\newcommand{\Levy}{L{\'e}vy} 
\DeclareMathOperator*{\wlim}{w^\sharp-lim}
\DeclareMathOperator{\anz}{\#}
\newcommand{\cadlag}{c\`adl\`ag}
\newcommand{\dx}{{\textup{d}}}               
\newcommand{\eqd}{\overset{\textup{d}}{=}}   
\newcommand{\eps}{\varepsilon}
\newcommand{\abs}[1]{\lvert#1\rvert} 
\newcommand{\norm}[1]{\lVert#1\rVert} 
\newcommand{\ind}[1]{\mathbbm{1}_{\{#1\}}} 
\newcommand{\indset}[1]{\mathbbm{1}_{#1}}  
\newcommand{\wt}{\widetilde}
\newcommand{\dr}{\underline{\underline{r}}}
\newcommand{\hu}{h}
\newcommand{\wh}{\widehat}
\newcommand{\suml}{\sum\limits}
\newcommand{\ve}{\varepsilon}
\newcommand{\uuxi}{\underline{\underline{\xi}}}
\newtheorem{proposition}{Proposition}[section]
\newtheorem{corollary}[proposition]{Corollary}
\newtheorem{lemma}[proposition]{Lemma}
\newtheorem{theorem}{Theorem}
\newtheorem{conjecture}{Conjecture}[section]
\newtheorem{xx}{\bf xxx}
\newtheorem{zz}{\bf zzz}
\def\th@newremark{\th@remark\thm@headfont{\bfseries}}   
\theoremstyle{definition}
\theoremstyle{newremark}
\newenvironment{remark}
  {\pushQED{\qed}\remarkx}
  {\popQED\endremarkx}
\newenvironment{definition}
  {\pushQED{\qed}\definitionx}
  {\popQED\enddefinitionx}
\DeclareMathOperator{\supp}{supp}              
\DeclareMathOperator{\Pois}{Poiss}              
\DeclareMathOperator{\Exp}{Exp}                
\newcommand{\mcB}{\mathcal{B}}
\newcommand{\mcC}{\mathcal{C}}
\newcommand{\mcD}{\mathcal{D}}
\newcommand{\mcE}{\mathcal{E}}
\newcommand{\mcG}{\mathcal{G}}
\newcommand{\mcH}{\mathcal{H}}
\newcommand{\mcI}{\mathcal{I}}
\newcommand{\mcJ}{\mathcal{J}}
\newcommand{\mcL}{\mathcal{L}}
\newcommand{\mcM}{\mathcal{M}}
\newcommand{\mcS}{\mathcal{S}}
\newcommand{\mfC}{\mathfrak{C}}
\newcommand{\mfe}{\mathfrak{e}}
\newcommand{\mfK}{\mathfrak{K}}
\newcommand{\mfl}{\mathfrak{l}}
\newcommand{\mfM}{\mathfrak{M}}
\newcommand{\ntree}{\mathfrak{0}}
\newcommand{\mfp}{\mathfrak{p}}
\newcommand{\mfP}{\mathfrak{P}}
\newcommand{\mfu}{\mathfrak{u}}
\newcommand{\mfU}{\mathfrak{U}}
\newcommand{\mfv}{\mathfrak{v}}
\newcommand{\mfV}{\mathfrak{V}}
\newcommand{\mfw}{\mathfrak{w}}
\newcommand{\mfW}{\mathfrak{W}}
\newcommand{\mfY}{\mathfrak{Y}}
\newcommand{\E}{\mathbbm{E}}
\newcommand{\K}{\mathbbm{K}}
\newcommand{\N}{\mathbbm{N}}
\newcommand{\R}{\mathbbm{R}}
\newcommand{\U}{\mathbbm{U}}
\newcommand{\Z}{\mathbbm{Z}}
\newcommand{\M}{\mathbbm{M}}
\renewcommand{\P}{\mathbbm{P}}
\newcommand{\1}{\mathbbm{1}}
\newcommand{\GP}{\mathrm{GP}}
\newcommand{\Pro}{\mathrm{P}}
\newcommand{\bbS}{\mathbb{S}}
\newcommand{\bbK}{\mathbbm{K}}
\newcommand{\bbN}{\mathbbm{N}}
\newcommand{\bbU}{\mathbbm{U}}
\newcommand{\uur}{\underline{\underline{r}}}
\newcommand{\uuu}{\underline{\underline{u}}}
\numberwithin{equation}{section}
\begin{document}

\title{Genealogy-valued Feller diffusion}

\author{A. Depperschmidt$^1$, A. Greven$^2$} \date{\today}
\maketitle

\begin{abstract}
  We consider the evolution of the genealogy of the population
  currently alive in a Feller branching diffusion. In contrast to the
  approach via labeled trees in the continuum random tree world
  \cite{Ald1991a,LeGall93}, following \cite{GPWmp13}, the genealogies
  are modelled as elements of a Polish space $\U$ which consists of
  all equivalence classes of ultrametric measure spaces. This space
  equipped with an operation called concatenation, denoted by
  $(\U,\sqcup)$ has a rich algebraic (semigroup) structure,
  \cite{infdiv,ggr_GeneralBranching}, which is used effectively to
  study branching processes. We focus on the evolution of the
  genealogy in time and the large time asymptotics conditioned on
  survival up to present time and on survival forever. We develop the
  calculus in such a way that it can be applied in the future to more
  complicated systems, such as logistic branching or state dependent
  branching. Furthermore the approach we take carries over very
  smoothly to spatial models with infinitely many components.

  We prove existence, uniqueness, continuity of paths and a
  generalized Feller property of solutions of the martingale problem
  for this genealogy-valued, i.e.\ $\U$-valued Feller diffusion. The
  uniqueness is shown via Feynman-Kac duality with the distance matrix
  augmented Kingman coalescent. By conditioning on the entire
  population size process and then observing the genealogy part we
  obtain the precise relation to a specific time-inhomogeneous
  $\U_1$-valued Fleming-Viot process with varying resampling rate,
  $\U_1$ being the set of all equivalence classes of ultrametric
  probability measure spaces. This relation gives the so-called skew
  martingale representation of the $\U$-valued Feller diffusion.

  Via the Feynman-Kac duality we deduce the generalized branching
  property of the $\U$-valued Feller diffusion. Using a semigroup
  operation through concatenations on $\U$, \cite{infdiv}, together
  with the generalized branching property,
  \cite{ggr_GeneralBranching}, we obtain a \Levy{}-Khintchine formula
  for the $\U$-valued Feller diffusion and determine explicitly the
  \Levy{} measure which has a special form, allowing us to obtain for
  $h>0$ a decomposition into depth-$h$ subfamilies which leads to a
  representation in terms of a Cox point process of genealogies where
  ``points'' correspond to single ancestor subfamilies.

  We determine the $\U$-valued process conditioned to survive until a
  finite time $T$ correcting a result from the $\R_+$-valued
  literature in the computation of the diffusion coefficient. This is
  the key ingredient of the excursion law of the $\U$-valued Feller
  diffusion. Next we study asymptotics of the $\U$-valued Feller
  diffusion conditioned to survive forever and obtain its
  Kolmogorov-Yaglom limit and show that the limiting processes solve
  well-posed $\U$-valued martingale problems.

  Using infinite divisibility and skew martingale problems we obtain
  various representations of the long time limits: $\U$-valued
  backbone construction of the Palm distribution, the $\U$-valued
  version of the Kallenberg tree, the $\U$-valued version of Feller's
  branching diffusion with immigration from an immortal line \`a la
  Evans \cite{Evans93}. On the level of $\U$-valued processes we still
  have equality (in law) of the $Q$-process, i.e., the process
  conditioned to survive up to time $T$ in the limit $T \to \infty$,
  the size-biased process and Evans' branching process with
  immigration from an immortal line. The $\U$-valued generalized
  quasi-equilibrium is a size-biased version of the Kolmogorov-Yaglom
  limit law.

  The above results are key tools for analyzing genealogies in spatial
  branching populations. We construct the genealogy of the interacting
  Feller diffusion on a countable group (super random walk) and obtain
  results on a martingale problem characterization, duality,
  generalized branching property and the long time behavior for this
  object. As an application we give a two scale analysis of the super
  random walk genealogy with strongly recurrent migration providing
  the asymptotic genealogy of clusters via the $\U^\R$-valued version
  of the Dawson-Watanabe process. We indicate the situation in other
  dimensions.

  Finally we enrich the $\U$-valued Feller process further, encoding
  the information on the whole population ever alive before the
  present time $t$ and describe its evolution. This leads to the so
  called fossil process and we relate its limit for $t \to \infty$ to
  the continuum random tree.
\end{abstract}
\bigskip

\noindent
\textbf{Keywords:} Evolving genealogies, genealogies as ultrametric
measure spaces, genealogies of Feller's branching diffusion,
genealogies of super random walk, Feynman-Kac duality, Cox cluster
representations of genealogies, \Levy{} measures of genealogies,
genealogies of Fleming-Viot processes, Kingman coalescent, $\U$-valued
Kolmogorov-Yaglom limit, $\U$-valued backbone construction,
genealogical Palm.


\medskip
\footnoterule

\noindent {\footnotesize $1)$ Universität Hamburg, Fachbereich
  Mathematik, Bundesstr.\ 55,
  20146 Hamburg, andrej.depperschmidt@uni-hamburg.de, \\ $2)$
  Universit\"at Erlangen-N\"urnberg, Department Mathematik, Cauerstr.
  11, 91058 Erlangen, greven@math.fau.de}

\newpage

\tableofcontents

\newpage

\section{Introduction}
\label{s.introduc}

\paragraph{Background}
Diffusion limits of Galton-Watson processes and the construction of
Feller's branching diffusion date back to the last century, the study
of measure valued branching processes to the seventies. Nevertheless,
important features of these processes are still being discovered; see
\cite{Lamb07,Li2011} and their extensive source of references.
Furthermore \emph{spatial} versions such as super random walks and
super processes have been explored; see \cite{D93,Eth00,DG96,DG03}.
Also the \emph{genealogies} associated with such an evolution of
\emph{all individuals ever alive} have been studied via \emph{labeled
  trees} starting with the work of Neveu \cite{Neveu86} and subsequent
work by Aldous \cite{Aldous90,Ald1991,Ald1991a,Ald1993}, Le Jan
\cite{LJ91} and Le Gall \cite{LeGall93} led to the description in
terms of the \emph{continuum random tree}, which is encoded by
excursions of Brownian motion. These constructions have been extended
to branching processes with fat tail offspring distributions which
lead to jump processes in the limit, \cite{DLG05,DuquesneLeGall2002}.
The genealogies in processes with immigration have also been
systematically studied, see \cite{Lamb02} for example. All of the
above references use a coding of the genealogy as a \emph{labeled
  tree} or a \emph{labeled $\R$-tree} that represent all individuals
\emph{ever} alive. This coding is more difficult to handle in spatial
population models, compare here \cite{DuquesneLeGall2002} where
special branching features are used to tackle this.

We are interested in describing the \emph{evolution} of the genealogy
of the currently alive population in time as \emph{solution to
  well-posed martingale problems}. For this we will use a coding with
\emph{equivalence classes of ultrametric measure spaces} elements of a
\emph{Polish space $\U$}, which seems better suited to treat
\emph{evolutions} of genealogies in time. This approach is also quite
flexible for extensions to multi-type models. In particular the form
of the description is open for much more general forms of the change
of generation in variable size populations.

But even much more crucial is that the approach taken allows very
naturally to pass to spatial models on \emph{infinite geographic
  spaces}, a point distinguishing it from other approaches, possible.
See \cite{DG18evolution} for a survey of our approach. This approach
extends the concept of historical processes of Dawson and Perkins
\cite{DP91} suited for super processes on $\R^d$ and allows to tackle
general population processes which was the first attempt to overcome
the difficulties arising in the \emph{spatial context} from methods
using representations of population dynamics in terms of countable
particle systems via lookdown constructions which allow to work even
with \emph{stochastic equations}, as in the case of Donnelly and Kurtz
\cite{DonnellyKurtz1999b,DonnellyKurtz1999a} or more recently in
\cite{Gufler2018a} and \cite{BGKTW2021}, but is not possible on
infinite geographic spaces. Current research also suggests that the
approach also has applications in stochastic processes with values in
large or countable graphs.

Different from the Fleming-Viot world studied so far we can use an
algebraic structure $(\U,\sqcup)$ very effectively based on recent
work \cite{infdiv,ggr_GeneralBranching} to study the depth-$h$
subfamily structure by determining the \Levy{} measure, the excursion
law, Evans branching with immigration from an immortal line and many
other objects relevant for the \emph{genealogy-valued}
Feller-diffusion. These structures are much more complicated to deal
with for Fleming-Viot models.

To lift results for the classical branching processes, $\R$-valued or
measure-valued, to the level of genealogies we have to deal with
processes whose state space is Polish, but not \emph{locally compact}
or $\sigma$-\emph{compact}. Even though we can overcome the problems
arising from the more complicated state space, the techniques
exploited use special features as dualities so that it is not easy to
say in general which properties of known population models can be
lifted to genealogy-valued, i.e., $\U$-valued Markov versions of these
processes.

As a general program one could look at the genealogies and their
evolution of individual based population models where one can
immediately write down the $\U$-valued process which describes the
evolution of the present day population as time passes by. Then the
issue would be if the associated infinite population limit models in
which, if one considers only the population sizes in space and type
spaces one can obtain a limiting dynamic, also the process of the
corresponding genealogies have a limiting dynamics, respectively under
what conditions on the rates can we lift theorems on the level of
genealogies. Since the state spaces for genealogies are Polish spaces
but not compact, locally compact or $\sigma$-compact, it is difficult
to do this in full generality and in fact this is not true in general
that this lifting works. (Here even $\Lambda$-Canning's models, which
are not dust free are such models as it turns out.) For that reason we
focus here on a specific model, where we can use duality and
Feynman-Kac duality as tools to compensate for lacking compactness
properties of the state spaces.

\paragraph{Perspectives}
The challenge for the future is to analyze the $\U$-valued version of
the diffusion $\dx X_t=\sqrt{b(X_t)} \; \dx W_t$, instead of the
Feller case $b\cdot x$ (for a constant $b>0$), for locally Lipschitz
function $b$ with $b(0)=0$, $b(x)>0$ for $x>0$ and $b(x)=O(x)$ as $x
\to \infty$, where the independence of the evolution of subfamilies
has to be replaced by exchangeability. The solutions can be
constructed and are well-posed by the skew martingale problem
approach, but to analyze the finite structure this poses great
challenges. However, the developed calculus puts this in a better
perspective, but clearly new concepts and techniques are also needed.
Also sub- and supercritical case should be replaced by logistic terms
as drift such as $a(x(x-K))$ for example. The important point in all
these cases is to also pass to spatial models and in particular to
infinite geographic spaces.

\paragraph{Basic questions}
In this framework we treat \emph{three topics} concerning the
evolution of genealogies in continuous state branching populations,
more specifically the \emph{$\U$-valued Feller diffusion} model and
the corresponding super-random walk.
\medskip

\noindent
\textbf{\emph{(1) Basics of the $\U$-valued Feller diffusion}} In this
work we begin with a foundational part and we look at the
\emph{evolving} genealogy of the Feller diffusion model from a new
perspective by rigorously defining a \emph{process} $\mfU$ with a
Polish state space $\U$ which we recall later and which captures the
\emph{evolution in time of the genealogy} of the individuals
\emph{currently alive} as an \emph{evolving Markov process} defined
via a \emph{well-posed} martingale problem.

The main tool for this approach is the description of genealogies by
\emph{equivalence classes of ultrametric measure spaces} leading to
the state space $\U$; see \cite{GPW09} furthermore see \cite{Gl12} or
\cite{DGP11,GSW} for generalizations. In the world of Fleming-Viot
processes with \emph{fixed} population sizes, this martingale problem
approach has been used extensively on the space of ultrametric
probability measure spaces, a closed subspace of $\U$, denoted by
$\U_1$; see \cite{GPWmp13,DGP12,GSW}. Varying population sizes are
considered in \cite{Gl12,infdiv} and \cite{ggr_GeneralBranching}.

We study the process $(\mfU_t)_{t\ge 0}$ which is defined as the
unique solution of a \emph{well-posed martingale problem} with values
in $\U$. Since $\U$ is not anywhere locally compact, we need some
version of the Feller property to overcome the lack of a Weierstraß
theorem for the test functions of the martingale problem, which would
give a nice countable dense set in $C_b(\U)$ to obtain continuous
paths and the Markov property. Here comes as a key tool that the
operators are \emph{second order operators} and that we have a
\emph{duality} based on polynomials. Polynomials are \emph{not} dense
but contain a countable and \emph{measure determining set} of test
functions.

For the \emph{existence} of a solution we use a classical particle
approximation where the details can be found in \cite{Gl12}. This also
allows to state some kind of universality law in the sense that the
genealogy processes of any kind of approximating individual based
branching particle systems with critical offspring distribution and
finite variance converges to the $\U$-valued Feller diffusion. The
\emph{uniqueness} of the solution is a consequence of a
\emph{Feynman-Kac duality} introduced in this paper. This Feynman-Kac
duality with the $\U$-valued Kingman coalescent is the main tool of
analysis. It shows that the law of the genealogy, i.e.\ the
\emph{$\U$-valued} Feller diffusion, can be seen as a reweighted law
of the $\U$-valued genealogy associated with the Kingman coalescent.
The law of the Feller diffusion genealogy at a fixed time $t$ puts
higher mass on those Kingman coalescent paths which have late mergers,
i.e., the Feller genealogy favors smaller distances than the Kingman
coalescent does.

A finer view of this aspect can be investigated via an $\U_1$-valued
process $\wh \mfU$ obtained by normalizing $\mfU$ by the population
size and \emph{conditioning} on the complete total mass process. This
allows to relate the Feller genealogy to a \emph{Fleming-Viot
  genealogy}, namely to a $\U_1$-valued \emph{time-inhomogeneous
  Fleming-Viot process}. This is similar to the structure of the
spatial multi-type branching processes in \cite{DG03}. Instead of
getting existence by particle approximation as in \cite{Gl12}, one can
also use alternatively the $\U_1$-valued Fleming-Viot process from
\cite{GPWmp13} and the $\R_+$-valued Feller diffusion to construct the
$\U$-valued one as functional of this pair.

This $\U_1$-valued Fleming-Viot process is for suitable paths of the
total population size process in \emph{duality} with a
\emph{time-changed coalescent}, giving an alternative proof for the
well-posedness of the martingale problem for the process $\mfU$. This
allows also for a \emph{strong} dual representation of the conditional
law of the genealogy given the population size process at least in the
\emph{critical} case.

We prove that the $\U$-valued Feller diffusion is in fact a
\emph{branching} process in a \emph{generalized sense}; see
\cite{ggr_GeneralBranching} for the concept and an alternative proof.
Roughly speaking it is shown that given the genealogy at some time $t$
and dividing the population in sub-populations the $s$-tops of the
genealogies of the sub-populations at time $t+s$ have evolved
independently according to the same mechanisms. The $s$-tops are the
recent genealogies seen from time $t+s$ backwards up to time $s$. This
leads to a \Levy{}-Khintchine formula (see \cite{infdiv}) whose
\emph{\Levy{} measure on $\U \setminus \{\ntree\}$} we identify here
explicitly. These results allow then to prove some properties of the
genealogies. In particular, we obtain explicit decompositions into
depth $h$ single ancestor subfamilies leading to a \emph{Cox point
  process representation} on specific semigroups in $\U$ namely in the
subspace induced by diameter-$2h$ ultrametric measure spaces. The main
focus for us is on identifying explicitly the \emph{genealogical
  \Levy{} measure} on $\U$ and identifying the law of a \emph{single
  ancestor subfamily} (which are the points which the \Levy{} measure
selects), for which we give three different representations: (1) via a
Yule tree, (2) via a time-inhomogeneous $\U$-valued coalescent, (3)
via an entrance law for the $\U$-valued process which we obtain from
the \emph{excursion law} of genealogies which itself will be
constructed. The analysis of these issues is more difficult in the
resampling world and has not been attacked so far successfully due to
the dependence between subfamilies.

In the theory of $\R_+$ or measure-valued branching processes the
property of infinite divisibility plays an important role allowing to
decompose the state in independent components and to derive Poisson
point process decompositions of the states; see e.g.\ Chapter~IV in
\cite{LG99} or Section~II.7 in \cite{zbMATH01827859}. This happens
also on the level of genealogy valued, i.e.\ $\U$-valued, objects as
well as we shall show in our framework (and already mentioned above).
Therefore it is possible to make use of the large number of results in
the $\R_+$ or measure-valued case to obtain information about the
subfamily decompositions as we call it, to study more model specific
questions. This has to be explored further in the future.

Since a critical branching process dies out almost surely we want to
obtain more detailed information on the genealogy conditioned on the
event of \emph{survival up to the present time}. This also gives the
structure of a single ancestor subfamily in the above description via
$\U$ appearing in the Cox point process representation.

For that purpose we first identify the \emph{$\U$-valued process
  conditioned to survive until time $T$} as solution of a well-posed
martingale problem for times $[0,T]$, \emph{correcting on the way an
  error in the $\R_+$-valued literature} \cite{LN68} where the
diffusion coefficient was not correct. We obtain here a
time-inhomogeneous super-critical state-dependent (generalized)
branching process with values in $\U$. This provides also tools to
investigate next the properties of genealogies of populations
surviving up to large times $T$, i.e.\ $T \to \infty$.

\medskip

\noindent
\textbf{\emph{(2) $\U$-valued Feller conditioned on long survival}}
The second part of this work is concerned with qualitative properties
of the genealogy for \emph{large} times and conditioned on the event
of \emph{survival for large times} or survival \emph{forever}. In the
case of Fleming-Viot models it is much easier to establish convergence
since we obtain as $t\to\infty$ a $\U_1$-valued equilibrium. For the
$\U$-valued Feller diffusion this is more subtle and we need to study
conditional law under survival forever as was done for the $\R$-valued
case.

To include rigorously the genealogies as \emph{$\U$-valued object}
into the analysis, we need to generalize concepts from the
$\R_+$-valued versions to $\U$-valued ones; see e.g.\
\cite{Overbeck93,Lamb02,Lamb07} for the former. Related ideas for
\emph{labeled} trees for \emph{individual based} models have been
considered by Chauvin, Rouault and Wakolbinger in \cite{CRW91} and by
Kallenberg in \cite{Kall77} and for labeled marked trees in the
Brownian snake construction of Le Gall in \cite{LG99}. Here the
$\R_+$-valued and individual based versions suggest conjectures for
$\U$-valued processes which can indeed be verified. This gives also
rise to a better understanding of \emph{spatial} models despite some
new features arising.

In order to study the behavior of the genealogy for $t \to \infty$
conditioned on survival we define further $\U$-valued Markov processes
related to the evolution of the genealogy of the Feller diffusion. In
particular we identify the evolving genealogy as $\U$-valued process
\emph{conditioned} to \emph{survive forever} ($\U$-valued $Q$-process)
respectively its \emph{size-biased}, i.e.\ $h$-transformed version
(Palm law for processes on $\U$). For both we identify its rescaling
limit as time tends to infinity to get a \emph{generalized
  quasi-equilibrium} on $\U$, where we represent these objects via
solutions of \emph{well-posed martingale problems} on $\U$ and via
$\U$-valued \emph{backbone representations}.

We also relate these process to the genealogical, that is, $\U$-valued
version of \emph{Evans' branching with immigration from an immortal
  line} \cite{Evans93}. This requires forms of the $\U$-valued
martingale problem with different features than treated so far. In
particular, we consider this process via a version with values in the
$[0,\infty)$-\emph{marked} ultrametric measure spaces
$\U^{[0,\infty)}$. The point here is to give the appropriate
\emph{analog of the Cox point process representation} of the Feller
diffusion for the $\U$-valued process conditioned to survive forever.

We will see that the macroscopic structure of the limit genealogy is
different when conditioning on survival \emph{forever} and
conditioning on survival up to a \emph{finite time horizon} $t$ in the
limit $t$ tending to infinity. While the latter conditioning is more
appropriate for a single population, the conditioning on survival
forever is appropriate for studying spatial model since it describes
the family of a \emph{typical individual} of the entire spatial
population; see Theorem~\ref{T:KOLMOGOROVLIMIT} and
Theorem~\ref{T.CLUMP}. \medskip

\noindent
\textbf{\emph{(3) Spatial and fossil populations}}

\noindent
\textbf{(i)} We finally discuss genealogies of spatial branching
models. In particular, for the \emph{super random walk} on a
geographic space $V$ we establish well-posedness of the martingale
problem of the corresponding process with values in the
\emph{$V$-marked} ultrametric measure spaces (the space $\U^V$ which
we recall later) which is the basis for analysis in future work.
Thanks to the branching property of the $\U^V$-valued Feller
diffusions the results we have for this process are important elements
in the analysis of \emph{spatial} branching models. These are the key
objects of interest for us. The branching property allows to view the
spatial models as a superpositions of \emph{independent} copies of
$\U$-valued processes which makes the analysis significantly easier
and more explicit than in spatial Fleming-Viot models, which remain to
be treated in detail.

As an example for the application of the analysis of the law of the
$\U$-valued Feller diffusion conditioned on survival forever we use it
to address for a spatial model, the super-random walk, the question of
the interplay between genealogy and spatial distribution of the mass
in the limit $t \to \infty$ and how dimension and the properties of
migration come into play.

For that we analyze asymptotically as time tends to infinity the
formation of clumps of high population size at rare spots in space in
the $\U^\Z$-valued \emph{strongly recurrent} super random walk. Here
we can obtain via \emph{two-scale} analysis a precise and explicit
\emph{asymptotic description}, first of genealogies of the population
in the rare spots of high population size and then in second step
scale the structure of the genealogy of one of the rare \emph{clumps}.
We describe the evolving genealogy of the clumps population as the
\emph{$\U^\R$-valued Dawson-Watanabe} (super) \emph{process}, which we
construct here. This analysis combines our results with results on the
$\mcM(\R)$-valued super random walk by Dawson and Fleischmann
\cite{DF88}. We indicate the situation in the cases $d=2$ and $d\geq
3$.

\noindent
\textbf{(ii)} Furthermore we connect our results to the literature
mentioned in the beginning and give the precise relation to the
\emph{continuum random tree} mentioned above as the generalized Yaglom
limit of the genealogy of the population alive before or at time $t$
(an $\M$-valued process, see \eqref{ag2} and the sequel), the so
called metric measure space valued \emph{fossil process}; see
\cite{GSWfoss}.

\paragraph{Outline}
The paper is organized as follows. In Section~\ref{ss.ummspace} we
give preparations, in particular we recall the suitable \emph{Polish
  state space} $\U$ for genealogies and recall a polar representation
of elements $\mfu \in \U$ which gives a decomposition of $\mfu$ into
its mass and genealogy components. Furthermore we introduce the
collection of consistent \emph{concatenation semigroup} structures on
$\U$, namely $\{(\U,\sqcup^h) : h>0\}$ and recall the notion of
\emph{infinite divisibility} on $\U$.

In Section~\ref{sec:results} we present the main definitions and
\emph{results} of this paper. More precisely, in
Section~\ref{ss.mainres} we study the \emph{basic structural
  properties} of the $\U$-valued Feller diffusion and its dual, namely
in Section~\ref{sss.martrep} its characterization via a martingale
problem and main properties, in ~\ref{FKdual} the duality theory, in
~\ref{branchmarkcox} the branching property and Cox cluster
representation and ~\ref{sss.entrance} excursion and entrance law and
the process conditioned to survive up to time $T$. In
Section~\ref{ss.longuval} we study the \emph{long time behavior} in
three parts. In Section~\ref{sss.longlim} the long time behavior is
considered via conditioning on survival up to time $T$ with
$T \to \infty$ and via size-biasing together with the decomposition in
an independent sum of the Kallenberg tree and a copy of the
untransformed process. In both cases we consider scaling limits. This
is further refined in Section~\ref{sss.1616} where we give a dynamical
representation of the Kallenberg tree via the $\U$-valued version of
Evans' process with immigration from an immortal line and in
Section~\ref{sss.backboncon} the backbone representation via
concatenation of a $\U$-valued Cox point process is obtained.
Section~\ref{ss.extension} applies and generalizes $\U$-valued Feller
diffusion model. Section~\ref{sss.genalspat} introduces the spatial
version, the \emph{genealogical super random walk} and
Section~\ref{sss.contrand} relates it to the \emph{continuum random
  tree}.

Sections~\ref{sec:duality}-\ref{s.proofext} are devoted to the
\emph{proofs} of these results. Section~\ref{sec:duality} proves
Theorem~\ref{T:DUALITY} on duality, Section~\ref{sec:Fel:ex} proves
Theorem~\ref{THM:MGP:WELL-POSED}. Section~\ref{sec:branching} proves
finer properties of the processes, such as Theorem~\ref{T:BRANCHING}
(branching property). Finally Section~\ref{sec:Kolmogorovlimit}
contains the proof of Theorem~\ref{T:KOLMOGOROVLIMIT} and other
results on the large time limit (on the quasi equilibrium and Yaglom
limit). Section~\ref{s.proofext} gives proofs for the extensions to
spatial models and to the fossil process allowing to exhibit the
relation to the CRT.

Some more technical points are collected in the appendix in
Sections~\ref{sec:comp-diff-coeff}, \ref{app.facts},
\ref{sec:infin-divis-xxx} and~\ref{sec:appr-solut-mart}. In
Section~\ref{sec:comp-diff-coeff} we give the calculation correcting
the scaling limit result in \cite{LN68}. In Sections~\ref{app.facts}
and \ref{sec:infin-divis-xxx} we collect some consequences of infinite
divisibility on $\U$ as studied in \cite{infdiv} and
\cite{ggr_GeneralBranching}. Finally, in
Section~\ref{sec:appr-solut-mart} we briefly discuss approximation of
solutions of certain martingale problems.

\section[Ultrametric measure spaces and concatenation
semigroups]{Ultrametric measure spaces and
  concatenation semigroups}
\label{ss.ummspace}

In this section we introduce the \emph{state space} $\U$ whose
elements can be interpreted as genealogies of population processes and
recall the \emph{topological semigroup structure} of that space
developed in \cite{infdiv} and \cite{ggr_GeneralBranching}.
Furthermore we present tools and objects which we use to deal with
random variables on these spaces. In particular we introduce
\emph{polynomials} as the basic test functions, the \emph{polar
  decomposition} of the states and \emph{concatenation semigroups}.
For details we refer the reader to \cite{GPW09} and \cite{infdiv}.

\subsection{State spaces: the topological spaces
  \texorpdfstring{$\U$}{U} and \texorpdfstring{$\U_1$}{UV}}
\label{sss.topspa}

We describe a population by a set $U$ of its currently alive
individuals together with its genealogy by giving the
\emph{genealogical distances} $r(\cdot,\cdot)$ of pairs of individuals
in $U$, and by a finite measure $\mu$ on the Borel sets of $U$. Here,
the genealogical distance means the tree distance on the genealogical
tree of a branching population, i.e.\ twice the time to the \emph{most
  recent common ancestor} of a given pair of individuals.

\begin{definition}[Ultrametric measure spaces]
  A \label{D.ummsp} triple $(U,r,\mu)$ is called an \emph{ultrametric
    measure space}, if $(U,r)$ is a complete and separable ultrametric
  space and $\mu$ is a finite measure on its Borel-$\sigma$-algebra.

  Ultrametric measure spaces $(U,r,\mu)$ and $(U',r',\mu')$ are called
  \emph{equivalent} if there is an isometry $\varphi$ between the
  supports of $\mu$ and $\mu'$ that satisfies
  $\mu' = \varphi_\ast \mu$. Here $\varphi_\ast \mu$ denotes the image
  measure of $\mu$ under $\varphi$. The equivalence class of
  $(U,r,\mu)$ is denoted by $[U,r,\mu]$. The sets of \emph{equivalence
    classes} of \emph{ultrametric measure spaces} and more
  specifically \emph{ultrametric probability measure} spaces are
  denoted by
  \begin{align}
    \label{tv1}
    \bbU & \coloneqq \bigl\{[U,r,\mu]: (U,r,\mu) \text{ an ultrametric
           measure space with finite measure $\mu$}\bigr\},\\
    \label{tvFV}
    \U_1 & \coloneqq \bigl\{[U,r,\mu]: (U,r,\mu) \text{ an
           ultrametric
           measure space with probability measure $\mu$}\bigr\}.
  \end{align}
  We refer to the elements of $\bbU$ and $\bbU_1$ as genealogies.
\end{definition}

For $a \in \R_+$ and $\mfu = [U,r,\mu] \in \U$ we write
\begin{align}
  \label{eq:multaU}
  a \cdot \mfu \coloneqq [U,r, a \mu].
\end{align}
The null measure on a metric space will be usually denoted by $0$. On
$\U$ we define the following specific elements
\begin{align}
  \label{eq:null-one-tree}
  \ntree \coloneqq [\{1\},r,0] \quad \text{and} \quad \mfe \coloneqq
  [\{1\},r,\delta_1].
\end{align}
Here $r$ is necessarily the zero metric on $\{1\}$. We refer to
$\ntree$ and $\mfe$ as the \emph{null element} respectively the
\emph{unit element}. Note that with the operation defined in
\eqref{eq:multaU} we have $\ntree = 0 \cdot \mfe$. Furthermore note
that for any $x$ in a set $X$ equipped with a metric $r$ we also have
$\mfe = [X,r,\delta_x]$.

\begin{remark}[Ultrametric spaces and $\R$-trees]\label{r.ultsp}
  Any ultrametric space $(U,r)$ can be embedded isometrically into an
  $\R$-tree such that the \emph{leaves} of the $\R$-tree correspond to
  the elements of $U$; cf.\ Remark~2.2 in \cite{GPWmp13}. We call this
  object the associated ancestral tree. Then the distance of two
  leaves is given by the sum of the distances to the most common
  ancestor.
\end{remark}

Now we introduce some objects needed for the definition of the
topology on $\U$ which turns it into a Polish space.

\begin{definition}[ultrametric distance matrices, distance matrix
  distributions]\label{D.umdm}
  Define the set of \emph{ultrametric distance matrices} of order
  $n\ge2$ by
  \begin{align}
    \label{tv2}
    [0,\infty)^{\binom{n}{2}} \coloneqq \bigl\{(r_{ij})_{1\le i<j\le n} :
    r_{ij}\ge 0\; \,\forall 1\le i<j \le n \text{ and } r_{ij}\le r_{ik}
    \vee r_{kj}\; \,\forall 1\le i<k<j\le n\bigr\}.
  \end{align}
  For elements $u_1,\dots,u_n$ of a metric space $(U,r)$, writing
  $\underline u = (u_1,\dots,u_n)$ we define
  $\dr(\underline{u}) \in [0,\infty)^{\binom{n}{2}}$ by
  \begin{align}
    \label{eq:dr}
    \dr(\underline{u}) = \bigl(r(u_i,u_j)_{1\le i < j \le n} \bigr)
  \end{align}
  and define the mapping
  \begin{align}
    \label{e638}
    R^{(n)}: U^n \rightarrow [0,\infty)^{\binom{n}{2}}, \quad R^{(n)}
    (\underline{u})= \dr(\underline{u}).
  \end{align}
  For $\mfu = [U,r,\mu] \in \U$ and integers $n \ge 2$ we define the
  \emph{distance matrix measure} $\nu^{n,\mfu}$ by
  \begin{align}
    \label{eq:e638dmd}
    \nu^{n,\mfu} \coloneqq (R^{(n)})_\ast \mu^{\otimes n},
  \end{align}
  that is, $\nu^{n,\mfu}$ is the image measure of $\mu^{\otimes n}$
  under $R^{(n)}$.
\end{definition}

An important set of functions on $\U$ is the set of polynomials. For a
set $A$ we denote by $\mathrm{b}\mcB(A)$ the set of bounded measurable
real-valued functions on $A$.

\begin{definition}[Polynomials]
  \label{D.polyn}
  For an integer $n\ge 0$ and
  $\varphi\in \mathrm{b}\mcB([0,\infty)^{\binom{n}{2}})$ (for $n=0,1$ the
  function $\varphi$ is assumed to be constant) we define the
  function $\Phi=\Phi^{n,\varphi}: \bbU\to\R$ as follows
  \begin{align}
    \label{e:0606131101}
    \mfu =
    [U,r,\mu] \mapsto
    \begin{cases}
      \Phi^{n,\varphi} (\mfu)  \coloneqq
      \langle\varphi,\nu^{n,\mfu}\rangle,
      & n \ge 2,\\
      \Phi^{1,\varphi}(\mfu)  \coloneqq c \mu(U), & n = 1,\; \varphi
      \equiv c,  \\
      \Phi^{0,\varphi}(\mfu)  \coloneqq c, & n=0,\; \varphi \equiv c.
    \end{cases}
  \end{align}
  The smallest non-negative integer $m$, for which there is $\varphi$
  satisfying \eqref{e:0606131101} with $n=m$ is called the
  \emph{degree} of the polynomial $\Phi$. Whenever we need to stress
  the dependence on the degree $m$ and $\varphi$ we write
  $\Phi^{m,\varphi}$. The set of polynomials on $\U$ of degree $m$ is
  denoted by $\Pi_m$. Furthermore we set
  \begin{align}
    \label{eq:Pi-total}
    \Pi \coloneqq \bigcup_{m \in \N_0} \Pi_m \quad \text{and} \quad
    \wh\Pi \coloneqq \Pi|_{\U_1},
  \end{align}
  i.e.\ $\Pi$ is the set of polynomials of all degrees and $\wh \Pi$
  is the set of polynomials restricted from $\U$ to $\U_1$. The
  elements of $\wh\Pi$ will be denoted by $\wh\Phi$ with the same
  notational conventions concerning the degree $m$ and the function
  $\varphi$.

  For a subclass $\mcC$ of bounded measurable functions on the space
  of distance matrices we write
  \begin{align}
    \label{tv3}
    \Pi(\mcC) \coloneqq \bigcup_{n \in \N_0} \bigl\{\Phi^{n,\varphi} :
    \varphi\in\mcC \cap \mathrm{b}\mcB([0,\infty)^{\binom{n}{2}})\bigr\}.
  \end{align}
  Here, again for $n\in \{0,1\}$ the set
  $\mathrm{b}\mcB([0,\infty)^{\binom{n}{2}})$ consists of constant
  functions.
\end{definition}

We note that the set of polynomials is a linear space. Furthermore
every polynomial can be viewed as a monomial. We denote by $\mathcal
C_b = \mathcal C_b([0,\infty)^{\binom{n}{2}})$ the set of bounded
continuous functions on $[0,\infty)^{\binom{n}{2}}$.

\begin{definition}[Topology]
  \label{D.topo}
  The topology on $\U$ induced by $\Pi(\mathcal C_b)$ is called the
  \emph{Gromov weak topology}. By induced we mean for convergence
  $\mfu_n \to \mfu$ on $\U$ we require $\Phi(\mfu_n) \to \Phi(\mfu)$
  on $\R$ for all $\Phi \in \Pi(\mathcal C_b)$.
\end{definition}
\begin{remark}[Polish metrizable space]
  \label{r.d.topo}
  In \cite{GPW09} it is shown that $\U_1$ equipped with the Gromov
  weak topology is a Polish space which is metrizable by the
  Gromov-Prohorov metric. Here we do \emph{not} restrict to
  probability measures but the results from \cite{GPW09} can be
  extended to $\U$. For \emph{two} extensions of the Gromov-Prohorov
  metric to $\U$ we refer to Section~2.4 in \cite{Gl12}. We use here
  the topology and the metric which are called in \cite{Gl12} extended
  Gromov weak topology respectively extended Gromov-Prohorov metric.

  Let us briefly recall this metric here. Let $(Z,r_Z)$ be a complete
  and separable metric space. For a Borel subset $A$ of $Z$ set
  $A^{\varepsilon} = \{z \in Z: \inf_{y \in A} r_{Z}(z,y)
  <\varepsilon\}$. The Prohorov distance of finite measures $\mu$ and
  $\nu$ on the Borel-$\sigma$-Algebra on $Z$ is defined by
  \begin{align}
    \label{eq:32}
    d_{\Pro} (\mu,\nu) = \inf\{\varepsilon>0 : \mu(A)\le
    \nu(A^\varepsilon) + \varepsilon \text{ and } \nu(A)\le
    \mu(A^\varepsilon) + \varepsilon, \text{ for all closed $A \subset Z$}\}.
  \end{align}
  Then the extended Gromov-Prohorov metric is a direct extension of
  the metric introduced in \cite{GPW09}. More precisely for
  $\mfu_1=[U_1,r_1,\mu_1], \mfu_2=[U_2,r_2,\mu_2]\in \U$ it is defined
  by
  \begin{align}
    \label{eq:31}
    d_{\GP} (\mfu_1,\mfu_2) = \inf d_{\Pro} (\mu_1 \circ
    (\varphi_1)^{-1}, \mu_2 \circ (\varphi_2)^{-1}),
  \end{align}
  where the infimum is taken over all isometric embeddings
  $\varphi_1: U_1 \to Z$ and $\varphi_2: U_2 \to Z$ into a common
  complete and separable metric space $(Z,r_Z)$. By Proposition~2.4.12
  in \cite{Gl12} the metric space $(\U, d_{\GP})$ is complete and
  separable. Note that in this metric space there is only one null
  space which is given by the null element from
  \eqref{eq:null-one-tree}.

  For brevity we will refer throughout the paper to $d_{\GP}$ as the
  Gromov-Prohorov metric on $\U$ and call the corresponding topology
  the Gromov-weak topology.
\end{remark}

\begin{remark}[Non-local compactness]
  \label{rem:Unloc.comp}
  The spaces $\U$ and $\U_1$ are \emph{Polish} spaces when equipped
  with the \emph{Gromov weak topology}; see Remark~\ref{r.d.topo}.
  However, the spaces are neither locally compact nor
  $\sigma$-compact. In particular the Stone-Weierstraß theorem does
  not apply and in fact the set of polynomials is a countable but not
  a dense subset of $\mathcal{C}_b (\U)$; see Remark~2.6 in
  \cite{Loehr2013} for an argument that $\wh \Pi$ is not dense in
  $\mathcal{C}_b(\U_1)$. (Here $\mathcal{C}_b (\cdot)$ denotes again
  the continuous bounded functions on the particular set.) For that
  reason the Feller property (see Definition~\ref{rk:Fellert}) and the
  strong Markov property are somewhat subtle to establish.
\end{remark}

The following lemma can be shown combining Proposition~2.6 from
\cite{GPW09}, Proposition~4.6 of Chapter~3 in \cite{EK86} and the
discussion around equation (4.21) after that proposition in
\cite{EK86}.
\begin{lemma}[Convergence criterion]
  \label{lem:Pi:separating}
  The algebra generated by $\Pi$ is separating, on
  \begin{align}
    \label{tv4}
    \wt{\mcM} = \Bigl\{P \in \mcM_1(\bbU):
    \limsup_{K\to \infty} \frac{1}{K} \Bigl(\int \bar{\mfu}^{K} \,
    P (\dx\mfu) \Bigr)^{1/K}<\infty\Bigr\}.
  \end{align}
  Furthermore, this algebra is also convergence determining, whenever
  the limiting point is in $\wt{\mcM}$.
\end{lemma}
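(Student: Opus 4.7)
The plan is to reduce both assertions to Proposition~2.6 of \cite{GPW09} (which handles the probability-measure case on $\U_1$) through a polar decomposition of $\bbU$ and a Carleman moment-determinacy argument, and then to invoke Proposition~4.6 of Chapter~3 of \cite{EK86} together with the discussion around equation~(4.21) there to upgrade the separating property to convergence determination. The setup is as follows: every non-null $\mfu = [U,r,\mu] \in \bbU$ decomposes uniquely into its total mass $\bar\mfu = \mu(U) \in (0,\infty)$ and its normalization $\hat\mfu = [U,r,\mu/\bar\mfu] \in \U_1$, while $\ntree$ corresponds to $\bar\mfu = 0$. Any polynomial $\Phi^{n,\varphi}$ with $n \ge 1$ factors as $\Phi^{n,\varphi}(\mfu) = \bar\mfu^{\,n}\,\Phi^{n,\varphi}(\hat\mfu)$, so the algebra generated by $\Pi$ is spanned by functions of the form $\bar\mfu^{\,N}\Psi(\hat\mfu)$ with $N \in \N_0$ and $\Psi \in \wh\Pi$ of degree at most $N$.

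For the separating part, assume $P_1, P_2 \in \wt{\mcM}$ integrate every element of the algebra identically. Choosing $\Psi \equiv 1$ yields equality of all moments of $\bar\mfu$ under the two measures, and the growth bound defining $\wt{\mcM}$ is precisely a Carleman-type condition making the real-valued moment problem on $[0,\infty)$ determinate; in particular the total-mass marginals coincide and so does the mass at $\{\ntree\}$. Applying the same argument to the signed finite measures $\bar\mfu^{\,N}\Psi(\hat\mfu)\,P_i(\dx\mfu)$ pushed forward to $(0,\infty)$ identifies the conditional expectation $\E_{P_i}[\Psi(\hat\mfu) \mid \bar\mfu = m]$ for every $\Psi \in \wh\Pi$ and $Q$-almost every $m > 0$, where $Q$ is the common total-mass law. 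Since $\wh\Pi$ is separating on $\U_1$ by Proposition~2.6 of \cite{GPW09}, the conditional laws of $\hat\mfu$ given $\bar\mfu$ agree, and a standard disintegration yields $P_1 = P_2$.

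For convergence determination I apply Proposition~4.6 of Chapter~3 of \cite{EK86}: a point-separating algebra of bounded continuous functions is convergence determining at any target measure for which the tightness-type criterion discussed around~(4.21) there holds. The $\wt{\mcM}$ hypothesis on the limit controls $\bar\mfu$ uniformly and hence gives tightness of the total-mass marginals along any approximating sequence, while Gromov-weak tightness on $\U_1$ from \cite{GPW09}, extended to $\bbU$ as in \cite{Gl12}, supplies tightness of the normalized coordinate. Combined with the separating property just established, this upgrades convergence of the algebra integrals to weak convergence. The main technical obstacle is the real-line moment step: verifying that the precise $\limsup$ growth rate in the definition of $\wt{\mcM}$ is strong enough both to force Carleman determinacy and to imply the tightness condition of (4.21); both follow from classical arguments once the polar decomposition is in place.
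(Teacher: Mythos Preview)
Your approach is essentially the one the paper indicates: the paper gives no detailed proof at all but simply cites Proposition~2.6 of \cite{GPW09} together with Proposition~4.6 of Chapter~3 in \cite{EK86} and the discussion around~(4.21) there, which is exactly the combination you invoke via the polar decomposition and the Carleman moment argument. One imprecision worth fixing: the polynomials in $\Pi$ are \emph{not} bounded on $\bbU$ (only on $\U_1$), so Proposition~4.6 does not apply to them directly; the role of~(4.21) is precisely to handle this unboundedness via the moment-growth condition defining $\wt{\mcM}$, and you should phrase the convergence-determining step accordingly rather than as an application to ``bounded continuous functions''. Also note that for fixed $\Psi\in\wh\Pi$ of degree $m$ the algebra only gives you $\bar\mfu^{\,N}\Psi(\hat\mfu)$ for $N\ge m$, so your moment-identification step really determines the signed measure $\Psi(\hat\mfu)P_i(\dx\mfu)$ only on $\{\bar\mfu>0\}$; the mass at $\ntree$ has to be recovered separately from the agreement of the total-mass marginals, which you do mention but should make explicit.
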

Note that $\wt{\mcM}$ is the set of all distributions on $\bbU$ which
are uniquely characterized by all moments of the total mass; see
Theorem~3.2.9 and Corollary~3.2.10 in \cite{D93}.

\begin{remark}
  If \label{rem:feller_momens} $(Z_t)_{t \ge 0}$ is a an $\R_+$-valued
  Feller diffusion, i.e.\ a solution of the stochastic differential
  equation $\dx Z_t = \sqrt{b Z_t}\, \dx B_t$, with $Z_0 >0$ and $b>0$
  and $B=(B_t)_{t\ge 0}$ a Brownian motion (cf.\ Remark~\ref{r.697}),
  then from the form of its Laplace transform (see \eqref{e1260}) one
  can easily see that for any $t \ge 0$ it is not only defined for
  positive $\lambda$ but is actually analytic in $\lambda$ in a
  neighborhood of $0$. Thus, for any $t \ge 0$ the law of $Z_t$ is
  determined by its moments.
\end{remark}

Thus, the above lemma and the fact that the set $\wh\Pi$ is measure
and convergence determining on $\U_1$; see Corollary~3.1 in
\cite{GPW09}, implies the following corollary.

\begin{corollary}[Law determining test functions]\label{cor.681}
  Let $\mfU=(\mfU_t)_{t\ge 0}$ be a stochastic process on $\U$ whose
  total mass process is given by Feller's continuous state branching
  diffusion starting in some $x_0 \in [0,\infty)$. Then, for any
  $t \ge 0$ the distribution of $\mfU_t$ is contained in $\wt\mcM$.

  Furthermore the set $\Pi$ of polynomials is law determining and
  convergence determining on $\mcM_1(\U)$. In particular there exists
  a countable measure determining set of test functions.
\end{corollary}

\subsection{Polar decomposition of elements of
  \texorpdfstring{$\U$}{U}}
\label{sec:polar-repr-stat}

A useful point of view on $\U$ is via the \emph{polar decomposition}
of its elements. More precisely, one can decompose a state
$\mfu = [U, r, \mu] \in \U$ in its \emph{total mass} and its
\emph{pure genealogy} parts: In the case $\mu(U)>0$ we set
\begin{align}
  \label{e998}
  \bigl(\bar \mfu,\hat \mfu \bigr) \coloneqq
  \bigl(\bar \mu,\left[U,r,\hat \mu \right]\bigr),
  \quad \text{where} \quad
  \bar\mu \coloneqq \mu(U) \quad \text{and} \quad \hat \mu
  \coloneqq \frac{\mu}{\bar \mu}.
\end{align}
There is obviously a bijection between $(0,\infty) \times \U_1$ and
$\U_{>0} = \{\mfu = [U,r,\mu] \in \U: \mu(U) >0\}$. The space
$\U_{>0}$ equipped with the metric induced by the product metric on
$(0,\infty) \times \U_1$ is not complete. Its completion is introduced
in Section~2.4.2 in \cite{Gl12}. In the completion the elements
$\mfu \in \U$ with $\bar \mu=0$ are ``identified'' with some
$(0,\tilde \mfu) \in [0,\infty) \times \U_1$. In particular in this
completion there are uncountably many elements with total mass $0$ and
convergence of the distance to $0$ requires both, the convergence of
the distance of the total masses to $0$ and the convergence of the
distances of the genealogies equipped with probability measures on
$\U_1$ to $0$.

In the present paper we will work with the Gromov-weak topology on
$\U$ induced by the metric $d_\GP$ from \eqref{eq:31}. This is weaker
than the topology on the completion mentioned above in
Remark~\ref{r.d.topo} because for a sequence
$(\bar\mfu_n,\hat\mfu_n)_{n=0,1,\dots}$ we do not require that
$\bar\mfu_{n} \to 0$ implies the convergence of $\hat\mfu_{n}$ to some
limit $\hat\mfu \in \U_1$. On $\U_{>0}$ both topologies do
\emph{coincide}. For details we refer the reader to Section~2.4 in
\cite{Gl12}. In many cases we can also say what happens in the
stronger topology, however in spatial models this gets intricate and
technical.

Using the polar decomposition we can also define the \emph{normalized
  distance matrix distribution} of elements of
$\U \setminus \{\ntree\}$ as
\begin{align}
  \label{dmd-normalized}
  \hat\nu^{n,\hat\mfu} \coloneqq (R^{(n)})_\ast \hat \mu^{\otimes n},
\end{align}
where $R^{(n)}$ is as defined in \eqref{e638}. Of course we then have
$\nu^{n,\mfu} = \bar \mfu^n \hat\nu^{n,\hat\mfu}$.

\begin{remark}[Polynomials on product spaces and polar decomposition
  of elements of $\U$]\label{rem:poldeg}
  We have stated that polynomials are separating and convergence
  determining on $\U$. Polynomials satisfy these properties on $\R_+$
  as well. Using standard arguments for measure determining functions
  on product spaces one would consider monomials of the form
  $\Phi^{m,\varphi}$, with $\varphi \in \mathcal
  C_b([0,\infty)^{\binom{n}{2}},\R)$ so that
  $\Phi^{m,\varphi}(\mfu)=\bar\mu^m \int \varphi \,
  \dx\hat\mu^{\otimes n}$ for $m,n \in \N_0$ and expect this to be a
  measure determining set on $\R_+ \times \U_1$ restricted to $\wt
  \mcM$. Lemma~\ref{lem:Pi:separating} shows that we do not need all
  combinations of $m, n \in \N_0$ to separate points. The reason is
  that $\U \subset \R_+ \times \U_1$, but contains only elements of
  the form $\bar \mfu \cdot \wh \mfu$, whish is a subset of $\R_+
  \times \U_1$.
\end{remark}

\subsection[Concatenation semigroup
  \texorpdfstring{$(\U,\sqcup^h)$}{(U,sqcup)},
  \texorpdfstring{$h$}{h}-truncation and infinite divisibility on
  \texorpdfstring{$\U$}{U}]{Concatenation semigroup
  \texorpdfstring{$(\U,\sqcup^h)$}{(U,sqcup)},
  \texorpdfstring{$h$}{h}-truncation and \\infinite divisibility on
  \texorpdfstring{$\U$}{U}}
\label{sss.concat}

It is well known that thanks to the branching property the laws of
$\R_+$-valued branching processes are infinitely divisible. It turns
out that the same is true for the $\U$-valued Feller diffusion. To
this end we need a semigroup structure on $\U$ (see \cite{infdiv})
with respect to an operation which we call concatenation.

\paragraph{Concatenation semigroup $(\U,\sqcup)$}
Consider a representative of $\mfu = [U,r,\mu] \in \U$. For a given
$h>0$ we want to decompose the population represented by $U$ into
\emph{subfamilies} in which the \emph{time to the most recent common
  ancestor} is less than $h$, i.e., the genealogical distance between
pairs of individuals inside each of the subfamilies is smaller than
$2h$. Since we work with ultrametric spaces we obtain a disjoint
decomposition of the whole space in a collection of subspaces with
diameters strictly less than $2h$. We call the equivalence classes of
such spaces \emph{$h$-trees}. The \emph{$h$-trees} themselves can be
connected with each other to form new spaces whose equivalence classes
we call \emph{$h$-forests}. Both these objects are elements of $\U$.

Using the pairwise distance matrix distribution we can formally define
the objects which we just described in words as follows. The subset of
\emph{$h$-trees} in $\U$ is defined by
\begin{align}
  \label{e.tr48}
  \U(h) \coloneqq \{\mfu \in \U: \nu^{2,\mfu} ([2h,\infty)) = 0 \},
\end{align}
and the subset of \emph{$h$-forests} in $\U$ is defined by
\begin{align}
  \label{e.tr46}
  \U(h)^\sqcup \coloneqq
  \{\mfu \in \U : \nu^{2,\mfu}((2h,\infty)) = 0\}.
\end{align}
Obviously we have $\U(h) \subset \U(h)^\sqcup$.

\medskip
For $\mfu_1, \mfu_2 \in \U(h)^\sqcup$ with $\mfu_i = [U_i,r_i,\mu_i]$,
$i=1,2$ we define the \emph{$h$-concatenation} of $\mfu_1$ and
$\mfu_2$ by
\begin{align}
  \label{e.tr47}
  \mfu_1 \sqcup^h \mfu_2 \coloneqq
  [U_1 \uplus U_2, r_1 \sqcup^h r_2, \mu_1 + \mu_2].
\end{align}
Here $\uplus$ denotes the disjoint union of sets and
$r_1 \sqcup^h r_2$ is a metric on $U_1 \uplus U_2$ defined by
\begin{align}
  \label{grx53}
  r_1 \sqcup^h r_2|_{U_1 \times U_1} = r_1, \quad r_1 \sqcup^h
  r_2|_{U_2 \times U_2}
  = r_2, \quad r_1 \sqcup^h r_2|_{U_1 \times U_2} \equiv 2h.
\end{align}
Finally, $\mu_1 + \mu_2$ should be interpreted as
$\wt \mu_1 + \wt \mu_2$ on $U_1 \uplus U_2$ where $\wt \mu_i$,
$i \in\{1,2\}$ denotes the extension of $\mu_i$ to $U_1 \uplus U_2$.
We equip $\U(h)^\sqcup$ and $\U(h)$ with the relative topology from
$\U$. In particular, $\U(h)^\sqcup$ is a \emph{Polish space}.

Note that \emph{$h$-concatenation} $\sqcup^h$ is an associative and
commutative operation acting on elements of $\U(h)^\sqcup$. Thus, for
every $h > 0$, $(\U(h)^\sqcup, \sqcup^h)$ is a \emph{topological
  semigroup} with the neutral element $\ntree$.

\medskip
We define the \emph{$h$-top} of $\mfu =[U,r,\mu]\in \U$ as
\begin{align}
  \label{e781}
  \lfloor \mfu\rfloor (h) \coloneqq [U,r \wedge 2h,\mu] \in
  \U(h)^\sqcup.
\end{align}
In Theorem~1.13
in \cite{infdiv} it is shown that for any $\mfu \in \U$ and any $h>0$
there is a unique (up to order) sequence $(\mfu_i : i \in I_h)$
indexed by a (possibly finite) set $I_h$, such that
\begin{align}
  \label{tv14}
  \lfloor \mfu\rfloor (h) = \mathop{{\bigsqcup}^h}_{i\in I_h}
  \mfu_i,\; \text{ with $\mfu_i \in \U(h) \setminus\{\ntree\}$, $i \in I_h$}.
\end{align}

Besides the $h$-top of $\mfu \in \U$ we will also need the
\emph{$h$-trunk} denoted by $\lceil \mfu\rceil(h)$. For
$\mfu = [U,r,\mu]$ consider the decomposition of
$\lfloor \mfu\rfloor (h)$ as in \eqref{tv14} with
$\mfu_i = [U_i,r_i,\mu_i] \in \U(h) \setminus \{\ntree\}$. Then the
$h$-trunk $\lceil \mfu\rceil(h)$ of $\mfu$ is defined as the
equivalence class of the ultrametric measure space $(I_h,r^*,\mu^*)$,
i.e.\
\begin{align}
  \label{def:trunk}
  \lceil \mfu\rceil(h) \coloneqq [I_h,r^*,\mu^*],
\end{align}
where $I_h$ is as above and
\begin{align}
  \label{def:trunk-metr-meas}
  \begin{split}
    r^*(i,j) & = \inf\{(r(u,v) - 2h) \vee 0 : u \in U_i, v\in
    U_j\}, \; i,j \in I_h, \\
    \mu^*(\{i\}) & = \mu_i(U_i), \; i \in I_h.
  \end{split}
\end{align}
We call the mapping $T_h: \U \to \U(h)^\sqcup$ defined by
\begin{align}
  \label{eq:hTrunc}
  T_h (\mfu) = \lfloor \mfu \rfloor (h)
\end{align}
the \emph{$h$-truncation}. This allows us to turn $\U$ into a
semigroup w.r.t.\ a collection of operations $\{\sqcup^h : h>0\}$. We
extend the operation $\sqcup^h$ to all of $\U$ by setting
\begin{align}
  \label{e750}
  \mfu_1 \sqcup^h \mfu_2\coloneqq  T_h(\mfu_1) \sqcup^h T_h(\mfu_2).
\end{align}
This way we obtain a collection of \emph{topological semigroups}
$\{(\U, \sqcup^h), h \ge 0\}$, which is \emph{consistent} under
$T_h$, i.e.\ for
$h>h': T_{h'}(\mfu_1 \sqcup^h \mfu_2)=T_{h'}(\mfu_1)
\sqcup^{h'}T_{h'}(\mfu_2); \mfu_1,\mfu_2 \in
\U(h)^\sqcup$.

The polynomials which we introduced in \eqref{e:0606131101} fit to
this structure if for $\Phi = \Phi^{m,\varphi} \in \Pi$ and $h>0$ we
introduce the corresponding $h$-\emph{truncated polynomial} by
\begin{align}
  \label{eq:pol:trunc}
  \Phi_h\coloneqq \Phi^{m,\varphi_h} \quad \text{with} \quad \varphi_h
  (\dr) \coloneqq \varphi(\dr) \prod_{1\le i < j \le m} \ind{r_{ij} < 2h}.
\end{align}
With this notation we have
\begin{align}
  \label{e755}
  \Phi_h(\mfu_1 \sqcup^h \mfu_2) = \Phi_h (\mfu_1) + \Phi_h (\mfu_2)
  \text{ for all } \mfu_1,\mfu_2 \in \U(h)^\sqcup.
\end{align}
Similar identity holds for all $\Phi^{n,\varphi}$ with $\varphi$ which
have support in $[0,2h)^{\binom n 2}$; see Theorem~1.27 in
\cite{infdiv}.

\paragraph{Infinite divisibility}
Using the structures $\{(\U(h)^\sqcup, \sqcup^h),T_h: h>0\}$
introduced above one can obtain the \emph{\Levy{}-Khintchine
  representation} of \emph{infinitely divisible} $\U$-valued random
variables. Here infinite divisibility means that for all $h>0$ and
$n\in \N$ the $h$-truncations can be represented as $h$-concatenations
of $n$ i.i.d.\ $\U(h)^\sqcup$-valued random variables. This notion was
introduced in \cite[Section~1.5]{infdiv}.

On a Polish space $E$, where we have defined bounded sets together
with a point infinitely far away $\mathcal M^\# (E)$ denotes the set
of boundedly finite measures on $E$. Here we will consider
$E = \U\setminus \{\ntree\}$ with the point $\ntree$ infinitely far
away.

According to Theorem~1.37 in \cite{infdiv} an infinitely divisible
random ultrametric measure space $\mfU$ has a \Levy{}-Khintchine
representation of its Laplace functional. More precisely, there exists
a unique measure
$\Lambda_\infty \in \mcM^\#(\U \setminus \{\ntree\})$, called
\emph{\Levy{} measure} (also often referred to as \emph{canonical
  measure}) with
$\int (\bar{\mfu} \wedge 1) \Lambda_\infty (\dx \mfu) < \infty$ so
that for any $h \in (0,\infty)$ we have
\begin{align}
  \label{ag2inf}
  -\log \E\bigl[
  \exp\bigl(-\Phi_{h}(\mfU)\bigr)\bigr]
  & = \int_{\U(h)^\sqcup \setminus
    \{\ntree\}} \bigl(1-e^{-\Phi_{h}(\mfu)} \bigr)\, \Lambda_h (\dx \mfu) \quad
    \forall \, \Phi \in \Pi_+,\\
  \intertext{where $\Lambda_{h} \in
  \mcM^\#(\U(h)^\sqcup \setminus \{\ntree\})$ is defined by  }
  \label{e.ar1inf}
  \Lambda_{h}(\dx \mfu)
  & = \int_{\U \setminus \{\ntree\}}
    \Lambda_\infty(\dx\mfv) \, \ind{\lfloor \mfv\rfloor (h) \in \dx\mfu}.
\end{align}
We say that $\mfU$ is \emph{$t$-infinitely divisible}, if for all
$h \le t$ there is a unique
$\Lambda_t \in \mcM^\#(\U(t)^\sqcup \setminus \{\ntree\})$ so that
$\mfu \mapsto (\bar{\mfu}\wedge 1)$ is integrable with respect to
$\Lambda_t$, and for all $h\in (0,t]$ equations \eqref{ag2inf} and
\eqref{e.ar1inf} hold with $\Lambda_t$ instead of $\Lambda_\infty$. In
either case, for any $h$ in the suitable range, we have
\begin{align}
  \label{ag2binf}
  \Lambda_h \bigl(\U(h)^\sqcup \setminus \{\ntree\}\bigr)
  = - \log \P(\bar \mfU =0) \in [0,\infty].
\end{align}
We refer to $\Lambda_h$ as the \emph{$h$-\Levy{} measure} and to
$\Lambda_\infty$ as the \emph{\Levy{} measure} of $\mfU$. Formula
\eqref{ag2inf} means that we can represent $\lfloor\mfU\rfloor(h)$ via
an inhomogeneous \emph{Poisson point process} $N(\Lambda_h)$ on
$\U(h)^\sqcup \setminus \{\ntree\}$ with intensity measure $\Lambda_h$
as
\begin{align}
  \label{e902}
  \lfloor \mfU\rfloor (h) =
  {\mathop{\bigsqcup\nolimits^h}\limits_{\mfu \in N(\Lambda_h)}} \mfu.
\end{align}
That means that for each $h>0$ the state of the $\U$-valued Feller
diffusion can be decomposed into depth-$h$ \emph{single ancestor}
subfamilies where the subfamilies are given by i.i.d.\ $\U(h)$-valued
random variables.

\section{Concepts and main results}
\label{sec:results}

In this section we formulate in three subsections the \emph{main
  results} on the $\U$-valued Feller diffusion. In
Subsection~\ref{ss.mainres} we present the \emph{martingale
  characterization} and discuss the relation to the $\U_1$-valued
\emph{Fleming-Viot process} from \cite{GPWmp13} and various \emph{dual
  representations} via $\U$-valued coalescents. Here we also give
descriptions of structural properties such as branching property in a
version for $\U$-valued processes and we use the \emph{semigroup
  structure} $(\U,\sqcup)$ to establish the \emph{\Levy{}-Khintchine
  formula} on $\U$ allowing to describe the depth-$h$ subfamily
decompositions via the \emph{Cox cluster representation}. In
Subsection~\ref{ss.longuval} we study the \emph{long time behavior}
and we focus on populations \emph{conditioned on survival} for long
times or \emph{size-biased} populations. Finally, in
Subsection~\ref{ss.extension} we consider extensions of the results to
the spatial case, i.e.\ to the \emph{super random walk}, and discuss
versions of our processes with \emph{fossils} which for any $t \ge 0$
include \emph{all individuals} alive at times $s \le t$ and the
relation to the \emph{continuum random tree}.

\subsection{Results 1: Genealogies and the
  \texorpdfstring{$\U$}{U}-valued Feller diffusion}
\label{ss.mainres}

The first group of results
(Theorems~\ref{THM:MGP:WELL-POSED}-~\ref{TH.MARTU}) includes the
martingale problem characterization of the $\U$-valued Feller
diffusion and of its polar decomposition, the Feynman-Kac and
conditional dualities, and the generalized branching property with the
corresponding Cox cluster representation. Furthermore we study the
entrance law, the excursion law and identify the $\U$-valued genealogy
process of the population conditioned to survive up to the present
time.

\subsubsection{Martingale problem and identification of population
  size and genealogy processes}
\label{sss.martrep}

Here we introduce the $\U$-valued Feller diffusion as solution of a
martingale problem and characterize the population size process and
genealogy (conditioned on the population size) process as two $\R_+$
respectively $\U_1$-valued Markov processes in their own right. First
we recall the classical Feller diffusion.

\begin{remark}[Feller's branching diffusion on $\R_+$]
  \label{r.697}
  \emph{Feller's branching diffusion with immigration} is an
  $\R_+$-valued stochastic process which solves the stochastic
  differential equation
  \begin{align}
    \label{eq:FelBDiff-ab}
    \dx Z_t = c\,\dx t + aZ_t \,\dx t + \sqrt{b Z_t}\, \dx B_t, \quad
    \text{with } Z_0 >0.
  \end{align}
  Here $b >0$ the \emph{diffusion coefficient} arising from the
  \emph{individual branching rate} of the corresponding particle
  approximation, $a \in \R$ is the \emph{sub-/super-criticality
    coefficient}, and $c \ge 0$ is the \emph{immigration} rate. In the
  cases $a<0$, $a=0$ and $a>0$ the branching diffusion is called
  \emph{sub-critical}, \emph{critical} respectively
  \emph{super-critical}. We will refer to the case $a \ne 0$ as the
  \emph{non-critical} case and call $a$ the \emph{non-criticality}
  coefficient if the sign of $a$ is not important. In the case $c =0$
  the process is referred to as \emph{Feller's branching diffusion}
  and this is the process (together with its genealogy) we will mostly
  deal with in this paper. The process with immigration will appear in
  a special form when we condition Feller's branching diffusion on
  survival forever. This is well known from the classical
  $\R_+$-valued branching processes theory.

  The process $Z=(Z_t)_{t\ge 0}$ can be obtained as the many
  individuals -- small mass -- fast branching limit of sequences,
  called $Z^{(N)}=(Z_t^{(N)})_{t \ge 0}$ below, of individual based
  models. For instance, consider a sequence of continuous time
  Galton-Watson processes $X^{(N)}=(X^{(N)}_t)_{t\ge 0}$ with
  branching at rate $b$, and offspring distribution with mean
  $1+\frac{a}{bN}+o(\frac{1}{N})$ and variance $1+o(1)$ as
  $N\to\infty$. Furthermore assume that at rate $Nc$ a new individual
  immigrates into the population. Rescale mass, and speed up time by
  setting $Z_t^{(N)} = \frac{1}{N} X_{Nt}^{(N)}$. Then, provided that
  the initial conditions $Z_0^{(N)}$ converge weakly to $Z_0$ as
  $N\to\infty$, the sequence $Z^{(N)}=(Z_t^{(N)})_{t \ge 0}$ converges
  as $N\to\infty$ in distribution to the solution of
  \eqref{eq:FelBDiff-ab}.

  In the binary branching case one could choose for $X^{(N)}$ the
  offspring distribution with branching in two individuals with
  probability $p_N = \frac12 + \frac{a}{2bN}$ and death with
  probability $1-p_N$. For immigration, at rate $Nc$ new particles are
  added to the population. Rescaling $X^{(N)}$ as above the limiting
  process is a solution of the SDE \eqref{eq:FelBDiff-ab} including
  the immigration term.
\end{remark}

For a critical binary Galton-Watson process with branching rate $b$
starting with one individual the probability of survival up to time
$N$ is approximately $2/(Nb)$. Thus, if the initial number of
individuals is $N$ then the number of families that survive up to time
$Nt$, for instance, is approximately Poisson with mean $2/(bt)$.

From this it easy to deduce the following result, which we state here
for future reference.

\begin{lemma}
  Consider \label{FellDiffDecomp} the critical $\R_+$-valued Feller
  diffusion $(Z_t)_{t \ge 0}$ defined in \eqref{eq:FelBDiff-ab} with
  $a=c=0$. For $0<s<t$, conditioned on $Z_{t-s}$ the random variable
  $Z_t$ be decomposed in a $\Pois\bigl(2 Z_{t-s}/(b(t-s))\bigr)$
  distributed number of i.i.d.\ random variables.

  Conditioned on $Z_{t-s}$, the vector of family sizes at time $t$
  going back to one ancestor at time $t-s$, arises as the limit
  $N\to\infty$ of rescaled critical binary Galton-Watson processes
  $(Z^{(N)}_t)_{t\ge 0}$ (as in above remark with $a=0$) starting
  initially with $N$ individuals.
\end{lemma}

\begin{remark}[Time-inhomogeneous Feller branching diffusion on
  $\R_+$] \label{r.inh.FR}
  A time inhomogeneous version of the solution of the stochastic
  differential equation \eqref{eq:FelBDiff-ab} arises if the
  coefficients $b$, $a$ and $c$ are measurable functions of time which
  are bounded on finite time intervals.
\end{remark}

\begin{remark}[Particle models and operators of $\U$-valued
  diffusions]
  The \label{r.697a} individual based processes from the previous
  remark naturally allow to read off $\U$-valued versions of the
  processes. Define the set $U_t$ as $\{1,2,\dots,n_t\}$, where $n_t$
  is the population size at time $t$. Furthermore, define
  \emph{ancestors} and \emph{descendants} in the obvious way, and the
  \emph{genealogical distance} $r_t$ as the ``usual'' graph distance,
  i.e.\ distance of two individuals from the population at time $t$ is
  twice the time back to their most recent common ancestor. Obviously,
  this defines an ultrametric space and letting $\mu_t$ be the
  \emph{counting measure} on $U_t$ we can encode the branching process
  together with its genealogy at time $t$ by an ultrametric measure
  space and taking its equivalence class we obtain an element of
  $\U$. The evolution is of course Markovian. In the case of an
  immigration event at time $s$ we set the distance between the new
  individual and every other individual alive at that time to be $2s$.

  We do not focus on approximation results and prove here limit
  theorems for individual based $\U$-valued processes only in the
  critical case. For the Feller diffusion the arguments are
  essentially based on corresponding results from \cite{Gl12}. Let us
  note however, that approximation results concerning generator
  convergence allow us to \emph{determine} the correct operators of
  our diffusion processes in various cases and we will use this
  approach often throughout the paper; see for instance the derivation
  of the branching operator in \eqref{eq:4}.

  For other processes which we consider here, such as processes
  conditioned to live forever or $h$-transformed processes (these
  processes lead to branching with immigration), there are some issues
  concerning \emph{path convergence} in $D(\U,\R)$ which we will not
  discuss in the present paper. We will rely on the fact that particle
  approximations determine the operators also in these cases.
\end{remark}

\paragraph{Martingale problem}
\label{martprob}

For any distribution on the state space $\U$ we want to construct a
stochastic process which has the given distribution as the initial
distribution, satisfies the Markov property and whose paths satisfy
some regularity conditions. The processes will be defined as solutions
of \emph{well-posed martingale problems}. We use here the following
notion of a martingale problem.

\begin{definition}[(local) martingale problem]
  Let $E$ be a Polish space, $\nu$ a probability measure on its
  Borel-$\sigma$-algebra, $\mathcal F$ a subspace of bounded
  measurable functions on $E$ and $\Omega$ a linear operator whose
  domain $\mcD$ is contained in $\mathcal F$. The distribution $P$ of
  an $E$-valued stochastic process $X=(X_t)_{t\ge 0}$ is called a
  \emph{solution of the $(\nu,\Omega,\mathcal F)$-martingale problem}
  if $X_0$ has distribution $\nu$, the paths of $X$ are almost surely
  in the Skorohod space $D([0,\infty),E)$, and for all
  $F \in \mathcal F$,
  \begin{align}
    \label{eq:4mp}
    \Bigl(F(X_t)- F(X_0) - \int_0^t \Omega F(X_s)\,\dx s \Bigr)_{t\ge
    0}
  \end{align}
  is a $P$-martingale with respect to the canonical filtration. If the
  solution $P$ is unique, then $(\nu,\Omega,\mathcal F)$-martingale
  problem is said to be \emph{well-posed}. If the processes in
  \eqref{eq:4mp} are only local martingales then we refer to $P$ as
  the solution of the $(\nu,\Omega,\mathcal F)$-\emph{local}
  martingale problem.
\end{definition}

Since the state space $\U$ is not locally compact we use on that state
space the following notion of (generalized) Feller property.

\begin{definition}[Generalized Feller property]
  \label{rk:Fellert}
  Let $\mfU=(\mfU_{t})_{t\ge 0}$ be a $\U$-valued Markov process
  starting under the law $\P_\nu$ in the initial law
  $\nu \in \mathcal{M}_1(\U)$, where $\nu$ is arbitrary. Denote by
  $\E_\nu$ the expectation w.r.t.\ to $\P_\nu$. We say that $\mfU$
  satisfies the \emph{generalized Feller property} if for every
  $\Phi \in \mathcal{C}_b(\U)$ the function
  \begin{align}
    \label{eq:35}
    \nu \mapsto \E_\nu [ \Phi(\mfU_t)] \quad \text{is continuous in the weak
    topology on $\mcM_1(\U)$.}
  \end{align}
\end{definition}

Note that the generalized Feller property implies that for all $t >0$
the function $\nu \mapsto \mathcal L_\nu(\mfU_t)$ is continuous on
$\mathcal M_1(\U)$, a property which in turn implies our defining
condition of the Feller property. To check the Feller property of our
processes we will proceed as follows: The set
$\Pi(\mathcal{C}_b) \subset \mathcal C(\U)$ is measure and convergence
determining. As a consequence it suffices to verify the condition
\eqref{eq:35} for all $\Phi \in \Pi(\mathcal C_b)$ which is easier
since we have a duality based on functions from $\Pi(\mathcal C_b)$.

If $\mfU$ is a Markov process satisfying the Feller property from the
Definition~\ref{rk:Fellert} then $T=(T_t)_{t\ge 0}$, defined by
$T_t F (\mfu) = \E_\mfu[F(\mfU_t)]$ is a semigroup on
$\mathcal C_b (\U)$. In the classical case, i.e.\ if the state space
is compact or locally compact the bounded continuous functions resp.\
the continuous functions vanishing at infinity are uniformly
continuous. Hence, for semigroups $T=(T_t)_{t\ge 0}$ satisfying the
Feller property in the locally compact case we have
$\norm{T_t -\operatorname{Id}} \to 0$ in the operator norm as
$t \downarrow 0$, i.e.\ $T$ is a strongly continuous semigroup. For
non-locally compact spaces this argument does not work.

\medskip

We consider first the critical case, which corresponds to $a=0$ in
\eqref{eq:FelBDiff-ab}, and introduce the following operator on
polynomials $\Phi \in \Pi(\mathcal C_b^1)$. For
$\mfu \in \U \setminus \{\ntree\}$ motivated by the individual based
approximation we define
\begin{align}
  \label{tv5}
  \Omega^{\uparrow} \Phi^{n,\varphi}(\mfu)
  & \coloneqq \Omega^{\uparrow,\mathrm{grow}} \Phi^{n,\varphi}(\mfu) +
  \Omega^{\uparrow,\mathrm{bran}} \Phi^{n,\varphi}(\mfu)\\
  \intertext{with}
  \label{mr3}
  \Omega^{\uparrow,\mathrm{grow}}\Phi^{n,\varphi}(\mfu)
  & \coloneqq \Phi^{n,\overline{\nabla} \varphi} (\mfu) , \quad
    \overline{\nabla} \varphi =
    2 \sum_{1\le i<j \le n} \frac{\partial \varphi}{\partial r_{i,j}}
    \; \text{for } n \ge 2 \text{ and }  0 \text{ otherwise}, \\
  \label{mr3a}
  \Omega^{\uparrow,\mathrm{bran}}\Phi^{n,\varphi}(\mfu)
  & \coloneqq \frac{b}{\bar{\mfu}} \sum_{1\le k < \ell \le n}
    \Phi^{n,\varphi\circ \theta_{k,\ell}} (\mfu) \; \text{for } n \ge 2
    \text{ and }  0 \text{ otherwise},\\
  \intertext{where the replacement of the $\ell$-th sample point by
  the $k$-th one is described by the  following operator acting on  the distance matrix
  of the sample}
  \label{eq:theta}
  \left(\theta_{k,\ell} (\dr) \right)_{i,j}
  & \coloneqq  r_{i,j}\ind{i\neq \ell, j\neq \ell}
  + r_{j\vee k, j\wedge k} \ind{i=\ell}
  + r_{i\vee k, i\wedge k} \ind{j=\ell} ,\quad 1\le i < j.
\end{align}
We extend $\Omega^{\uparrow} \Phi^{n,\varphi}$ to $\U$ by setting
$\Omega^{\uparrow} \Phi^{n,\varphi}(\ntree) = 0$. We see that
$\Omega^{\uparrow}$ maps $\Pi$ into $\Pi$ and hence is a linear
operator on this algebra of polynomials.

The growth operator is ``standard'', see e.g.\ \cite{GPWmp13} or
\cite{DGP12}. For some intuition behind the form of the branching
operator consider the following computation for the approximating
particle system from Remark~\ref{r.697} and Remark~\ref{r.697a} (set
there $a=0$ and $c=0$ for the critical case without immigration):
denoting by $\Omega^{\uparrow,\mathrm{bran},N}$ the branching operator
of the $N$-th system we have
\begin{align}
  \label{eq:4}
  \begin{split}
    \Omega^{\uparrow,\mathrm{bran},N}
    & \Phi^{n,\varphi}(\mfu) = bN^2 \int \mu(dx) \langle \varphi,\frac{1}{2}
    (\mu+\frac{1}{N}\delta_x)^{\otimes n} +
    \frac{1}{2}(\mu-\frac{1}{N}\delta_x)^{\otimes n} -\mu^{\otimes n} \rangle\\
    & = b \sum_{1 \le k < \ell \le n} \int \mu(dx) \langle \varphi,
    \mu^{\otimes (k-1)} \otimes \delta_x \otimes \mu^{\otimes
      (\ell-k-1)}
    \otimes \delta_x \otimes \mu^{\otimes (n-\ell)} \rangle + O(1/N)\\
    & = \frac{b}{\bar\mfu} \sum_{1 \le k < \ell \le n} \langle \varphi
    \circ \theta_{k,\ell}, \mu^{\otimes n} \rangle + O(1/N).
  \end{split}
\end{align}

Our first main result states that the operator $\Omega^\uparrow$
defines a ``good'' Markov process on the state space $\U$. In
particular for every initial law on $\U$ we have a unique Markov
process which solves the martingale problem for $\Omega^\uparrow$ and
has a.s.\ regular paths. Recall from Corollary~\ref{cor.681} that
there is a set of measure determining test functions. Together with
the Feller property these test functions will be used to obtain the
strong Markov property on this not locally compact state space.
Furthermore we shall show that $\Omega^{\uparrow}$ is a second order
operator, see e.g.\ Section~4.1 in \cite{DGP12} for more on this
concept.

\begin{theorem}[Well-posedness of the martingale problem]
  \label{THM:MGP:WELL-POSED}
  For any $\mfu \in \bbU$ the following assertions hold.
  \begin{enumerate}
  \item The $(\delta_\mfu,\Omega^{\uparrow}, \Pi(\mathcal
    C_b^1))$-martingale problem in $D([0,\infty),\U)$ is well-posed.
  \item The unique solution $P_\mfu$ of the
    $(\delta_\mfu, \Omega^{\uparrow}, \Pi(\mathcal C_b^1))$-martingale problem
    hat the property that $\mfu \mapsto P_\mfu$ is continuous,
    satisfies the strong Markov property and $P_\mfu$ is concentrated
    on continuous paths.
  \item For every $\nu \in \mathcal M(\U)$ the law $P_\nu$ defined by
    \begin{align}
      \label{eq:3pnu}
      P_\nu \coloneqq \int \nu(\dx\mfu) P_\mfu
    \end{align}
    solves the \emph{local}
    $(\nu, \Omega^\uparrow,\Pi(C^1_b))$-martingale problem (recall
    that the initial law need not have finite moments of the total
    masses) and is the unique solution of that martingale problem. In
    particular the solution $P_\nu$ satisfies the Feller property.
  \end{enumerate}
\end{theorem}

Analogous generalizations of initial conditions from point masses to
measures as in the step from 2.\ to 3.\ in the above theorem will also
hold for most other processes on potentially different state spaces
that we will consider later.

Even though using duality (cf.\ Remark~\ref{r.tv45}) one can show that
$\E[\Phi(\mfU_t)]\rightarrow \E[\Phi(\mfU_0)]$ as $t \downarrow 0$
holds uniformly in $\Phi$ on $\Pi(C^1_b)$ it is \emph{not} true on all
of $\mathcal C_b(\U)$. The reason is that for each $t \ge 0$ and $\Phi
\in \Pi(C^1_b)$ the mapping $\mfu \mapsto \E_{\mfu}[\Phi(\mfU_t)]$ is
uniformly continuous, but we do not know this for functions in
$\mathcal C_b(\U) \setminus \Pi(C^1_b)$, since the Weierstraß
approximation theorem is not applicable on the state spaces $\U$ and
$\U_1$ and in fact does not hold here.

\begin{definition}[$\U$-valued Feller diffusion]
  The \label{D.tvF} solution $\mfU=(\mfU_t)_{t\ge 0}$ of the
  $(\delta_\mfu,\Omega^{\uparrow},\Pi(\mathcal C_b^1))$-martingale problem with
  continuous paths is called the \emph{$\U$-valued Feller diffusion}
  with diffusion coefficient $b$ and initial condition $\mfu$. The
  process for initial law $\nu$ on $\U$ arises as in \eqref{eq:3pnu}.
  Using the polar decomposition from Section~\ref{sec:polar-repr-stat}
  we often write $\mfU_t = (\bar\mfU_t,\wh\mfU_t)$ for $t\ge 0$ and
  refer to $\bar\mfU=(\bar\mfU_t)_{t\ge 0}$ as the \emph{population
    size process} and to $\wh\mfU=(\wh\mfU_t)_{t\ge 0}$ as the
  \emph{pure genealogy part} of $\mfU$.
\end{definition}

\begin{remark}[Non-critical case]
  \label{r.offspring}
  An analogue of Theorem~\ref{THM:MGP:WELL-POSED} holds also for
  \emph{non-critical branching} with \emph{non-criticality
    coefficient} $a \ne 0$ and $c=0$ in \eqref{eq:FelBDiff-ab}; see
  also the particle model in Remark~\ref{r.697a}. In this case the
  total mass process $(\bar{\mfU}_t)_{t \ge 0}$ is given in law by a
  solution of \eqref{eq:FelBDiff-ab} with $c=0$. Using particle
  approximation which is explained in Remark~\ref{r.697} and
  Remark~\ref{r.697a} with a similar calculation as in \eqref{eq:4}
  one can see that the corresponding operator (recall
  \eqref{tv5}--\eqref{mr3a}) has the form
  \begin{align}
    \label{eq:3omab}
    \Omega^{\uparrow,(a,b)} \Phi^{n,\varphi}(\mfu)
    & \coloneqq \Omega^{\uparrow,\mathrm{grow}} \Phi^{n,\varphi}(\mfu) +
      \Omega^{\uparrow,\mathrm{bran}} \Phi^{n,\varphi}(\mfu) +
      \Omega^{\uparrow,\mathrm{sbran}} \Phi^{n,\varphi}(\mfu),
  \end{align}
  where
  \begin{align}
    \label{e709}
    \Omega^{\uparrow, \mathrm{sbran}} \Phi^{n, \varphi} =  a n \;
    \Phi^{n,\varphi}
  \end{align}
  is the additional branching term for $n \ge 1$. In particular,
  $\Omega^{\uparrow,(a,b)}$ is a linear operator on $\Pi$. The
  corresponding process describes the $\U$-valued Feller diffusion
  with parameters $a$ and $b$.

  It is remarkable that the non-criticality leads to the addition of a
  ``first order term'' and the effect of this will become clearer once
  we have obtained the duality relation also in this case; see
  Remark~\ref{r.concrit} for the change in the duality relation in
  particular for the change of the potential. In
  Section~\ref{sec:Fel:ex}, in particular in Remark~\ref{r.5522}, we
  will explain how to rigorously prove \emph{existence} and
  \emph{uniqueness} of the corresponding martingale problems based on
  the skew martingale problems. There is however also a third method
  to treat non-criticality, namely Girsanov's transform. We do not give
  the details here, but refer to \cite{DGP12} where this is carried
  out for the Fleming-Viot model with selection.
\end{remark}

\begin{remark}[Inhomogeneous case]
  \label{r.inh.FU}
  As in the case of $\R_+$-valued Feller diffusion, see
  Remark~\ref{r.inh.FR} for assumptions, we can also assume in the
  case of $\U$-valued Feller diffusion that the coefficients $b$ and
  $a$ are functions of space and time in \eqref{mr3a} respectively \eqref{e709}
  (and therefore in \eqref{eq:3omab}).

  In the sequel we focus on the time-homogeneous case even though our
  Theorems~\ref{THM:MGP:WELL-POSED}--\ref{T:BACKBONE} 
  can be generalized to the time inhomogeneous case.
\end{remark}

\paragraph{Properties: relation to \texorpdfstring{$\R_+$}{R}-valued
  Feller diffusion and \texorpdfstring{$\U_1$}{U1}-valued Fleming-Viot
  process}
\label{sec:prop-relat-texorpdfs}
In order to obtain the process and to study its structure better, it
is useful to split the information on the behavior of the population
size and the behavior of the genealogy, that is to consider the
processes
\begin{align}
  \label{eq:22}
  \bar\mfU = (\bar\mfU_t)_{t\ge 0} \quad \text{and} \quad \wh\mfU =(\wh\mfU_t)_{t\ge 0}
\end{align}
and to identify their \emph{dynamics} and \emph{path properties}
before and at respectively after the extinction of the population. The
precise statement follows in Propositions~\ref{prop.tvF} and
\ref{prop.834} below. We will use the notation $\bar \Phi$ and $\wh
\Phi$ for polynomials in the total population mass respectively the
pure genealogy part of the process.

We can characterize $\bar\mfU$ and $\wh \mfU$ conditioned on
$\bar\mfU$ in points (i) and (ii) below by a well-posed martingale
problems and this way identify the two components as nice processes
about each of which we have much information.

\medskip
\noindent
\textbf{\emph{(i)\, Population size process}} For the total mass
process we have the following result. For it's proof we will need some
other results and notation. A sketch of the proof  will be discussed
in Remark~\ref{r.p.prop.tvF}.

\begin{proposition}[Total mass process and classical Feller
  diffusion]\label{prop.tvF}
  If $(\mfU_t)_{t \ge 0}$ is a \emph{$\U$-valued} Feller diffusion
  then $(\bar \mfU_t)_{t \ge 0}$ is an autonomous Markov process given
  by the classical $\R_+$-valued Feller diffusion, given by solution
  of the SDE \eqref{eq:FelBDiff-ab} with $a=0$ and $c=0$.
\end{proposition}

The following well known path properties of $(\bar\mfU_t)_{t\ge 0}$
from Proposition~\ref{prop.tvF} restrict the set of functions that we
need to consider as possible total mass paths. Such properties have
been studied in the literature and are based on results of specific
classes of diffusions. See Chapter~9, Lemma~1.6 in \cite{EK86} and
Section~1(a) in \cite{DG03} which also contains some facts which we
need later for spatial models.

\begin{proposition}[Path properties of $\R_+$-valued Feller diffusion]
  Let \label{prop:ppFD} $Z=(Z_t)_{t \ge 0}$ be an $\R_+$-valued Feller
  diffusion, i.e.\ a solution of the SDE \eqref{eq:FelBDiff-ab} with
  $c=0$. The paths of $Z$ are almost surely elements of
  $C \left([0,\infty),[0,\infty)\right)$ and if $Z_0>0$ then there is
  $T_{\mathrm{ext}} \in (0,\infty]$, so that $Z_t>0$ for all
  $t \in [0,T_{\mathrm{ext}})$ and $Z_t = 0$ for all
  $t \ge T_{\mathrm{ext}}$. In the case $T_{\mathrm{ext}}<\infty$ we
  have
  \begin{align}
    \label{eq:ppFD}
    \int_r^{T_{\mathrm{ext}}} Z_s^{-1} \, \dx s =\infty \quad
    \text{for all} \; r \in [0,T_{\mathrm{ext}}).
  \end{align}
  We refer to $T_{\mathrm{ext}}$ as the \emph{extinction time} of the
  total mass process.
\end{proposition}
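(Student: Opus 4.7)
\textbf{Proof plan for Proposition~\ref{prop:ppFD}.}

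The plan is to deduce everything from the stochastic differential equation
\[
  \dx Z_t = a Z_t\,\dx t + \sqrt{b Z_t}\,\dx B_t, \qquad Z_0>0,
\]
together with an Itô computation on $\log Z$. First, since the drift is linear and the diffusion coefficient $x \mapsto \sqrt{bx}$ is $1/2$-Hölder on $[0,\infty)$ and vanishes at $0$, the classical Yamada--Watanabe theorem (together with the linear growth bound that rules out explosion) gives pathwise uniqueness and the existence of a (pathwise unique) strong solution with continuous trajectories in $[0,\infty)$; in particular $Z$ has paths in $C([0,\infty),[0,\infty))$ a.s.

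Second, I would define
\[
  T_{\mathrm{ext}} \coloneqq \inf\{t \ge 0 : Z_t = 0\}
  \quad (\inf \emptyset = \infty)
\]
and verify the dichotomy. By path-continuity $Z_{T_{\mathrm{ext}}} = 0$ on $\{T_{\mathrm{ext}} < \infty\}$. The process starting at $0$ admits $Z \equiv 0$ as a solution (both coefficients vanish at $0$), and pathwise uniqueness forces this to be the only one; the strong Markov property applied at $T_{\mathrm{ext}}$ then yields $Z_t = 0$ for all $t \ge T_{\mathrm{ext}}$ a.s. Combined with the assumption $Z_0 > 0$ and continuity, this shows $Z_t > 0$ on $[0,T_{\mathrm{ext}})$, which is the first assertion of the proposition.

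Third, for the integral-divergence statement, I would apply Itô's formula to $\log Z_t$ on the random interval $[r,t]$ with $r < t < T_{\mathrm{ext}}$, where $Z$ stays strictly positive:
\[
  \log Z_t - \log Z_r
  = a(t-r) - \frac{b}{2}\int_r^t Z_s^{-1}\,\dx s + M_{r,t},
  \qquad
  M_{r,t} \coloneqq \int_r^t \sqrt{b/Z_s}\,\dx B_s,
\]
so $M_{r,\cdot}$ is a continuous local martingale with quadratic variation
$\langle M_{r,\cdot}\rangle_t = b \int_r^t Z_s^{-1}\,\dx s$.
On $\{T_{\mathrm{ext}} < \infty\}$, continuity of $Z$ at $T_{\mathrm{ext}}$ forces $\log Z_t \to -\infty$ as $t \uparrow T_{\mathrm{ext}}$. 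Suppose for contradiction that $I \coloneqq \int_r^{T_{\mathrm{ext}}} Z_s^{-1}\,\dx s < \infty$ on a set of positive probability. Then $\langle M_{r,\cdot}\rangle_{T_{\mathrm{ext}}-} \le bI < \infty$, so by the Dambis--Dubins--Schwarz time-change (a continuous local martingale with finite quadratic variation converges a.s.) $M_{r,t}$ has an a.s.\ finite limit as $t\uparrow T_{\mathrm{ext}}$. Since the drift term $a(t-r) - \frac{b}{2}\int_r^t Z_s^{-1}\,\dx s$ also has a finite limit, this gives $\log Z_t$ converging to a finite value, contradicting $\log Z_t \to -\infty$. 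Hence $\int_r^{T_{\mathrm{ext}}} Z_s^{-1}\,\dx s = \infty$ a.s.\ on $\{T_{\mathrm{ext}} < \infty\}$.

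The main subtlety is the third step: one must justify applying Itô on the open random interval $[0,T_{\mathrm{ext}})$ (handled by localizing at stopping times $T_n \coloneqq \inf\{t : Z_t \le 1/n\}$ which increase to $T_{\mathrm{ext}}$) and then carefully control the boundary behaviour of the continuous local martingale $M_{r,\cdot}$, which is precisely where the Dambis--Dubins--Schwarz convergence criterion does the work.
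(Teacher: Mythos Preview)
Your argument is correct and self-contained. The paper does not actually prove Proposition~\ref{prop:ppFD}; it treats the statement as well known and refers the reader to Section~1(a) of \cite{DG03} (and later to Proposition~0.2 in \cite{DG03} for the divergence of $\int Z_s^{-1}\,\dx s$, even in the spatial setting). So your proposal is not merely a different route but a genuine proof where the paper only cites one. Your three steps (Yamada--Watanabe for strong existence/uniqueness and continuity; strong Markov plus uniqueness at $0$ for absorption; It\^o on $\log Z$ together with the Dambis--Dubins--Schwarz convergence criterion for the integral divergence) are the standard direct approach and fit the level of the paper. The localization via $T_n=\inf\{t:Z_t\le 1/n\}$ that you mention is exactly what is needed to make the It\^o step rigorous on $[0,T_{\mathrm{ext}})$.
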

\begin{proof}[Proof sketch.]
  Similar assertions to \eqref{eq:ppFD} have been shown in
  \cite{DG03}, pages 24-25, the argument uses the representation of
  the process $(Z_t)_{t \ge 0}$ by Brownian motion. This in turn
  allows to rewrite the integral asymptotically close to a zero of the
  process in terms of an excursion involving $(B_t)^{-2}$. Then using
  facts about these processes and asymptotically equivalent Bessel
  processes the result follows. For details we refer to \cite{DG03}.
\end{proof}

Motivated by Proposition~\ref{prop:ppFD} we make the following
definition.
\begin{definition}[Admissible total mass paths I: Positive initial
  conditions]
  We \label{def:adm_u} call a function
  \begin{align}
    \label{eq:37}
    \bar \mfu=(\bar \mfu_t)_{t \ge 0} \in C
    \left([0,\infty),[0,\infty)\right)
  \end{align}
  \emph{admissible} as a total mass path of a $\U$-valued Feller
  diffusion if it satisfies the properties listed in
  Proposition~\ref{prop:ppFD}.
\end{definition}

\medskip
\noindent
\textbf{\emph{(ii)\, The pure genealogy process}} It is a well known
that the total mass of a Dawson-Watanabe superprocess with finite
non-trivial initial measure is given by Feller's branching diffusion
and that by Perkins' disintegration theorem the processes normalized
by the total mass and conditioned on the total mass process is a
(classical) Fleming-Viot superprpocess with time inhomogeneous
resampling rate given up to a constant factor by the reciprocal of the
total mass; see \cite{Perkins1992} or \cite[Section~4.4]{Eth00}.
Analogous result does hold in the $\U$-valued case and will be
discussed in the following.

To analyze the \emph{genealogy part} $\wh \mfU$ we will use the fact
that there is a close relationship between the $\U$-valued
\emph{Feller diffusion} and the $\U_1$-valued \emph{Fleming-Viot}
processes. In this context $\U_1$-valued \emph{Fleming-Viot} processes
with time-inhomogeneous resampling rates will arise. The latter is a
$\U_1$-valued Markov process with continuous paths arising from the
Fisher-Wright or more generally Fleming-Viot models, which themselves
arise as infinite population limits of the well known individual based
Moran models, similarly as the rescaling of Galton-Watson gave rise to
Feller's branching diffusion as described in Remark~\ref{r.697}. For
the rescaling of a Moran model with mutation leading to a Fleming-Viot
superprocess we refer the reader for instance to \cite{Eth00}. For a
definition of a $\U_1$-valued Moran model whose rescaling leads to
$\U_1$-valued Fleming-Viot process see e.g.\ Definition~2.5 in
\cite{DGP12}.

Recalling Proposition~\ref{prop.tvF}, we can \emph{condition} on the
autonomous $\R_+$-valued Feller diffusion $(\bar \mfU_t)_{t \ge 0}$ to
get the conditioned genealogy part of the process. Then the
$\U$-valued Feller diffusion $\mfU$ decomposed as
$(\bar \mfU, \wh \mfU)$ has the following key property.

\begin{proposition}[Genealogy conditioned on total mass path is
  $\U_1$-valued Fleming-Viot]\label{prop.834}
  Consider the process $(\wh \mfU_t)_{t \ge 0}$ conditioned on a
  realization $\bar \mfu = (\bar \mfu_t)_{t \ge 0}$ of
  $(\bar \mfU_t)_{t \ge 0}$, denoted by
  $(\wh \mfU_t(\bar\mfu))_{t \ge 0}$. For almost surely all $\bar\mfu$ the
  process $(\wh \mfU_t(\bar\mfu))_{t \ge 0}$ is a
  \begin{align}
    \label{810}
    \begin{split}
      \text{\emph{time-inhomogeneous} } \U_1 \text{-valued
        Fleming-Viot diffusion
        $(\wh \mfU_t^{\mathrm{FV}} (\bar \mfu))_{t\ge 0}$
      } \\
      \text{ with resampling rate } d(t)=b/\bar \mfu_t \text{ at time
      } t  \text{ if } \bar\mfu_t>0.
    \end{split}
  \end{align}
  At the extinction point $T_{\mathrm{ext}}$ of the total mass path
  $\bar\mfu$ the process $(\wh \mfU_t(\bar\mfu))_{t \ge 0}$ converges
  to the null tree $\ntree$ on $\U_1$ in the Gromov weak topology and
  remains in this state for $t \ge T_{\mathrm{ext}}$. In particular
  the conditioned process has continuous $\U_1$-valued paths.

  Consequently, $(\mfU_t)_{t \geq 0}$ has paths with values in
  $\U_{\mathrm{comp}}$, the equivalence classes of compact ultrametric
  measure spaces.
\end{proposition}

\begin{remark}[Time change]\label{r.1540}
  The corollary above could also be read in terms of a time change.
  More precisely, for $T_t = b \int_0^t 1/\bar \mfu_s \,\dx s$, the
  process $(\wh \mfU_{T_t})_{t \geq 0}$ is $\U_1$-valued Fleming-Viot
  process with resampling rate-$1$ and ultrametrics scaled by
  $(T_t^{-1})$. Though this view is useful sometimes, the formulation
  above in terms of the generator is more suitable for generalizations
  to spatial or multi-type models. We shall see that a version of
  Proposition~\ref{prop.834} holds in these models, but no formulation
  via time change is possible.
\end{remark}

\begin{remark}[Notation]
  \label{rem:Pnots}
  In the sequel we will often work with our process as well as its
  conditioned variants or functionals.
  \begin{itemize}
  \item The $\U$-valued Feller diffusion possibly with superscripts
    depending on its variant will be usually denoted by
    $\mfU=(\mfU_t)_{t\ge 0}$. Its law is denoted by $P$, $P_{\mfu_0}$,
    $P_\nu$ etc. Here $\mfu_0$ is a fixed element in $\U$ and $\nu$ a
    probability measure on $\U$.

  \item The autonomous total mass process and its law are denoted by
    $\bar\mfU=(\bar\mfU_t)_{t\ge 0}$ respectively $\bar P$. Again,
    depending on initial conditions etc.\ we may have additional sub-
    or superscripts.
  \item If $\bar\mfu =(\bar\mfu_t)_{t\ge 0}$ is a realization of the
    total mass path process $\bar\mfU=(\bar\mfU_t)_{t\ge 0}$ then we
    can consider the full process or the pure genealogy part
    conditioned on the total mass. The corresponding processes are
    denoted by $\mfU(\bar\mfu) = (\mfU_t(\bar\mfu))_{t\ge 0}$ and
    $\wh\mfU(\bar\mfu) = (\wh\mfU_t(\bar\mfu))_{t\ge 0}$. The laws,
    with possible additional sub- or superscripts, are denoted by
    $P^{\bar \mfu}$ respectively $\wh P^{\bar\mfu}$.
    Corollary~\ref{prop.834} says that $\wh P^{\bar\mfu}$ is also the
    law of
    $\wh \mfU^{\mathrm{FV}} (\bar \mfu) = (\wh \mfU_t^{\mathrm{FV}}
    (\bar \mfu))_{t\ge 0}$. Furthermore we have
    \begin{align}
      \label{eq:coneq}
      \mcL \bigl[(\mfU_t(\bar\mfu))_{t\ge 0}\bigr] =
      \mcL \bigl[(\bar\mfu_t \cdot \wh\mfU_t^{\mathrm{FV}}(\bar\mfu))_{t\ge
      0}\bigr].
    \end{align}
  \end{itemize}
  Similar results will hold in many other situations where we will
  also use similar convention concerning the notation.
\end{remark}

\paragraph{Extension of the operator
  \texorpdfstring{$\Omega^{\uparrow}$}{Omega}}

To understand the material in the paragraph starting on
page~\pageref{sec:prop-relat-texorpdfs} which relates the
process $\mfU$ to familiar objects (recall in particular
Corollaries~\ref{prop.tvF} and \ref{prop.834}), as well as for some
calculations in proof sections it is useful to extend the validity of
the martingale problem to \emph{larger domains of test functions},
even if this is not needed for uniqueness.

We obtain more martingales for our process if we write the elements of
$\U$ in a particular form and then use the particle approximation from
\cite{Gl12} to extend the operator $\Omega^\uparrow$ to larger classes
of functions, which we will use heavily in the sequel to explore the
structure of $(\mfU_t)_{t \geq 0}$. We obtain this class by adding two
domains of test functions on which $\Omega^\uparrow$ can be defined.

The polar decomposition of elements of $\U$ discussed in
Section~\ref{sec:polar-repr-stat} and in particular in equation
\eqref{e998} suggests considering test functions of the form
\begin{align}
  \label{e1216}
  \Phi^{n,\varphi}(\mfu)=\bar \Phi (\bar \mfu)
  \wh \Phi^{n,\varphi} (\hat \mfu),
\end{align}
where $\wh \Phi^{n,\varphi}$ acts on the genealogy component
$\hat\mfu$, and $\bar\Phi$ acts on the mass component $\bar\mfu$ of
$\mfu$, i.e.\ recalling \eqref{eq:Pi-total}we have
$\wh \Phi^{n,\varphi}\in \wh\Pi$ and $\bar \Phi$ is a function on
$\R_+$.

Note that the set of functions of the form \eqref{e1216} is indeed an
extension of the polynomials from Definition~\ref{D.polyn}, because
for instance for $n \ge 2$ the polynomial $\Phi^{n,\varphi}$ applied to
$\mfu = [U,r,\mu] \in \U\setminus\{\ntree\}$ can be written as
\begin{align}
  \label{pol-ext}
  \Phi^{n,\varphi}(\mfu)
  = \langle \varphi, \nu^{n,\mfu} \rangle =
  \bar \mfu^n \langle \varphi, \hat\nu^{n,\hat\mfu}\rangle
  = \bar\mfu^n \wh
  \Phi^{n,\varphi}(\hat \mfu),
\end{align}
i.e., we have
$\Phi^{n,\varphi}(\mfu) = \bar \Phi (\bar \mfu) \wh \Phi^{n,\varphi}
(\hat \mfu)$ with $\bar \Phi (\bar \mfu) = \bar\mfu^n$.

\begin{remark}[Definition of the operator $\Omega^{\uparrow}$ on new
  test functions]
  \label{r.1207}
  An important subspace of \emph{bounded continuous} functions
  constitute those functions which are mapped by the operator
  $\Omega^{\uparrow}$ again onto \emph{bounded} continuous functions.
  On this subspace we can apply the various standard results on
  transition semigroups. We define it by
  \begin{align}
    \label{e1212}
    \mcD_1 \coloneqq \bigl\{\Phi = \bar\Phi\wh\Phi : \bar\Phi \in
    C_b^2([0,\infty),\R),
    x^{-1} \bar\Phi(x) \leq
    \mathrm{Const} \text{ as } x \to 0 \; \text{and} \; \wh\Phi \in
    \wh\Pi \bigr\}.
  \end{align}

  Another important subspace is the set of functions $\Phi$ on which
  $\Omega^\uparrow$ can be applied to $S_t \Phi$, where
  $(S_t)_{t\ge 0 }$ denotes the semigroup of our process. This vector
  space can be chosen as
  \begin{equation}
    \label{e1189}
    \mcD_2\coloneqq \bigl\{\Phi = \bar\Phi\wh\Phi : \bar \Phi \in C^2
    ([0,\infty),\R),\, \limsup_{\bar \mfu \to \infty} |\bar \Phi''
    (\bar \mfu)|
    /\bar \mfu^n < \infty \; \text{ for some}\; n \in \N  \;
    \text{and} \; \wh\Phi \in \wh\Pi \bigr\}.
  \end{equation}
  We use here that $\wt\mcM$ (recall from \eqref{tv4}) is preserved
  under $S_t$ (see Remark~\ref{rem:feller_momens}) and hence $GS_t
  \Phi = S_t (G\Phi)$ exists for all $t \ge 0$, where $G$ is any
  combination of the operators we use here for the martingale
  problems.

  For general $\Phi^{n,\varphi} = \bar\Phi \wh\Phi^{n,\varphi}\in
  \mcD_i$, $i =1,2$ the generator $\Omega^\uparrow$ can be written in
  the following form (for the proof we refer to \cite{Gl12}
  Section~5.4 and in particular equations (5.68-69) and discussion
  around equation (1.85) with the choice $\alpha=0$ there, mind
  however a typo in (1.87) where it should read $(m^\mfu)^{\alpha-1}$)
  \begin{align}
    \label{eq:r.742.1}
    \Omega^{\uparrow} \Phi^{n,\varphi} (\mfu) = \wh \Phi^{n,\varphi}(\hat \mfu)\,
    \Omega^{\mathrm{mass}} \bar \Phi (\bar{\mfu}) + \bar \Phi (\bar{\mfu})\,
    \Omega^{\mathrm{gen}}_{\bar\mfu} \wh \Phi^{n,\varphi}(\hat \mfu).
  \end{align}
  Here the generator parts are given by
  \begin{align}
    \label{eq:12a}
    \Omega^{\mathrm{mass}} \bar \Phi(\bar{\mfu})
    & = \frac{b \bar{\mfu}}{2} \frac{\partial^2}{\partial
      \bar{\mfu}^2} \bar \Phi (\bar{\mfu}), \\
    \label{eq:12b}
    \Omega_{\bar\mfu}^{\mathrm{gen}} \wh \Phi^{n,\varphi}(\hat \mfu)
    & = \frac{b}{\bar{\mfu}} \; \Omega^{\uparrow,\mathrm{res}} \wh
      \Phi^{n,\varphi}(\hat \mfu) + \Omega^{\uparrow, \mathrm{grow}}
      \wh \Phi^{n,\varphi}(\hat \mfu),\\
    \intertext{with}
    \label{eq:12c}
    \Omega^{\uparrow,\mathrm{res}} \wh \Phi^{n,\varphi} (\hat \mfu)
    & = \sum_{1\le k < \ell \le n}
      \langle \varphi \circ \theta_{k,\ell} - \varphi, \wh
      \nu^{n,\hat\mfu}\rangle .
  \end{align}
  Indeed we have an extension of $\Omega^\uparrow$, because using
  $\Phi^{n,\varphi}(\mfu)=\bar\mfu^n \wh \Phi^{n,\varphi}(\hat \mfu)$
  as in \eqref{pol-ext} we can rewrite the branching part from
  \eqref{mr3a} for $\mfu$ with $\bar\mfu>0$ as follows
  \begin{align}
    \label{mr3aexta}
    \begin{split}
      \Omega^{\uparrow,\mathrm{bran}}\Phi^{n,\varphi}(\mfu)
      & = \frac{b}{\bar{\mfu}} \bar\mfu^n \sum_{1\le k < \ell \le n}
      \langle  \varphi \circ \theta_{k,\ell}, \hat\nu^{n,\hat\mfu}\rangle\\
      & = \frac{b}{\bar{\mfu}} \bar\mfu^n \sum_{1\le k < \ell \le n}
      \bigl(\langle \varphi \circ \theta_{k,\ell} - \varphi, \wh
      \nu^{n,\hat\mfu}\rangle + \langle \varphi, \wh
      \nu^{n,\hat\mfu}\rangle\bigr) \\
      & = \frac{b}{\bar{\mfu}} \bar\mfu^n
      \Bigl(\Omega^{\uparrow,\mathrm{res}} \wh \Phi^{n,\varphi} (\hat\mfu) +
      \frac{n(n-1)}{2} \wh\Phi^{n,\varphi} (\hat \mfu) \Bigr).
    \end{split}
  \end{align}
  Thus, in this case the generator defined in \eqref{tv5} can be
  written as
  \begin{align}
    \label{e748}
    \Omega^{\uparrow} \Phi^{n,\varphi} (\mfu) = \frac{n(n-1)}{2}
    \frac{b}{\bar{\mfu}} \bar \mfu^n \wh \Phi^{n,\varphi}(\hat \mfu) +
    \frac{b}{\bar{\mfu}} \bar\mfu^n \Omega^{\uparrow,\mathrm{res}}
    \wh \Phi^{n,\varphi} (\hat \mfu)  + \bar\mfu^n \; \Omega^{\uparrow,
    \mathrm{grow}} \; \wh \Phi^{n,\varphi} (\hat \mfu).
  \end{align}
  Choosing $\bar \Phi \equiv 1$, which is in $\mcD_2$, in
  \eqref{eq:r.742.1} we obtain
  \begin{align}
    \label{e746}
    \Omega^{\uparrow} \Phi^{n,\varphi} (\mfu) =
    \Omega_{\bar\mfu}^{\mathrm{gen}} \wh \Phi^{n,\varphi}(\hat \mfu),
  \end{align}
  i.e.\ when $\bar \mfu$ is the present value of the mass then it
  plays the role of a parameter for the genealogy component of the
  process.
\end{remark}

We summarize the discussion above in the following result.
\begin{proposition}[Modifications and extensions of the martingale
  problem]
  The \label{prop.1183} process from Theorem~\ref{THM:MGP:WELL-POSED}
  solves the martingale problem also with $\mcD_1$, $\mcD_2$ instead
  of $\Pi(\mathcal C_b^1)$ in its modified form and is in particular a
  solution of the extended $(\Pi(\mathcal C_b^2)\cup \mathcal D_1 \cup
  \mathcal D_2,\Omega^{\uparrow})$-martingale problem.
\end{proposition}

Propositions~\ref{prop.tvF} and \ref{prop.834} are consequences of the
representations \eqref{eq:r.742.1} and \eqref{e746}. We sketch their
proofs in the following remarks.

\begin{remark}[Proof sketch of Proposition~\ref{prop.tvF}]
  From \label{r.p.prop.tvF} equation \eqref{eq:r.742.1} in Remark~\ref{r.1207} it follows
  that the operator $\Omega^\uparrow$ acts on $\bar \Phi \wh \Phi$
  according to the product rule. This follows also from the fact that
  $\Omega^\uparrow$ is a second order operator; see \cite{DGP12}.
  Both, the total mass martingale problem and the genealogy part
  martingale problem are well-posed. In particular, combining
  \eqref{eq:r.742.1} and \eqref{eq:12a} the assertion of
  Propositions~\ref{prop.tvF} follows.
\end{remark}

\begin{remark}[Proof sketch of Proposition~\ref{prop.834}]
  The operator of the well-posed Fleming-Viot martingale problem (see
  \cite{GPWmp13}) acts on functions $\wh \Phi \in \wh\Pi$ and is given
  by
  \begin{align}
    \label{e939}
    \Omega^{\mathrm{FV}} \wh \Phi (\hat\mfu) = \Omega^{\uparrow,
    \mathrm{grow}} \wh \Phi (\hat\mfu) +
    d \, \Omega^{\uparrow,\mathrm{res}}\wh \Phi (\hat\mfu),
  \end{align}
  where $\Omega^{\uparrow, \mathrm{grow}}$ is the growth part of the
  generator defined in \eqref{mr3}, $d>0$ is the resampling rate and
  $\Omega^{\uparrow,\mathrm{res}}$ is the resampling part of the
  generator defined in \eqref{eq:12c}. We can allow here a
  \emph{time-dependent} continuous resampling rate $d$ based on
  admissible functions in our case and still the arguments of the
  proof of the existence and the uniqueness from \cite{GPWmp13} go
  through without complications. The reason is that we can approximate
  the resampling rate by ones which are piecewise constant, so that
  the theorems from \cite{GPWmp13} apply on the corresponding time
  intervals. Using the dual process it is easy to show convergence as
  we shrink the time intervals of constancy periods to zero.
\end{remark}

\paragraph{Skew martingale problems, an excursion}

The \label{p:skew} skew martingale problem allows in many cases to
study (two) aspects of a stochastic process $Z$, by isolation of
functionals for each of those aspects, given by processes $X$ and $Y$,
and following their evolution. Typically one functional solves an
autonomous martingale problem, the other conditioned on the
realization of the first solves a time-inhomogeneous martingale
problem (the conditional martingale problem) and both should be well
posed. The evolution of the functionals can on the other hand be used
to \emph{construct} the process $Z$.

An example is the spatial multitype branching process where the
branching rate is determined by the total masses at the geographic
sites. Then one studies the occupation numbers at the site and the
relative frequencies of the types by writing down a skew martingale
problem; see \cite{DG03}. In the present work we look at the total
mass and the genealogies of a Feller diffusion or in a spatial setting
at the super random walk.

A skew martingale problem for a process $(Z_t)_{t\ge 0}$ with values
in a Polish space $E$ arises from a
$(\mu^Z,G^Z,\mathcal{F}^Z)$-martingale problem, where $\mathcal{F}^Z$
is a measure determining domain $G^Z$ on the state space $E$. Namely,
we have two functionals $X=F_1(Z)$ with values in a Polish space $E_1$
and $Y=F_2(Z)$ with values in a Polish space $E_2$, such that there is
a bijection $F: E_1 \times E_2 \to E$ such that $Z=F(X,Y)$ on
$E_1\times E_2$. In particular $F(F_1(Z),F_2(Y))=Z$, so that we can
say that $(X,Y)$ is a coding of $Z$. The maps $F_1$, $F_2$ and $F$ are
assumed to be measurable.

The key point is now that we assume that
\begin{align}
  \label{eq:7}
  \begin{split}
    \text{$(X_t)_{t \ge 0}$ is a solution of a well-posed
      $(\mu^X,G^X,\mathcal{F}^X)$-martingale problem}\\
    \text{and is $E_1$-valued Borel-Markov process,}
  \end{split}
\end{align}
where for $f_1 \in \mathcal F^X$, $f_1 \circ F_1 (\cdot) \in \mathcal
F^Z$ and
\begin{align}
  \label{eq:11}
  (G^Z f_1 \circ F_1) (z) = (G^X f_1)(F_1(z)), \quad z \in E.
\end{align}
Then $(X_t)_{t \ge 0}$ solves the $(\mu^X,G^X,\mathcal
F^X)$-martingale problem with $\mu^X = \mu_{F_1}$. For a solution
$P^Z$ of the $(\mu^Z,G^Z,\mathcal F^Z)$-martingale problem we then
have a solution $P^X$ of the $(\mu^X,G^X,\mathcal F^X)$-martingale
problem.

We now consider the law $P^{Y | X}$ of $Y=(Y_t)_{t\ge 0}$ with
$Y_t = F_2(Z_t)$ conditioned on the realization of the (whole path) of
the process $X=(X_t)_{t\ge 0}$. Here a second assumption is needed.
For every $t \ge 0$ we define
\begin{align}
  \label{eq:12}
  G^{Y | X}_t f_2(y) = G^Z(f_2 \circ F_2)(F((y,X_t))), \quad f_2 \in
  \mathcal F^Y,
\end{align}
where $\mathcal F^Y$ is an algebra of test functions $f$ on $E_2$
satisfying $f \circ F \in \mathcal F^Z$. We assume that for almost
surely every realization of $(X_t)_{t\ge 0}$
\begin{align}
  \label{eq:14}
  \text{the $(\mu^Y,(G_t^{Y|X})_{t \ge 0}, \mathcal{F}^Y)$-martingale
  problem is well-posed $P^X$-a.s.}
\end{align}
The corresponding solution is a \emph{time-inhomogeneous} martingale
problem.

We note that for a solution of the
$(\mu^Z,G^Z,\mathcal{F}^Z)$-martingale problem $P^{Y|X}$ is
$P^X$-a.s.\ a solution of \eqref{eq:14}. If this solution is unique,
then with $P^Y = \mathcal L\bigl((Y_t)_{t\ge 0}\bigr)$
\begin{align}
  \label{eq:15}
  P^Y = \int_{D([0,\infty),E_1)} P^{Y|X=x} \, P^X(dx).
\end{align}
The pair $(P^X,P^{Y|X})$ determines uniquely the joint law $P^{(X,Y)}$
of the processes $X$ and $Y$ and then automatically the law of the
solution $Z$ is given by the push-forward law under $F$:
\begin{align}
  \label{eq:16}
  P^Z = F_\ast P^{(X,Y)}.
\end{align}
Therefore we obtain a unique solution for the
$(\mu^Z,G^Z,\mathcal F^Z)$-martingale problem by solving the
\emph{skew martingale problem} consisting of the \emph{$X$-martingale
  problem} and the \emph{conditional $Y|X$-martingale problem}.

In the case of the $\U$-valued Feller diffusion $(\mfU_t)_{t\ge
  0}$ we have
\begin{align}
  \label{eq:17}
  E & = \U, \quad E_1 = \R_+, \quad E_2 = \U_1, \\
  \label{eq:18}
  X & = \bar \mfU, \quad Y = \widehat \mfU, \quad F(\bar\mfu, \hat
  \mfu) = [(u,r,\bar\mfu \hat\mu)].
\end{align}
We equip $E_1 \times E_2$ with the Gromov-weak topology and not with
the product topology. This way all elements $(0,\wh\mfu)$ for $\mfu
\in \U_1$ are identified and $F$ is indeed a (continuous) bijection.

The test functions are chosen as follows: $\mathcal F^X$ as the
algebra of the polynomials generated by $x \mapsto x^n$, $x \in \R_+$,
$n \in \N_0$, $\mathcal{F}^Y$ is the algebra of polynomials on $\U_1$
and $\mathcal{F}^Z$ is the algebra of polynomials on $\U$.

Then $X$ is the $\R_+$-valued Feller-diffusion and $Y |X$ is the
$\U_1$-valued Fleming-Viot diffusion with resampling rate at time $t$
given by $b(\bar\mfU_t)^{-1}$ before the extinction and as $\infty$ at
times beyond the extinction time, i.e.\
$\wh \mfU_t =[\{1\},\underline 0, \delta_1]$ for
$t \ge T_{\mathrm{ext}}$, where $T_{\mathrm{ext}}$ is the extinction
time of $(\bar\mfU_t)_{t\ge 0}$.

\subsubsection{Feynman-Kac duality, conditional duality and strong
  dual representation}\label{FKdual}

Here we first discuss a \emph{Feynman-Kac duality} and a
\emph{conditioned (on the total population size process) duality}
which allows for a \emph{strong conditioned dual representation}. The
dualities facilitate the study of finer properties of genealogies and
allow to prove uniqueness of solutions of martingale problems.

\paragraph{(1) Feynman-Kac duality}
There is a \emph{Feynman-Kac (FK) duality relation} for the
$\U$-valued Feller diffusion. The corresponding dual process is based
on a \emph{partition-valued} process and partition elements which we
associate with a certain position in the population, and in addition
the process is enriched by the \emph{distance matrix} .

Let $\bbS$ be the set of partitions of $\bbN$ with finite number of
partition elements, that is $p \in \bbS$ can be written as
$p=(\pi_1,\dots,\pi_n)$ for $n\in \N$ with pairwise disjoint
$\pi_\cdot$ and $\N = \pi_1 \cup \dots \cup \pi_n$. The $\pi_i$,
$i=1,\dots,n$ are called \emph{partition elements}. We define a
partial order on $\bbN$ induced by a partition by stipulating that
$i<j$ implies $\min \pi_i < \min \pi_j$, i.e., partition elements are
ordered according to their \emph{minimal element}. We denote by
$\abs{p}$ the number of partition elements of $p$.

For $\pi, \pi' \in p$ we define the partition obtained after their
coalescence in $p \in \bbS$ by
\begin{align}
  \label{tv6}
  \kappa_p(\pi,\pi') \coloneqq
  \left(p \setminus \{\pi,\pi'\}\right) \cup \{\pi \cup \pi'\}.
\end{align}
For $i \in \N$ and $p \in \bbS$ we set $p(i) = \pi_k$ if
$i \in \pi_k$, i.e.\ $p(i)$ is the partition element containing $i$.
We write $i \sim_p j$ if $p(i)=p(j)$, that is if $i$ and $j$ are in
the same partition element of $p$.

For $\dr \in [0,\infty)^{\binom{\N}{2}}$ and
$p = (\pi_1,\dotsc,\pi_n) \in \bbS$ we define the distance matrix
$\dr^p \in [0,\infty)^{\binom{\N}{2}}$ by
\begin{align}
  \label{tv7}
  (\dr^p)_{ij} \coloneqq r_{\min p(i), \min p(j) } , \quad 1 \le i < j.
\end{align}

Now we consider a sample and the associated distance matrix. For
$\dr = \dr (u_1,u_2,\dots)$, recall \eqref{eq:dr}, and
$p = (\pi_1,\dotsc,\pi_n) \in \bbS$ we define
$\dr^p=\dr^p(\underline u)$ by
\begin{align}
  \label{tv8}
  \dr^p(\underline u):
  \begin{cases}
    U^{|p|} & \to [0,\infty)^{\binom{\N}{2}},\\
    \underline{u}_p & \mapsto \dr^p (\underline{u}_p),
  \end{cases}
\end{align}
where $\underline{u}_p = (u_{\min \pi_1}, \dots, u_{\min \pi_{|p|}})$
and $(\dr^p(\underline{u}_p))_{ij} = r(u_{\min p(i)}, u_{\min p(j)})$.
Thus, it suffices to know $u_{\min \pi_1},\dots,u_{\min \pi_{|p|}}$ to
construct $\dr^p(\underline{u})$.

\medskip
The state space of the FK-dual will be
\begin{align}
  \label{tv10}
  \bbK = \bbS \times [0,\infty)^{\binom{\N}{2}},
\end{align}
where $[0,\infty)^{\binom{\N}{2}}$ is the space of
countably-dimensional distance matrices; cf.\ \eqref{tv2} for the
space of finitely-dimensional distance matrices. The space $\bbK$
equipped with the product topology is Polish (see \cite{GPWmp13}). We
note that every state of the coalescent is associated uniquely with a
finite ultrametric space.

The dual process $\mfK = (\mfK_t)_{t \ge 0}$ is a \emph{Kingman
  coalescent} enriched with an evolving distance matrix. The evolution
of $\mfK$ is as follows
\begin{itemize}
\item each pair of partition elements \emph{coalesces} independently
  at rate $b$,
\item distances between the elements $i,j \in \N$ are \emph{initially
    $0$} and \emph{grow} at speed $2$ as long as $i$ and $j$ are in
  different partition elements and then we define the distance of two
  partition elements as twice the time until the coalescence and $2t$
  if no such coalescence event occurs.
\end{itemize}

We define a set
$\mathcal B_{\mathrm{fc}} \coloneqq \mathcal B_{\mathrm{fc}}$ by
\begin{align}
  \label{eq:20}
  \begin{split}
    \mathcal B_{\mathrm{fc}} \coloneqq \bigl\{\varphi: &
    [0,\infty)^{\binom{\N}{2}}
    \to \R : \\
    & \qquad \qquad \varphi \text{ is bounded, measurable and depends on
      finitely many coordinates}\}
  \end{split}
\end{align}
For any $\phi \in \mathcal B_{\mathrm{fc}}$ and $(p,\dr') \in \bbK$ we
define analogously to \cite{GPWmp13} the polynomial $H^\varphi$ by
\begin{align}
  \label{tv11}
  H^{\varphi} :
  \begin{cases}
    \bbU \times (\bbS \times [0,\infty)^{\binom{\N}{2}}) & \to \R \\
    \bigl(\mfu, (p,\dr')\bigr) & \mapsto
    H^{\varphi}\bigl(\mfu,(p,\dr')\bigr) =
    \int \mu^{\otimes |p|}(\dx \underline{u}_p)\, \varphi \left(
      \dr^p(\underline{u}_p) + \dr' \right).
  \end{cases}
\end{align}
We define on $\U \times (\bbK \times \mathcal B_{\mathrm{fc}})$ the
\emph{duality function} $H$ by
\begin{align}
  \label{eq:19}
  H(\mfu,((p,\dr'),\varphi)) = H^\varphi (\mfu,(p,\dr')),
\end{align}
where the dual component $\varphi$ is set to be constant in time.

The set of functions
$\bigl\{H^\varphi\bigl(\cdot,(p,\dr')\bigl) : (p,\dr') \in \bbS,\,
\varphi: \mathcal B_{\mathrm{fc}}\bigr\}$ is \emph{separating} and
\emph{convergence determining} on $\wt \mcM$ (recall \eqref{tv4} and
Corollary ~\ref{cor.681}) and hence $H$ is a good duality function for
laws supported on $\wt \mcM$.

Next, we relate the enriched Kingman coalescent $\mfK$ with the
$\U$-valued Feller diffusion.
\begin{theorem}[Feynman-Kac duality for genealogies: Kingman and
  Feller]
  \label{T:DUALITY}
  For $\mfu_0 \in \U$ let $\mfU = (\mfU_t)_{t\ge 0}$ be the solution
  to the $(\delta_{\mfu_0}, \Omega^{\uparrow}, \Pi(\mathcal C_b^1))$-martingale
  problem. For $(p_0,\dr'_0) \in \bbK$ let
  $\mfK=(\mfK_t)_{t\ge 0}=((p_t,\dr_t'))_{t\ge 0}$ be as defined above
  with initial condition $(p_0,\dr'_0)$. Then, for all $\varphi$
  depending on finitely many coordinates we have for all
  $\mfu_0 \in \U,(p_0,\uur'_0) \in \K$:
  \begin{align}
    \label{tv12}
    \E_{\mfu_0} \left[ H \bigl(\mfU_t,((p_0,\dr'_0),\varphi)\bigr) \right]
    = \E_{(p_0,\dr'_0)} \left[ H\bigl(\mfu_0,((p_t,\dr_t'),\varphi)\bigr)
    \exp\left(\int_0^t b {\binom{|p_s|}{2}} \, \dx s \right) \right],
    \quad \forall \; t \ge 0.
  \end{align}
\end{theorem}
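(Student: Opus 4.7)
The plan is to follow the standard Feynman--Kac duality scheme: reduce the desired identity \eqref{tv12} to the pointwise generator identity
\begin{equation*}
  \Omega^{\uparrow}_{\mfu} H^\varphi\bigl(\mfu,(p,\dr')\bigr)
  = \Omega^{\mfK}_{(p,\dr')} H^\varphi\bigl(\mfu,(p,\dr')\bigr)
  + b \binom{|p|}{2} H^\varphi\bigl(\mfu,(p,\dr')\bigr),
\end{equation*}
and then derive \eqref{tv12} from it by the abstract argument of Section~4.4 in \cite{EK86}. Here $\Omega^{\mfK}$ is the generator of the enriched Kingman coalescent described in the theorem: its growth part is $2\sum_{i<j,\, i \not\sim_p j} \partial/\partial r'_{ij}$, reflecting the linear growth of distances between distinct partition elements, and its coalescence part is $b \sum_{\{\pi,\pi'\} \subset p} [f(\kappa_p(\pi,\pi'),\dr') - f(p,\dr')]$, reflecting pairwise merging at rate $b$.

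\textbf{Verification of the generator identity.}
The key observation is that for fixed $(p,\dr')$ with $n = |p|$ the map $\mfu \mapsto H^\varphi(\mfu,(p,\dr'))$ is nothing but the polynomial $\Phi^{n,\psi}$ of Definition~\ref{D.polyn}: setting $y_a = x_{\min \pi_a}$ for $a = 1,\dots,n$, the matrix $\dr^p(\underline{x}_p)$ depends on $\underline{x}_p$ only through the pairwise distances $r(y_a,y_b)$, so one defines $\psi \in \mathrm{b}\mcB(\R_+^{\binom{n}{2}})$ with $H^\varphi(\mfu,(p,\dr')) = \Phi^{n,\psi}(\mfu)$. Applying $\Omega^{\uparrow,\mathrm{grow}}$ from \eqref{mr3} and using the chain rule yields $2 \sum_{i < j,\, i \not\sim_p j} \partial\varphi/\partial r_{ij}$ inside the integral, which is exactly $\Omega^{\mfK,\mathrm{grow}} H^\varphi(\mfu,\cdot)(p,\dr')$, because in $\mfK$ precisely the entries $r'_{ij}$ with $i \not\sim_p j$ grow at speed~$2$. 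For the branching part \eqref{mr3a}, the operation $\theta_{k,\ell}$ forces the $\ell$-th sample to equal the $k$-th; the resulting integrand no longer depends on $y_\ell$, so the integration over $y_\ell$ produces a factor $\bar\mfu$ cancelling the $1/\bar\mfu$ in front, and the remaining expression equals $H^\varphi(\mfu,(\kappa_p(\pi_k,\pi_\ell),\dr'))$, the crucial point being that $\min(\pi_k \cup \pi_\ell) = \min \pi_k$ for $k < \ell$, so the merged element in $\kappa_p(\pi_k,\pi_\ell)$ is represented by $y_k$. Writing each summand as $[H^\varphi(\mfu,(\kappa_p(\pi_k,\pi_\ell),\dr')) - H^\varphi(\mfu,(p,\dr'))] + H^\varphi(\mfu,(p,\dr'))$ and summing over $1 \le k < \ell \le n$ then yields precisely $\Omega^{\mfK,\mathrm{coal}} H^\varphi(\mfu,\cdot)(p,\dr') + b\binom{n}{2} H^\varphi(\mfu,(p,\dr'))$.

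\textbf{From generator identity to the duality, and main obstacle.}
Given the generator identity and suitable integrability, the Feynman--Kac duality \eqref{tv12} follows in the standard way by showing that $s \mapsto \E_{\mfu_0}\bigl[\E_{(p_0,\dr'_0)}[H^\varphi(\mfU_{t-s},\mfK_s) \exp(\int_0^s b \binom{|p_u|}{2}\,\dx u)]\bigr]$ is constant on $[0,t]$ and evaluating at the endpoints. Integrability is supplied by the pointwise bound $|H^\varphi(\mfu,(p,\dr'))| \le \|\varphi\|_\infty \bar\mfu^{|p|}$, the monotonicity $|p_s| \le |p_0|$ (so the Feynman--Kac factor is uniformly bounded by $\exp(t b \binom{|p_0|}{2})$), and the finiteness of all moments of the Feller total mass process (Corollary~\ref{C.tvF}). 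The main technical obstacle I anticipate is that the generator identity is derived naturally only for $\varphi \in C_b^1$ (so that $\Omega^{\uparrow,\mathrm{grow}}$ can be applied via the chain rule), while \eqref{tv12} is claimed for any bounded measurable $\varphi$ depending on finitely many coordinates; this is handled by proving the identity first for $C_b^1$ test functions, where the martingale problem formulation of Theorem~\ref{THM:MGP:WELL-POSED} applies directly, and then extending by a monotone-class argument that uses the convergence-determining property of $\Pi(C_b)$ on $\wt\mcM$ from Lemma~\ref{lem:Pi:separating} together with the uniform moment control of $\bar\mfU_t$ to justify the passage to the limit on both sides of \eqref{tv12}.
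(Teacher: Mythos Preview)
Your proposal is correct and follows essentially the same approach as the paper: both recognize that $H^\varphi(\cdot,(p,\dr'))$ is a polynomial of degree $|p|$ in $\Pi(C_b^1)$ (the paper makes this explicit via a permutation $\sigma$ sending $\min\pi_i\mapsto i$), verify the generator identity $\Omega^{\uparrow}H^\varphi = L^{\downarrow,\mathrm{K}}H^\varphi + b\binom{|p|}{2}H^\varphi$ by treating the growth and branching parts separately exactly as you describe, and then invoke the abstract Feynman--Kac duality machinery of \cite{EK86}, Chapter~4 (specifically Theorem~4.11 and Corollary~4.13), with the integrability hypotheses supplied by the monotonicity of $|p_s|$ and the moment bounds on $\bar\mfU_t$. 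Your additional remark about the extension from $\varphi\in C_b^1$ to bounded measurable $\varphi$ via a monotone-class argument is a point the paper does not spell out.
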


\begin{remark}[Non-critical case]
  \label{r.concrit}
  The FK-duality holds also for non-critical $\U$-valued branching
  diffusions based on the operators \eqref{tv5} and in addition the
  operator from \eqref{e709}. The only modification in the case of the
  Feynman-Kac potential, namely the exponent on the right hand side of
  \eqref{tv12} carries as in the critical case the occupation time of
  the coalescent, but in the noncritical case an additional term in
  the potential is added, meaning that the potential for the critical
  case $a=0$ is replaced for $a\ne 0$ by
  \begin{align}
    \label{e.ggr1}
    \int^t_0 \biggl( b \binom{|p_s|}{2} + a |p_s|\biggr) \, \dx s.
  \end{align}
  Here $a \in \R$ is the non-criticality coefficient; recall
  \eqref{eq:FelBDiff-ab}. Note, that the non-criticality does not add
  ``splitting'' to the ancestral tree (recall Remark~\ref{r.ultsp})
  because the \emph{coalescence rate remains} the same. However, there
  is a \emph{reweighting} of the tree by the changed exponential term
  giving more respectively less weight, depending on the sign of $a$,
  to coalescent paths with later mergers than the Kingman coalescent
  and therefore bigger distances.
\end{remark}

\begin{remark}[Kingman coalescent genealogy]
  For \label{r.coaltre} the $\U_1$-valued Fleming-Viot diffusion the
  duality allows to relate the ultrametric probability measure space
  which is associated with the \emph{entrance law} of the Kingman
  coalescent from a countable population at time zero evolved for
  infinite time; see \cite{GPW09}. In the case of the Feller diffusion
  for every finite population of size $n$ a \emph{reweighting} takes
  place through the Feynman-Kac term. However, this does \emph{not}
  lead to a \emph{consistent family of laws} of a process, in which
  all the finite ultrametric spaces are embedded.
\end{remark}

The issue described in the above remark says in particular that the
Kingman coalescent cannot be used for a strong duality. However, there
is another possibility.

\paragraph{(2) Conditional duality}
Here we introduce \emph{conditional duality} for the pure genealogy
part, using the fact that $(\bar \mfU_t)_{t \ge 0}$ is an
\emph{autonomous Markov process} on $\R_+$. More precisely, we
introduce a duality for the $\U$-valued Feller diffusion
\emph{conditioned} on the \emph{complete} total mass process
$\bar\mfU= (\bar \mfU_t)_{t \ge 0}$ and observe only the process
$\wh\mfU$ for which we want the dual representation.

The dual of $\wh \mfU$ conditioned on
$\bar\mfU=(\bar\mfU_t)_{t\ge 0}$, denoted by
$\mfC(\bar\mfU) = (\mfC_t(\bar\mfU))_{t\ge 0}$, is as before an
\emph{enriched coalescent} but with a \emph{time dependent}
coalescence rate at time $t$ given by
\begin{align}
  \label{e3566}
  b \cdot \bar \mfU_t^{-1}
\end{align}
if $\mfU_t>0$ and $\infty$ after the extinction of $\bar\mfU$. At the
time when $\bar \mfU_t$ hits zero the process $\mfC(\bar\mfU)$
coalesces to a single partition in the time before, so that from the
extinction time on it is constant equal to a single partition. This
object was considered in \cite{DG03}. See also
Section~\ref{ss.uniqumar}. The duality functions $H(\cdot,\cdot)$ are
the same as before, i.e.\ as defined in \eqref{tv11}, with the
difference that $\mu$ is replaced by $\hat \mu$. More precisely the
duality function is now a function on $\U_1$ in its first variable and
for the process $\wh \mfU$ \emph{conditioned} on $\bar \mfU$ the
duality relation is the same as in \eqref{tv12} but \emph{without} the
reweighting through the FK-term.

\begin{theorem}[Conditioned duality for $\U$-valued Feller]
  \label{PROP.1007}
  Let $\bar \mfu = (\bar \mfu_t)_{t \ge 0}$ be an admissible total
  mass path as defined in Definition~\ref{def:adm_u} and $\mfU$ the
  $\U$-valued Feller diffusion starting in $\mfu \in \U$. The following
  duality relations holds for all $\hat\mfu_0 \in \U_1$ resp.\
  $\mfu_0 \in \U$,
  $(p,\dr') \in \bbK$,
  $\varphi \in C_b \bigl([0,\infty)^{\binom{\N}{2}},\R)$ and for the
  following two processes.
  \begin{enumerate}[(a)]
  \item For processes $\wh \mfU^{\mathrm{FV}}(\bar \mfu)$ and the
    coalescent $\mfC(\bar \mfu)$:
    \begin{align}
      \label{e1015}
      \E_{\hat \mfu_0} \bigl[H\bigl(\wh \mfU^{\mathrm{FV}}_t(\bar \mfu),
      ((p,\uur'),\varphi)\bigr) \bigr]
      = \E_{(p,\uur')} \bigl[H
      \bigl(\hat\mfu_0,((p_t(\bar \mfu),\uur_t'(\bar
      \mfu)),\varphi)\bigr)\bigr].
    \end{align}
  \item For processes $\wh \mfU (\bar\mfu)$, a functional of $\mfU$ and the coalescent
    $\mfC(\bar \mfu)$:
    \begin{align}
      \label{e1015a}
      \E_{\hat \mfu_0} \bigl[H\bigl(\wh \mfU_t(\bar \mfu),
      ((p,\uur'),\varphi)\bigr) \big\vert \bar\mfU = \bar\mfu \bigr]
      = \E_{(p,\uur')} \bigl[H \bigl(\hat\mfu_0,((p_t(\bar \mfu),\uur_t'(\bar
      \mfu)),\varphi)\bigr)\bigr].
    \end{align}
    almost surely w.r.t.\ the distribution of $\bar\mfU$.
  \end{enumerate}
\end{theorem}

\begin{remark}[State dependent branching]\label{r.1022}
  The conditioned duality holds also in the case of total mass
  dependent branching. Suppose that the underlying total mass process
  $Z$ solves the SDE $\dx Z_t=\sqrt{b(Z_t)} \, \dx B_t$ for a locally
  Lipschitz function $b$ with $b(0)=0$, $b(z) > 0$ for $z > 0$ and
  $g(z)= O(z^2)$ as $z \to \infty$. Then the individuals total mass
  dependent branching rate is $h(z)=b(z)/z$, and the rate of
  coalescence in the conditional duality is $h(z)/z$ when the total
  mass is $z$.
\end{remark}

To understand better the genealogies in the \emph{non-critical} case,
recall Remark~\ref{r.offspring}, we consider them from the point of
view of the \emph{conditioned duality}. In this case we have
Feynman-Kac duality with potential $a |p_t|$.

In the non-critical case the conditional duality is as follows.
Consider test functions $\Phi = \bar\Phi \wh\Phi \in \mcD_1$ with
$\bar\Phi = const$ and assume that $\wh\Phi = \wh \Phi^{n,\varphi}$
with $\varphi$ depending on $n$ coordinates. On these test functions
we know the generator.

We have to recall that the duality condition w.r.t.\ the function $H$
for a process with generator $G$ and dual process with generator
$G_{\text{dual}}$ reads
\begin{align}
  \label{eq:6}
  GH(\cdot,\mfK)(\mfu) = G_{\text{dual}} H (\mfu,\cdot)(\mfK), \quad
  \mfK \in \bbK, \, \mfu \in \U.
\end{align}
We calculate both sides. Calculations similar to those in
Remark~\ref{r.1207} show that in the non-critical case only a
Feynman-Kac term to the conditional dual for $\wh\mfU$ because the
effect of the non-criticality resides in the total masses. This term
is given by
\begin{align}
  \label{e925}
  a n \, \wh \Phi (\hat \mfu).
\end{align}
Thus, there is a \emph{reweighting} of the critical finite sampled
trees which arise as time-inhomogeneous coalescent trees, longer
branches are more or less favored depending on the sign of $a$.

\paragraph{(3) Strong conditioned dual representation}
Above observation raises the question of a, as we call it,
\emph{strong} duality where the whole state is represented by the
dual. Indeed the duality relation above allows to give a \emph{strong
  dual representation} in terms of the path of the \emph{autonomous}
total mass process and the \emph{probability entrance law} of the
\emph{conditioned dual process} associated with our process for times
$t \in [0,T_{\mathrm{ext}})$, where $T_{\mathrm{ext}}$ is the
extinction time of $\mfU$. Namely we associate with the
\emph{enriched} coalescent an ultrametric measure space based on the
ultrametric from \eqref{tv7} and the uniform distribution. Thus, we
obtain a process $(\mfC_t(\bar \mfu))_{t \in [0,T_{\mathrm{ext}})}$
for every finite number of basic individuals for the coalescent. Here
for the duality at time $t$ the coalescence rate at time $s$ for
$s \in [0,t)$ is $(\mfu_{t-s})^{-1}$ at backward time $s \in [0,t]$.
It has been shown that one can construct the $\U_1$-valued
\emph{probability entrance law} starting with countably many
individuals denoted $\mfC^\infty(\bar \mfu)$; see \cite{GPW09} for the
time-homogeneous case where the existence of the entrance law is
shown. Then we can strengthen the Theorem~\ref{PROP.1007} to a
stronger statement about the state of the genealogy process
$\wh \mfU_t(\bar \mfu)$ conditioned on the total mass process
$\bar \mfu$. Namely it has the following form.
\begin{corollary}[Strong conditioned duality]
  \label{cor.1226}
  \begin{align}
    \label{e892}
    \mathcal{L}[\wh \mfU_t(\bar \mfu)] =
    \mathcal{L}[\mathfrak{C}_t^\infty (\bar \mfu)], \quad
    \mcL \left[(\bar \mfU)_{t \geq 0}\right] \text{ - a.s. in } \bar
    \mfu \quad \text{for } t \in [0,T_{\mathrm{ext}}).
  \end{align}
\end{corollary}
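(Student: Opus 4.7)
I would deduce \eqref{e892} by combining the conditioned duality of Theorem~\ref{PROP.1007}(b) with the fact that any $\U_1$-valued random variable is determined in law by its family of sampling distance matrix distributions $(\hat\nu^{n,\cdot})_{n\ge 2}$, cf.\ Corollary~\ref{cor.681} together with \cite{GPW09}. It therefore suffices to show that, for every $n\ge 2$ and every bounded continuous $\varphi$ on $\R_+^{\binom{n}{2}}$,
\begin{equation*}
\E\bigl[\langle \varphi, \hat\nu^{n,\wh\mfU_t(\bar\mfu)}\rangle\bigr]
= \E\bigl[\langle \varphi, \hat\nu^{n,\mfC_t^\infty(\bar\mfu)}\rangle\bigr],
\end{equation*}
almost surely with respect to the law of $\bar\mfU$.

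For the left hand side I would apply \eqref{e1015a} with initial dual state $(p^{(n)},0)$, where $p^{(n)}\in\bbS$ is any partition of $\N$ with exactly $n$ blocks whose least elements are $1,\dots,n$ (the precise choice is irrelevant since $\varphi$ ignores coordinates beyond $\binom{n}{2}$). Reading off \eqref{tv11} directly gives $H^\varphi\bigl(\wh\mfU_t(\bar\mfu),(p^{(n)},0)\bigr) = \langle\varphi,\hat\nu^{n,\wh\mfU_t(\bar\mfu)}\rangle$, so that the duality identifies the left hand expectation with the expectation of $\varphi$ applied to the random distance matrix produced by the $n$ distinguished blocks of the enriched coalescent $\mfC(\bar\mfu)$ at time $t$, started from $p^{(n)}$ with vanishing distance matrix.

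For the right hand side I would construct $\mfC_t^\infty(\bar\mfu)$ as the $\U_1$-valued probability entrance law of the enriched coalescent from countably many initial blocks. Via the time change $T_t = b\int_0^t \bar\mfu_s^{-1}\,\dx s$ of Remark~\ref{r.1540}, this reduces to the standard rate-one enriched Kingman coalescent on $\N$, whose classical ``coming down from infinity'' property ensures only finitely many blocks at every strictly positive time. Assigning to each block its limiting frequency as a probability mass, together with the grown pairwise distances, defines a $\U_1$-valued random variable $\mfC_t^\infty(\bar\mfu)$. Kolmogorov consistency on labelled distance matrices under restriction to $\{1,\dots,n\}$ then identifies the distance matrix produced by sampling $n$ independent points from $\mfC_t^\infty(\bar\mfu)$ with that of the finite-block coalescent started from $p^{(n)}$, so that the right hand side of the duality equals $\E\bigl[\langle\varphi,\hat\nu^{n,\mfC_t^\infty(\bar\mfu)}\rangle\bigr]$. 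Near the extinction time $T_{\mathrm{ext}}$ the integrability \eqref{eq:ppFD} forces $T_t\to\infty$, and hence $\mfC_t^\infty(\bar\mfu)=\mfe$ for $t\ge T_{\mathrm{ext}}$, consistently with the behaviour of $\wh\mfU_t(\bar\mfu)$ recorded in Corollary~\ref{cor.834}.

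Putting the two steps together yields equality of all finite sampling distance matrix distributions, and hence \eqref{e892}. \textbf{The main obstacle} I foresee is the rigorous construction of the entrance law $\mfC_t^\infty(\bar\mfu)$ as a genuinely $\U_1$-valued random variable along a generic admissible path $\bar\mfu$: one must control the time-inhomogeneous coalescence rate $b/\bar\mfu_t$ over the full time interval, in particular its blow-up near $T_{\mathrm{ext}}$, and verify that Kolmogorov consistency on labelled finite coalescents lifts to a measurable $\U_1$-valued object. The time change $T_t$ is precisely the device that reduces both issues to the classical rate-one Kingman setting, and is the key technical step.
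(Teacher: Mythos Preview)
Your approach is the one the paper has in mind: the corollary is stated without a separate proof and is meant as an immediate consequence of Theorem~\ref{PROP.1007}(b) together with the construction of the $\U_1$-valued coalescent entrance law (as in \cite{GPW09,GPWmp13}) and the fact that polynomials determine laws on $\U_1$. Your time-change reduction to the rate-one Kingman coalescent via Remark~\ref{r.1540} is a clean way to handle the inhomogeneous rate $b/\bar\mfu_s$ and the behaviour near $T_{\mathrm{ext}}$.

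There is, however, one genuine omission concerning the initial state $\hat\mfu_0$. Your description of the right-hand side of \eqref{e1015a} as ``$\varphi$ applied to the random distance matrix produced by the $n$ distinguished blocks of the enriched coalescent'' suppresses $\hat\mfu_0$: the right-hand side is
\[
\E_{(p^{(n)},0)}\bigl[H^\varphi\bigl(\hat\mfu_0,(p_t(\bar\mfu),\uur_t'(\bar\mfu))\bigr)\bigr],
\]
which by \eqref{tv11} integrates $\varphi$ against $\uur_t'$ \emph{plus} the mutual distances of $|p_t|$ i.i.d.\ samples drawn from $\hat\mfu_0$. Correspondingly, your construction of $\mfC_t^\infty(\bar\mfu)$ from ``limiting block frequencies together with the grown pairwise distances'' alone matches the duality only when $\hat\mfu_0=\mfe$. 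For general $\hat\mfu_0$ the $\U_1$-valued coalescent must be started by placing the countably many initial individuals at i.i.d.\ locations in $\hat\mfu_0$; the distance between individuals $i$ and $j$ at coalescent time $t$ is then $(\uur_t')_{ij}$ plus the $\hat\mfu_0$-distance of their initial positions. With this adjustment (which is the standard set-up of the tree-valued Kingman coalescent in \cite{GPWmp13}) your exchangeability and Kolmogorov-consistency argument goes through unchanged, and the time-change still reduces everything to the rate-one entrance law grafted onto $\hat\mfu_0$.
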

\begin{proof}
  The point is, that if we know indeed that the entrance law exists we
  can argue as follows: From the conditional duality relation we know
  that every finite $n$-subcoalescent of the entrance law equals in
  law the element of $\U_1$ given by a sample of $n$ points from the
  process $\widehat{\mathcal U}$. Since the entrance law specifies a
  skeleton of $\mfC_t^\infty$ this implies the identity of the two
  objects.
\end{proof}

The presence of the term \eqref{e925} in the non-critical case means
that we can \emph{not} obtain a \emph{strong} duality for
$\mfu \neq \ntree$ since then we have a \emph{Feynman-Kac} duality in
which case the laws for different sizes of the coalescent, i.e.\
number of individuals in the basic set which we partition \emph{not}
form a \emph{consistent family} of laws on $\U$.

\subsubsection[Generalized branching, Markov branching tree
and Cox cluster representation]{Structural properties: Generalized
  branching property, Markov branching tree and Cox cluster
  representation}
\label{branchmarkcox}
We turn now to three \emph{different} structural properties of the
genealogy of the population currently alive summarized in
Theorem~\ref{T:BRANCHING}. We want to decompose the population and
identify the law of the \emph{number} of depth-$h$ single ancestor
subfamilies (open 2$h$-balls), to find their \emph{law} $\varrho_h^t$
if we consider each of them as a random element in $\U(h)$ and to
identify the joint law of this set of $h$-subfamilies. This gives the
state at time $t$ as \emph{concatenation of the families of the
  distinct depth-$h$ founding fathers}. The key point is to show that
they are \emph{i.i.d.} and their number is \emph{Poisson} distributed
if we \emph{condition on the total mass} of the whole population at
time $t-h$. This identification is the so called \emph{Cox cluster
  representation} of the genealogy in terms of depth-$2h$ single
ancestor subfamilies extending the corresponding measure-valued
notion; see \cite{D93}.

Indeed the semigroup structures $\{(\U,\sqcup^h): h >0\}$ allows to
define the concept of the \emph{generalized branching property} of a
semigroup $(Q_t)_{t \ge 0}$ for a $\U$-valued Markov process (this is
recalled in Definition~\ref{d.mbt}(a) below in a form useful here)
which has been studied in detail in \cite{ggr_GeneralBranching}.
Furthermore, in \cite{infdiv} for a random element in $\U$ the concept
of a \emph{Markov branching tree} was introduced to describe this
particular $h$-subfamily structures. The Cox cluster representation
then follows from the generalized \emph{branching property} since this
implies \emph{infinite divisibility} of the marginal distribution and
with a special form of the \Levy{}-Khintchine formula on
$(\U(h),\sqcup^h)$ which is based on concepts and results in
\cite{infdiv}.

To formulate our result for the $\U$-valued Feller process below in
Theorem~\ref{T:BRANCHING} we need first three groups of concepts and
ingredients, \emph{(Markov) branching property, Yule tree} with leaf
law, and the (autonomous) total mass process $\bar\mfU$ which is given
by $\R_+$-valued Feller diffusion which we label (1), (2), (3). The
key result is Theorem~\ref{T:BRANCHING} below.

Finally in a separate paragraph we apply the Feynman-Kac
\emph{duality} to understand better Cox cluster representation from
the previous subsection.

\paragraph{(1)}
We want to show that the $\U$-valued Feller diffusion has states
$\mfU_t$ with the generalized branching property and hence produces
Markov branching trees. Then we use this to obtain a \emph{Cox point
  process representation} of the genealogy $\mfU_t$ as concatenation
over a Cox point process on genealogies generating a random depth-$h$
subfamilies, this number, which we denote by $M_h$, is the number of
\emph{depth-$h$ single ancestor} subfamilies which are elements of
$\U(h)$ and which we denote by $\mfY^{(h)}$. We shall ``explicitly''
determine these ingredients of the CPP in Theorem~\ref{T:BRANCHING}
below. Explicitly means here to characterize the law of $M_h$ and
$\mfY^{(h)}$.

We next introduce rigorously the needed concepts. Recall here
\eqref{eq:pol:trunc} for $\Phi_t$ and \eqref{tv14} for the $h$-tops
$\lfloor\mfu\rfloor (h)$.

\begin{definition}[Generalized branching, Markov branching tree, Cox
  cluster representation]\label{d.mbt}
  \leavevmode
  \begin{enumerate}[(a)]
  \item We say that a semigroup $Q_t$ (or an associated Markov
    process) on $\mcB(\U)$ has the \emph{generalized branching
      property} if for every $\mfu_1,\mfu_2 \in \U$ and for every
    $\Phi \in \Pi$:
    \begin{align}
      \label{tv15}
      \int Q_t(\mfu_1 \sqcup^{t} \mfu_2, \dx\mfu) \Phi_t(\mfu) =
      \int Q_t(\mfu_1, \dx \mfu) \Phi_t(\mfu)
      + \int Q_t(\mfu_2, \dx \mfu) \Phi_t(\mfu).
    \end{align}
  \item We say that the random ultrametric space $\mfU$ is a
    \emph{$t$-Markov branching tree} if for every $h \in [0,t)$, for the
    $h$-tops $\lfloor\mfU\rfloor (h)$ there exist
    \begin{align}
      \label{e872}
      m_h \in \mcM([0,\infty)), \quad \text{and} \quad \varrho_h
      \in \mcM_1 (\U) \; \text{ with full measure on}\;
      \U(h) \setminus \{\ntree\},
    \end{align}
    such that we have a \emph{Cox point process representation} of the
    $\U(h)^\sqcup$-valued $h$-top, i.e.\ the $h$-top is the
    concatenation of a \emph{mixed Poisson number $M_h$ of i.i.d.\
      $\U(h)\setminus\{\ntree\}$ valued random variables $\mfU_i$ with law
      $\varrho_h$}:
    \begin{align}
      \label{tv16}
      \lfloor \mfU\rfloor (h) =
      \mathop{\bigsqcup\nolimits^h}_{i=1,\dots,M_h} \mfU_i.
    \end{align}
    Here, the empty concatenation is the zero element of $\U$ and the
    \emph{mixing measure} (or Cox measure) $m_h$ \emph{for $M_h$ is
      infinitely divisible} and its \Levy{} measure will be denoted by
    $\lambda^{m_h}$.
  \end{enumerate}
\end{definition}

\begin{remark}[Equivalent definition of generalized branching
  property]
  \label{r.962}
  The generalized branching property can equivalently be described in
  terms of separating multiplicative functions or by requiring that
  on $\mcB(\U(t+h))$ we have
  \begin{align}
    \label{e1129}
    Q_t(u_1 \sqcup^h u_2, \cdot)= (Q_t(u_2,\cdot)
    \ast^h \; Q_t(u_2,\cdot))(\cdot),\quad h>0, t \ge 0.
  \end{align}
  Here, $\ast^h$ denotes the convolution with respect to $\sqcup^h$ on
  $\U(h)^\sqcup$ extended to $\U$.
\end{remark}

An example of a Markov branching tree is provided by \emph{compound
  Poisson forests} on $\U(t)^\sqcup$. For the definition recall
$\U(t)$, $\U(t)^\sqcup$ and the concatenation operation $\sqcup^t$
from \eqref{e.tr48} -- \eqref{e.tr47}.

\begin{definition}[Compound Poisson forest]
  Let \label{d.compoi} $\theta>0$ and
  $\upsilon\in\mcM_1(\bbU(t)^\sqcup \backslash \{\ntree\})$. Let $M$ be
  a Poisson random variable with parameter $\theta$. Let $\mfU_i$,
  $i\in\bbN$, be an i.i.d.\ sequence of random $t$-forests with
  $\mcL[\mfU_1]= \upsilon$. Assume that $(\mfU_i)_{i\in\bbN}$ and $M$
  are independent. We call the $t$-concatenation of
  $(\mfU_i)_{i\in\{1,\dots,M\}}$ defined by
  \begin{align}
    \label{rg12}
    \mfP_t\coloneqq \sideset{}{^{t}}\bigsqcup_{i=1,\dots,M} \mfU_i,
  \end{align}
  a \emph{compound Poisson $t$-forest} with parameters $\theta$ and
  $\upsilon$, a $\textup{CPF}_t(\theta,\upsilon)$ for short.
\end{definition}
Note that every $\textup{CPF}_t(\theta,\upsilon)$ is a random
$t$-forest, i.e.\ an element of $\bbU(t)^\sqcup$. The corresponding
\emph{$t$-\Levy{} measure} is given by $\theta \cdot \upsilon$. If
$\upsilon$ puts full measure on $\U(t)$ then the $\mfU_i$ are actually
``trees'', i.e.\ \emph{single ancestor elements}.

\begin{remark}[\Levy{} measure of a Markov branching tree]
  \label{r.1394}
  A \emph{Markov branching tree} has an \emph{infinitely divisible
    law} (see Proposition~\ref{l.ac7289}) whose \emph{\Levy{} measure}
  has a particular form that allows for a Cox point process
  representation by a concatenation of elements in $\U(h)$, i.e.\ the
  \emph{prime elements} of $\U$ describing \emph{single ancestor
    subfamilies}. In the general infinitely divisible case based on
  the \Levy{}-Khintchine representation of the Laplace functional (see
  \eqref{ag2inf} in Section~\ref{sss.concat}) one would expect
  ``only'' a Poisson point process representation by concatenation of
  elements from $\U(h)^\sqcup$.

  The \Levy{} measure $\Lambda_h^{\mfU_t}$ of the $t$-Markov branching
  tree $\mfU_t$ is of the following form (see
  \eqref{n.e.tilde.lambda}, where we denote by $\P$ the probability
  law of the PPP\, $N(y \varrho^t_h(\cdot))$):
  \begin{align}
    \label{e.tr12}
    \Lambda_{h}^{\mfU_t}(\dx \mfu) = \int_{\R_+}
    \lambda^{m_h^t}(\dx y) \, \mathbb{P}\Bigl(\bigsqcup_{\mfw \in
    N(y \varrho_h^t(\cdot))} \mfw \in \dx \mfu \Bigr), \quad h\in (0,t].
  \end{align}
  Here, similarly to notation of Definition~\ref{d.mbt} (but adding an
  additional superscript $t$ on $m_h$ and $\varrho_h$), $m_h^t$ is an
  infinitely divisible law on $[0,\infty)$. In our context $m_h^t$
  will be the law of the random total population size
  $\bar\mfU_{t-h}$. Furthermore, $\lambda^{m_h^t}$ is the \Levy{}
  measure of the law $m_h^t$, $N(y \varrho_h^t(\cdot))$ is a PPP on
  $\U(h)$ with \emph{intensity measure} $y \varrho_h^t$. The intensity
  measure
  \begin{align}
    \label{eq:38}
    \begin{split}
      \text{$\varrho^t_h$}
      & \text{ is the \emph{\Levy{} measure} of the
        $h$-truncation from the \Levy{}-Khintchine} \\
      & \text{representation of $\U$-valued random variables $\mfU_t$}.
    \end{split}
  \end{align}
  We obtain it by \emph{fixing} $y$ in \eqref{e.tr12} and taking as
  $\mfU_t$ the concatenation over the PPP $N(y \varrho_h^t)$, cf.\
  \eqref{rg12} and see Theorem~1.37 in combination with
  Corollary~1.40 in \cite{infdiv}.
\end{remark}

\paragraph{(2)}
\label{tv847}
For a Markov branching tree our goal is to identify first $M_h$ as
mixed Poisson, i.e.\ identifying the mixing measure $m_h$ which is the
law of $M_h$ and, second determine the law $\varrho_h$ of $\mfY^{(h)}$
which provides the law of the summands $\mfU_i$ from of the
concatenation producing $\mfU_t$ via \eqref{e872}. We aim at giving a
device which generates $(\mfY^{(h)}_s)_{s \in [0,h)}$ by giving the
$s$-\emph{truncations as stochastic process in $s$}. To this end, we
need the \emph{Yule processes with leaf laws} and \emph{compound
  Poisson point processes} on $\U \setminus \{\ntree\}$ we define in
the following definitions. The above mentioned device will be derived
in the proof of Theorem~\ref{T:BRANCHING} in
Section~\ref{sec:branching}.

\begin{definition}[Genealogical Yule tree]
  Fix $t>0$. \label{Yule} A $\U(t)$-valued random variable $\mfU$ is
  called a \emph{Yule tree} with \emph{splitting rate}
  $(\beta_s)_{s\in [0,t)} \in [0,\infty)^{[0,t)}$ and \emph{leaf law}
  $\nu_t \in \mcM_1([0,\infty))$, denoted by
  \begin{align}
    \label{grx63}
    \operatorname{Yule}\bigl((\beta_s)_{s\in [0,t)}, \nu_t\bigr)
    = \Bigl[\wt M= \{1,\dots, M\}, r, \mu = \sum_{i=1}^M \bar m_i
    \delta_{\{i\}}\Bigr],
  \end{align}
  if the metric space $(\wt M,r)$ is generated by a Yule tree with
  \emph{splitting rate} $(\beta_s)_{s\in [0,t)}$ independent of the
  masses of the leaves and $r$ being the genealogical distance. The
  latter are given by the i.i.d.\ nonnegative random variables
  $\bar m_1, \bar m_2, \dots$ distributed according to the \emph{leaf
    law} $\nu_t$ and give the sizes of masses at the leaves
  $\{1,\dots, M\}$.

  In other words, the \emph{generator} of the \emph{driving} Yule tree
  process $([\{1,\ldots, M_s\}, r_s,\mu_s])_{s \in [0,t)}$ acts on
  functions $\Phi^{m,\varphi} \in \Pi(\mathcal C_b^1)$ at time $s$ as follows
  (recall \eqref{mr3})
  \begin{align}
    \label{grx64}
    A_s \Phi^{m,\varphi} (\mfu)
    = \Phi^{m,\overline{\nabla} \varphi}(\mfu) + \beta_s \sum_{j=1}^{\bar{\mfu}}
    \left(\int(\mu+\delta_{\{j\}})^{\otimes m}(\dx \underline{u})
    \varphi(\dr(\underline{u}))
    - \int \mu^{\otimes m}(\dx \underline{u}) \varphi(\dr(\underline{u}))
    \right).
  \end{align}
  Here, $M_s$ is the number of leaves at time $s$ and $\mu_s$ is the
  counting measure on $\{1,\dots,M_s\}$. Furthermore $\mfu =
  [\{1,\dots, \bar{\mfu}\}, r, \sum_{j=1}^{\bar{\mfu}}
  \delta_{\{j\}}]$. Note that here the mass is a positive integer.
\end{definition}

\medskip
\noindent
\textbf{\emph{Construction of a Yule tree with prescribed splitting
    rate and leaf law}} \; To obtain $\mfY^{(h)}$ we need a specific
Yule tree. Consider an elementary individual based Yule process on
time interval $[0,h)$ starting with one individual at time $0$ and
splitting at time $s \in [0,h)$ at rate (compare also \eqref{e1401})
\begin{align}
  \label{e1407}
  2(h-s)^{-1}.
\end{align}
For every $s \in (0,h)$ the Yule tree gives rise to an ultrametric
space whose ultrametric is given by the genealogical distance. Next,
equip the space at time $s$ with the \emph{leaf law} $\nu^{(h)}_{s}$
given as the exponential distribution with parameter
$2(b(h-s))^{-1}$. We obtain a collection of processes
\begin{align}
  \label{e934}
  (\mfY_s^{(h)})_{s \in [0,h)} \text{ with values in  } \U, \quad
  \mfY^{(h)}_s= \operatorname{Yule} \Big( \big( 2(h-s')^{-1} \big)_{s' \in
  [0,s]}, \Exp\big( 2(b(h-s))^{-1} \big)\Big).
\end{align}
Then we can define the limiting forest $\mfY^{(h)}$ at time $h$. The
proof of the following lemma can be found in Section~\ref{ss.th3} on
p.~\pageref{p.l.e1408}.
\begin{lemma}[Existence of $\mfY^{(h)}$] \label{l.e1408}
  For each $h>0$ there is a $\U$-valued random variable $\mfY^{(h)}$
  so that
  \begin{align}
    \label{e1408}
    \mcL \bigl[\mfY^{(h)} \bigr] = \lim_{t \uparrow h} \; \mcL
    \bigl[\mfY_t^{(h)} \bigr].
  \end{align}
\end{lemma}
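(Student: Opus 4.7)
The plan is to identify $\mfY_t^{(h)}$ with the time-$t$ marginal of a $\U$-valued Markov process that has a.s.\ continuous paths up to and including time $h$; the limit $\mfY^{(h)}$ will then be the endpoint of that process. Concretely, by the classical spine/skeleton decomposition of the critical Feller branching diffusion conditioned on $\{\bar\mfU_h>0\}$, the genealogy of the population of time-$t$ ancestors of the time-$h$ survivors is precisely a time-inhomogeneous Yule tree with splitting rate $2(h-s)^{-1}$, and the mass contributions of the distinct lineages, given that skeleton, are i.i.d.\ exponentials with parameter $2(b(h-t))^{-1}$. This matches Definition~\ref{Yule} and the construction \eqref{e934} exactly, so that $\mfY_t^{(h)}$ will agree in law with the time-$t$ marginal of the conditioned $\U$-valued Feller genealogy.

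Given this identification, I would next realize the conditioned $\U$-valued Feller genealogy on $[0,h]$ as the solution of a well-posed martingale problem with a.s.\ continuous paths in $\U$. The unconditioned $\U$-valued Feller diffusion has continuous paths by Theorem~\ref{THM:MGP:WELL-POSED}, and conditioning on $\{\bar\mfU_h>0\}$ is a time-inhomogeneous Doob transform by the harmonic function $(t,\bar\mfu)\mapsto 1-\exp(-2\bar\mfu/(b(h-t)))$, which is exactly the survival probability of the autonomous $\R_+$-valued Feller diffusion (Corollary~\ref{C.tvF}) from state $\bar\mfu$ at time $t$. The transformed process yields a $\U$-valued Markov process $(\wt\mfU_t)_{t\in[0,h]}$ with continuous paths in the Gromov-weak topology. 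In particular $\wt\mfU_t\to\wt\mfU_h$ almost surely in $\U$ as $t\uparrow h$, and setting $\mfY^{(h)}\coloneqq\wt\mfU_h$ then gives the desired $\U$-valued random variable and the identity \eqref{e1408} follows.

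A self-contained analytic alternative, avoiding the identification with the conditioned Feller process, would be: compute $\E[\Phi(\mfY_t^{(h)})]$ for $\Phi$ in the convergence-determining class of Laplace-type polynomials $\Phi(\mfu)=e^{-\lambda\bar\mu}\wh\Phi^{n,\varphi}(\hat\mfu)\bar\mu^n$ by integrating out the i.i.d.\ exponential leaf masses first, which reduces the question to explicit generating-function identities for the time-inhomogeneous Yule tree; then split tightness on $\U$ via the polar decomposition from Section~\ref{sec:polar-repr-stat} into tightness of the total masses on $\R_+$ (compound-geometric of exponentials, with an explicit Laplace transform) and of the normalized ultrametric measure spaces on $\U_1$ (Gromov-Prohorov tightness criterion of \cite{GPW09}); and finally identify the limit from convergence of finite-dimensional distance-matrix distributions. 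The hard part, common to both routes, will be tracking the delicate balance as $t\uparrow h$ between the exploding Yule leaf count $M_t$ and the vanishing mean $b(h-t)/2$ of the individual leaf masses, and showing that their combination produces a well-defined, non-degenerate $\U$-valued limit with almost surely finite total mass.
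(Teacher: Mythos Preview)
Your primary route contains a mis-identification. The object $\mfY_t^{(h)}$ of \eqref{e934} is a \emph{finite atomic} ultrametric measure space: its support is $\{1,\dots,M_t\}$ with i.i.d.\ exponential point masses. The time-$t$ marginal of the $\U$-valued Feller diffusion conditioned on $\{\bar\mfU_h>0\}$ (started from the entrance law $\ntree$) is, by contrast, a diffuse tree for every $t>0$; it contains all individuals alive at time $t$, including those whose lineages die out before $h$. What your skeleton sentence actually describes is the \emph{$(h-t)$-trunk} $\lceil\mfU_h^h\rceil(h-t)$ of the time-$h$ state (equivalently, the reduced tree of time-$t$ ancestors of the time-$h$ population, weighted by their time-$h$ descendant mass). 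So ``$\mfY_t^{(h)}\eqd\wt\mfU_t$'' is false, and the path-continuity argument $t\mapsto\wt\mfU_t$ does not touch the sequence you need to control. A corrected version would read $\mfY_t^{(h)}\eqd\lceil\mfU_h^h\rceil(h-t)$ and then argue $\lceil\mfu\rceil(\varepsilon)\to\mfu$ as $\varepsilon\downarrow 0$ in Gromov-weak topology; but to even state this you already need the time-$h$ state $\mfU_h^h$ of Theorem~\ref{TH.MARTU}(c), and the identification of the trunk with the Yule tree is precisely the content of Theorem~\ref{T:BRANCHING}(b), whose proof in the paper \emph{uses} the present lemma. So the first route, once repaired, becomes circular within the paper's architecture.

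Your second, ``self-contained analytic'' route is essentially what the paper does. The paper argues tightness directly via the Gromov--Prohorov criterion: diameter is bounded by $2h$; total mass is stochastically bounded because the expected number of Yule leaves at time $t$ times the mean leaf mass $b(h-t)/2$ stays of order one; and the no-dust condition holds because for any $\varepsilon>0$ the number of ancestors at depth $\varepsilon$ equals the Yule count at time $h-\varepsilon$, which is a.s.\ finite and does not depend on $t$ once $t>h-\varepsilon$. For convergence, the paper marks individuals by birth time, fixes a threshold $s<h$, and shows that the contribution of leaves born in $(s,h)$ has vanishing total mass as $s\uparrow h$ (rate $2(h-u)^{-1}$ of new lineages times mean mass $\sim b(h-u)/2$ integrates to a finite, then vanishing, quantity), while the ``old'' part is eventually frozen. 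Your Laplace-polynomial computation would reproduce these estimates, so pursue the second route and drop the first.
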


\paragraph{(3)}
Furthermore we need to be able to define later on the \Levy{} measure
and need the Feller diffusion  $(Z_t)_{t \ge 0}$, which is the
solution of
\begin{align}
  \label{e948}
  \dx Z_t = \sqrt{bZ_t} \; \dx B_t, \;Z _0=\mu_0(U_0),
\end{align}
where $B$ is standard Brownian motion. This is needed in order to be
able to condition on the total mass process and then being able to
represent the condition as an autonomous stochastic process.

\medskip

Now with the points (1)-(3) we have all the needed ingredients and can
state our theorem. Recall the notation and concepts introduced in
Section~\ref{sss.concat}. The proof of the following theorem is given
in Section~\ref{ss.th3}.
\begin{theorem}[Branching property, Markov branching tree, \Levy{}
  measure, conditioned genealogy process]
  \label{T:BRANCHING}
  Consider the initial state $\bar u \mfe$ for some $\bar u >0$. For
  general initial state the result holds for
  $\lfloor \mfU_t\rfloor(t)$ instead of $\mfU_t$.
  \begin{enumerate}[(a)]
  \item The \emph{$\U$-valued Feller diffusion} $(\mfU_t)_{t\ge 0}$
    has the generalized branching property.
  \item If $\mfU_0$ is in $\U$, then for each $t>0$ the random
    variable $\mfU_t$ is a $t$-Markov branching tree and
    $t$-infinitely divisible. The parameters of the corresponding Cox
    point process representation on $\U(h)^\sqcup$ from \eqref{tv16}
    and ingredients \eqref{e872} and \eqref{e948} are as follows. For each
    $h \in (0,t]$, $t>0$ we have
    \begin{align}
      \label{e1402}
      m_h^t & = \mcL[2(bh)^{-1}Z_{t-h}],\\
      \label{e1403}
      \varrho_h^t & = \mcL[\mfY^{(h)}],
    \end{align}
    where $\mfY^{(h)}$ is the random variable from \eqref{e1408}, and
    here $\varrho_h^t$ does not depend on $t$.
  \end{enumerate}
\end{theorem}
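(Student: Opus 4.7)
For part (a), I would use the Feynman-Kac duality of Theorem~\ref{T:DUALITY}. The key observation is that for the $t$-truncated polynomial $\Phi_t = \Phi^{n,\varphi_t}$ with $\varphi_t(\uur) = \varphi(\uur)\prod_{i<j}\ind{r_{ij}<2t}$, the dual function $H^{\varphi_t}(\mfu, (p,\uur'))$ is \emph{additive} under $\sqcup^t$: if $\mfu = \mfu_1 \sqcup^t \mfu_2$, expanding $(\mu_1+\mu_2)^{\otimes|p|}$ produces in every mixed term a pair of sampled points at distance exactly $2t$ in the $\sqcup^t$-metric, which together with the non-negative coordinate $\uur'$ violates the strict inequality $r_{ij}<2t$ in $\varphi_t$. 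Hence $H^{\varphi_t}(\mfu_1 \sqcup^t \mfu_2, \cdot) = H^{\varphi_t}(\mfu_1, \cdot) + H^{\varphi_t}(\mfu_2, \cdot)$ pointwise in $(p,\uur')$, and since the dual process and the Feynman-Kac weight on the right-hand side of \eqref{tv12} do not depend on the starting tree, the branching identity \eqref{tv15} follows for $\Phi \in \Pi(C_b^1)$, hence for $\Phi \in \Pi$ by Lemma~\ref{lem:Pi:separating}.

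For part (b), the same Feynman-Kac argument applies at every scale $h \in (0,t]$, yielding the generalized branching property at scale $h$ for the semigroup at time $h$. Combined with the Markov property at time $t-h$, this gives $h$-infinite divisibility of $\lfloor \mfU_t\rfloor(h)$: the $h$-top at time $t$ factorizes over the atoms of $\mfU_{t-h}$, whose forward evolutions over time $h$ are independent and each live in $\U(h)$ by construction. The general \Levy{}-Khintchine representation from \cite{infdiv} (Theorem~2.37) then gives \eqref{ag2inf} with a \Levy{} measure concentrated on $\U(h)\setminus\{\ntree\}$, establishing the Markov branching tree structure with CPP representation \eqref{tv16} and the specific form \eqref{e.tr12}.

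For part (c), the mixing measure $m_h^t$ is identified through the autonomous total-mass process. By Corollary~\ref{C.tvF}, $\bar\mfU$ is the Feller diffusion $Y$ of \eqref{e948}, and its Laplace exponent $u(h,\lambda) = \lambda/(1+bh\lambda/2)$ rewrites as $\theta_h\bigl(1-\theta_h/(\theta_h+\lambda)\bigr)$ with $\theta_h = 2/(bh)$. Given $Y_{t-h}=y$, this is the Laplace exponent of a compound Poisson law with $\Pois(\theta_h y)$ atoms of size $\Exp(\theta_h)$; matching these atoms with the total masses of the depth-$h$ mono-ancestor subfamilies in \eqref{e.tr12} gives $m_h^t = \mcL[\theta_h Y_{t-h}]$, i.e.\ \eqref{e1402}. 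The main obstacle is the identification $\varrho_h^t = \mcL[\mfY^{(h)}]$. Here one applies the conditional duality of Theorem~\ref{PROP.1007} to a single mono-ancestor subfamily conditioned on survival over $[t-h,t]$: the time-inhomogeneous coalescent running backward from time $t$ to $t-h$, conditioned to come down to one block, is the time-reversal of a pure-birth (Yule) tree with the classical splitting rate $2/(h-s)$ at time $s\in[0,h)$, while the leaf-mass law $\Exp(\theta_h)$ matches the atom-size law above. Existence of the limit object $\mfY^{(h)}$ is provided by Lemma~\ref{l.e1408}. Alternatively, the particle approximation of Remark~\ref{r.697} together with the classical identification of the genealogy of a critical Galton-Watson family conditioned on survival as a Yule process in the continuum limit yields the same result; a Laplace-transform comparison against the \Levy{}-Khintchine formula \eqref{e.tr12} then closes the argument.
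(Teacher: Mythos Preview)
Your argument for part~(a) is essentially the paper's: both use the Feynman--Kac duality of Theorem~\ref{T:DUALITY} and kill the cross terms in $(\mu_1+\mu_2)^{\otimes|p|}$ via the strict truncation in $\varphi_t$. The paper works with the slightly more general $\sqcup^h$ and $\Phi_{t+h}$ and invokes Lemma~\ref{lem:r':growth} to get $r'_t=2t$ on non-coalesced pairs, whereas you take $h=0$ and only need $\uur'\ge 0$; both are fine. Your identification of $m_h^t$ via the compound-Poisson rewriting $u_h(\lambda)=\theta_h(1-\theta_h/(\theta_h+\lambda))$, $\theta_h=2/(bh)$, is exactly the computation the paper performs (equations \eqref{e4268}--\eqref{e4273}).

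The genuine divergence is in identifying $\varrho_h^t$. The paper does \emph{not} argue via time-reversal of a conditioned coalescent. Instead it proves the Yule structure computationally: it shows the ball-counting process $(M^t_{h'})_{h'\in(0,h]}$ is a Markov branching process (Proposition~\ref{p.MBT.balls}), computes the generating function of the number of $2h'$-balls inside one $2h$-ball as geometric with parameter $(h-h')/h$ (equation~\eqref{e4270}), and then verifies via the backward PDE \eqref{e4272} that a time-inhomogeneous Yule process with splitting rate $2/(h-s)$ has the same marginals. The leaf law is identified separately by the Laplace calculation~\eqref{e.tr40}.

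Your route through Theorem~\ref{PROP.1007} is conceptually appealing but, as stated, has a gap. The conditional dual is a coalescent with \emph{random} rate $b/\bar\mfu_{t-s}$, where $\bar\mfu$ is a Feller excursion conditioned on survival. The assertion that \emph{after averaging over this random path} the block-counting process becomes a \emph{deterministic-rate} Yule process with rate $2/(h-s)$ is precisely the non-trivial content of the theorem; it is what the paper spends equations \eqref{e4265}--\eqref{e4272} establishing, and it cannot simply be invoked. Corollary~\ref{cor.3640} does record the coalescent representation of $\varrho_h^t$ you are reaching for, but note that it is stated as a \emph{consequence} of Theorem~\ref{T:BRANCHING}(b), not as an input. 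Your alternative via particle approximation and ``classical identification'' likewise outsources the key step. To make your approach self-contained you would still need the generating-function or PDE computation the paper carries out.
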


Note that in (b) the law $\varrho_h^t$ would depend on $t$ if the
diffusion coefficient $b$ would be inhomogeneous. From \eqref{e1403}
and the construction of $\mfY^{(h)}$ we can conclude that the law
$\varrho_h^t$ is in fact concentrated on $\U(h)$, that is, on
\emph{open} $2h$-balls which correspond to ``depth at most $h$''
single ancestor subfamilies and $\varrho^{t'}_h=\varrho^t_h$ for
all $t' \geq t \geq h$. In particular if we consider the
\emph{path of decompositions}
\begin{equation}
  \label{e2082}
  h \mapsto {\mathop{\bigsqcup\nolimits^h}\limits_{i \in I^h}}
  \left[U_i^h,r_i^h,\mu_i^h \right],
  \quad
  |I^h|= \Pois \left(2(bh)^{-1} Z_{t-h}\right),
\end{equation}
then for all $h \in (0,t]$, \emph{given} $Z_{t-h}$ \emph{and} $I_h$
the elements $[U_i^h,r_i^h,\mu_i^h]$, $i \in I_h$ are \emph{i.i.d.}
with law $\varrho_h^t$.

\medskip
Two further different characterizations of $\varrho_h^t$ are
given below in \eqref{e3732} in terms of \emph{coalescents} and in
\eqref{e1611} in terms of \emph{entrance laws} and excursion laws.

\begin{remark}[Cluster representation]
  Theorem~\ref{T:BRANCHING}, part (b) gives the \emph{Cox cluster
    representation} (see \cite{D93} page 45/46 for that concept),
  i.e.\ a unique decomposition into \emph{depth-$h$ single ancestor
    subfamilies} of the time $t$ population and its state $\mfU_t$ can
  be represented accordingly as a concatenation over a Cox point
  process on $\U(h)$ via \eqref{tv16}. Here the Cox measure and the
  \emph{single ancestor subfamily law} are given in \eqref{e1402} and
  \eqref{e1403}. More precisely we can represent the $t$-top of
  $\mfU_t$ as a concatenation of a $\Pois(\bar \mfU_0)$ number of
  random elements in $\U(t)$ chosen at random according to
  $\varrho_t^t$. This corresponds to the decomposition in the
  \emph{families of founding fathers}. Moreover, for $h \in (0,t)$ the
  $h$-tops have a \emph{Cox cluster representation} with Cox measure
  $m_h^t$ given via the total mass $\bar \mfU_{t-h}$ at time $t-h$. In
  particular, for given $\bar \mfU_{t-h}=u$ we have the representation
  as a concatenation of $\Pois((bh)^{-1}u)$ distributed number of
  independent random variables with distribution $\varrho^t_h$.
\end{remark}

\begin{remark}[The associated path of subfamily
  decompositions identification]\label{r.21099}
  Since $\mfU_t$ is a state in a stochastic branching process we can
  consider the \emph{whole path of a family decompositions in $h$},
  which will give us the complete geometric structure of $\mfU_t$ if
  we vary $h$ in $(0,t)$ and in particular $\mfU_t$ as limit
  $h\uparrow T$ where the balls are successively partitioned further
  and further.

  The \emph{\Levy{} measure} of the Cox measure $m_h^t$, denoted by
  $\lambda^{m_h^t}$, is explicitly known to be given by
  \begin{align}
    \label{e1790}
    \lambda^{m_h^t} (\dx z) = \frac{1}{\left((t-h)b/2\right)^{2}}
    \exp\Bigl(-\frac{z}{(t-h)b/2}\Bigr) \,\dx z.
  \end{align}
  We insert this in \eqref{e.tr12} applied to $\U$-valued Feller
  diffusion and obtain a decomposition in \emph{depth-$h$ subfamilies}
  corresponding in a representative of the state at time $t$ to
  \emph{decomposition in open $2h$-balls grouped in open $2t$-balls}.
  More precisely, we obtain a decomposition in $M_h^t$ different open
  $2t$-balls each of which is decomposed in $N^{t,(i)}_{h}$ many open
  $2h$-balls $\mfU^i_k$, where for $k=1,\dots, N^{t,(i)}_{h}$,
  $i=1,2,\dots,M_h^t$ the $\U(h)$-valued random variables $\mfU^i_k$
  are \emph{independent} of $N^{t,(i)}_{h}$ and $M_h^t$ and are
  i.i.d.\ distributed according to $\varrho^t_h$.

  Let $(Y_i)_{i \in \N}$ be i.i.d.\ $\Exp((t-h)b/2)$-distributed, and
  let $N^{t,(i)}_{h}$ be independent $\Pois(\frac{2}{bh} Y_i)$
  distributed random variables. The number of $i$ with
  $N^{t,(i)}_{h}\geq 1$ is given by $M_h^t$ and this can be thought of
  as considering $2h$-balls in distance less than $2t$ and group them
  in the $2t$-balls.
\end{remark}

\begin{remark}[Relation to Cox cluster representation of $\R_+$-valued
  Feller diffusion]\label{r.1810} 
  Projection onto the total mass component of the state $\mfU$ results
  in a Cox point process representation of the total mass
  corresponding for each $h \in [0,t]$ to a different depth-$h$ single
  ancestor subfamily decomposition of $\bar \mfU_t$ and each time we
  get a sum of i.i.d.\ masses, which has its own Cox-measure $m_h^t$,
  i.e.
  \begin{align}
  \label{e1812}
    \bar \mfU_t=\sum_{\bar \mfw \in N(Y \cdot \varrho^t_h)} \bar
    \mfw, \qquad \mcL[Z]=m_h^t.
  \end{align}
  This is the \emph{Cox cluster representation} of the $\R_+$-valued
  Feller diffusion at the depth $h$. The problem is now to identify
  $\varrho^t_h$ projected on the component $\bar \mfu$ as a measure on
  $\R_+$. This would give $\mcL[\bar \mfw]$. This will be identified
  in the next section on entrance laws of the $\U$-valued Feller
  diffusion as the \emph{entrance law} of an $\R_+$-valued Feller
  diffusion from state $0$ at time $h$. This means that via the
  projection we obtain the i.i.d.\ decomposition
  \begin{align}
  \label{e1818}
    \bar \mfU_t=\sum_{i=1}^{M_h^t} \bar \mfw_i, \;\; \text{where} \;\;
    \mathcal L[M^t_h]=\Pois(\bar \mfU_{t-h}), \;\; \mcL[\bar
    \mfw]=\mcL[Z_h^0],
  \end{align}
  where $Z_h^0$ is the time $h$ state of an $\R_+$-valued Feller
  diffusion starting from state $0$.
\end{remark}

\paragraph{Representation of $\varrho^t_h$ via conditioned
  duality}
We can use the conditioned duality to represent the \emph{\Levy{}
  measure} of $\mfU$; see \eqref{e872},\eqref{e1403}. Namely the
$h$-\Levy{} measure of the process has as one ingredient (recall
\eqref{e1403}) $\varrho^t_h$, which generates the $h$-tops of $\mfU_t$
by concatenation of a Cox point process and this we want to relate
this representation to one in terms of to a coalescent genealogy via
the conditional duality if we condition $\mfU$ on the total mass path.
Therefore our $\varrho^t_h$ arise as mixture over the path law given
to have at time $t-h$ a particular value of $\bar \mfu_t$.

Let $P_{t-h,u}$ be the law of the $\R_+$-valued Feller diffusion given
the value at time $t-h$ is $u \in (0, \infty)$ and then restrict to
path from $t-h$ to $t$. Then the following corollary is a consequence
of part (b) of the Theorem~\ref{T:BRANCHING} and
Corollary~\ref{cor.1226} together with the fact that the time in the
coalescent runs backward.
\begin{corollary}[Genealogy: \Levy{} measure via coalescent]
  \label{cor.3640}
  For a given total mass path $\bar \mfu$ consider the coalescent
  entrance law $\mfC^\infty_t(\bar \mfu)$. We have
  \begin{align}
    \label{e3732}
    \varrho_h^t (\cdot)=
    \int\limits_{C([0,t],\R_+)} \eta_h^t(\bar \mfu) (\cdot)
    P_{t-h,u}\; (\dx \bar\mfu), \quad h \in [0,t],
  \end{align}
  where $\eta_h^t(\bar \mfu)$ is the law on $\U(h)^\sqcup$
  concentrated on $\U(h)$ arising from the $h$-top of the state
  $\mfC_t^\infty(\bar \mfu)$ as follows. Decomposing
  $\mfC^\infty(\bar\mfu)$ at time $h$ in
  partition elements, we obtain $\{\hat \mfu_i^{(h)}, i \in I\}$ and
  get
  \begin{align}
    \label{e3649}
    \lfloor \mfC_t^\infty (\bar \mfu)\rfloor (h)
    = {\mathop{\bigsqcup_{i \in I}}}^h \hat \mfu_i^{(h)}, \quad
    \hat \mfu_i^{(h)} \in \U_1(h).
  \end{align}
  Here $I$ is indexing the partition elements according to their
  smallest elements.

  Then we pick $i$ uniformly distributed in $I$ and set (with $M_h^t$
  and $\mfw_i$ from \eqref{e1818}):
  \begin{align}
    \label{e3653}
    \eta_h^t(\bar \mfu) \coloneqq \mcL[\mfu_i^{(h)}], \;
    \mfu_i^{(h)} = \bar\mfu_i^{(h)} \hat\mfu_i^{(h)}
    \quad
    \text{with} \; \bar\mfu_i = \mfw_i, \; i=1,\dots, M_h^t.
  \end{align}
  The measure $m_h$ is then the law under which a particular $u$ in
  $P_{t-h,u} (\cdot)$ is chosen.
\end{corollary}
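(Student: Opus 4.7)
The strategy is to combine Theorem~\ref{T:BRANCHING}(b), which identifies $\varrho_h^t$ as the common marginal law of the depth-$h$ mono-ancestor subfamilies appearing in the Cox cluster decomposition of $\mfU_t$, with the strong conditioned duality of Corollary~\ref{cor.1226}, which lets us read this subfamily structure off the enriched coalescent $\mfC_t^\infty(\bar\mfu)$. The plan is to show that a uniformly chosen partition element of $\mfC_t^\infty(\bar\mfu)$ at coalescent time $h$, endowed with its correct mass, has exactly the law $\varrho_h^t$ after averaging over $\bar\mfu$ with respect to $P_{t-h,u}$.

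First, I would condition on the total mass path $\bar\mfU = \bar\mfu$. By Corollary~\ref{cor.1226} and \eqref{eq:coneq} we have $\mfU_t(\bar\mfu) \eqd \bar\mfu_t\cdot \mfC_t^\infty(\bar\mfu)$, and the $h$-truncation $T_h$ from \eqref{eq:hTrunc} commutes with this equality in law. In the enriched coalescent, two labels lie in the same partition element at coalescent time $h$ precisely when their distance in the ultrametric representation is at most $2h$; hence the partition decomposition \eqref{e3649} of $\lfloor\mfC_t^\infty(\bar\mfu)\rfloor(h)$ coincides with the canonical $\sqcup^h$-decomposition \eqref{tv14} of the $h$-top of $\wh\mfU_t(\bar\mfu)$. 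Scaling each $\hat\mfu_i^{(h)} \in \U_1(h)$ by its mass $\mfw_i$ coming from the $\R_+$-valued Cox representation of Remark~\ref{r.1810} then produces the $\U(h)$-valued subfamilies $\mfu_i^{(h)}$ of $\mfU_t(\bar\mfu)$.

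Next, the Kingman coalescent is exchangeable, and the time-inhomogeneous rate $b/\bar\mfu_s$ in \eqref{e3566} does not break this exchangeability since it acts symmetrically on labels. Hence, conditional on $\bar\mfu$ and on $|I|$, the family $(\mfu_i^{(h)})_{i \in I}$ is exchangeable, and a uniformly chosen $i \in I$ produces a random element whose conditional law depends only on $\bar\mfu$; call it $\eta_h^t(\bar\mfu)$. On the other hand, Theorem~\ref{T:BRANCHING}(b) tells us that, conditional on $\bar\mfU_{t-h}=u$, the subfamilies are iid with common law $\varrho_h^t$, and the conditional law of $\bar\mfu$ given $\bar\mfU_{t-h}=u$ is exactly $P_{t-h,u}$; integrating the two descriptions against $P_{t-h,u}$ then yields the identity \eqref{e3732}. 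The main obstacle is the measure-theoretic bookkeeping in matching the two descriptions of the masses carried by the partition elements: the entrance-law picture for $\mfC_t^\infty(\bar\mfu)$ is $\U_1$-valued and spreads its measure over countably many labels, whereas the Cox representation of $\bar\mfU_t$ from Remark~\ref{r.1810} assigns iid masses with law $\mcL[Y_h^0]$ conditional on $\bar\mfU_{t-h}$. Showing that these two mass assignments coincide in law is the key non-trivial step; it ultimately follows from the generalized branching property of Theorem~\ref{T:BRANCHING}(a), the consistency of $T_h$ with $\sqcup^h$, and the dual identification of the number of blocks at coalescent time $h$ as $\Pois(2u/(bh))$ given $\bar\mfU_{t-h}=u$.
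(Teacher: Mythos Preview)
Your proposal is correct and follows essentially the same route as the paper: condition on the total mass path, invoke the strong conditioned duality of Corollary~\ref{cor.1226} to identify the $h$-top of $\mfU_t(\bar\mfu)$ with that of $\bar\mfu_t\cdot\mfC_t^\infty(\bar\mfu)$, observe that the $2h$-balls correspond exactly to the partition elements at coalescent time $h$, and match this against the Cox cluster representation of Theorem~\ref{T:BRANCHING}(b). The paper's proof is briefer and dispatches what you flag as the ``main obstacle'' (matching the mass assignments) simply by appealing to the \emph{uniqueness up to permutation} of the decomposition of $\lfloor\mfU_t^{\mathrm{Fel}}\rfloor(h)$ into $\U(h)$-elements, rather than invoking exchangeability or the branching property again; since both descriptions decompose the same random element of $\U(h)^\sqcup$, the pieces must agree in law, and this makes your extra bookkeeping unnecessary.
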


\begin{proof}\label{pr.3655}
  Observe that we can decompose
  $\lfloor \mfU^{\mathrm{Fel}}\rfloor (h)$ uniquely in subfamilies
  corresponding in a representative of $\mfU^{\mathrm{Fel}}$ to
  disjoint $2h$-balls in $\U(h)$, i.e.\ write
  $\lfloor \mfU^{\mathrm{Fel}}\rfloor(h)={\mathop{\sqcup}\limits_{i
      \in I}}^h \mfu_i$. Then we know from the \Levy{}-Khintchine
  representation and the fact that the state is a Markov branching
  tree that these are independent identically distributed random
  elements and their number is $\Pois(b \bar\mfu_{t-h})$-distributed
  and the law of one is
  $\varrho_h^t \in \mcM_1(\U(h)^\sqcup \setminus \{\ntree\})$ which
  gives full measure to $\U(h)$, which we decompose in masses and
  state of the genealogies in $\U_1$.

  First we decompose the mass into the pieces associated with the open
  $2h$-balls. This is generated autonomously and gives the
  $\bar\mfu_i^{(h)}$ for our decomposition. On the other hand we can
  decompose the $h$-top of $\mfC_t^\infty(\bar \mfu)$ in disjoint
  $2h$-balls (uniquely since the coalescence times have a continuous
  distribution). By definition of the metric for
  $\mfC_t^\infty(\bar \mfu)$, in a representative these $2h$-balls
  correspond to the partition elements at time $h$. Therefore
  conditioning on $\bar \mfu_{t-h}=u$ is the law of the subspace
  spanned by a partition element at running time $h$ of the
  coalescent.

  By the uniqueness of the decomposition of $\mfU^{\mathrm{Fel}}_t$ in
  $\U(h)$-elements up to permutations the claim follows.
\end{proof}

The theme of a representation via Cox point processes we will take up
again below in the form of backbone construction for the conditioned
processes.

\subsubsection{Excursion law, entrance law and process conditioned on
  survival}
\label{sss.entrance}
Here we discuss first the key ingredients for the better understanding
of depth-$h$ single ancestor family at the time $t$ populations, i.e.\
of the \emph{cluster law $\varrho_h^t$} from Theorem~\ref{T:BRANCHING}
and for the further discussion of the longtime behavior, namely the
\textit{excursion law} or the \textit{entrance law} of the $\U$-valued
Feller diffusion starting from the zero element $\ntree$. This will give
the description of the typical \emph{founding fathers family}.

The second important object here is the process $\mfU^T$ arising from
$\mfU$ \emph{conditioned} on the event to survive until a fixed time
$T$. Note that $\mfU$ is a process that goes \emph{extinct} in an
a.s.\ finite time. The process $\mfU^T$ is in close relationship with
the entrance and excursion laws. The conditioned process $\mfU^T$ will
be characterized in the main Theorem~\ref{TH.MARTU}, and in its
Corollary~\ref{cor:elcl} we relate it to the excursion law of the
$\U$-valued Feller diffusion.

All these topics will also be crucial later studying the population
surviving for \emph{long} time. A good summary of notion and results
on entrance laws, excursion laws and related concepts in the context
of branching processes is found in the monograph \cite{Li2011}; see in
particular Chapter~8 and Section~A.5.

\paragraph{Excursion law and entrance law of
  \texorpdfstring{$\mfU$}{U} from the zero element \texorpdfstring{$\ntree$}{0}}
We want to study here the measure $\varrho_h^t$ and relate it to the
\emph{entrance law from the zero tree $\ntree$}. Let us first extend
the notion of admissible paths from Definition~\ref{def:adm_u}.

\begin{definition}[Admissible total mass paths II: Excursions and
  conditioning]
  \label{def:adm_uII}
  We call a function
  $\bar \mfu=(\bar \mfu_t)_{t \ge 0} \in C
  \left([0,\infty),[0,\infty)\right)$ \emph{admissible} as an
  excursion of a total mass path of a $\U$-valued Feller diffusion if
  $\bar\mfu_0=0$ and there are
  $0\le T_{\mathrm{ent}} < T_{\mathrm{ext}} \le \infty$, so that
  $\bar \mfu_t>0$ for all $t \in (T_{\mathrm{ent}},T_{\mathrm{ext}})$
  and $\bar \mfu_t = 0$ otherwise. Furthermore, for all
  $r \in (T_{\mathrm{ent}},T_{\mathrm{ext}})$ we assume
  $\int_{T_{\mathrm{ent}}}^{r} 1/\bar\mfu_t \, \dx t =\infty$ and in the
  case $T_{\mathrm{ext}} <\infty$ we also assume
  $\int_r^{T_{\mathrm{ext}}} 1/\bar\mfu_t \, \dx t =\infty$.
\end{definition}

We start with the \emph{excursion law} for the \emph{mass process}.
For the total mass process the excursion law from $0$, denoted by
$\bar P_0$, exists and is well known. If we denote by
$\bar P_\varepsilon$ the total mass process starting with mass
$\varepsilon>0$ then (for the topology see below)
\begin{align}
  \label{e1369tm}
  \bar P_0 = \lim_{\varepsilon \to 0} \frac{1}{\varepsilon} \;
  \bar P_{\varepsilon}.
\end{align}
This is assertion (3a) in \cite{PY82}; see also Theorem~1 in
\cite{hutzenthaler2009} for a rigorous proof. We claim that the paths
are also under the entrance law admissible, i.e.\ we have
\begin{align}
  \label{eq:39}
  \bar P_0 (A)=1,
\end{align}
where $A$ is the set of all admissible paths. As before we use here
\cite[Lemma~1.6 in Ch. 9]{EK86} and \cite{DG03}. In Proof sketch of
Proposition~\ref{prop:ppFD} we explain how to use these references.
Our goal here is to lift this result to the $\U$-valued setting.

To this end, for $\ve>0$ we consider the $\U$-valued Feller diffusion
$\mfU = (\mfU_t)_{t \ge 0}$ with initial state
$\mfU_0 = \ve \cdot \mfe$, where $\mfe$ is the unit element from
\eqref{eq:null-one-tree}. We set
\begin{align}
  \label{e1435a}
  P_{\ve \cdot \mfe} \coloneqq \mcL [(\mfU_t)_{t \ge 0} | \mfU_0 = \ve
  \cdot \mfe].
\end{align}
Since survival of the $\U$-valued Feller diffusion depends only on the
total mass Markov process on $\R_+$ which is autonomous, we can
condition the $\U$-valued Feller diffusion on the total mass process
$\bar\mfU = (\bar \mfU_t)_{t \ge 0}$ which starts in $\varepsilon$.
For a realization $\bar\mfu=(\bar\mfu_t)_{t\ge 0}$ of this process we
define
\begin{align}
  \label{e1435n}
  P^{\bar \mfu}_{\ve \cdot\mfe}
  & \coloneqq \mcL \bigl[\mfU(\bar\mfu)| \bar\mfu_0 =\varepsilon \bigr],\\
  \label{e1435}
  \wh P^{\bar \mfu,\varepsilon}_{\mfe}
  & \coloneqq \mcL\bigl[\wh\mfU(\bar\mfu)| \bar\mfu_0 = \varepsilon\bigr]
    = \mcL \bigl[\wh\mfU^{\mathrm{FV}}(\bar\mfu)) | \bar\mfu_0 =\varepsilon\bigr].
\end{align}
Here $\mfU(\bar\mfu)= (\mfU_t(\bar\mfu))_{t\ge 0}$ is the $\U$-valued
diffusion conditioned on the total mass path and
$\wh \mfU^{\mathrm{FV}}(\bar\mfu) = (\wh
\mfU^{\mathrm{FV}}_t(\bar\mfu))_{t\ge 0}$ is the time-inhomogeneous
$\U_1$-\emph{valued Fleming-Viot diffusion} obtained by taking the
resampling rate $d(t) = b/\bar \mfu_t$ at time $t$ and initial state
$\wh \mfU^{\mathrm{FV}}_0 = \mfe$. In particular, the generator is as
given in \eqref{e746} and the process is the one from
Corollary~\ref{prop.834}. Recall also relation \eqref{eq:coneq} between
the full and the pure genealogy part of the conditioned process.

Since we deal here with excursion measures, which are typically
\emph{$\sigma$-finite}, we need the following generalization of the
concept of weak convergence of probability measures on the path spaces
$C([0,T],\U)$ or $C([0,T],\R_+)$. A special role is played by the path
equal to the zero element of $\U$ corresponding to starting in the
trap. We here consider the open $\ve$-neighborhood of
$\underline{\ntree}$ and the closed complements (which are Polish
spaces), where we want to have the restriction of our measures to
converge weakly as finite measures. This is usually formalized as
follows (see\cite{DVJ08,LR16} for this object).

For $\sigma$-finite measures on $C([0,T),\U\setminus \{\ntree\})$,
where \emph{$\underline{\ntree}$ is the constant path equal to
  $\ntree$} we introduce the \emph{weak$^\sharp$-topology} with
respect to the \emph{point $\underline{\ntree}$ as infinity point}.
Roughly speaking sequences of $\sigma$-finite measures converge if
their restrictions to complements of open $\ve$-neighborhoods of
$\underline{\ntree}$ converge as finite measures weakly.

The following result is the announced generalization of
\eqref{e1369tm} to the $\U$-valued setting.

\begin{proposition}[$\U$-valued Feller excursion law and entrance law
  from $\ntree$]\label{prop.1382el} 
  For $\varepsilon>0$ let $P_{\varepsilon \cdot \mfe}$ be the law on
  $C([0,\infty),\U)$ defined in \eqref{e1435a} and
  $\wh P^{\bar\mfu,\varepsilon}_\mfe$ in \eqref{e1435}. Then we have
  \begin{align}
    \label{e1377}
    P_{\varepsilon \cdot \mfe} = \int \wh P^{\bar \mfu,\varepsilon}_\mfe \; \bar
    P_\varepsilon (\dx \bar \mfu).
  \end{align}
  Furthermore the following limit exists (w.r.t.\ $\ntree$)
  \begin{align}
    \label{e1369}
    P_\ntree  \coloneqq \wlim_{\varepsilon \to 0}
    \frac{1}{\varepsilon} \; P_{\ve \cdot \mfe},
  \end{align}
  and \eqref{e1377} holds for $\ve=0$. The corresponding entrance law
  of the $\U$-valued Feller diffusion from the null element $\ntree$
  is given by $\{P_\ntree(\mfU_t \in \cdot) : t >0\}$.
\end{proposition}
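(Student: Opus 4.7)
The plan is to reduce the proposition to two things already available: the disintegration of $\mfU$ into its autonomous total mass path and the (time-inhomogeneous) $\U_1$-valued Fleming-Viot kernel from Corollary~\ref{cor.834}, together with the known excursion limit $\frac{1}{\varepsilon}\bar P_\varepsilon\to\bar P_\ntree$ for the $\R_+$-valued Feller diffusion recorded in \eqref{e1369tm}.

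\emph{Step 1 (Identity \eqref{e1377}).} Apply Corollary~\ref{cor.834} and relation \eqref{eq:coneq}: conditional on a realization $\bar\mfu$ of the autonomous total mass process $\bar\mfU$, the law of the full process $(\mfU_t)_{t\ge0}$ is (in our notation) the pushforward of $\wh P^{\bar\mfu}_\mfe$ under $(\wh\mfu_t)_{t\ge0}\mapsto(\bar\mfu_t\cdot\wh\mfu_t)_{t\ge0}$. Disintegrating $P_{\varepsilon\cdot\mfe}$ along $\bar P_\varepsilon$ gives \eqref{e1377}; this uses only measurability of the Fleming-Viot kernel in the path parameter $\bar\mfu$, which is inherited from the well-posedness of the martingale problem with generator \eqref{e939} for $d(t)=b/\bar\mfu_t$ on admissible paths (Definition~\ref{def:adm_u}).

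\emph{Step 2 (Candidate limit and $w^\sharp$-convergence).} Using \eqref{e1369tm}, define the candidate excursion law by
\begin{align*}
  P_\ntree \coloneqq \int \wh P^{\bar\mfu}_\mfe \; \bar P_\ntree(\dx\bar\mfu),
\end{align*}
interpreted as a $\sigma$-finite measure on $C([0,\infty),\U)$ which gives infinite mass to any neighborhood of $\underline{\ntree}$ but finite mass to the complement $A_\delta$ of any open $\delta$-neighborhood of $\underline{\ntree}$. The mass excursion measure $\bar P_\ntree$ is concentrated on admissible excursion paths in the sense of Definition~\ref{def:adm_uII}, on which the conditioned Fleming-Viot kernel $\wh P^{\bar\mfu}_\mfe$ is well-defined; for such $\bar\mfu$ the Fleming-Viot process enters from $\mfe$ as $t\downarrow T_{\mathrm{ent}}$ (the reciprocal rate diverges, forcing immediate coalescence) and converges to $\ntree\in\U_1$ as $t\uparrow T_{\mathrm{ext}}$ by the same mechanism. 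Combining \eqref{e1377} with \eqref{e1369tm} and Fubini gives for every bounded continuous $F$ on $C([0,\infty),\U)$ with support bounded away from $\underline{\ntree}$
\begin{align*}
  \frac{1}{\varepsilon}\int F\,\dx P_{\varepsilon\cdot\mfe}
  = \int \Bigl(\int F \,\dx[\wh P^{\bar\mfu}_\mfe \circ (\bar\mfu_\cdot\cdot)^{-1}]\Bigr)\,
  \frac{1}{\varepsilon}\bar P_\varepsilon(\dx\bar\mfu)
  \longrightarrow \int F\,\dx P_\ntree,
\end{align*}
provided the map $\bar\mfu\mapsto\int F\,\dx[\wh P^{\bar\mfu}_\mfe\circ(\bar\mfu_\cdot\cdot)^{-1}]$ is bounded and continuous on the support of $\bar P_\varepsilon$ restricted to mass paths bounded away from $\underline{0}$ in the sense of $w^\sharp$. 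This yields the $w^\sharp$-convergence of \eqref{e1369}.

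\emph{Step 3 (Main obstacle).} The delicate point is the bounded-continuous-dependence of the Fleming-Viot kernel $\bar\mfu\mapsto\wh P^{\bar\mfu}_\mfe$ on the mass path, since the resampling rate $d(t)=b/\bar\mfu_t$ is unbounded near the start and the extinction point of $\bar\mfu$. The plan to handle this is to (i) use the Feynman-Kac/conditioned duality of Theorem~\ref{PROP.1007} to express integrals of polynomials $\Phi=\Phi^{n,\varphi}$ against $\wh P^{\bar\mfu}_\mfe$ at finitely many times $0<t_1<\dots<t_k$ through the time-inhomogeneous distance-matrix coalescent $\mfC(\bar\mfu)$, whose coalescence rates depend continuously on $\bar\mfu$ uniformly on compact sets of admissible paths bounded away from the extinction time; (ii) invoke Lemma~\ref{lem:Pi:separating} together with the total-mass moment control from the Feller diffusion to lift finite-dimensional convergence to $w^\sharp$-convergence of path measures restricted to $A_\delta$. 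A clean alternative implementation of Step 2, avoiding continuity questions for the kernel, is a direct moment computation using the Feynman-Kac duality of Theorem~\ref{T:DUALITY}: for $\Phi=\Phi^{n,\varphi}\in\Pi$,
\begin{align*}
  \frac{1}{\varepsilon}\E_{\varepsilon\cdot\mfe}[\Phi(\mfU_t)]
  = \E_{(p_0,\dr_0')}\Bigl[\varepsilon^{|p_t|-1}\varphi(\dr_t')
    \exp\Bigl(\int_0^t b\tbinom{|p_s|}{2}\,\dx s\Bigr)\Bigr],
\end{align*}
because $H^\varphi(\varepsilon\cdot\mfe,(p,\dr'))=\varepsilon^{|p|}\varphi(\dr')$; the $\varepsilon\to0$ limit extracts the event $\{|p_t|=1\}$ and identifies the moments of the candidate entrance law $P_\ntree(\mfU_t\in\cdot)$. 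The $w^\sharp$-convergence of the path measures then follows by combining these moment limits with the disintegration of Step~1 and the mass-level convergence \eqref{e1369tm}.
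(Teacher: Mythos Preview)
Your approach is essentially the same as the paper's: both prove \eqref{e1377} by the disintegration of Corollary~\ref{cor.834}, then deduce \eqref{e1369} from the $\R_+$-valued excursion limit \eqref{e1369tm} together with continuity of the Fleming-Viot kernel $\bar\mfu\mapsto\wh P^{\bar\mfu}_\mfe$ on admissible paths. The paper's proof is very terse---it simply asserts this continuity and moves on---whereas you correctly flag it as the main technical point and sketch two ways to handle it (via the conditioned duality of Theorem~\ref{PROP.1007}, or via a direct Feynman-Kac moment computation). Your alternative moment computation $\frac{1}{\varepsilon}\E_{\varepsilon\cdot\mfe}[\Phi(\mfU_t)]=\E_{(p_0,0)}[\varepsilon^{|p_t|-1}\varphi(\dr_t')\exp(\int_0^t b\binom{|p_s|}{2}\,\dx s)]$ is a clean supplementary argument not present in the paper; it identifies the time-$t$ marginals of $P_\ntree$ directly without invoking kernel continuity, though you still need the disintegration and mass-level $w^\sharp$-convergence to upgrade to path-space convergence.
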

\begin{proof}
  The equation \eqref{e1377} is clear. Next we note that \eqref{e1377}
  holds for $\bar P_0$ by \eqref{e1369tm}. We shall see that for any
  $\ve \ge 0$ the function
  $\bar\mfu \mapsto \wh P^{\bar \mfu,\ve}_\mfe$ is a continuous
  function on the set of admissible functions and in particular
  $\wh P^{\bar \mfu,0}_\mfe$ exists. This is clear for every $\ve>0$.
  By duality we see that the value at $t=0$ must be $\mfe$ and the law
  of the path converges as $\ve \to 0$ and $\wh P_\mfe^{\bar \mfu,\ve}$
  depends conditionally on $\bar\mfu$ in $A$ for $\ve\ge 0$. Then the
  assertion \eqref{e1369} follows from \eqref{e1377} and the quoted
  $\R_+$-valued result.

  For the last assertion concerning the entrance law, we combine
  Corollaries~\ref{prop.tvF} and \ref{prop.834} and the fact that the
  resampling rate $d(t)$ depends only on the current state, so that
  evolving a measure means \emph{first} evolving the total mass with
  its transition kernel and then based on the new piece of the path of
  the mass, an then \emph{second} evolving the pure genealogy path
  with the time-inhomogeneous Fleming-Viot kernel on $\U_1$.
\end{proof}

We call $P_\ntree$ the \emph{excursion law} of the $\U$-valued Feller
diffusion \emph{from the null element $\ntree$}. If we want to
consider excursions starting at time $\alpha$ (instead of time $0$) we
would have to include $\alpha$ in the notation.

In general an excursion law is a $\sigma$-finite measure as is the
case here. However, in our context $P_{\ntree} (\bar\mfU_t>0)$ is
finite for any $t>0$ and we focus now on this restricted and
normalized version of the excursion law. We relate now the law
$\rho_t^h$ of the depth-$t$ single ancestor subfamily at the time $t$
in the $\U$-valued Feller diffusion to the entrance law from
Proposition~\ref{prop.1382el}. For the proof of the following result
see p.~\pageref{pr.5155}.
\begin{proposition}[$\U$-valued Feller excursion law and the measure
  $\varrho^t_t$]\label{cor.1382}
  Let $P_{\ntree;t}^{\mathrm{prob}}$ denote the time $t$ marginal of
  the excursion law $P_\ntree$ conditioned on survival beyond time
  $t$, i.e.\ normalized by $P_{\ntree} (\bar\mfU_t>0)$. Let
  $\varrho^t_t$ be as defined in \eqref{eq:38}. Then we have
  \begin{align}
    \label{e1611}
    P_{\ntree;t}^{\mathrm{prob}} = \varrho_t^t.
  \end{align}
  Note that the r.h.s.\ equals $\mcL \bigl[\mfY^{(t)} \bigr]$; see
  \eqref{e1403}. Furthermore $\mfY^{(t)}$ equals also the limiting
  Yule tree from \eqref{e1408} (with $h=t$ in both cases).
\end{proposition}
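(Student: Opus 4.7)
The plan is to reduce the identity \eqref{e1611} to Theorem~\ref{T:BRANCHING}(b) via the Cox cluster representation in the regime of small initial mass. The second equality $\varrho_t^t = \mcL[\mfY^{(t)}]$ is immediate from \eqref{e1403} with $h=t$, so all the substance lies in the first equality $P_{\ntree;t}^{\mathrm{prob}} = \varrho_t^t$. The driving picture: for initial mass $\varepsilon$ tiny, conditional on survival beyond time $t$ the state $\mfU_t$ consists, with probability tending to $1$, of exactly one depth-$t$ mono-ancestor subfamily, whose law is by definition $\varrho_t^t$.

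To implement this, I would first invoke Proposition~\ref{prop.1382el} to realize the restricted time-$t$ marginal $P_{\ntree;t}|_{\{\bar\mfu > 0\}}$ as the weak limit of the finite measures $\varepsilon^{-1} P_{\varepsilon\cdot\mfe}(\mfU_t \in \cdot)|_{\{\bar\mfu > 0\}}$. Next, I would apply Theorem~\ref{T:BRANCHING}(b) to the initial state $\varepsilon\cdot\mfe$ with depth parameter $h=t$. Since $\varepsilon\cdot\mfe$ is supported at a single point, the state $\mfU_t$ automatically lies in $\U(t)$ and coincides with its own $t$-top, so the Cox cluster representation \eqref{tv16} reduces to
\[
  \mfU_t = \mathop{{\bigsqcup}^t}_{i=1,\ldots,M_t(\varepsilon)} \mfU_i^{(t)},
\]
where by \eqref{e1402} the mixing measure $m_t^t$ is the Dirac mass at $2\varepsilon/(bt)$ (because $Y_0=\varepsilon$ is deterministic), hence $M_t(\varepsilon) \sim \Pois(2\varepsilon/(bt))$, and the $\mfU_i^{(t)}$ are i.i.d.\ with law $\varrho_t^t$, independent of $M_t(\varepsilon)$. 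For any Borel $A \subset \U\setminus\{\ntree\}$ bounded away from $\ntree$, a direct expansion gives
\begin{align*}
  \varepsilon^{-1} P_{\varepsilon\cdot\mfe}(\mfU_t \in A)
  &= \varepsilon^{-1}\sum_{k\ge 1} \P(M_t(\varepsilon)=k)\, \mu_k(A)\\
  &= \frac{2}{bt}\,e^{-2\varepsilon/(bt)}\,\varrho_t^t(A) + O(\varepsilon),
\end{align*}
where $\mu_k$ denotes the law of the $t$-concatenation of $k$ i.i.d.\ copies of $\varrho_t^t$ and we used $\P(M_t(\varepsilon)\ge 2) = O(\varepsilon^2)$. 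Letting $\varepsilon \to 0$ yields $P_{\ntree;t}(A) = (2/(bt))\,\varrho_t^t(A)$. Specializing to $A = \{\bar\mfu > 0\}$ gives the total mass $P_{\ntree;t}(\bar\mfU_t > 0) = 2/(bt)$, and dividing delivers $P_{\ntree;t}^{\mathrm{prob}}(A) = \varrho_t^t(A)$, as claimed.

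The main (mild) obstacle is a topological check: that the $w^\sharp$-convergence from \eqref{e1369} is strong enough to justify weak convergence of the above restrictions to $\{\bar\mfu \ge \delta\}$ as finite measures for each $\delta > 0$. These sets are closed in $\U$, bounded away from the path $\underline{\ntree}$, and exhaust $\{\bar\mfu > 0\}$ as $\delta \downarrow 0$; since this is precisely the situation that the $w^\sharp$-topology with $\underline{\ntree}$ as point at infinity is designed to handle, the pointwise identity extends to the full statement of the proposition.
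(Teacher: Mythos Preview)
Your proof is correct and takes essentially the same approach as the paper: both identify the normalized excursion-law marginal with $\varrho_t^t$ via the small-initial-mass limit and the Cox cluster structure of Theorem~\ref{T:BRANCHING}(b). The paper's proof is a one-line reference back to that theorem's proof (implicitly leaning on Proposition~\ref{thm:bransemig-brantree:1}, where $\hat\varrho_h^h$ is constructed precisely as the boundedly weak limit of $nQ_h(\tfrac{1}{n}\mfe,\cdot)\,\1\{\cdot\neq\ntree\}$); your argument makes the Poisson expansion and the normalization by $2/(bt)$ explicit.
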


This suggests to not only look at time-$t$ marginals of the excursion
law. Namely there is one more \emph{family} of excursion laws and
associated entrance laws which will be relevant in the following.
Namely, we start from the \emph{normalized excursion law} of $\mfU$
from above and restrict it to paths on $[0,T]$ which means replacing
the excursion law by a law of paths escaping $\ntree$ till time $T$.
It will be introduced below in the next paragraph in the context of
processes conditioned to survive until some fixed time $T>0$. We
denote the corresponding \emph{probability law} by
\begin{align}
  \label{e1424}
  P^{\mathrm{prob}}_{\ntree;0,T} (\, \cdot\, )
  = P_{\ntree} (\, \cdot \, \cap \{\bar\mfU_T>0\})/ P_{\ntree} (\bar\mfU_T>0).
\end{align}
As we shall see below, for each $T>0$ this induces a collection of
entrance laws indexed by $t \in (0,T]$. This family of excursion laws
is related to the \Levy{} measure of the $\U$-valued Feller diffusion
$\mfU=(\mfU_t)_{t\ge 0}$ and a dynamical representation will be given
in Theorem~\ref{TH.MARTU}(c) below, i.e.\ we specify the process for
which the marginals of $P^{\mathrm{prob}}_{\ntree;0,T}$ form an
entrance law from the $\ntree$-element.

\begin{remark}
  \label{r.2007}
  It is easy to see by explicit calculation using the normal FK-dual
  that the Feynman-Kac dual of the entrance law from the zero element
  of the $\U$-valued Feller diffusion can be represented via the
  coalescent conditioned to coalesce by time $t$. Indeed, via
  conditional duality the law $\wh P_{\bar \mfu}$ has a
  time-inhomogeneous coalescent as a dual which coalesces to one
  lineage by time zero.
\end{remark}

\paragraph{The \texorpdfstring{$\U$}{U}-valued Feller diffusion
  conditioned to survive until time \texorpdfstring{$T$}{T}}

The next object is the process $\mfU$ \emph{conditioned on
  non-extinction by time $T$}, i.e.\ on $\bar \mfU_T > 0$, for a fixed
$T>0$ which is a.s.\ the event $\bar\mfU_t>0$ for $t \in [0,T]$. This
process will be denoted by
\begin{align}
  \label{e1011}
  \mfU^T = \bigl(\mfU^T_t \bigr)_{t \in [0,T]}.
\end{align}

For the total masses of $\mfU$, i.e.\ the Feller diffusion, it is well
known that conditioned to survive till time $ T $ we get a
\emph{time-inhomogeneous generalized (i.e.\ state-dependent)
  super-critical branching diffusion} with ($T$-dependent) time
inhomogeneous drift and the original volatility coefficient which was
explicitly calculated in special cases (see \cite{LN68}) but in
addition to a generalization this needs a \emph{correction}.

We proceed here differently with the calculation and get the following
state and time dependent coefficients for the total mass process
\begin{align}
  \label{e984}
  \wt a_T(t,x) = \frac{2x/(T-t)}{\exp (2x/(b(T-t)))-1}, \quad \wt
  a_T(t,0)= b\; \text{ and } \; \wt b_T(t,x) = bx, \quad t \in [0,T].
\end{align}
In \cite{LN68} the formula $\wt b_T(t,x)=x(2+\wt a_T(t,x))$ appears,
which is \emph{not correct}. The computation is carried out in
Section~\ref{sec:comp-diff-coeff}.

We get the state- and time-dependent (positive)
\emph{super-criticality coefficient}
\begin{align}
  \label{e1445}
  a_T(t,x) = \frac{2/(T-t)}{\exp (2x/(b(T-t)))-1}
\end{align}
in the individual rate of branching at time $s$ and state $x$. Note
that in particular $a_T(t,x)$ replaces the coefficient $a$ in
\eqref{eq:FelBDiff-ab}, $b$ stays the same and $c=0$.
Note also that as $x \downarrow 0$ we have $\wt a_T (t,x) \sim b$ but
$a_T(t,x) \sim b/x$. This is what we see in the generator on the level
of the genealogies.

\begin{remark}[Scaling property]
  \label{r.1627}
  Note that under a time-mass scaling $x \mapsto a^{-1}x$,
  $s \mapsto a^{-1}s$ and the time-horizon scaling
  $T \mapsto a^{-1} T$ the term $\wt a_T(s,x)$ remains invariant.
  Therefore under this rescaling the mass process $\bar\mfU^T$ is
  invariant, if the initial state is zero.
\end{remark}

With the rate $a_T(\cdot,\cdot)$ of super-criticality, and rate
$b_T(\cdot,\cdot)=b$ of critical branching we can run a
time-inhomogeneous \emph{generalized} super-critical $\U$-valued
Feller diffusion $(\mfU_t^T)_{t \ge 0}$. This means that we define the
generator of $\mfU^T$ denoted by
\begin{align}
  \label{e1336}
 \Omega^{\uparrow,(a_T,b_T)}
\end{align}
as the generalization of the operator $\Omega^{\uparrow,(a,b)}$
described in \eqref{eq:3omab} by replacing in \eqref{e709} $a$ and $b$
at time $t$ by $a_T(t,\bar\mfu)$ respectively $b_T(t,\bar\mfu)$. One
can make this process time-homogeneous with state space $\R \times \U$
by passing to the \emph{time-space process} $(t,\mfU_t)_{t \ge 0}$.
Here we need to extend the domain of test functions to achieve that
this domain is mapped under $\Omega^{\uparrow,(a_T,b_T)}$ into itself.
Here we use $\mcD_1$ and $\mcD_2$ from \eqref{e1212} respectively
\eqref{e1189} to obtain linear operators.

\begin{remark}
  \label{r.1095}
  Note that these processes do \emph{not} have the branching property
  as previously defined because the super-criticality coefficient
  $a_T$ is \emph{state-dependent}. This requires using different
  techniques to show uniqueness. Since the state dependence is only
  via the total mass process we will use a \emph{conditional} duality.
\end{remark}

\begin{remark}
  \label{r.143}
  Note that $a_T(t,x)$ converges to $b/x$ as $T \to \infty $, i.e.\ in
  the limit the super-criticality coefficient has a \emph{pole} at
  $x=0$. This means that the total mass process has a constant drift
  $b$ and that at small mass the super-criticality rate of the
  individuals diverges.
\end{remark}

Before we state the well-posedness of $\mfU^T$, we point out that
there is another complication we have to handle for $\mfU^T$. Namely
we will need this process starting at time $s$ with zero-mass. One
immediate consequence is that the process is a one ancestor
ultrametric space, i.e.\ there exists exactly one open $2u$-ball at
time $u$, here $s < u < t$. (This follows from the conditional
duality, which we develop in the proof.) Therefore we construct this
process below in part (b) by starting from time $s + \varepsilon$ and
considering then the limit $\ve \to 0$ yielding the state
$\ntree = (0,\mfe)$ for the pair as limit of
$(\bar \mfU_t, \wh \mfU_t)$.

\begin{theorem}[Martingale problem: $\U$-valued Feller conditioned on
  survival till time $T$]
  \label{TH.MARTU}
  Fix $T>0$ and consider functions $a$ and $b$ on $[0,\infty)$ with
  $a\coloneqq \wt a_T(\cdot,\cdot)$ and $b\coloneqq \wt b_T(s,x)=bx$
  as in \eqref{e984}.
  \begin{enumerate}[(a)]
  \item For every $T>0$ the process $\mfU^T=(\mfU^T_t)_{t \in [0,T]}$,
    recall \eqref{e1011} for a definition, is a time-inhomogeneous
    Markov process, with values in $\U$.
  \item For any $\mfu \in \U\setminus \{\ntree\}$ the
    $(\delta_\mfu, \Omega^{\uparrow, (a,b)}, \Pi(C^1_b))$-martingale
    problem (recall \eqref{e1336}) is well-posed. The solution is given by
    $\mfU^T=(\mfU^T_t)_{t \in [0,T]}$ with $\mfU^T_0 =\mfu$. If we
    start this process at time $s$, we replace $T$ by $T-s$ in
    formulas \eqref{e984} and \eqref{e1445}.
  \item For the initial state $\ntree = (0,\mfe)$ we can construct an
    entrance law $\mcL[(\mfU_t^{T,\mathrm{entr}})_{t \in (0,T]}]$ of
    $(\mfU_t^T)_{t \in (0,T]}$ so that as $t \downarrow 0$,
    $(\bar \mfU^T_t,[U_t^T,r_t^T, \hat \mu_t^T])$ converges (weakly) to
    $\ntree = (0,\mfe)$ on $\R_+ \times \U_1$.

    Furthermore, for $P^{\mathrm{prob}}_{\ntree;0,T}$ from
    \eqref{e1424} we have
    \begin{align}
      \label{e2488}
      \mcL\Bigl[\bigl(\mfU^{T,\mathrm{entr}}_t\bigr)_{t \in [0,T]}
      \Bigr]=P_{\ntree;0,T}^{\mathrm{prob}}.
    \end{align}
  \item For general initial (random) states $\mfU_0^T$ with law
    supported on $\U\setminus \{\ntree\}$ the process $\mfU^T$ is
    defined by a martingale problem similarly to \eqref{eq:3pnu}.

  \item Consider the initial non-random state
    $\mfu_0 = (\bar\mfu_0,\hat\mfu_0) \ne \ntree$. The process
    $\bar \mfU^T=(\bar \mfU_t^T)_{t \in [0,T]}$ starting in
    $\bar \mfu_0$ is a state-dependent super-critical $\R_+$-valued
    ``branching'' diffusion with super-criticality coefficient
    $a_T(t,\bar \mfU_t)$ from \eqref{e1445} and volatility
    $b\bar\mfu$.

    Conditioned on $\bar \mfU^T$, the process
    $\wh \mfU^T =(\wh \mfU^T_t)_{t \in [0,T]} = ([U_t,r_t,\wh
    \mu_t])_{t \in [0,T]}$ is a time-inhomogeneous $\U_1$-valued
    Fleming-Viot diffusion with (finite) resampling rate
    $b/\bar \mfU_t^T$ at time $t \in [0,T]$ and starting in
    $\hat \mfu_0$.
  \end{enumerate}
\end{theorem}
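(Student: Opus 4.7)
The strategy is to realize $\mfU^T$ as the Doob $h$-transform of the unconditioned $\mfU$ by the event $\{\bar\mfU_T>0\}$. Since survival depends only on the total mass, this event lies in $\sigma(\bar\mfU)$, and the governing space-time harmonic function is
\begin{align*}
h_T(t,x) = \bar P_x(\bar\mfU_{T-t}>0) = 1-\exp\bigl(-2x/(b(T-t))\bigr),
\end{align*}
for which $(\partial_t+\Omega^{\mathrm{mass}})h_T=0$ by a direct check. I would first establish part (d): combining Corollary~\ref{C.tvF} (autonomy of $\bar\mfU$) and Corollary~\ref{cor.834} (genealogy given mass is time-inhomogeneous Fleming--Viot with rate $b/\bar\mfu_t$) and noting that $\{\bar\mfU_T>0\}\in\sigma(\bar\mfU)$, the regular conditional distribution of $\wh\mfU$ given $\bar\mfU$ is preserved under conditioning, while the marginal law of $\bar\mfU$ is replaced by its Doob $h$-transform, an $\R_+$-valued diffusion with unchanged volatility $bx$ and new drift $bx\,(\partial_x h_T)/h_T=\wt a_T(t,x)$; this is precisely the calculation correcting \cite{LN68}, carried out in Section~\ref{sec:comp-diff-coeff}. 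Part (a) then follows from the general Markov property of Doob $h$-transforms via the positive martingale $h_T(t,\bar\mfU_t)$.

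For part (b), I would compute the generator of $\mfU^T$ via $\Omega^T\Phi = h_T^{-1}(\partial_t+\Omega^\uparrow)(h_T\Phi)$ for $\Phi\in\Pi(C^1_b)$, using the product-rule decomposition \eqref{eq:r.742.1} and that $h_T$ depends only on $\bar\mfu$. The mass part reproduces the drift $\wt a_T$; the cross-term $bx(\partial_x h_T/h_T)\,\partial_x\bar\mfu^n$ acting on monomials $\Phi^{n,\varphi}(\mfu)=\bar\mfu^n\wh\Phi^{n,\varphi}(\hat\mfu)$ contributes exactly $n\,a_T(t,\bar\mfu)\,\Phi^{n,\varphi}(\mfu)$, matching the state-dependent analogue of \eqref{e709} with the coefficient $a_T$ of \eqref{e1445}. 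This identifies the generator as $\Omega^{\uparrow,(a_T,b_T)}$ from \eqref{e1336}. Existence is then immediate from the $h$-transform construction. For uniqueness, since the branching property fails (Remark~\ref{r.1095}), the Feynman--Kac duality of Theorem~\ref{T:DUALITY} does not apply directly; instead I would route uniqueness through the two components, using classical uniqueness of the $\R_+$-valued $h$-transform SDE and, given any admissible path $\bar\mfu$ (Definition~\ref{def:adm_uII}), the well-posedness of the conditional $\U_1$-valued time-inhomogeneous Fleming--Viot martingale problem via the coalescent duality of Theorem~\ref{PROP.1007}(a) with rate $b/\bar\mfu_t$. Reassembling the two components and invoking Lemma~\ref{lem:Pi:separating} yields joint well-posedness on $\Pi(C^1_b)$.

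For part (c), I would construct the entrance law as the weak limit of the laws $P_{\varepsilon\cdot\mfe}(\,\cdot\mid\bar\mfU_T>0)$ as $\varepsilon\downarrow 0$. By Proposition~\ref{prop.1382el} the weighted laws $\varepsilon^{-1}P_{\varepsilon\cdot\mfe}$ converge $w^\sharp$-ly to the $\sigma$-finite excursion law $P_\ntree$, and since $P_\ntree(\bar\mfU_T>0)\in(0,\infty)$ the normalized restrictions to the set $\{\bar\mfU_T>0\}$ converge weakly to $P^{\mathrm{prob}}_{\ntree;0,T}$, proving \eqref{e2488}. The collapse $\mfU^{T,\mathrm{entr}}_t\to\ntree=(0,\mfe)$ as $t\downarrow 0$ follows from the total mass tending to $0$ (classical Feller excursion) together with coalescence to a single ancestor, since the conditional coalescent rate $b/\bar\mfU_t$ diverges near the entrance point (cf.\ Remark~\ref{r.2007}).

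The main obstacle is the uniqueness step in part (b): the genuine state-dependence of $a_T$ destroys the generalized branching property, so there is no global Feynman--Kac dual for $\mfU^T$ and one must exploit the $\sigma(\bar\mfU)$-measurability of the conditioning to factor the problem into an autonomous $\R_+$-valued $h$-transform plus a conditional time-inhomogeneous Fleming--Viot diffusion. The delicate point is verifying that the relevant regular conditional distributions are measurable and well-defined on the admissible-path set (Definition~\ref{def:adm_uII}) uniformly enough to reassemble the full uniqueness statement on $\U$ through the test class $\Pi(C^1_b)$.
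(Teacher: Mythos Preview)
Your proposal is correct and follows essentially the same route as the paper: exploit $\{\bar\mfU_T>0\}\in\sigma(\bar\mfU)$ to factor the problem into an $h$-transformed mass diffusion plus an unchanged conditional time-inhomogeneous Fleming--Viot genealogy (the paper phrases this as the factorization \eqref{e3418}--\eqref{e4330} rather than naming the Doob $h$-transform, but the content is the same), with uniqueness routed through the conditional duality of Theorem~\ref{PROP.1007}. The gap you correctly flag at the end---reassembling the two factored uniqueness statements into well-posedness of the joint $\U$-valued martingale problem on $\Pi(C^1_b)$---is precisely what the paper closes by invoking the skew-martingale-problem framework of \cite{Gl12}, Theorem~8.1.4 (with $E_1=[0,\infty)$, $E_2=\U_1$, $A$ the mass-diffusion operator, $C$ the Fleming--Viot operator, and $B$ multiplication by $\bar\mfu^{-1}$), which supplies exactly the measurability machinery you are asking for.
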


The above result identifies the process defined as the $\U$-valued
Feller diffusion conditioned to survive up to time $T$ as a
time-inhomogeneous Markov process which is a \emph{super-critical,
  state-dependent branching process} and the state dependence and
time-inhomogeneity are present only in the super-criticality per
individual. We can also use $\wh\mfu^T$ to better understand the
excursions as described in the following result.

\begin{corollary}[Entrance law and conditional law]
  Denoting \label{cor:elcl} $P_\ntree^T$ the excursion law of the
  $\U$-valued Feller diffusion $\mfU$ normalized by
  $P_\ntree (\bar \mfU_T>0)$ and restricted to paths on $[0,T]$ we
  have $P_\ntree^T = \mathcal L[(\mfU_t^T)_{t \in [0,T]}]$.
\end{corollary}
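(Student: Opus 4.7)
The plan is to reduce the claim to Theorem~\ref{TH.MARTU}(c), in which all the real work has already been done. Unwrapping definitions, $P_\ntree^T$ is the probability measure obtained from the $\sigma$-finite excursion measure $P_\ntree$ by normalizing by the finite mass $P_\ntree(\bar\mfU_T > 0)$ (finite for every $T > 0$, as noted before Proposition~\ref{cor.1382}) and restricting to paths on $[0,T]$. Equivalently, $P_\ntree^T$ is $P_\ntree$ conditioned on the event $\{\bar\mfU_T > 0\}$ and restricted to $[0,T]$. Comparing with the definition in \eqref{e1424}, we obtain immediately
\[
  P_\ntree^T = P^{\mathrm{prob}}_{\ntree;0,T}.
\]

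Next I would apply equation \eqref{e2488} from Theorem~\ref{TH.MARTU}(c), which states
\[
  \mcL\bigl[(\mfU_t^{T,\mathrm{entr}})_{t\in[0,T]}\bigr] = P^{\mathrm{prob}}_{\ntree;0,T}.
\]
By the construction of part~(c), $(\mfU_t^{T,\mathrm{entr}})_{t\in[0,T]}$ is precisely the process $\mfU^T$ run from time $0$ with initial state $\ntree$ via the entrance law built there. Chaining the two identifications then yields $P_\ntree^T = \mcL[(\mfU_t^T)_{t\in[0,T]}]$, which is the assertion of the corollary.

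The only point that might require slight care is that the identification holds at the level of \emph{path measures} on $C([0,T],\U)$, not merely at the level of finite-dimensional marginals. This, however, is already built into Theorem~\ref{TH.MARTU}: well-posedness of the $(\delta_\mfu, \Omega^{\uparrow,(a_T,b_T)}, \Pi(C_b^1))$-martingale problem from part~(b) pins down the law of $(\mfU^T_t)_{t\in[s,T]}$ for every non-trivial starting point and every entrance time $s>0$, and the entrance law from part~(c), combined with the boundary condition $\mfU^T_0 = \ntree$, then characterizes the full path law uniquely. Consequently the corollary requires no new work beyond invoking Theorem~\ref{TH.MARTU}(c); it is a reformulation making the connection between the process conditioned to survive up to time $T$ and the conditioned excursion law explicit. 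The main obstacle, the actual identification of the entrance law with the normalized excursion, is the content of \eqref{e2488} and is thus handled upstream.
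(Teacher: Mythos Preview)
Your proposal is correct and matches the paper's treatment: the corollary is stated without a separate proof, being an immediate reformulation of Theorem~\ref{TH.MARTU}(c) via the identification $P_\ntree^T = P^{\mathrm{prob}}_{\ntree;0,T}$ and equation~\eqref{e2488}. Your additional remark about path-level identification is a reasonable clarification but not strictly needed, since \eqref{e2488} is already stated at the level of path laws.
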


The result in part (d) of Theorem~\ref{TH.MARTU} can be used to give
another conditional dual representation of $\varrho_h^t$.
\begin{corollary}[Coalescent representation of $\varrho_h^t$]
  \label{cor:CRr_h-t}
  Consider the $\U_1$-valued coalescent with time-inhomo\-geneous rates
  given by $(b/\bar\mfu_{t-s}^t)_{s\in [0,h]}$. Then, recalling from
  \eqref{eq:multaU} the notation $a[U,r,\mu]=[U,r,a\mu]$, we have
  \begin{align}
    \label{eq:CRr_h-t}
    \varrho_h^t = \mathcal L[\bar\mfu_h^t \cdot \mfC_h^\infty(\mfu^t)].
  \end{align}
\end{corollary}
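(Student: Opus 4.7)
The plan is to derive the representation by combining the structural decomposition of $\mfU^h$ (the $\U$-valued Feller diffusion conditioned to survive until time $h$) from Theorem~\ref{TH.MARTU}(d) with the strong conditioned duality of Corollary~\ref{cor.1226}. First I would reduce to the situation where the ``conditioning horizon'' matches the depth of the cluster. By Theorem~\ref{T:BRANCHING}(b) and the remark following it, $\varrho_h^t$ does not depend on $t$ (for time-homogeneous $b$), so it suffices to identify $\varrho_h^h$. Furthermore, $\varrho_h^h$ is the law of a single depth-$h$ mono-ancestor subfamily, which by the generalized branching property together with the excursion-law characterization of Proposition~\ref{cor.1382} coincides with $\mcL[\mfU_h^{h,\mathrm{entr}}]$, the time-$h$ marginal of $\mfU^h$ started from the $\ntree$-entrance law (equivalently, from a single founding ancestor).

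Next I would invoke Theorem~\ref{TH.MARTU}(d) applied to $\mfU^h$: its total mass $\bar\mfU^h$ is an autonomous time-inhomogeneous super-critical Feller diffusion, and conditional on a realization $\bar\mfu^h = (\bar\mfu^h_s)_{s\in[0,h]}$ of this path, the pure genealogy $\wh\mfU^h$ is a time-inhomogeneous $\U_1$-valued Fleming-Viot diffusion with resampling rate $b/\bar\mfu^h_s$ at forward time $s$, started from $\mfe$. Applying the strong conditioned duality of Corollary~\ref{cor.1226} to this Fleming-Viot identifies the law of $\wh\mfU_h^h$ given $\bar\mfU^h=\bar\mfu^h$ with the law of $\mfC_h^\infty(\bar\mfu^h)$, namely the $\U_1$-valued entrance law at backward-time $h$ of the Kingman coalescent run with time-inhomogeneous coalescence rate $b/\bar\mfu^h_{h-s}$ at backward-time $s\in[0,h]$ (the time-reversal accounting for the passage from forward Fleming-Viot to backward coalescent in the duality). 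Finally, the polar decomposition $\mfU_h^h = \bar\mfU_h^h\cdot\wh\mfU_h^h$, together with the scaling convention $a[U,r,\mu]=[U,r,a\mu]$, yields the claimed identity.

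The main obstacle is the first step, i.e.\ the identification of the law of a single subfamily in the Cox cluster decomposition of $\mfU_t$ with the time-$h$ marginal of $\mfU^h$ from the $\ntree$-entrance. The required argument combines three ingredients: the generalized branching property of Theorem~\ref{T:BRANCHING}(a) to isolate one subfamily, the entrance law characterization in Proposition~\ref{prop.1382el} to pass from $\varepsilon$-initial conditions to the $\ntree$-entrance, and the equality \eqref{e2488} of Theorem~\ref{TH.MARTU}(c) identifying the conditioned-to-survive entrance law with $P^{\mathrm{prob}}_{\ntree;0,h}$. Once this reduction is achieved the remainder is essentially bookkeeping: the structural statement of Theorem~\ref{TH.MARTU}(d) and the coalescent duality of Corollary~\ref{cor.1226} together produce the coalescent representation, with the coalescent rates obtained as the time-reversal of the Fleming-Viot resampling rates.
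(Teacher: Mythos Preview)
Your proposal is correct and follows the approach the paper intends: the corollary is stated immediately after Theorem~\ref{TH.MARTU} as a direct consequence of part~(d), without a separate proof, and your argument is precisely the natural unpacking of that hint---identify $\varrho_h^t$ with the time-$h$ marginal of the conditioned entrance law via Proposition~\ref{cor.1382} and \eqref{e2488}, then read off the polar decomposition from Theorem~\ref{TH.MARTU}(d), and finally invoke the strong conditioned duality (Corollary~\ref{cor.1226} or, more directly for an arbitrary admissible mass path, Theorem~\ref{PROP.1007}(a)) to express the $\U_1$-component as the time-inhomogeneous coalescent entrance law. The reduction to $t=h$ via the $t$-independence of $\varrho_h^t$ is a clean way to organize the argument.
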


\subsection{Results 2: Long surviving \texorpdfstring{$\U$}{U}-valued
  Feller diffusion}
\label{ss.longuval}

The second group of results on the $\U$-valued Feller diffusion
(Theorem~\ref{T:KOLMOGOROVLIMIT}-~\ref{T:BACKBONE}) consists of the
$t \rightarrow \infty$ asymptotics of the population
\emph{conditioned} on different forms of \emph{long time survival} or
\emph{survival forever}. We recall that conditioning to survive for a
fixed time $T$ gives a state-dependent branching process, where the
super-criticality is state dependent. This destroys the branching
property. We shall see that this is different if we condition to
survive forever, where we have a branching property appearing again.
We proceed as follows.

\medskip
\noindent
(1) We consider first conditioning on survival with finite time
horizon $T$ and then letting $T$ tend to infinity \textit{the
  $Q$-limit} process $\mfU^\dagger$, since generally this is the name
in the $\R$-valued version. For this process
$(\mfU_t^\dagger)_{t\ge 0}$ we consider then the limit for
$t \to\infty$ (rescale distances-mass by $t$), which is called
\textit{generalized quasi-equilibria} of the genealogies of $\mfU$.

\medskip
\noindent
(2) In addition we consider a different order of limits than in (1),
namely the extension of the \emph{Kolmogorov-Yaglom exponential limit
  law} (KY-limit) for the $Q$-process, where we condition the process
to survive until time $t$, \emph{rescale the total mass and distances
  at time $t$} by multiplying them with $t^{-1}$ and then take
$t\to\infty$.

\medskip
\noindent
(3) Complementary we
consider size-biasing and representations of this object in particular
represent it via the $\U$-valued version of \emph{Evans branching with
  immigration} from an \emph{immortal line} and show it equals the
$Q$-process.

\medskip
\noindent
(4) Finally, the goal is to bring together the above three groups of
results via various representations of the limit genealogies by
concatenations over \emph{Cox point processes} which uses the
\textit{backbone constructions}.

The results on the long time behavior come in \emph{four pieces} with
six theorems: first, addressing (1)-(3) above, we have
\emph{$Q$-process}, Palm process and the Kallenberg tree together with
the KY-limit; second point is the construction of the $\U$-valued
version of the \emph{Evans' branching process with immigration} from
an immortal line and the connection to the $\U$-valued Kallenberg tree;
third, the \emph{$\U$-valued backbone} construction; fourth, the
\emph{KY limit laws} and their relations for all appearing processes
we have constructed in the preceding subsections.

\subsubsection[\texorpdfstring{$Q$}{Q}-process with Kolmogorov-Yaglom
limit, Palm measure and Kallenberg tree]{Long surviving Feller
  diffusion 1: \texorpdfstring{$Q$}{Q}-process with Kolmogorov-Yaglom
  limit, Palm measure and Kallenberg tree}
\label{sss.longlim}

Since the critical Feller diffusion becomes \emph{extinct} almost
surely in finite time, interesting questions arise by considering
conditioned genealogies in various regimes of conditioning on
survival. As in the case of critical (discrete) branching processes
conditioned on survival further rescaling is needed in some cases in
order to obtain interesting limits. Here we analyze the behavior of
the \emph{$\U$-valued Feller diffusion} $\mfU = (\mfU_t)_{t \ge 0}$
for $t \to \infty$ via \emph{scaling limits} of processes arising from
\emph{conditioning} of $\mfU$ on survival in various ways. Namely, we
first condition on \emph{survival forever}. The second construction is
based on \emph{size-biasing} (Palm measure) with the Kallenberg tree
as the main ingredient. These objects will be important once we come
to \emph{spatial} populations on infinite geographic spaces like
$\Z^d$ for example. Then the rare events of survival of a Feller
diffusion become visible, since in this case we deal with many
independent such processes corresponding to the sites where time-$0$
individuals have at large time $t$ surviving descendants somewhere in
space but with large total mass of order $t$.

\paragraph{\texorpdfstring{$Q$}{Q}-process and Kolmogorov-Yaglom limit
  law for $\U$-valued Feller diffusion}
\label{Kolmoglim}

One possibility of conditioning is to condition the process to
\emph{survive forever}. Here, survival forever means that we consider
the process $\mfU$ at time $t$, condition it to survive until time
$T \ge t$, and let $T \to \infty$. The existence of the limit is
stated below in Theorem~\ref{T:KOLMOGOROVLIMIT}(a). The limiting
process is referred to as the \emph{$Q$-process} and will be denoted
by
\begin{align}
  \label{e977Q}
  \mfU^\dagger = (\mfU^\dagger_t)_{t \ge 0}.
\end{align}
In Lemma~\ref{c.1310} it is shown that the limiting $Q$-process can be
obtained as solution of a \emph{well-posed martingale problem}. There
more precisely we can consider the process
$\mfU^T=(\mfU_t^T)_{t\in[0,T]}$ from Theorem~\ref{TH.MARTU}(a) for
$T \to \infty$.

If the law of the $Q$-process at time $t$ converges as $t\to\infty$
then the limiting object is referred to as the \emph{Yaglom limit}. If
the $Q$-process has an equilibrium then this is referred to as
\emph{quasi-equilibrium} of $\mfU$; see e.g.\ \cite{Lamb07,MV12}. It
is known that in the \emph{critical} case the $\U$-valued Feller
diffusion has no quasi-equilibrium or Yaglom limit because in this
case already the total mass diverges. Therefor in particular this
holds for the $Q$-process an we have to rescale mass and distances to
get a generalized quasi-equilibrium in both cases. First we look at
the Feller diffusion, then after a suitable \emph{rescaling} we do get
a limiting law, which we refer to as the \emph{generalized Yaglom
  limit}. A classical result due to Yaglom (in the case of discrete
branching processes) says that the conditioned law
$\mcL [T^{-1} \bar \mfU_T |\bar \mfU_T>0]$ of the process converges
weakly as $T \to \infty$ to exponential distribution $\Exp(2/b)$.
Rescaling the $Q$-process for the $\R$-valued Feller diffusion we
obtain with this rescaling the size-biased exponential law.

Turning now to $\U$-valued objects we consider therefore for $T>0$ the
rescaled process
\begin{align}
  \label{ag1-split}
  \breve \mfU^T = (\breve \mfU^T_t)_{t \in [0,T]} = \bigl(\bigl[
  U_t,t^{-1} r_t,t^{-1} \mu_t\bigr]\bigr)_{t \in [0,T]} \quad
  \text{conditioned on $\bar \mfU_T>0$}.
\end{align}
We denote the process $\mfU^\dagger$ from \eqref{e977Q} rescaled as in
\eqref{ag1-split} by
\begin{align}
  \label{e977}
  \breve \mfU^\dagger = (\breve \mfU^\dagger_t)_{t \ge 0}.
\end{align}

Recall the operator $\Omega^{\uparrow,(a,b)}$ from \eqref{eq:3omab}
and its generalization in \eqref{e1336}. In the following lemma we
show that $\mfU^\dagger$ is a \emph{state-dependent super-critical
  branching process} with coefficients which are limits for
$T\to\infty$ of the corresponding coefficients of the process
$\mfU^T=(\mfU_t^T)_{t\in[0,T]}$ from Theorem~\ref{TH.MARTU}. More
precisely we show that $\mfU^\dagger$ is characterized by a well-posed
martingale problem, the lemma is proven in Section~\ref{ss.prtkol}

\begin{lemma}[Well-posedness of martingale problem of $Q$-process]
  For \label{c.1310} any point $\mfu \in \U \setminus \{\ntree\}$ the
  $\bigl(\delta_\mfu,\Omega^{\uparrow,(a,b)}, \Pi(C^1_b)\bigr)$
  martingale problem (recall \eqref{eq:3omab}) with $a$ and $b$ given
  by functions
  \begin{align}
    \label{e1459}
    a(t,\bar \mfu)=b/\bar \mfu \, \text{ and } \, b(t,\bar \mfu)=b,
  \end{align}
  is well-posed and defines a Markov process denoted by $\mfU^\dagger$
  and referred to as $Q$-process (of $\mfU$).
\end{lemma}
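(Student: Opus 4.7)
The plan is to exploit the mass/genealogy decomposition of Corollaries~\ref{C.tvF} and \ref{cor.834} in the same way as in the proof of Theorem~\ref{TH.MARTU}(d): identify the total mass as an autonomous $\R_+$-valued Markov process and, conditionally on its path, the genealogy part as a time-inhomogeneous $\U_1$-valued Fleming-Viot diffusion. Existence and uniqueness of $\mfU^\dagger$ then follow from the well-posedness of each of these two layers; alternatively, existence can be read off as the $T \to \infty$ limit of the processes $\mfU^T$ from Theorem~\ref{TH.MARTU}, using the convergences $a_T(t,\bar\mfu) \to b/\bar\mfu$ and $\wt a_T(t,\bar\mfu) \to b$ of Remark~\ref{r.143}.

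For the total mass I would apply $\Omega^{\uparrow,(a,b)}$ with $a(\bar\mfu) = b/\bar\mfu$, $b(t,\bar\mfu) = b$ to test functions $\bar\Phi \in C_b^2$ depending only on $\bar\mfu$, in the extension from Remark~\ref{r.1207} (take $\wh\Phi \equiv 1$). The added first order contribution from \eqref{e709} gives, after the usual polar rewriting, the constant drift $a(\bar\mfu)\bar\mfu = b$, and hence
\begin{equation*}
\Omega^{\mathrm{mass}}\bar\Phi(\bar\mfu) = \tfrac{b\,\bar\mfu}{2}\bar\Phi''(\bar\mfu) + b\,\bar\Phi'(\bar\mfu).
\end{equation*}
Thus $\bar\mfU^\dagger$ is autonomous and solves \eqref{eq:FelBDiff-ab} with $a = 0$, $c = b$, i.e.\ Feller's branching diffusion with immigration at constant rate $b$. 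This SDE has a pathwise unique strong solution by Yamada--Watanabe, and since $2c/b = 2 \ge 1$, the boundary $0$ is inaccessible, so for every initial value $\bar\mfu_0 > 0$ the paths lie a.s.\ in $C([0,\infty),(0,\infty))$.

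For the conditional genealogy, fixing a realization $\bar\mfu$ of $\bar\mfU^\dagger$ and repeating the computation leading to \eqref{e746} (the $a n \Phi^{n,\varphi}$ correction from \eqref{e709} acts as a pure mass factor under polar factorization and leaves the $\U_1$-part invariant), the conditioned process $\wh\mfU^\dagger(\bar\mfu)$ satisfies the martingale problem of the time-inhomogeneous $\U_1$-valued Fleming-Viot diffusion with resampling rate $d(t) = b/\bar\mfu_t$. By the preceding paragraph this rate is continuous and locally bounded along almost every path, and well-posedness of the corresponding Fleming-Viot martingale problem follows from \cite{GPWmp13} in the version with admissible time-dependent rates already used in \eqref{810} and Corollary~\ref{cor.834}. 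Assembling the two layers as in Remark~\ref{rem:Pnots} then produces a Markov process $\mfU^\dagger$ whose law is uniquely determined, and combining the autonomous mass generator with the conditional Fleming-Viot generator reproduces $\Omega^{\uparrow,(a,b)}$ on $\Pi(C_b^1)$.

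The principal obstacle is the pole of $a(\bar\mfu) = b/\bar\mfu$ at $\bar\mfu = 0$. It is absorbed by the constant immigration in the mass SDE: the effective drift $a(\bar\mfU^\dagger_t)\bar\mfU^\dagger_t = b$ is bounded, and the conditional resampling rate $b/\bar\mfU^\dagger_t$ remains locally bounded along almost every path. This is exactly the mechanism anticipated in Remark~\ref{r.143} and it ensures that almost every realization of $\bar\mfU^\dagger$ is admissible in the sense of Definition~\ref{def:adm_u} (with no extinction time), so the two-layer construction is well posed and delivers the unique solution claimed.
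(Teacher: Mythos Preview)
Your proposal is correct and follows essentially the same route as the paper: existence is obtained from the convergence $\mfU^T \Rightarrow \mfU^\dagger$ as $T\to\infty$ (established in the proof of Theorem~\ref{T:KOLMOGOROVLIMIT}(a)), and uniqueness is argued via the conditioned martingale problem, reducing to the well-posedness of the $\R_+$-valued Feller diffusion with immigration and of the time-inhomogeneous $\U_1$-valued Fleming-Viot process, exactly as in Section~\ref{ss.pr.thmartu}. Your write-up is in fact more detailed than the paper's own proof, which simply refers back to these ingredients; your explicit verification of the Feller boundary condition for $\bar\mfU^\dagger$ is a useful addition that the paper leaves implicit.
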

We give in Corollary~\ref{c.1609} in connection with
Proposition~\ref{pr.palm1} an alternative description of the dynamics
of $\mfU^\dagger$.

\begin{remark}[Comparison with super-critical Feller]
  \label{r.1726}
  We obtain a branching process whose total mass process has drift
  $x \frac{b}{x}=b$ in state $x$ because $a_t(s,x)$ converges to $b/x$
  for $T \to \infty$ and every $s,x$, which looks like immigration.
  Indeed we will make this more precise in Section~\ref{sss.1616}.
\end{remark}

We extend the process $\mfU^T=(\mfU_t^T)_{t\in[0,T]}$ from
Theorem~\ref{TH.MARTU}(a) beyond time $T$ to a process
$\mfU^T = (\mfU_t^T)_{t \ge 0}$ by setting $\mfU^T_t=\mfU^T_T$ for
$t \ge T$. In the following theorem we show that the $\U$-valued
Feller diffusion has a $Q$-process, a generalized \emph{Yaglom} limit
as well as a generalized \emph{quasi-equilibrium distribution}. Recall
here the rescaling we will consider in b) and c) below
\eqref{ag1-split}.

\begin{theorem}[$Q$-process, KY-limit and generalized
  quasi-equilibrium for genealogies]\label{T:KOLMOGOROVLIMIT}
  Let $\mfu \in \U\setminus \{\ntree\}$ be an arbitrary initial condition
  of the $\U$-valued Feller diffusion.  Then the following assertions
  hold.
  \begin{enumerate}[(a)]
  \item For $\mfU^T = (\mfU_t^T)_{t \ge 0}$ and the $Q$-process
    $\mfU^\dagger=(\mfU^\dagger_t)_{t \ge 0}$ from Lemma~\ref{c.1310}
    (and \eqref{e977Q}) we have
    \begin{align}
      \label{e1026}
      \mcL[(\mfU^T_t)_{t \ge 0}]
      \xRightarrow{T \to \infty} \mcL [(\mfU^\dagger_t)_{t \ge 0}].
    \end{align}
  \item The scaled process
    $\breve \mfU^\dagger = (\breve \mfU^\dagger_t)_{t \ge 0}$ from
    \eqref{e977} has a (generalized) quasi-equilibrium, i.e.\ there is
    a $\U$-valued variable $\breve \mfU^\dagger_\infty$ such that
    \begin{align}
      \label{tv17b}
      \mcL[\breve \mfU_t^\dagger] \xRightarrow{t \to \infty}
      \mcL[\breve \mfU_\infty^\dagger].
    \end{align}
  \item The KY-limit of the $\U$-valued critical Feller diffusion
    exists and is different from the generalized quasi-equilibrium of
    the $Q$-process, i.e.\ there is a $\U$-valued variable
    $\breve \mfU^\infty_\infty$ such that
    \begin{align}
      \label{tv17}
      \mcL[\breve \mfU_T^T] \xRightarrow{T \to \infty}
      \mcL[\breve \mfU_\infty^\infty],
    \end{align}
    but $\mcL[\breve \mfU^\dagger_\infty]$ is a size-biased version of
    $\mcL[\breve{\mfU}_\infty^\infty]$.
  \end{enumerate}
\end{theorem}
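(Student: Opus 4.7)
My plan is to handle the three parts in turn, using in each case the total-mass/genealogy split from Corollaries~\ref{C.tvF} and~\ref{cor.834} combined with the appropriate duality.

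For part (a), I would proceed by generator convergence. The process $\mfU^T$ solves the martingale problem for $\Omega^{\uparrow,(a_T,b_T)}$ with $a_T(t,\bar\mfu)=\frac{2/(T-t)}{\exp(2\bar\mfu/(b(T-t)))-1}$ and $b_T(t,\bar\mfu)=b\bar\mfu$. A direct Taylor expansion shows that for every fixed $t\ge 0$ and $\bar\mfu>0$
\begin{align*}
a_T(t,\bar\mfu)\;\xrightarrow[T\to\infty]{}\;\frac{b}{\bar\mfu},
\end{align*}
locally uniformly on $(0,\infty)$, while $b_T$ is independent of $T$. Applying this inside the representation \eqref{eq:r.742.1}--\eqref{eq:12c} together with the term \eqref{e709}, one obtains $\Omega^{\uparrow,(a_T,b)}\Phi\to\Omega^{\uparrow,(b/\bar\mfu,b)}\Phi$ locally uniformly on $\U\setminus\{\ntree\}$ for every $\Phi\in\Pi(C_b^1)$. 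Tightness of $\{\mcL[(\mfU^T_t)_{t\le L}]\}_T$ on every compact interval follows from the moment bounds on $\bar\mfU^T$ (which come from the explicit SDE with drift $\wt a_T \bar\mfU^T$) and from the Fleming-Viot representation of the genealogical part, which has total mass $1$ and is tight in Gromov-weak topology by the arguments of \cite{GPWmp13}. The well-posedness established in Lemma~\ref{c.1310} then identifies the limit as $\mfU^\dagger$.

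For part (b), observe first that by Corollary~\ref{C.tvF} applied to $\mfU^\dagger$ (with the generator replaced according to Lemma~\ref{c.1310}), the total mass $\bar\mfU^\dagger$ is an autonomous Feller branching diffusion with constant immigration rate $b$: $\dx\bar\mfU^\dagger_t=b\,\dx t+\sqrt{b\bar\mfU^\dagger_t}\,\dx W_t$. This classical process satisfies $\mcL[t^{-1}\bar\mfU^\dagger_t]\Rightarrow\Gamma(2,2/b)$ by the scaling invariance of Feller branching and the results in \cite{Lamb07}. Conditionally on the path $\bar\mfU^\dagger$, the genealogy $\wh\mfU^\dagger$ is a time-inhomogeneous $\U_1$-valued Fleming-Viot diffusion with resampling rate $b/\bar\mfU^\dagger_t$ (as in Corollary~\ref{cor.834}), whose dual is the conditioned coalescent of Theorem~\ref{PROP.1007}. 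Rescaling time and distances by $t^{-1}$ transforms $\bar\mfU^\dagger$ into a stationary-in-law process (after passing to the large-$t$ limit) and the coalescent rates $b/\bar\mfU^\dagger_{s}$ into rates of order $1$, so the dual enriched coalescent converges weakly. Running the duality backwards then yields convergence of the rescaled genealogy, and identifies $\breve\mfU^\dagger_\infty$ via an entrance law of the time-inhomogeneous coalescent run with the limiting mass profile.

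For part (c), the existence of $\mcL[\breve\mfU^\infty_\infty]$ is obtained by the same strategy: the total mass part gives the classical Yaglom exponential limit $\mcL[T^{-1}\bar\mfU_T\mid\bar\mfU_T>0]\Rightarrow\Exp(2/b)$, and conditionally on the total mass path the genealogy is a Fleming-Viot diffusion which, after time-distance rescaling, converges by the dual conditioned coalescent to a well-defined limit on $\U_1$. The size-biasing relation is then the key point. Using the classical asymptotic $\P_{\mfu}[\bar\mfU_T>0]\sim 2\bar\mfu/(bT)$ and the Markov property, one has for any bounded continuous $F$ on $\U$ and for the rescaled variables
\begin{align*}
\E^\dagger\bigl[F(\breve\mfU^\dagger_t)\bigr]
=\lim_{T\to\infty}\frac{\E\bigl[F(\breve\mfU^T_t)\,\1_{\bar\mfU_T>0}\bigr]}{\P[\bar\mfU_T>0]}
=\E\!\left[F(\breve\mfU_t)\,\frac{\bar\mfU_t}{\bar\mfu}\right],
\end{align*}
exhibiting the $Q$-process as the $h$-transform by $\bar\mfu$. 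Taking $t\to\infty$ on both sides and combining parts (a)--(b) with the KY limit gives the size-biased relation $\mcL[\breve\mfU^\dagger_\infty]=\bigl(\bar\mfu/\E[\bar\mfu]\bigr)\cdot\mcL[\breve\mfU^\infty_\infty]$, and non-degeneracy of the mass part shows the two laws differ.

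The main obstacle, in my view, is the bookkeeping in part (a): passing the generator convergence through to path-space weak convergence requires showing both tightness in Skorokhod space (not just marginally) and that the limit points solve the limiting martingale problem with the possibly singular coefficient $b/\bar\mfu$ at $\bar\mfu=0$; here I would exploit that $\bar\mfU^\dagger$ never hits zero thanks to the positive immigration term $b$, so the singularity of $a$ at $0$ is never encountered along the paths. A secondary subtlety is the interchange of limits $t\to\infty$ and $T\to\infty$ in part (c), which I would justify via the uniform (in $T$) moment bounds and the explicit Yaglom asymptotic.
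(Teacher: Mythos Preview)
Your argument for part~(a) follows the paper's route: generator convergence of $\Omega^{\uparrow,(a_T,b)}$ to the operator of Lemma~\ref{c.1310}, tightness via compact containment, and identification of limit points via well-posedness. The paper makes more explicit the verification of the dust-free condition (modulus of mass distribution) through the conditional dual coalescent and the need for \emph{uniform} continuity of the conditional genealogy law in the total mass path, but your outline is along the same lines.

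For parts~(b) and~(c) you take a genuinely different route. The paper proves~(c) \emph{first}, directly via the Feynman--Kac duality of Theorem~\ref{T:DUALITY}: one writes $\E[\Phi_h^{m,\varphi}(\breve\mfU_T)\mid\bar\mfU_T>0]\sim\tfrac{bT}{2}\,\E[\Phi_h^{m,\varphi}(\breve\mfU_T)]$, expresses the right-hand side through the enriched Kingman coalescent with the exponential weight $\exp(\int_0^T b\binom{|p_s|}{2}\,\dx s)$, and computes the $T\to\infty$ limit by an explicit analysis of late mergers (formulas~\eqref{e1949}--\eqref{e5481}). Part~(b) and the size-biased relation are then obtained from~(c) by absorbing the size-biasing factor $\bar\mfU_t$ into the polynomial (raising its degree from $n$ to $n+1$) and rerunning the same calculation. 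Your approach instead conditions on the total mass path throughout and uses the conditional (non-FK) duality of Theorem~\ref{PROP.1007}. This is viable, but it requires more than you state: for~(c) you need convergence in law of the \emph{rescaled path} $(T^{-1}\bar\mfU_{Ts})_{s\in[0,1]}$ under $\{\bar\mfU_T>0\}$, not just the marginal Yaglom limit at $s=1$, since the conditioned coalescent rates depend on the whole path. This path-level statement is available (it is essentially the scale-invariance in Remark~\ref{r.1627} together with the entrance-law construction in Theorem~\ref{TH.MARTU}(c), after noting that the initial mass $\bar\mfu_0/T\to 0$ washes out), but it should be made explicit. The paper's FK-duality argument avoids this detour entirely by computing moments directly; your approach is more modular but front-loads the path convergence of the conditioned total mass.
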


This means that the macroscopic time-space view on the surviving
population gives \emph{different} pictures in the cases of
conditioning on survival forever and conditioning on survival up to a
finite but diverging time-horizon. In particular also the genealogies
look different in these cases.

The conclusion might be however that looking at non-spatial population
one should work with the concept leading to
$\breve{\mfU}^\infty_\infty$ even though the genealogy has no
transparent decomposition into subfamilies since those remain even in
the limit $t\to\infty$ dependent. All we can do in that case is using
the conditional duality, where again the subfamilies are described via
the enriched partitions of the coalescent. This needs further
exploration.

However in \emph{spatial} situations the generalized quasi-equilibrium
is important since it describes the family of a \emph{typical}, i.e.\
randomly chosen individual from the overall population, a key object.
See also Section~\ref{sss.genalspat} for a discussion of spatial
models.

\paragraph{The \texorpdfstring{$\U$}{U}-valued Feller diffusion under
  the Palm measure}
Another method to study a process going to extinction is to consider
its \emph{size-biased} law, which is also known as the \emph{Palm
  distribution}. Recall that the Palm distribution of a $\U$-valued
process $\mfU$ at time $t$ is the law
$\bar{\mfU}_t \cdot \dx P_{[0,t]}$ if $P_{[0,t]}$ is the path law
$\mcL[(\mfU_s)_{s \in [0,t]}]$. Define $h:\U \to \R_+$ by
$h(\mfU)=\bar\mfU$ and note that this is a positive harmonic function
on $\U\setminus\{\ntree\}$. Therefore we can define an $h$-transform.
For our process this law, the Palm distribution, is an
\textit{$h$-transform} of $\mfU$ with previously mentioned $h$ and
hence is again Markovian and its generator can be calculated from the
one of $\mfU$ with the help of $h$. We denote the process realizing
the Palm distribution by
\begin{align}
  \label{e1607}
  \mfU^{\mathrm{Palm}} = (\mfU^{\mathrm{Palm}}_t)_{t \ge 0}.
\end{align}
Here we consider a construction of the \emph{Palm distribution}
w.r.t.\ the total mass process $\bar\mfU$ which arises as the
\emph{$h$-transformed process} and can be consistently defined for all
$t > 0$ and we use the fact of being an $h$-transform to construct a
corresponding $\U$-valued process via a new martingale problem in
Proposition~\ref{pr.palm1} below.

What is known for the $\R_+$-valued process? The Palm of the
$\R_+$-valued Feller diffusion can be described alternatively by two
processes which we recall below in \eqref{e1116}, \eqref{e1125}; see
\cite{Evans93}. To this end, consider the $\R_+$-valued Feller
diffusion $Z=(Z_t)_{t \ge 0}$ satisfying
\begin{align}
  \label{e1111}
  \dx Z_t = \sqrt{bZ_t} \, \dx B_t, \text{ starting in } Z_0=z_0.
\end{align}
The \emph{size-biased} Feller diffusion and the one \emph{conditioned
  to survive forever} can be represented in two ways, namely as
\emph{Feller branching diffusion with immigration}
$\tilde Z=(\tilde Z_t)_{t \ge 0}$ satisfying
\begin{align}
  \label{e1116}
  \dx \tilde Z_t= b \, \dx t + \sqrt{b \tilde Z_t} \, \dx B_t, \enspace
  \tilde Z_0 = z_0,
\end{align}
or alternatively the Palm law is given by
$Z^{\mathrm{Palm}}=(Z_s^{\mathrm{Palm}})_{s \in [0,t]}$ via the
\emph{Kallenberg tree}, given by
$Z^{\mathrm{Kal}} = (Z^{\mathrm{Kal}}_t)_{t\ge 0}$, as the process
\begin{align}
  \label{e1125}
  Z^{\mathrm{Palm}}= Z+Z^{\mathrm{Kal}} = (Z_t + Z^{\mathrm{Kal}}_t)_{t
  \ge 0}, \enspace
\end{align}
where $Z^{\mathrm{Kal}}$ is a version of $\tilde Z$ with
$Z^{\mathrm{Kal}}_0= 0$, independent of $Z$. For details we refer to
\cite{Evans93} and \cite{PY82}.

These facts can later even be lifted to the spatial case of
\emph{super random walk}. We will show later in Theorem~\ref{T.1205},
that even the Palm of the $\U$-valued Feller diffusion
$\mfU^{\mathrm{Palm}}$ allows a similar decomposition as a
concatenation of $\mfU$ and $\mfU^{\mathrm{Kal}}$.

\begin{remark}[Moments are measure determining]
  We \label{rem:mom_md} note that since the moments of a Feller
  diffusion are finite and measure determining for all $t \ge 0$, this
  immediately holds for the Palm measure, where for each $m$ up to a
  constant the $m$-th moment is given by the $(m+1)$-st moment of the
  original one. In particular the statement of the
  Corollary~\ref{cor.681} holds also for the Palm measure.
\end{remark}

First, we have to \emph{establish} these alternative representations
of the Palm measure also for the \emph{$\U$-valued Feller branching}.
Since we know that $\mfU^{\mathrm{Palm}}$ is an $h$-transform of a
Markov process we want to know the operator of the martingale problem
acting on polynomials. We obtain by explicit calculation with the
$h$-transform property (see Subsection~\ref{ss.prp1p2} for proofs) the
following result.

\begin{proposition}[Representation Palm $1$]
  \label{pr.palm1}
  Consider the polynomials $\Pi(C^1_b)$ as test functions and for
  $\Phi^{n,\varphi}$ we set
  \begin{align}
    \label{e1372}
    \Omega^{\uparrow,\mathrm{Palm}} \; \Phi^{n,\varphi} (\mfu)
    = \frac{n b}{\bar\mfu} \Phi^{n,\varphi}(\mfu) +
    \Omega^{\uparrow, \mathrm{bran}} \Phi^{n,\varphi}(\mfu)
    + \Omega^{\uparrow, \mathrm{grow}} \Phi^{n,\varphi}(\mfu).
  \end{align}
  This operator maps $\Pi(\mcC_b^1)$ into $\Pi$ and is a linear
  operator on $\Pi$. In particular this operator specifies a
  well-posed martingale problem.
\end{proposition}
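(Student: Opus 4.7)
The plan is to treat $\mfU^{\mathrm{Palm}}$ as an $h$-transform of the $\U$-valued Feller diffusion $\mfU$ with $h(\mfu)=\bar\mfu$ and to compute the transformed generator explicitly using the extended generator representation \eqref{eq:r.742.1}; then well-posedness follows by identifying the resulting operator with the one from Lemma~\ref{c.1310}.

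First I would observe that by Corollary~\ref{C.tvF} the total mass $\bar\mfU=(\bar\mfU_t)_{t\ge 0}$ is the critical $\R_+$-valued Feller diffusion and hence a non-negative martingale under $P_{\mfu_0}$. Consequently $h(\mfu)=\bar\mfu$ is a harmonic function for $\Omega^{\uparrow}$, i.e.\ $\Omega^{\uparrow} h \equiv 0$, and the Palm law on paths, $\bar{\mfU}_t\,\dx P_{[0,t]}$, is a consistent family of probability measures on $\U\setminus\{\ntree\}$ whose time-$t$ marginal is $\bar\mfu\,P_{\mfu_0}(\mfU_t\in\dx\mfu)/\bar\mfu_0$. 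Standard $h$-transform theory (see e.g.\ Ethier--Kurtz) then yields that $\mfU^{\mathrm{Palm}}$ is Markovian with semigroup $S^h_t F(\mfu)=h(\mfu)^{-1} S_t(hF)(\mfu)$ and with generator
\begin{equation*}
  L^{h}F(\mfu)=\frac{1}{h(\mfu)}\,\Omega^{\uparrow}(hF)(\mfu)
\end{equation*}
on the natural domain. The key point is to show that this equals the RHS of \eqref{e1372} on $\Pi(C_b^1)$.

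Next I would carry out the computation on $F=\Phi^{n,\varphi}$. Writing $hF$ in the form \eqref{e1216}, namely $(hF)(\mfu)=\bar\mfu^{n+1}\wh\Phi^{n,\varphi}(\hat\mfu)$, we fall into the extended domain $\mcD_2$ with $\bar\Phi(x)=x^{n+1}$ and $\wh\Phi=\wh\Phi^{n,\varphi}$. Applying \eqref{eq:r.742.1}--\eqref{eq:12c} I obtain
\begin{equation*}
  \Omega^{\uparrow}(hF)(\mfu)=\wh\Phi^{n,\varphi}(\hat\mfu)\,\tfrac{b\bar\mfu}{2}(n+1)n\,\bar\mfu^{n-1}
  +\bar\mfu^{n+1}\,\Omega^{\mathrm{gen}}_{\bar\mfu}\wh\Phi^{n,\varphi}(\hat\mfu),
\end{equation*}
whereas \eqref{e748} (or equivalently \eqref{eq:r.742.1} with $\bar\Phi(x)=x^n$) gives
\begin{equation*}
  \Omega^{\uparrow}\Phi^{n,\varphi}(\mfu)=\wh\Phi^{n,\varphi}(\hat\mfu)\,\tfrac{b n(n-1)}{2}\,\bar\mfu^{n-1}
  +\bar\mfu^{n}\,\Omega^{\mathrm{gen}}_{\bar\mfu}\wh\Phi^{n,\varphi}(\hat\mfu).
\end{equation*}
Dividing the first display by $\bar\mfu$ and subtracting the second gives the clean identity
\begin{equation*}
  L^h \Phi^{n,\varphi}(\mfu)-\Omega^{\uparrow}\Phi^{n,\varphi}(\mfu)
  =\bigl(\tfrac{b(n+1)n}{2}-\tfrac{bn(n-1)}{2}\bigr)\bar\mfu^{n-1}\wh\Phi^{n,\varphi}(\hat\mfu)
  =\frac{n b}{\bar\mfu}\,\Phi^{n,\varphi}(\mfu),
\end{equation*}
and using $\Omega^{\uparrow}=\Omega^{\uparrow,\mathrm{bran}}+\Omega^{\uparrow,\mathrm{grow}}$ from \eqref{tv5} one recognises the RHS of \eqref{e1372}.

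Finally, to conclude well-posedness I would compare $\Omega^{\uparrow,\mathrm{Palm}}$ with the state-dependent non-critical operator $\Omega^{\uparrow,(a,b)}$ of \eqref{e1336} built from Remark~\ref{r.offspring}: choosing $a(t,\bar\mfu)=b/\bar\mfu$ and $b(t,\bar\mfu)=b$, the extra ``sbran'' term from \eqref{e709} becomes $a n\,\Phi^{n,\varphi}=\tfrac{nb}{\bar\mfu}\Phi^{n,\varphi}$, which matches the first summand of \eqref{e1372} exactly. Thus $\Omega^{\uparrow,\mathrm{Palm}}=\Omega^{\uparrow,(b/\bar\mfu,\,b)}$ on $\Pi(C_b^1)$, and well-posedness of the $(\delta_\mfu,\Omega^{\uparrow,\mathrm{Palm}},\Pi(C_b^1))$-martingale problem for $\mfu\in\U\setminus\{\ntree\}$ is precisely the content of Lemma~\ref{c.1310}. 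The main subtlety I expect is the singular factor $1/\bar\mfu$ in the generator, but this is harmless because the associated total mass process is the Feller branching diffusion with immigration rate $b$ (cf.\ \eqref{e1116}), which stays strictly positive; the conditional duality of Theorem~\ref{PROP.1007} adapted to this time-inhomogeneous resampling rate $b/\bar\mfu_t$ (applied along paths that avoid the trap $\ntree$) gives the uniqueness on which Lemma~\ref{c.1310} rests, and our computation simply reroutes that uniqueness through the $h$-transform.
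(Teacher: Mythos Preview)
Your proposal is correct and follows essentially the same route as the paper: treat $\mfU^{\mathrm{Palm}}$ as the $h$-transform with $h(\mfu)=\bar\mfu$, write $h\Phi^{n,\varphi}$ in polar form with $\bar\Phi(x)=x^{n+1}$, apply the extended generator formula \eqref{eq:r.742.1}--\eqref{eq:12c}, and read off the extra term $\tfrac{nb}{\bar\mfu}\Phi^{n,\varphi}$; well-posedness is then obtained by identifying the resulting operator with that of Lemma~\ref{c.1310}. The only cosmetic difference is that the paper unpacks $\Omega^{\mathrm{gen}}_{\bar\mfu}$ via \eqref{mr3aexta} to regroup into $\Omega^{\uparrow,\mathrm{bran}}+\Omega^{\uparrow,\mathrm{grow}}$ directly, whereas you reach the same conclusion by subtracting the two displays.
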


Recall the $Q$-process $\mfU^\dagger$ from \eqref{e977Q}. We have the
following corollary.
\begin{corollary}[Equality of $\mfU^\dagger$ and
  $\mfU^{\mathrm{Palm}}$]
  \label{c.1609}
  \begin{align}
    \label{e1610}
    \mcL [\mfU^\dagger]=\mcL[\mfU^{\mathrm{Palm}}].
  \end{align}
\end{corollary}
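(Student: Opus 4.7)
The plan is to observe that both processes have already been identified, earlier in the excerpt, as solutions to the \emph{same} well-posed martingale problem, and then to invoke uniqueness. Concretely, I will proceed as follows.

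First, I will read off the generator of $\mfU^\dagger$ from Lemma~\ref{c.1310}: the $Q$-process is characterized by the well-posed $(\delta_\mfu,\Omega^{\uparrow,(a,b)},\Pi(C^1_b))$-martingale problem with coefficients $a(t,\bar\mfu)=b/\bar\mfu$ and $b(t,\bar\mfu)=b$, as given in \eqref{e1459}. Using the general form of $\Omega^{\uparrow,(a,b)}$ established in Remark~\ref{r.offspring} via \eqref{e709}, namely the addition of the first-order term $an\,\Phi^{n,\varphi}$ to the critical operator $\Omega^{\uparrow,\mathrm{grow}}\Phi^{n,\varphi}+\Omega^{\uparrow,\mathrm{bran}}\Phi^{n,\varphi}$, I will substitute $a=b/\bar\mfu$ to obtain
\begin{align*}
  \Omega^{\uparrow,(b/\bar\mfu,b)}\Phi^{n,\varphi}(\mfu)
  = \Omega^{\uparrow,\mathrm{grow}}\Phi^{n,\varphi}(\mfu)
  + \Omega^{\uparrow,\mathrm{bran}}\Phi^{n,\varphi}(\mfu)
  + \frac{nb}{\bar\mfu}\,\Phi^{n,\varphi}(\mfu).
\end{align*}
Comparing with \eqref{e1372}, this expression is \emph{identical} to the operator $\Omega^{\uparrow,\mathrm{Palm}}$ defining the Palm process via Proposition~\ref{pr.palm1}.

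Second, I will invoke the well-posedness statements: Lemma~\ref{c.1310} gives uniqueness of the $(\delta_\mfu,\Omega^{\uparrow,(b/\bar\mfu,b)},\Pi(C^1_b))$-martingale problem, while Proposition~\ref{pr.palm1} gives the same for $(\delta_\mfu,\Omega^{\uparrow,\mathrm{Palm}},\Pi(C^1_b))$. Since the two operators coincide on the common domain $\Pi(C^1_b)$ and the initial distributions agree (both processes start from $\mfu$; size-biasing a point mass by a deterministic positive weight leaves it unchanged, so no issue arises at $t=0$), uniqueness forces $\mcL[\mfU^\dagger]=\mcL[\mfU^{\mathrm{Palm}}]$ as path laws on $D([0,\infty),\U\setminus\{\ntree\})$.

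The only subtlety worth flagging (rather than a genuine obstacle) is the common singular behaviour of both operators as $\bar\mfu\downarrow 0$: both $\Omega^{\uparrow,\mathrm{bran}}$ and the first-order term $(nb/\bar\mfu)\Phi^{n,\varphi}$ blow up at the trap, so strictly speaking one works on $\U\setminus\{\ntree\}$. However, this is exactly the setting in which Lemma~\ref{c.1310} and Proposition~\ref{pr.palm1} have been proved to yield well-posed martingale problems, and the autonomous total-mass component is in both cases the Feller diffusion with constant immigration drift $b$ (recall Remark~\ref{r.1726} and \eqref{e1116}), which never reaches $0$. Hence no extra argument is needed, and the identification of operators together with uniqueness concludes the proof.
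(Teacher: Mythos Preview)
Your proof is correct and follows exactly the same approach as the paper: the paper's argument is simply the one-line observation that the generators of the $Q$-process (Lemma~\ref{c.1310}) and the Palm process (Proposition~\ref{pr.palm1}) agree on polynomials, whence well-posedness gives equality of laws. You have spelled out in detail the generator comparison that the paper leaves as ``easy to check.''
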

\begin{proof}
  It is easy to check that the generators of the $Q$-process from
  Lemma~\ref{c.1310} and the size-biased process (Palm process) agree
  on polynomials and hence these two processes \emph{agree}.
\end{proof}

Due to the form of the generator in \eqref{e1372} we can represent the
distribution of $\mfU^{\mathrm{Palm}}$ also in the following form
since $\bar \mfu$ in this law is $>0$ for $t \ge 0$ based on the
standard criterion for positive paths of diffusions on $\R_+$, which
can be found in \cite{RoWi00}, and leads in our situation (see
\cite{DG03} pages 21-22) to positive paths $(\bar\mfU^T_t)_{t\ge 0}$
and $\mfU^\dagger_t)_{t\ge 0}$.

\begin{proposition}[Representation Palm $2$]
  Abbreviating \label{pr.palm2}
  $\mcL [(\mfU^{\mathrm{Palm}})_{t \ge 0}]$ by $P^{\mathrm{Palm}}$ we
  have
  \begin{align}
    \label{e1386}
    P^{\mathrm{Palm}} = \int P^{\mathrm{Palm},\bar \mfu} \;
    \bar P^{\mathrm{Palm}} (\dx \bar \mfu),
  \end{align}
  where $\bar P^{\mathrm{Palm}}$ is the Palm law of the $\R_+$-valued
  Feller diffusion which is supported on admissible paths
  $\mfu=(\mfu_t)_{t\ge 0}$ (see Definition~\ref{def:adm_u}), and
  $P^{\mathrm{Palm},\bar \mfu}$ is the regular version of
  $\mcL[\mfU^{\mathrm{Palm}} |\bar \mfU^{\mathrm{Palm}} = \bar\mfu]$.

  We denote (recall \eqref{e939} for the Fleming-Viot process) by
  $\wh \mfU^{\mathrm{FV}} (\bar \mfu) =(\wh \mfU^{\mathrm{FV}}_t (\bar
  \mfu))_{t\ge 0}$ the time-inhomogeneous Fleming-Viot process
  $\wh \mfU^{\mathrm{FV}} (\bar \mfu)$ with immigration where the
  resampling and immigration rates at time $t$ are given by
  $d(t)=b/\bar \mfu_t$ respectively $c(t)=b/\bar \mfu_t$; recall that
  $\bar\mfu_t >0$ for $t >0$. Then we have for $\bar\mfu$ a
  realization of $\bar P^{\mathrm{Palm}}$:
  \begin{align}
    \label{eq:33}
    P^{\mathrm{Palm},\bar \mfu} = \mathcal L[(\bar \mfu_t \; \wh
    \mfU_t^{\mathrm{FV}}(\bar\mfu))_{t \geq 0}] \quad \bar P^{\mathrm{Palm}}\text{-a.s.}.
  \end{align}
\end{proposition}

\paragraph{Kallenberg decomposition of the $\U$-valued Feller
  diffusion under the Palm measure}
In order to understand the Palm law better we return to the
representation of the state of the Feller diffusion at time $t$ as a
concatenation of a Cox point process on
$\U(t)^\sqcup \setminus \{\ntree\}$ following from the
\Levy{}-Khintchine formula. In fact we know in our case that the
points of the Cox process are elements of $\U(t)$, i.e. elements with
radius less than $t$ representing the depth-$t$ subfamilies.
Size-biasing yields here in addition to a version of $\mfU_t$ simply
in addition to a version of $\mfU_t$ one additional independent
depth-$t$ subfamily independent of the rest, this is the so called the
so called $\U$-valued \emph{Kallenberg tree}.

We claim now that also the $\U$-valued size-biased process is the
concatenation of the $\U$-valued Feller diffusion and of the entrance
law from $\ntree$ of the size-biased Feller diffusion the so called
\emph{Kallenberg tree}, which plays this role also in the $\R$-valued
case, recall \eqref{e1125}.

More precisely we can decompose the size-biased $\U$-valued Feller
diffusions in two \emph{independent} sub-trees, which if
$t$-concatenated result in the $t$-top of the full tree. The first is
a copy in law of the original $\U$-valued Feller diffusion at time $t$
the second is what we call the \emph{($\U$-valued) version of the
  Kallenberg tree}. We will prove below that this is in law the
\emph{$\U$-valued Feller diffusion size-biased and observed at time
  $t$ which however is started at time $0$ in the zero-tree}, this law
is called $\mfU^{\mathrm{Palm},0}_t$ and was constructed using the
\emph{\Levy{}-Khintchine representation}, via
Proposition~\ref{cor.1382} and formula \eqref{e1424}. Another question
would be whether we can make this decomposition consistent in $t$ such
that we can decompose in fact into two processes for all $t>0$ as we
do above.

The next theorem establishes the existence of a Kallenberg tree in the
sense of the discussion around \eqref{e1125} by identifying it as
entrance law. Later in Theorem~\ref{TH.IDENTKALL} we will see that the
$\U$-valued Kallenberg tree agrees also with the object given by the
\emph{immortal line and its descendants} at time $t$.

\begin{theorem}[Kallenberg decomposition of the Palm of $\U$-valued
  Feller diffusion] \label{T.1205}
  The entrance law of $\mfU^{\mathrm{Palm}}$ from the zero element
  exists and is denoted $P^{\mathrm{Palm},0}$. It is given by the
  size-biased normalized entrance law of the Feller diffusion
  restricted to $\bar\mfU_t >0$, which is given in \eqref{e1369} and
  has various representations, see Proposition~\ref{cor.1382}.

  We have for a Feller diffusion $\mfU$ (recall \eqref{e1607}):
  \begin{align}
    \label{e12066}
    \mcL \bigl[\mfU_t^{\mathrm{Palm}}(t)\bigr] = \mcL \bigl[\mfU_t
    \sqcup^{t} \mfU_t^{\mathrm{Kal}}\bigr],
  \end{align}
  with $\mfU_t $ and $\mfU_t^{\mathrm{Kal}}$ independent and
  \begin{align}
    \label{e1683}
    \mcL \bigl[\mfU^{\mathrm{Kal}}_t \bigr]
    = \mcL \bigl[\mfU_t^{\mathrm{Palm},0} \bigr].
  \end{align}
\end{theorem}
The next question is how we can better characterize
$ \mfU^{\mathrm{Kal}}$ using the branching property and how we can use
this to obtain information on the asymptotics as $t\to\infty$. This
question we address in three following subsections.

\subsubsection[Evans' infinite horizon dynamical representation of
\texorpdfstring{$\U$}{U}-valued Kallenberg tree]{Longtime behavior of
  Feller diffusion 2: Evans' infinite horizon dynamical representation
  of the \texorpdfstring{$\U$}{U}-valued Kallenberg tree}
\label{sss.1616}

For the $\U$-valued Feller diffusion we obtained detailed information
about the genealogy through a Cox point process representation (called
\emph{Cox cluster representation}) as concatenation of independent
\emph{single ancestor subfamilies}. The question now is whether for
the processes $\mfU^\dagger$ and $\mfU^{\mathrm{Palm}}$, which are
arising by \emph{conditioning} the original $\mfU$ on surviving
forever, we can obtain a similar representation. Here the Kallenberg
decomposition \eqref{e12066} shows it suffices to do this for the
Kallenberg tree $\mfU^{\mathrm{Kal}}_t$ since we have already treated
the $\U$-valued Feller case which added independently to the latter.

How can the genealogy of $\mfU_t^{\mathrm{Kal}}$ be decomposed into
independent subfamilies? The underlying structure was revealed nicely
by Evans for the $\R_+$-valued case and subsequently in \cite{Evans93}
formulated in great generality in particular covering measure-valued
branching processes.

Motivated by describing the Palm measure of the critical Feller
diffusion, in particular the component given by the Kallenberg tree
above, Evans introduced in Theorems~2.7-2.9 in \cite{Evans93} in a
very general context of superprocesses on general state spaces (in
particular Polish spaces) a new process which we refer here to as
\emph{Evans branching} with \emph{immigration from an immortal line}.

In the context of $\U$-valued processes the analogue of this process
will be of the form of a branching process, where from an
\emph{immortal line} (which we can think of as an ``invisible''
process identical to $\ntree = 0 \cdot \mfe$) at a constant rate $b$,
\emph{$\U$-valued critical Feller diffusions break off} and is
formally defined by a $\log$-Laplace equation. Indeed he showed this
way that the \emph{total mass process} of the Palm of the $\U$-valued
Feller critical branching diffusion has this form. In fact on the
level of \emph{individual based} critical branching processes this
phenomenon is discussed in different words (the concept of immortal
line is missing) in Example~2.1 in \cite{KW1971}.

We will obtain the \emph{genealogical}, i.e.\ $\U$-valued version of
the Kallenberg tree, denoted by $\mfU^{\mathrm{Kal}}$ in
\eqref{e12066} also via a process we construct by the same Markovian
dynamic as suggested by Evans and we denote by $\mfU^\ast$, in
Theorem~\ref{TH.1061}. This $\mfU^*$ we refer to as the $\U$-valued
version, of the \emph{Evans immortal line process} and we prove in
Proposition~\ref{prop.2034} that indeed $\mfU^\ast$ is \emph{Markov}.
Then the key result of this subsection is Theorem~\ref{TH.IDENTKALL}
at the end which says that this immortal line process $\mfU^*$, the
Kallenberg tree process gives independently concatenated to $\mfU$
another very interesting \emph{representation} for the $Q$-process
$\mfU^\dagger$ respectively the Palm process $\mfU^{\mathrm{Palm}}$ as
of course \eqref{e1116} for total masses might already suggest. This
allows also to relate this to the this to the size-biased single
ancestor subfamily from the \Levy{}-Khintchine representation.

\paragraph{Strategy}
Our goal is to give a \emph{$\U$-valued formulation} of Evans' ideas,
i.e.\ we will have an immortal line from which independent $\U$-valued
Feller diffusions split off at rate $b$ from $\ntree$ (recall
Proposition~\ref{prop.1382el}). These are \emph{concatenated} via the
immortal line to a new $\U$-valued process. The description induces a
certain structure of the ultrametric in the state at time $t$. From
the description we shall derive the \emph{generator} and a martingale
problem and then establish uniqueness to get a characterization.

The way to do this is to construct first a \emph{richer process},
which we call $\mfU^{\ast,+}$. In this process the Feller populations
break off from the immortal line at time $s$ and all its individuals
obtain $s$ as an \emph{inheritable mark}. This way we can identify the
subfamily of the descendants of an immigrant arriving at times $s$. We
shall show that forgetting the marks yields indeed again a Markov
process which gives the desired object.

We will need some new concepts to proceed, in particular a state space
and a corresponding martingale problem which we introduce now.
\begin{itemize}
\item We recall the marked metric measure spaces $\U^V$.
\item We construct from the Evans' recipe the state of the
  $\U^V$-valued process at time $t$.
\item We derive from the marginal laws an operator for the
  corresponding martingale problem.
\item We show well-posedness of the martingale problem.
\item We show that forgetting the marks, i.e.\ projecting from $\U^V$
  onto $\U$ we obtain a $\U$-valued \emph{Markov} process.
\end{itemize}

\paragraph{$V$-marked genealogies and $\U^V$}
Here and later we will need \emph{marked metric measure spaces} to
model populations with types, locations etc.\ taken from some \emph{a
  priori fixed} complete and separable metric space $(V,r_V)$. Then
the basic objects are equivalence classes of $V$-\emph{marked}
ultrametric measure spaces of the form
\begin{align}
  \label{e940}
  [U \times V, r_U, \nu],
\end{align}
where $(U,r_U)$ is the population equipped with the genealogical
distance and $\nu$ is a measure on the Borel-$\sigma$-algebra of
$(U,r_U) \otimes (V,r_V)$. Note that $r_V$ is then automatically
fixed. Think here first of a finite or even a probability measure
space (later we shall also consider $\sigma$-finite measure spaces).
The projection of $\nu$ on $U$ will be denoted by $\mu$. Often, in
fact in all cases we consider here (see \cite{KliemLoehr2015}), there
exists a measurable \emph{mark function} $\kappa: U \longrightarrow V$
so that $\nu$ is of the form
\begin{align}
  \label{e941}
  \nu (\dx u, \dx v)  = \mu (\dx u) \otimes \delta_{\kappa(u)} (\dx v).
\end{align}

As in the case without marks, the symbol $[ \; \cdot \;]$ in
\eqref{e940} denotes an \emph{equivalence class} of $V$-marked metric
measure spaces. Here, in the case with mark functions, two spaces
$(U \times V, r_U, \nu)$ and $(U' \times V, r_{U'}, \nu')$ are called
\emph{equivalent} if there is a measure and a mark preserving isometry
$\varphi$, more precisely if there is
$\varphi: \supp\mu \rightarrow \supp\mu'$ with
$\mu' = \varphi_\ast \mu$ and
$\kappa' (\varphi(u), \cdot)= \kappa(u,\cdot)$ for $\mu$ - almost all
$u \in U$. The space of all equivalence classes of $V$-marked metric
measure spaces is denoted by
\begin{align}
  \label{e691}
  \U^V.
\end{align}
Suitable test functions in this setting are again polynomials which
are here of the following form and now based on two functions
$\varphi$ on distances and $g$ on marks, i.e.\ for $n \ge 2$ the
function $\varphi$ is a bounded and measurable function on
$[0,\infty)^{\binom{n}{2}}$ and a constant function in the case
$n \in \{0,1\}$. The function $g$ is a bounded and measurable function
on $V^n$ for $n \ge 1$ and a constant for $n=0$.

Furthermore, we define
\begin{align}
  \label{e942}
  \begin{split}
    \Phi^{n,\varphi,g} \bigl([U \times V,r,\nu]\bigr) = \int_{(U
      \times V)^n} \;
    & \varphi \big((r(u_i,u_j))_{1 \le i < j \le n}  \big)\\
    & \qquad \cdot g \big((v_i)_{i=1, \dots, n} \big)
    \nu \big(\dx(u_1,v_1) \big) \dots \nu \big(\dx(u_n,v_n) \big).
  \end{split}
\end{align}

The algebra generated by all polynomials is \emph{measure determining}
and will be denoted by
\begin{align}
  \label{e2506}
  \Pi^{V}.
\end{align}

For $n \ge 2$ let $C_b=C_b([0,\infty)^{\binom{n}{2}},\R)$ be the set of
bounded continuous functions on $[0,\infty)^{\binom{n}{2}}$ and for
$n \in \{0,1\}$ we identify this space with constant functions.
Similarly $C_b^1=C_b^1([0,\infty)^{\binom{n}{2}},\R)$ denotes the subset of
continuously differentiable functions in $C_b$. Furthermore, for
$n\ge 1$ we denote by $C_{bb}=C_{bb}(V^n,\R)$ the set of bounded and
boundedly supported continuous functions on $V^n$. For $n=0$ we
identify this set with constant functions. Note that
$\Phi^{0,\varphi,g}$ is just the product of the two constants. In
agreement with our previous notation we denote by
$\Pi^{V}(C_b \times C_{bb})$ the algebra generated by all polynomials
of the form \eqref{e942} with functions $\varphi$ and $g$ from the
corresponding spaces.

The \emph{Gromov weak topology} on $\U^V$ is generated by polynomials
by requiring
\begin{align}
  \label{e947}
  \mfu_n \xrightarrow{n \rightarrow \infty}  \mfu \; \text{ on $\U^V$
  if and only if } \;
  \Phi (\mfu_n) \xrightarrow{n \rightarrow \infty} \Phi (\mfu), \;
  \text{ for all $\Phi \in \Pi^{V} (C_b \times C_{bb})$}.
\end{align}
For more details on $V$-marked metric measure spaces we refer to
\cite{DGP11}, \cite{KliemLoehr2015} and \cite{DG18evolution}. We need
here the extension to the case of measures $\nu$ to be finite and
later $\sigma$-finite. This is well-known; see for instance
\cite{GSW}.

\begin{remark}[Generalized Feller property for $\U^V$-valued
  processes]
  The \label{rem:gFUV} generalized Feller property for $\U^V$-valued
  processes is defined analogously to Definition~\ref{rk:Fellert}
  where we replace $\U$ by $\U^V$. By the discussion below
  Definition~\ref{rk:Fellert} it suffices to check the continuity
  property analogous to \eqref{eq:35} in the neutral case for all
  $\Phi \in \Pi^{V}(C^1_b \times C_{bb})$.
\end{remark}

\paragraph{The $\U^{[0,\infty)}$-valued Feller diffusion with
  immigration from an immortal line:
  \texorpdfstring{$\mfU^{\ast, +}$}{frU-ast-plus}}

It is in the description of this process with immigration that we use
\emph{marked} genealogies. The process $\mfU^{\ast, +}$ is a Feller
diffusion with \emph{constant immigration} from an \emph{immortal
  individual} with the consequence that if the immigrants enter the
evolution at time $t$, their distance to the remaining population is
$2t$, since this is what the $t$-concatenation does. Therefore at
every time $s$ we have subfamilies arising which split off from the
immortal line to evolve as $\U$-valued Feller diffusions and in
addition carrying the time of splitting, i.e.\ the \emph{immigration
  time}, as an \emph{inheritable mark}. Immigration at, say time $s$,
from the zero state means that an independent Feller tree starts
growing at time $s$ according to the entrance law and this tree is
concatenated with the rest of the tree by giving the ancestor of this
immigrant family distance $2s$ to everybody else. We call this
operation of merging the \emph{sliding concatenation}, which we
formally define below in \eqref{e1963}. If we observe the resulting
population alive at time $t$ we obtain the state $\mfU_t^{*,+}$ and
varying $t\in [0,\infty)$ we obtain the process
$(\mfU^{\ast,+}_t)_{t \geq 0}$.

We have to make this construction rigorous and show that we get a nice
stochastic process. The starting point of the construction is the
measure valued process on the marks which is well-defined via
Theorems~2.7-2.9 in \cite{Evans93}. Given the measure valued process
we can construct below rigorously the process of genealogies of the
family which immigrated at time $s$ for every $s \in [0,t]$ by giving
this sub-population the mark $s$, called color and define the family
$^s\mfU$ of color $s$ processes corresponding to the population
immigrating at times $s$ and concatenate them as described above to
obtain a marked copy of the entrance law of the $s$-marked version of
the $\U$-valued Feller diffusion we have constructed previously in
Theorem~\ref{THM:MGP:WELL-POSED}.

\medskip
\noindent
\textbf{\emph{(1) The state space, description of the process.}} Here
we choose $\U^V$ as state space with $V=[0,\infty)$. Evans' idea
lifted to the level of $\U^{[0,\infty)}$-valued \emph{processes}
requires that
\begin{itemize}
\item $\mfU^{\ast,+}$ restricted to color $s$, denoted by $^s\mfU$, is
  the state of a copy of an $\U$-valued Feller diffusion starting at
  time $s$ from $\ntree$, (recall Proposition~\ref{prop.1382el} on the
  entrance law from $\ntree$),
\item is marked by one mark $s$,
\item is conditioned to survive till time $T$, where $T$ is the
  time at which we observe $\mfu^{*,+}$,
\item for different $s$ these pieces evolve independently.
\end{itemize}

Distances between elements of different pieces are defined formally
below in \eqref{e2000-r}. From this description one can construct the
transition kernel for a Markov process if we define how to run the
dynamic from a general element of $\U$ which can occur as state
starting from the zero space. This will be defined below.

\begin{remark}[Construction of Evans process by sliding concatenation]
  \label{rem:eva-sl-con}
  To construct the \emph{time $T$ state} of the Evans process, let
  $I_T$ be the countable set of starting times of excursions of the
  measure-valued Evans process on $V$ that start between $[0,T]$ and
  survive up to time $T$ and consider the corresponding $\U^V$-valued
  state, namely let
  \begin{align}
    \label{e2000-mfUs}
    \prescript{s}{}\mfU
    & = [\prescript{s \mkern-2mu}{}U \times [0,\infty),
      \prescript{s \mkern-2mu}{} r,
      \prescript{s \mkern-2mu}{}\mu \otimes \delta_s],\quad s \in I_T,
  \end{align}
  be the corresponding $\U^{[0,\infty)}$-valued Feller diffusions
  marked with $s$ starting from the zero element and conditioned to
  survive till time $T$. This process can be defined via the
  time-inhomogeneous $\U^{[0,\infty)}$-valued Fleming-Viot process
  associated with the total mass path of the corresponding type $s$ as
  $\mfU^{\mathrm{FV}}((\bar\mfu_t(s))_{t\ge 0})$ by multiplying its
  mass by $\bar\mfu_t(s)$. Here we take the martingale problem from
  \eqref{e939} lifted to the case where we have an identifiable color
  which we introduce formally in \eqref{e2581}-\eqref{e2589}.

  We define the $\U^{[0,\infty)}$-valued random variable by
  \emph{sliding} concatenation $\sqcup^{\mathrm{sli}}$ as follows:
  \begin{align}
    \label{e1963}
    \mfU^{\ast,+}_T \coloneqq \bigl[U \times [0,\infty),r,\mu\bigr]
    \coloneqq \mathop{\bigsqcup\nolimits^{\mathrm{sli}}}_{s \in I_T}
    \prescript{s}{}\mfU,
  \end{align}
  where we set (we suppress for brevity the dependence of the particular
  elements on $T$)
  \begin{align}
    \label{e2000-U}
    U & = \bigcup_{s \in I_T} \prescript{s \mkern-2mu}{}U,\\
    \label{e2000-r}
    \begin{split}
      r(i,i') & =2(T-s) \text{ for } i \in \prescript{s
        \mkern-2mu}{}U, \; i' \in \prescript{s' \mkern-3mu}{}U,
      \text{ with } s < s' < T, \\
      r(i,i') & = \prescript{s}{}r (i,i') \text{ for } i,i' \in
      \prescript{s \mkern-2mu}{}U,
    \end{split}\\
  \intertext{and}
    \label{e2000-mu}
    \mu & =\sum_{s \in I_T} \prescript{s \mkern-2mu}{}\mu \otimes \delta_s.
  \end{align}
  If we start on a state which has already evolved for some time, say
  $s$, we have to concatenate it with a piece based on a finite set
  $I_{s,T} \subset [s,0]$ and on intensity $1/(T-s)$, independent of
  everything else. Then we use the sliding concatenation with
  $I_{s,T} \cup I_T$.
\end{remark}
\begin{lemma}
  \label{l2000rmu}
  Every process fitting our description in the first paragraph of the
  point (1) must have states which are equal in law to the
  $\U^V$-valued random variable in \eqref{e1963}.
\end{lemma}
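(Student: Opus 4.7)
The plan is to show that the description uniquely specifies all polynomial expectations $\E[\Phi(\mfU^{\ast,+}_T)]$ for $\Phi \in \Pi^{\mathrm{mark}}$, and then to invoke the marked analogue of Lemma~\ref{lem:Pi:separating} to conclude that polynomial expectations determine the law on $\U^{[0,\infty)}$.

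First I would observe that the two random ingredients entering the sliding concatenation \eqref{e1963} are individually pinned down in law by the description: (i) the countable set $I_T \subset [0,T]$ of excursion start-times surviving up to $T$, together with the associated total mass paths $(\bar\mu^s_t)_{t \in [s,T]}$ for each $s \in I_T$, whose joint law is prescribed by Evans' measure-valued process on $V=[0,\infty)$ from \cite{Evans93}; and (ii) conditionally on (i), the family of independent $s$-marked $T$-conditioned $\U$-valued Feller diffusions ${}^s\mfU$, whose laws are uniquely characterised by Theorem~\ref{TH.MARTU} (together with Corollary~\ref{cor.834}, which controls the pure genealogy part via time-inhomogeneous Fleming-Viot dynamics with resampling rate $b/\bar\mu^s_t$ once the total mass path is fixed). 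Given (i) and (ii), the rules \eqref{e2000-U}--\eqref{e2000-mu} construct the element of $\U^{[0,\infty)}$ deterministically.

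Evaluating a polynomial $\Phi^{n,\varphi,g}$ as in \eqref{e942} on this element expands, via $\mu=\sum_{s\in I_T}{}^s\mu\otimes\delta_s$, into a sum over color assignments $(s_1,\dots,s_n)\in I_T^n$: within blocks of constant color the distances are read off from ${}^s r$, while cross-block entries equal the deterministic value $2(T-\min(s_i,s_j))$ from \eqref{e2000-r}, and the $V$-coordinates are simply $(s_i)_i$. Taking expectations, exploiting the conditional independence of ${}^s\mfU$ across different $s$, and plugging in the laws fixed in (i) and (ii) yields a closed expression for $\E[\Phi^{n,\varphi,g}(\mfU^{\ast,+}_T)]$ depending only on the ingredients named by the description. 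Since two processes fitting the description must reproduce the same formula, they have the same polynomial moments.

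The main obstacle I anticipate is to verify that polynomials are law-determining at the candidate distribution, since the state carries infinitely many colors and a direct application of Lemma~\ref{lem:Pi:separating} is not immediate. One handles this by noting that for any $g \in C_{bb}(V^n,\R)$ only finitely many $s\in I_T$ contribute to $\Phi^{n,\varphi,g}$, reducing each polynomial computation to a finite-colored marked space; combined with the control of the total mass $\bar\mu^{\ast,+}_T = \sum_{s\in I_T}\bar\mu^s_T$ coming from the Feller-branching-with-immigration total mass process (whose moments are all finite), one checks that $\mcL[\mfU^{\ast,+}_T]$ lies in the marked analogue of $\wt\mcM$ from \eqref{tv4}, so that the polynomial algebra separates it. Once this measure-theoretic point is secured, the coincidence of all polynomial expectations forces equality in law with the sliding concatenation in \eqref{e1963}.
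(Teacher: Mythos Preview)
Your approach is correct and in fact more explicit than what the paper does. The paper provides no standalone proof of this lemma: it is placed directly after the construction in Remark~\ref{rem:eva-sl-con} and is meant to be read as an immediate consequence of that construction. The sliding concatenation \eqref{e1963} is \emph{defined} precisely so as to encode the three items of the verbal description (restriction to color $s$ is a marked $\U$-valued Feller diffusion conditioned to survive until $T$; independence across colors; cross-color distances given by \eqref{e2000-r}), and the set $I_T$ together with the colored mass paths is taken from Evans' measure-valued process. Hence any process fitting the description has, by inspection, the time-$T$ state \eqref{e1963}. The paper later recycles exactly this observation in the existence part of the proof of Theorem~\ref{TH.1061}.

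Your route via polynomial moments and the marked analogue of Lemma~\ref{lem:Pi:separating} is a sound way to make this rigorous. One small correction to your argument: the claim that ``for any $g\in C_{bb}(V^n,\R)$ only finitely many $s\in I_T$ contribute'' is not accurate as stated, since the colors in $I_T$ accumulate at $T$ and bounded support of $g$ on $[0,\infty)^n$ does not by itself avoid a neighbourhood of $T$. The right way to close this (and the one the paper uses in related places, see Remark~\ref{r.4013} and the proof of Lemma~\ref{l.e1408}) is to note that $I_T\cap[0,T-\varepsilon]$ is almost surely finite for every $\varepsilon>0$, while the total mass carried by colors in $(T-\varepsilon,T)$ vanishes as $\varepsilon\to 0$; this lets you approximate each polynomial by its restriction to finitely many colors and then pass to the limit.
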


We describe the state of the $\U^V$-valued process by considering the
population size (total mass), the measure giving the frequency of
colors from a subset of $A \subseteq V$,
\begin{align}
  \label{e2535}
  \bar \mfU=\nu(U \times V) \in \R_+,\quad \bar \mfU^{\rm rel}
  (\cdot)=\nu(U \times \cdot)/\bar \mfU \in \mcM_1(V)
\end{align}
and the sampling measure from the marked population and the induced
measure on $U$:
\begin{align}
  \label{e2539}
  \wh \mfU(\cdot)=\nu(\cdot)/\bar \mfU \in \mcM_1(U \times V),\quad
  \wh \mfU^{\rm gen}=\nu(\cdot \times V)/\bar \mfU \in \mcM_1(U).
\end{align}

\medskip
\noindent
\textbf{\emph{(2) Derivation of the operator}} In the following we
will define the $\U^{[0,\infty)}$-valued Evans process rigorously via
a \emph{martingale problem} in Theorem~\ref{TH.1061}. However, first
below we \emph{derive} the operator from the description and
construction of the state at time $T$ above and which gives us, if
such a process exists (this will be addressed in
Section~\ref{ss.pr.th1061}) the transition probability. This is stated
later on below in Corollary~\ref{l.1913}. We denote the process by
\begin{align}
  \label{e1315}
  \mfU^{\ast,+}
\end{align}
and call it \emph{$\U^{[0,\infty)}$-valued Feller diffusion with
  immigration from an immortal line}; recall the general $V$-marked
metric measure space $\U^V$ from \eqref{e691}. We set here
$V=[0,\infty)$ and then $\mfU^{\ast,+}$ is a
\begin{align}
  \label{e1101}
  \U^{V} \text{-valued process}.
\end{align}

Next we derive a formula for the \emph{operator} for the martingale
problem characterizing $\mfU^{\ast,+}$. As domain of our generator we
take polynomials $\Phi^{n,\varphi,g}$ of degree $n$ and of the form as
in Section~\ref{sss.topspa} but now even with $g \in C_b^1(V^n,\R)$,
i.e.\ bounded, continuously differentiable with bounded derivative.
However, here we have time-inhomogeneous dynamics. For this purpose we
write the polynomial in the form
\begin{align}
  \label{1076}
  \Phi^{n,\varphi,g}(\mfu)
  = \bar \Phi(\bar \mfu) \wh \Phi^{n,\varphi,g}(\hat \mfu),
\end{align}
with $\bar \Phi(\bar\mfu)=\bar\mfu^n$ and
$\wh \Phi^{n,\varphi,g}(\hat \mfu) = \int_{(U \times V)^n}
\dx\hat\nu^{\otimes n}(\varphi \cdot g)$ if $\bar\mfu\neq 0$ and
otherwise equal to zero.

\begin{remark}
  \label{r.2530}
  One can turn this time-inhomogeneous process into a homogeneous one
  by passing to the state space $\R_+ \times \U^V$ and replacing test
  functions $\Phi^{n,\varphi,g}=\bar\Phi\cdot \wh\Phi^{n,\varphi,g}$
  and the time-inhomogeneous operator $\Omega^{\uparrow,+}_{V,t}$ by
  \begin{align}
    \label{e2532}
    (t,\mfu) \mapsto \Psi(t) \cdot \Phi^{n,\varphi,g} (\mfu) \text{
    respectively }
    \frac{\partial}{\partial t} + \Omega^{\uparrow,+}_{V,t}.
  \end{align}
  where $\Psi \in C^1_b([0,\infty),\R)$.
\end{remark}

Because of the \emph{time-inhomogeneity} together with \emph{singular}
effects, instead of $g : V \to \R$ we need to take as building blocks
of polynomials functions of the form
\begin{align}
  \label{e2636}
  (t,\underline v) \mapsto g(t,\underline{v}) \; \text{ and}\;
  g \in C^1_b([0,\infty)^{1+n},\R).
\end{align}
On the corresponding set of test functions we define the operator
$\Omega^{\uparrow, +}_{V}$ fitting the description of the object we
gave translating the description of Evans to the framework of
$\U^V$-valued processes which we gave above in Lemma~\ref{l2000rmu}.

We have here a marked population whose genealogy evolves as a Feller
diffusion all carrying the same mark and at \emph{time $t$}
potentially a population with the \emph{mark $t$} starts according to
an \emph{entrance law} and is concatenated with the rest of the
population. This immigration of a color $s$ at time $t$ has operator
$\Omega^{\uparrow,+,s}_{V,\mathrm{imm}, t}$. This means our generator
consist of two parts
\begin{align}
  \label{e1068}
  \Omega^{\uparrow,+}_{V,t} = \Omega^{\uparrow,+}_{V} +
  \Omega^{\uparrow,+,t}_{V,\mathrm{imm}, t}.
\end{align}
We will specify these parts separately in (i) and (ii) below.

(i) The operator $\Omega^{\uparrow,+}_{V}$ is time-homogeneous and is
the \emph{extension} of $\Omega^{\uparrow}$ from $\U$ to $\U^{V}$.
Here the evolution changes the distance matrix distribution as before
by the growth of the distances. Furthermore, branching acts as before
on $\varphi$, but here the branching changes also the relative weights
of the colors already in the population before the present time. This
means that the branching part of the operator now maps
\begin{align}
  \label{e2581}
  \Phi^{n,\varphi,g} \to \Phi^{n,\wt \varphi,\wt g},
\end{align}
where with $i,j,k \in \{1,\dots,n\}$:
\begin{align}
  \label{e2585}
  \left(\wt \varphi (\uuu),\wt
    g(t,\underline{v})\right)=\sum_{1\le i < j \le n}
  \bigl(\varphi(\uuu^{i,j}),g(t,\underline{v}^{i,j})\bigr) \ind{v_i=v_j} \;
  \text{ and}
\end{align}
\begin{align}
  \label{e2589}
  \bigl(\uuu^{i,j}\bigr)_{k,\ell} =
  \begin{cases}
    u_{k,\ell}  & : \; \text{ for }\; k,\ell \notin \{i,j\}, \\
    u_{i,\ell}  & : \; \text{ for }\; k=i,j \neq \ell, \\
    u_{k,j} & : \; \text{ for }\; j=\ell,i\neq k, \\
    u_{i,j} & : \;\text{ for }\; k =i, j=\ell,
  \end{cases}
  \qquad
  \bigl(\underline{v}^{i,j}\bigr)_k =
  \begin{cases}
    v_i & :\; \text{ for }\; k=i, \\
    v_j & :\; \text{ for }\; k=j, \\
    v_k & :\; \text{ otherwise}.
  \end{cases}
\end{align}
Therefore in $\wh \Omega^{\uparrow,\rm gen}$ the operator
$\Omega^{\uparrow,\rm grow}$ acts on $\varphi$ as before and does not
touch $g$ where the branching part now is given by \eqref{e2581}.

(ii) Next we turn to the operator
$\Omega^{\uparrow,+,t}_{V, \mathrm{imm},t}$. Here we note that the
immigration operator acts at time $t$ only on the mass in the mark
$t$, however the mark determines the distances. The operator
$\Omega^{\uparrow,+,t}_{V,\mathrm{imm}, t}$ is time-inhomogeneous and
induces an inflow of total mass at rate $b$ which has type ``$t$'' at
time $t$. Since most of the colors have died out by time $t+\ve$ and
only finitely many survive for a longer time, the measure
$\nu(U \times \cdot)$ is atomic and has only the current time $t$ as a
condensation point. Here an issue is to ``decide'' about the
genealogical relationship of the \emph{new incoming individual} to the
\emph{current population}. By our convention the new individuals
coming in at time $t$ have distance $2(t-s)$ to those carrying the mark
$s$.

The assertion of the following lemma is a consequence of \eqref{e1116}
and a construction and calculation that we carry out in its proof in
Section~\ref{ss.pr.th1061} where we show that a process satisfying our
description exists.
\begin{lemma}
  \label{l.0850}
  For any process satisfying our description in the beginning of point
  (1) the following limit exists (recall \eqref{e2636} for $g$):
  \begin{align}
    \label{e1085}
    \wh \Omega^{\uparrow,+,t}_{V,\mathrm{imm},t} \wh
    \Phi^{n,\varphi,g} (t,\mfu)
    \coloneqq \lim_{\Delta \downarrow 0} \frac{1}{\Delta} \E
    \left[\wh \Phi^{n,\varphi,g} \left(\mfU_{t+\Delta} \right) -
    \wh \Phi^{n,\varphi,g} \left(\mfU_t \right) | \mfU_t = \mfu \right]
  \end{align}
  and more precisely we have for $g$ satisfying \eqref{e2636} and
  $t>s$
   \begin{align}
    \label{e1085a}
     \wh \Omega^{\uparrow,+,s}_{V,\mathrm{imm},t} \wh
     \Phi^{n,\varphi,g} (t,\mfu)
     = \sum_{i=1}^n \; \frac{b}{\bar \mfu_t^s} \; \wh
     \Phi^{n,\varphi,g_i} (\hat\mfu),
     \text{ with } g_i(\underline{s})
     \coloneqq \delta_{\underline{s},t}
     \frac{\partial g(t,\underline{s})}{\partial s_i} +
     \frac{\partial}{\partial t} g(t,\underline{s}).
  \end{align}
  With this notation we have
  \begin{align}
    \label{eq1081}
    \Omega^{\uparrow,+,t}_{V,\mathrm{imm},t}\bar \Phi \wh
    \Phi^{n,\varphi,g} (t,\mfu)
    = b n \bar \mfu^{n-1} \wh \Phi^{n,\varphi,g}(\hat \mfu)
    + \bar \mfu^n \wh \Omega^{\uparrow,+,t}_{V,\mathrm{imm},t}
    \wh \Phi^{n,\varphi,g} (\hat \mfu).
  \end{align}
\end{lemma}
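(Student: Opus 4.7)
The plan is to derive the operator directly from the description of $\mathfrak{U}^{\ast,+}$ via the sliding-concatenation construction of Remark~\ref{rem:eva-sl-con}, treating $\widehat{\Omega}^{\uparrow,+,t}_{V,\mathrm{imm},t}$ as exactly the residual piece of the generator not accounted for by the (time-homogeneous) growth-and-branching part $\Omega^{\uparrow,+}_V$ in the decomposition \eqref{e1068}. Concretely, on the small time interval $[t,t+\Delta]$, conditional on $\mathfrak{U}_t = \mathfrak{u}$, the state $\mathfrak{U}_{t+\Delta}$ equals $\mathfrak{U}_t$ evolved by the $\U^V$-valued Feller dynamics of the existing colors (this is what $\Omega^{\uparrow,+}_V$ sees) plus, with probability $b\Delta + o(\Delta)$, a brand new color with mark $t$ entering according to the Feller entrance law from $\ntree$ of Proposition~\ref{prop.1382el}, concatenated by sliding concatenation.

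First I would expand $\widehat{\Phi}^{n,\varphi,g}(t+\Delta, \mathfrak{U}_{t+\Delta}) - \widehat{\Phi}^{n,\varphi,g}(t,\mathfrak{u})$ to first order in $\Delta$. The time-inhomogeneity of $g$ contributes the Taylor term $\Delta \,\partial_t g(t,\underline v) + o(\Delta)$, which after taking expectation yields the $\partial g/\partial t$ piece of $g_i$. The immigration contribution is found by writing the normalized sampling measure as $\hat\nu_{t+\Delta} = \hat\nu_t + \Delta\,\bar{\mathfrak{u}}_t^{-1}(\hat\nu^{\mathrm{imm}}_t - \bar{\mathfrak{u}}_t\,\hat\nu_t) + o(\Delta)$, where $\hat\nu^{\mathrm{imm}}_t$ is the rate-$b$ immigrant increment concentrated at mark~$t$ (the entrance law contributes rate $b$ total mass by \eqref{e1116}). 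Expanding $\hat\nu_{t+\Delta}^{\otimes n}$ and collecting the $n$ single-substitution terms in which exactly one of the $n$ sampling coordinates is taken from the immigrant block produces the sum $\sum_{i=1}^n$ with prefactor $b/\bar{\mathfrak{u}}_t$; substituting the $i$-th mark by $t$ and Taylor expanding in that coordinate explains the $\delta_{\underline s,t}\,\partial g/\partial s_i$ factor (the Kronecker symbol encoding that this extra derivative fires precisely at the locus where the freshly immigrated mark $t$ sits). Higher-order terms in $\Delta$ involving multiple immigrant substitutions are $O(\Delta^2)$ and vanish in the limit.

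Equation \eqref{eq1081} then follows by applying Leibniz's rule to the polar-decomposed polynomial $\Phi = \bar\Phi\cdot \widehat{\Phi}^{n,\varphi,g}$ with $\bar\Phi(\bar{\mathfrak{u}}) = \bar{\mathfrak{u}}^n$, combining the genealogical piece computed above with the mass-side contribution of the immigration. Since the immigration adds drift $b$ to the Feller SDE \eqref{e1116}, its action on the mass factor $\bar{\mathfrak{u}}^n$ is $b\,\partial_{\bar{\mathfrak{u}}}\bar{\mathfrak{u}}^n = b n \bar{\mathfrak{u}}^{n-1}$, yielding the first summand in \eqref{eq1081}; the diffusion part $bx \partial_x^2/2$ is absorbed into $\Omega^{\uparrow,+}_V$ and does not contribute here.

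The main obstacle will be justifying that the limit in \eqref{e1085} exists in the sense of pointwise convergence of the difference quotient, which requires (i) uniform integrability of $\widehat{\Phi}^{n,\varphi,g}(\mathfrak{U}_{t+\Delta})$ as $\Delta\downarrow 0$, available from the bounded $n$-th moments of the Feller mass component propagated through the concatenation, and (ii) a careful treatment of the singular $\delta_{\underline s, t}$ contribution, which is legitimate because the mark measure of $\mathfrak{U}_t$ almost surely has no atom at the current time $t$ (only previously immigrated marks $s<t$ carry mass), so the $\partial g/\partial s_i$ contribution can enter only via the fresh immigrant at mark exactly $t$. For the first of these points, an alternative route is to use the generator-convergence framework referenced in Section~\ref{sec:appr-solut-mart} and Remark~\ref{r.1207}, reading off the operator from a suitable particle approximation of the immigration dynamics rather than from direct semigroup differentiation.
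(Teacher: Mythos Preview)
Your proposal is essentially correct and follows the same approach as the paper: both decompose the increment on $[t,t+\Delta]$ into the time-homogeneous Feller evolution of existing colors (handled by $\Omega^{\uparrow,+}_V$) plus the contribution of freshly immigrated mass at mark $t$, then apply the product rule to $\bar\Phi\cdot\widehat\Phi$ and read off the mass-side contribution $bn\bar{\mathfrak{u}}^{n-1}$ from the drift in \eqref{e1116}. The paper's argument makes the excursion-law machinery more explicit---working with the PPP of surviving excursions rescaled to $[0,1]$ with intensity $2/((1-s)\Delta)$ as in \eqref{e2079}, the modified concatenation $\sqcup^{t,\Delta}$ of \eqref{grx53dt}, and the entrance-law identity $\mathcal{L}(M^t_{t,s})=\Exp(t-s)$ of \eqref{e2343}---but these are precisely the ingredients your first-order expansion of $\hat\nu_{t+\Delta}$ is implicitly invoking, and the paper likewise refers to the particle-approximation route you mention as an alternative.
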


Note that we can write the r.h.s.\ of \eqref{eq1081} as
\begin{align}
  \label{e2070}
  \frac{bn}{\bar \mfu} \; \Phi^{n,\varphi,g} (\mfu) + \bar \mfu^n \;
  \wh \Omega^{\uparrow,+,t}_{V,\mathrm{imm},t} \;
  \wh \Phi^{n,\varphi,g} (\hat \mfu).
\end{align}

\medskip
\noindent
\textbf{The $\U^V$-valued Evans process: results on $\mfU^{\ast,+}$}
We combine \eqref{e1068} and Remark~\ref{r.2530} with the above lemma
to obtain the following result.
\begin{corollary}[Evans' tree $\U^{V}$-valued]
  \label{l.1913}
  The generator of the dynamics of the $\U^{[0,\infty)}$-valued Evans'
  process fitting the properties of the description induces on
  $\U^{[0,\infty)}$ a process with generator acting on polynomials as
  given in \eqref{e1068} and \eqref{e1085a}.
\end{corollary}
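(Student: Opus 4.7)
The plan is to read off the generator directly from the sliding-concatenation construction in Remark~\ref{rem:eva-sl-con}: on a short interval $[t, t+\Delta]$, decompose the evolution of $\mfU^{\ast,+}$ into (i) the autonomous evolution of the already-present colors $s \in I_t$ and (ii) the arrival of a freshly-immigrating color with mark $t$. By the construction these two contributions are independent, so their infinitesimal generators add up to yield $\Omega^{\uparrow,+}_{V,t}$ as in \eqref{e1068}.

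For (i), each subfamily $\prescript{s}{}\mfU$ evolves on $[t,t+\Delta]$ as an independent $\U$-valued Feller diffusion carrying the constant mark $s$. Writing $\Phi^{n,\varphi,g}$ via the sampling measure $\hat\nu^{\otimes n}$ and repeating the computation of Remark~\ref{r.1207} (in particular \eqref{mr3aexta}) in the $V$-marked setting: growth acts on $\varphi$ via $\overline{\nabla}\varphi$ and leaves $g$ untouched, while a branching event replaces a sampled individual by a copy of its sibling carrying the \emph{same} mark. The latter observation is exactly what introduces the factor $\ind{v_i = v_j}$ in \eqref{e2585} together with the action on $g$ via $\underline{v}^{i,j}$ in \eqref{e2589}. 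This assembles into $\Omega^{\uparrow,+}_V$ as described in item (i) preceding Lemma~\ref{l.0850}.

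For (ii), I would simply invoke Lemma~\ref{l.0850}, which both ensures the existence of the limit \eqref{e1085} and provides the explicit formula \eqref{e1085a}. The underlying mechanism is that on $[t,t+\Delta]$ a new color with mark $t$ is inserted at sliding distance $2(t-s)$ to every present color $s$, and the Evans identity \eqref{e1116} on total masses supplies the immigration rate $b$. Expanding $\Phi^{n,\varphi,g}$ to first order in the mass of this arrival identifies the $i$-th sampling index as landing on the new color, producing the $\delta_{\underline{s},t}\,\partial_{s_i} g$ contribution; the extra $\partial_t g$ term absorbs the explicit smooth time-dependence of $g$ permitted by \eqref{e2636}. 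Multiplying by $\bar\Phi(\bar\mfu) = \bar\mfu^n$ and adding the drift-$b$ contribution of the total mass produces the prefactor $bn\bar\mfu^{n-1}$ in \eqref{eq1081}. Summing (i) and (ii) and moving to the time-space process via Remark~\ref{r.2530} completes the derivation of \eqref{e1068}.

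The main obstacle is the singular concentration of freshly-arrived mark mass at the condensation point $\{t\}$: most of the mark measure $\bar\mfU^{\rm rel}_t$ is carried on older colors, but new mass piles up precisely at $\{t\}$, so derivatives in $s_i$ evaluated at $s_i = t$ must be interpreted carefully. This is exactly why the admissible test-function class is restricted to \eqref{e2636}, for which the first-order expansion is unambiguous and produces the Kronecker factors $\delta_{\underline{s},t}$ appearing in \eqref{e1085a}. All of this analytic content is handled in the proof of Lemma~\ref{l.0850} in Section~\ref{ss.pr.th1061}; given that lemma, Corollary~\ref{l.1913} reduces to the clean assembly outlined above.
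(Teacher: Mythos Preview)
Your proposal is correct and follows essentially the same approach as the paper: the paper's justification for this corollary is the single sentence ``We combine \eqref{e1068} and Remark~\ref{r.2530} with the above lemma to obtain the following result,'' and your argument unpacks exactly this combination, deriving $\Omega^{\uparrow,+}_V$ from the marked Feller evolution of existing colors and invoking Lemma~\ref{l.0850} for the immigration term.
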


\begin{remark}
  \label{r.2196}
  We observe that denoting by $(S_t)_{t \ge 0}$ the semigroup of the
  process $\mfU^{\ast,+}$, for $g$ constant
  $\wh \Omega_{V,\mathrm{imm},t}^{\uparrow,+} \; S_t(\wh
  \Phi_t^{\varphi,g})=0$. Hence in \eqref{eq1081} the only additional
  term is exactly the first one which is the same that we get for
  $\mfU^\dagger$ or $\mfU^{\mathrm{Palm}}$. This will allow us to
  identify a process $\mfU^\ast$ further below in
  Theorem~\ref{TH.IDENTKALL}, which relates the
  $\U^{[0,\infty)}$-valued Evans process to $\mfU$.
\end{remark}

The following theorem is proven in Section~\ref{ss.pr.th1061}.

\begin{theorem}[Genealogies of Feller diffusion with immigration from
  immortal line]\label{TH.1061}
  Consider the state space $\U^\R$ and in it the closed subset of
  states $\U^\R_{\mathrm{imm}}$, defined by requiring that the marks
  satisfy $s \ge 0$ and distances of points satisfy, that elements of
  different colors have distance twice the color difference.
  Then the following assertions hold.
  \begin{enumerate}[(a)]
  \item For each $\mfu \in \U^\R_{\mathrm{imm}}$ the
    $(\delta_\mfu,\Omega_{V}^{\uparrow,+},\Pi_{V})$-martingale problem
    is well-posed.
  \item The corresponding realization of a solution, denoted
    (generically) by $\mfU^{\ast,+}$, is a Feller (recall
    Remark~\ref{rem:gFUV}) and strong Markov process with continuous
    paths. For initial laws on $\U^{\R}_{\mathrm{imm}}$ we define the
    process similarly to \eqref{eq:3pnu} obtain the martingale problem
    with random initial conditions from $\U^\R_{\mathrm{imm}}$.
  \end{enumerate}
\end{theorem}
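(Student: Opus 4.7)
\textbf{Plan of proof for Theorem~\ref{TH.1061}.} The plan is to follow the blueprint laid out in Remark~\ref{rem:eva-sl-con}, Lemma~\ref{l.0850} and Corollary~\ref{l.1913}: construct $\mfU^{\ast,+}$ explicitly via sliding concatenation, verify by direct computation on polynomials that it solves the $(\delta_\mfu,\Omega_V^{\uparrow,+},\Pi_V)$-martingale problem, and then prove uniqueness by a conditioning argument that reduces the problem to already well-posed ones.

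\emph{Existence.} I would first invoke the classical result of \cite{Evans93} to obtain the measure-valued Evans process on the mark space $V=[0,\infty)$, which yields the countable set $I_T$ of excursion start times together with their conditioned total mass paths $(\bar{\mfu}_\cdot(s))_{s \in I_T}$. For each $s \in I_T$, independently attach a $\U$-valued Feller diffusion with the single mark $s$ conditioned to survive to the present, realised via Theorem~\ref{TH.MARTU}(d) as the time-inhomogeneous marked $\U_1$-valued Fleming-Viot diffusion associated with $\bar{\mfu}_\cdot(s)$, multiplied pointwise by that mass path. The sliding concatenation \eqref{e1963}-\eqref{e2000-mu} then yields $\mfU^{\ast,+}_T \in \U^\R_{\mathrm{imm}}$; consistency as $T$ varies is automatic since the restriction to information available at time $T<T'$ of a color-$s$ piece conditioned to survive to $T'$ is the color-$s$ piece conditioned to survive to $T$. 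The martingale property for $\Phi^{n,\varphi,g} \in \Pi_V$ is verified by sampling $n$ individuals with their marks: the growth/branching contribution coincides with the time-homogeneous $\Omega_V^{\uparrow,+}$-piece via \eqref{e2581}--\eqref{e2589}, while the immigration contribution \eqref{e1085a} arises as the leading-order short-time effect of the newborn atom of mark $t$, exactly as computed in Lemma~\ref{l.0850}. The time-inhomogeneity is dealt with by passing to the time-space process as in Remark~\ref{r.2530}.

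\emph{Uniqueness.} The strategy mirrors Corollaries~\ref{C.tvF}-\ref{cor.834} and Theorem~\ref{PROP.1007}. The total mass process $\bar{\mfU}^{\ast,+}$ is autonomous, a Feller branching diffusion with constant immigration rate $b$ (the $\tilde Z$ of \eqref{e1116}), and its law is therefore uniquely determined. Conditional on a realisation $\bar{\mfu}$ of this path, the normalised $\U^V$-valued process is a time-inhomogeneous \emph{marked Fleming-Viot with atomic color immigration}, with resampling rate $b/\bar{\mfu}_t$ and immigration of color $t$ at rate $b/\bar{\mfu}_t$ at time $t$. For this conditional process I would set up the natural extension of the conditioned duality of Theorem~\ref{PROP.1007}: the dual is the coalescent $\mfC(\bar{\mfu})$ with enriched distance matrix, in which each lineage is additionally killed at time-dependent rate $b/\bar{\mfu}_t$ and replaced by an immigrant lineage whose assigned mark is the time of killing. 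Since the resulting family of duality functions $H^{\varphi,g}$ is separating and convergence determining on $\wt{\mcM}$ (the marked analogue of Lemma~\ref{lem:Pi:separating}), this yields uniqueness of the conditional law, and integrating against the unique law of $\bar{\mfU}^{\ast,+}$ gives well-posedness on $\U^\R_{\mathrm{imm}}$.

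\emph{Feller property, strong Markov, continuous paths.} The Feller property (in the sense of Remark~\ref{rk:Fellert}) follows once existence and uniqueness are established, by noting that the sliding concatenation construction depends Gromov-weakly continuously on the initial state in $\U^\R_{\mathrm{imm}}$: starting from the existing part of the tree at time $0$ only shifts finitely many already-alive colors, and new colors are grafted on autonomously. Almost sure continuity of paths follows because each color piece is a Fleming-Viot-type diffusion with continuous paths in the Gromov weak topology, and only finitely many colors carry positive mass on any compact time interval by standard properties of the measure-valued Evans process. The strong Markov property then follows from continuity of paths and the Feller property via the usual argument. The main obstacle I foresee is the uniqueness step: the immigration operator $\Omega^{\uparrow,+,t}_{V,\mathrm{imm},t}$ is singular (atoms of new colors appear instantaneously), so care is needed both in setting up the killing-and-immigration extension of the enriched coalescent and in verifying that the test functions $\Phi^{n,\varphi,g}$ with $g$ of the form \eqref{e2636} generate a sufficiently rich algebra for the duality to separate laws.
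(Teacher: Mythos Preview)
Your existence argument is essentially the paper's: invoke Evans' measure-valued process for the colored masses, attach to each surviving color an independent time-inhomogeneous $\U_1$-valued Fleming-Viot piece via Theorem~\ref{TH.MARTU}(d), and form the sliding concatenation. That part is fine.

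The uniqueness argument, however, has a genuine gap. You condition only on the \emph{scalar} total mass $\bar{\mfU}^{\ast,+}_t = \nu_t(U\times V)$ and claim the resulting $\U_1^V$-valued process is a marked Fleming-Viot with immigration and resampling rate $b/\bar{\mfu}_t$. But the branching operator $\Omega^{\uparrow,+}_V$ in \eqref{e2581}--\eqref{e2589} carries the indicator $\ind{v_i=v_j}$: resampling acts only \emph{within} a color. Hence the coalescence rate for a pair of lineages of color $s$ is $b/\bar{\mfu}_t(\{s\})$, not $b/\bar{\mfu}_t$, and the set $\mcI_t$ of colors present is itself random given only the total mass. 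Your proposed dual, a coalescent with killing at rate $b/\bar{\mfu}_t$ and mark equal to the killing time, corresponds to a Fleming-Viot in which \emph{all} individuals resample freely, which is not the dynamics here; it cannot separate laws of the color-restricted process.

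The paper's remedy is to condition on the full \emph{measure-valued} process $(\nu_t(U\times\cdot))_{t\ge 0}$ on the mark space, so that $\mcI_t$ and each color mass path $(\bar{\mfu}_t(\{s\}))_{t\ge 0}$ become deterministic. Uniqueness of this measure-valued process is obtained from Evans' log-Laplace characterization (by embedding into a two-site superprocess on $\{\ast,\dagger\}\times[0,\infty)$). Given the colored mass paths, the genealogy decomposes into independent single-color time-inhomogeneous Fleming-Viot processes, each with its own coalescent dual at rate $b/\bar{\mfu}_r(\{s\})$; lineages of color $s$ hit the immortal line exactly at backward time $s$ because $\int_s^T \bar{\mfu}_r(\{s\})^{-1}\,\dx r = \infty$. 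This is precisely the conditional duality of Corollary~\ref{cor.3892}. To repair your argument you must upgrade the conditioning from $\bar{\mfU}^{\ast,+}$ to the whole collection $\{(\bar{\mfu}_t(\{s\}))_{t\ge 0}: s\in\mcI_T\}$ and argue separately, via Evans, that this collection is uniquely determined by the martingale problem.
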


\paragraph{The $\U$-valued Evans process: final results on the
  projection $\mfU^\ast$}
Next we return to the question whether a functional of $\mfU^{\ast,+}$
called $\mfU^\ast$ by ignoring colors (and hence its scaled version
introduced below in \eqref{e1538} $\breve{\mfU}^\ast$) itself is
\emph{Markov} which is saying that we want to define the Feller
diffusion with immigration from the immortal line and then just
observing its genealogy part, i.e.\ the projection
$[U \times V, r \otimes r_V, \mu] \mapsto [U,r,\pi_U\mu],$ so that we
have a process with values in $\U$ rather than $\U^{V}$. Then the
$\U_1$ component, i.e., the genealogy part $\wh \mfU^\ast$ and the
total mass $\bar \mfU$ gives the \emph{functional $\tau$} and the
process is denoted by
\begin{align}
  \label{e1737}
\mfU^\ast  \text{ given by the pair } (\bar \mfU,\wh \mfU^\ast).
\end{align}
We obtain here only states which decompose in balls of radii
$s_1<s_2< \dots <t$ if $t$ is the current time with $t$ as the only
accumulation point and the distance between two balls is $2(s_k-s_i)$
if $i < k$ are the corresponding indices of $s$. We call this closed
space $\U_{\mathrm{imm}}\subset \U$. We note that the orbit of
$\mfU^*$ is contained in $\U_{\mathrm{imm}}$.

The total mass process $\bar \mfU^\ast_t$ is a Markov process namely
the solution of $\dx X_t=b\dx t+\sqrt{bX_t} \; \dx B_t$. This raises the
question whether $\mfU^\ast$ is a \emph{Markov} process. The
observation in Remark~\ref{r.2196} allows us to conclude by defining
an operator for a martingale problem:
\begin{align}
  \label{e2240}
  \Omega^{\uparrow,\ast} \Phi^{n,\varphi}=n \frac{b}{\bar \mfu}
  \Phi^{n,\varphi} + \frac{b}{\bar \mfu} {\binom n 2}
  \Phi^{n,\varphi} + \frac{b}{\bar \mfu} \; \sum_{1 \le k < \ell \le
  n} \; \Phi^{n,\theta_{k,\ell} \circ \varphi}.
\end{align}
\begin{proposition}[Markov property of $\mfU^\ast$]
  \label{prop.2034}
  The process $\mfU^\ast$ is the unique solution of the
  $(\delta_{\mfU_0},\Omega^{\uparrow,\ast},\Pi)$-Martingale problem,
  where $\mfU_0$ arise from $\mfU_0$ in Theorem~\ref{TH.1061} by
  removing the colors. In particular the process $\tau(\mfU^{\ast,+})$
  is Markov.
\end{proposition}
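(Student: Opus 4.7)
The plan is to transfer the well-posed martingale problem of $\mfU^{\ast,+}$ (Theorem~\ref{TH.1061}) to its projection $\mfU^\ast = \tau(\mfU^{\ast,+})$ by lifting test functions on $\U$ to $\U^{[0,\infty)}$ via a constant mark function, and then showing that the resulting operator on the unmarked state agrees with $\Omega^{\uparrow,\ast}$. Concretely, for a polynomial $\Phi = \Phi^{n,\varphi} \in \Pi(C_b^1)$ on $\U$, define its lift $\tilde\Phi := \Phi \circ \tau$, which on $\U^V$ coincides with the polynomial $\Phi^{n,\varphi,g}$ of \eqref{e942} with $g \equiv 1$. The key simplification is that Lemma~\ref{l.0850} shows the immigration part $\wh\Omega^{\uparrow,+,t}_{V,\mathrm{imm},t}$ vanishes on $\wh\Phi^{n,\varphi,g}$ when $g$ is constant, since each $g_i$ in \eqref{e1085a} is zero. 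Hence by \eqref{eq1081} the immigration contribution collapses to the single first-order term $b n \bar\mfu^{n-1}\wh\Phi = (bn/\bar\mfu)\,\Phi^{n,\varphi,1}$.

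The branching-growth part $\Omega^{\uparrow,+}_V$ applied to $\Phi^{n,\varphi,1}$ reduces to the Feller generator $\Omega^{\uparrow}\Phi^{n,\varphi}$ from \eqref{tv5}, because in \eqref{e2581}--\eqref{e2589} the reshuffling of marks leaves the constant $g$ invariant, and the growth part never touches $g$. Putting these pieces together yields
\begin{equation*}
  \Omega^{\uparrow,+}_{V,t}\,\tilde\Phi(\mfu)
  = \bigl(\Omega^{\uparrow}\Phi^{n,\varphi}\bigr)\bigl(\tau(\mfu)\bigr)
  + \frac{bn}{\overline{\tau(\mfu)}}\,\Phi^{n,\varphi}\bigl(\tau(\mfu)\bigr)
  = \bigl(\Omega^{\uparrow,\ast}\Phi^{n,\varphi}\bigr)\bigl(\tau(\mfu)\bigr),
\end{equation*}
where the right-hand side depends on $\mfu$ only through $\tau(\mfu)$ (after rewriting the branching term via \eqref{mr3aexta} to produce the $\binom{n}{2}b/\bar\mfu$ summand appearing in \eqref{e2240}). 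Feeding $\tilde\Phi$ into the $\mfU^{\ast,+}$-martingale problem and taking conditional expectation with respect to the filtration generated by $\mfU^\ast$ then shows that $\mfU^\ast$ solves the $(\delta_{\mfU_0},\Omega^{\uparrow,\ast},D_V)$-martingale problem.

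For uniqueness, note that $\Omega^{\uparrow,\ast}$ coincides with $\Omega^{\uparrow,\mathrm{Palm}}$ from Proposition~\ref{pr.palm1} (again after the algebraic rewriting of \eqref{mr3aexta}), so well-posedness is immediate from that proposition and Corollary~\ref{c.1609}. An alternative, more self-contained route proceeds via the conditional duality of Section~\ref{FKdual}: the total mass $\bar\mfU^\ast$ is the autonomous Feller diffusion with immigration of \eqref{e1116}, and Proposition~\ref{pr.palm2} identifies the conditional law of $\wh\mfU^\ast$ given $\bar\mfU^\ast = \bar\mfu$ as the time-inhomogeneous Fleming--Viot process with resampling and immigration rates both equal to $b/\bar\mfu_t$, which admits a coalescent-with-immigration dual analogous to Theorem~\ref{PROP.1007}. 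Uniqueness of the conditional law for almost every admissible path $\bar\mfu$ (Definition~\ref{def:adm_uII}) then yields uniqueness of $\mfU^\ast$, whence the Markov property of the projection $\tau(\mfU^{\ast,+})$ follows by the standard consequence of martingale-problem well-posedness. The principal obstacle is control of the $1/\bar\mfu$ singularity in the coefficients near the entrance from $\ntree$; this is harmless because path properties of the Feller diffusion with immigration (as in Proposition~\ref{prop:ppFD} and its analog for \eqref{e1116}) guarantee that $\int 1/\bar\mfU^\ast_s\,\dx s$ is locally finite away from the entrance, so the conditional dual is well-defined and produces the needed separating family of duality functions.
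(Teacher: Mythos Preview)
Your proposal is correct and follows essentially the same approach as the paper: lift polynomials on $\U$ to $\U^{[0,\infty)}$ via constant $g$, observe (as in Remark~\ref{r.2196} and Lemma~\ref{l.0850}) that the immigration operator on $\wh\Phi$ vanishes for constant $g$ so that only the $bn/\bar\mfu$ term survives, and then identify $\Omega^{\uparrow,\ast}$ with the operator of $\mfU^\dagger$/$\mfU^{\mathrm{Palm}}$ to import well-posedness. The paper's own proof is a one-line reference to the $\mfU^\dagger$ martingale problem; your write-up simply makes the intermediate steps explicit and adds the (redundant but harmless) alternative via conditional duality.
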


This result above allows us to define a $\U$-valued process with
continuous paths:
\begin{align}
  \label{e1329}
  \mfU^\ast = (\tau(\mfU^{\ast,+}_t))_{t \ge 0}
\end{align}
and a scaled process
\begin{align}
  \label{e1538}
  \breve \mfU^\ast = \left([U_t,bt^{-1} r_t,(bt)^{-1} \mu_t]\right)_{t \ge 0}.
\end{align}

Then we can prove:
\begin{theorem}[Identification of Kallenberg tree as functional of the
  Evans process]
  \label{TH.IDENTKALL}
  The process $\mfU^\ast$ satisfies (recall
  \eqref{e1607},\eqref{e1683}):
  \begin{align}
    \label{a2082}
    \mcL_0 [\mfU^\ast]= \mcL_0[\mfU^{\mathrm{Palm}}] = \mcL[\mfU^{\mathrm{Kal}}].
  \end{align}
\end{theorem}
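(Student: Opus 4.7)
The plan is to prove the two equalities separately and in opposite directions of difficulty. The second equality $\mcL_0[\mfU^{\mathrm{Palm}}]=\mcL[\mfU^{\mathrm{Kal}}]$ is immediate from the definition of the Kallenberg tree in Theorem~\ref{T.1205}, equation \eqref{e1683}: $\mfU^{\mathrm{Kal}}$ is by definition the entrance law of $\mfU^{\mathrm{Palm}}$ from the null tree $\ntree$. So all the real work lies in showing $\mcL_0[\mfU^\ast]=\mcL_0[\mfU^{\mathrm{Palm}}]$.

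To establish this first equality I would argue by uniqueness of the martingale problem. On the one hand, Proposition~\ref{prop.2034} identifies $\mfU^\ast=\tau(\mfU^{\ast,+})$ as the unique solution of the $(\delta_{\mfU_0},\Omega^{\uparrow,\ast},D_V)$-martingale problem with generator $\Omega^{\uparrow,\ast}$ given by \eqref{e2240} (supplemented by the standard $\Omega^{\uparrow,\mathrm{grow}}$-part coming from distance growth, which is unaffected by the projection $\tau$). On the other hand, Proposition~\ref{pr.palm1} identifies $\mfU^{\mathrm{Palm}}$ as the unique solution of the martingale problem with generator $\Omega^{\uparrow,\mathrm{Palm}}=\tfrac{nb}{\bar\mfu}\Phi^{n,\varphi}+\Omega^{\uparrow,\mathrm{bran}}\Phi^{n,\varphi}+\Omega^{\uparrow,\mathrm{grow}}\Phi^{n,\varphi}$ from \eqref{e1372}. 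The key computation is then the algebraic identity (on polynomials in $\Pi(C_b^1)$)
\[
  \frac{b}{\bar\mfu}\binom{n}{2}\Phi^{n,\varphi}+\frac{b}{\bar\mfu}\sum_{1\le k<\ell\le n}\Phi^{n,\theta_{k,\ell}\circ\varphi}\;=\;\Omega^{\uparrow,\mathrm{bran}}\Phi^{n,\varphi}+\frac{b}{\bar\mfu}\sum_{k<\ell}\bigl(\Phi^{n,\varphi}-(\Phi^{n,\varphi}-\Phi^{n,\varphi\circ\theta_{k,\ell}})\bigr),
\]
i.e.\ that the coalescence-like terms in $\Omega^{\uparrow,\ast}$ reassemble to $\Omega^{\uparrow,\mathrm{bran}}$; this is the generator level content of Remark~\ref{r.2196}. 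Once the two generators coincide on a law-determining class of test functions, well-posedness of both martingale problems forces the laws to agree, provided the initial distributions match.

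For matching the initial laws I would observe that in both cases the ``$\mcL_0$''-version is the entrance law from $\ntree$. For $\mfU^{\mathrm{Palm}}$ it exists via Proposition~\ref{pr.palm2}: conditionally on the path $\bar\mfu$ of the total-mass process (which solves $\dx X_t=b\,\dx t+\sqrt{bX_t}\,\dx B_t$ and admits entrance from $0$) the pure genealogy part is a time-inhomogeneous $\U_1$-valued Fleming-Viot with immigration that can be started at $\ntree$. For $\mfU^\ast$ exactly the same autonomous total-mass SDE holds by construction of the Evans sliding-concatenation in Remark~\ref{rem:eva-sl-con} (rate-$b$ immigration into a critical Feller diffusion), and the $\U_1$-part is constructed in Theorem~\ref{TH.1061} precisely as that Fleming-Viot with immigration; in particular the entrance from $\ntree$ exists. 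Uniqueness of the common martingale problem together with identical entrance distributions yields $\mcL_0[\mfU^\ast]=\mcL_0[\mfU^{\mathrm{Palm}}]$, completing the chain.

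The hard part I expect is not the algebraic generator comparison but rather the justification, used implicitly in Proposition~\ref{prop.2034}, that the projection $\tau$ of the $\U^{[0,\infty)}$-valued process $\mfU^{\ast,+}$ is genuinely Markov on $\U$, with generator closed on polynomial test functions that forget the marks. A priori, the full mark distribution on $[0,\infty)$ could be needed to predict the future of the $\U$-marginal; what makes the projection autonomous is precisely the observation in Remark~\ref{r.2196} that $\wh\Omega^{\uparrow,+,t}_{V,\mathrm{imm},t}$ kills $S_t$-images of mark-free polynomials, so that immigration contributes only the scalar linear term $nb/\bar\mfu\cdot\Phi^{n,\varphi}$ to the projected generator. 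Verifying this rigorously — that the semigroup preserves the subalgebra of unmarked polynomials — is the core technical step; after it is in place the uniqueness argument and matching of entrance laws proceed in a routine way.
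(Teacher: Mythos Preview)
Your proposal is correct and follows essentially the same route as the paper: the paper's proof of Theorem~\ref{TH.IDENTKALL} simply observes that the operator $\Omega^{\uparrow,+}_{V,t}$ acting on polynomials that do not depend on the colour (i.e.\ with $g$ constant) reduces, via Remark~\ref{r.2196} and equation~\eqref{eq1081}, to exactly $\Omega^{\uparrow,\mathrm{Palm}}$ from \eqref{e1372}, so that $\mfU^\ast=\tau(\mfU^{\ast,+})$ solves the same well-posed martingale problem as $\mfU^{\mathrm{Palm}}$ (and $\mfU^\dagger$), and the second equality is indeed just the definition \eqref{e1683}. Your identification of the Markov property of the projection $\tau$ as the substantive step is accurate; in the paper this is Proposition~\ref{prop.2034}, whose proof also rests on the same generator comparison and well-posedness.
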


Next recall the explanations of and definitions of the ingredients in
the identity around \eqref{e1403} for $\varrho_h^t$, \eqref{e977Q} for
$\mfU^\dagger$, \eqref{e1607} for $\mfU^{\mathrm{Palm}}$,
\eqref{e1125}, \eqref{e12066}, \eqref{e1683} for
$\mfU^{\mathrm{Palm}}$.

\begin{corollary}[Identification of the size-biased $\U$-valued Feller
  diffusion]\label{c.idenfell}
  We have the following equality of laws:
  \begin{align}
    \label{e1335}
    \mcL_0 \bigl[\mfU^{\mathrm{Palm}} \bigr] =
    \mcL_0 \bigl[\mfU^\dagger_0 \bigr] =
    \mcL_0 \bigl[\mfU^\ast \bigr] =
    \mcL_0 \bigl[\tau(\mfU^{\ast,+})\bigr] = (\varrho_t^t)^{\mathrm{Palm}}.
  \end{align}
\end{corollary}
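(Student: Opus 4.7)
The plan is to recognize Corollary~\ref{c.idenfell} as a direct assembly of three identifications already established immediately above, so essentially no new work is required. Splitting the four-way equality into three links, I need to justify (a) $\mcL_0[\mfU^{\mathrm{Palm}}] = \mcL_0[\mfU^\dagger_0]$, (b) $\mcL_0[\mfU^\dagger_0] = \mcL_0[\mfU^\ast]$, and (c) $\mcL_0[\mfU^\ast] = \mcL_0[\tau(\mfU^{\ast,+})]$.

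For (a), I would invoke Corollary~\ref{c.1609}: Lemma~\ref{c.1310} characterizes $\mfU^\dagger$ as the solution of the well-posed $\bigl(\Omega^{\uparrow,(a,b)},\Pi(C^1_b)\bigr)$-martingale problem with coefficients $a(t,\bar\mfu)=b/\bar\mfu$ and $b(t,\bar\mfu)=b$, while Proposition~\ref{pr.palm1} gives the operator $\Omega^{\uparrow,\mathrm{Palm}}$ for $\mfU^{\mathrm{Palm}}$. Rewriting $\Omega^{\uparrow,\mathrm{bran}}$ via \eqref{mr3aexta} and adding the first-order contribution $an\,\Phi^{n,\varphi} = (bn/\bar\mfu)\,\Phi^{n,\varphi}$ coming from the super-criticality, as in Remark~\ref{r.offspring}, one sees by direct comparison that the two operators agree on $\Pi(C_b^1)$. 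Well-posedness of both martingale problems then forces the laws to coincide for every fixed initial condition in $\U\setminus\{\ntree\}$, and specializing to the entrance law from $\ntree$ yields~(a).

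For (b), I would simply quote Theorem~\ref{TH.IDENTKALL}, which asserts $\mcL_0[\mfU^\ast]=\mcL_0[\mfU^{\mathrm{Palm}}]$; combined with~(a) this gives the middle equality. The equality (c) holds by the very definition $\mfU^\ast = \tau(\mfU^{\ast,+})$ in \eqref{e1329}, so no further argument is needed.

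The genuine difficulty therefore lies not in the corollary itself but in the earlier results it cites: the two delicate points are the Markovianity of the projection $\tau(\mfU^{\ast,+})$ established in Proposition~\ref{prop.2034} (nontrivial because $\mfU^{\ast,+}$ lives on the strictly richer marked space $\U^\R_{\mathrm{imm}}$), and the explicit verification that the projected generator \eqref{e2240} coincides with $\Omega^{\uparrow,\mathrm{Palm}}$. The key observation, recorded in Remark~\ref{r.2196}, is that the immigration operator $\wh\Omega^{\uparrow,+}_{V,\mathrm{imm},t}$ annihilates $S_t \wh\Phi^{n,\varphi,g}$ when $g$ is constant; consequently, upon projection all immigration effects collapse into the single first-order term $nb/\bar\mfu$ common to both operators, while the remaining binomial and coalescent-type pieces match those in the critical $\U$-valued Feller generator after the rewriting~\eqref{mr3aexta}.
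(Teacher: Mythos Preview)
Your proposal is correct and matches the paper's approach: the corollary is stated without a separate proof precisely because it is an immediate assembly of Corollary~\ref{c.1609}, Theorem~\ref{TH.IDENTKALL}, and the definition~\eqref{e1329}, exactly as you lay out in your three links (a)--(c). Your additional remarks about where the real work lies (Proposition~\ref{prop.2034} and the observation in Remark~\ref{r.2196}) are accurate and appropriately situate the corollary as a bookkeeping statement.
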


\subsubsection[IPP-representation of \texorpdfstring{$\mfU^\dagger_t$,
  $\mfU^{\mathrm{Palm}}_t$, $\mfU^\ast_t$}{U-dagger, U-Palm, U*} via
backbone construction]{Longtime behavior of Feller diffusion 3:
  IPP-representation of \texorpdfstring{$\mfU^\dagger_t$,
    $\mfU^{\mathrm{Palm}}_t$, $\mfU^\ast_t$}{U-dagger, U-Palm, U*} via
  backbone construction}
\label{sss.backboncon}
We are now ready to return to the \emph{question of a cluster
  representation} for $\mfU^\dagger$ the $\U$-valued Feller diffusion
\emph{conditioned to survive forever}, or equivalently
$\mfU^{\rm Palm}$, which we raised at the beginning of
Section~\ref{sss.1616}.

We represent for that at a given time $t$ the state of $\mfU^\dagger$
using the identity in law with $\mfU^\ast$ as \emph{concatenation over
  an IPP (inhomogeneous PP) or a CPP on $\U$}. We shall explain why it
is better (at least for $\mfU^\dagger$, $\mfU^{\mathrm{Palm}}$,
$\mfU^\ast$) to use here an IPP, i.e.\ a representation with an
inhomogeneous Poisson point process, where we obtain a concatenation
of independent but \emph{not} identically distributed subfamilies
defined according to the most recent common ancestor after the moment
of immigration. A key feature, the ``points'' of this concatenation
will now arise here as final state of an \emph{evolving $\U$-valued
  process}. Abstractly in terms of the $\U$-valued state of $\mfU^t_t$
we decompose into the largest ball of radius $< t$, then take the
complement and take the largest ball of radius less than the previous
etc. and obtain this way a decomposition in $\U$-valued elements which
we can concatenate (sliding concatenation) to the full state. (This
will be a \emph{convergent countable} concatenation.)

We use that $\mfU^{\mathrm{Palm}}$ is equal in law to $\mfU^\ast$
which arises from $\mfU^{\ast,+}$ by ignoring the colors. This gives
us a decomposition into disjoint populations, which are independent
and of decreasing diameter according to a most recent common ancestor
before some (random) time $t$ back, which is the time of the
immigration of the founding father. Here we start our construction
with $\mfU^{\ast,+}$ and then pass to $\mfU^\ast$ to get the
decomposition of $\mfU^{\rm Palm}$.

\paragraph{The backbone construction of an IPP-representation}
We have three objectives.

\medskip
\noindent
\emph{First} we construct a concrete IPP-representation of the scaled
process $\breve \mfU^\ast$ running with time index $s \in [0,T]$ with
a \emph{fixed time horizon} $T \in [0,\infty)$ and of the scaling
limit as $T\infty \infty$, denoted $\breve \mfU_\infty^\ast$. In this
representation the state at time $T$ is obtained as time-$T$ state of
a time-inhomogeneous process which generates the state via a point
process, in this case as a \emph{concatenation} of \emph{independent
  subfamilies} immigrating at random times $s$ (at time-inhomogeneous
rate) and surviving for time $T-s$. This representation is called the
\emph{backbone construction}. Note that here we have a concatenation
of \emph{differently distributed} but \emph{independent} pieces.

\medskip
\noindent
\emph{Second} we have to show rigorously that we can obtain from the
picture of immortal lines from above a representation of the
$\U$-valued scaled distribution of the process at each time $t$ as
$t \rightarrow \infty$, which then converges in law to the
(generalized) \emph{quasi-equilibrium of the scaled $\U$-valued Feller
  process}. This later object we construct from an \emph{inhomogeneous
  Poisson point process} on the time interval $(-\infty, 0)$ and
independent copies of $\U$-valued Feller diffusions starting at the
points of the point process and conditioned to survive till time $0$.
We have to specify the \emph{intensity} of the PPP and to give the
rule how to \emph{concatenate} the i.i.d.\ copies of the $\U$-valued
Feller diffusions.

\medskip
\noindent
Our \emph{third} point is now to relate the backbone decomposition
with the \Levy{}-Khintchine decomposition. We consider a decomposition
in \emph{identically distributed} and \emph{independent pieces} and a
CPP-representation. The process $\mfU^\ast$ (and also
$\mfU^\dagger,\mfU^{\mathrm{Palm}}$) at time $t$ are infinitely
divisible since we can decompose the initial state $Y_0$ and the
immigration rate $b$ into $n$-pieces starting in $Y_0/n$ and with
immigration at rate $b/n$, for every $n$ and then the states of the
$n$-pieces are independent and identically distributed and their
concatenation is a version of the original process. Therefore the
$h$-trees can be represented via the \Levy{}-Khintchine representation
and we even have what we called a $\U$-valued Markov branching tree
structure allowing a \emph{Cox point process representation on $\U(h)$
  of all $h$-tops} via the \Levy{}-Khintchine formula. Recall in
contrast to that that the backbone representation gives an IPP
concatenation of subfamilies immigrating from the immortal line at
depths which are ordered and are in different $\U^\sqcup(s)$.

\begin{remark}
  \label{r.2660}
  This raises the question if there is an analogous backbone
  decomposition for the process $\mfU^T$. If we consider the entrance
  law from zero there is a \emph{backbone}, but the issue is the
  \emph{dependence} between the lines breaking off. This is due to
  the \emph{non-linearity} of the drift coefficient which does not
  allow a representation as for $\mfU^{\mathrm{Palm}}$ with
  independent branches breaking off.
\end{remark}

\medskip
\noindent
\textbf{\emph{Heuristics}} We use now $\U$-valued the Evans
construction but taken from a different perspective, namely instead of
looking forward we focus on the time $t$-state and its \emph{backward
  decomposition}, using its build up \emph{in time $s,0 \leq s < t$}
and considering the \emph{limit $s \uparrow t$}. Here we consider an
immortal particle which generates at rate $b$ independent copies of
the $\U$-valued Feller diffusion. In order to obtain the ones
surviving at a specified time $t$, we have to consider only those
surviving till time $t$. These are obtained generating from the
immortal line at \emph{rate $(t-s)^{-1}$ at time $s$ populations
  surviving from time $s$ up to time $t$}, more precisely from the
entrance law from the zero element, which are grafted, using a proper
$s$-concatenation, at the time of creation to the current genealogical
tree and conditioned to survive till the time horizon $t$. This builds
up the (marked) genealogy of the current population of
$\mfU^\ast(\mfU^{\ast,+})$ from a sequence of time
$(t-h_i)$-genealogies grafted at time $s_i = t-h_i$ to the
\emph{backbone} the \emph{line of descent of the time $t$-surviving
  particle}. Formally we make use of the operation
$\sqcup^{\mathrm{sli}}$ from \eqref{e1963}.

\medskip
\noindent
\textbf{\emph{Rigorous formulation}}
This population conditioned to survive for some time $t$ once started
at time $0$ we want to generate via a \emph{Markov process} evolving
from time $0$ to time $t$, which means in particular we have to work
with the dynamic producing the conditioning to survive till time $t$
we treated above in Section~\ref{sss.entrance}.

We recall here that this process can be started in \emph{mass zero} as
$\R_+$-valued process, however as $\U$-valued process a problem arises
since the branching operator involves the term $(\bar \mfu)^{-1}$.

We have to write down the grafting to the backbone on the \emph{level
  of ultrametric measure spaces} using $(\U, \sqcup^{s})$ for suitable
$s$. Before we treat the $\U$-valued case we again work with marks for
the immigrants. (see Remark~\ref{r.4013}) In formulas we want to write
\begin{align}
  \label{e2204}
  \mfU^{\ast,+}_t=\mathop{\bigsqcup\nolimits^{\mathrm{sli}}}\limits_{s
  \in N(t)} \enspace \mfW_{s,t}^{t,+},
\end{align}
where $N(t)$ is a time-inhomogeneous PPP with intensity $n_{t-s}$ in
$s \in [0,t)$ and $(\mfW_{s,u}^{t,+})_{u \in [s,t)}$ is an
$\U^{(0,\infty)}$-valued process such that they are \emph{independent}
for different $s$. This means that we can define via the concatenation
as in \eqref{e2204} processes $(\mfV_r^{t,+})_{r \in [0,t)}$ by
replacing $t$ in $N(t)$ by $r$, similarly taking $\mfW_{s,r}^{t,+}$
and then trying to characterize this process as a multi-type, with
types $s \in [0,t)$, branching process. Finally we will have to
consider the limit $r \uparrow t$.

\medskip
\noindent
\textbf{\emph{Construction of backbone}}
We will work with the rates in \eqref{e984} above to produce the
$\U^{[0,\infty)}$-valued process. Here the $\mfU_i \in \U(h)$ and are
versions of $\mfU^t_h$ and denoted by
\begin{align}
  \label{e1009}
  (\mfV^{t,+}_s)_{s \in [0,t)}.
\end{align}
We get this process by concatenating states of various $\U^V$-valued
processes together. First turn to such an element to be concatenated.

This amounts to construct first an $\U$-valued object, namely take our
generator $\Omega^{\uparrow(a,b)}$ and replacing the constants $a$,
$b$ by functions $a_t(s,\overline{\mfu})$ respectively
$b_t(s,\overline{\mfu})\equiv b$ from \eqref{e984} (where the quantity
$\bar \mfu$ refers to the piece generating \emph{one immigrant family}
which is determined by the parameters $s$ and $t$). We note that we
have to keep track here of the time of the insertion into the
population, since we need the mass of this sub-population together
with that time of insertion to \emph{determine the rates $a$ and $b$}
for this particular sub-population, which requires to introduce a
\emph{mark} which is inherited by the descendants. We then have an
increasing sequence of marks from $V=[0,t]$ with corresponding
$\U$-valued random variables which we have to \emph{mark} with the
time of appearance.

Then \emph{add the immigration} of independent copies of this
evolution to the immortal line, where the immigration is given by an
\emph{inhomogeneous Poisson point process} with intensity
\begin{align}
  \label{e1401}
  2(t-s)^{-1} \text{ for } s \in [0,t),
\end{align}
where the $r$ becomes the mark of the inserted population.

Next \emph{concatenate} them successively to the state $\mfV_s^{t,+}$
from \eqref{e1009} which is the time point of immigration for every
$s \in [0,t]$. Altogether we get a $\U^V$-valued time-inhomogeneous
processes. For this program we proceed as follows. This process is
different from the Evans process but has the same marginals at time
$t$. Recall that the pieces are conditioned to survive till time $t$
in this construction.

To concatenate we take the union of the populations with mark less
than $s$ and define the distance as the one in the sub-population for
two individuals from the same sub-population otherwise this is defined
such that the time $u$ descendants of time $s$, $s'$ immigrants have
distance $2(u-\min(s,s'))$. The measure is defined as the sum of the
sub-population measures extended to the disjoint union the obvious
way.

Due to the independence properties we can view this also as a
collection of independent $\U$-valued processes where the evolution
starts at increasing time points which follow the evolution
corresponding to $\Omega^{\uparrow,(a,b)}$ and which are then marked
with the starting time and concatenated so that we obtain an
$\U$-valued Markov process we want to relate to $\mfU^\ast$.

\begin{remark}
  \label{r.4013}
  We will see in the end that for all $\ve >0$ we have only finitely
  many colors $s \le t-\ve$ and we denote this collection by $\mcI_s$.
  Therefore, at any fixed time $s$ before time $t$ we can decompose
  the space $(U,r,\mu)$ into disjoint balls of radii decreasing in
  space and consider the state at time $s$ as a concatenation of the
  corresponding $\U$-valued random variables $\mfV_s(i)$,
  $i \in \mcI_s$. In particular we can write the generator as a sum of
  operators acting on the $i$-th term only. Since the decomposition is
  unique we obtain an operator which depends only on the state
  $\mfV_r^t$ giving an $\U$-valued Markov process. However carrying
  out the details here is a bit cumbersome and it is more convenient
  to work with $\U^{[0,\infty)}$ first and finally project on $\U$.
\end{remark}

\paragraph{Martingale problem description of backbone}
In order to identify the object $(\mfV_s^{t})_{s \in [0,t]}$ just
constructed, in particular its state at time $t$ and to relate it to
the process $\mfU^\ast$ respectively to the state $\mfU^\ast_t$, from
above we need more information. First, for every $t>0$ and
$s\in [0,t]$ we want to obtain $\mfV_s^{t}$ as the time $s$ state of a
Markov process via a martingale problem. Then we have to show that the
corresponding martingale problem is well-posed. This martingale
problem is of a somewhat different form compared to the one we had
before in $\mfY^t$ in \eqref{e934} because here instead of fixing a
leaf law we generate the masses also dynamically. We start again by
working with a $\U^{[0,\infty)}$-valued object below and later return
to the $\U$-valued situation.

Namely consider for every $t>0$, $V=[0,\infty)$, the
\emph{time-inhomogeneous $\U^V- $valued branching process
  $(\mfV^{t,+}_s)_{s \in [0,t]} $} defined as the continuous time
$\U^V$-valued branching process with mechanism as described above, the
corresponding operator is denoted
$\wt \Omega^{\uparrow,\ast,+}_{V,s}, \; s \in [0,t]$. We use now
polynomials for the marked case of the form \eqref{e942}.

First of all we need the growth operator of the distances which acts
only via $\varphi$ and is as before, then second we need the operator
acting on masses via the drift term, again as before. We need the
operator $\Omega^{\uparrow,(a,b)}$ to act separately for each of the
populations associated with a mark $s$ characterizing the time of
immigration $s$ so that $a$, $b$ are taken as $a_t(u,x)$,
$b_t(u,x)=bx$ for $u \in [s,t]$ and $x$ the time-$u$ mass of the
\emph{type $ s$-immigrant populations}, together with an explicit time
coordinate. We require that the resampling operator acts only on
variables $(u,v)$, $(u',v)$ with equal marks while otherwise we have
the zero operator, i.e.\ recalling \eqref{e942} and
\eqref{e2581}--\eqref{e2589} we have (replacing $t$ by $s$ in \eqref{e2585})
\begin{align}
  \label{e3181}
  \Omega^{\uparrow,(a,b)}_{V,s} \; \Phi^{n,\varphi,g} =
  \Phi^{n,\tilde \varphi,\tilde g}
\end{align}
where $\varphi \in C^1_b([0,\infty)^{\binom{n}{2}}, \R)$,
$g \in C_{bb}(\R \times \R^n,\R)$.

Note however that here we need in addition to the time-inhomogeneous
evolution given by conditioned branching also \emph{immigration} at
rate $(t-s)^{-1}$ at time $s \in [0,t]$. This means we have to include
the mechanism of immigration in our martingale problem. To describe
the immigration effect with an operator we consider an evolution in a
\emph{randomly fluctuating medium}, where the medium turns a time into
an active time where then the branching operator
$\Omega^{\uparrow, (a,b)}_{V,s}$, the colored version of
$\Omega^{\uparrow,(a,b)}$, acts on the sub-population with the
corresponding mark.

The dynamics start for all colors $s$ in the zero tree (recall we can
start this conditioned dynamics in zero). This means we have an
$\N_0$-valued medium process with time-inhomogeneous jump rate
$(t-s)^{-1}$ at time $s$ to jump one up, call this process
\begin{align}
  \label{e3144}
  L=(L(s))_{s \geq 0}.
\end{align}

The medium \textit{flips} from \emph{active to passive} at the jump
times of $L$. However at the time of immigration the new population
splits off from ``the immortal path''. This means the distances of the
new immigrant at time $s$ to an individual $i$ with mark $u < s$ is
$2(s-u)$ and these distances are added at the moment of immigration
and grow according to the entrance law from the tree with total mass
$0$ and genealogy
$\mfe=[\{ \ast \}, \underline{\underline{0}}, \delta_\ast]$. We have
to describe this effect in a generator action now, together with the
active mark $s$ appearing. Hence we need an operator describing this
transition.

Recall that the generator of the $\U$-valued process degenerates for
mass $0$ for the branching term involving the (total mass)$^{-1}$ and
hence explodes at the total mass zero. Therefore starting the
$\U$-valued time-\emph{in}homogeneous diffusion starting from zero
arises itself as an \emph{entrance law}.

The complete operator for these two effects flipping of the medium and
immigration made precise below in \eqref{e1041} acts on $\Phi$ as
follows. Set $V=[0,t]$ and define the immigration operator where
at time $s$ an immigration of color $s$ occurs, which means an
entrance law from $0$ is added to the process. We have to calculate
the infinitesimal effect generated by this influx. For this purpose we
need an operator:
\begin{align}
  \Omega_{V,\mathrm{imm},s}^{\uparrow,+}
  \Phi^{n,\varphi,g}=\sum^n_{i=1}\Phi^{n,\varphi,\hat g_i},\;
  \text{ where} \; \hat g_i=\delta_{\underline{s},t}
  \frac{\partial}{\partial s_i} g + \frac{\partial}{\partial t} g
  \; \text{ (cf.\ \eqref{e1085a}}) \label{e1041}
\end{align}
which arises from the following operator by generating $L$ by an
inhomogeneous Poisson point process with intensity $2(t-r)^{-1}$:
\begin{align}
  \label{e3248}
  \wt \Omega^{\uparrow,\ast,+}_{V,r} (L)= \sum_{s \in I_t,s \leq r} \;
  \Omega^{\uparrow, (a,b)}_{V,s},
  \quad\text{where} \quad
  I_t \coloneqq I_t(L) \coloneqq \{s \in [0,t] : L(s) \ne L (s-)\}.
\end{align}

The operator of the marked process is:
\begin{equation}
  \label{e3803}
  \Omega^{\uparrow,\ast,+}_{V,s} = \Omega^{\uparrow,(a,b)}_{V,s}
  + \frac{2}{t-s} \Omega^{\uparrow,t,s}_{V,\mathrm{imm},s}.
\end{equation}

We handle the singularity by incorporating time in the state, recall
here what we did earlier in~\ref{e2636}. Then passing to the
\emph{time-space process} on $[0,\infty) \times E$ if $E$ was the
original state space (and test functions now have the form
$\Psi \Phi^{n,\varphi,g}$, where $\Psi \in C^1_b([0,\infty), \R)$
with $\Phi$ as above) gives for the fixed time horizon $T > 0$ the
operator $\bar \Omega^{\uparrow,\ast,+}_{V}$ acting as
$\frac{\partial}{\partial s} + \wt \Omega^{\uparrow,\ast,+}_{V,s}$
i.e.\ it acts as
\begin{align}
  \label{e1081}
  \bigl(\bar\Omega^{\uparrow,*,+} (\Psi \Phi)\bigr) (t,\mfu)
  = \Bigl( \frac{\partial}{\partial t} \Psi(t) \Bigr) \Phi(\mfu) +
  \Psi(t) \wt \Omega^{\uparrow,\ast,+}_V \Phi(\mfu).
\end{align}
This requires an argument why \emph{existence and uniqueness}
still hold. The precise statement is below.

\paragraph{Main results on PPP-representation via backbone}
First we need to show that the process $(\mfV^{t,+}_r)_{r \in [0,t]}$
is well-defined and since the rate in \eqref{e1401} diverges for
$r \uparrow t$ we also have to establish that $\mfV^{t,+}_r$ converges
to a limit in $\U^V$ as $r \uparrow t$. The following result is proven
in Section~\ref{ss.profbran}.

\begin{proposition}[$(\mfV_r^{t,+})_{r\in [0,t]}$ is well-defined by
  martingale problem]\label{l.branch} \hfil\\
  For the backbone construction the following assertions hold.
  \begin{enumerate}[(a)]
  \item For a given realization of the process $L$ the
    $(\delta_0,\bar\Omega^{\uparrow,\ast}_{V,t},\Pi(C^1_b))$-martingale
    problem is well-posed for times $r \in [0,t)$.
  \item The limit of $\mcL[\mfV^{t,+}_r]$ for $r \uparrow t$ exists in
    $\U^V$ and defines $\mfV_t^{t,+}$ if we require the continuity of
    paths on $[0,t]$.
  \end{enumerate}
\end{proposition}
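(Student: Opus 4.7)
For part~(a) I plan to condition on the realization of $L$. Since $L$ has a.s. only finitely many jumps on any compact $[0,r]\subset[0,t)$, the $\U^V$-valued state on each interval $[s_i,s_{i+1})$ between successive jumps decomposes uniquely, as in Remark~\ref{r.4013}, into a sliding concatenation of finitely many color-$s_j$ subpopulations, each of which evolves independently as a time-inhomogeneous conditioned Feller diffusion with coefficients $a_t(u,\cdot)$ and $b_t(u,\cdot)=bu$ from~\eqref{e984}. Well-posedness of each individual colored martingale problem is provided by Theorem~\ref{TH.MARTU}(b), and at each jump time $s_{i+1}$ of $L$ a new color is grafted in, started from $\ntree$ via the entrance law constructed in Theorem~\ref{TH.MARTU}(c). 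The different colored pieces interact only through the sliding-concatenation distance rule \eqref{e2000-r}, which is a deterministic function of the marks, so their joint evolution is the independent product of the colored pieces. Existence follows by gluing. For uniqueness, the operator~\eqref{e3248} decomposes as a sum of color-wise operators, so that any solution, conditioned on $L$, must coincide on each interval with the independent product of the unique solutions of the colored sub-martingale problems; the extra $\partial/\partial s$ term in~\eqref{e1081} only accounts for the explicit time dependence as in Remark~\ref{r.2530}, and averaging over the (independently given) law of $L$ yields well-posedness.

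For part~(b), I plan to show that $(\E[\Phi(\mfV^{t,+}_r)])_{r<t}$ is Cauchy for every polynomial $\Phi^{n,\varphi,g}\in\Pi^{\mathrm{mark}}$ and invoke convergence-determination of polynomials. The necessary a priori estimates are supplied by the excursion/entrance-law description of Proposition~\ref{prop.1382el}: the survival probability from $\ntree$ up to time $t$ of a color started at time $s$ is $2/(b(t-s))$, while its conditional mean total mass at time $t$ is $b(t-s)/2$. Since the immigration intensity is $2/(t-s)$, the expected total mass contributed by colors $s\in[r,t)$ that survive to time $t$ equals $\int_r^t \tfrac{2}{t-s}\cdot\tfrac{b(t-s)}{2}\,\dx s = b(t-r)\to 0$ as $r\uparrow t$. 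Moreover every pair of individuals carrying colors in $[r,t)$ has pairwise genealogical distance at most $2(t-r)$. When sampling $n$ points from the normalized state of $\mfV^{t,+}_r$, the probability of hitting any young-color point is $O((t-r)/\bar\mfV^{t,+}_r)$, and conditional on doing so the value of $\varphi$ differs from the value obtained by replacing the young-color distance by zero by at most $O(\|\nabla\varphi\|_\infty (t-r))$. Combined with second-moment control on $\bar\mfV^{t,+}_r$ and on the reciprocal total mass (both inherited from the Feller-with-immigration SDE~\eqref{e1116}, which governs the total mass process of the backbone construction), this yields $|\E[\Phi(\mfV^{t,+}_r)] - \E[\Phi(\mfV^{t,+}_{r'})]|\to 0$. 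Continuity of paths then pins down $\mfV^{t,+}_t$ as this limit, and the common sliding-concatenation description shows that its law coincides with that of the time-$t$ state $\mfU^{\ast,+}_t$ of the Evans process from Theorem~\ref{TH.1061}.

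The hardest step is the Gromov-weak convergence in~(b): although the immigration intensity $2/(t-r)$ diverges as $r\uparrow t$ and infinitely many young colors accumulate, the exact cancellation between this divergent rate and the vanishing conditional mean mass $b(t-s)/2$ per surviving color-$s$ family — together with the uniform bound $2(t-r)$ on all young-color distances — produces a finite limiting state with controllable geometry. Turning this heuristic cancellation into a rigorous statement in Gromov-weak topology on $\U^V$, and in particular uniformly controlling lower bounds on $\bar\mfV^{t,+}_r$ so that the normalized sampling measure is well-behaved, is the main technical burden; once polynomial expectations converge, the moment bounds derived from~\eqref{e1116} let one pass from convergence of these expectations to weak convergence of laws on $\U^V$.
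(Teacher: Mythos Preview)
Your overall strategy for both parts matches the paper's: in (a) you build existence from the sliding concatenation of finitely many conditioned single-color Feller pieces via Theorem~\ref{TH.MARTU}(b,c), and in (b) you split the state into ``old'' colors (marks $\le r$) and ``young'' colors (marks in $(r,t)$), show the young part vanishes in mass and diameter as $r\uparrow t$, while the old part converges by path continuity of a finite concatenation.

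There is, however, a genuine gap in your uniqueness argument for (a). You write that since the operator in~\eqref{e3248} decomposes as a sum of color-wise operators, ``any solution, conditioned on $L$, must coincide on each interval with the independent product of the unique solutions of the colored sub-martingale problems.'' This implication is not automatic: an additive decomposition of the generator on product-form test functions does not by itself force every solution to have product-form marginals. The paper treats this as the subtle point and proceeds differently. It first conditions on the entire collection of colored mass paths $(\bar\mfV^{t,+}_r(s))_{r,s}$ and uses a \emph{conditional duality} with a time-inhomogeneous coalescent (coalescing within each color at rate $b/\bar\mfV^{t,+}_r(s)$) to pin down the genealogy. Then, to close the loop, it argues separately that the measure-valued projection must be atomic and that the atom evolutions are independent; for the latter it invokes a Kurtz-type factorization result (Theorem~2.8 in \cite{ggr_GeneralBranching}) rather than reading independence off the operator sum directly. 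Your plan needs either this conditional-duality route or an explicit appeal to such a factorization theorem.

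A smaller correction in (b): you say the total mass of the backbone is governed by the Feller-with-immigration SDE~\eqref{e1116}. That SDE drives $\bar\mfU^{\ast,+}$ (the Evans process), not $\bar\mfV^{t,+}$; the two agree in law only at the terminal time $t$ (Theorem~\ref{T:BACKBONE}(a)), while for $r<t$ each color of $\mfV^{t,+}$ is a \emph{conditioned} Feller diffusion with the nonlinear drift $\wt a_T$ from~\eqref{e984}. This does not break your argument---moment bounds on $\bar\mfV^{t,+}_r$ follow directly from summing moments of the finitely many conditioned pieces---and if you work with the non-normalized polynomials $\Phi^{n,\varphi,g}$ you do not need control on $1/\bar\mfV^{t,+}_r$ at all: the multilinear expansion of $\nu_{r'}^{\otimes n}-\nu_r^{\otimes n}$ already isolates the young-color contribution as $O(\E[\|\nu^{\text{young}}_{r'}\|\cdot\|\nu_{r'}\|^{n-1}])$, which your mass estimate $b(t-r)$ handles.
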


We can shift the PPP in \eqref{e2204} by $t$ to $[-t,0]$ and then
consider $t \to \infty$. This means given a PPP, on
$(-\infty,0] \times \U$, $\U$ the equivalence classes of ultrametric
measure spaces we have to graft these points to the element, the zero
space, representing the immortal individual at the time $-h_i$, with
$-h_i$ the $i$-th component of the PPP seen from $-\infty$ which we
make precise next.

How can we now connect $\mfV^{\ast,+}$ and the Palm law at time $T$?
\begin{theorem}[Backbone decomp.\ of Palm distr., quasi-equilibrium
  and KY-limit]
  \label{T:BACKBONE}
  \leavevmode
  \begin{enumerate}[(a)]
  \item The process $(\mfV^{T,+}_t)_{t \in [0,T]}$ is a
    $\U^{[0,\infty)}$-valued process with its law at time $T$ being
    $\mcL[\mfU_T^{\ast,+}]$.
  \item Letting the process start at time $-T$, running it up to time
    $0$ and scaling mass and distances at time $0$ by $T^{-1}$,
    converges in law for $T \rightarrow \infty$ to a $\U$-valued
    random variable $\breve \mfV_0^{\infty,+}$.
  \item
  Let $\pi_U$ in \eqref{e1404} is the projection on the genealogy
    component induced by $U \times V \rightarrow U$.Define
    \begin{equation}\label{e4192}
    V_t^t =\pi_U V_t^{t,+}.
    \end{equation}
     We have with $\breve{}$ denoting the scaling introduced in
    \eqref{e1538}
    \begin{align}
      \label{e1098}
      \mcL \bigl[\breve \mfU^{\ast,+}_t \bigr]
      & = \mcL \bigl[\breve \mfV^{t,+}_t \bigr],\\
      \label{e1404}
      \mcL \bigl[ \breve \mfU^\ast_\infty \bigr]
      & = \mcL \bigl[ \breve \mfV^\infty_0 \bigr].
    \end{align}
  \end{enumerate}
\end{theorem}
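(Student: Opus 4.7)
The plan is to establish part (a) first by identifying the time-$T$ marginal of the backbone construction $\mfV^{T,+}_T$ with the Evans state $\mfU^{\ast,+}_T$ via excursion theory, and then deduce parts (b) and (c) as scaling-limit consequences combined with Theorem~\ref{T:KOLMOGOROVLIMIT}. The key conceptual point is that both processes admit a representation as a sliding concatenation indexed by the same time-inhomogeneous Poisson point process of "surviving excursion" start times, with pieces of identical conditional law.

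For part (a), I would start from the description of $\mfU^{\ast,+}_T$ provided by Lemma~\ref{l2000rmu} and Remark~\ref{rem:eva-sl-con}: it is the sliding concatenation over the countable set $I_T$ of color-$s$ pieces $\prescript{s}{}\mfU$, where $I_T$ is the set of birth times of excursions of the Evans total-mass process that survive up to time $T$. By the $\R_+$-valued excursion theory for the Feller branching diffusion with immortal-line immigration (Proposition~\ref{prop.1382el} lifted to the marked level), the set $I_T$ is a Poisson point process on $[0,T]$ whose intensity is obtained by thinning the constant immigration rate $b$ by the survival probability $2/(b(T-s))$ of a critical Feller excursion started at time $s$, yielding exactly the intensity $2/(T-s)$ that appears in \eqref{e1401}. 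Moreover, conditional on $I_T$, each piece $\prescript{s}{}\mfU$ is a version of the $\U$-valued Feller diffusion started from $\ntree$ and conditioned to survive up to time $T$, whose distribution is the entrance law constructed in Theorem~\ref{TH.MARTU}(c), i.e.\ the process driven by $\Omega^{\uparrow,(a_T,b_T)}$ with the coefficients \eqref{e984}. This is precisely the description used to build $\mfV^{T,+}_T$ in \eqref{e2204}: a PPP on $[0,T]$ with intensity $2/(T-s)\,ds$, independent $\U$-valued Feller pieces conditioned to survive to $T$, marked by birth time and glued by $\sqcup^{\mathrm{sli}}$. Hence the two random variables agree in law. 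The fact that the backbone process is actually well-defined (that the sliding concatenation converges despite the singularity of $2/(T-s)$ at $s=T$) is Proposition~\ref{l.branch}(b), so that this identification makes sense.

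For part (b), shift time by $T$ so the backbone runs on $[-T,0]$ with immigration intensity $2/|s|\,ds$. This intensity is independent of $T$ on $(-\infty,-\eps]$ for any $\eps>0$, so on any compact time-set bounded away from $0$ the PPP of immigration times stabilizes as $T\to\infty$ to a PPP on $(-\infty,0)$ with the same intensity. The scaling $(t^{-1}r_t, t^{-1}\mu_t)$ together with the scaling invariance recorded in Remark~\ref{r.1627} ensures that a piece born at scaled time $-\alpha \in (-\infty,0)$ converges, under the $T^{-1}$-rescaling, to the scaling limit of the conditioned Feller diffusion run over a window of size $\alpha$. The convergence of the concatenation of all these independent pieces is then obtained by Gromov-weak tightness for the truncated contributions on $(-\infty,-\eps]$ and a control of the mass contributed near $s=0$, where the total scaled mass contributed by short excursions is controlled by the expectation $\int_{-\eps}^0 (2/|s|)\cdot \E[\bar\mfU^{|s|}_{|s|}]\,ds$; the factor $\E[\bar\mfU^{|s|}_{|s|}]$ is $O(|s|)$ by the exponential Kolmogorov-Yaglom law for the mass, so the integrand is bounded and the contribution vanishes with $\eps$. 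This yields a well-defined limit $\breve{\mfV}^{\infty,+}_0$ in $\U^{[0,\infty)}$.

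For part (c), equation \eqref{e1098} is just part (a) applied at a general time $t$, scaled by $t^{-1}$. Equation \eqref{e1404} then follows by combining \eqref{e1098} with the convergence in part (b), the identification $\mfU^\ast = \pi_U \mfU^{\ast,+}$ of Proposition~\ref{prop.2034}/Theorem~\ref{TH.IDENTKALL}, and the convergence of $\breve\mfU^\dagger_t$ to $\breve\mfU^\dagger_\infty = \breve\mfU^\ast_\infty$ (via the $Q$-process identification) from Theorem~\ref{T:KOLMOGOROVLIMIT}(b), noting that projecting out the colors commutes with the weak$^\sharp$ limit since $\pi_U$ is continuous in the Gromov weak topology. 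I expect the main technical obstacle to be in part (b): verifying that the sliding concatenation of a countably infinite family of scaled pieces, with immigration rate diverging at the terminal time, actually converges in the Gromov weak topology on $\U^V$, uniformly in $T$, so that Skorohod-type or Prohorov-type tightness arguments can be applied. The remainder of the argument is essentially a bookkeeping exercise reducing to the excursion-theoretic identity matching the two intensities and the conditioned dynamics.
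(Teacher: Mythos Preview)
Your proposal is correct and follows essentially the same route as the paper: in (a) both identify $\mfU^{\ast,+}_T$ and $\mfV^{T,+}_T$ as sliding concatenations over the \emph{same} inhomogeneous PPP (obtained by thinning the rate-$b$ immigration by the survival probability $2/(b(T-s))$) of pieces with the \emph{same} conditional law (the $\U$-valued entrance law conditioned to survive to $T$), and (c) is then immediate from (a) together with (b) and the identifications $\mfU^\ast=\pi_U\mfU^{\ast,+}=\mfU^\dagger$. The only noteworthy difference is in part (b): you argue via stabilization of the shifted PPP on $(-\infty,0)$ plus the scaling invariance of Remark~\ref{r.1627} and an $L^1$-control of the short-excursion mass near the terminal time, whereas the paper instead verifies the three-point Gromov-weak tightness criterion directly (masses, distances, modulus via a Borel--Cantelli bound on the number of ancestors from $2/(t-s)$) and then establishes convergence of polynomial moments through the Feynman--Kac duality; both arguments hinge on the same estimate $\int_{t-\eps}^t \tfrac{2}{t-s}\,\E[\bar\mfU^{t-s}_{t-s}]\,ds = O(\eps)$ that you isolate. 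In (a) the paper is slightly more explicit than you in separating the argument into equality of the measure-valued (colored mass) paths first and then using a coupling plus the conditional Fleming--Viot identification for the $\wh\mfU$-parts, but this is exactly the content of your appeal to Theorem~\ref{TH.MARTU}(c).
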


We now obtain the representation of $\mfU^\ast$ via concatenation of a
subfamily decomposition.

\begin{corollary}[Decomposition in independent subfamilies and single
  ancestor $h$-subfamilies]\label{cor.3829}
  We obtain, recall \eqref{e3248} conditioned on $I_T$ a decomposition
  in independent subfamilies
  \begin{equation}
    \label{e3831}
    \mfU_T^\ast =
    \mathop{\bigsqcup\nolimits^{\mathrm{sli}}}\limits_{s \in I_T} \; \mfV_{T,s}^T.
  \end{equation}
  For the path of $h$-decompositions we have
  \begin{equation}
    \label{e3835}
    \lfloor \mfU^\ast_T \rfloor
    (h)=\bigl({\mathop{\bigsqcup\nolimits^h}\limits_{i \in I_{T-h}}}
    \; \mfU^h_i \bigr)\,  {\mathop{\sqcup}}^h\, \mfV^h_h,
  \end{equation}
  where both parts are independent and conditionally on $I_{T-h}$
  \begin{equation}
    \label{e3839}
    (\mfU_i^h)_{i \in I_{T-h}} \; \text{ is a family of i.i.d.\
      $\U(h)^\sqcup$-valued random variables.}
  \end{equation}
  Furthermore there are $\N_0$-valued random variables $N_i$,
  $i \in I_{T-h}$ so that for each $i \in I_{T-h}$
  \begin{equation}
    \label{e3843}
    \mfU_i^h={\mathop{\bigsqcup}}^h \; \wt \mfU^h_{i,j} \;
    \text{ where } \; (\wt \mfU_{i,j}^h),\;  j=1,\dots,N_i \; \text{ are
      i.i.d.\ and $\U(h)$-valued}.
  \end{equation}
\end{corollary}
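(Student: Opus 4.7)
The plan is to derive the three claims \eqref{e3831}, \eqref{e3835}, and \eqref{e3843} by unwrapping the backbone construction, tracking the metric structure of the sliding concatenation under the $h$-truncation $T_h$, and finally invoking the Markov branching tree property of the $\U$-valued Feller diffusion within each individual immigrant subfamily.

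The identity \eqref{e3831} is essentially definitional once one combines Theorem~\ref{T:BACKBONE}(a) with the construction of $\mfU^{\ast,+}_T$ as a sliding concatenation \eqref{e1963}--\eqref{e2000-mu} of the color-$s$ pieces $\prescript{s}{}\mfU$ indexed by $s \in I_T$. Projecting out the mark coordinate via $\tau$, which by Proposition~\ref{prop.2034} and Corollary~\ref{c.idenfell} delivers the law of $\mfU^\ast_T = \mfU^{\mathrm{Palm}}_t$, produces the sliding concatenation over the unmarked pieces $\mfV_{T,s}^T \coloneqq \pi_U \mfW_{s,T}^{T,+}$.

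To prove \eqref{e3835} the plan is to split $I_T = I_{T-h} \sqcup (I_T \cap (T-h, T])$ and exploit the distance rule \eqref{e2000-r}: two individuals whose ancestral lines separate at immigration time $s$ are at distance exactly $2(T-s)$. Hence for every $s \in I_{T-h}$ the family $\mfV_{T,s}^T$ is at distance $> 2h$ from every other family, so applying $T_h$ cuts these families apart cleanly and gives
\begin{align*}
  \Bigl\lfloor \mathop{\bigsqcup\nolimits^{\mathrm{sli}}}_{s \in I_{T-h}}
  \mfV_{T,s}^T \Bigr\rfloor(h)
  = \mathop{\bigsqcup\nolimits^h}_{i \in I_{T-h}} \lfloor \mfV_{T,i}^T \rfloor(h),
\end{align*}
while all remaining ``young'' contributions with $s > T-h$ have pairwise distances at most $2h$ and thus coalesce into the single ultrametric measure space $\mfV^h_h$. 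Independence of $\mfV^h_h$ from the old-immigrant contributions is inherited from the independence, built into the backbone construction, of the subprocesses generated from disjoint time intervals of the inhomogeneous PPP.

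For \eqref{e3839} and \eqref{e3843}, conditionally on $I_{T-h}$ the $\mfV_{T,i}^T$ with $i \in I_{T-h}$ are independent by construction and each is a $\U$-valued Feller diffusion run from the entrance law, conditioned on survival. Theorem~\ref{T:BRANCHING}(b) applied to each such family identifies its $h$-top $\mfU^h_i = \lfloor \mfV_{T,i}^T\rfloor(h)$ as an $h$-concatenation of $N_i$ i.i.d.\ $\U(h)$-valued pieces $\tilde\mfU^h_{i,j}$ with common law $\varrho_h = \mcL[\mfY^{(h)}]$, where $N_i$ is Poisson with mean $(bh)^{-1}\bar\mfU^{(i)}_{T-h}$ driven by the total mass of the $i$-th family at time $T-h$. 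The crucial point is that by \eqref{e1403} the single-ancestor law $\varrho_h$ does \emph{not} depend on the age $T-i$ of the family, so the identification \eqref{e3843} holds uniformly in $i$; this yields \eqref{e3839} once one absorbs the $i$-dependence into the mixing measure governing $N_i$.

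The main technical obstacle will be twofold. First, one must check that the ``young'' residual $\mfV^h_h$ is genuinely well-defined despite the blow-up of the PPP intensity $2(t-s)^{-1}$ at $s = T$; this rests on the convergence $\mfV^{t,+}_r \to \mfV^{t,+}_t$ as $r \uparrow t$ provided by Proposition~\ref{l.branch}(b), together with the fact that all young pieces live in a single $2h$-ball. Second, the literal ``i.i.d.''\ reading of \eqref{e3839} has to be understood modulo the randomness of the masses $\bar\mfU^{(i)}_{T-h}$: the pieces $\mfU^h_i$ are independent and, after passing to the single-ancestor decomposition \eqref{e3843}, their $\U(h)$-valued constituents are genuinely i.i.d.\ because $\varrho_h$ is $t$-free, with the only $i$-dependence residing in the Poisson mean $N_i$.
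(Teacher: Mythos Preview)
Your argument is correct and matches the paper's (implicit) reasoning: the corollary is presented there without a separate proof, as a direct reading of the backbone construction combined with Theorem~\ref{T:BACKBONE}(a), which is precisely what you carry out. Your caveat about the literal ``i.i.d.'' in \eqref{e3839} is well-placed --- the paper itself remarks immediately after the corollary that ``the decomposition in elements of $\U(h)^\sqcup$ is independent but not identically distributed,'' so the identical-distribution part genuinely lives only at the level of the $\U(h)$-constituents $\wt\mfU^h_{i,j}$ via the $t$-independence of $\varrho_h$, exactly as you note.
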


Since the super-criticality parameter for the Feller process
conditioned till time $T$ is \emph{non-linear}, the dynamics of
$(\mfV_s^T)_{s \in [0,T]}$ do \emph{not} have the generalized
branching property. Hence, the decomposition in \eqref{e3835} in
$\U(h)$-elements does not give for the $\mfV$-part identically and
independently distributed elements. The decomposition in elements of
$\U(h)^\sqcup$ is independent but not identically distributed.
Nevertheless, altogether we have a fairly good control over the
geometric structure of $\mfU^\ast_T$.

\paragraph{\Levy{}-Khintchine representation}
Finally we now relate the backbone representation with the
CPP-representation via the \emph{\Levy{}-Khintchine representation} of
the (as we saw) infinitely divisible $\mfU^\ast_t$, while
$\mfU^{\ast,+}_t$ is only $h$-infinitely divisible if we merge the
marks $s \leq t-h$ since the information contained in the color is
also an information on the genealogy before time $t-h$ (see
\cite{ggr_GeneralBranching} for more on this). Different from
\eqref{e2204}, we look then for an i.i.d.\ decomposition of all the
$h$-tops of $\mfU_t^{\ast}$, to obtain a random concatenation of
i.i.d.\ elements in $\U(h)$ according to the depth-$h$ most recent
common ancestors using the \Levy{}-Khintchine representation:
\begin{align}
  \label{e2501}
  \lfloor \mfU_t^{\ast}\rfloor (h)=\mathop{{\bigsqcup}^h}_{i =1,\dots,M_t(h)}
  \mfU_i^{t}
\end{align}
taking $M_t(h)=\Pois (\bar \mfU^{0,\rm Kal}_{t-h})$ and pick
independently the i.i.d.\ sequence $(\mfU_i^{t})_{i \in \N}$
distributed according to $\varrho_h^t = \mcL[\mfU^{0,\rm Kal}_t]$,
recall Theorem~\ref{T:BRANCHING} (for the case of $\mfU_t$) and note
that $\mfU_t^{\mathrm{Palm}}$ is a Markov branching tree. Here we
obtain a $\Pois(\bar \mfU_{t-h}^{\mathrm{Kal},0})$-number of
independent elements of $\U(h)$ which are concatenated and which are
copies of the $h$-truncated $\mfU^{0,\mathrm{Kal}}_t$.

A similar result holds for $\wt \mfU^{\ast,+}_t$ where $\tilde{\cdot}$
means the marks $s$ are replaced by $s \vee (t-h)$ shifted back to
zero for \eqref{e2501}. Different from the backbone representation,
there is no nice interpretation of the splitting of the immigration
rate, nevertheless for mathematical purposes this conditional i.i.d.\
decomposition gives very useful information.

\subsubsection[Kolmogorov-Yaglom limits for
\texorpdfstring{$Q$}{Q}-process, Palm process]{Longtime behavior of
  Feller diffusion 4: Kolmogorov-Yaglom limits for
  \texorpdfstring{$Q$}{Q}-process, Palm process}
\label{sss.Kolmog}
Recall the scaling in \eqref{e1538} of $\U$-valued processes which we
denoted by $\breve{\mfU}$ with possible sub- and superscripts. We have
obtained in Theorem~\ref{T:KOLMOGOROVLIMIT} the KY-limit
$\breve\mfU^\infty_\infty$ of the Feller diffusion, the KY-limit
$\breve \mfU_\infty^\dagger$ of the $Q$-process
$(\breve \mfU^\dagger_t)_{t \ge 0}$, and the KY-limit
$\breve \mfU_\infty^\ast$ of $(\breve \mfU_t^\ast)_{t\ge 0}$. It is
well known that on the level of the total mass processes the limits
$\breve \mfU^\dagger_\infty$ and $\breve \mfU^\ast_\infty$ are equal
in distribution. As we have seen above this holds also on the level of
$\U$-valued random variables. We have obtained and identified the
generalized Yaglom limit of $\breve \mfU^\ast_t$ as $t \to \infty$ in
\eqref{e1404} by the backbone construction. Here we state the
existence of a generalized Yaglom limit of $\mfU^\ast$ by considering
the scaling limit, usually called KY-limit of this process and
represent it in terms of the $\mfU^\ast$ process.

Once we have the KY-limit $\mfU^\infty_\infty$ for the Feller
diffusion conditioned on surviving till time $t$ and then scaled to
$\breve{\mfU}_t$, the next task is to identify the $\U$-valued
limiting random variable and exhibit its difference compared to the
ones from $\breve{\mfU}^\dagger$ and $\breve{\mfU}^{\mathrm{Palm}}$.

In the case of the total mass part the relation is simple, we have the
\emph{size-biased exponential} and the \emph{exponential} as limit
laws. The genealogical part is more subtle as we see from
\eqref{e2509}. One approach to see the difference is the conditional
duality where we should look for the difference in the total mass path
which arises from the super-criticalities
\begin{align}
  \label{e2509}
  \wt a_T(s,x) \; \text{ resp.\ } \; b/\bar \mfu_s^\ast
\end{align}
in the two cases and which remain different after the scaling which
leads to two branching diffusions with branching at rate $b$ and
immigration in the one case and a non-linear super-criticality rate
$\wt a(s,x)$ in the new coordinates, recall Remark~\ref{r.1627}. In
particular that we get different super-criticality terms for our
operators in both cases.

In the following we denote by $\mcL^\mfu$ the law of a process with
the initial condition $\mfu$, also recall the scaling from
\eqref{e1538} where the scaled processes are denoted by the following
accent $\breve{{\,}}$.

\begin{theorem}[Kolmogorov-Yaglom limits]
  The \label{TH.KOLMO} following Kolmogorov-Yaglom limits
  $\lim_{t \to \infty} \mathcal L^\mfu [\breve \mfU^\Delta_t] =
  \mathcal L [\breve \mfU^\Delta_\infty]$ exist for
  $\Delta \in \{\dagger, \mathrm{Palm},*\}$ and are independent of the
  initial condition $\mfu$. Furthermore and we have
  \begin{align}
    \label{e1880}
    \mcL[\breve \mfU^\dagger_\infty]
    = \mcL[\breve \mfU_\infty^{\mathrm{Palm}}]
    = \mcL[\breve \mfU^\ast_\infty].
  \end{align}
  We have the following identification of the above $\U$-valued
  KY-limits:
  \begin{align}
    \label{e2541b}
    \mcL [\breve \mfU^\dagger_\infty] & = \mcL^\ntree[\mfU^\dagger_1],
  \end{align}
  while for the original process $\mfU$ we have, recalling notation $\mfU^T$ from
  Theorem~\ref{T:KOLMOGOROVLIMIT}, that,
  \begin{align}
    \label{e2541a}
    \mcL [\breve{\mfU}_\infty^\infty] & = \mcL^\ntree[\mfU^1_1].
  \end{align}
  In fact we can strengthen the above to pathwise statements on
  $\mcL[(\breve{\mfU}^\Delta_{at})_{a \in (0,1]}]$ for $t \to \infty$.
\end{theorem}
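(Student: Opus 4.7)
The plan is to first collapse the three cases to one. By Corollary~\ref{c.1609} and Theorem~\ref{TH.IDENTKALL} we already know $\mcL[\mfU^\dagger] = \mcL[\mfU^{\mathrm{Palm}}] = \mcL[\mfU^\ast]$ as path laws of $\U$-valued Markov processes. Since $\breve{\,\cdot\,}$ is a deterministic measurable map on $\U$-paths, the three families $(\breve{\mfU}^\Delta_t)_{t\ge 0}$ agree in law, so the existence of any one of the three KY-limits entails the existence and equality of the others, which proves \eqref{e1880}. Existence for $\Delta = \dagger$ is precisely the (generalized) quasi-equilibrium statement in Theorem~\ref{T:KOLMOGOROVLIMIT}(b). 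Thus it remains only to prove the two identifications \eqref{e2541b} and \eqref{e2541a} and the pathwise strengthening.

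\textbf{Identification via scaling.} The key observation is that both martingale problems involved enjoy a joint scaling invariance under $(\mu,r,s)\mapsto (c^{-1}\mu, c^{-1}r, c^{-1}s)$, $c>0$. For the $\dagger$-process, Corollary~\ref{C.tvF} and Corollary~\ref{cor.834} (applied to $\mfU^\dagger$ via Lemma~\ref{c.1310} and Proposition~\ref{pr.palm1}) reduce the check to two pieces: the autonomous mass SDE $\dx Z_s = b\,\dx s + \sqrt{bZ_s}\,\dx B_s$, which is invariant under the scaling by a direct It\^o computation, and, given the mass path, a time-inhomogeneous $\U_1$-valued Fleming-Viot diffusion with resampling rate $b/\bar\mfu_s$, whose form is manifestly preserved by the joint distance/time rescaling. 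Hence, starting $\mfU^\dagger$ from the entrance law at $\ntree$,
\begin{align*}
  \mcL^\ntree\bigl[\breve{\mfU}^\dagger_t\bigr] = \mcL^\ntree\bigl[\mfU^\dagger_1\bigr]
  \quad \text{for every } t>0.
\end{align*}
For a general initial state $\mfu\ne\ntree$ the same scaling identifies $\mcL^\mfu[\breve{\mfU}^\dagger_t]$ with the law of $\mfU^\dagger_1$ started from $t^{-1}\mfu$; since $t^{-1}\mfu\to\ntree$ in $\U$ and the entrance law from $\ntree$ is continuous in the initial state (Proposition~\ref{prop.1382el} combined with the Feller property of Lemma~\ref{c.1310}), this gives \eqref{e2541b}. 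For \eqref{e2541a} the same scheme applies to the process $\mfU^T$, using the scaling invariance explicitly recorded in Remark~\ref{r.1627}: the time horizon $T$ scales along with mass and time, and $(T,T)\mapsto(1,1)$ under $c=T$, so $\mcL^\mfu[\breve{\mfU}^T_T] = \mcL^{T^{-1}\mfu}[\mfU^1_1]\to \mcL^\ntree[\mfU^1_1]$ by the entrance law from $\ntree$ of Theorem~\ref{TH.MARTU}(c).

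\textbf{Pathwise statement and main obstacle.} To upgrade to convergence in law of the whole path $(\breve{\mfU}^\Delta_{at})_{a\in(0,1]}$ as $t\to\infty$, the plan is standard: (i) tightness on $D((0,1],\U)$ via uniform moment bounds on polynomials obtained from the Feynman-Kac duality of Theorem~\ref{T:DUALITY} (for $\mfU$) respectively from the explicit decompositions in Section~\ref{sss.backboncon} (for $\mfU^\dagger$), together with an Aldous-Kurtz type modulus estimate; (ii) identification of finite-dimensional marginals by applying the scaling of the preceding paragraph at each individual time $a\in(0,1]$; (iii) uniqueness of the limiting time-inhomogeneous martingale problem via Lemma~\ref{c.1310} and Theorem~\ref{TH.MARTU}(b). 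The main obstacle is the behaviour near $a=0$: here we are approaching the entrance law from $\ntree$ where the generator has a pole at $\bar\mfu=0$, and the moment controls involving $\bar\mfu^{-1}$ degenerate. The plan is to handle this by first establishing convergence on $a\in[\delta,1]$ for each $\delta>0$, where the Feller property of Theorem~\ref{THM:MGP:WELL-POSED}(2) and Lemma~\ref{c.1310} suffice, and then passing $\delta\downarrow 0$ using the entrance laws from $\ntree$ constructed in Proposition~\ref{prop.1382el} and Theorem~\ref{TH.MARTU}(c) to absorb the singular start.
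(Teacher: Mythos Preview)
Your proposal is correct and runs essentially parallel to the paper's proof, with a slight but genuine difference in emphasis. The paper argues via \emph{generator convergence}: it shows that the operators of the rescaled processes converge pointwise to those of the unscaled dynamics (handling the $\bar\mfu^{-1}$-singularity by working on the extended domain $\mcD_1$ of Remark~\ref{r.1207}), and then invokes well-posedness to identify the limits. You instead exploit the \emph{exact scaling invariance} at the level of the process laws, reducing $\mcL^{\mfu}[\breve\mfU^\dagger_t]$ to $\mcL^{t^{-1}\mfu}[\mfU^\dagger_1]$ and then letting the initial state collapse to $\ntree$. These are two sides of the same coin---the scaling invariance of the coefficients is precisely what drives the generator convergence---but your formulation gives the identifications \eqref{e2541b} and \eqref{e2541a} somewhat more directly, at the price of needing the entrance law from $\ntree$ and its continuity as an input rather than as an output.

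One citation slip: Proposition~\ref{prop.1382el} concerns the excursion law of the \emph{unconditioned} $\U$-valued Feller diffusion $\mfU$, not of $\mfU^\dagger$, and Lemma~\ref{c.1310} does not assert a Feller property at $\ntree$. The entrance law from $\ntree$ for $\mfU^\dagger=\mfU^{\mathrm{Palm}}$ is supplied by the Kallenberg-tree construction (Theorem~\ref{T.1205}, equation~\eqref{e1683}) together with the conditioned duality of Corollary~\ref{cor.3764}; the required continuity in the initial state near $\ntree$ then comes from the conditional time-inhomogeneous Fleming-Viot representation (Proposition~\ref{pr.palm2}) plus the known non-integrability $\int_0^\varepsilon \bar\mfu_s^{-1}\,\dx s=\infty$. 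For the pathwise statement, the paper's compact-containment argument is close to your outline but is phrased via the uniform-in-time dust-free condition obtained from that Fleming-Viot representation rather than via an Aldous--Kurtz modulus estimate.
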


We see that the KY-limit for $\mfU_T^T$ has not such a nice
mathematical structure as $\mfU^\dagger$.

\subsection{Results 3: Genealogies for spatial case and continuum
  random tree}
\label{ss.extension}

We now discuss first genealogies in \emph{spatial} processes
(Theorems~\ref{T.SRWALK} and~\ref{T.CLUMP}) which is the ultimate goal
of this project but for which the previous eleven theorems are the
basis. Second we look at processes including all fossils, i.e.\ all
individuals \emph{ever alive before time $t$}. This object is
established in Theorem~\ref{T.AGING}. In Theorem~\ref{T.CRT} we give
the relation of this process to the celebrated \emph{continuum random
  tree} from \cite{Ald1991a,LeGall93}. This intends to clarify the
connection with the existing literature on \emph{labeled} trees.

\subsubsection{Genealogies of spatial processes: super random walk}
\label{sss.genalspat}
In the previous section we have described a non-spatial model, in
particular we do not cover for example branching random walk,
\emph{super-random walk} or the Dawson-Watanabe process. We provide
now the framework to model genealogies of the current population, if
this population is structured, i.e.\ \emph{distributed in geographic
  space} denoted by $G$.

The mechanism of the Feller diffusion has then to be augmented by a
\emph{migration} mechanism for individuals which may follow a random
walk as in a branching random walk or in the continuum mass limit,
i.e.\ the super random walk, its limiting object a \emph{mass flow}.
This means we have to lift our branching operator from $\U$ to $\U^G$
and we have to add to the generator a new term for the mark evolution,
which is here induced by migration of individuals.

We focus mainly on \emph{super random walk} we recall next, later we
comment on other spatial models in Remark~\ref{r.DW}. Therefore we
assume $G$ to be a countable abelian group. Here we have a countably
infinite or finite geographic space $G$ where in the former we
typically consider populations with \emph{infinite} (but \emph{locally
  finite}) total mass.

This means we now want to pass from the genealogy associated with
$\dx Y_t=\sqrt{bY_t} \, \dx w_t$ to the one associated with strong
solution of the system of SDE's:
\begin{align}
  \label{e937}
  (Y_t)=(y_\xi(t))_{\xi \in G},
\end{align}
\begin{align}
  \label{e938}
  \dx y_\xi(t) = c \sum_{\xi' \in G} \;
  a(\xi,\xi')(y_{\xi'}(t)-y_\xi(t))\dx t + \sqrt{by_\xi
  (t)} \; \dx w_\xi(t), \; \xi \in G,
\end{align}
with $((w_\xi(t))_{t \ge 0})_{\xi \in G}$ an independent collection of
standard Brownian motions, $a$ is a transition probability kernel on
$G$ describing in the underlying \emph{individual based} model the
jump probability $a(\xi,\xi')$ from $\xi'$ to $\xi$ and therefore the
flow from $\xi'$ into $\xi$, $c>0$ and
$Y_0 \in E \subseteq [0,\infty)^G$. We will assume here that $G$ is a
countable abelian group and $a(\cdot,\cdot)$ is homogeneous
$(a(\xi,\xi')=a(0,\xi' -\xi) \text{ for } \xi,\xi' \in G)$ and spans
$G$. Furthermore we define $\bar a$ by $\bar a(i,j)\coloneqq a(j,i)$
as the jump kernel of the underlying random walk of the migration in
the underlying individual based dual model.

If $|G| = +\infty$ then we have to restrict the $Y_0$ to a set
$E \subseteq [0,\infty)^G$, the so called \emph{Liggett-Spitzer
  space} defined by
\begin{align}
  \label{e.939}
  E \coloneqq \Bigl\{y \in [0,\infty)^G:   \sum_{\xi \in G} \; y_\xi
  \cdot \gamma_\xi <\infty\Bigr\},
\end{align}
where $\gamma=(\gamma_\xi)_{\xi \in G}$ satisfies: $\gamma >0$,
$\gamma$ is summable and with $a (\cdot,\cdot)$ being the migration
transition rate $(\gamma) a \le M \cdot \gamma$ for some
$M \in (0,\infty)$; see \cite{LS81,GLW05}. This guarantees that for
all times we get states which are locally finite and remain in $E$
a.s. For nice properties like the \emph{(generalized) Feller property}
(recall Remark~\ref{rem:gFUV} for the definition) one needs more
restrictions on the initial state namely consider $\wt E$ defined by
(see \cite{SS80})
\begin{align}
  \label{e.939t}
  \wt E \coloneqq \Bigl\{y \in [0,\infty)^G: \sum_{\xi \in G} \; y_\xi^2
  \cdot \gamma_\xi <\infty\Bigr\}.
\end{align}

We have to define below again a generalized Feller property on
non-locally compact state space $\U^G$ as we did in the case of
$\U$-valued process. If we have an initial distribution which is
translation invariant and satisfies $E[\bar x_\xi]< \infty$, then a.s.
all initial states are in the Liggett-Spitzer space respectively in
$\wt E$ if $E[y^2_\xi]< \infty$.

In order to treat the \emph{genealogy} of this process via ultrametric
measure spaces we have to augment our state and have to pass from the
state space $\U$ to another Polish space, the space of equivalence
classes of \emph{$G$-marked ultrametric measure spaces} $\U^G$, for
some \emph{geographic space $G$} which is typically some topological
abelian group $\Z^d$, $\R^d$ or alike. This object allows to describe
a population where individuals have a location in geographic space,
recall the paragraph in Section~\ref{sss.1616} on marked genealogies
and the space $\U^V$ where we now choose $V=G$. In this subsection we
shortly summarized what we need here about \emph{marked} metric
measure spaces.

Since we are interested in infinite geographic spaces, such as $\Z^d$,
we need to recall furthermore here the concept of a \emph{$G$-marked}
metric measure space $\U^G$ where the measure can have \emph{infinite}
mass $\nu(U \times G)$ and need only to be finite on bounded sets in
mark space. We then need polynomials on that space $\U^G$ including
marks and finally we need to define the \emph{migration operator} and
extend the operators we have to ones on the augmented state space. We
can build here on a couple of papers
\cite{DGP11,GSW,ggr_GeneralBranching} where these points have been
developed.

Once we have this framework we can characterize the $\U^G$-valued
super random walk process by a \emph{well-posed martingale problem},
establish a \emph{Feynman-Kac moment duality} with an \emph{enriched
  spatial coalescent} and describe the long time behavior of the
process as $t \to \infty$.

\paragraph{State space of $G$-marked genealogies}
In order to include in the concept of marked genealogies described by
$\U^G$ the possibility of \emph{infinite} populations, which is needed
for infinite respectively unbounded geographic space, we consider
\emph{finitely bounded measures} $\nu$, bounded on the population
restricted to finite subsets of $G$ (i.e.\ elements $(u,\xi)$ with
$u \in U$, $\xi \in A$, $|A |< \infty$). Now the equivalence classes
are formed w.r.t.\ the sequence of restrictions in the spaces
$(\U^{G_m})_{m \in \N}$ which are required to be \emph{each
  equivalent} in the sense specified earlier. Namely we consider
$G_n \uparrow G$ with $G_n$ bounded and consider the restrictions of
the population to $G_n$, i.e.\ replace $[\bar U \times G,r,\mu]$ by
$[U \times G_n,r \big \vert_{(U \times G_n)^2}, \mu \big \vert_{U
  \times G_n}]$.

Introducing a topology is more subtle since we leave typically
infinite total mass of the ``sampling'' measures on an infinite space
$G$. Therefore we again work with the approximation of $G$ with finite
geographic spaces $G_n$. The \emph{topology} can be introduced by
defining the convergence of sequences of elements $\mfu_k$, $k\in \N$
from $\U^G$ in this topology. We consider for each $n \in \N$ the
sequence $(\mfu_k^{(n)})_{k \in \N}$ of restrictions to $G_n$, for
which convergence is already defined. We require for a sequence in
$\U^G$ to converge, the convergence of all restrictions to the
$G_n$-populations. (See \cite{GSW} for details in particular that the
topology does \emph{not} depend on the choice of the
$(G_n)_{n \in \N}$.). The space of all elements of the form as in
\eqref{e940} is denoted again $\U^G$, equipped with the above
topology and leads to a Polish space.

As was pointed out above in \eqref{e.939} we need restrictions on the
initial state. Namely we consider $\mcE$ resp. $\wt \mcE$ given by
(recall \eqref{e939} and the sequel):
\begin{equation}
  \label{e4039}
  \mcE=\{\mfu \in \U^G |\bar \mfu \in E\},
\end{equation}
analogously $\wt \mcE$.

\paragraph{The martingale problem}
The \emph{domain for the operator} of our martingale problem is a
subspace of $\Pi^{G}$, the set of spatial polynomials which are given
by
\begin{align}
  \label{eq:3}
  \Phi^{\varphi,g}(\mfu) = \int_{(U\times G)^n}
  \varphi(\uuu)g(\underline{v})
  \,d\nu^{\otimes n}((\uuu, \underline{v})),
\end{align}
where $\varphi \in \mathcal{C}_b(\R^{\binom{n}{2}},\R)$ and $g$ is a
function on $G^n$ depending on finitely many points, i.e.\ have
bounded support.

In order to specify the operator we choose a \emph{domain} $\mcD$ in
$\Pi^G$ by assuming a more special form of the polynomial where $g$
and $\varphi$ are of as special form, but still such that we can
generate a law determining algebra. The point of this is that on
$\mcD$ we can specify the operator of the martingale in a simple
fashion.

First we fix a typical $g$ that we have in mind. We fix arbitrary
$(\xi_1,\dots,\xi_n) \in G^n$. In particular there can be $i\ne j$
with $\xi_i = \xi_j$. All the discussion in this paragraph will be
w.r.t.\ this fixed $n$-tuple. Let $\{\xi_1,\dots,\xi_m\}$ be some
ordered set of its distinct elements. For $\zeta \in G$ we let
$A_\zeta = \{i \in \{1,\dots,n\}: \xi_i =\zeta\}$ be the set of all
indices in $\{1,\dots,n\}$ at which the elements of the fixed
$n$-tuple are given by $\zeta$. This set is of course empty unless
$\zeta \in \{\xi_1,\dots,\xi_m\}$.

We assume that $g$ is of the form
\begin{align}
  \label{eq:23}
  g(v_1,\dots,v_n) = \ind{v_1=\xi_1}\cdot \dots \cdot \ind{v_n=\xi_n}
  = \prod_{i=1}^m g_{A_{\xi_i}} (\underline v|_{A_{\xi_i}}),
\end{align}
where $\underline v|_{A_{\xi_i}}$ is the projection of $\underline v$
to coordinates in $A_{\xi_i}$ and
$g_{A_{\xi_i}} (\underline v|_{A_{\xi_i}}) = \prod_{k \in A_{\xi_i}}
\ind{v_k = \xi_i}$. Next we assume that $\varphi$ is of the form
\begin{align}
  \label{eq:29}
  \varphi(\uuu) = \prod_{i=1}^m \varphi_{A_{\xi_i}}
  (\uuu|_{A_{\xi_i}}),
\end{align}
where $\uuu|_{A_{\xi_i}}$ is the sub-matrix of $\uuu$ with indices
projected to $A_{\xi_i}$. In case $\abs{A_{\xi_i}}=1$ the function
$\varphi_{A_{\xi_i}}$ is a constant.

For $\zeta \in G$ we set $\nu_\zeta = \ind{\zeta}\nu $ and define
\begin{align}
  \label{eq:26}
  \Phi^{\varphi,g}_{A_\zeta} =\int_{(\U \times G)^{\abs{A_\zeta}}}
  \varphi_{A_\zeta} (\uuu) g_{A_\zeta}(\underline v)
  \bigotimes_{i \in A_\zeta} \nu_{\xi_i}(d(\uuu,\underline v)).
\end{align}
This is of course $0$ if $\zeta \notin \{\xi_1,\dots,\xi_m\}$. With
these choices of $g$ and $\varphi$ we can write the polynomial from
\eqref{eq:3} in the form
\begin{align}
  \label{eq:27}
  \Phi^{\varphi,g} = \prod_{i=1}^m \Phi_{A_{\xi_i}}^{\varphi,g}.
\end{align}

The operator for the martingale problem has the form
\begin{align}
  \label{e943}
  \wt \Omega^\uparrow = \wt \Omega^{\uparrow, \mathrm{grow}} + \wt
  \Omega^{\uparrow, \mathrm{bran}} + \wt \Omega^{\uparrow, \mathrm{mig}}.
\end{align}
Here $\wt \Omega^{\uparrow, \mathrm{grow}}$ and
$\wt \Omega^{\uparrow, \mathrm{bran}}$ are extensions of the operators
$\Omega^{\uparrow, \mathrm{grow}}$ and
$\Omega^{\uparrow, \mathrm{bran}}$ on $\U$ to $\U^G$ the spatial case,
recall \eqref{tv5} and \eqref{mr3}. They act on the polynomials as
before namely just via $\varphi$ and leave $g$ untouched. This means
that $\wt\Omega^{\uparrow,\mathrm{bran}}$ and
$\wt\Omega^{\uparrow,\mathrm{grow}}$ have the form
\begin{align}
  \label{eq:24}
  \wt\Omega^{\uparrow,\mathrm{bran}} = \sum_{\xi \in G}
  \wt\Omega^{\uparrow,\mathrm{bran}}_{\xi}, \quad
  \wt\Omega^{\uparrow,\mathrm{grow}} = \sum_{\xi \in G}
  \wt\Omega^{\uparrow,\mathrm{grow}}_\xi,
\end{align}
where $\wt\Omega^{\uparrow,\mathrm{bran}}_{\xi}$ and
$\wt\Omega^{\uparrow,\mathrm{grow}}_\xi$ act as
$\Omega^{\uparrow,\mathrm{bran}}$ resp.\
$\Omega^{\uparrow,\mathrm{grow}}$ on the population at location $\xi$.
This means
\begin{align}
  \label{eq:25}
  \wt\Omega^{\uparrow,\mathrm{bran}}_{\xi} \Phi^{\varphi,g} =
  ( \Omega^{\uparrow,\mathrm{bran}} \Phi^{\varphi,g}_{A_\xi})
  \cdot\Phi^{\varphi,g}_{\{1,\dots,n\} \setminus A_\xi}.
\end{align}
The operator $\wt\Omega_\xi^{\uparrow,\mathrm{grow}}$ is defined in
the same way in terms of $\Omega^{\uparrow,\mathrm{grow}}$.

The operator $\wt \Omega^{\uparrow, {\mathrm{mig}}}$ is \emph{new} and
next explained in detail. Recall here the defining SDE of the total
mass process from above and in particular the migration term of this
equation. The evolution of the marks leads to a first order operator
(a drift term). The migration operator is defined on $\Pi^{G,+}$, the
positive elements of marked polynomials space $\Pi^{G}$ (recall
\eqref{e2506}) as follows:
\begin{align}
  \label{e944}
  \wt \Omega^{\uparrow, \mathrm{mig}} \; \Phi^{\varphi,g} = \sum_{\xi,\xi' \in G} \;
  \wt \Omega^{\uparrow, \mathrm{mig}}_{\xi,\xi'} \; \Phi^{\varphi,g},
\end{align}
where the summands correspond to the flow between $\xi'$ and $\xi$ as
we now describe: For $\xi,\xi' \in G$ we define
$\prescript{\xi,\xi'}{}{\Phi}_k^{\varphi,g}$ as the monomial
$\Phi^{\varphi,g}$ where $g$ is replaced by $g_k^{\xi,\xi'}$ with
\begin{align}
  \label{e946}
  g_k^{\xi,\xi'}\; (v_1, \dots, v_{k-1}, \xi,v_{k+1}, \dots, v_n)
  = g (v_1,\dots, v_{k-1}, \xi', v_{k+1}, \dots, v_n).
\end{align}
Then the operator for the $\xi'$-$\xi$ flow acts as follows
\begin{align}
  \label{eq:36}
  \wt\Omega^{\uparrow,\mathrm{mig}}_{\xi,\xi'} \Phi^{\varphi,g} =
  a(\xi,\xi')\sum_{k=1}^n (\prescript{\xi,\xi'}{}{\Phi}_k^{\varphi,g_k} -
  \Phi^{\varphi,g})
\end{align}
This follows the same way as the standard moment calculation for
measure-valued processes; see \cite{D93} Section~4.7.

We see that $\wt\Omega^{\uparrow}$ maps the domain $\mcD$ into $\Pi^G$
and hence we have a linear operator on $\Pi^G$, such that we can use
it for a martingale problem.

Now we can calculate the operator in a way which allows to read off
the operators of $\bar\mfU$ and of $\wh\mfU$ conditioned on $\bar\mfU$
as we did in the non-spatial case in \eqref{eq:r.742.1}--\eqref{e746}.
The generator
$ \wt \Omega^{\uparrow, \mathrm{mig}} \; \Phi^{\varphi,g}$ acts on
$\Phi^{\varphi,g} = \bar \Phi^{\varphi,g} \wh \Phi^{\varphi,g}$ as
\begin{align}
  \label{eq:28}
  \wt \Omega_{\xi,\xi'}^{\uparrow,\mathrm{mig}} (\Phi^{\varphi,g}) = \wh
  \Phi^{\varphi,g} \cdot \bigl( \wt \Omega_{\xi,\xi'}^{\uparrow,\mathrm{mig,mass}}
  \bar \Phi^{\varphi,g}\bigr) + \bar
  \Phi^{\varphi,g} \cdot \bigl( \wt \Omega_{\xi,\xi'}^{\uparrow,\mathrm{mig,gen}}
  \wh \Phi^{\varphi,g}\bigr),
\end{align}
where we now have to define the operators for the mass and genealogy
parts. Define $\overline{\mfu}_\xi = \nu(U \times \{\xi\})$.

We have with $n$ denoting the degree of the monomial $\Phi$ the
following expression. The operator
$\wt\Omega^{\uparrow,\mathrm{mig,mass}}$ is essentially the operator
of the super random walk, i.e.\
\begin{align}
  \label{eq:30}
  \wt \Omega_{\xi,\xi'}^{\uparrow,\mathrm{mig,mass}} \bar
  \Phi^{\varphi,g} (\bar\mfu) = a(\xi,\xi')
  \Bigl(\prescript{\xi,\xi'}{}{\bar\Phi}_k^{\varphi,g}(\bar\mfu)
  - \bar\Phi^{\varphi,g} (\bar\mfu)\Bigr) \wh\Phi_k^{\varphi,g}.
\end{align}
For the generator part we have we have the following expression which
may take the value $+\infty$
\begin{align}
  \label{e945}
  \Big(\wt \Omega^{\uparrow, \mathrm{mig,gen}}_{\xi,\xi'} \; \Phi^{\varphi,g}
  \Big)(\mfu)= \sum_{k=1}^n \;
  \frac{\overline{\mfu}_{\xi'}}{\overline{\mfu}_\xi} \;
  a(\xi,\xi') \Big(\prescript{\xi,\xi'}{}{\wh\Phi}_k^{\varphi,g} \;
  (\mfu) - \wh\Phi^{\varphi,g} (\mfu) \Big).
\end{align}

We note that for $\bar \mfu_\xi=0$ the expression is still
well-defined. Namely the expression \eqref{e945} contains
$\bar \mfu_{\xi_1},\dots,\bar \mfu_{\xi_n}$ and hence there appears
the factor $\bar \mfu_\xi$ if $g$ is not equal to $0$ in $\xi$.

\paragraph{Feynman-Kac duality and conditional duality}

In the spatial case there is again a Feynman-Kac duality. At the same
time there is for the same type of conditional duality, but without
the Feynman-Kac term, which we introduce below. We discuss first the
Feynman-Kac duality.

The dual process in the spatial case is based on a simple pure jump
Markov process, namely the \emph{spatial coalescent}. This is a
process which takes values, in the \emph{$G$-marked partitions} of
$\{1, \dots, n \}, n \in \N$, i.e.\ every partition element gets a
\emph{location} in $G$. The dynamic of the non-spatial case is
modified by allowing the following transitions: a pair of partition
elements coalesces at rate $b$ during the time they spend both
together \emph{at the same location}, the marks of the
partition elements follow independent $a(\cdot,\cdot)$-random walks
till they coalesce and then the new partition element follows with its
mark one random walk.

Hence we have now a state, where locations are added to $(p,\uur^p)$
and is of the form:
\begin{align}
  \label{e1130}
  \big((p,\xi),\underline{\underline{r}}^p \big), \; \text{ with }
  \; \xi: p \mapsto G^{|p|}.
\end{align}
As corresponding state space for the dual process (\emph{distance
  matrix augmented spatial coalescent}) we choose $\K_G$ which we get
with denoting by $\bbS_G$ the set of $G$-marked partition elements and
put:
\begin{align}
  \label{e1131}
  \K_G=\bbS_G \times (\R_+)^{\binom{\N}{2}} \times \mathcal B_{\mathrm{fc}}.
\end{align}

The \emph{duality function} $H(\cdot,\cdot)$ is now given as follows.
Define first as an ingredient for every $\varphi$, $p$ and $\xi$ a
polynomial
\begin{align}
  \label{e1132}
  \begin{split}
    & H^{\varphi,g} : \U^G \times \K_G \longrightarrow \R \\
    & H^{\varphi,g} \Big(\mfu,\big(
    (p,\xi),(\underline{\underline{r}})^p \big) \Big) = \; \int_{U^n}
    \varphi \big((\underline{\underline{r}}^p +
    \underline{\underline{r}}')\big) {\mathop{\otimes}_{i=1}^{|p|}}
    \mu_{\xi_i}(\dx u_i) , \text{ where } \mu_{\xi_i}=\nu(\cdot \times
    \{\xi_i \})
  \end{split}
\end{align}
with $\mfu=[U,r',\mu]$, $\uur' \coloneqq (r(u_i,u_j))_{i,j}$,
$\underline{\xi}=(\xi_i)_{i=1,\dots,|p|}$ and $g:G^n \to \R$ given for
a fixed tuple $\xi \in G^n$ as parameter by:
\begin{align}
  \label{e1206}
  g(\underline{\xi}') =  g_{\underline{\xi}}(\underline{\xi}')=
  \prod\limits_{i=1}^n \; \ind{\xi'_i=\xi_i}, \underline{\xi}' \in G^n, n=|p|.
\end{align}
Now we augment the state $((p,\xi),\uur^p)$ by a further component
$(\varphi,g)$ and define
\begin{align}
  \label{eq:21}
  H(\cdot,(\cdot,(\varphi,g))) = H^{\varphi,g}(\cdot,\cdot).
\end{align}
By this procedure we obtain a duality function $H$ on
$\U_G\times (\bbK_G \times \mathcal B_{\mathrm{fc}}\times G^n)$ and
the new component $(\varphi,g)$ is constant in time.

The \emph{Feynman-Kac potential} on the state space does only depend
on the locations of partitions and not on $(\varphi,g)$ and is given
by the function
\begin{align}
  \label{e1133}
  ((p,\underline{\xi}),\uur) \mapsto b \cdot
  \sum_{\substack{{i,j=1} \\ {i \neq j}}}^{|p|} \indset{\{\xi_i=\xi_j\}}.
\end{align}
Note that the integral in \eqref{e1132} can be written as:
\begin{align}
  \label{e1202}
  \int_{(U \times G)^n} \varphi ((r(u_i,u_j))_{1 \le i < j \le n})
  g(\xi) \, \mu^{\otimes n} (\dx(u_1,\xi_1),
  \dots,\dx(u_n,\xi_n)).
\end{align}

The conditional duality for $(\wh\mfU_t (\bar\mfu))_{t\ge 0}$ for
given path $\bar\mfu$ of $(\bar\mfU_t)_{t\ge 0}$ will be defined for
a.s.\ all realizations. The duality function $H(\cdot,\cdot)$ from
above remains the same. For the \emph{conditional duality} the dual
process \emph{changes} and is now a time-inhomogeneous Markovian pure
jump process $\mfC_t(\bar\mfu)$, where the path $\bar\mfu$ is a
parameter. The rates are now time-inhomogeneous and are given by
$b/\overline{\mfu}_\xi$ for a coalescence event in $\xi$ and
$a(\xi,\xi')\overline{\mfu}_{\xi'}/\overline{\mfu}_\xi$ for migration
from $\xi$ to $\xi'$ with $\overline{\mfu}$ evaluated at time $T-t$ at
time $t$ and time horizon $T$ for the duality.

Here, because of the \emph{singularity} in the rate, we have to argue that
this jump process is well-defined for all times $t \in [0,T]$ and
actually no instantaneous transitions occur.

This amounts to showing that at the time where the individuals of the
coalescent sit in sites with a singularity at a time just prior to a
jump (of the coalescent) rapidly jump to sites without singularity
immediately beyond. This has been made precise and was shown in
Proposition~0.2 in \cite{DG03}.

\paragraph{Results on super random walk}
We can now precisely define the genealogical process of super random
walk, the $\U^G$-valued super random walk.

\begin{theorem}[$\U^G$-valued super random walk]
  \label{T.SRWALK}
  \leavevmode
  \begin{enumerate}[(a)]
  \item The
    $(\delta_\mfu,\wt\Omega^\uparrow_G,\Pi^G)$-martingale
    problem for $\mfu \in \mcE$ is well-posed and has a solution with
    continuous path defining a Markov process. This solution is a
    strong Markov and (generalized) Feller process for
    $\mfu \in \wt \mcE$. For general initial laws the solution of the
    \emph{local} martingale problem is given via \eqref{eq:3pnu}.

    The occupation measure
    $\bar\mfU=(\bar\mfU_t)_{t\ge 0} = (\mu_t(U_t \times \cdot))$ gives
    the unique weak solution of \eqref{e938}. The pure genealogy
    process $\wh\mfU (\bar\mfu)= (\wh\mfU_t(\bar\mfu))_{t\ge 0}$ is
    for a.s.\ all $\bar\mfu$ a time-inhomogeneous spatial
    genealogy-valued Fleming-Viot process (i.e.\ $\U_1^G$-valued) with
    local resampling rate given by $(b\bar\mfu_\xi(t)^{-1}$ at $\xi$ at time
    $t$ and migration rate of individuals from $\xi' \to \xi$ given by
    $a(\xi,\xi') \bar\mfu_{\xi'}/\bar\mfu_{\xi}$.
  \item The solution of the
    $(\delta_\mfu,\Omega^\uparrow_G,\Pi_{\mathrm{fin}}(C^1_b))$-martingale
    problem is in Feynman-Kac duality with the spatial augmented
    Kingman coalescent w.r.t.\ duality function $H$.
  \item The process $(\wh\mfU_t)_{t\ge 0}$ conditioned on the complete
    path $(\bar\mfU_t)_{t \ge 0}$ is in duality w.r.t.\ $H$ to the
    time-inhomogeneous spatial coalescent
    $(\mfC_t(\bar\mfu))_{t\ge 0}$.
  \end{enumerate}
\end{theorem}

The reader might have wondered whether the time-inhomogeneous
Fleming-Viot process appearing in the above theorem is well-defined, the
problem being that at certain times and sites rates $+\infty$ appear.
This is of course a point which needs some care but there are results
in the literature.

In \cite{DG03} such a situation was analyzed and a \emph{modified
  concept of the martingale problem introduced} for that. We have of
course the expression in our martingale problem as it stands diverging
terms. Therefore we must identify for test functions depending on a
finite number of sites and require for our modified martingale
property time intervals only which contain \emph{no singularity} at
these points. The point is that the complement of the set at time
points which are singularity free can be exhausted by collections of
singularity free closed sub-intervals of time (since the singularities
are a closed set of Lebesgue measure zero and the complement is an
open set with full Lebesgue measures). It needs to be proved that we
obtain a unique solution of this \emph{modified} martingale problem,
with continuous path. Then $\wh\mfU(\bar\mfu)$ is Markov process with
values in $\U_1^{G,\#}$.

This is done as in \cite{DG03} by defining approximations where on
where on very small intervals the process is frozen, i.e.\ our rates
which diverge are cut and give us standard processes which converge as
the cutting level is raised to $\infty$. We refer for details to the
literature and assume the well-posedness of the time-inhomogeneous
$\U_1^{G,\#}$-valued Fleming-Viot process here.

The next question is whether we have the \emph{generalized branching
  property} and the \emph{Cox point process representation} from the
\Levy{}-Khintchine formula analogous to Theorem~\ref{T:BRANCHING}
parts (a),(b).

This issue is addressed in \cite{ggr_GeneralBranching} respectively
\cite{infdiv} and answered to the \emph{positive}; for details we
refer the reader to these papers.

\begin{remark}[Longtime behavior]\label{r.longbeh}
  In this framework we can now also analyze the question, how the
  genealogies behave as $t \rightarrow \infty$. This depends very much
  on the kernel $a(\cdot,\cdot)$. If the symmetrized kernel
  $\wh a = \frac{1}{2}(a + \bar a)$ is recurrent then the process
  becomes \emph{locally extinct} and conditioned on local survival one
  has on each finite subset of $G$ a diverging family descending from
  a single founding father. In the transient case it is well known
  that the super random walk has a translation invariant ergodic
  equilibrium with mean $\theta$ for every $\theta \in [0,\infty)$.

  We obtain here also a stationary limiting genealogy with countably
  many such founding fathers whose descendants are in distance
  $+\infty$. To make the latter precise some reformulation is needed,
  in particular one passes from $r(\cdot,\cdot)$ to the ultrametric
  $(1-e^{-r(\cdot,\cdot)})$ which maps $0,\infty]$ onto $[0,1]$
  one-to-one. We can not work out details in this paper.
  The case of the genealogical $G$-indexed Fleming-Viot process is
  treated in \cite{GSW} in complete generality and for $G=\Z$ in great
  detail.
\end{remark}

\begin{remark}[Dawson-Watanabe process: genealogical version]
  \label{r.DW}
  The treatment of the Dawson-Watanabe process involves as a further
  limit the \emph{spatial continuum limit} $\ve \Z^d \to \R^d$, (with
  $\ve \rightarrow 0)$, where we face the fact that in the duality
  relation the joint occupation times of the path degenerate in
  $d \ge 2$ and do \emph{not} lead to a Feynman-Kac duality with a
  spatial coalescent for a stochastic $\U$-valued dynamic, due to the
  \emph{lack of uniform integrability} of the \emph{exponential term}.
  This results in the necessity to work with a different argument here
  to obtain the uniqueness. We cannot carry out the details for
  $d \ge 2$ in this paper since this requires new techniques and a
  different formulation of the martingale problem.

  For $d=1$ the analogous limit for the $\U^\Z$-valued interacting
  Fleming-Viot process instead of the $\U^\Z$-valued super branching
  random walk is treated in \cite{GSW}. For branching in $d=1$ we can
  again work with the Feynman-Kac duality and obtain a well-posed
  martingale problem. We use this $\U^\R$-valued super process below
  to analyze the asymptotics of the super random walk on $\Z$.
\end{remark}

\paragraph{Application to long time behavior}
To get a better impression of what is behind the
Remark~\ref{r.longbeh} above we at least apply our techniques and
approach from the previous section for the non-spatial case and the
conditioned on survival process to study the long time behavior of
the super random walk in a specific case. We look at the regime where
the migration mechanism is \emph{strongly recurrent}, for example on
$\Z$, and where it is known (see \cite{DF88,DG96,DG03}) that the super
random walks forms \emph{clumps} of large mass on a \emph{thin set in
  space}. Here we can show now that such clumps have \emph{marked
  genealogies} for which we can give an explicit asymptotic
description as $t \to \infty$ and this description is fairly explicit.

We consider as an example the case of the \emph{super random walk on
  $\Z$} with a symmetric kernel $a(\cdot,\cdot)$, which is in the
domain of normal attraction of Brownian motion. We assume that the
random initial state is having translation invariant ergodic states
with mean $\theta$ for its total masses and all initial distances are
put equal to $0$, w.l.o.g. Then the total mass process goes locally to
extinction by forming rare, i.e.\ spatially separating, clumps of
diverging height and volume in space, as is known from the literature
\cite{DF88}.

Our point here is to describe in more detail the \emph{genealogy} of
these clumps which turn out to be \emph{single ancestors clumps} as
$t\to \infty$. Indeed a key point is the fact that all $t$-tops of the
states decompose at time $t$ in \emph{independent identically
  distributed elements of $\U^\Z$} corresponding to \emph{marked
  depth-$t$ subfamilies}. We can study all these independent
subfamilies \emph{separately} and then concatenate to the full state.
In fact we can decompose into single ancestor independent subfamilies.
This is a consequence of the branching property; cf.\
\cite{infdiv,ggr_GeneralBranching}. We next exploit this in detail.

\medskip
\noindent
\textbf{{\emph{(1) The scaling}}} Note that the state at time $t$ is
the independent concatenation of the processes starting with the mass
at one site, \cite{infdiv}. These processes become extinct and survive
up to time $t$ only with a probability of order $t^{-1}$. Hence the
sites where the time-$0$ population has descendants at time $t$
becomes thinner and thinner. In volume of order $t$ we can expect a
Poisson number of such sites in the limit $t\to\infty$. But of course
the growing clump around such a surviving mass has also a geographic
structure which is of interest. To see both these aspects we need a
two-scale analysis with a coarser first scale to describe the origin
of the time-$t$ population and second scale describing the internal
structure of such a surviving family at time $t$.

Therefore we observe that with a \emph{first scale} we can describe
the surviving founding fathers if we consider the sites which have
surviving mass somewhere at time $t$ and let $t \to \infty$. Namely we
get a point process on $\Z$, denoted $\mfP_t$ which has the property
that if we scale space by $t^{-1}$ getting what we call here
$\wt \mfP_t$ then a simple limit theorem holds:
\begin{align}
  \label{e2704}
  \mcL[\wt \mfP_t] \xRightarrow{t \to \infty} \mcL [\wt \mfP_\infty].
\end{align}
Here the r.h.s.\ is a Poisson point process on $\R$ with intensity
measure $\theta \lambda$, where $\lambda$ is the Lebesgue measure on
$\R$ \cite{DF88}. Here $\theta$ is $E[\nu_0(U \times \{0\})]$, the
initial intensity of individuals. In order to see more details of the
genealogy in the asymptotic analysis we need a \emph{two} (space-time)
scales approach to see the finer structure.

Therefore we come now to the \emph{second scale}. With each point in
$\wt \mfP_\infty$ we can associate a $\R$-marked ultrametric measure
space which describes the genealogy of the \emph{clump} consisting of
the individuals descending from those individuals initially at this
point. We know from section~\ref{ss.longuval} that this clump
asymptotically is associated with the surviving founding father
corresponding to that point since only one $2t$-ball has almost all
mass. More precisely we note that we can consider for each point in
$\Z$ the $\U^\Z$-valued Feller diffusion associated at time $t$ with
the \emph{$t$-top} of the population \emph{initially} in a point
$z \in \Z$. This defines a $\Z$-marked ultrametric measure space at
time $t$, denoted
\begin{align}
  \label{e2712}
  \left(\mfU_t^z \right)_{t \ge 0}.
\end{align}
Formally this is the process from Theorem~\ref{T.SRWALK} starting as
entrance law with mass $0$ in the point $z$ evolving for times $t >0$
as given by the system in \eqref{e937}, \eqref{e938}.

Here the point is now that we want to know the marked genealogy of a
\emph{typical} individual drawn at random from the population in
$[-n,n]$ and then we let $n \to \infty$. If we start with a
translation invariant state then if we pick a typical individual and
look at the system from the point of view of this individual we look
at the system under the \emph{Palm measure} (we typically pick from
families with large population in the ball of reference, note here
that the different surviving families segregate asymptotically in the
sense that $(1-\ve)$ of their mass is in part of space at that point).
Therefore we should look at the clumps under the \emph{size-biased
  law} in view of the scaling result in \eqref{e2704}.

Now condition on \emph{survival forever} of this process or
alternatively \emph{size bias} by the total population size to get
processes
\begin{align}
  \label{e2717}
  \bigl(\mfU_t^{z,\dagger}\bigr)_{t \ge 0} \text{ resp.\ }
  \bigl(\mfU_t^{z,\mathrm{Palm}}\bigr)_{t \ge 0}.
\end{align}
Then scaling as in \eqref{e1538} we get processes
\begin{align}
  \label{e2721}
  \bigl(\breve \mfU_t^{z,\dagger}\bigr)_{t \ge 0}, \quad
  \bigl(\breve \mfU_t^{z,\mathrm{Palm}}\bigr)_{t \ge 0}.
\end{align}
We have proved (take the non-spatial process!) that we get limit
configurations for
\begin{align}
  \label{e2726}
  \pi_U \; \mfU^{z,\dagger_t} \, , \quad  \pi_U \; \mfU_t^{z,\mathrm{Palm}}
\end{align}
as $t \to \infty$ denoted
$\breve \mfU_\infty^{\dagger, \downarrow} = \breve
\mfU_\infty^{\mathrm{Palm}, \downarrow}$, where the $\downarrow$
indicates that we get the limit of the projection of the state in
$\U^Z$ on the genealogy i.e.\ on $\U$.

The question is now whether we get a limit if we consider in addition
the \emph{$\Z$-marked object} in $\U^\Z$ better viewed as $\R$-marked
to be able to scale. For that purpose we consider the \emph{scaling of
  the marks}, the \emph{masses at a site} and distances as above:
\begin{align}
  \label{e2732}
  x \to t^{-1/2} x, \;  x \in \Z, \quad \mu(\{i\}\times U) \to
  t^{-1/2} \mu(\{i\}\times U), \quad r \to t^{-1} r.
\end{align}
This gives for finite collections of marks (sites) as $t \to \infty$
(via a first and second moment calculation for the super random walk
which is standard based on Ito's formula) a tight
object on $\R$. If we want to view the scaled $\pi_U \mu$ as a measure
on $\R$, we have to compensate the growing number of points (by
$\sqrt{t}$) in a macroscopic set $ A \in \R$. Therefore we scale the
measure $\wt \mu_t$ at the r.h.s. above by an additional $1/\sqrt{t}$.
We obtain then the \emph{equivalence classes of $\R$-marked
  ultrametric measure spaces}:
\begin{align}
  \label{e2736}
  \bigl(\wt \mfU_t^{z,\dagger}\bigr)_{t \ge 0},
\end{align}
which represent the \emph{time-space scaled} clump.

\medskip
\noindent
\textbf{\emph{(2) Limiting object}} In the first scale the limiting
object in \eqref{e2704} is of simple structure whereas it is more
complicated in the second scale.

The first basic ingredient of the limiting process of
$(\tilde\mfU_t^{z,\dagger})_{t \ge 0}$ is the $\U^\R$-\emph{valued
  Dawson-Watanabe} process (or superprocess). This object gives the
genealogy corresponding to the classical measure-valued
Dawson-Watanabe process in $\R^d$ for $d=1$. The latter is the
\emph{continuous-space-limit} of the super random walk we introduced
in Section~\ref{sss.genalspat}. The continuous space limit scales
time, space and mass. Namely we take $\varepsilon \Z^d$-super random
walks and let for a time rescaled version $\varepsilon \downarrow 0$
so that we get a limiting measure on $\R^d$.

To get the existence of the genealogy-valued continuum space time
limit we can define the $\U^\R$-valued superprocess rigorously as a
\emph{functional} of the \emph{historical} Dawson-Watanabe process,
introduced of in \cite{DP91}, for which we have to show that it is a
Markov process.

Recall that the historical process associates with a branching
population of migrating individuals a measure on \cadlag{} path. Think
of Galton-Watson random walk and consider for every individual alive
at the present time, say $t$, its \emph{path of descent} through
space, a path following the individuals location backward then that of
the father etc. Then take the counting measure on these path which are
for convenience continued constant before time $0$ and after time $t$
(the present time). This defines a
$\mcM(D((-\infty,+\infty),G)$-valued process. In the diffusive scaling
of time and space of \emph{many individuals of small mass and rapid
  branching} the historical Dawson-Watanabe process $\mcH$ arises; for
a characterization as scaling limit or via martingale problem see
\cite{DP91} or \cite{D93}.

Let $\mcH=(\mcH_t)_{t\ge 0}$ be the \emph{historical}
\emph{Dawson-Watanabe process} and define the process
$\mfU^{\mathrm{hDW}} = (\mfU^{\mathrm{hDW}}_t)_{t\ge 0}$ as a
functional of $\mcH$ as follows. We define the set $U_t$ of
"individuals" as the set of paths in the support of $\mathcal{H}_t$
and we let $r_t(\iota,\iota')$, $\iota,\iota' \in U_t$ be the value
$2(t-T)$ where $T$ is the maximal time with the property that the
paths $\iota$ and $\iota'$ agree for all $s \le T$. If no such
$T$ exists we set $r_t(\iota,\iota') =2t$. The mark of $\iota \in U_t$
is the value of the path at time $t$, i.e.\ $\kappa_t(\iota)=\iota_t$.
For the measure we take $\mu_t = \mcH_t$. This defines an element
\begin{align}
  \label{eq:HDW}
  \mfU_t^{\mathrm{hDW}} = [U_t,r_t,\kappa_t,\mu_t] \in \U^\R.
\end{align}
This functional gives again a process on the state space $\U^\R$,
which turns out to be \emph{Markov} and which we refer to as the
$\U^{\R}$-\emph{valued Dawson-Watanabe process}.

Note that this construction does not work for super random walk
because $t-T$ is \emph{not} the genealogical distance of the
corresponding individuals.

The limiting process in \eqref{e2736} is then the \emph{spatial
  version $\mfU^{\ast, DW(\R)}$ of the $\U$-valued Evans process}, we
described in the non-spatial case in \eqref{e1737} based on
Theorem~\ref{TH.1061}, namely the \emph{$\U^\R$-valued Evans process}
rigorously constructed based on $\mfU^{\mathrm{hDW}}$ starting with
some mark $z \in \R$, which we denote by
\begin{align}
  \label{e4400}
  \mfU^{z,*,\mathrm{DW}(\R)}=(\mfU^{z,*,\mathrm{DW}(\R)}_t)_{t\ge 0}.
\end{align}
Namely in this process an immortal particle with $0$-mass is
performing Brownian motion on $\R$ and throwing off at rate $b$ an
$\U^\R$-valued Feller diffusion (defined above) starting from $0$ mass
at the current position of the immortal particle. Then observing the
concatenated states of all surviving immigrant families at the present
time $T$ gives the time $T$ state of $\mfU^{\ast, DW(\R)}$. The state
has a population consisting of a countable number of immigrant
families each of which has a compact support on their geographic
positions.

The state can also be represented by a backbone construction as
follows. First fix $T >0$ and consider a Brownian path $B^\ast$ on
$\R$ starting at time $0$ in the point $z$, furthermore at rate
$(T-s)^{-1}$ at time $s$ a further process splits off in $B^\ast(s)$,
which evolves independently beyond time $s$ up to time $T$. Namely at
time $s$ start the $\U^\R$-valued process, the $\U^\R$-valued Feller
process conditioned on survival till time $T$, which is the
conditioned version of the $\U^\R$-valued Dawson-Watanabe process
(conditioned to survive till time $T$ and starting in $B^\ast(s)$).
These processes are denoted by $(\mfU^{T,s}_r)_{r \in [s,t]}$ with
$0 \leq s < t< T$ are the continuum space versions of the
$\U^\Z$-valued super random walk on $\Z$ conditioned to survive till
time $T$ and marked in addition to the position by the \emph{color
  $s$}. Then we concatenate all these elements of $\U^\R$, as below
\eqref{e1009}, to obtain $\mfU_t^{T,\sqcup}$. Then modifying the
argument of Section~\ref{sss.backboncon} let $t \uparrow T$ to get
\begin{align}
  \label{e2745}
  \mfU^{z,T, \sqcup}_T \in \U^\R.
\end{align}

\medskip
\noindent
\textbf{\emph{(3) Super random walk on $\Z$: Asymptotic clumps
    genealogy}} Now we can obtain the asymptotic clump description in
the second finer scale and show that the Evans process and the
backbone construction coincide and give asymptotic cluster as follows.
For the following result recall \eqref{e2736}, \eqref{e4400},
\eqref{e2745}.

\begin{theorem}[Asymptotic clump genealogy of super random walk on
  $\Z$]
  \label{T.CLUMP}
  We have
  \begin{align}
    \label{e2751}
    \mcL[\wt \mfU_T^{z,\dagger}] \Rightarrow \mcL[\mfU_1^{z,1,\sqcup}] =
    \mcL [\mfU_1^{z,*,\mathrm{DW}(\R)}] \;
    \text{ as  }  \; T \to \infty.
  \end{align}
\end{theorem}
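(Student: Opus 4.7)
The plan is to treat \eqref{e2751} as two separate assertions and prove each by a spatial adaptation of machinery developed earlier in the paper. For the identification $\mcL[\mfU_1^{z,1,\sqcup}] = \mcL[\mfU_1^{z,*,\mathrm{DW}(\R)}]$ I would carry out the spatial backbone construction along the lines of Theorem~\ref{T:BACKBONE}: run an immortal particle on $\R$ starting at $z$ as Brownian motion and at time $s\in[0,T]$ split off, independently, at rate $2(T-s)^{-1}$ copies of the $\U^\R$-valued Dawson-Watanabe process conditioned to survive until time $T$ and based at the current position of the immortal particle, then sliding-concatenate them along the backbone via $\sqcup^{\mathrm{sli}}$ as in \eqref{e1963}. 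By construction the time-$T$ marginal is $\mfU^{z,T,\sqcup}_T$. Computing the associated $\U^\R$-valued generator and matching it with the spatial analogue of $\Omega^{\uparrow,\ast}$ from \eqref{e2240}, augmented by the migration operator \eqref{e944}--\eqref{e946} with kernel replaced by the continuum Laplacian on $\R$, identifies the two laws by well-posedness, which is the $\U^\R$-version of Theorem~\ref{TH.1061}.

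For the convergence I would use a two-scale analysis. By the spatial analogue of Corollary~\ref{c.1609} the $Q$-process $\mfU^{z,\dagger}$ agrees in law with the Palm process, so it suffices to work with the latter. Projecting onto the mass-mark component gives a size-biased $\mcM(\Z)$-valued super random walk, which by Dawson-Fleischmann \cite{DF88} under the scaling \eqref{e2732} converges to the size-biased $\mcM(\R)$-valued Dawson-Watanabe process; this is precisely the mass-mark projection of $\mfU^{z,*,\mathrm{DW}(\R)}_1$. To lift to $\U^\R$-valued objects I would combine this with the conditional duality of Theorem~\ref{PROP.1007} extended to the spatial setting (cf.\ Remark~\ref{r.1022}): given the realized total-mass field, the genealogy is a time-inhomogeneous spatial Fleming-Viot process in the sense of Corollary~\ref{cor.834}, whose dual rescales to a system of coalescing Brownian motions on $\R$ with distances growing at rate $2$. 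The Feynman-Kac duality of Theorem~\ref{T.SRWALK}(b), combined with Lemma~\ref{lem:Pi:separating} lifted to $\Pi^{\mathrm{space}}$, then allows the duality identity to be passed to the limit, establishing convergence of polynomial moments; tightness in the Gromov weak topology on $\U^\R$ follows from uniform-in-$t$ moment bounds of the same Feynman-Kac type.

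The hard part is the singular drift $b/\bar\mfu$ present in the $Q$-process generator, which persists in the scaling limit as the immortal-line immigration of the Evans process and is problematic whenever the rescaled mass is small. The conditional duality circumvents this: on the level of laws conditioned on the total-mass path, the resampling rate $b/\bar\mfu_t$ of the time-inhomogeneous Fleming-Viot is integrable along almost every admissible path, so both the pre-limit and limit genealogies are well-defined in the sense required for polynomial test functions. A secondary technical point is the rigorous definition \eqref{eq:HDW} of the $\U^\R$-valued Dawson-Watanabe process as a Markov functional of the historical process of \cite{DP91}, which one verifies by combining the mark-function representation \eqref{e941} with the martingale problem characterization of the historical process and then checks that this functional solves the limiting $\U^\R$-valued martingale problem.
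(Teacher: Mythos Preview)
Your overall strategy is close in spirit to the paper's, but there is a genuine gap at the point where you invoke ``well-posedness, which is the $\U^\R$-version of Theorem~\ref{TH.1061}''. The paper explicitly flags the well-posedness of the $\U^\R$-valued Dawson--Watanabe (and more generally $\U^\R$-valued Fleming--Viot) martingale problem via duality as an \emph{open} problem; see Remark~\ref{r.DW} and the discussion in Section~\ref{ss.treeval}. You therefore cannot match generators and conclude equality of laws by uniqueness of a martingale problem on $\U^\R$. The paper circumvents this by defining the $\U^\R$-valued Dawson--Watanabe process as a functional of the \emph{historical} process \eqref{eq:HDW}, whose uniqueness is classical via the log-Laplace equation, and then arguing directly that the future of this functional depends only on its present state (hence is Markov) without ever proving well-posedness of an intrinsic $\U^\R$-valued martingale problem.

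A second, smaller issue concerns the limiting dual. In the Feller/branching setting the FK-potential \eqref{e1133} does not vanish in the scaling limit; the paper shows that the dual expectation, \emph{including the exponential functional}, converges to the corresponding expression for \emph{delayed} coalescing Brownian motions, with the delay governed by the Brownian collision local time. Your phrasing ``dual rescales to a system of coalescing Brownian motions'' and your appeal to conditional duality (which would strip the FK term) risks landing on the instantaneous-coalescence limit appropriate for Fleming--Viot rather than the delayed one required here. Finally, knowing the two marginal limits (genealogy in $\U$ and marks in $\mcM(\R)$) is not enough by itself; the paper closes the joint-law argument via convergence of the \emph{marked $h$-trunks}, exploiting that in the limit the distance between two leaves is read off from the splitting time of their paths of descent. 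Your proposal does not supply a mechanism for this last identification.
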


In fact we can obtain here even a result on processes. Recall that
then we have a given time-horizon $T$ where we observe our system and
we want to represent the observed clump at this time $T$ with the help
of a Markov process in, say $s$ which evolves through the time
interval $[0,T]$. Then we let their time-horizon go to infinity. This
representing process will be the process $\mfU^{z,*,\mathrm{DW}(\R)}$
at a specific time $s=1$. That means that the system at time $T'>T$
has its own representation. However since the dynamics of the process
of representation does not depend in its dynamics on $T$ we can read
off the limits $T'$ and $T$ from the same process. In other words:
$(\wt\mfU_{tT}^\dagger)_{t\ge 0}$ converges in law to
$(\mfU^{z,*,\mathrm{DW}(\R)}_t)_{t\ge 0}$.

This way we have the \emph{asymptotic} description of the
\emph{genealogy of a surviving clump} as $\U^\Z$-valued object, by the
limiting $\U^\R$-\emph{valued Evan's process}, with a scale-$t$
genealogy marked with locations on $\sqrt{t}$-spatial scale, which
appears in the first scale $t$ only as object
$\overset{\approx}{\mfP}_\infty$ marked with one point. More precisely
in a time scale $t$ we have a Poisson point process and on these
points a $\U^\R$-valued Evans process starts from that point and has
at macroscopic time $s$ (i.e.\ $st$ in real time) of the property that
the population is supported on a set of the form $A_s \sqrt{t}$, where
$A_s$ is a random compact set in $\R$ marking the genealogy of the
$\U$-valued Evans process. This compactness of $A_s$ follows from the
corresponding compactness property of the Dawson-Watanabe process; see
\cite{D93}. Here we observe that from the immortal line a countable
number of Dawson-Watanabe processes split off at a sequence of points
$(s_n)_{n \in N}$, $s_n \uparrow t$ as $n\to\infty$. Each has a
compact support but we need that these supports shrink fast enough wo
still be contained in a bounded set. Note that the total mass produced
is bounded since its expectation is $bt$. We note that the claim
amounts the claim that the \emph{historical process} corresponding to
the Evans process charges a compact set at fixed times $t$. This has
been shown in \cite{DP91} in Theorem~8.10 under certain assumptions
applying to the Brownian motion case with immigration. This follows
from the property that this process is the Kallenberg tree, which is
the subfamily of an randomly chosen individual, which is then the new
origin in space, whose law is absolutely continuous w.r.t.\ the
original law.

\begin{remark}
  \label{r.4650}
  Note that in $d=2$ we have a Poisson field of ancestors with
  descendants at time $t$, which are now spread in
  scale-$\sqrt{t}$-distance so that we are in the range, where the
  different families can hit all macroscopic balls. In $d \geq 3$ we
  start having a diverging number of ancestors even on the scale
  $\sqrt{t}$ and we get countably many ancestors to contribute locally
  to the population. The key effect is the divergence of the rate of
  individuals creating at time $t$ a surviving form of the form
  $(t-s)^{-1}$. This subfamilies produce the local peaks in the
  population distribution. Every dimension has its own flavor here.
  The analysis would need the $\U^{\R^d}$-valued Dawson-Watanabe
  process, which we cannot construct in this paper, see
  Remark~\ref{r.DW}.
\end{remark}

\subsubsection{The fossil process of Feller diffusion and the
  continuum random tree (CRT)}
\label{sss.contrand}

The reader might wonder how all our results are related to the by now
classical theory of the \emph{continuum random tree}, short CRT. To
make this connection we formulate rigorously two results, however the
proof is kept quite short as this is not our main message. The
genealogy of all individuals \emph{ever} alive which is really a tree
(in the sense of an $\R$-tree) has been described for the genealogy of
the Feller diffusion by the so called \emph{continuum random tree}
(CRT) introduced originally by Aldous in \cite{Aldous90} and extended
by Le Gall in \cite{LeGall93}. This object is of course a random
variable and \emph{not} a stochastic process and it is the latter for
which the description of genealogies we propose here is fruitful.

Our goal is not to develop this theory of these special $\M$-valued
processes in detail, but rather to provide the link between the
``classical'' object CRT and the approach we develop here to obtain
the time evolution of genealogies, as stochastic processes therefore
not all details are provided here.

We will now indicate how the continuum random tree builds up from an
\emph{evolving process}, called $\mfU^{\mathrm{foss}}$ of \emph{random
  weighted $\R$-trees}, describing the population alive at \emph{some}
time before or at the current time $t \ge 0$ and which includes also
all the \emph{fossils} before time $t$ and the individuals alive at
time $t$ as the force actually driving the evolution. This process is
then considered as $t \rightarrow \infty$. This is developed in
\cite{GSW} for interacting Fleming-Viot processes (interacting via
migration) on countable groups and on the spatial continuum $\R$.

\paragraph{State space of fossil $\M$-valued process}
In order to include \emph{fossils} up to the present time $t$ we think
of every individual $\iota$ alive at a time $s \in [0,\infty)$ as the
new basic entity which is characterized by a pair $(s,\iota)$ for
$\iota$ an element of the set $U_s$ describing the population alive at
time $s$. As sampling measure we then take the \emph{occupation
  measure} and as genealogical distance still \emph{twice the time
  back to a common ancestor}. This we have to formalize now.

We denote by $\M$ the space of equivalence classes of \emph{metric
  measure spaces} and with $\M^\rho$ the ones with a \emph{root}. (The
root is under equivalence classes mapped in the root.) The space is
equipped with the Gromov-weak topology under which it is a Polish
space (\cite{GPW09}).

For our purpose here we introduce as state space
$\M^{\rho,+} \subseteq \M^\rho$ a class of special \emph{rooted}
metric measure spaces (for the latter see \cite{GPW09}) which
describes the fossils as well, by replacing the ultrametric measure
spaces $(U_s,r_s,\mu_s)$ we had so far by an object of the form
\begin{align}
  \label{ag2}
  (M_T, \wt r_T, \wt \mu_T),
\end{align}
with the following ingredients. First let $T \ge 0$ denotes the
present time and for $s \in [0,T]$ consider
$\mfU_s = [U_s,r_s,\mu_s] \in \U$ and \emph{assume} that we can fix
representatives of the equivalence classes for every $s$ such that
$s \mapsto \mu_s$ is measurable. We can then define
\begin{align}
  \label{ag3}
  M_T = \{(s,\iota) : s \in [0,T], \iota \in U_s\} \cup \{\rho\}, \;\;
\;\; \wt r_T \text{ metric on } M_T, \;\;
  \wt \mu_T = \int^T_0 \mu_s \dx s + \mu_T^{\mathrm{top}},
\end{align}
where $\rho$ is the root and $\mu_T^{\mathrm{top}}$ is a measure fully
supported on $U_T$. The \emph{distance} between individuals in $U_s$
and the root is given by $s$. To get a Polish state space we pass to a
\emph{stronger} topology and a concept of equivalence under which the
structure in \eqref{ag3} is preserved, this means that the path of
measures $(\mu_s)_{s \in \R}$ is preserved (recall due to the root the
$U_s$ in distance $s$ from the root are preserved).

We then need to show that this structure constitutes then again a
Polish space. We have here a subset of the space $\M^\rho$, which is
known to be Polish. Hence we need to argue that we have a closed
subset.

Since this state space is the topic of work specifically on the
ancestral web on the one hand and on the other hand the fossil process
(see \cite{GKW}, \cite{GSWfoss}), \emph{we only sketched here the main
  idea}.

\begin{remark}[State space]
  For \label{r.4837} that purpose we consider the measure
  $\wt \mu_s^{\mathrm{top}}$ by extension of $\mu_s^{\mathrm{top}}$ to
  $M$ and form expressions
  $\suml_{k \in \{1,2\}} \alpha_k \delta_{\mu^k}$, $\alpha_k \ge 0$.
  This so called bi-measure metric space then combines the structure
  of the
  $\{[M,r_s,\wt \mu_s^{\mathrm{top}}],[M,r_s,\wt \mu_T]\}$. If
  we use the additional property that $t \mapsto \mu_t$ is continuous,
  we can use here two-level metric measure spaces to obtain a Polish
  space $\M^{\rho,+}$ which contains the needed information. For the
  concept of bi-measure metric spaces see \cite{Mei18}.
\end{remark}

\paragraph{Martingale problem of fossil tree-valued Feller process}
We will define a new dynamic such that the restriction of that process
$(\wt \mfU_t^{\rm foss})_{t \ge 0}$ to the \emph{time-$s$ slice}
$[U_s,r_s,\mu_s]$ of the state denoted $\mfU^{\mathrm{foss}}_t$ with
$s \in [0,t]$ gives a \emph{version of our $\U$-valued Feller
  diffusion} for time $[0,t]$. We denote this projection by $\pi_s$.
This new process we can again describe as the solution to a well-posed
martingale problem, where an \emph{additional operator} describes how
current individuals turn into fossils. Also the sampling measure $\mu$
is split into the \emph{top part} supported on $U_T$ the individuals
currently alive at time $T$ and the remaining part on $[0,T)$,
\emph{the fossils}.

In order to introduce the dynamic via a \emph{martingale problem} on
$\M^{\rho,+}$ rigorously next, i.e., we need test functions and
operator as ingredients. We introduce now again \emph{polynomials} on
the state space as follows. We write
\begin{align}
  \label{e1178}
  \wt \mu = \mu^{\mathrm{foss}} + \mu^{\mathrm{top}}
\end{align}
and set for $\mfM \in \M^{\rho, +}$:
\begin{align}
  \label{e1179}
  \begin{split}
    \Phi (\mfM) = \int_{(M)^{n+m}}
    & \varphi \big((r(u_i,u_j))_{1 \le i < j \le n+m}\big) \\
    & (\mu^{\mathrm{foss}})^{\otimes n} \big(\dx(u_1, \ldots, u_n)\big)
    (\mu^{\mathrm{top}})^{\otimes m} \big(\dx(u_{n+1}, \ldots, u_{n+m})\big),
  \end{split}
\end{align}
where $\varphi \in C^1_b \big((\R_+)^{\binom{n+m}{2}}, \R\big)$. We
denote the polynomials with non-negative $\varphi$ by $\Pi^1_+$.

On these polynomials we now define the \emph{generator}. We extend the
$\Omega^{\uparrow, \mathrm{grow}}$, $\Omega^{\uparrow, \mathrm{bran}}$
now by letting it \emph{act only on the top} and in addition we need
the operator describing the \emph{creation of the new top} by time
passing. Formally this looks as follows.

The \emph{aging} is now described by a weighted gradient. Let
$I=\{1,\ldots, n\}$, $J=\{n+1,\ldots, n+m\}$. Then define the weighted
gradient as follows:
\begin{align}
  \label{e1181}
  \frac{\partial^w}{\partial \; \underline{\underline{r}}}= 2 \sum_{k
  \in J,l \in J} \; \frac{\partial}{\partial \; r_{k,l}} + \sum_{k
  \in I,l \in J} \; \frac{\partial}{\partial \; r_{k,l}}.
\end{align}
This takes into account that fossils do not age, but only the top.

Introduce the map $s_k$ acting on $(\mu^{\mathrm{foss}})^{\otimes n}$
as follows:
\begin{align}
  \label{e1180}
  \mu^{\mathrm{foss}} \otimes \dots \otimes \mu^{\mathrm{foss}} \longrightarrow
  \mu^{\mathrm{foss}} \otimes \dots \otimes \mu^{\mathrm{top}} \otimes
  \mu^{\mathrm{foss}} \dots \otimes \mu^{\mathrm{foss}},
\end{align}
with $\mu^{\mathrm{top}}$ replacing $\mu^{\mathrm{foss}}$ at the
$k$-th position for $k \in \{1,2,\ldots, n \}$. Then
$\Omega^{\uparrow, \mathrm{foss}}$ is defined on $\Pi^1$ by
\begin{align}
  \label{e1182}
  \begin{split}
    \Omega^{\uparrow, \mathrm{foss}} \Phi (\mfM) & = b \int_{M^{n+m}}
    \; \dx\bigl( (\mu^{\mathrm{foss}})^{\otimes n} \otimes
    (\mu^{\mathrm{top}})^{\otimes m} \bigr)
    \; \sum_{k,l=n+1}^{n+m} \; (\wh \theta_{k,l} \;  \varphi-\varphi)
    \quad \text{(branching)}\\
    & \quad + \sum^n_{k=1} \; \int_{M^{n+m}} \;
    \dx \bigl(s_k(\mu^{\mathrm{foss}})^{\otimes n}
    \otimes (\mu^{\mathrm{top}})^{\otimes m}\bigr) \, \varphi
    \quad \text{(top layer growth)}\\
    & \quad + \int_{M^{n+m}} \; \dx \bigl((\mu^{\mathrm{foss}})^{\otimes n}
    \otimes (\mu^{\mathrm{top}})^{\otimes m}\bigr) \,
    \big(\frac{\partial^w}{\partial \underline{\underline{r}}}\;
    \varphi \big) \quad \text{(distance growth).}
  \end{split}
\end{align}
This defines now all ingredients for a martingale problem on
$\M^{\rho,+}$.

\paragraph{Results}
We now have a well-defined process
$(\mfU^{\mathrm{foss}}_t)_{t \ge 0}$. Namely
\begin{theorem}[Well-posed fossil martingale problem]\label{T.AGING}
  The
  $(\delta_\mfu, \Pi^1_+, \Omega^{\uparrow,
    \mathrm{foss}})$-martingale problem is well-posed for every
  $\mfu \in \M^{\rho,+}$. The resulting process
  $(\mfU_t^{\mathrm{foss}})_{t \ge 0}$ with values in $\M^{\rho,+}$
  is called the fossil $\M^\rho$-valued fossil Feller diffusion.
\end{theorem}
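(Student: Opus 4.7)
The plan is to reduce existence to the already-established well-posedness of the $\U$-valued Feller diffusion (Theorem~\ref{THM:MGP:WELL-POSED}) via an explicit functional construction, and to establish uniqueness through a Feynman-Kac duality with an enriched coalescent that handles both top and fossil samples.

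First I would construct a candidate solution explicitly. Given the continuous $\U$-valued path $(\mfU_t)_{t \ge 0}$ from Theorem~\ref{THM:MGP:WELL-POSED} started from the top of $\mfu$, I build $(\mfU_t^{\mathrm{foss}})_{t \ge 0}$ as the $\M^{\rho,+}$-valued functional whose time-$T$ state has underlying set $\{(s,\iota): s \in [0,T], \iota \in U_s\}\cup\{\rho\}$, metric given by $\tilde r((s,\iota),(s',\iota'))= s+s'-2\tau$ where $\tau$ is the time of the most recent common ancestor of $\iota$ and $\iota'$ in the evolving tree (and $\tilde r(\rho, (s,\iota))=s$), top measure $\mu^{\mathrm{top}}_T = \mu_T$, and fossil measure $\mu^{\mathrm{foss}}_T = \int_0^T \mu_s\,\dx s$. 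One first checks that the Polish structure of $\M^{\rho,+}$ sketched in Remark~\ref{r.4837} via bi-measure metric spaces makes this functional measurable and continuous in $T$.

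Next I would verify that this functional solves the $(\delta_\mfu, \Pi^1_+, \Omega^{\uparrow,\mathrm{foss}})$-martingale problem by matching each term in \eqref{e1182} with a natural infinitesimal effect. The first (branching) term is inherited from the branching part of the $\U$-valued generator applied to the $m$ top indices only. The second ($s_k$) term arises from differentiating $(\mu^{\mathrm{foss}}_T)^{\otimes n}=(\int_0^T \mu_s\,\dx s)^{\otimes n}$ in $T$: the Leibniz rule yields exactly $\sum_{k=1}^n s_k(\mu^{\mathrm{foss}})^{\otimes n}\otimes (\mu^{\mathrm{top}})^{\otimes m}$ because $\frac{\partial}{\partial T}\mu^{\mathrm{foss}}_T = \mu_T = \mu^{\mathrm{top}}_T$. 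Finally, the weighted-gradient term captures the fact that pairs both in $J$ (top-top) grow at rate $2$ as in $\Omega^{\uparrow,\mathrm{grow}}$, mixed pairs (top-fossil) grow at rate $1$ since only the top coordinate is still evolving, and fossil-fossil pairs do not grow at all. A routine It\^o-type calculation combined with the continuity of the paths of $(\mfU_t)_{t\ge 0}$ gives the required martingale property and, through the same construction, the strong Markov and Feller properties.

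For uniqueness, I would set up a Feynman-Kac duality extending Theorem~\ref{T:DUALITY}. The dual runs on partitions $p$ of $\{1,\dotsc,n+m\}$ augmented by a distance matrix and a status map assigning to each partition element either ``live'' or ``fossil with time-stamp $s_\pi \in [0,T]$''. Starting the dual at time $T$ with the $n$ fossil indices assigned i.i.d.\ uniform time-stamps on $[0,T]$ (reflecting $\mu^{\mathrm{foss}}_T=\int_0^T \mu_s\,\dx s$) and the $m$ top indices live, the process runs backward in time: live elements coalesce pairwise at Kingman rate $b$, a fossil element with time-stamp $s_\pi$ is activated to ``live'' when the backward time reaches $T-s_\pi$ and thereafter may coalesce, and distances grow as in the dual of \eqref{tv12}. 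The Feynman-Kac correction factor involves $\exp(\int_0^u b\binom{N^{\mathrm{live}}_v}{2}\dx v)$ with $N^{\mathrm{live}}_v$ the number of live lineages, matching the Feller (rather than Fleming-Viot) normalization. Polynomials of the form \eqref{e1179} serve as duality functions; they are law-determining on distributions on $\M^{\rho,+}$ whose top-mass moments grow like those of Feller's diffusion (by Corollary~\ref{cor.681} applied slicewise). The duality identity then forces the law of $\mfU^{\mathrm{foss}}_t$ to coincide with that of the constructed functional. The main obstacle will be the precise verification of the generator identity for the modified dual, in particular matching the $s_k$ term in $\Omega^{\uparrow,\mathrm{foss}}$ with the infinitesimal ``fossil$\to$live'' relabeling events in the dual and checking the required integrability for the Feynman-Kac factor uniformly in $T$. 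A secondary but nontrivial technical point is to equip $\M^{\rho,+}$ with a genuinely Polish topology via the bi-measure formulation of Remark~\ref{r.4837}, so that the standard Stroock-Varadhan framework for martingale problems applies verbatim.
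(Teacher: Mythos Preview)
Your uniqueness argument via a Feynman--Kac duality with a coalescent whose fossil lineages stay frozen until a prescribed time and then become active is precisely the paper's device: the paper calls this the \emph{time-space coalescent}, with partition elements in states ``active'' and ``frozen'' and activation at the times $T-t_i$. The only cosmetic difference is that the paper fixes deterministic activation times $t_0<\dots<t_n$ and obtains the duality for polynomials evaluated at those slices, whereas you integrate over the activation times from the start; since $\mu^{\mathrm{foss}}=\int_0^T\mu_s\,\dx s$ the $s$-integration enters with Lebesgue weight, so your ``uniform time-stamps'' is in fact correct and the two formulations agree by Fubini.

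The existence step, however, has a genuine gap. You want to read the fossil state $\mfU^{\mathrm{foss}}_T$ off the $\U$-valued path $(\mfU_s)_{s\in[0,T]}$ via the formula $\tilde r((s,\iota),(s',\iota'))=s+s'-2\tau$. But each $\mfU_s$ is only an \emph{equivalence class} $[U_s,r_s,\mu_s]$: there is no canonical underlying set $U_s$, hence no meaning to ``the individual $\iota\in U_s$'', and no canonical ancestor map from $U_{s'}$ to $U_s$ that would let you compute $\tau$ across slices. Concretely, if two subfamilies carry distinct masses at all times one can track them by continuity of the mass paths, but the general reconstruction of the fossil metric from the path of equivalence classes --- in particular for lineages that go extinct before $T$ and hence leave no trace in the $\R$-tree encoded by $\mfU_T$ --- is a substantial step you do not address. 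The paper sidesteps this entirely by building existence via the individual-based particle approximation, where individuals are explicitly labeled and therefore trackable across time; the fossil state is an honest functional at the particle level and the diffusion limit is taken afterwards. You could repair your argument by invoking the same approximating systems $\mfU^{(N)}$ (or a look-down/historical construction), rather than the $\U$-valued limit directly.
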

We want to relate this process we defined as a stochastic process
above with the \emph{continuum random tree} shortly CRT. We focus on
one surviving family which is the basic component. We obtain the
fossil process $\mfU^{\rm foss}$ for the $\U$-valued Feller diffusion
starting in the mass $\bar \mfu$ in the element with
$(\bar \mfu,\hat \mfu)$ and
$\hat \mfu=\mfe=[1,\underline{\underline{0}},\delta_1]$.

The CRT arises as scaling limit in various settings among which is an
object in the $\U$-valued Galton-Watson critical branching process is
essentially embedded, a result going back to Aldous \cite{Ald1991a}.
It can be obtained from an \emph{explicit construction} from the paths
of an excursion of Brownian motion over the interval $[0,1]$; see
\cite{LeGall93}. First a \emph{metric space} is constructed from the
excursion of (better 2 $\cdot$ excursion) standard Brownian motion,
which is often called the CRT. This can be extended to a metric
\emph{measure} space which then allows to consider the equivalence
class, which would give an element in $\M$. We choose the measure
induced by the local time on a level; see \cite{PWak13}. Namely we
want the measure to be such that the process of the mass of the
population in distance $\leq 2t$ from the root has as function of $t$
the form $t \mapsto \int^t_0 \bar\mfU_s\, \dx s$. This provides a
random object of the form in \eqref{ag2}. We denote by
$[\textrm{CRT}] \in \M$ the equivalence class of the metric finite
measure space constructed from the Brownian excursions.

From the process $\mfU^{\mathrm{foss}}$ the CRT arises as the
$\M$-valued Kolmogorov-Yaglom limit as $t \rightarrow \infty$ limit.
We get the following:
\begin{theorem}[CRT as Kolmogorov-Yaglom limit of fossil process]
  The \label{T.CRT} fossil process $(\mfU^{\mathrm{foss}}_t)_{t \ge 0}$
  has the properties,
  \begin{align}
    \label{ag3b}
    \mcL [\pi_{s} \circ \mfU^{\mathrm{foss}}_t] = \mcL[\mfU_s],
    \quad \forall \; s \le t,
  \end{align}
  and with initial state as described above:
  \begin{align}
    \label{ag3c}
    \mcL [\mfU^{\mathrm{foss}}_t] \xRightarrow{t\to\infty} \mcL
    \big[[\textnormal{CRT}]\big].
  \end{align}
\end{theorem}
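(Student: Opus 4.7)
\textbf{Proof proposal for Theorem~\ref{T.CRT}.}

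For part (a), the plan is to read off the consistency $\mcL[\pi_s \circ \mfU^{\mathrm{foss}}_t] = \mcL[\mfU_s]$ directly from the structure of the generator $\Omega^{\uparrow,\mathrm{foss}}$ defined in \eqref{e1182}. I would first observe that the weighted gradient $\partial^w/\partial \uur$ in \eqref{e1181} acts only on distances involving at least one top-index, so distances between fossil points are \emph{frozen} once created; the operator $s_k$ only turns $\mu^{\mathrm{top}}$-factors into $\mu^{\mathrm{foss}}$-factors; and the branching part acts only on the top. Consequently, for test functions of the form \eqref{e1179} with $n=0$ (no fossil sampling) the operator reduces to $\Omega^{\uparrow,\mathrm{bran}}+\Omega^{\uparrow,\mathrm{grow}}=\Omega^{\uparrow}$. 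Since Theorem~\ref{T.AGING} gives uniqueness of the $\M^{\rho,+}$-valued problem and Theorem~\ref{THM:MGP:WELL-POSED} gives uniqueness of the $\U$-valued problem, one concludes that the top component of $\mfU^{\mathrm{foss}}$ is a version of the $\U$-valued Feller diffusion $\mfU$. The identification \eqref{ag3b} then follows by showing that the slice at height $s$ in $\mfU^{\mathrm{foss}}_t$ for any $t\ge s$ coincides with the top of $\mfU^{\mathrm{foss}}_s$, which is a direct consequence of the ``frozen fossil'' property together with the construction of $s_k$ (the sampling measure from the slice is preserved as the top is recorded into $\mu^{\mathrm{foss}}$ just before time $s$ is passed).

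For part (b), the strategy is to implement the classical CRT construction of Aldous--Le Gall within our $\M^{\rho,+}$-valued framework. First I would argue, using part (a) and the KY-limit results of Theorem~\ref{T:KOLMOGOROVLIMIT} and Theorem~\ref{TH.KOLMO}, that upon the usual scaling of mass and distances by $t^{-1}$ and conditioning on survival up to time $t$, the profile process $(\bar\mfU_s^{\mathrm{foss}})_{s\in[0,t]}$ rescaled to $[0,1]$ converges in law to a (properly normalized) Brownian excursion $e=(e_s)_{s\in[0,1]}$; this is the classical Yaglom/Aldous limit for the Feller diffusion. Next I would show that the \emph{entire} $\M^{\rho,+}$-valued random variable $\mfU^{\mathrm{foss}}_t$ is a measurable functional of the profile path $(\bar\mfU_s^{\mathrm{foss}})_{s\in[0,t]}$ together with the conditioned Fleming-Viot genealogy from Corollary~\ref{cor.834}, combined via the sliding concatenation of Section~\ref{sss.1616}; this is where the ``two-scale'' description of the excerpt pays off.

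With these ingredients in place, the final step is to identify the limiting functional of $e$ with Le Gall's construction of the CRT. Concretely, I would use the polar decomposition to write the rescaled state as $(\bar\mfu_s\cdot \wh\mfU^{\mathrm{FV}}_s(\bar\mfu))_{s\in[0,1]}$ and invoke the fact that the time-inhomogeneous $\U_1$-valued Fleming--Viot process with the specific resampling rate $b/\bar e_s$ collapses deterministically onto the Brownian-excursion tree of Le Gall (since under Aldous's/Le Gall's coding the ultrametric is determined by $e$ alone, and the measure at level $s$ is the local time $\mathrm d\ell_s$ normalised so that $\int_0^s \bar e_u \, \mathrm du$ gives the population up to height $s$; compare with \cite{PWak13}). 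The resulting equivalence class in $\M$ is $[CRT]$ by definition.

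The main obstacle will be the last step: controlling convergence in the Polish space $\M^{\rho,+}$ (introduced only informally in Remark~\ref{r.4837}) rather than just in Gromov-weak sense on single slices, and carefully matching our sampling measure $\wt\mu=\int_0^T\mu_s\,\mathrm ds+\mu^{\mathrm{top}}$ with the local-time measure on the Brownian excursion tree. Tightness can be handled by the compact support property of critical Feller genealogies (Corollary~\ref{cor.834}), and the identification of the limit can be reduced, via polynomials, to convergence of distance-matrix distributions of i.i.d.\ samples, which for the Brownian CRT is the content of Aldous's original characterization.
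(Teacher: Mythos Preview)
Your treatment of part~\eqref{ag3b} via the generator is sound and in fact more explicit than the paper, which essentially takes this as immediate from the construction of the fossil dynamic.

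For part~\eqref{ag3c}, however, there is a genuine misreading of the statement. You introduce a rescaling of mass and distances by $t^{-1}$ and a conditioning on $\{\bar\mfU_t>0\}$, and then appeal to the Kolmogorov--Yaglom machinery of Theorems~\ref{T:KOLMOGOROVLIMIT} and~\ref{TH.KOLMO} to obtain a Brownian excursion on $[0,1]$ as profile limit. But the theorem asserts convergence of the \emph{unscaled, unconditioned} fossil state $\mfU^{\mathrm{foss}}_t$ as $t\to\infty$, starting from a fixed mass $\bar\mfu>0$ in $\mfe$. Since the critical Feller diffusion dies a.s.\ in finite time $T_{\mathrm{ext}}$, the fossil process becomes constant after $T_{\mathrm{ext}}$, and the paper's proof is organized around this fact: tightness is checked directly (distances bounded by $2T_{\mathrm{ext}}$, total mass $\int_0^\infty \bar\mfU_s\,\dx s<\infty$ a.s., dust-free via the time-space coalescent of the proof of Theorem~\ref{T.AGING}), and convergence of polynomials is obtained from the time-space FK-duality together with $\int_0^t \bar\mfU_s\,\dx s \to \int_0^\infty \bar\mfU_s\,\dx s$. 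The identification of the limit with $[CRT]$ is then argued by matching distance-matrix distributions of i.i.d.\ samples through particle approximation and Aldous' characterization, not by reading the tree off a scaling limit of the profile.

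Your proposed route via rescaled profiles and Corollary~\ref{cor.834} would prove a different (Yaglom-type) statement, and the claim that the time-inhomogeneous $\U_1$-valued Fleming--Viot process with rate $b/\bar e_s$ ``collapses deterministically onto the Brownian-excursion tree'' is not something that follows from the results in the paper; it would require its own proof. If you want to salvage your approach, you would first have to remove the scaling and conditioning, and then replace the KY-limit step by the time-space duality developed in the proof of Theorem~\ref{T.AGING}.
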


The strength of the description by evolving ultrametric measure spaces
is to be able to handle the dynamical aspects, while the strength of
the CRT-embedding in a Brownian motion is the analysis of the static
aspects of the \emph{final full genealogical tree} up to extinction.
The analysis of the process based on the CRT adapted to the spatial
version of the Feller process, as super random walk or the
Dawson-Watanabe process is less easy to handle (recall here the
concept of the \emph{Brownian snake} for the latter see \cite{LG99}).

\section[Proofs of Theorems~\ref{T:DUALITY},~\ref{PROP.1007}]{Proofs
  of Theorems~\ref{T:DUALITY} and \ref{PROP.1007}: Dualities}
\label{sec:duality}
In this section we collect in Section~\ref{s.duality} all the
arguments needed to establish the duality relations we claimed in
Section~\ref{sec:results} and we extend in Section~\ref{ss.condualy}
these duality to the processes $\mfU^\dagger, \mfU^{\mathrm{Palm}}$
and $\mfU^\ast$.

\subsection{Feynman-Kac duality and conditioned duality for
  \texorpdfstring{$\U$}{U}-valued Feller}
\label{s.duality}

For a rigorous proof of the FK-duality we need now a more formal
definition for the dual dynamic, namely the dual is characterized via
a martingale problem. For a function $G: \bbK \to \R$ depending on
finitely many coordinates only. Define
\begin{align}
  \label{tv32}
  L^{\downarrow,\mathrm{coal}} G(p,\dr')
  & = b \sum_{\pi, \pi' \in p} \left(G(\kappa_p(\pi,\pi'),\dr') -
    G(p,\dr') \right),\\
  \label{tv32a}
  L^{\downarrow,\mathrm{grow}} G(p,\dr')
  & = \sum_{i \nsim_p j} \frac{\partial}{\partial r_{ij}'} G(p,\dr')
\end{align}
for $p\in \bbS$ and $\dr' \in [0,\infty)^{\binom{\N}{2}}$ and its sum
\begin{align}
  \label{tv33}
  L^{\downarrow,\mathrm{K}} =
  L^{\downarrow,\mathrm{grow}} +
  L^{\downarrow,\mathrm{coal}}.
\end{align}
Define the sets of test functions
\begin{align}
  \label{eq:testfunc:down}
  \mcG^{\downarrow} = \{ H^{\varphi}(\mfu, \cdot):\ \mfu \in \bbU, \varphi
  \in C_b^1(\R^{\binom{\N}{2}}) \text{ dep.\ on finitely many coord.}\}.
\end{align}

\begin{lemma}
  \label{lem:K:ex}
  Let $\mu \in \mcM_1(\bbK)$. The enriched Kingman coalescent (see
  also page 809 of \cite{GPWmp13}) is a solution of the
  $(\mu,
  L^{\downarrow,\mathrm{K}}, \cdot, \mcG^{\downarrow})$
  martingale problem.
\end{lemma}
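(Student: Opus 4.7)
The plan is to construct the enriched Kingman coalescent $(\mfK_t)_{t\ge 0} = ((p_t,\dr_t'))_{t\ge 0}$ by a pathwise description and then verify the martingale property by a direct Dynkin-style calculation on test functions $F = H^\varphi(\mfu,\cdot) \in \mcG^\downarrow$. Since elements of $\bbS$ are partitions of $\bbN$ with finitely many blocks and coalescence only decreases $|p_t|$, the process never has more blocks than initially, which keeps everything concrete.

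First I would construct the process. Starting from $(p_0,\dr'_0)\in\bbK$: between jumps the partition $p_t$ stays constant and the distance matrix grows according to
\[
\frac{\dx}{\dx t} (r'_t)_{ij} = \ind{i\nsim_{p_t} j},\qquad\text{(speed $2$ means we are using rate $1$ per side; adjust constants to match \eqref{tv32a}).}
\]
At exponential times with total rate $b\binom{|p_t|}{2}$, an unordered pair $(\pi,\pi')$ of blocks is chosen uniformly and the partition jumps to $\kappa_{p_t}(\pi,\pi')$; the matrix $\dr_t'$ jumps continuously. Because $|p_t|\le|p_0|<\infty$ the jump rate is bounded, so this is a well-defined càdlàg strong Markov process, and by the disintegration $P_\mu(\cdot) = \int P_{(p_0,\dr'_0)}(\cdot)\,\mu(\dx(p_0,\dr'_0))$ we extend to general initial law $\mu\in\mcM_1(\bbK)$.

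Next I verify the martingale property on $\mcG^\downarrow$. Fix $F(p,\dr') = H^\varphi(\mfu,(p,\dr'))$ with $\varphi\in C_b^1(\R_+^{\binom{\N}{2}})$ depending on finitely many coordinates (say, those in some finite index set $J$). Using Definition of $H^\varphi$ in \eqref{tv11} we may differentiate under the integral: between jumps the chain rule yields
\[
\frac{\dx}{\dx t} F(p_t,\dr_t') \;=\; \sum_{i\nsim_{p_t} j} \frac{\partial F}{\partial r'_{ij}}(p_t,\dr_t') \;=\; L^{\downarrow,\mathrm{grow}} F(p_t,\dr_t'),
\]
where boundedness of derivatives of $\varphi$ (hence of $F$ by dominated convergence) legitimizes interchanging $\partial/\partial r'$ with the $\mu^{\otimes|p|}$ integral. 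At a jump from $p$ to $\kappa_p(\pi,\pi')$, $F$ changes by $F(\kappa_p(\pi,\pi'),\dr'_t) - F(p,\dr'_t)$, and the compensator of this pure-jump part is exactly $L^{\downarrow,\mathrm{coal}} F$ by the definition of the jump rates. Dynkin's formula for jump-diffusion processes (here with linear drift and finite-intensity jumps) then gives
\[
F(p_t,\dr'_t) - F(p_0,\dr'_0) - \int_0^t L^{\downarrow,\mathrm{K}} F(p_s,\dr'_s)\,\dx s
\]
is a martingale under $P_{(p_0,\dr'_0)}$. Boundedness of $F$ (since $\varphi$ is bounded and $\mu^{\otimes|p|}(U^{|p|})<\infty$) ensures integrability, and averaging over the initial law $\mu$ yields the martingale problem solution claim.

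The main step requiring care is not the computation itself but the justification that the routine finite-dimensional argument suffices even though $\bbN$-indexed partitions a priori carry an infinite-dimensional structure. This reduces to two observations: (i) since $\varphi$ depends only on coordinates in the finite set $J$, $F(p,\dr')$ depends on $p$ only through the restriction $p|_{J}$ and on $\dr'$ only through its $J\times J$ block, collapsing the state to a finite-dimensional object; (ii) the rate $b\binom{|p_t|}{2}$ is bounded by $b\binom{|p_0|}{2}$ throughout, so there is no explosion and standard jump-process martingale theory (e.g.\ \cite{EK86}, Ch.~4) applies directly. I do not expect genuine analytic obstacles; the lemma is essentially an unpacking of the construction of $\mfK$ together with a verification on $\mcG^\downarrow$, and parallels the argument on p.~809 of \cite{GPWmp13} which handles the Fleming-Viot case.
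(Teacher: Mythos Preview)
Your proof is correct but takes a different route from the paper. The paper does not reprove anything: it simply observes that in \cite{GPWmp13} the enriched Kingman coalescent is already characterized as the solution of the martingale problem for the same operator $L^{\downarrow,\mathrm{K}}$ on a \emph{larger} domain $\mcG^{1,0}\supset \mcG^\downarrow$, so restricting the test-function class to $\mcG^\downarrow$ is immediate. Your argument, by contrast, gives a self-contained pathwise construction of the piecewise-deterministic process and verifies the martingale property directly via Dynkin's formula, using that the total jump rate is bounded by $b\binom{|p_0|}{2}$ and that $\varphi$ depends on finitely many coordinates. This buys independence from the external reference at the cost of spelling out standard jump-process material; the paper's version is a two-line citation. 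One cosmetic point: you use the symbol $\mu$ both for the initial law on $\bbK$ and for the sampling measure on $U$ coming from $\mfu=[U,r,\mu]$; in the paper these are distinct objects and you should rename one of them to avoid confusion.
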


\begin{proof}
  \label{pr.lem:k:ex}
  In \cite{GPWmp13} the enriched Kingman coalescent is defined as
  the solution of the $(\delta_k,\Omega^\downarrow,\mcG^{1,0})$
  martingale problem, where
  $\Omega^\downarrow =
  L^{\downarrow,\mathrm{K}}$ and
  \begin{align}
    \label{tv34}
    \mcG^{1,0} = \{ G \in b\mcB(\bbK): G(\cdot,\dr') \in C(\bbS) \,
    \forall \, \dr', \, \sum_{i\nsim_p j} \frac{\partial}{\partial
    r_{ij}'} G(p,\dr') \text{ exists}\}.
  \end{align}
  One may easily check that $\mcG^\downarrow \subset \mcG^{1,0}$,
  since the former elements only depend on finitely many coordinates.
  Thus, it is clear that any solution to the
  $(\delta_k,\Omega^\downarrow,\mcG^{1,0})$ martingale problem is also
  a solution to the
  $(\delta_k,
  L^{\downarrow,\mathrm{K}}, \cdot, \mcG^{\downarrow})$
  martingale problem.
\end{proof}

Analogously to \eqref{eq:testfunc:down} we set
\begin{align}
  \label{tv35}
  \mcH & = \{ H^{\varphi}(\cdot, \cdot):\ \varphi \in
         C_b^1([0,\infty)^{\binom{\N}{2}}) \text{ dep.\ on finitely many coord.}\}
         \quad \text{ and} \\
  \label{tv35b}
  \mcG^{\uparrow}
       & = \{ H^{\varphi}(\cdot, (p,\dr')):\ (p,\dr') \in \bbK, \varphi \in
         C_b^1([0,\infty)^{\binom{\N}{2}}) \text{ dep.\ on finitely many coord.}\}.
\end{align}

\begin{proof}[Proof of Theorem~\ref{T:DUALITY}]
  \label{pr.T:DUALITY}
  We know that both processes exist by Proposition~\ref{cor:ex} and
  Lemma~\ref{lem:K:ex}. We follow the proof of Proposition~4.1 in
  \cite{GPWmp13} and use Corollary~4.13 of Chapter~4 in \cite{EK86}.
  We need to show (4.52) of Theorem~4.11 of Chapter~4 in \cite{EK86},
  that is for $H=H^{\varphi}(\cdot,(p,\dr')) \in \mcG^{\uparrow}$ we
  find $G:\U \to \R$ such that
  \begin{align}
    \label{eq:4.52}
    H(\mfU_t)- H(\mfU_0) - \int_0^tG(\mfU_s)\, \dx s
  \end{align}
  is a martingale. Additionally, such $G$ has to satisfy
  \begin{align}
    \label{eq:4.42}
    G(\mfu) =
  L^{\downarrow,\mathrm{K}} H^{\varphi}(\mfu, \cdot) (p,\dr') +
    b \binom{|p|}{2} H^{\varphi}(\mfu, (p,\dr'))
  \end{align}
  and finally the assumptions (4.50) and (4.51) of Theorem~4.11 of
  Chapter~4 in \cite{EK86} need to hold. The latter two claims hold,
  since $|p_t|$ is decreasing and so $\binom{|p_t|}{2}$ is and we have
  moment bounds on $\bar{\mfU}_t$ as in Lemma~\ref{lem:Feller}.

  First, let us show \eqref{eq:4.52}. Consider
  $H=H^\varphi(\cdot,(p,\dr')) \in \mcG^\uparrow$ for certain fixed
  $(p,\dr')\in \bbK$ with $p= (\pi_1, \dotsc,\pi_n)$. Consider the
  bijective permutation $\sigma: \N \to \N$, (only depending on $p$),
  \begin{align}
    \label{tv36}
    \sigma: \;
    \begin{cases}
      \sigma (\min \pi_i) = i, &  i =1,\dots,n \\
      \sigma|_{\N \setminus \{\min \pi_1, \dotsc, \min \pi_n\}} &
      \text{ increasing}
    \end{cases}
  \end{align}
  and set
  \begin{align}
    \label{tv37}
    \tilde{p} = \sigma_*(p).
  \end{align}
  That means
  $\sigma_*(p) = (\tilde{\pi}_1,\dots, \tilde{\pi}_n) \in \bbS$ is a
  partition with the same number of partition elements as $p$ and such
  that
  $i \in \tilde{\pi}_k : \Leftrightarrow \sigma^{-1}(i) \in \pi_k$. We
  also define for $\dr' \in [0,\infty)^{\binom{\N}{2}}$:
  \begin{align}
    \label{tv38}
    \left(\sigma_* (\dr')_{ij}\right)_{1\le i < j}
    = \left( r_{\sigma^{-1}(i), \sigma^{-1}(j)}' \right)_{1\le i < j}.
  \end{align}
  Then,
  \begin{align}
    \label{tv39}
    H^\varphi(\mfu,(p,\dr')) = \int \mu^{\otimes n}(\dx u_1, \dots,
    \dx u_n)\,
    \varphi ( \dr^{\sigma_* p} (u_1,\dotsc, u_n) + \sigma_* \dr').
  \end{align}
  In particular $H$ can be written as a polynomial of order $n = |p|$
  and with $\varphi$ manipulated as in the previous line (remember
  that $p$ and $\dr'$ are fixed). Actually $H^\varphi(\cdot,(p,\dr'))$
  is in the domain $\Pi(\mathcal C_b^1)$ of $\Omega^{\uparrow}$ and by
  Proposition~\ref{cor:ex} we know that
  \begin{align}
    \label{tv40}
    \bigl(H^\varphi (\mfU_t,(p,\dr')) - H^\varphi (\mfU_0,(p,\dr')) - \int_0^t
    \Omega^{\uparrow} H^\varphi (\mfU_s,(p,\dr'))\, \dx s\bigr)_{t \ge 0}
  \end{align}
  is a martingale. This shows \eqref{eq:4.52}.

  Next, show \eqref{eq:4.42} separately for both parts of the
  generator for $p= (p_1,\dotsc, p_n)$ and $\varphi$ depending on
  finitely many coordinates only:
  \begin{align}
    \label{tv41}
    \Omega^{\uparrow, \mathrm{grow}} H^\varphi (\mfu,(p,\dr'))
    & = 2 \sum_{1\le i \le j \le n} \int \mu^{\otimes n}(\dx u_1,
      \dots, \dx u_n) \frac{\partial}{\partial r(u_i,u_j)} \varphi \left(
      \dr^{\sigma_* p} (u_1,\dotsc, u_n) + \sigma_* \dr' \right) \\
    & =  2 \sum_{1\le k < l, k\nsim_p l} \int \mu^{\otimes n}(\dx
      \underline{u}_p) \frac{\partial}{\partial r_{kl}} \varphi ( \dr^p (
      \underline{u}_p) +  \dr') \label{tv41b}\\
    & = 2 \sum_{k \nsim_p l} \int \mu^{\otimes |p|}(\dx \underline{u}_p)
      \frac{\partial}{\partial r_{kl}'} \varphi \left(
      \dr^p(\underline{u}_p) +  \dr' \right) \label{tv41c}\\
    & =
  L^{\downarrow,\mathrm{grow}} H^{\varphi} (\mfu,\cdot) (p,\dr').\label{tv41d}
  \end{align}
  Additionally, using (including a formal addition)
  \begin{align}
    \label{tv42}
    \hat{\theta}_{k,l} (x_1,\dotsc, x_n) = (y_1,\dotsc,y_n),\ \text{ with }
    y_i = \ind{i\neq l} x_i + \ind{i=l} x_k,
  \end{align}
  we get
  \begin{align}
    \label{tv43}
    & \Omega^{\uparrow, \mathrm{bran}} H^\varphi (\mfu,(p,\dr')) =
      \frac{2b}{\bar{\mfu}} \sum_{1\le k < l \le n} \int \mu^{\otimes
      n}(\dx \underline{u}) \, \varphi\circ \theta_{k,l} \bigl(
      \dr^{\sigma_* p}(u_1,\dotsc,u_n) + \sigma_* \dr' \bigr) \\
    \label{tv43b}
    & \quad =\frac{2b}{\bar{\mfu}} \sum_{1\le k < l \le n} \int
      \mu^{\otimes n}(\dx \underline{u}) \, \varphi \bigl(  \dr^{\sigma_*
      p}(\hat{\theta}_{k,l}(u_1,\dotsc,u_n)) + \sigma_* \dr' \bigr) \\
    \label{tv43c}
    &\quad = b \sum_{\pi \neq \pi' \in p} \int \mu^{\otimes (|p| -1)}(\dx
      \underline{u}_{\kappa_p(\pi,\pi')})\, \varphi\bigl(
      \dr^{\kappa_p(\pi,\pi')}(\underline{u}_{\kappa_p(\pi,\pi')}) +
      \dr' \bigr) \\
    \label{tv43d}
    & \quad  =  b \sum_{\pi \neq \pi' \in p}
      \bigl(\int \mu^{\otimes (|p| -1)}(\dx u_{\kappa_p(\pi,\pi')}) \varphi
      \bigl(\dr^{\kappa_p(\pi,\pi')}(u_{\kappa_p(\pi,\pi')}) + \dr'\bigr)
      - H^{|p|, \varphi}(\mfu,  (p,\dr')) \bigr) \\
    \label{tv43e}
    & \qquad \qquad \qquad + b \binom{|p|}{2}
      H^{\varphi}\bigl(\mfu,(p,\dr')\bigr) \\
    \label{tv43f}
    & \quad =
  L^{\downarrow,\mathrm{coal}} H^{\varphi}(\mfu,\cdot) (p,\dr')
      + b \binom{|p|}{2} H^{\varphi}\bigl(\mfu,(p,\dr')\bigr).
  \end{align}
  Now, we can apply Corollary~4.13 of Chapter~4 in \cite{EK86} to
  obtain the proposition.
\end{proof}

\begin{proof}[Proof of Theorem~\ref{PROP.1007}]
  The part (a) follows from the duality of the $\U_1$-valued
  Fleming-Viot process in the time-inhomogeneous case; see
  \cite{Gl12}. Then the part (b) follows from Corollary~\ref{prop.834}
  and part (a).
\end{proof}

\subsection{Conditioned duality and Feynman-Kac duality for
  related processes}
\label{ss.condualy}

The conditional duality techniques extend also to more general forms
of branching. Of particular interest for us are the $Q$-process and
the Palm process of the $\U$-valued Feller diffusion or $\U^V$-valued
branching diffusion with immigration. In this section we obtain the
conditioned duality respectively the Feynman-Kac duality for the
critical and non-critical $\U$-valued Feller diffusions, for processes
$\mfU^\dagger$, $\mfU^{\mathrm{Palm}}$, and for the $\U^V$-valued
Feller diffusion with immigration.

\subsubsection{Conditioned duality for \texorpdfstring{$Q$}{Q} and
  Palm process}
\label{sss.dualu}
We start with an observation concerning the conditioning on survival.
We see in particular that the process $(\wh \mfU_t)_{t \ge 0}$
conditioned on the total mass process is not affected by the
conditioning on survival which only changes the probability of such
path in the condition. In term of generators we observe the following.

Write $\cdot$ for $\mathrm{Palm}$, $\dagger$ or $\ast$. Observe that
the drift affects only the total mass process but \textit{not} the
mechanism of the conditioned (on the total mass) process
$(\wh \mfU_t^\cdot)_{t \ge 0}$. The component process
$\wh \mfU^\cdot$ of the process $\mfU^\cdot$ is only affected when
we integrate the law of the process conditioned (on the total mass
process) to get its full law. Therefore the conditioned dual is only
affected via the change of the dynamic of the underlying process
$\bar \mfU^\cdot$ on which we condition $\wh \mfU^\cdot$, which
affects the coalescence rate in the dual process.

Recall the formula \eqref{e1372} for the generator of
$\mfU^{\mathrm{Palm}}$, $\mfU^\dagger$ or $\mfU^*$ acting on
polynomials. Then we see that if we condition on the total mass
process we have as a conditional dual process a coalescent with rate
$b/\bar \mfu_t$ at time $s$ with $t=T-s$ where $T$ is the time horizon
of the duality where $\bar \mfu = (\bar\mfu_t)_{t \ge 0}$ is a
realization of the rate $b$ Feller diffusion with immigration at rate
$b$. Recall that $\bar\mfu_t>0$ for $t >0$ and that $\bar \mfu_0$ may
be zero.

We have to guarantee here that the process exists throughout up to the
potential singularity at $t=0$ i.e.\ $s=T$ in the backward time. Here
this is no problem since such a singularity can only occur at time
$s=T$ if the forward total mass diffusion does \emph{not} start with a
positive mass term. Therefore for positive initial mass the
conditioned duality holds again for $\mfU^{\mathrm{Palm}}$
$\mfU^\dagger$ and $\mfU^*$. In case of a zero we obtain the Kingman
coalescent for infinite time as state at $s=T$ giving the unit element
$\mfe$. For this we need that
$\int_0^\varepsilon \bar \mfu_t^{-1} \,\dx t = + \infty$ for
$\varepsilon >0$. This was shown in Proposition~0.2 in \cite{DG03}
even in the spatial context.

\begin{corollary}[Conditioned duality for $\mfU^{\mathrm{Palm}}$,
  $\mfU^\dagger$ and $\mfU^\ast$]\label{cor.3764}
  The conditional duality from \eqref{e1015a} holds for
  $\mfU^{\mathrm{Palm}}$, $\mfU^\dagger$ and $\mfU^\ast$ and for their
  entrance laws from $0$.
\end{corollary}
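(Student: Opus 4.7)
The plan is to reduce the corollary to Theorem~\ref{PROP.1007}(a) by exploiting the fact that the three processes $\mfU^{\mathrm{Palm}}$, $\mfU^\dagger$ and $\mfU^\ast$ differ from $\mfU$ only through the \emph{total mass dynamics}, while their pure genealogy part \emph{conditioned on the total mass path} is governed by the same time-inhomogeneous $\U_1$-valued Fleming-Viot dynamics with resampling rate $b/\bar\mfu_t$ as in Corollary~\ref{cor.834}. Concretely, I would first invoke the decomposition from \eqref{eq:r.742.1} in Remark~\ref{r.1207} and compare it with the generator \eqref{e1372} of the Palm process. The extra ``immigration-type'' term $n b/\bar\mfu \cdot \Phi^{n,\varphi}$ depends only on $\bar\mfu$ and not on $\hat\mfu$; equivalently, it is generated by the $h$-transform with $h(\mfu)=\bar\mfu$ acting only on the mass component (so total mass becomes Feller-with-immigration, recall \eqref{e1116} and Proposition~\ref{pr.palm1}), and via Corollary~\ref{c.1609} and Proposition~\ref{prop.2034} the same is true for $\mfU^\dagger$ and $\mfU^\ast$. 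Hence conditioning on the path $\bar\mfu$ of the total mass yields, in all three cases, a time-inhomogeneous $\U_1$-valued Fleming-Viot diffusion with rate $d(t)=b/\bar\mfu_t$, which is precisely the process for which \eqref{e1015a} was proved.

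With this conditioning established, the proof reduces to two steps. First, for any admissible $\bar\mfu$ in the sense of Definitions~\ref{def:adm_u} and \ref{def:adm_uII} we apply Theorem~\ref{PROP.1007}(a), which gives the duality relation between $\wh\mfU^{\mathrm{FV}}(\bar\mfu)$ and the time-inhomogeneous enriched coalescent $\mfC(\bar\mfu)$ with rate $b/\bar\mfu_t$. Second, integrating the identity against the modified law of $\bar\mfU^{\cdot}$ (Feller with constant immigration rate $b$, as in \eqref{e1116}), and using Proposition~\ref{pr.palm2} to express the joint law as $P^{\cdot,\bar\mfu}\otimes \bar P^{\cdot}$, gives \eqref{e1015a} with the conditional law of $\bar\mfU^{\cdot}$ in place of that of $\bar\mfU$. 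No Feynman–Kac reweighting is needed because, as emphasized around \eqref{e925}, the extra ``first order term'' $nb/\bar\mfu$ is absorbed into the law of the total mass process on which we condition.

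The only delicate point, and the main obstacle, is the extension to \emph{entrance laws from} $\ntree$. Here $\bar\mfu_0=0$, so the resampling rate $b/\bar\mfu_t$ of the conditional dual blows up as $t\downarrow 0$, and we must verify that the dual coalescent is still well defined over the whole interval $[0,T]$. The resolution, as noted in the text following Corollary~\ref{cor.3764} and using Proposition~0.2 of \cite{DG03}, is that $\int_0^\varepsilon \bar\mfu_t^{-1}\,\dx t=+\infty$ almost surely under the immigration law (indeed $\bar\mfu$ behaves near $0$ like a Feller diffusion with immigration, which hits $0$ only at $t=0$ and satisfies this integrability failure). Therefore in the backward direction the coalescent $\mfC_\bullet(\bar\mfu)$ coalesces down to a single partition element by time $t=0$, producing the unit-tree state $\mfe$ as in Remark~\ref{r.2007} and Corollary~\ref{cor.1226}; this is consistent with the entrance law of the primal process from $\ntree=(0,\mfe)$ obtained in Theorem~\ref{TH.MARTU}(c). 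One then passes from $\varepsilon\cdot\mfe$ to $\ntree$ via the approximation in \eqref{e1369} and the dominated-convergence argument of Proposition~\ref{prop.1382el}, yielding \eqref{e1015a} in the limit.
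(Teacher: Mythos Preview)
Your proposal is correct and follows essentially the same approach as the paper's own argument in Section~\ref{sss.dualu}: both observe that the extra first-order term $nb/\bar\mfu\,\Phi^{n,\varphi}$ in the generator \eqref{e1372} affects only the total mass dynamics, so that conditioning on $\bar\mfu$ again yields the time-inhomogeneous $\U_1$-valued Fleming-Viot process with resampling rate $b/\bar\mfu_t$, to which Theorem~\ref{PROP.1007}(a) applies; and both handle the entrance law case via the divergence $\int_0^\varepsilon \bar\mfu_t^{-1}\,\dx t=+\infty$ from Proposition~0.2 in \cite{DG03}, forcing the dual coalescent to collapse to a single partition element at the origin.
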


Recall Corollary~\ref{cor.3640} giving the dual identification of
$\varrho^t_h$ for the $\U$-valued Feller diffusion. The
conditional duality from the above corollary gives a good idea about
the form of the states of the processes $\mfU^{\mathrm{Palm}}$,
$\mfU^\dagger$ and $\mfU^\ast$. One might hope indeed that this gives
us some information on $\varrho^t_h$ in the
\emph{\Levy{}-Khintchine representation} of the state at time $t$ as
in \eqref{e3732}. For the conditional laws we obtain this
$\varrho^t_h$ as a mixture over laws of coalescent trees, where we
can proceed as in the case of the Feller diffusion above just using
different total mass path now, namely the ones generated by the
diffusion $\dx \mfu_t=b\dx t + \sqrt{b \bar \mfu_t} \, \dx w_t$. Therefore we
also obtain here the measure $\varrho^t_h$ for the state at time
$t$ in terms of the coalescent as we did for the Feller diffusion,
only the \emph{mixing measure} i.e.\ the law of the total mass path is
now \emph{different}.

\begin{corollary}[conditioned $\varrho_h^t$]
  The representation of $\varrho_h^t$ via the dual of
  Corollary~\ref{cor.3764} holds for $\mfU^{\mathrm{Palm}}$,
  $\mfU^\dagger$ and $\mfU^\ast$.
\end{corollary}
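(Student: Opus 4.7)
The plan is to transfer verbatim the argument of Corollary~\ref{cor.3640} to each of $\mfU^{\mathrm{Palm}}$, $\mfU^\dagger$ and $\mfU^\ast$, using the conditioned duality just established in Corollary~\ref{cor.3764} in place of the one used in Corollary~\ref{cor.3640}. The three processes differ from the $\U$-valued Feller diffusion only through the driving total mass diffusion: here the path law is that of Feller's diffusion with immigration $\dx \bar\mfu_t = b\,\dx t + \sqrt{b\bar\mfu_t}\,\dx w_t$, not the pure Feller diffusion, but the conditional genealogical law given $\bar\mfu$ is in every case that of the time-inhomogeneous Fleming-Viot process with resampling rate $b/\bar\mfu_t$. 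In particular the dual of $\wh\mfU^\cdot(\bar\mfu)$ is in each case the enriched coalescent $\mfC(\bar\mfu)$ with time-dependent coalescence rate $b/\bar\mfu_{t-s}$ at backward time $s$, and the only thing which changes between the three $\cdot \in \{\mathrm{Palm},\dagger,\ast\}$ processes and the plain $\U$-valued Feller diffusion is the \emph{mixing measure} over total mass paths.

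Concretely, I would proceed in three steps. First, I invoke the Markov branching tree structure for the processes in question: since $\mfU^{\mathrm{Palm}}$, $\mfU^\dagger$ and $\mfU^\ast$ agree in law (Corollary~\ref{c.idenfell}), and the latter admits, via the backbone IPP-representation of Theorem~\ref{T:BACKBONE} and Corollary~\ref{cor.3829}, a Cox point process decomposition of the $h$-top into i.i.d.\ $\U(h)$-valued random variables, we obtain a decomposition
\begin{align*}
  \lfloor \mfU^\cdot_t\rfloor(h) = \mathop{\bigsqcup\nolimits^{h}}_{i\in I} \mfu_i^{(h)}, \qquad \mfu_i^{(h)}\in \U(h)\setminus\{\ntree\},
\end{align*}
with the $\mfu_i^{(h)}$ being, conditionally on $I$, i.i.d.\ with law $\varrho_h^{t,\cdot}$. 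Second, I condition on the total mass path $\bar\mfu$ and apply the conditioned duality of Corollary~\ref{cor.3764}: given $\bar\mfu$, the genealogy $\wh\mfU^\cdot_t$ is the entrance law $\mfC^\infty_t(\bar\mfu)$ of the time-inhomogeneous coalescent, so that the partition elements at backward time $h$ of $\mfC^\infty_t(\bar\mfu)$ are in bijection with the $2h$-balls of $\lfloor\mfU^\cdot_t\rfloor(h)$ (as in the proof on p.~\pageref{pr.3655}). Third, picking one partition element uniformly and attaching its mass piece $\bar\mfu_i^{(h)}=\mfw_i$ yields the law $\eta_h^t(\bar\mfu)$ of a single depth-$h$ subfamily, and integrating over the law of $\bar\mfu$ with prescribed value at time $t-h$ produces the formula
\begin{align*}
  \varrho_h^{t,\cdot}(\cdot) = \int_{C([0,t],\R_+)} \eta_h^t(\bar\mfu)(\cdot)\, P^\cdot_{t-h,u}(\dx\bar\mfu),
\end{align*}
with $P^\cdot_{t-h,u}$ the law of the Feller-with-immigration path given $\bar\mfu_{t-h}=u$, which is precisely the announced ``different mixing measure''.

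The only nontrivial point will be to justify the uniqueness of the $h$-ball decomposition for the three conditioned/size-biased processes; for the plain Feller diffusion this came from the generalized branching property, whereas for $\mfU^\dagger$ and $\mfU^{\mathrm{Palm}}$ the super-criticality is state-dependent (Remark~\ref{r.1095}) and no branching property holds. The way around this is to use the identification with $\mfU^\ast$ (Corollary~\ref{c.idenfell}) together with the backbone/Evans construction: under the $\U^{[0,\infty)}$-valued Evans picture the $h$-top is a sliding concatenation of the immigrant subfamilies that have immigrated before time $t-h$ and still survive, plus the immigrant cloud after time $t-h$, and merging the colors $s\le t-h$ furnishes exactly the required i.i.d.\ $\U(h)$-decomposition (see also the comment following \eqref{e2501}). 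Once this structural fact is in place, the coalescent identification of $\eta_h^t(\bar\mfu)$ is verbatim the one in the proof p.~\pageref{pr.3655}, and the corollary follows.
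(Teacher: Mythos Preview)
Your proposal is correct and follows essentially the same approach as the paper. The paper's own argument is just the short paragraph preceding the corollary, which says to ``proceed as in the case of the Feller diffusion above just using different total mass path now, namely the ones generated by the diffusion $\dx \mfu_t=b\,\dx t + \sqrt{b \bar \mfu_t} \, \dx w_t$\ldots only the \emph{mixing measure} i.e.\ the law of the total mass path is now \emph{different}''. Your three-step transfer of the proof of Corollary~\ref{cor.3640} is exactly this, and you have correctly singled out the one point the paper leaves implicit, namely that the i.i.d.\ $\U(h)$-decomposition (the Markov branching tree structure) must be supplied for $\mfU^\dagger,\mfU^{\mathrm{Palm}},\mfU^\ast$ since these do not enjoy the generalized branching property; your resolution via the identification with $\mfU^\ast$ and the discussion around \eqref{e2501} is what the paper intends.
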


\subsubsection{Conditioned duality: Feller diffusion with
  immigration}\label{sss.fellimm}
Consider first the total mass process. In the case of a constant
immigration at rate $\varrho > 0$ the total mass process is the
solution of
\begin{align}
  \label{e862}
  \dx Z_t=\varrho\, \dx t + \sqrt{bZ_t} \, \dx w_t.
\end{align}
For the Feynman-Kac duality we write
$\dx Z_t=\varrho(1-Z_t) \, \dx t + \varrho Z_t \, \dx t + \sqrt{bZ_t}
\, \dx w_t$ and obtain as a dual process for the total mass process
the coalescent that we describe next.

Add a site $\ast$ to the system in addition to the site $0$ where the
original process $(Z_t)_{t \ge 0}$ is located. On the site $*$ the
process has the constant state $\varrho > 0$, i.e., on $\ast$ all
rates of change are zero. Then the dual system is a spatial coalescent
which starts with $n$ individuals at site $0$ and all rates at $\ast$
are zero but a partition element jumps from $0$ to $\ast$ at rate
$\varrho$. This coalescent is denoted by $\mathfrak{C}$ and its
entrance law started with countably many individuals by
$\mathfrak{C}^\infty$. Then the two processes are again Feynman-Kac
dual with Feynman-Kac potential from \eqref{e.ggr1} with $a=\varrho$.
This can be combined with super- and sub-critical terms.

Consider the spatial model with $N$-colonies and uniform migration
mechanism and branching at each site. In other words for
$V=\{0,1,\dots,N-1\}$, equipped with addition modulo $N$ as the group
operation we consider the $\U^V$-valued super random walk on
$V=\{0,\dots,N-1\}$. Consider the system starting in an exchangeable
initial law and let $\varrho$ be the limit of the empirical mean over
the $N$ components. Then observe the system at a typical site, say
site $0$. If the initial state is i.i.d.\ this is $\varrho=E[Z_t (i)]$
which we assume to be finite. In the limit $N \to \infty$ we obtain
for the masses at a typical site the so called \emph{McKean-Vlasov
  limit} the equation above. What is the limiting dynamic for the
genealogies at rate $\varrho$? How to define the genealogies?

There are two possibilities of interest only one corresponds to the
duality suggested above. If we have equation \eqref{e862} for the
total mass from a spatial model with a site of observation and an
outside world with a source of ancestors unrelated to our population
immigrate at some constant rate, then we obtain for the total mass a
drift $\varrho \, \dx t$. Once the population has immigrated it
evolves as in the $\U$-valued Feller diffusion. Immigrants (at time
$t$) have distance $2t$ to the normal population with ancestor at site
$0$. This is close to the duality in the spatial model, precisely it
is the limit of the spatial dual.

\subsubsection[Conditioned duality: Feller diffusion with immigration
from immortal line]{Conditioned duality: Feller diffusion with
  immigration from immortal line
  $\mfU^{\ast,+}$}\label{sss.fellimortal}

Here we want to connect to the conditional dual of the $\U^V$-valued
process with immigration from the immortal line, where through the
marks more information is available and the condition is more complex,
since we have now for every color a total mass path
$\underline{\bar\mfu}=\{(\bar \mfu_s(\ell)),\ell_{0 \le s \le t} \in
\N\}$ with $\ell$ being the color. Then denoting by
$P^{\underline{\bar\mfu}}$ the law of this collection instead of a
simple path we get the same formula.

We know that ignoring the colors, i.e.\ observing only $\mfU^\ast$ we
have the same process and dual as for $\mfU^\dagger$ or
$\mfU^{\mathrm{Palm}}$, however once we have a certain color we have
a partition element with a fixed final element and time to all
coalesce. In other words the conditioning allows to represent the
different subfamilies for a given time of immigration of the
forefather. Therefore a single subfamily corresponds to a coalescent
which has to coalesce at a fixed time and with a coalescence rate
given by the inverse of the mass of the corresponding excursion.

First we need some ingredients, namely the colored Feller diffusion
with immigration where each color has a mass evolving as Feller
diffusion entrance law starting at time $s$, the color and surviving
till the time horizon $T$,
\begin{align}
  \label{e3864}
  \mathfrak{D}= \{(\bar\mfu_t(s))_{t \ge 0} : s \in S(T)\}, \quad
  S(T) \text{ the set of colors.}
\end{align}
The set $S(T)$ will be generated considering Evans branching diffusion
with immigration from an immortal line, namely the immigration times
leading to a diffusion equipped with that time as color surviving
till at time $T$.

Continue with the dual process. We consider individuals marked with
colors from $(0,T)$. The individuals may move to a cemetery. Instead
of the total mass path we consider now a \emph{point process on
  $[0,\infty)^2$} coding color and its mass at the current time
horizon $T$. Then we want to condition on this object and define a
\emph{marked coalescent} where \emph{coalescence occurs within colors
  only} with time-inhomogeneous rates at time $t'$ given by
$(\bar \mfu_t(s))^{-1}$ with $\bar \mfu_t(s)$ the mass of color named
$s$ at time $t$ with $t=T-t'$, with $t'$ the running time of the
coalescent and $T$ the time horizon.

The mass of colors form a Feller diffusion with coefficient $b$ and
with super-criticality coefficient for the colors
$a_T(t,\bar \mfu_t(s))$.

The question now is how to start the coalescent. Here we consider a
finite number $n$ of individuals, where we place $n_1,n_2,\dots,n_j$
of them on the color $t_1,t_2,\dots,t_j$ (note we have
\emph{countably} many colors altogether with $T$ the only limit
point).

The partition elements with a given mark evolve as explained above
till they reach the birth time of the color when they jump to the
\emph{cemetery} merging with the immortal line due to the fact that
$\int_s^T (\bar\mfu_r)^{-1} \, \dx r = +\infty$.

Finally we need to introduce now the duality function $H$. As an
ingredient take a polynomial $H^{n,\varphi,g}$ on $\U^{(0,\infty)}$.
Recall now \eqref{e1130}-\eqref{e1206}, to see how to define marked
polynomials $H^{n,\varphi,g}$. Then use the relation \eqref{eq:21} to
define $H$. Then we are able to write down the conditional duality.

By piecing together the arguments in the above sections we obtain now
that we have again a conditioned duality relation. Namely conditioned
on the path of the collection in \eqref{e3864} we conclude that, the
process above is in duality with $\mfU^{\ast,+}$.
\begin{corollary}[Conditioned duality for $\mfU^{\ast,+}$]
  \label{cor.3892}
  As a consequence of the $H$ duality we have
  \begin{align}
    \label{e3894}
    \E_{\wh \mfU_0^{\ast,+}} \left[ H^{n,\varphi,g} \left(\wh
    \mfU_T^{\ast,+} (\bar \mfu),\mfC_0^{T,(\ast,+)} (\bar \mfu)
    \right) \right] = \E_{\mfC_0^{T,(\ast,+)}} \left[H^{n,\varphi,g}
    \left(\wh \mfU_0^{\ast,+} (\bar \mfu),\mfC_T^{T,(\ast,+)} (\bar
    \mfu) \right) \right],
  \end{align}
  the expectations are for the processes for given path $\bar \mfu$.
\end{corollary}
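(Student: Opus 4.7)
\textbf{Proof proposal for Corollary~\ref{cor.3892}.}
My plan is to reduce the stated identity to the conditioned duality already established in Theorem~\ref{PROP.1007} by exploiting the color decomposition of $\mfU^{\ast,+}$. Fix a realization of the collection $\mathfrak{D}=\{(\bar\mfu_t(s))_{t\in[s,T]}:s\in S(T)\}$. By the construction of $\mfU^{\ast,+}$ in Remark~\ref{rem:eva-sl-con}, conditionally on $\mathfrak{D}$ the different color subpopulations $\prescript{s}{}\mfU$ evolve independently, and the restriction of $\wh\mfU^{\ast,+}$ to color $s$ is, by the lifted version of Corollary~\ref{cor.834}, a time-inhomogeneous $\U_1$-valued Fleming-Viot process with resampling rate $b/\bar\mfu_t(s)$. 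The only quantities that couple distinct colors are the \emph{deterministic} distances $2(t-\min(s,s'))$ installed at the moment of concatenation, which are completely determined by $\mathfrak{D}$ and hence are frozen parameters under the conditioning.

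Next I would exploit the product structure of the duality function $H^{n,\varphi,g}$. The factor $g$ pins down, for each sampled individual $i=1,\dots,n$, a color $s_i$; once these colors are fixed, the integration against $\nu^{\otimes n}$ factorizes as a product over colors of integrations against $\prescript{s}{}\mu^{\otimes k_s}/\bar\mfu_t(s)^{k_s}$, with $k_s$ the number of sample points of color $s$. Similarly the distance matrix splits into (i) a deterministic block of inter-color distances and (ii) intra-color blocks on which the growth and ``resampling via $\theta_{k,\ell}$'' act exactly as in the non-marked conditional duality. Correspondingly, the dual $\mfC^{T,(\ast,+)}(\bar\mfu)$ acts as follows: the partition is enriched with immutable color marks, intra-color pairs coalesce at the time-inhomogeneous rate $b/\bar\mfu_{T-t'}(s)$, and a partition element of color $s$ hits the cemetery (merges into the immortal line) exactly at the birth time of $s$, which by Proposition~\ref{prop:ppFD} / Definition~\ref{def:adm_uII} is forced since $\int_s^{r}(\bar\mfu_t(s))^{-1}\,\dx t=\infty$ for $r>s$.

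With the above setup, the proof becomes a generator comparison. On the forward side, using the representation \eqref{e1068}--\eqref{eq1081} for $\Omega^{\uparrow,+}_{V,t}$ together with Lemma~\ref{l.0850}, conditioning on $\mathfrak{D}$ kills the drift/immigration contribution (it is absorbed into the given paths), leaving only $\Omega^{\uparrow,\mathrm{grow}}$ and a within-color resampling operator whose action on $H^{n,\varphi,g}$ matches, term by term and color by color, the action of the generator $L^{\downarrow,\mathrm{K}}_{\mathrm{col}}$ of the colored coalescent computed as in \eqref{tv41}--\eqref{tv43f}. Since no Feynman-Kac potential survives this conditioning (the super-criticality lives only in the total mass paths), the generator identity
\begin{equation*}
\Omega^{\uparrow,+}_{V,t}\bigl|_{\mathrm{cond}}\, H^{n,\varphi,g}(\,\cdot\,,k) = L^{\downarrow,\mathrm{K}}_{\mathrm{col}}\, H^{n,\varphi,g}(\mfu,\,\cdot\,)(k)
\end{equation*}
holds pointwise. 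Applying Corollary~4.13 of Chapter~4 in \cite{EK86} (as in the proof of Theorem~\ref{T:DUALITY} on p.~\pageref{pr.T:DUALITY}) then yields \eqref{e3894} for each conditioning $\bar\mfu$ with the required moment bounds on $\bar\mfU^{\ast,+}_t$ provided by an analogue of Lemma~\ref{lem:Feller} for the immigration process.

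The main obstacle is the boundary behavior at the color birth times. Two issues must be handled with care: (i) the coalescence rate $b/\bar\mfu_{T-t'}(s)$ explodes as $t'$ approaches the forward birth time of $s$, which must be shown to force coalescence to a single partition element precisely at that time, matching the ``merger into the immortal line'' on the forward side (this is the same mechanism as in Proposition~\ref{cor.1382} and is controlled by the divergence of $\int(\bar\mfu(s))^{-1}$); and (ii) although $S(T)$ is countable, only finitely many colors are visible in any polynomial $H^{n,\varphi,g}$ and only finitely many survive up to any time $T-\varepsilon$, so a standard truncation/approximation in $\varepsilon\downarrow 0$ reduces the proof to finitely many colors, where the generator argument above applies rigorously. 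Once (i) and (ii) are handled, piecing the per-color conditional dualities of Theorem~\ref{PROP.1007}(b) together gives \eqref{e3894}.
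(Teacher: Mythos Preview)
Your proposal is correct and follows essentially the same route as the paper: the paper's own argument for this corollary is the single sentence ``By piecing together the arguments in the above sections we obtain now that we have again a conditioned duality relation,'' and what you have written is precisely that piecing-together---use the color decomposition and conditional independence from Remark~\ref{rem:eva-sl-con}, apply the per-color conditioned duality of Theorem~\ref{PROP.1007}/Corollary~\ref{cor.834} to each Fleming-Viot block, and invoke the divergence of $\int (\bar\mfu(s))^{-1}$ near the color birth time to force the merger into the immortal line. Your explicit generator comparison and the truncation in the number of colors are more detailed than what the paper spells out, but they are exactly the steps the paper intends.
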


\section[Proof of Theorem~\ref{THM:MGP:WELL-POSED}]{Proof of
  Theorem~\ref{THM:MGP:WELL-POSED}: Existence, uniqueness and path
  properties of the $\U$-valued Feller diffusion}
\label{sec:Fel:ex}

We prove separately existence with path properties (continuity of
paths) and the uniqueness with semigroup properties. Finally we prove
in that context also the (generalized) Feller property and the strong
Markov property as a consequence.

\subsection{Existence and properties}
\label{ss.existence}

We begin with preparation in Step~0 where we introduce some notation
on polynomials in the polar setting that will be used throughout this
section. To obtain the existence result, in Step~1 we will use a
\emph{particle approximation} and show \emph{tightness} of its laws;
see Proposition~\ref{lem:gloede}. One point of the general existence
result is here different compared to the well-known diffusion
approximation of the total mass process, even though also in the
latter case the diffusion coefficients are not bounded for large
population size. As one can see from the form of the operator
$\Omega^\uparrow$ (recall for instance \eqref{tv5} and \eqref{e748}),
at the points of zero and infinite mass the action of this operator
\emph{can produce infinite values}. For this reason we will start
analyzing the martingale problem on bounded test functions which
vanish at zero and infinite mass and behave in a particular way
approaching them if the mass approaches these values. Then in Step~2
in several consecutive results finishing with
Proposition~\ref{cor:ex}, we will show that the limiting points of the
particle approximation solve the martingale problem of
Theorem~\ref{THM:MGP:WELL-POSED}. Finally, in Step~3 we prove
continuity of paths of solutions of the martingale problem.

\paragraph{Step 0}
A particle approximation was considered in \cite{Gl12} in the polar
setting that we have recalled here in
Section~\ref{sec:polar-repr-stat}. The results can be used in our
setting as well. Recall in Remark~\ref{r.1207} the extension of the
operator $\Omega^{\uparrow}$ to (polar) polynomials from sets $\mcD_1$
and $\mcD_2$.

For \emph{polar polynomials} we define the following general notation
\begin{align}
  \label{eq:ppc1c2}
  \Pi(\bar \mcD,\wh \mcD) = \{\Phi \in \Pi: \mfu \mapsto \Phi (\mfu) =
  \bar \Phi (\bar \mfu) \wh \Phi(\hat \mfu), \bar \Phi \in
  \bar \mcD, \wh \Phi \in \wh \mcD\}.
\end{align}
Of course, here we implicitly assume that $\bar \mcD$ and $\wh \mcD$
are appropriate sets of functions for $\bar\Phi$ and $\wh\Phi$
respectively. That means $\bar \mcD$ must be a subset of real-valued
functions on $\R_+$ and $\wh \mcD$ must be a subset of polynomials on
$\U_1$, i.e.\ a subset of $\wh \Pi$ which was defined in
\eqref{eq:Pi-total}. Recall also that by definition for all
polynomials $\Phi \in \Pi$ (in any form) we have $\Phi (0)=0$.

One of our goals in this section is to prove the existence and
uniqueness of solutions of the $\Omega^{\uparrow}$ martingale problem
on $\Pi(\mathcal C_b^1)$. We will prove existence and uniqueness for several
sets of polar polynomials generalizing the setting from step to step.
In the following we write
\begin{align}
  \label{eq:tr33}
  \begin{split}
    \wh{\mcD}_b^1 & = \wh\Pi(\mathcal C_b^1), \\
    \bar\mcD^2 & = \{\bar \Phi \in C^2(\R_+,\R) : \exists c>0,m \in
    \N_{\ge2} \text{ s.th. } \bar{\Phi}(\bar{\mfu})
    \le c\bar{\mfu}^m\}, \\
    \bar\mcD_c^2 & = \{\bar \Phi \in \bar\mcD^2 : \supp \bar{\Phi}
    \text{ compact}\},\\
    \bar\mcD_{c!}^2 & = \{\bar \Phi \in \bar\mcD_c^2 : \supp
    \bar{\Phi} \subset (0,\infty) \}.
  \end{split}
\end{align}

In the following lemma we first address some regularity properties of
our test functions.
\begin{lemma}
  \label{L.mp}
  We have $\Pi(\bar\mcD^2_{c!}, \wh\mcD^1_b) \subseteq C_b (\U,\R)$
  with continuity w.r.t.\ Gromov weak topology. Furthermore, for any
  $\Phi \in \Pi(\bar\mcD^2_{c!}, \wh\mcD^1_b)$,
  $\Omega^{\uparrow} \Phi$ is a bounded and continuous function on
  $\U$.
\end{lemma}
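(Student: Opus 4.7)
\medskip
\textbf{Proof plan.} Write $\Phi=\bar\Phi\cdot\wh\Phi$ with $\bar\Phi\in\bar\mcD^2_{c!}$ (so $C^2$ with compact support contained in $(0,\infty)$) and $\wh\Phi=\wh\Phi^{n,\varphi}\in\wh\mcD^1_b$ (with $\varphi\in C^1_b$). The plan is, first, to check continuity and boundedness of $\Phi$ itself on the whole of $\U$, and then, second, to do the same for $\Omega^\uparrow\Phi$ by separately inspecting the two summands in the polar-decomposition formula \eqref{eq:r.742.1}. The key mechanism throughout is that $\bar\Phi$ vanishes on a whole neighborhood of $0$, so any factor of the form $1/\bar\mfu$ arising from the operator is tamed by multiplication with $\bar\Phi(\bar\mfu)$, and the possible discontinuity of the polar decomposition at $\ntree$ becomes invisible.

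\medskip
For the first statement, recall that on $\U\setminus\{\ntree\}$ the polar map $\mfu\mapsto(\bar\mfu,\hat\mfu)$ into $(0,\infty)\times\U_1$ is continuous (Section~2.4.2 in \cite{Gl12}), that $\bar\Phi$ is bounded and continuous on $[0,\infty)$, and that $\wh\Phi^{n,\varphi}$ is bounded and Gromov-weak continuous on $\U_1$ by the definition of the Gromov weak topology and Proposition~2.6 of \cite{GPW09}. Hence $\Phi$ is bounded and continuous on $\U\setminus\{\ntree\}$. Since $\supp\bar\Phi$ is compact in $(0,\infty)$, there exists $\delta>0$ with $\bar\Phi(x)=0$ for $x\le\delta$; consequently $\Phi$ vanishes on the open neighborhood $\{\mfu\in\U:\bar\mfu<\delta\}$ of $\ntree$, and setting $\Phi(\ntree)=0$ yields a continuous bounded extension to all of $\U$.

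\medskip
For the second statement use \eqref{eq:r.742.1}:
\begin{align*}
  \Omega^\uparrow\Phi(\mfu)
  = \wh\Phi(\hat\mfu)\,\Omega^{\mathrm{mass}}\bar\Phi(\bar\mfu)
  + \bar\Phi(\bar\mfu)\,\Omega^{\mathrm{gen}}_{\bar\mfu}\wh\Phi(\hat\mfu).
\end{align*}
The first summand equals $\wh\Phi(\hat\mfu)\cdot\tfrac{b\bar\mfu}{2}\bar\Phi''(\bar\mfu)$; since $\bar\mfu\mapsto\bar\mfu\bar\Phi''(\bar\mfu)$ is continuous and compactly supported in $(0,\infty)$ and $\wh\Phi$ is bounded continuous on $\U_1$, this term is bounded continuous on $\U\setminus\{\ntree\}$ and vanishes identically for $\bar\mfu<\delta$. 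For the second summand I split using \eqref{eq:12b}:
\begin{align*}
  \bar\Phi(\bar\mfu)\,\Omega^{\mathrm{gen}}_{\bar\mfu}\wh\Phi(\hat\mfu)
  = b\,\frac{\bar\Phi(\bar\mfu)}{\bar\mfu}\,\Omega^{\uparrow,\mathrm{res}}\wh\Phi(\hat\mfu)
  + \bar\Phi(\bar\mfu)\,\Omega^{\uparrow,\mathrm{grow}}\wh\Phi(\hat\mfu).
\end{align*}
Because $\bar\Phi$ vanishes on $[0,\delta]$, the ratio $\bar\mfu\mapsto\bar\Phi(\bar\mfu)/\bar\mfu$ is continuous and compactly supported on $[0,\infty)$. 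Moreover, $\Omega^{\uparrow,\mathrm{res}}\wh\Phi^{n,\varphi}$ is by \eqref{eq:12c} a polynomial on $\U_1$ with test function $\sum_{k<\ell}(\varphi\circ\theta_{k,\ell}-\varphi)\in C_b$, and $\Omega^{\uparrow,\mathrm{grow}}\wh\Phi^{n,\varphi}$ is by \eqref{mr3} a polynomial with test function $\overline\nabla\varphi\in C_b$ (using $\varphi\in C^1_b$); hence both are bounded and Gromov-weak continuous on $\U_1$. Therefore each summand above is bounded continuous on $\U\setminus\{\ntree\}$ and vanishes on $\{\bar\mfu<\delta\}$, so extending by $0$ at $\ntree$ yields a bounded continuous function on $\U$.

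\medskip
The only delicate point, and indeed the main (mild) obstacle, is the behavior at $\ntree$: the formula \eqref{eq:r.742.1} contains the explosive factor $b/\bar\mfu$, which is exactly why one imposes $\supp\bar\Phi\subset(0,\infty)$ in the definition of $\bar\mcD^2_{c!}$. The remaining work is bookkeeping: checking that the cancellation between $\bar\Phi$ and $1/\bar\mfu$ is genuine (immediate from the support condition), and that continuity of the polar decomposition on $\U\setminus\{\ntree\}$ together with the vanishing on $\{\bar\mfu<\delta\}$ produces Gromov-weak continuity on all of $\U$, not merely on $\U\setminus\{\ntree\}$.
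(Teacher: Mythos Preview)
Your proof is correct and follows exactly the approach the paper intends: the paper's own proof only remarks that ``the only issue is continuity at zero'' and defers the details to Lemma~2.4.13 in \cite{Gl12}, while you have spelled out precisely those details---using the agreement of the Gromov-weak and product topologies on $\U\setminus\{\ntree\}$ together with the compact support of $\bar\Phi$ in $(0,\infty)$ to obtain vanishing on a full neighborhood of $\ntree$, which neutralizes both the discontinuity of the polar decomposition and the $1/\bar\mfu$ singularity in $\Omega^{\mathrm{gen}}_{\bar\mfu}$.
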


\begin{proof}
  The \label{pr.L.mp} only issue is continuity at zero. A proof of the
  first part can be found in Lemma~2.4.13 in \cite{Gl12}. The second
  part follows similarly.
\end{proof}

\paragraph{Step 1}
In Section~3 of \cite{Gl12} for each $N\in \N$ a discrete state
(continuous time) $\U$-valued Galton-Watson process
$(\mfU^{(N)}_t)_{t\ge 0}$ is constructed which solves a particular
martingale problem. For the explicit choice of the domains and the
form of the operators we refer the reader to the original reference.

The following proposition is also merely a citation of results from
\cite{Gl12} combined with classical theory on Markov processes, recall
\eqref{eq:tr33}.

\begin{proposition}[Tightness]
  Consider \label{lem:gloede} $\mfu \in \U\setminus\{\ntree\}$. The
  family $\{(\mfU^{(N)}_t)_{t\ge 0} : N \in \N\}$ is tight and any
  limit point $\mfU$ is a solution to the
  $(\delta_{\mfu}, \Omega^{\uparrow}, \Pi(\bar\mcD^2_{c!},
  \wh\mcD^1_b))$ martingale problem, provided the initial conditions
  $\mfU^{(N)}_0$ converge to $\mfu$ in the Gromov weak topology.
\end{proposition}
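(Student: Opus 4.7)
The plan is to invoke the particle approximation from \cite{Gl12} and then verify the two hypotheses of the standard martingale problem convergence theorem (cf.\ Theorem~4.8.2 or Corollary~4.8.16 of Chapter~4 in \cite{EK86}): (i) tightness of the family $\{(\mfU^{(N)}_t)_{t\ge 0}\}$ in $D([0,\infty),\U)$, and (ii) convergence of the discrete generators applied to each $\Phi \in \Pi(\bar\mcD^2_{c!},\wh\mcD^1_b)$ to $\Omega^\uparrow \Phi$ in a sense suitable for passing to the limit. The reason for restricting the test functions to the smaller domain $\Pi(\bar\mcD^2_{c!},\wh\mcD^1_b)$, rather than all of $\Pi(C_b^1)$, is exactly what makes this step work: by Lemma~\ref{L.mp} both $\Phi$ and $\Omega^\uparrow\Phi$ are \emph{bounded} and \emph{continuous} on $\U$, because compact support in the mass variable bounded away from $0$ avoids the singularity $1/\bar\mfu$ in $\Omega^{\uparrow,\mathrm{bran}}$, and the growth restriction in $\bar\mcD^2$ controls behavior at infinity.

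First I would address tightness. Projecting onto the total mass component yields the discrete Galton–Watson total mass process $\bar\mfU^{(N)}$, for which tightness in $C([0,\infty),\R_+)$ together with convergence to the Feller diffusion is classical. This delivers a uniform moment bound on $\sup_{t\le T}\bar\mfU^{(N)}_t$ for every $T>0$. Combined with the uniform moment bound on $\int r\,\nu^{2,\mfU_t^{(N)}}(\dx r)$ obtained from the growth part of the generator (the distance matrix grows at linear speed $2$), this yields compact containment in $\U$ with the Gromov weak topology, since balls in the Gromov–Prohorov metric are parametrized by joint bounds on total mass and on the distance matrix distributions; here one uses the extension of the Gromov weak topology to $\U$ developed in \cite{Gl12} and recalled in Remark~\ref{r.d.topo}. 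Aldous' criterion then reduces to verifying the Aldous–Rebolledo condition for $(\Phi(\mfU^{(N)}_t))_{t\ge 0}$ for each $\Phi\in\Pi(\bar\mcD^2_{c!},\wh\mcD^1_b)$, which follows from the boundedness of $\Omega^{(N)}\Phi$ (uniformly in $N$ on the support of $\Phi$) and a standard estimate on the quadratic variation. The set $\Pi(\bar\mcD^2_{c!},\wh\mcD^1_b)$ separates points on sets of uniformly bounded mass by Lemma~\ref{lem:Pi:separating}, so tightness of these one-dimensional projections combined with compact containment yields tightness of $(\mfU^{(N)})$ in $D([0,\infty),\U)$.

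Next I would verify generator convergence: for each $\Phi \in \Pi(\bar\mcD^2_{c!},\wh\mcD^1_b)$, one shows that $\Omega^{(N)}\Phi \to \Omega^\uparrow \Phi$ uniformly on $\U$ as $N\to\infty$. This is a direct combinatorial computation using the explicit form of the discrete generator in \cite{Gl12} (where branching occurs at rate $bN$ with offspring distribution of variance $1+o(1)$), noting that the factor $1/N$ arising in the mass rescaling balances the factor $N$ in the branching rate to produce the $b/\bar\mfu$ coefficient in $\Omega^{\uparrow,\mathrm{bran}}$, while the growth part contributes directly. Uniformity on $\U$, rather than on compacts, is available here because $\Phi$ has compact support in $\bar\mfu$ bounded away from $0$. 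Given (i) and (ii), Corollary~4.8.16 in \cite{EK86} (or the direct martingale-problem convergence argument) implies that every weak limit point solves the $(\delta_\mfu,\Omega^\uparrow,\Pi(\bar\mcD^2_{c!},\wh\mcD^1_b))$-martingale problem.

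The main obstacle I anticipate is the compact containment condition in the Gromov weak topology on $\U$, since $\U$ is not locally compact and the topology is more subtle than on $\R_+$; however, the needed compactness lemma together with the relevant moment bounds on $\bar\mfU^{(N)}_t$ and on distance functionals is already established in \cite{Gl12}, so this essentially reduces to a careful citation. A secondary technical point is to ensure the discrete generators are defined consistently for $\Phi$ not necessarily in the domain of $\Omega^{(N)}$; this is handled by a standard smoothing/extension argument, leveraging that $\bar\Phi \in \bar\mcD^2_{c!}$ vanishes in a neighborhood of $\bar\mfu=0$.
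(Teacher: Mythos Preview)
Your approach is essentially the same as the paper's: both reduce the proposition to (i) tightness of the particle approximations in $D([0,\infty),\U)$ and (ii) generator convergence on the restricted domain $\Pi(\bar\mcD^2_{c!},\wh\mcD^1_b)$, then invoke a standard convergence theorem from \cite{EK86}. The paper is terser: it simply cites Proposition~4.1.3 of \cite{Gl12} for tightness, checks condition~(5.1) of Theorem~5.1 in Chapter~4 of \cite{EK86} via Proposition~5.5.1 and Remark~5.5.3 of \cite{Gl12}, and is done. You instead sketch how those cited results would be proved and invoke Corollary~4.8.16 rather than Theorem~5.1; both routes are valid.

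One point in your sketch deserves care. Your claimed compact containment argument (``balls in the Gromov--Prohorov metric are parametrized by joint bounds on total mass and on the distance matrix distributions'') is incomplete as stated: relative compactness in $\U$ requires, in addition to tightness of total masses and of distance matrix distributions, a \emph{modulus of mass distribution} (``no dust'') condition---control over the number of $\varepsilon$-balls needed to carry fraction $1-\delta$ of the mass. Without this third ingredient the set is not relatively compact in the Gromov weak topology. You correctly defer to \cite{Gl12} for the full result, so this does not invalidate your plan, but the heuristic you offer for why compact containment holds is missing the hardest piece.
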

\begin{proof}
  \label{pr.gromov}
  In Proposition~4.1.3 of \cite{Gl12} it is shown that the family
  $\{(\mfU^{(N)}_t)_{t\ge 0} : N \in \N\}$ is tight in the Gromov weak
  topology. This allows us to apply Theorem~5.1 of Chapter~4 in
  \cite{EK86}. The condition (5.1) in \cite{EK86} can be checked via
  Proposition~5.5.1 and Remark~5.5.3 in \cite{Gl12}.
\end{proof}

\begin{lemma}
  \label{lem:Feller}
  Consider $\mfu = (\bar\mfu,\hat\mfu) \in \U\setminus\{\ntree\}$ and let
  $\mfU$ be a solution to the
  $(\delta_{\mfu}, \Omega^{\uparrow}, \Pi(\bar\mcD^2_{c!},
  \wh\mcD^1_b))$ martingale problem. Then for any $t>0$, and
  $m \in \N$ there is a constant $c(t,m,\bar\mfu)$ such that
  \begin{align}
    \label{tv20}
    \E_\mfu[\bar{\mfU}_t^m] \le c(t,m,\bar\mfu)
    \quad \text{and} \quad
    \E_\mfu[\sup_{s\le t} \bar{\mfU}_s^{m}] \le c(t,m,\bar\mfu).
  \end{align}
\end{lemma}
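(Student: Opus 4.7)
The plan is to localize the operator $\Omega^\uparrow$ to the mass component via the polar decomposition and then derive the moment bounds by a standard Grönwall induction. The only mild subtlety is that the monomials $x\mapsto x^m$ fall outside the admissible domain $\Pi(\bar\mcD^2_{c!},\wh\mcD^1_b)$, forcing a simultaneous cutoff-and-stopping argument.

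First I would restrict attention to test functions of the product form $\Phi = \bar\Phi \otimes \mathbf{1}$ with $\bar\Phi \in \bar\mcD^2_{c!}$ and $\wh\Phi\equiv 1 \in \wh\Pi$. Since $\Omega^{\mathrm{gen}}_{\bar\mfu}\mathbf{1}=0$, formula \eqref{eq:r.742.1} reduces the generator to
\begin{equation*}
\Omega^\uparrow \Phi(\mfu) = \frac{b\bar\mfu}{2}\bar\Phi''(\bar\mfu),
\end{equation*}
which depends only on the total mass. The martingale-problem assumption then yields
\begin{equation*}
\E_\mfu[\bar\Phi(\bar\mfU_t)] = \bar\Phi(\bar\mfu) + \E_\mfu\Big[\int_0^t \tfrac{b}{2}\bar\mfU_s \bar\Phi''(\bar\mfU_s)\,\dx s\Big]
\end{equation*}
for every $\bar\Phi \in \bar\mcD^2_{c!}$.

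Next I would approximate the monomial $x\mapsto x^m$. For each $m\in\N$ and each $K>\bar\mfu$ I would choose a smooth cutoff $\bar\Phi^{(m)}_K \in \bar\mcD^2_{c!}$ with $\bar\Phi^{(m)}_K(x)=x^m$ on $[K^{-1},K]$, $0\le \bar\Phi^{(m)}_K(x)\le x^m$ everywhere, and with the uniform-in-$K$ bound $|\bar\Phi^{(m)}_K{}''(x)| \le C_m(x^{m-2}\vee 1)$ (realised by interpolating against a fixed bump). Combining this with the stopping times $\tau_K\coloneqq\inf\{s\ge 0:\bar\mfU_s\ge K\}$ and optional stopping, the above identity gives
\begin{equation*}
\E_\mfu\bigl[\bar\mfU_{t\wedge\tau_K}^m \ind{\bar\mfU_{t\wedge\tau_K}\in [K^{-1},K]}\bigr] \le \bar\mfu^m + \frac{bC_m m(m-1)}{2}\int_0^t \E_\mfu\bigl[\bar\mfU_{s\wedge\tau_K}^{m-1}+1\bigr]\dx s.
\end{equation*}
Starting from $m=1$, where the contributions to $\bar\Phi^{(1)}_K{}''$ are uniformly bounded on their support (so that Fatou delivers $\E_\mfu[\bar\mfU_t]<\infty$), I would iterate inductively in $m$ and apply Grönwall's inequality to obtain $\E_\mfu[\bar\mfU_{t\wedge\tau_K}^m] \le c(t,m,\bar\mfu)$ uniformly in $K$. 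Fatou's lemma as $K\to\infty$ then yields the first bound.

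For the supremum estimate I would revisit the martingale decomposition: for each $K$, the process
\begin{equation*}
N^{(m),K}_t \coloneqq \bar\Phi^{(m)}_K(\bar\mfU_{t\wedge\tau_K}) - \bar\Phi^{(m)}_K(\bar\mfu) - \int_0^{t\wedge\tau_K}\tfrac{b}{2}\bar\mfU_s\bar\Phi^{(m)}_K{}''(\bar\mfU_s)\,\dx s
\end{equation*}
is a bounded martingale. Applying Doob's $L^2$-maximal inequality to $N^{(m),K}$ and bounding the drift term pointwise via the envelope $\tfrac{bC_m m(m-1)}{2}(\bar\mfU_s^{m-1}+1)$, already integrable by the first part, produces $\E_\mfu[\sup_{s\le t\wedge\tau_K}\bar\mfU_s^m]\le C\cdot c(t,m,\bar\mfu)$ uniformly in $K$, and Fatou finishes the supremum bound. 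The main obstacle, such as it is, lies in the test-function construction: monomials $x^m$ are not in $\bar\mcD^2_{c!}$ (which requires compact support bounded away from $0$ and $\infty$), so the entire argument hinges on controlling the contributions from the cutoff regions uniformly in $K$; everything else is a classical Feller-diffusion moment computation.
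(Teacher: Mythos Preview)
Your approach is correct in spirit and is considerably more detailed than the paper's two-line proof, which simply asserts that $(\bar\mfU_t)_{t\ge 0}$ is an ordinary Feller diffusion and invokes It\^o's lemma and Doob's inequality. What the paper leaves implicit---that the restricted martingale problem on $\Pi(\bar\mcD^2_{c!},\wh\mcD^1_b)$ already pins down the total-mass dynamics enough to deliver moment bounds---is exactly what your cutoff-plus-induction argument supplies. The first part (moments of $\bar\mfU_t$) goes through as you describe; note that Gr\"onwall is not actually needed, since the induction passes from $\E[\bar\mfU_s^{m-1}]$ to $\E[\bar\mfU_t^m]$ directly via the integral bound.

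There is one small gap in your supremum argument: to apply Doob's $L^2$-maximal inequality to $N^{(m),K}$ uniformly in $K$, you need a uniform-in-$K$ bound on $\E[(N^{(m),K}_t)^2]=\E[\langle N^{(m),K}\rangle_t]$. This quadratic variation can be identified by applying the martingale problem to $(\bar\Phi^{(m)}_K)^2\in\bar\mcD^2_{c!}$, giving $\langle N^{(m),K}\rangle_t=\int_0^t b\,\bar\mfU_s\bigl((\bar\Phi^{(m)}_K)'(\bar\mfU_s)\bigr)^2\,\dx s$, which is controlled by the $(2m-1)$-th moment already established in the first part. You should spell this step out. A cleaner route---and closer to the paper's intent---is to observe that your cutoff argument, once the moments $\E[\bar\mfU_t^m]$ are in hand, shows that $\bar\mfU$ itself is an $L^m$-bounded nonnegative martingale (take $K\to\infty$ in the martingale $\bar\Phi^{(1)}_K(\bar\mfU_t)-\int_0^t\tfrac{b}{2}\bar\mfU_s(\bar\Phi^{(1)}_K)''(\bar\mfU_s)\,\dx s$ using $L^1$-convergence), after which Doob's $L^m$-inequality applied directly to $\bar\mfU$ yields $\E[\sup_{s\le t}\bar\mfU_s^m]\le\bigl(\tfrac{m}{m-1}\bigr)^m\E[\bar\mfU_t^m]$ for $m\ge 2$.
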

\begin{proof}
  \label{pr.Fellito}
  Since the process $(\bar{\mfU}_t)_{t\ge 0}$ is an ordinary
  Feller-diffusion the assertion follows by Ito's lemma and Doob's
  inequality.
\end{proof}

\paragraph{Step 2}
Now, in several consecutive steps we will extend the existence result
to the needed wider class of test functions. First we consider the
case \emph{close to extinction}, i.e.\ when the total mass is close to
zero.
\begin{lemma}\label{lem:ex:epsweg}
  Let $\mfu \in \U$ and assume that $\mfU^{(N)}_0$ converges to $\mfu$
  in the Gromov weak topology. Then any limiting point of the family
  $\{(\mfU^{(N)}_t)_{t\ge 0} : N \in \N\}$ is a solution to the
  $(\delta_{\mfu}, \Omega^{\uparrow},\Pi(\bar\mcD^2_c,\wh\mcD^1_b))$
  martingale problem, provided the initial conditions $\mfU^{(N)}_0$
  converge to $\mfu$ in the Gromov weak topology.
\end{lemma}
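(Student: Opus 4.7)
The plan is to pass from the class $\Pi(\bar\mcD^2_{c!}, \wh\mcD^1_b)$ of test functions covered by Proposition~\ref{lem:gloede} to the larger class $\Pi(\bar\mcD^2_c, \wh\mcD^1_b)$ by a localization/approximation argument. Let $\Phi = \bar\Phi \wh\Phi \in \Pi(\bar\mcD^2_c, \wh\mcD^1_b)$ be given. I introduce a smooth cutoff $\chi_\eps \in C^\infty(\R_+,[0,1])$ satisfying $\chi_\eps \equiv 0$ on $[0,\eps]$, $\chi_\eps \equiv 1$ on $[2\eps,\infty)$, with $\|\chi_\eps'\|_\infty \leq C/\eps$ and $\|\chi_\eps''\|_\infty \leq C/\eps^2$, and set $\bar\Phi_\eps \coloneqq \bar\Phi \cdot \chi_\eps$ and $\Phi_\eps \coloneqq \bar\Phi_\eps \wh\Phi$. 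Since $\supp\bar\Phi_\eps$ is compact and bounded away from $0$, one has $\Phi_\eps \in \Pi(\bar\mcD^2_{c!}, \wh\mcD^1_b)$. Writing $\mfU$ for an arbitrary limit point of $\{\mfU^{(N)}\}$ with $\mfU^{(N)}_0 \to \mfu$, Proposition~\ref{lem:gloede} yields that
\begin{equation*}
  M^\eps_t \coloneqq \Phi_\eps(\mfU_t)-\Phi_\eps(\mfU_0)-\int_0^t \Omega^{\uparrow}\Phi_\eps(\mfU_s)\,\dx s
\end{equation*}
is a martingale for every $\eps>0$.

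The next step is to send $\eps \downarrow 0$ and verify that $M^\eps_t \to M_t$ both pointwise and in $L^1$, where $M_t$ is the analogous expression with $\Phi_\eps$ replaced by $\Phi$. Pointwise convergence $\Phi_\eps(\mfu) \to \Phi(\mfu)$ is immediate (for $\bar\mfu>0$ eventually $\chi_\eps(\bar\mfu)=1$; for $\bar\mfu=0$ both sides are $0$ by the convention of Definition~\ref{D.polyn}). For $\Omega^\uparrow \Phi_\eps \to \Omega^\uparrow \Phi$ pointwise and for the uniform-in-$\eps$ bound needed for dominated convergence, I invoke the decomposition \eqref{eq:r.742.1}. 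The only potentially dangerous piece is the singular term $\bar\Phi_\eps(\bar\mfu)\cdot (b/\bar\mfu)$ together with the Leibniz contribution $\bar\Phi \chi''_\eps$ appearing in $\Omega^{\mathrm{mass}}\bar\Phi_\eps$. Both are controlled by the defining property of $\bar\mcD^2$, namely $|\bar\Phi(\bar\mfu)| \leq c\,\bar\mfu^m$ with $m\geq 2$: on the transition zone $[\eps,2\eps]$ one has $|\bar\Phi\chi''_\eps|\leq c\, c'\eps^{m-2}$ and $|\bar\Phi'\chi'_\eps|\leq c\,c'\eps^{m-2}$, both uniformly bounded for $m\geq 2$, while $|\bar\Phi_\eps(\bar\mfu)/\bar\mfu| \leq c\,\bar\mfu^{m-1}$ is bounded on the (compact) support of $\bar\Phi$. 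Hence $\|\Omega^\uparrow \Phi_\eps\|_\infty$ and $\|\Phi_\eps\|_\infty$ stay bounded by a constant independent of $\eps$, giving in particular the uniform bound $|M^\eps_t| \leq C(1+t)$.

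With these estimates, dominated convergence yields $\int_0^t \Omega^\uparrow\Phi_\eps(\mfU_s)\,\dx s \to \int_0^t \Omega^\uparrow\Phi(\mfU_s)\,\dx s$ pathwise, hence $M^\eps_t \to M_t$ pointwise and in $L^1$. Passing to the limit in $\E[M^\eps_t \mid \mcF_s]=M^\eps_s$ shows that $M_t$ is an $(\mcF_t)$-martingale, which is exactly the martingale property of the $(\delta_\mfu,\Omega^\uparrow,\Pi(\bar\mcD^2_c,\wh\mcD^1_b))$-problem. The initial condition is unchanged.

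The main obstacle is not the martingale passage per se but securing the uniform bound on $\Omega^\uparrow \Phi_\eps$ through the singular layer at $\bar\mfu = 0$; the argument hinges crucially on the requirement $m \geq 2$ built into the definition of $\bar\mcD^2$, and on choosing $\chi_\eps$ whose derivative bounds are compensated by the vanishing order of $\bar\Phi$ at $0$. Once this is in place, the tightness from Proposition~\ref{lem:gloede} (unchanged, since it concerns the underlying $N$-approximation only) together with Lemma~\ref{lem:Feller} for the moment control on $\bar\mfU_t$ delivers the lemma for every initial condition $\mfu \in \U$, including the extinction state $\ntree$.
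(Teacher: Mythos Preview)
Your proof is correct and follows essentially the same approach as the paper: both introduce a smooth cutoff $\chi_\eps$ (the paper calls it $\varrho_\eps$) vanishing on $[0,\eps]$ and equal to $1$ on $[2\eps,\infty)$, set $\bar\Phi_\eps = \bar\Phi\chi_\eps \in \bar\mcD^2_{c!}$, invoke Proposition~\ref{lem:gloede} for the approximants, and then pass to the limit $\eps\downarrow 0$ using the growth bound $|\bar\Phi(\bar\mfu)|\le c\bar\mfu^m$ from the definition of $\bar\mcD^2$ to control the singularity at zero mass via the decomposition \eqref{eq:r.742.1}. The paper obtains the slightly sharper statement that $\|\Phi-\Phi_\eps\|_\infty$ and $\|\Omega^\uparrow\Phi-\Omega^\uparrow\Phi_\eps\|_\infty$ are both $O(\eps)$, whereas you argue via uniform boundedness of $\Omega^\uparrow\Phi_\eps$ plus pointwise convergence and dominated convergence; either route closes the argument.

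One small imprecision: the bound $|\bar\Phi'\chi_\eps'|\le cc'\eps^{m-2}$ on the transition zone does not follow from $|\bar\Phi|\le c\bar\mfu^m$ alone, since that hypothesis gives no direct control on $\bar\Phi'$. What you do get is $\bar\Phi(0)=\bar\Phi'(0)=0$, hence $|\bar\Phi'(x)|\le \|\bar\Phi''\|_\infty\, x$ by the mean value theorem; combined with the factor $\bar\mfu$ in $\Omega^{\mathrm{mass}}\bar\Phi_\eps = \tfrac{b\bar\mfu}{2}\bar\Phi_\eps''$ this yields $\bar\mfu|\bar\Phi'\chi_\eps'| = O(\eps)$ on $[\eps,2\eps]$, which is what is actually needed. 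With this correction your estimates are complete.
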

\begin{proof}
  \label{pr.Phi:eps}
  Let $\Phi \in \Pi(\bar\mcD^2_c, \wh\mcD^1_b)$ and for $\eps \in (0,1)$
  define
  $\bar{\Phi}_\eps = \bar{\Phi} \varrho_\eps \in \bar\mcD^2_{c!}$,
  i.e.\ with compact support in $(0,\infty)$, where we choose
  $\varrho_\eps \in C^\infty(\R_+,\R)$ with
  $\varrho_\eps|_{[0,\eps)}=0, \varrho_\eps|_{[2\eps, \infty)} = 1$.
  Now we use the bound $\bar{\Phi}(\bar{\mfu}) \le c\bar{\mfu}^m$ to
  obtain the estimates
  \begin{align}
    \label{eq:Phi:eps}
    |\bar{\Phi} (\bar{\mfu}) \wh\Phi (\wh{\mfu}) - \bar{\Phi}_\eps
    (\bar{\mfu}) \wh\Phi (\wh{\mfu}) | \le  \wh\Phi (\wh{\mfu}) c
    \eps^m \le c \|\varphi\| \eps^m
  \end{align}
  and for $\mfu \in \U\setminus\{\ntree\}$
  \begin{align}
    \label{eq:LPhi:eps}
    \begin{split}
      |\Omega^{\uparrow} \bar{\Phi} (\bar{\mfu}) \wh\Phi (\wh{\mfu})
      & - \Omega^{\uparrow} \bar{\Phi}_\eps (\bar{\mfu}) \wh\Phi (\wh{\mfu})| \\
      & \le | \Omega^{\mathrm{mass}} \bar{\Phi}(\bar{\mfu}) -
      \Omega^{\mathrm{mass}}\bar{\Phi}_\eps (\bar{\mfu}) | \,
      |\wh{\Phi}(\wh{\mfu})| + |\bar{\Phi}(\bar{\mfu}) -
      \bar{\Phi}_\eps(\bar{\mfu}) | \cdot
      |\Omega^{\mathrm{gen}}_{\bar{\mfu}} \wh{\Phi} (\wh{\mfu}) | \\
      & \le | \ind{\bar{\mfu} \le \eps} \frac{b}{2} \bar{\mfu}
      \partial_{\bar{\mfu}}^2 \bar{\Phi}(\cdot) | \cdot \| \varphi \|
      + | \ind{\bar{\mfu} \le \eps} \bar{\Phi}(\bar{\mfu}) | \cdot |
      \Omega^{\mathrm{gen}}_{\bar{\mfu}} \wh{\Phi} (\wh{\mfu}) | \\
      & \le \frac{b}{2} \eps \| \partial_{\cdot}^2 \bar{\Phi}(\cdot)
      \| \cdot \| \varphi \| + c \eps \ind{\bar{\mfu} \le \eps}
      |\bar{\mfu}|^{m-1} (2 \| \overline{\nabla} \varphi \| +
      \frac{2}{\bar{\mfu}} \| \varphi \| ) \\
      & \le \eps ( b \| \partial_{\cdot}^2 \bar{\Phi}(\cdot) \| \cdot
      \| \varphi \| + 2c \| \overline{\nabla} \varphi \| + 2c \| \varphi \| ).
    \end{split}
  \end{align}
  Since both terms on the left hand side of \eqref{eq:Phi:eps} and of
  \eqref{eq:LPhi:eps} are zero for $\bar\mfu =0$, both bounds are
  uniform in $\mfu \in \U$. For any limiting point $(\mfU_t)_{t\ge 0}$
  of $(\mfU^{(N)}_t)_{t\ge 0}$, any $\eps >0$, any
  $0\le t_1 \le \dots \le t_k \le s < t$ and any bounded measurable
  $h_i: \U \to \R$ by
  Proposition~\ref{lem:gloede} we have
  \begin{align}
    \label{eq:251}
    \E \Bigl[ \Bigl( \bar{\Phi}_\eps (\bar{\mfU}_t) \wh\Phi
    (\wh{\mfU}_t) - \bar{\Phi}_\eps (\bar{\mfU}_s) \wh\Phi
    (\wh{\mfU}_s) - \int_s^t \Omega^{\uparrow}\bar{\Phi}_\eps
    (\bar{\mfU}_z) \wh\Phi (\wh{\mfU}_z) \, \dx z \Bigr) \prod_{i=1}^k
    h_i(\mfU_{t_i}) \Bigr] = 0.
  \end{align}
  Using \eqref{eq:Phi:eps} and \eqref{eq:LPhi:eps} and $\eps \to 0$ we
  see that \eqref{eq:251} also holds for $\bar{\Phi}$ instead of
  $\bar{\Phi}_\eps$. That means $(\mfU_t)_{t\ge 0}$ is a solution to
  the latter martingale problem.
\end{proof}

Next, we extend the martingale problem to allow arbitrary positive
mass.

\begin{proposition}[Limit points are solutions to martingale problem]
  For \label{prop:polar} each $\mfu\in \U$ any limit point
  $\mfU=(\mfU_t)_{t\ge 0}$ of the sequence
  $\{(\mfU^{(N)}_t)_{t\ge 0} : N \in \N\}$ is a solution to the
  $(\delta_{\mfu},
  \Omega^{\uparrow},\Pi(\bar\mcD^2,\wh\mcD_b^1))$-martingale problem,
  provided the initial conditions $\mfU^{(N)}_0$ converge to $\mfu$ in
  the Gromov weak topology.
\end{proposition}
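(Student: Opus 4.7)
The plan is to extend the martingale property established in Lemma~\ref{lem:ex:epsweg} for test functions in $\Pi(\bar\mcD^2_c,\wh\mcD^1_b)$ to the larger class $\Pi(\bar\mcD^2,\wh\mcD^1_b)$ by a truncation argument in the total mass variable, using the polynomial growth bound $\bar\Phi(\bar\mfu) \le c\bar\mfu^m$ from the definition of $\bar\mcD^2$ together with the moment estimates on $\bar\mfU_t$ from Lemma~\ref{lem:Feller}. Since $\Omega^{\uparrow}$ splits nicely according to the polar decomposition via~\eqref{eq:r.742.1}, the only issue is controlling the large-mass behaviour uniformly in the truncation; the small-mass regime has already been handled in the previous lemma.

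Concretely, I would fix $\chi \in C^2(\R_+,[0,1])$ with $\chi|_{[0,1]} = 1$ and $\chi|_{[2,\infty)} = 0$, and for $K \ge 1$ set $\chi_K(x) \coloneqq \chi(x/K)$, so that $|\chi_K^{(j)}| \le C_j K^{-j}$ with support in $[K,2K]$ for $j \in \{1,2\}$. For $\Phi = \bar\Phi\wh\Phi \in \Pi(\bar\mcD^2,\wh\mcD^1_b)$ the function $\Phi_K \coloneqq (\bar\Phi \chi_K)\wh\Phi$ lies in $\Pi(\bar\mcD^2_c,\wh\mcD^1_b)$, so Lemma~\ref{lem:ex:epsweg} gives
\begin{equation*}
  \E\Bigl[\Bigl(\Phi_K(\mfU_t) - \Phi_K(\mfU_s) - \int_s^t \Omega^{\uparrow}\Phi_K(\mfU_z)\,\dx z\Bigr)\prod_{i=1}^k h_i(\mfU_{t_i})\Bigr] = 0
\end{equation*}
for all $0 \le t_1 \le \dots \le t_k \le s < t$ and bounded measurable $h_1,\dots,h_k$. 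Pointwise one has $\Phi_K \to \Phi$ with $|\Phi_K(\mfu)| \le c\|\wh\Phi\|\bar\mfu^m$ uniformly in $K$, and inspection of~\eqref{eq:12a}--\eqref{eq:12c}, combined with the bound $|\bar\Phi(\bar\mfu)|/\bar\mfu \le c\bar\mfu^{m-1}$, yields $|\Omega^{\uparrow}\Phi_K(\mfu)| \le C(1+\bar\mfu^{m'})$ uniformly in $K$ for some integer $m'$ depending only on $m$ and $\wh\Phi$. The moment bounds $\E_\mfu[\sup_{s\le t}\bar\mfU_s^{m'}] < \infty$ from Lemma~\ref{lem:Feller} then supply a dominating integrable majorant, so dominated convergence removes the truncation and produces the martingale relation for $\Phi$.

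The delicate point is ensuring that the terms in $\Omega^{\uparrow}\Phi_K$ coming from derivatives of $\chi_K$ do not survive in the limit. These terms are supported on the annulus $\{K \le \bar\mfu \le 2K\}$, and on this annulus the factors $K^{-j}$ produced by $\chi_K^{(j)}$ are compensated by $\bar\mfu^j \sim K^j$ appearing either explicitly (through $\bar\mfu\,\partial_{\bar\mfu}^2$ in $\Omega^{\mathrm{mass}}$) or implicitly (through the bound $|\bar\Phi^{(i)}(\bar\mfu)| \le c\bar\mfu^{m-i}$ that follows from $\bar\Phi \in \bar\mcD^2$ by considering cutoff approximations of $\bar\Phi$ and interpolation). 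The resulting contribution is bounded by $C\bar\mfu^{m-1}\ind{K\le\bar\mfu\le 2K}$, which tends to zero pointwise and is dominated by the same $C(1+\bar\mfu^{m'})$ envelope already used above. Once this is checked, the argument is a routine application of dominated convergence, and the proposition follows.
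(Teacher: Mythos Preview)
Your truncation-plus-dominated-convergence strategy is the paper's strategy as well, but the justification of the key estimate contains a genuine gap. You assert that $|\bar\Phi^{(i)}(\bar\mfu)| \le c\,\bar\mfu^{\,m-i}$ ``follows from $\bar\Phi \in \bar\mcD^2$ by considering cutoff approximations of $\bar\Phi$ and interpolation''. This is false: the definition of $\bar\mcD^2$ in~\eqref{eq:tr33} imposes only the growth bound $\bar\Phi(\bar\mfu) \le c\,\bar\mfu^m$ and says nothing about derivatives, and Landau--Kolmogorov type interpolation needs a bound on the \emph{highest} derivative to control the intermediate ones, not the other way around. A concrete counterexample is $\bar\Phi(x) = x^2\sin(e^x)$: this is $C^\infty$ with $|\bar\Phi(x)| \le x^2$, hence lies in $\bar\mcD^2$, yet $\bar\Phi'$ and $\bar\Phi''$ grow exponentially. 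Consequently your claimed uniform envelope $|\Omega^{\uparrow}\Phi_K(\mfu)| \le C(1+\bar\mfu^{m'})$ is unavailable, both for the cross term $\bar\mfu\,\bar\Phi'\chi_K'$ and already for the principal term $\tfrac{b\bar\mfu}{2}\bar\Phi''\chi_K$ in $\Omega^{\mathrm{mass}}$.

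The paper sidesteps the cutoff-derivative terms altogether by introducing the stopping time $\tau_n = \inf\{t \ge 0 : \bar\mfU_t \ge n\}$ and applying optional stopping to the $\bar\Phi_n$-martingale. Since $\bar\mfU_{u\wedge\tau_n} \le n$ and the cutoff $\tilde\varrho_n$ equals $1$ on $[0,n]$, one has $\bar\Phi_n(\bar\mfU_{u\wedge\tau_n}) = \bar\Phi(\bar\mfU_{u\wedge\tau_n})$ and $\Omega^{\uparrow}(\bar\Phi_n\wh\Phi)(\mfU_{u\wedge\tau_n}) = \Omega^{\uparrow}(\bar\Phi\wh\Phi)(\mfU_{u\wedge\tau_n})$ exactly, so the derivatives of $\tilde\varrho_n$ never enter the computation. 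The limit $n\to\infty$ is then taken by splitting on $\{t<\tau_n\}$ versus $\{t\ge\tau_n\}$ and using the moment bounds of Lemma~\ref{lem:Feller} together with Markov's inequality (with exponent $m+1$) to kill the second piece. This optional-stopping device is precisely what your argument is missing; with it in place, the rest of your outline matches the paper's proof.
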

\begin{proof}
  \label{pr.epsweg}
  Let $\Phi \in \Pi(\bar\mcD^2,\wh\mcD_b^1)$ and let
  $(\mfU _t)_{t\ge 0}$ be a limit point of solutions of
  $\{(\mfU^{(N)}_t)_{t\ge 0} : N \in \N\}$ with
  $\mfU^{(N)}_0 \to \mfu$ as $N\to \infty$ in the Gromov weak
  topology. For $n \in \N$ define
  $\bar{\Phi}_n = \bar{\Phi} \tilde{\varrho}_n \in \bar\mcD_c^2$,
  where we choose $\tilde{\varrho}_n \in C_c^\infty(\R_{\ge_0},\R)$
  with $\tilde{\varrho}_n|_{[n+1,\infty)}=0$ and
  $\tilde{\varrho}_n|_{[0,n]} = 1$.

  By Lemma~\ref{lem:ex:epsweg} for any $k \ge n+1$, any
  $0\le t_1 \le \dots \le t_k \le s < t$ and bounded measurable
  $h_i: \U \to \R$ we have
  \begin{align}
    \label{eq:1}
    \E \Bigl[ \Bigr(\bar{\Phi}_n (\bar{\mfU}_t) \wh\Phi (\wh{\mfU}_t) -
    \bar{\Phi}_n (\bar{\mfU}_s) \wh\Phi (\wh{\mfU}_s) - \int_s^t
    \Omega^{\uparrow}\bar{\Phi}_n (\bar{\mfU}_u) \wh\Phi
    (\wh{\mfU}_u) \, \dx u \Bigl) \prod_{i=1}^k h_i(\mfU_{t_i})
    \Bigr] = 0.
  \end{align}

  For $n \in \N$ we define the stopping times
  \begin{align}
    \label{tv21}
    \tau_{n} \coloneqq  \inf \{ t \ge 0: \bar{\mfU}_t \ge n \}.
  \end{align}
  By the optional stopping theorem and \eqref{eq:1} we obtain
  \begin{align}
    \label{tv22}
    \begin{split}
      \E \Bigl[ \Bigl( \bar{\Phi} (\bar{\mfU}_{t\wedge \tau_n})
      \wh\Phi (\wh{\mfU}_{t\wedge \tau_n}) & - \bar{\Phi}
      (\bar{\mfU}_{s\wedge \tau_n}) \wh\Phi (\wh{\mfU}_{s\wedge
        \tau_n}) \\ & - \int_s^t \Omega^{\uparrow}\bar{\Phi}
      (\bar{\mfU}_{u\wedge \tau_n}) \wh\Phi (\wh{\mfU}_{u\wedge
        \tau_n}) \, \dx u \Bigr) \prod_{i=1}^k h_i(\mfU_{t_i}) \Bigr] =
      0.
    \end{split}
  \end{align}
  Now we need to show that all expressions in the above display tend
  to the expected ones as $n\to \infty$. For the first term
  this follows, since
  \begin{align}
    \label{eq:2}
    \begin{split}
      \E & \Bigl[ \bar{\Phi} (\bar{\mfU}_{t\wedge \tau_n}) \wh\Phi
      (\wh{\mfU}_{t\wedge \tau_n}) \prod_{i=1}^k h_i(\mfU_{t_i})
      \Bigr] \\
      & = \E \Bigl[ \ind{t < \tau_n} (\bar{\Phi} (\bar{\mfU}_{t})
      \wh\Phi (\wh{\mfU}_{t}) \prod_{i=1}^k h_i(\mfU_{t_i}) \Bigr] +
      \E \Bigl[\ind{t \ge \tau_n} (\bar{\Phi} (n) \wh\Phi
      (\wh{\mfU}_{t\wedge \tau_n}) \prod_{i=1}^k h_i(\mfU_{t_i})
      \Bigr].
    \end{split}
  \end{align}
  Lemma~\ref{lem:Feller} implies $\ind{t < \tau_n} \nearrow 1$. Recall
  that $\wh \Phi \in \wh{\mcD}^1$ entails that there is a $c_1<\infty$
  such that
  \begin{align}
    \label{tv24}
    \| \wh \Phi (\cdot) \prod_{i=1}^k h_i(\cdot) \| \le c_1
  \end{align}
  and since $\bar{\Phi}(\bar{\mfu}) \le c \bar{\mfu}^m$ we can use
  dominated convergence for the first term in \eqref{eq:1}.

  For the second term using \eqref{tv24}, Markov inequality and
  Lemma~\ref{lem:Feller} we obtain
  \begin{align}
    \label{tv25}
    \begin{split}
      \E \Bigl[\ind{t \ge \tau_n} (\bar{\Phi} (n) \wh\Phi
      (\wh{\mfU}_{t\wedge \tau_n}) \prod_{i=1}^k h_i(\mfU_{t_i}) \Bigr]
      & \le c_1 c \E [ \ind{t\ge \tau_n} n^m ] \\
      & \le c_1 c n^m \P_{\bar{\mfu}} [ \sup_{s< t} \bar{\mfU}_s \ge n]
      \\
      & \le c_1 c n^m n^{-m-1}   \E_{\bar{\mfu}} [ \sup_{s\le t}
      \bar{\mfU}_s^{m+1}]\\
      & \le c_1 c n^{-1} c(m+1,t,\bar{\mfu}).
    \end{split}
  \end{align}
  As $n\to \infty$ the term on the right hand side goes to zero. The
  remaining term in \eqref{eq:1} can be treated similarly.
\end{proof}

\begin{proposition}[Existence of solution for $\Omega^\uparrow$]
  \label{cor:ex}
  For each $\mfu\in \U$ any limit point $\mfU=(\mfU_t)_{t\ge 0}$ of
  the sequence $\{(\mfU^{(N)}_t)_{t\ge 0} : N \in \N\}$ is a solution
  to the $(\delta_{\mfu}, \Omega^{\uparrow},\Pi(\mathcal C_b^1))$ martingale
  problem, provided the initial conditions $\mfU^{(N)}_0$ converge to
  $\mfu$ in the Gromov weak topology.
\end{proposition}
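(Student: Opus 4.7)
The strategy is to deduce this from Proposition~\ref{prop:polar} by exploiting the polar decomposition. For every polynomial $\Phi^{n,\varphi} \in \Pi(C_b^1)$ of degree $n \ge 2$, the identity \eqref{pol-ext} writes $\Phi^{n,\varphi}(\mfu) = \bar\Phi(\bar\mfu)\wh\Phi^{n,\varphi}(\hat\mfu)$ with $\bar\Phi(\bar\mfu) = \bar\mfu^n$ and $\wh\Phi^{n,\varphi} \in \wh\Pi(C_b^1) = \wh\mcD^1_b$. Since $n \ge 2$, $\bar\Phi$ lies in $\bar\mcD^2$ (the growth bound holds with exponent $m = n$), hence $\Phi^{n,\varphi} \in \Pi(\bar\mcD^2, \wh\mcD^1_b)$ and Proposition~\ref{prop:polar} applies verbatim to give the required martingale property.

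It remains to treat the degenerate cases $n = 0$ and $n = 1$. Direct inspection of \eqref{mr3}--\eqref{mr3a} shows that $\Omega^{\uparrow}\Phi^{n,\varphi} \equiv 0$ in both cases: for $n = 0$ the function $\Phi$ is constant and the martingale property is trivial; for $n = 1$ we have $\Phi^{1,\varphi}(\mfu) = c\bar\mfu$, so the assertion reduces to the statement that $(\bar\mfU_t)_{t \ge 0}$ is a martingale. This is the classical martingale property of the critical Feller diffusion, which I would obtain by passage to the limit from the particle approximation: for each $N$, $\bar\mfU^{(N)}$ is the total mass of a critical Galton--Watson process and is therefore a martingale, and the prelimit version of the moment bound in Lemma~\ref{lem:Feller} (available from Section~4 of \cite{Gl12}) gives uniform integrability of $\{\bar\mfU^{(N)}_t\}_{N\in\bbN}$ at every fixed $t$, so that the martingale property survives in the limit. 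As an alternative route one could apply Proposition~\ref{prop:polar} with $\Phi^{2,1}(\mfu) = \bar\mfu^2$ to obtain the second-moment martingale $\bar\mfU_t^2 - b\int_0^t \bar\mfU_s\, \dx s$ and, together with further test functions of the form $f(\bar\mfu)$ with $f \in C^2_c((0,\infty)) \subset \bar\mcD^2_c$, invoke the standard uniqueness theory for the Feller diffusion (Chapter~8 of \cite{EK86}) to identify $\bar\mfU$ as the critical Feller diffusion, which is a martingale.

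The main (minor) obstacle is exactly this definitional mismatch: by \eqref{eq:tr33}, elements of $\bar\mcD^2$ must satisfy a polynomial growth bound with exponent $m \ge 2$, and this rules out the linear function $\bar\Phi(\bar\mfu) = \bar\mfu$ that one is forced to consider for degree-$1$ polynomials. The resolution, however, is cheap: $\Omega^\uparrow$ annihilates such polynomials, so nothing needs to be proved about the action of the generator, and the required martingale property is the well-known martingale property of the total mass inherited either directly from the particle approximation or, equivalently, from the uniqueness of the total-mass martingale problem already implied by Proposition~\ref{prop:polar}.
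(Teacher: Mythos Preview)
Your argument is correct and, for degree $n\ge 2$, identical to the paper's: write $\Phi^{n,\varphi}$ in polar form as $\bar\mfu^n\,\wh\Phi^{n,\varphi}(\hat\mfu)$, observe this lies in $\Pi(\bar\mcD^2,\wh\mcD^1_b)$, and invoke Proposition~\ref{prop:polar}.

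The only difference is in how the degenerate degrees $n\in\{0,1\}$ are handled. You correctly spot that $\bar\Phi(\bar\mfu)=\bar\mfu$ fails the growth condition in $\bar\mcD^2$ (which demands $m\ge 2$), and you resolve this by a direct argument: $\Omega^\uparrow$ annihilates these polynomials, the $n=0$ case is trivial, and for $n=1$ the required martingale property of $(\bar\mfU_t)_{t\ge 0}$ is inherited from the critical Galton--Watson approximants via uniform integrability. The paper instead does not single out these cases in the proof but defers to the approximation machinery of Section~\ref{sec:appr-solut-mart}, where a larger class $\Pi^*\supset\Pi(C_b^1)$ is reached from $\mcD_1$ by approximation and dominated convergence. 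Your route is more elementary and more transparent for these two specific test functions; the paper's route is more uniform but invokes an extra appendix. Both are valid.
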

\begin{proof}\label{pr.cor:ex}
  Let $\Phi = \Phi^{n,\varphi} \in \Pi(\mathcal C_b^1)$ then, as we have seen
  in Remark~\ref{r.1207}, for $\mfu \in \U$ we have
  $\Phi(\mfu) = \bar{\Phi}(\bar{\mfu}) \wh {\Phi}(\wh{\mfu})$ with
  $\bar{\Phi}(\bar{\mfu}) = \bar{\mfu}^n$ and
  $\wh{\Phi} = \wh{\Phi}^{n,\varphi}$. In particular
  $\Phi \in \Pi(\bar\mcD^2,\wh\mcD^1_b)$. For such functions we have
  various formulas for the action of the operator $\Omega^\uparrow$.

  Thus, by Proposition~\ref{prop:polar}, any limit point
  $\mfU = (\mfU_t)_{t\ge 0}$ of $\{(\mfU^N_t)_{t\ge 0} : N \in \N\}$
  is a solution to the
  $(\delta_{\mfu}, \Omega^{\uparrow},\Pi(\bar\mcD^2,\wh\mcD_b^1))$
  martingale problem. In Section~\ref{sec:appr-solut-mart} we will see
  that we have a solution of the
  $(\Omega^\uparrow,\Pi(\mathcal C_b^1))$-martingale problem.

  Use this to calculate
  \begin{align}
    \label{tv27}
    \E & \Bigl[\Bigl(\Phi(\mfU_t) - \Phi( \mfU_s) - \int_s^t
         \Omega^{\uparrow} \Phi(\mfU_u) \, \dx u\Bigr) \prod_{i=1}^k
         h_i(\mfU_{t_i}) \Bigr] \\
    \label{tv27b}
       & = \E\Bigl[\Bigl(\bar{\Phi} (\bar{\mfU}_t) \wh\Phi
         (\wh{\mfU}_t) - \bar{\Phi} (\bar{\mfU}_s) \wh\Phi
         (\wh{\mfU}_s) - \int_s^t \Omega^{\uparrow}\bar{\Phi}
         (\bar{\mfU}_u) \wh\Phi (\wh{\mfU}_u) \, \dx u \Bigr)
         \prod_{i=1}^k h_i(\mfU_{t_i})\Bigr] = 0.
  \end{align}
  Thus, from Section~\ref{sec:appr-solut-mart} we know that
  $(\mfU_t)_{t\ge 0}$ is also a solution to the
  $(\delta_\mfu,\Omega^{\uparrow}, \Pi(\mathcal C_b^1))$-martingale problem.
\end{proof}

\paragraph{Step 3} Now we prove that there is a version with almost
surely continuous paths.

\begin{lemma}[Continuous version]\label{lem:continuous}
  There exists a version of the $\U$-valued Feller diffusion $\mfU$
  with paths in $C([0,\infty), \U)$ almost surely.
\end{lemma}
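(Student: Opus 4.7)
The plan is to show that the càdlàg solution $\mfU$ produced in Proposition~\ref{cor:ex} admits a version with continuous paths, by first proving continuity of $t\mapsto \Phi(\mfU_t)$ for a class of test polynomials that determines the Gromov-weak topology on $\U$ away from $\ntree$, and then handling the approach to $\ntree$ separately via continuity of the total mass.

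First I would localize via the stopping times $\tau_n^+ = \inf\{t:\bar\mfU_t\ge n\}$ and $\tau_n^- = \inf\{t:\bar\mfU_t\le 1/n\}$. On the stochastic interval $[0,\tau_n^+\wedge\tau_n^-]$ the mass stays in $[1/n,n]$, so for any $\Phi \in \Pi(\bar\mcD^2_{c!},\wh\mcD^1_b)$ both $\Phi$ and $\Phi^2$ fall in a class on which $\Omega^{\uparrow}$ acts as a bounded continuous function (Lemma~\ref{L.mp}; note that $\Phi^2$ is again a polynomial). From the martingale problem of Proposition~\ref{cor:ex} applied to both $\Phi$ and $\Phi^2$ and a standard carré-du-champ identity one gets that the martingale $M^\Phi_{t\wedge \tau_n^+\wedge\tau_n^-} = \Phi(\mfU_{t\wedge\tau_n^+\wedge\tau_n^-}) - \Phi(\mfU_0) - \int_0^{t\wedge\tau_n^+\wedge\tau_n^-}\Omega^{\uparrow}\Phi(\mfU_s)\,\dx s$ has predictable quadratic variation $\int_0^{t\wedge\tau_n^+\wedge\tau_n^-}\Gamma(\Phi)(\mfU_s)\,\dx s$ with $\Gamma(\Phi)=\Omega^{\uparrow}\Phi^2-2\Phi\,\Omega^{\uparrow}\Phi$. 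Direct inspection of \eqref{mr3}--\eqref{mr3a} shows that $\Gamma(\Phi)$ contains only the bilinear contributions coming from the first-order growth operator and the resampling-type branching operator, i.e.\ no jump/Lévy term; consequently $\Gamma(\Phi)$ is a bounded continuous function, the quadratic variation is absolutely continuous in $t$, and $M^\Phi_{\cdot\wedge \tau_n^+\wedge\tau_n^-}$ has continuous paths almost surely. Hence $t\mapsto \Phi(\mfU_{t\wedge\tau_n^+\wedge\tau_n^-})$ is continuous.

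Second, letting $\Phi$ range over the countable separating subfamily of $\Pi(\bar\mcD^2_{c!},\wh\mcD^1_b)$ furnished by Lemma~\ref{lem:Pi:separating} and Remark~\ref{rem:poldeg}, which is enough to determine convergence in the Gromov weak topology restricted to the compact-mass slice $\{1/n\le\bar\mfu\le n\}$ (recall Remark~\ref{rk:Fellert} and that on this slice Gromov weak topology coincides with the product topology from Section~\ref{sec:polar-repr-stat}), I conclude almost sure continuity of $t\mapsto \mfU_{t\wedge\tau_n^+\wedge\tau_n^-}$. Using Lemma~\ref{lem:Feller} to send $\tau_n^+\to\infty$, continuity holds on $[0,T_{\mathrm{ext}})$, where $T_{\mathrm{ext}}$ is the extinction time of the total mass.

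Third, to obtain continuity across extinction I use that the total mass $\bar\mfU$ is by construction a solution of the classical Feller diffusion martingale problem (cf.\ the $\bar\Phi$-only specialization in Remark~\ref{r.1207}), hence has continuous paths almost surely, so $\bar\mfU_t\to 0$ continuously as $t\uparrow T_{\mathrm{ext}}$. Since $\bar\mfu\to 0$ is equivalent to convergence to $\ntree$ in the Gromov weak topology on $\U$, this gives $\mfU_t\to\ntree$ continuously at $T_{\mathrm{ext}}$; after $T_{\mathrm{ext}}$ the process is absorbed at $\ntree$ by the convention $\Omega^{\uparrow}\Phi(\ntree)=0$ combined with the total mass being absorbed at $0$. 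The main obstacle is the carré-du-champ computation ruling out jump terms, which is purely algebraic but must be checked explicitly using the definitions \eqref{mr3}--\eqref{mr3a}; once this is in hand the remaining arguments are standard localization and topological consequences already prepared in Section~\ref{ss.ummspace}.
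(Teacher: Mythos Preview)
Your approach is genuinely different from the paper's. The paper argues via the particle approximation $\mfU^{(N)}$: it shows that the jump functional $J_{d_{\GP}}(\mfU^{(N)})$ tends to zero in probability by splitting along the polar decomposition, using $d_{\GP}(\mfu,\mfv)\le|\bar\mfu-\bar\mfv|+d_{\GP}(\hat\mfu,\hat\mfv)$, and invoking the known continuity of the Feller diffusion and the $\U_1$-valued Fleming--Viot limits; then Theorem~10.2 of Chapter~3 in \cite{EK86} gives a continuous version of the limit. Your route via the carr\'e-du-champ of $\Omega^{\uparrow}$ is closer in spirit to Remark~\ref{r.4418}, which points to the second-order operator criterion of \cite{DGP12}.

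However, your argument as written has a real gap at the crucial step. From the martingale problem for $\Phi$ and $\Phi^2$ you correctly obtain that $\langle M^{\Phi}\rangle_t=\int_0^t\Gamma(\Phi)(\mfU_s)\,\dx s$ with $\Gamma(\Phi)=\Omega^{\uparrow}\Phi^2-2\Phi\,\Omega^{\uparrow}\Phi$ bounded on the localized interval. But absolute continuity of the \emph{predictable} quadratic variation does \emph{not} imply that $M^{\Phi}$ is continuous: the compensated Poisson martingale $N_t-\lambda t$ has $\langle N-\lambda\cdot\rangle_t=\lambda t$, yet jumps. Equivalently, computing $\Gamma(\Phi)$ and seeing that it is a bounded polynomial tells you nothing about jumps; the Poisson generator also has $\Gamma(f)(n)=\lambda(f(n+1)-f(n))^2$, a perfectly bounded expression. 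What actually rules out jumps is the \emph{chain rule} (second-order, or ``diffusion'') property of the operator: $\Omega^{\uparrow}(\psi\circ\Phi)=\psi'(\Phi)\,\Omega^{\uparrow}\Phi+\tfrac12\psi''(\Phi)\,\Gamma(\Phi)$ for smooth $\psi$, with no higher-order remainder. This is precisely the content of Proposition~4.10 in \cite{DGP12} for the resampling operator, and it is this property---not absolute continuity of $\langle M^{\Phi}\rangle$---that feeds into the continuity criterion (Proposition~4.5 in \cite{DGP12}, cf.\ Remark~\ref{r.4418}). Your phrase ``no jump/L\'evy term'' gestures at this, but the logical chain you wrote down (bounded $\Gamma\Rightarrow$ absolutely continuous $\langle M^{\Phi}\rangle\Rightarrow$ continuous $M^{\Phi}$) is invalid. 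To repair the argument you must either verify the chain rule for $\Omega^{\uparrow}$ on a rich enough class and invoke a second-order criterion explicitly, or abandon the carr\'e-du-champ route and use the approximation argument the paper gives.
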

\begin{proof}
  \label{pr.lem:continuous}
  Recall the definition of the Gromov-Prohorov metric $d_{\GP}$ in
  \eqref{eq:31} and consider an approximating particle system
  $\mfU^{(N)}$ as given in Proposition~\ref{cor:ex}. For
  $(\mfu_t)_{t\ge 0} \in \U^{\R_+}$ and $(x_t)_{t\ge 0} \in \R^{\R_+}$
  we define the functionals
  \begin{align}
    \label{tv28}
    J_{d_{\GP}}((\mfu_t)_{t\ge 0})
    & = \int_0^\infty e^{-u} \bigl( 1\wedge \sup_{s\le u}
      d_{\GP}(\mfu_s, \mfu_{s-}) \bigr) \, \dx u, \\
    \label{tv28b}
    J_{|\cdot|}((x_t)_{t\ge 0})
    & = \int_0^\infty e^{-u} \bigl(1\wedge \sup_{s\le u} |x_s-x_{s-}|
      \bigr) \, \dx u.
  \end{align}
  Using polynomials of order $1$, it is clear that for the total mass
  processes we have $\bar{\mfU}^{(N)} \Rightarrow \bar{\mfU}$, where
  $\bar{\mfU}$ is the classical Feller-diffusion which has continuous
  paths. Using Theorem~10.2 of Chapter~3 in \cite{EK86} we obtain
  \begin{align}
    \label{tv29}
    J_{|\cdot|}(\bar{\mfU}^{(N)}) \Rightarrow 0
    \quad \text{ as } N \to \infty.
  \end{align}
  Using the inequality
  \begin{align}
    \label{tv30}
    d_{\GP}(\mfu,\mfv) \le |\bar{\mfu}-\bar{\mfv}|
    + d_{\GP} (\hat \mfu,\wh \mfv).
  \end{align}
  we have $J_{d_{\GP}}(\mfU^{(N)}) \le J_{|\cdot|}(\bar{\mfU}^{(N)}) +
  J_{d_{\GP}} (\wh U^{(N)})$. Since we have the approximation for
  $\U_1$-valued Fleming-Viot models we obtain
  \begin{align}
    \label{tv31}
    J_{d_{\GP}}(\mfU^{(N)}) \Rightarrow 0 \quad \text{as } N\to \infty.
  \end{align}
  By Theorem~10.2 of Chapter~3 in \cite{EK86} the last inequality
  implies that $\mfU$ has a continuous version.
\end{proof}

\begin{remark}
  \label{r.5522}
  In the non-critical case we construct a solution of the martingale
  problem by conditioning on the complete total mass path
  $\bar \mfu\coloneqq (\bar \mfu_t)_{t\ge 0}$ and then running a
  time-inhomogeneous Fleming-Viot process at rate $b\bar \mfu^{-1}_t$
  to obtain first $(\wh \mfU_t(\bar \mfu))_{t \geq 0}$ for every path
  of the solution of the total mass path martingale problem, cf.\
  corresponding discussion on page~\pageref{p:skew}. The solution of
  our martingale problem is then given by
  $(\bar \mfU_t \wh \mfU_t(\bar \mfU))_{t \ge 0}$ and is obtained by
  averaging over the law of
  $\bar \mfU\coloneqq (\bar \mfU_t)_{t\ge 0}$, the non-critical Feller
  diffusion on $\R_+$.
\end{remark}

\begin{remark}[Second order operators]
  \label{r.4418}
  In Proposition~4.10 in \cite{DGP12} it is shown that the resampling
  operator is a second order operator for evolutions on $\U_1$. We can
  use this to conclude here that $(\wh \mfU_t)_{t \geq 0}$ has
  continuous paths as does the Feller diffusion since the generator is
  second order. Next we come to the $\U$-valued process and use
  formula \eqref{eq:r.742.1} and the information on the
  $\Omega^{\mathrm{mass}}$ and $\Omega^{\mathrm{res}}$ operators to
  conclude that the operator $\Omega^{\mathrm{bran}}$ is a second
  order operator. Then using the $\U$-valued version (instead of
  $\U_1$-valued) of Proposition~4.5 in \cite{DGP12} it can be shown
  that solutions of second order martingale problems have continuous
  paths and obtain continuity of paths of the solution of the
  martingale problem.
\end{remark}

\subsection{Uniqueness of Feller martingale problem on
  \texorpdfstring{$\U$}{U}}
\label{ss.uniqumar}

The Feynman-Kac duality relation for $\mfU^{\mathrm{Fel}}$ allows to
deduce \emph{uniqueness} of the $\Omega^{\uparrow}$-martingale problem
and the \emph{Feller property} of the solution, which we do in two
lemmata and their proofs.
\begin{lemma}
  \label{cor:unique}
  For any $\P_0 \in \mcM_1(\bbU)$ the local
  $(\P_0,\Omega^{\uparrow},\Pi(\mathcal C_b^1))$-martingale problem for the
  $\U$-valued Feller diffusion has a unique solution.
\end{lemma}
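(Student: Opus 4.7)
The plan is to derive uniqueness from the Feynman-Kac duality stated in Theorem~\ref{T:DUALITY}. The strategy is classical: the FK-duality uniquely pins down the one-dimensional marginals of any solution, and then the Markov property for solutions of an initial-point martingale problem extends this to uniqueness of the full law on path space in the spirit of Theorem~4.4.2 of Ethier--Kurtz.

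First I would reduce to initial point masses by noting that if $\mfU,\mfU'$ are two solutions with $\P_0 = \delta_{\mfu_0}$, then applying \eqref{tv12} to both yields
\begin{align*}
\E_{\mfu_0}\bigl[H^\varphi(\mfU_t,(p_0,\uur'_0))\bigr]
= \E_{(p_0,\uur'_0)}\Bigl[H^\varphi(\mfu_0,(p_t,\uur'_t))
\exp\Bigl(\int_0^t b\tbinom{|p_s|}{2}\dx s\Bigr)\Bigr]
= \E_{\mfu_0}\bigl[H^\varphi(\mfU'_t,(p_0,\uur'_0))\bigr]
\end{align*}
for every $t\ge 0$, every $(p_0,\uur'_0)\in\bbK$ and every $\varphi\in C_b^1$ depending on finitely many coordinates. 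For the FK-duality to be applicable to \emph{any} solution of the martingale problem I need the integrability condition of Theorem~4.11 of Chapter~4 of \cite{EK86}: here $|p_s|$ is non-increasing, so the FK-potential is bounded by $tb\binom{|p_0|}{2}$, and the $H^\varphi$-factor is controlled by $\|\varphi\|_\infty\bar{\mfU}_t^{|p_0|}$, whose moments I control by testing the martingale property against the polynomials $\Phi(\mfu)=\bar{\mfu}^n$ (which lie in the extended domain $\mcD_2$, cf.~Corollary~\ref{cor.1183}) and Gronwall, exactly as in Lemma~\ref{lem:Feller}.

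Second, the one-dimensional marginal $\mcL[\mfU_t]$ then belongs to $\wt\mcM$ (see~\eqref{tv4}) because $\bar\mfU_t$ has the distribution of a classical $\R_+$-valued Feller diffusion at time $t$ (Corollary~\ref{C.tvF}), whose moments satisfy the required Carleman-type growth bound (equivalently Corollary~\ref{cor.681}). On $\wt\mcM$, Lemma~\ref{lem:Pi:separating} tells us that the algebra generated by $\Pi$ is law-determining; the dual functions $H^\varphi(\cdot,(p_0,\uur'_0))$ with varying $(p_0,\uur'_0)$ and $\varphi\in C_b^1$ generate this algebra (a standard monotone-class/density argument from $C_b^1$ to bounded measurable $\varphi$). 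Consequently $\mcL[\mfU_t]=\mcL[\mfU'_t]$ for all $t\ge 0$.

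Third, to pass from one-dimensional marginals to finite-dimensional ones, I would invoke the duality-implies-uniqueness mechanism: uniqueness of the one-dimensional marginals for every initial point $\mfu_0\in\U$ implies that any solution is Markov and its semigroup is uniquely determined, hence the law on $D([0,\infty),\U)$ is unique. For a general $\P_0$ one then writes $P_{\P_0}=\int P_{\mfu_0}\,\P_0(\dx\mfu_0)$ using the already-proved uniqueness from point masses. The main obstacle is the justification of the FK-duality for an \emph{arbitrary} solution rather than just the one constructed in Section~\ref{ss.existence}: this requires an a~priori moment bound for $\bar\mfU_t$ under any solution, which is why one first enlarges the class of admissible test functions to include $\Phi(\mfu)=\bar\mfu^n$ via Corollary~\ref{cor.1183} and derives the moment estimate from the martingale identity alone before invoking~\eqref{tv12}.
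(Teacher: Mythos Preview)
Your proposal is correct and follows essentially the same route as the paper: apply the Feynman--Kac duality of Theorem~\ref{T:DUALITY} to conclude that all polynomials $\Phi^{m,\varphi}\in\Pi(C_b^1)$ have the same expectation under any two solutions, invoke Lemma~\ref{lem:Pi:separating} (together with the moment bounds for $\bar\mfU_t$) to get equality of one-dimensional marginals, and then cite the Stroock--Varadhan/Ethier--Kurtz result that this implies uniqueness of the martingale problem. The paper is slightly terser---it works directly with general $\P_0$ and picks the specific dual state $(p,\uur')=(\{\{1\},\dots,\{m-1\},\{m,m+1,\dots\}\},0)$ so that $H^{m,\varphi}(\cdot,(p,0))=\Phi^{m,\varphi}(\cdot)$---whereas you are more explicit about the a~priori moment bound needed to justify the FK-duality for an \emph{arbitrary} solution, which is a point the paper passes over quickly.
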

\begin{proof}\label{pr.twosol}
  WE consider first fixed initial states, i.e.\ $\P_0=\delta_{\mfu}$
  for some $\mfu \in \U$. Let $\mfU$ and $\mfU'$ be two solutions with
  the same initial distribution $\P_0$, i.e.\ under our assumption the
  same initial point. We base the duality now on a function $\varphi$
  which depends on $ m $ variables and include $ m $ in the notation.
  For $\Phi = \Phi^{m,\varphi} \in \Pi(\mathcal C_b^1)$ with $m \in \N$ define
  $p = (\{1\},\{2\},\dotsc, \{m-1\}, \{m,m+1,m+2,\dotsc \})$,
  $\dr' \equiv 0$ in Theorem~\ref{T:DUALITY}, to obtain that
  \begin{align}
    \label{tv44}
    \E_{\P_0} \left[ H^{m,\varphi} (\mfU_t,(p,0)) \right] = \E_{(p,0)}
    \left[ H^{|p_t|, \varphi}(\mfu, (p_t,\dr_t')) e^{\int_0^t
    \binom{|p_s|}{2} \, ds} \right] = \E_{\P_0} \left[ H^{m,\varphi}
    (\mfU_t',(p,0)) \right].
  \end{align}
  On the other hand
  $H^{m,\varphi}(\mfU_t,(p,0)) = \Phi^{m,\varphi}(\mfU_t)$ and since
  the algebra generated by the class $\Pi(\mathcal C_b^1)$ is separating
  (Lemma~\ref{lem:Pi:separating} and the fact that the moments of the
  total masses exist for all $t \ge 0$ and satisfy \eqref{tv4} as is
  well known and follows by a moment calculation \cite{D93},
  Chapter~4.7.) we know that $\mcL[\mfU_t] = \mcL[\mfU_t']$ for any
  $t \ge 0$, which gives uniqueness of the one-dimensional marginals
  implying the \emph{uniqueness of the martingale problem} by a result
  of Stroock and Varadhan; see Theorem~5.1.2 in \cite{D93}.

  Let $P_u$ denote the solution of the martingale problem with initial
  law $\delta_\mfu$. If we have a general $\P_0$ then we characterize
  the solution of the corresponding martingale problem as a solution
  to a local martingale problem which is given by
  $\int P_\mfu \P_0(d\mfu)$, so that for the same initial law we have
  the same solutions if we know that the solution starting in $\P_0$
  must have this form.
\end{proof}

The next point is to obtain the Feller property and the strong Markov
property from that. Here the key point is that for the duality
function $H$ the set of functions $\{H(\cdot,\mfK) : \mfK \in \bbK\}$
is law determining for the forward evolution and we can therefore use
the \emph{duality} to prove the following result.

\begin{lemma}[Feller property]
  The \label{cor:Feller} semigroup associated with the
  $(\P_0, \Omega^{\uparrow}, \Pi(\mathcal C_b^1))$-martingale problem is a
  Feller semigroup in the sense that the mapping
  \begin{align}
    \label{e.cor:Feller}
    \mcM_1(\U) \; \rightarrow \mcM_1(\U), \quad
    \pi \mapsto \int \pi (\dx \mfu) \P(\mfU_t \in \cdot \mid \mfU_0 =
    \mfu) \; \text{ for all $\pi \in \mcM_1(\U)$}.
  \end{align}
  is continuous.
\end{lemma}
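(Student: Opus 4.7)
The plan is to use the Feynman-Kac duality from Theorem~\ref{T:DUALITY} to show that for every polynomial $\Phi \in \Pi$ the map $\mfu \mapsto \E_\mfu[\Phi(\mfU_t)]$ is continuous on $\U$ with respect to the Gromov weak topology, then to upgrade this to weak continuity of the one-dimensional marginal kernel, and finally to integrate in the initial distribution.

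First I would fix $\Phi = \Phi^{n,\varphi}$ with $\varphi \in C_b(\R_+^{\binom n 2})$ and apply Theorem~\ref{T:DUALITY} with $(p_0,\dr'_0) = (\{\{1\},\ldots,\{n-1\},\{n,n+1,\ldots\}\},0)$ so that $H^\varphi(\mfu_0,(p_0,0)) = \Phi(\mfu_0)$. The duality gives
\begin{align*}
  \E_{\mfu_0}[\Phi(\mfU_t)] = \E_{(p_0,0)}\Bigl[H^\varphi(\mfu_0,(p_t,\dr'_t))\, \exp\Bigl(\int_0^t b\tbinom{|p_s|}{2}\,\dx s\Bigr)\Bigr].
\end{align*}
Since $|p_s|$ is nonincreasing in $s$, the Feynman-Kac weight is bounded by the constant $\exp(tb\binom{n}{2})$ and does not depend on $\mfu_0$. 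For each fixed realisation of $(p_t,\dr'_t)$, the function $\mfu_0 \mapsto H^\varphi(\mfu_0,(p_t,\dr'_t))$ is a polynomial of degree $|p_t| \le n$, hence continuous on $\U$ in the Gromov weak topology and bounded by $\|\varphi\|_\infty\, \bar\mfu_0^{|p_t|}$. Along any convergent sequence $\mfu_0^{(k)} \to \mfu_0$ in $\U$ the total masses $\bar\mfu_0^{(k)}$ are bounded, so dominated convergence inside the outer expectation yields continuity of $\mfu_0 \mapsto \E_{\mfu_0}[\Phi(\mfU_t)]$.

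Next I would pass from polynomials to weak convergence of the one-dimensional marginals. If $\mfu_0^{(k)} \to \mfu_0$ in $\U$, then for every $\Phi \in \Pi$ we have $\E_{\mfu_0^{(k)}}[\Phi(\mfU_t)] \to \E_{\mfu_0}[\Phi(\mfU_t)]$. By Lemma~\ref{lem:Feller} the moments $\E_{\mfu_0}[\bar\mfU_t^K]$ grow at most like the moments of a Feller diffusion started in a bounded initial mass, hence the Cramer condition \eqref{tv4} holds uniformly along the sequence and $\mcL_{\mfu_0}[\mfU_t] \in \wt\mcM$. Lemma~\ref{lem:Pi:separating} then asserts that the algebra generated by $\Pi$ is convergence determining on $\wt\mcM$, so $\mcL_{\mfu_0^{(k)}}[\mfU_t] \Rightarrow \mcL_{\mfu_0}[\mfU_t]$ weakly on $\U$. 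Equivalently, $\mfu \mapsto \E_\mfu[F(\mfU_t)]$ is continuous (and bounded by $\|F\|_\infty$) for every $F \in C_b(\U)$.

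Finally, for general initial laws, if $\pi_k \Rightarrow \pi$ in $\mcM_1(\U)$ and $F \in C_b(\U)$, then the function $G(\mfu) \coloneqq \E_\mfu[F(\mfU_t)]$ is an element of $C_b(\U)$ by the previous step, so by definition of weak convergence
\begin{align*}
  \int G\,\dx\pi_k \longrightarrow \int G\,\dx\pi, \qquad k \to \infty,
\end{align*}
which is exactly the weak convergence $\pi_k S_t \Rightarrow \pi S_t$ asserted in \eqref{e.cor:Feller}. The main conceptual obstacle is the non-local-compactness of $\U$, which forbids a direct Stone--Weierstrass style approximation of $C_b(\U)$ by polynomials; it is circumvented by relying on Lemma~\ref{lem:Pi:separating} together with the uniform moment control on the total mass, so that weak continuity of the kernel is read off directly from continuity of polynomial expectations.
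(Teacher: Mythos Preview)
Your proof is correct and uses the same key idea as the paper: the Feynman--Kac duality of Theorem~\ref{T:DUALITY} together with dominated convergence gives continuity of $\mfu \mapsto \E_\mfu[\Phi(\mfU_t)]$ for polynomials, and Lemma~\ref{lem:Pi:separating} then upgrades this to weak continuity of the transition kernel. Your organisation---first establishing continuity at Dirac initial conditions, then deducing that $\mfu \mapsto \E_\mfu[F(\mfU_t)]$ lies in $C_b(\U)$ for every $F \in C_b(\U)$, and only then integrating against a general $\pi_k \Rightarrow \pi$---is slightly cleaner than the paper's direct calculation at the level of $\pi_n$, since it avoids having to justify the convergence of integrals of unbounded polynomial-type functions against a merely weakly convergent sequence of initial laws.
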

\begin{proof}
  \label{pr.pin}
  We need to show that for $\pi,\pi_1,\pi_2,\dots \in \mcM_1(\U)$ with
  $\pi_n \Rightarrow \pi$ we have
  \begin{align}
    \label{tv45}
    \P_{\pi_n} (\mfU_t \in \cdot \, ) \Rightarrow \P_{\pi} (\mfU_t \in
    \cdot \,)\; \text{ weakly on $\mathcal{M}_1(\U)$}.
  \end{align}
  It suffices to consider the convergence determining class
  $\wt{\mcM}$ given in Lemma~\ref{lem:Pi:separating}. Recall that all
  moments of the mass process are finite for every $t$ and every
  initial distribution $\pi$, $\pi_n$ can be approximated by
  truncation in the weak topology by elements from $\wt{\mcM}$, so
  that we obtain the claim in the general case. The measure
  $\P_\pi (\mfU_t \in \cdot\,)$ is actually in the set $\wt\mcM$ by
  Lemma~\ref{lem:Feller}. Using the duality of Theorem~\ref{T:DUALITY}
  we have
  \begin{align}
    \label{tv46}
    \begin{split}
      \E_{\pi_n}[H(\mfU_t, (p_0,\dr_0'))]
      & = \int_{\U} \pi_n (\dx \mfu) \, \E_\mfu [H(\mfU_t, (p_0,\dr_0'))] \\
      & = \int_{\U} \pi_n (\dx \mfu) \, \tilde{\E}_{(p_0,\dr_0')}
      [H(\mfu, (p_t,\dr_t'))e^{\int_0^t
    \binom{p_s}{2} \, ds}] \\
      & \xrightarrow{n\to\infty} \int_{\U} \pi (\dx \mfu) \,
      \tilde{\E}_{(p_0,\dr_0')}
      [H(\mfu,(p_t,\dr_t'))e^{\int_0^t
    \binom{p_s}{2} \, ds}] \\
      & = \int_{\U} \pi (\dx \mfu) \, \E_\mfu
      [H(\mfU_t,(p_0,\dr_0'))e^{\int_0^t
    \binom{p_s}{2} \, ds}] =
      \E_{\pi}[H(\mfU_t,(p_0,\dr_0'))].
    \end{split}
  \end{align}
  Convergence in the next to last step holds since the function
  $\mfu \mapsto \tilde{\E}_{(p_0,\dr_0')} [H(\mfu, (p_t,\dr_t'))]$ is
  continuous by the dominated convergence theorem.
\end{proof}

We can now use the continuity of path and the generalized Feller
property to obtain the strong Markov property by approximating a
stopping time by ones with countably many values.

\begin{corollary}[Strong Markov property]
  The process $\mfU$ satisfies the strong Markov property.
\end{corollary}

\begin{remark}
  \label{r.tv45}
  Recall that we also know that $\E_\mfu[\Phi(\mfU_t)] \rightarrow
  \Phi(\mfu)$ as $t \rightarrow 0$ for any $\Phi \in \Pi(\mathcal C_b^1)$ (see
  below \eqref{tv45}). Since $\U$ is not locally compact, the previous
  result does not suffice to deduce the strong continuity of the
  semigroup as in Chapter~16 of \cite{Kall02}. In fact it is not
  possible to obtain uniform bounds on $\E_\mfu[F(\mfU_t)-F(\mfu)]$ on
  $\mathcal C_b(\U)$ unless $F \in \Pi(\mathcal C_b)$.
\end{remark}

\begin{remark}[Uniqueness, Feller non-critical case]
  In \label{r.5638} the non-critical case we just have an additional
  term in the potential, see \eqref{e.ggr1}, and the uniqueness of the
  martingale problem and the Feller property follow exactly along
  those lines above.
\end{remark}

\section[Proofs of Theorems~\ref{T:BRANCHING},~\ref{TH.MARTU}]{Proofs of
  Theorems~\ref{T:BRANCHING},~\ref{TH.MARTU}: branching property, Cox
  representation, conditioning}\label{sec:branching}

Here we prove the structural properties of the $\U$-valued Feller
diffusion.

\subsection{Proof of Theorem~\ref{T:BRANCHING}: Markov branching
  property and Cox cluster representation}\label{ss.th3}

We begin with the proof of the lemma on existence of the limiting
forest of $\U$-valued Yule trees.
\begin{proof}[Proof of Lemma~\ref{l.e1408}]
  \label{p.l.e1408}
  We have to show \emph{tightness} and then \emph{convergence} of
  $\mfY_s^{(t)}$ as $s \uparrow t$. We mark (for the elementary Yule
  process this is easy) the individuals with their time of birth,
  i.e.\ consider states $[U_t \times [0,t], r_t,\nu_t]$. Then we
  decompose the population in two parts the one with colors $\leq s$
  and the other consisting of individuals with colors $>s$, that is,
  we consider
  \begin{align}
    \label{e4443}
    \begin{split}
      U^i_t & = \supp\; \nu_t^i, \; i=1,2, \text{ where} \\
      \nu^1_t & = \nu_t |_{U \times [0,s]} \\
      \nu^2_t & = \nu_t |_{U \times (s,t]}.
    \end{split}
  \end{align}
  The metrics on $U^i_t$, $i=1,2$, are given by corresponding
  restrictions of the metric $r_t$.

  For tightness we use the standard tightness criterion for marked
  metric measure spaces in \cite{DGP11} extended to finite measures,
  see Section~B.1 in \cite{infdiv}.

  The sequence is tight, since first of all the diameter is bounded by
  $t$ and second the total mass at time $t-h$ is stochastically
  bounded in $h \in [0,t]$ even though the total rate in
  $u \in [0,t-u]$ as $u \uparrow t$ logarithmically resulting in
  countably many branches splitting off. However, since the expected
  population mass produced by the descendants is $t-u$ upon survival
  the mass becomes sufficiently small, namely the total expected mass
  production rate is $1$ over this time interval so that we get a
  finite mass in the limit.

  The final step is to check the ``no dust'' condition. We need the
  (smallest) number of ancestors making up at least fraction
  $(1-\varepsilon)$ of the total mass. But in fact the number of
  ancestors time $\varepsilon$-back in $\mfY_s^{(t)}$ is finite and
  stochastically bounded since the rate of splitting in the Yule tree
  at time $s$ increases as $s \uparrow t$ and the total rate up to
  $t-\delta$ is finite for $\delta < 0$.

  Next we have to show convergence. Since the $\U^V$-valued Feller
  diffusion has continuous paths we only need to see that as
  $s \uparrow t$ the contribution of the population with colors $s'$
  for $s' \in (s,t)$ converges to the zero tree. This is true since
  the total mass of that contribution goes to zero.
\end{proof}

In the sequel we will need the following property of the dual
dynamics. Recall the notation introduced in \eqref{tv32} --
\eqref{eq:testfunc:down}.
\begin{lemma}
  Let \label{lem:r':growth} $K = (p_t,\dr'_t)_{t\ge 0}$ be a solution
  to the
  $(\delta_{(p,0)},L^{\downarrow,\mathrm{K}},\mcG^{\downarrow})$-martingale
  problem started in $(p,0)$. Then, $(\dr'_t)_{ij} = 2t$ for all
  $1\le i < j$ with $p_t(i) \neq p_t(j)$.
\end{lemma}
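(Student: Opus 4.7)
The proof is a direct consequence of two structural observations about the enriched Kingman coalescent. First, the coalescence part $L^{\downarrow,\mathrm{coal}}$ modifies only the partition component $p$ and never splits a merged partition element, so the indicator $\ind{i \nsim_{p_s} j}$ is nonincreasing in $s$. Second, the growth part $L^{\downarrow,\mathrm{grow}}$ modifies only $\dr'$ and drives each coordinate $r'_{ij}$ deterministically at speed $2$ precisely while $i \nsim_{p_s} j$ (the factor $2$ matches the convention for $\Omega^{\uparrow,\mathrm{grow}}$ as used in the proof of Theorem~\ref{T:DUALITY}, cf.\ equation~\eqref{tv41b}).

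To turn this into a rigorous statement, I would apply the martingale problem of Lemma~\ref{lem:K:ex} to test functions $G \in \mcG^{\downarrow}$ that depend only on the single coordinate $r'_{ij}$, namely $G(p,\dr') = f(r'_{ij})$ with $f \in C_b^1(\R_+)$. Such $G$ lies in $\mcG^{\downarrow}$, since it is realized as $H^{\varphi}\bigl(\mfu,(p,\dr')\bigr)$ by picking $\mfu = \mfe$ and $\varphi$ depending only on the $(i,j)$-entry of the distance matrix. On this class, $L^{\downarrow,\mathrm{coal}} G \equiv 0$ because $G$ is independent of $p$, while $L^{\downarrow,\mathrm{grow}} G(p,\dr') = 2\, \ind{i \nsim_p j}\, f'(r'_{ij})$. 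Thus
\begin{equation*}
  f(r'_{ij}(t)) - f(0) - \int_0^t 2\, \ind{i \nsim_{p_s} j}\, f'(r'_{ij}(s))\, \dx s
\end{equation*}
is a martingale. Since $\dr'_0 = 0$ and $r'_{ij}$ is monotone nondecreasing with compensator of bounded variation (bounded by $2t$), a standard truncation/approximation argument letting $f(x) \to x$ yields the almost sure pathwise identity
\begin{equation*}
  r'_{ij}(t) = 2 \int_0^t \ind{i \nsim_{p_s} j}\, \dx s.
\end{equation*}

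To conclude, I invoke the monotonicity observation: if $p_t(i) \neq p_t(j)$, then $i \nsim_{p_s} j$ for every $s \in [0,t]$, so the integrand equals $1$ throughout $[0,t]$ and the right-hand side equals $2t$, as claimed. There is no genuine obstacle here; the only point requiring mild care is the approximation argument needed to move from the generator identity on $C_b^1$ test functions to the pathwise identity for the coordinate $r'_{ij}$, which is routine given the deterministic, monotone nature of the drift.
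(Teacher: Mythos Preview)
Your proof is correct and follows essentially the same approach as the paper: both fix $i<j$, take the test function $G(p,\dr')=f(r'_{ij})$, observe that $L^{\downarrow,\mathrm{coal}}G=0$ while $L^{\downarrow,\mathrm{grow}}G=2\,\ind{i\nsim_p j}\,f'(r'_{ij})$, and then use the monotonicity of coalescence (if $p_t(i)\neq p_t(j)$ then $p_s(i)\neq p_s(j)$ for all $s\le t$) to conclude $r'_{ij}(t)=2t$. You are slightly more explicit than the paper in verifying $G\in\mcG^{\downarrow}$ via $\mfu=\mfe$ and in spelling out the passage from the $C_b^1$ martingale problem to the pathwise identity, but the argument is the same.
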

\begin{proof}
  \label{pr.len:r':growth}
  Let $1\le i < j$ with $p_t(i) \neq p_t(j)$, which implies
  $p_s(i)\neq p_s(j), \, s\le t$. Consider $G(p,\dr') = f(\dr_{ij}')$
  for $f \in C_b^1(\R_{\ge 0})$. Then
  \begin{align}
    \label{tv47}
    L^{\downarrow,\mathrm{grow}} G(p,\dr') = \indset{\{p(i) \neq p(j)\}}
    f'(\dr_{ij}') , \quad
    L^{\downarrow,\mathrm{coal}} G(p,\dr') = 0
  \end{align}
  and therefore,
  \begin{align}
    \label{tv48}
    f(r_{ij}'(t))-f(0) - \int_0^t 2f'(r_{ij}'(s)) \, \dx s
  \end{align}
  is a martingale, which implies $r_{ij}'(t) = 2t$.
\end{proof}

\begin{proof}[Proof of Theorem~\ref{T:BRANCHING}, (a) branching property]
  \label{pr.T:BRANCHING}
  Fix $\mfu \in \U$. Let
  $Q_t(\mfu,\cdot) = \P_\mfu(\mfU_t \in \cdot)$, $t\ge 0$ be the
  semigroup related to $\mfU$. By Proposition~2.8 in \cite{infdiv} we
  have that truncated polynomials are separating. Then by Theorem~4.5.
  in Chapter~3 in \cite{EK86} we know that we need to show that for
  any $t,h\ge 0$, $\mfu_i =[U_i,r_i,\mu_i] \in \U(h)$, $i=1,2$ and
  integrable $\Phi = \Phi^{m,\varphi} \in \Pi(\mathcal C_b)$:
  \begin{align}
    \label{eq:branching:1}
    Q_t(\mfu_1 \sqcup^h \mfu_2, \Phi_{t+h}) = Q_t(\mfu_1,\Phi_{t+h}) +
    Q_t(\mfu_2,\Phi_{t+h}).
  \end{align}
  Integrability of $\Phi$ follows from the martingale problem.

  Using Theorem~\ref{T:DUALITY} on the Feynman-Kac duality, for
  $p= \{\{1\},\{2\}, \dotsc, \{m-1\}, \{m,m+1,\dotsc\}\}$, $\dr' = 0$
  and $\mu_{12} = \mu_1 + \mu_2$ we can write
  \begin{align}
    \label{tv49}
    Q_t(\mfu_1 \sqcup^h \mfu_2, \Phi_{t+h}^{m,\varphi} )
    & = \E_{\mfu_1\sqcup^h \mfu_2} [\Phi_{t+h}^{m,\varphi} (\mfU_t)] \\
    & = \E_{p,\dr'} \Bigl[H^{|p_t|,\varphi_{t+h}} (\mfu_1 \sqcup^h
      \mfu_2,(p_t,\dr'_t)) \exp(\int_0^t \binom{|p_t|}{2} \, \dx s)
      \Bigr] \label{tv49b}\\
    & = \E_{p,\dr'} \Bigl[ \int (\mu_1+\mu_2)^{\otimes |p_t|} (\dx
      \underline{u}_p) \varphi_{t+h}(\dr^{p_t}(\underline{u}_{p_t}) + \dr'_t)
      \exp(\int_0^t \binom{|p_t|}{2} \, \dx s)\Bigr]. \label{tv49c}
  \end{align}
  In the case that in $\underline{u}_{p_t}$ individuals from both
  $U_1$ and $U_2$ are drawn, say $u_1$ and $u_2$ for simplicity, then
  $r(u_1,u_2) > 2h$ by construction of $\mfu_1 \sqcup^h \mfu_2$. On
  the other hand, $p_t(1) \neq p_t(2)$ if we draw $1$ and $2$ from
  $ U_1,U_2 $ respectively. Thus, $r'_t(\pi_1,\pi_2) = 2t$ by
  Lemma~\ref{lem:r':growth} and we obtain:
  \begin{align}
    \label{tv50}
    \varphi_{t+h}( (\dr(\underline{u}_{p_t}))^{p_t} + \dr'_t) = 0.
  \end{align}
  This directly allows to deduce \eqref{eq:branching:1} by calculating
  backwards from \eqref{tv49c}.
\end{proof}

\begin{proof}[Proof of Theorem~\ref{T:BRANCHING}, (b)
  \Levy{}-Khintchine representation] Next we have to show the
  \emph{Markov branching property} and to identify the
  \emph{ingredients} of the \emph{\Levy{}-Khintchine representation},
  which means for each time $t$ and $h \in (0,t)$ we need to identify
  $m_h$ and $\varrho^t_h$, and show that $m_h$ is infinitely
  divisible and characterized by its \Levy{} measure. The first refers
  to the process of total masses and then we only need to know that
  given the mass, the tree structure fits with $\varrho^t_h$.

  \medskip
  \noindent
  \textbf{(1)} We first argue that we have the \emph{Markov branching
    property}. Let $M^{t}_{h} = \#_h(\mfU_t)$ be the number of
  disjoint balls of radius $2h$ in the ultrametric space $\mfU_t$.
  Recall that for the total mass process $(\bar{\mfU}_t)_{t\ge 0}$ we
  have the representation of the Laplace transform by the
  \emph{$\log$-Laplace functional} (see \cite{D93}, Section 4), namely
  for $x \ge 0$ and $\lambda>0$:
  \begin{align}
    \label{e.Lap:Fel:mass}
    \E_x[e^{-\lambda \bar{\mfU}_t} ] = \exp \left( -x u_t(\lambda)
    \right), \text{ where } u_t(\lambda) \text{ solves: } \;
    \frac{\partial u_t(\lambda)}{\partial t}=-\frac{b}{2} \;
    u_t^2(\lambda), \; u_0(\lambda)=\lambda.
  \end{align}
  In particular we have $u_t(\lambda) = 2\lambda/(2+bt\lambda)$. By
  combining Theorem~1.37 and Theorem~1.44 from \cite{infdiv} and using
  Proposition~\ref{thm:bransemig-brantree:1},
  Proposition~\ref{l.totmass:semigroup}, Lemma~\ref{l.Grey} and
  Proposition~\ref{p.MBT.balls.new} for $e(h) = u_h(\infty) = 2/(b h)$ we
  obtain
  \begin{align}
    \label{e4263}
    \begin{split}
      \E_\mfu [ \exp(- \Phi_h(\mfU_t))]
      & = \int Q_{t-h}(\mfu,\dx \mfw) \exp \Bigl( -\bar{\mfw}e(h)
      \int_{\U(h)\setminus \{\ntree\}} \varrho_h^t(\dx \mfv) (1-
      e^{-\Phi(\mfv)}) \Bigr) \\
      & = \int Q_{t-h}(\mfu,\dx \mfw) \E\biggl[\exp\Bigl(-
      \Phi_h(\bigsqcup_{i=1}^{N(\bar{\mfw},h)} \mfV_i) \Bigr)
      \biggr],
    \end{split}
  \end{align}
  where \emph{$\mfV_i$ are i.i.d.\ drawn according to
    $\varrho_h^t$} and \emph{independent} of the \emph{(random)
    number of summands}
  $N(\bar{\mfw},h) = \Pois(\bar{\mfw}e(h))$. Therefore we have
  the Markov branching property.

  \medskip
  \noindent
  \textbf{(2)} Now we identify $m_h$ the law of the random variable
  $\bar \mfw$. The measure $m_h$ involves information about the number
  of $2h$-balls and hence involves a lot of information about the tree
  structure. In particular, \eqref{e4263} means that conditionally on
  $\mfU_{t-h}$, the number of balls of radius $2h$ in $\mfU_t$ is a
  Poisson variable with parameter $u_h(\infty)\bar{\mfU}_{t-h}$,
  denoted by $M^{t}_{h}$.

  Let $0<h'<h$. Then for $\mfV_i$ drawn according to
  $\varrho_h^t$, $Z_i = \#_{h'}(\mfV_i), \, i \in \N $ is an
  i.i.d.\ collection of positive integers and
  \begin{align}
    \label{e4265}
    M^{t}_{h'} = \sum_{i=1}^{M^{t}_{h}} Z_i.
  \end{align}
  This can be translated into an equation for the generating functions
  with $q \in (0,1)$:
  \begin{align}
    \label{e.Poi:eq}
    \int Q_{t-h'}(\mfu,\dx \mfw) \E [ q^{\text{Pois}(\bar{\mfw}
    u_{h'}(\infty)) } ] = \int Q_{t-h}(\mfu,\dx \mfw)
    \E[q^{\sum_{i=1}^{\text{Pois}(\bar{\mfw} u_h(\infty))} Z_i)}].
  \end{align}
  Using \eqref{e.Lap:Fel:mass} we obtain for the left hand side:
  \begin{align}
    \label{e4266}
    \begin{split}
      \int Q_{t-h'}(\mfu,\dx \mfw) \E [ q^{\Pois(\bar{\mfw}
        u_{h'}(\infty)) }]
      &= \int Q_{t-h'}(\mfu,\dx \mfw) \exp( - \bar{\mfw}
      u_{h'}(\infty) (1-q) ) \\
      & = \exp \left( - \bar{\mfu} u_{h'}((1-q)u_{h'}(\infty))
      \right).
    \end{split}
  \end{align}
  Similarly we evaluate the right hand side with
  $g_{h,h'}:[0,1] \to \R$ the generating function of $Z_1$:
  \begin{align}
    \label{e4267}
    \int Q_{t-h}(\mfu,\dx \mfw) \E[ q^{\sum_{i=1}^{\text{Pois}(\bar{\mfw}
    u_h(\infty))} Z_i} ] = \exp( - \bar{\mfu} u_h((1-h(q))u_h(\infty))).
  \end{align}
  Inserting this into \eqref{e.Poi:eq} we get
  \begin{align}
    \label{e.tr37}
    u_{t-h}((1-g(q))u_h(\infty)) = u_{t-h'}((1-q)u_{h'}(\infty)),
  \end{align}
  which due to the dynamical system structure of the $u_h$ is
  independent of $t$ and can be written as
  \begin{align}
    \label{e.tr38}
    g_{h,h'}(q) = 1 - \frac{1}{u_h(\infty)}
    u_{h-h'}((1-q)u_{h'}(\infty)).
  \end{align}

  We get the total mass of each of the leaves in the trunk as follows.
  Clearly, the corresponding random variables are i.i.d.~and one of
  them, say $Y$, equals the total mass of an ultrametric space chosen
  according to $\varrho_h^t$. This can be calculated in general with
  the help of Proposition~\ref{thm:bransemig-brantree:1}. Note that
  $\varrho_h^t$ is the normalized version of $\hat\varrho_h^t$ from
  that proposition, i.e.\ $\varrho_h^t = e(h)^{-1}\hat\varrho_h^t$.
  For $\gamma >0$ we have
  \begin{align}
    \label{e4268}
    \E [e^{-\gamma Y}]
    & = \lim_{n\to \infty} (e(h))^{-1} \int_{\bar{\mfu}>0}
      nQ_h(\frac{1}{n}\mfe,\dx \mfu) \, e^{-\gamma \bar{\mfu}} \\
    & = \lim_{n\to \infty} (e(h))^{-1} \int_{x>0}
      n\bar{Q}_h(n^{-1},\dx x) \, e^{-\gamma x}  \qquad (\text{by
      Proposition~\ref{l.totmass:semigroup}}) \label{e4268b}\\
    & = (u_h(\infty))^{-1} \lim_{n\to \infty} \Bigl[ e^{-\frac{1}{n}
      u_h(\gamma)} - e^{-\frac{1}{n}u_h(\infty)} \Bigr] \qquad
      (\text{by Lemma~\ref{l.Grey}})\label{e4268c}\\
    & = 1- \frac{u_h(\gamma)}{u_h(\infty)}. \label{e.tr40}
  \end{align}

  We now specialize to our case. In the case of the $\U$-valued
  Feller diffusion we have
  \begin{align}
    \label{e4269}
    u_h(\gamma) = 2\gamma/(2+bh\gamma),
    \quad
    u_h(\infty) = 2/(bh).
  \end{align}
  Inserting this in \eqref{e.tr38} gives:
  \begin{align}
    \label{e4270}
    g_{h,h'}(q) = 1- h \frac{(1-q)/h'}{1+(1-q)(h-h')/h'} =
    \sum_{k=1}^\infty q^k \frac{h'}{h} \Bigl(\frac{h-h'}{h}\Bigr)^{k-1},
  \end{align}
  which is a geometric distribution with parameter $(h-h')/h$. That is
  \begin{align}
    \label{e4271}
    Z_1 \text{ is geometrically distributed with parameter }(h-h')/h .
  \end{align}
  By Proposition~\ref{p.MBT.balls} we know that
  $(M^{t}_{h'})_{h'\in [t-h,t)}$ is a Markov process, moreover a
  branching process. We have also shown that the marginal
  distributions are geometric conditionally on $M^{t}_{h} = 1$.

  Now we show that a Yule process on $\N_0$ denoted
  $(X_s^{(h)})_{s \in [0,h)}$ with $X_0^{(h)} =1$ and jump rate
  $2(h-s)^{-1}$ at time $s \in [0,h)$ has the same marginal
  distribution.

  Let $f(s,q) = \E_1 [q^{X_s^{(h)}}]$ and observe that $f(s,q)$ must
  be a solution of the following Kolmogorov backward equation for
  $s \in (0,h)$ and $q\in [0,1)$:
  \begin{align}
    \label{e4272}
    \partial_s f(s,q) = \frac{2}{h-s} q(q-1) \partial_q
    f(s,q),\ f(0,q) = q,\, f(s,1) = 1.
  \end{align}
  The only solution of this equation is $f(s,q) = g_s(q)$. Therefore
  we have shown that the Yule process $X^{(h)}$ and
  $(M^{t}_{h'})_{h'\in [t-h,t)}$ have the same law conditionally
  on $M^{t}_{h} = 1$.

  It remains to identify the law of the total mass of each of the
  leaves in the trunk. Using \eqref{e.tr40} the Laplace transform of
  the mass equals
  \begin{align}
    \label{e4273}
    1- \frac{u_h(\gamma)}{u_h(\infty)}
    =\frac{2}{2+\gamma b h}.
  \end{align}
  This correspond to an exponential distribution with parameter $bh$.
  It suffices to reconstruct the $h$-trunk uniquely which also gives
  the whole state as limit $h \uparrow t$ by Proposition~2.25 in
  \cite{infdiv}.

  In the non-critical case the differential equation analogous to
  \eqref{e.Lap:Fel:mass} is given by
  \begin{align}
    \label{eq:unoncrit}
    \frac{\partial u_t(\gamma)}{\partial t} = -au_t(\gamma) +
    \frac{b}2u_t^2(\gamma), \quad u_t(0) = \gamma, \quad \text{
    with $b >0$ and $a \ne 0$}.
  \end{align}
  The solution and its limit for $\gamma\to\infty$ is given by
  \begin{align}
    \label{e4274}
    u_t(\gamma) = \frac{2a\gamma }{2a e^{-at} + b\gamma(1-e^{-at})}, \quad
    u_t(\infty) = \frac{2a}{b(1-e^{-at})}.
  \end{align}
  In the limit $a\to 0$ these expressions coincide with the
  corresponding expressions in the critical case. We can use
  \eqref{e.tr38} to determine the law of a family descending from one
  individual between time $t-h < t-h'$:
  \begin{align}
    \label{e4275}
    g_{h,h'}(q)
    & = 1 - a^{-1} b(1-e^{-ah})
      \frac{a(1-q)u_{h'}(\infty)}{e^{a(h-h')}(a-b(1-q)u_{h'}(\infty))
      + b(1-q) u_{h'}(\infty) } \\
    \label{e4275b}
    & = 1 - a^{-1} b(1-e^{-ah})
      \frac{a(1-q)a(b(1-e^{-at}))^{-1}}{e^{a(h-h')}
      (a-(1-q)a(1-e^{-ah'})^{-1}) + (1-q) a(1-e^{-ah'})^{-1} } \\
    \label{e4275c}
    & = \frac{e^{-ah} - e^{-a(h-h')} }{e^{-ah}-1} \sum_{k\ge 0}
      q^{k+1} \Bigl( \frac{e^{-a(h-h')}-1}{e^{-ah}-1} \Bigr)^k.
  \end{align}
  This is a geometric distribution for any $a\in \R \setminus \{0\}$
  and can be extended to $a=0$ by a limit, which gives the result in
  the critical case. Using a PDE approach as in the critical case we
  get for fixed $h\in (0,t]$ and $s= h-h' \in [0,h)$:
  \begin{align}
    \label{e4276}
    g_{h,h-s} (q) = \frac{e^{-ah}-e^{-as}}{e^{-ah}-1} \sum_{k\ge 0}
    q^{k+1} \Bigl( \frac{e^{-as}-1}{e^{-ah}-1} \Bigr)^k.
  \end{align}
  We compare this with a Feller process $(X_s)_{s\in [0,h)}$ started
  in $X_0 = 1$ and with generator
  \begin{align}
    \label{e4277}
    A_s f(n) = \alpha(s) n (f(n+1)-f(n)), \; s \in [0,h).
  \end{align}
  We compute the backward PDE for $f(s,q) = \E_1[q^{X_s}]$ for
  $q \in [0,1]$ and $s \in [0,h)$ and obtain:
  \begin{align}
    \label{e4278}
    \partial_s f(s,q) = \alpha(s) q(q-1) \partial_q f(s,q).
  \end{align}
  Setting $f(s,q) = g_{h,h-s}(q)$ allows to obtain the rate
  \begin{align}
    \label{e4279}
    \alpha (s) = \frac{a e^{-as}}{e^{-as}-e^{-ah}}, \; s \in [0,h).
  \end{align}
  We want to show that we have captured the metric structure. In order
  to achieve this we can consider the \emph{number of $2h'$-balls} in
  $\mfU_h$ under the law $P_0$, i.e.\ the excursion law and compare
  this with the corresponding number in $\mfY_h$, call these
  $M^{\mfY,h}_{h'}$ resp. $M_{h'}^{\mfU,h}$. We have seen in point 2
  that these are equal in law for fixed $h'$. What we need is the path
  in $h'$ and its law to be equal. This follows from the fact (shown
  above) that both sides are Markov processes.

  Putting things together, we have identified the \emph{process} of
  ball numbers, namely:
  \begin{align}
    \label{e4855}
    \begin{split}
      & \text{the normalized law } (u_h(\infty))^{-1}
      \hat\varrho_h^t=\varrho_h^t \text{ has realizations with
        the same metric
        structure} \\
      & \text{as a Yule tree with splitting rate } \alpha(s), \; s \in
      [0,h).
    \end{split}
  \end{align}
  We have to determine now the leaf law. To calculate the
  i.i.d.\ masses at the leaves of the trunk we use \eqref{e.tr40} and
  obtain, if we call $Y$ one such mass:
  \begin{align}
    \label{e4280}
    \E[e^{-\gamma Y}] = 1-\frac{u_h(\gamma)}{u_h(\infty)}
    = 1-\frac{\gamma b(1-e^{-ah})}{2a e^{-at} + b\gamma(1-e^{-at})}
    = \Bigl(1+ \gamma \frac{b}{2a}(e^{ah}-1) \Bigr)^{-1}.
  \end{align}
  Thus, the distribution of $Y$ is the exponential distribution with
  parameter $\frac{b}{2a}(e^{ah}-1) \in (0,\infty)$. By \eqref{e4273}
  the critical case the distribution is $\Exp(bh/2)$, i.e.\ the
  entrance law at time $h$ conditioned to survive up to time $h$. This
  identifies now $m_h$ as claimed. We have however already obtained
  more information in particular on the ultrametric structure.

  \medskip
  \noindent
  \textbf{(3)} Next we turn to $\varrho_h^t$. Now, according to
  \eqref{e4855}, we have to identify only the law of an
  $(h-h')$-subfamily by including the mass distribution into the
  picture, which we obtain from the corresponding Yule tree as
  $\mfY^{(h)}$ (as limit) by using Lemma~\ref{l.e1408} and
  Proposition~\ref{cor.1382}.

  We identify $\varrho_h^t$ as the law of $\mfY^{(h)}$ from
  \eqref{e1408}. Recall the definition of trunks in \eqref{def:trunk}.
  We will have to look only at the $h'$-\emph{trunk} of $\mfU_t$
  (which determine the state as we saw above) and at $\mfY^t_{t-h'}$,
  i.e.\ consider the evaluation with polynomials and prove:
  \begin{align}
    \label{e4281}
    \E[\Phi^{n,\varphi}(\lceil\mfU_t\rceil(h'))] =
    \E[\Phi^{n,\varphi}(\lceil\mfY^t_{t-h'}\rceil)] \quad \text{ for all }
    h' \in (0,h].
  \end{align}
  We know this identity from combining \eqref{e4855} and
  \eqref{e4268}--\eqref{e4269}. We can decompose $\mfY^t_{t-h'}$ in
  $2(h'-h)$ subfamilies so that a single such family given for
  $h' \uparrow t$ a realization of $\varrho_{h'}^t$, which then is
  actually equal in law to the object $\mfY_h^h$ giving the claim.
\end{proof}

\begin{proof}[Proof of Proposition~\ref{cor.1382}]\label{pr.5155}
  Here we claim that the Yule tree at time $t-\varepsilon$
  approximates for $\varepsilon \downarrow 0$ the time-$t$ marginal of
  the entrance law. This was already proved in part (b) of the proof
  above.
\end{proof}

\subsection{Proof of Theorem~\ref{TH.MARTU}}
\label{ss.pr.thmartu}

\begin{proof}[Proof of Theorem~\ref{TH.MARTU} (a): Conditioned
  process] We have observed in Proposition~\ref{prop.tvF} that the total
  mass of the $\U$-valued Feller diffusion is an autonomous Markov
  process. Hence, we can condition the \emph{original} process on the
  event of survival up to time $T$, which is measurable w.r.t.\ the
  total mass process. We obtain (using polar decomposition) the pure
  genealogy part driven by the total mass path conditioned to survive
  till time $T$. Since we can extend the domain of the generator
  $\Omega^{\uparrow}$ to bounded twice differentiable functions in
  $\bar\mfu$, we obtain a suitable process for the
  supercriticallity $a_T(\bar\mfu,t)$ which then solves a well-posed
  martingale problem on $\R_+$.

  We will show that we can use the \emph{conditioned total mass
    process} to solve the \emph{conditional martingale problem} which
  is the one specified in part (b) of Theorem~\ref{TH.MARTU}. This is
  similar to Corollary~\ref{prop.834}, but now the specified
  $\R_+$-valued diffusion replaces the unconditioned total mass
  process, i.e.\ here we keep the coefficient $b$ in the operator and
  introduce a super-criticality term $a_T(s,\bar \mfu_s)$ which acts
  only on the total mass process.

  Now we introduce the conditioning non-extinction at time $T$. We can
  use an abstract structure to characterize the law of
  $\mfU^T=(\mfU_t^T)_{t \in [0,T]}$. To this end, using polar
  decomposition of the (unconditioned) $\U$-valued Feller diffusion
  $\mfU=(\mfU_t)_{t\ge 0}$ we factorize mass and genealogy and write
  $\mfU_t=\bar \mfU_t \wh \mfU_t$ identifying it with an element from
  $\R_+ \times \U_1$. After extinction the process is identified with
  $\ntree=(0,\mfe)$. Then denoting by $\bar P$ the law of the
  $\R_+$-valued Feller diffusion $\bar\mfU=(\bar \mfU_t)_{t\ge 0}$ for
  some fixed initial state and by $\wh P$ the law of
  $\wh\mfU=(\wh \mfU_t)_{t \ge 0}$ we have
  \begin{align}
    \label{e3418}
    \wh P = \int \bar P (\dx\bar\mfu) \wh P^{\bar \mfu}.
  \end{align}
  Here, for a realization $\bar\mfu=(\bar\mfu_t)_{t\ge 0}$ of
  $\bar\mfU$, $\wh P^{\bar \mfu}$ is a regular version of
  $\wh P \bigl(\, \cdot \, |\bar\mfU = (\bar \mfu_t)_{t \ge 0}\bigr)$.

  We denote by $\bar P_T^{\mathrm{cond}}$ the law $\bar P$ on
  $C([0,\infty),\R_+)$ conditioned on $\bar \mfU_T>0$, and by
  $\wh P^{\bar \mfu}_T$
  the solution of the (corresponding w.r.t.\ $\bar\mfu$)
  \emph{conditioned $\U_1$-valued martingale problem}. For
  $\bar P_T^{\mathrm{cond}}$ almost surely each $\bar \mfu$ the
  solution $\wh P^{\bar \mfu}_T$ is the law of a time-inhomogeneous
  $\U_1$-valued Fleming-Viot process which is known to exist and is
  \emph{uniquely determined} by the specified martingale
  problem. 

  Now, denoting by $P_T^{\mathrm{cond}}$ the law $P$ conditioned on
  $\bar\mfU_T>0$, we have for given $T>0$:
  \begin{align}
    \label{e3425}
    P_T^{\mathrm{cond}}=\bar P_T^{\mathrm{cond}} \otimes \wh
    P_T^{\bar \mfu,\mathrm{cond}} \quad
    \text{ for } \bar P \; \text{a.s. all } \mfu=(\bar \mfu_t)_{t \ge 0},
  \end{align}
  where
  \begin{align}
    \label{e4330}
    \wh P_T^{\bar \mfu,\mathrm{cond}}
    = \wh P^{\bar \mfu}   \text{ restricted to }
    \bar \mfu \text{ with } \bar \mfu_t>0 \text{ for } t \in [0,T].
  \end{align}
  This can be explicitly verified because $\mfu=(\bar \mfu_t)_{t \ge
    0}$ is a realization of an autonomous process.
  Therefore $P_T^{\mathrm{cond}}$ is the law of a time-inhomogeneous
  Markov process. Its restriction to paths on $[0,T]$ is the law of
  the conditioned process $\mfU^T=(\mfU^T_t)_{t\in [0,T]}$.
\end{proof}

\begin{remark}
  \label{r.3998}
  In \cite{LN68} it is shown that the conditioned Galton-Watson
  process on $\N$ is Markovian, its conditional transition
  probabilities are calculated and the limiting process is identified.
  Thus, an alternative strategy of the proof of Theorem~\ref{TH.MARTU}
  (a) is via approximation by $\U$-valued Galton-Watson processes,
  i.e.\ by adapting the convergence result by Lamperti-Ney to the
  $\U$-valued setting. One would need to show that rescalings of
  \emph{$\U$-valued Galton-Watson processes converge towards the
    $\U$-valued Feller diffusion} which is a time-inhomogeneous
  state-dependent branching process.
\end{remark}

\begin{proof}[Proof of Theorem~\ref{TH.MARTU} (b): Martingale problem]
  We need to prove that the $\U$-valued Feller diffusion whose
  total-mass process is an $\R_+$-valued diffusion with drift and
  diffusion coefficients given via \eqref{e984}, exists and is
  uniquely determined by the martingale problem. In particular, we
  have to allow \emph{time-inhomogeneous} and \emph{state-dependent}
  super-criticality coefficients. The existence and uniqueness of the
  corresponding total mass process are well-known. This is as a
  diffusion process, which was studied in \cite{LN68} and which we
  prove correcting an error that paper in
  Appendix~\ref{sec:comp-diff-coeff}. Part (b) of
  Theorem~\ref{TH.MARTU} is a consequence of the Lemma~\ref{l.1986}
  below.
\end{proof}

\begin{lemma}
  \label{l.1986}
  For all $\mfu \in \U$ and all $s,t \in \R_+$, $s < t$ the
  $(\Omega^{\uparrow,(a,b)}, \Pi^1, \delta_\mfu)$-martingale problem
  with coefficients $a$ and $b$ as in \eqref{e984} has a unique
  solution in the space $ C([s,t], \U)$.
\end{lemma}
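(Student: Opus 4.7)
The plan is to exploit the crucial structural observation from Remark~\ref{r.1095}: the state-dependence in $\Omega^{\uparrow,(a,b)}$ enters only through the total mass $\bar\mfu$. Consequently, conditioning on the full path of the total mass process reduces the genealogy part to a time-inhomogeneous $\U_1$-valued Fleming-Viot process with prescribed resampling rate, for which both existence and uniqueness via duality are already in our toolbox. This reduction will handle both existence and uniqueness simultaneously, and bypasses the need for a generalized branching property, which is unavailable here because $a_T(t,\bar\mfu)$ is non-linear in $\bar\mfu$.

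For \emph{existence}, I would build the solution by the $h$-transform recipe employed in the proof of part (a): let $\bar P_T^{\mathrm{cond}}$ denote the law of the $\R_+$-valued Feller diffusion started from $\bar\mfu$ and conditioned on $\{\bar\mfU_T>0\}$, which is a time-inhomogeneous diffusion with drift $\tilde a_T(\cdot,\cdot)$ and volatility $\sqrt{b\bar\mfu}$ by the correction of \cite{LN68} computed in Section~\ref{sec:comp-diff-coeff}; and for each admissible path $\bar\mfu$ let $\wh P^{\bar\mfu}_T$ be the law of the $\U_1$-valued time-inhomogeneous Fleming-Viot process of \cite{GPWmp13,Gl12} with resampling rate $b/\bar\mfu_t$. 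Define $P^T_\mfu$ as the law of $(\bar\mfu_t\cdot\wh\mfU_t^{\mathrm{FV}}(\bar\mfu))_{t\in[s,t]}$ averaged against $\bar P_T^{\mathrm{cond}}$. Starting from $\mfu\neq\ntree$, the conditioned mass stays strictly positive on $[s,t)$ $\bar P_T^{\mathrm{cond}}$-a.s., so the resampling rate is locally bounded on $[s,t-\eps]$ for every $\eps>0$. I would then verify the martingale property on $\Pi^1$ by differentiating using the product-rule decomposition \eqref{eq:r.742.1}: the $\Omega^{\mathrm{mass}}$ part picks up precisely the extra drift $\tilde a_T(t,\bar\mfu)\partial_{\bar\mfu}\bar\Phi$ from the $h$-transform, giving exactly the added term $\Omega^{\uparrow,\mathrm{sbran}}$ of \eqref{e709} with $a=a_T(t,\bar\mfu)$, while the $\Omega^{\mathrm{gen}}_{\bar\mfu}$ part is unchanged because conditioning a diffusion on a terminal event does not alter the conditional dynamics of an independent measurable functional given the conditioned process (i.e., it only reweights the law of $\bar\mfU$).

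For \emph{uniqueness}, the idea is to run the conditional duality of Theorem~\ref{PROP.1007} with the reweighting encoded in the Feynman-Kac term \eqref{e.ggr1}, $a=a_T(t,\bar\mfu_t)$. Given any solution $P$ of the martingale problem, the total mass process $(\bar\mfU^T_t)_{t\in[s,t]}$ is a one-dimensional autonomous diffusion with the $\R_+$-valued generator acting on $\bar\Phi$, hence its law is determined by the uniqueness of the $\R_+$-valued SDE with coefficients \eqref{e984}, which is classical. Conditionally on this total mass path $\bar\mfu$, polynomial test functions $\wh\Phi^{n,\varphi}\in\wh\Pi$ evaluated on $\wh\mfU^T$ are determined by the conditional duality relation~\eqref{e1015a}, whose right-hand side only involves the time-inhomogeneous coalescent with rates $b/\bar\mfu_{t-s}$ and is thus explicit. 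Since polynomials form a law-determining class on $\U_1$ and hence (combined with the mass marginals) on $\U$ for distributions in $\wt\mcM$ (Lemma~\ref{lem:Pi:separating} and Corollary~\ref{cor.681}), the finite-dimensional distributions, and hence by the Stroock-Varadhan argument the law of the full process on $C([s,t],\U)$, are uniquely determined. The a.s.\ continuity of paths follows from that of the conditioned $\R_+$-valued diffusion together with continuity of the time-inhomogeneous $\U_1$-valued Fleming-Viot process inherited from Corollary~\ref{cor.834} (since the resampling rate stays locally bounded on $[s,t)$).

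The \emph{main obstacle} is managing the singularity of the resampling rate $d(t)=b/\bar\mfu_t$ near $\bar\mfu=0$: the operator $\Omega^{\uparrow,(a,b)}$ is unbounded on polynomials unless the mass is bounded away from zero. My strategy for this is a localization by stopping times $\tau_\eps=\inf\{u\in[s,t]: \bar\mfU^T_u\le\eps\}$, arguing that (i) on the event $\{\bar\mfU^T_{t}>0\}$ (which has full probability under the conditioned law once $\mfu\neq\ntree$), $\tau_\eps\uparrow t$ as $\eps\downarrow0$, (ii) standard moment estimates analogous to Lemma~\ref{lem:Feller} extend, since the mass process is super-critical but with the same quadratic volatility, so $\E[\sup_{u\le t}(\bar\mfU^T_u)^m]<\infty$, and (iii) the Doob optional stopping argument of Proposition~\ref{prop:polar} transfers the martingale property from $\tau_\eps$-stopped versions to the full one by dominated convergence, using that the Feynman-Kac potential is nonnegative so the exponential weight in \eqref{e.ggr1} is bounded by the coalescence control. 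Once this localization is in place, both existence and the duality argument extend to the full interval $[s,t]$ and yield the well-posedness claimed.
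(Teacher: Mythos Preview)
Your approach is essentially the same as the paper's: decompose into the autonomous total mass diffusion and the conditioned $\U_1$-valued genealogy, invoke classical one-dimensional diffusion theory for the former, and use the conditional duality with the time-inhomogeneous coalescent (rate $b/\bar\mfu_t$) for the latter. Two points are worth flagging. First, your opening sentence for uniqueness is confused: the conditional duality of Theorem~\ref{PROP.1007} carries \emph{no} Feynman-Kac term, and none is needed here, because the super-criticality $a_T(t,\bar\mfu)$ acts only on the mass generator and leaves $\Omega^{\mathrm{gen}}_{\bar\mfu}$ untouched (cf.\ Section~\ref{sss.dualu}); your subsequent sentence invoking \eqref{e1015a} with the plain coalescent is the correct statement, so simply drop the reference to \eqref{e.ggr1}. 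Second, the paper does not rely on your stopping-time localization but instead appeals to the abstract \emph{skew martingale problem} framework of \cite{Gl12} (Theorem~8.1.4 there, with $E_1=[0,\infty)$, $E_2=\U_1$, $A$ the mass-diffusion operator, $C$ the Fleming-Viot operator, and $B$ multiplication by $\bar\mfu^{-1}$) to make rigorous the passage from ``any solution of the full martingale problem'' to ``its $\wh\mfU$-part, conditioned on $\bar\mfU$, solves the Fleming-Viot martingale problem''; your direct argument is plausible but this step is exactly where care is required, and the paper's route packages it cleanly. The singularity at $\bar\mfu=0$ that you worry about is, in the paper, relevant only for the entrance-law case $\mfu=\ntree$, handled via the divergence $\int_{(0,\varepsilon]}\bar\mfu_s^{-1}\,\dx s=\infty$ forcing the dual coalescent to the trivial partition.
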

\begin{proof}\label{pr.2254}
  To prove this lemma we have to extend
  Theorem~\ref{THM:MGP:WELL-POSED} to \emph{time-inhomogeneous
    coefficients}. The existence is not a big problem since we can do
  approximations by piecewise constant super-criticality rates.
  Uniqueness is more subtle because \emph{state-dependence} of
  super-criticality rate breaks the FK-duality and the conditioned
  duality and we have to proceed differently. More precisely, since we
  can construct the total mass process as a \emph{diffusion process}
  uniquely from the given parameters, the $\U$-valued solution will be
  constructed uniquely as a process driven by the total mass process.

  First, we have to argue that the time-inhomogeneous total mass
  process is a solution to a martingale problem on $\R_+$, which is
  \emph{well-posed}. The fact that this is a solution can be seen
  adapting corresponding arguments in the time-homogeneous setting;
  cf.\ \eqref{eq:r.742.1} with $\varphi=const$ and
  Proposition~\ref{prop.1183}. Well-posedness follows from standard
  $\R_+$-valued diffusion theory. First, by the Feynman-Kac duality,
  the $\R_+$-valued Feller diffusion is the solution of a well-posed
  martingale problem. Then we have to add the drift term and show that
  the solution is still unique. For this we use the Yamada-Watanabe
  criterion; see Appendix~\ref{sec:ikeda-watan-argum}.

  Next, we prove that the \emph{pure genealogy} part of the process
  which solves our martingale problem \emph{conditioned} on the
  complete total mass process \emph{must solve a martingale problem on
    $\U_1$} which is \emph{well-posed}. The well-posedness of the
  conditioned martingale problem on $\U_1$ follows via duality which
  we have established in Section~\ref{sss.dualu}, so that only the
  first point remains to be shown.

  For that we have to generalize Theorem~6.4.2. in \cite{Gl12} to
  account for the \emph{drift term} appearing in our dynamics. This is
  easily done using the general Theorem~8.1.4 in \cite{Gl12} on
  \emph{skew martingale problems}, where the issue is resolved within
  an abstract setup. The setup from \cite{Gl12} applies here. We first
  choose the state spaces of the two processes, i.e.\ the one we
  condition on and the conditioned process as
  \begin{align}
    \label{e5276}
      E_1=[0,\infty) \quad \text{and} \quad E_2=\U_1.
  \end{align}
  Then the operator $A$ in \cite{Gl12} is the one of the martingale
  problem of our $\R_+$-valued diffusion, $C$ is the operator of the
  time-inhomogeneous Fleming-Viot process and the multiplication
  operator $B$ is given by multiplication with $\bar \mfu^{-1}$.

  An additional problem here is the case where we start with initial
  mass $\bar \mfu = 0$. In this case we have to construct the solution
  as an \emph{entrance law}. The diffusion coefficients from
  \eqref{e984} can be extended to mass $0$. The corresponding process
  on $\R_+$ has a unique solution, since the drift term is Lipschitz.
  We have to argue that this holds also for the \emph{$\U$-valued
    processes}. Here we have to study only the process
  \emph{conditioned} on the \emph{autonomous process of total masses}
  to show its convergence.

  We observe that the operator of the conditional martingale problem
  is not affected and the effect of the conditioning and the changing
  initial mass sits entirely in the path $\bar \mfu$. Using duality we
  see that because of the \emph{divergence} of
  $\int_{(0,\varepsilon]} \bar \mfu^{-1}_s \, \dx s$ the corresponding
  dual is a coalescent with a divergent accumulated coalescence rate.
  This dual in backward time is a coalescent hitting the trivial
  partition before any positive time and therefore due to diverging
  rates it converges to the \emph{zero tree} $\ntree$ as time goes to
  $0$. To establish the divergence we note that this is known for
  branching and branching with drift; see e.g.\ Proposition~0.2 in
  \cite{DG03}.
\end{proof}

\begin{proof}[Proof of Theorem~\ref{TH.MARTU} (c): Relation to
  entrance law] Finally we have to relate the excursion law of the
  Feller diffusion on $\U$ to $\mfU^T$. First we look at
  $\mfU^T_{(\ve,\mfe)}$ and its behavior as $\ve \downarrow 0$. We
  claim the processes $\mfU^T$ satisfy that $\mfU^T_{(0,\mfe)}$ is the
  limit of the processes $\mfU^T_{(\ve,\mfe)}$ as $\ve \downarrow 0$.
  This hold due to the fact that $\bar a_T(\cdot,\cdot)$ and the
  volatility, $\bar b_T(s,x)=bx,s \in [0,T], x \in [0,\infty)$ are
  continuous functions on the state space so that $\bar \mfU^T$ has
  the Feller property. Furthermore the path of $\bar \mfU^T$ is
  strictly positive on $(0,T]$ for all $t \in [0,T]$ with starting
  points $(\ve,\mfe)$. This means that we can use the representation
  of $\wh \mfU^T$ conditioned on $\bar \mfU^T$ as time-inhomogeneous
  Fleming-Viot process, that the process $\wh \mfU^T$ is Feller for
  every fixed mass path starting with non-zero mass a.s. Since the
  total mass path starting in $0$ mass is not integrable and
  $\wh \mfU^T $ converges to the element $\mfe$, we have that in fact
  the process $\wh \mfU^T$ has the derived continuity property. Using
  the polar representation we see that indeed $\mfU^T$ is Feller on
  $\U$.

  We note next that for $\ve >0$,
  $\mcL [(\mfU^T_t)_{t\in [0,T]}| \mfU^T_0 = (\ve \cdot\mfe)]$, up to
  a factor which converges to $1$ as $\ve \downarrow 0$, coincides
  with the law $P_{\ve \cdot \mfe}$ restricted to paths on $[0,T]$
  with $\bar \mfU_T >0$ and normalized by $\ve$. This is true by the
  definition of $\mfU^T$ and the asymptotics of $P(\bar \mfU_T >0)$.
  Second we know the $\ve \downarrow 0$ convergence result for
  $\bar \mfU^T$. The strategy is to use for the claim the conditioned
  duality to obtain the convergence of the process $\wh \mfU^T$
  conditioned on $\bar \mfU^T$. Recall that $\bar \mfU^T_t > 0$ for
  $t \in (0,T]$.

  With above two groups of properties we now have to prove that indeed
  $P^{\mathrm{prob}}_{\ntree;0,T}=\mcL[\mfU^{T,\rm entr}]$. This is
  follows from the choice of the topology as follows.

  For $\ve,\delta >0$ consider the measure
  $\prescript{\delta \mkern-5mu}{}{\bar P}_\ve \coloneqq \bar P_\ve
  (\cdot \cap \{ \sup\limits_{t \geq 0} \bar \mfU_t \geq \delta \})$.
  Then the result on the excursion law for $\R$-valued Feller tells us
  that the following limits exist
  \begin{equation}\label{e6765}
    \wlim_{\delta \downarrow 0}\Bigl(\wlim_{\ve \downarrow 0}
    \bigl[\ve^{-1} \cdot
    (\prescript{\delta \mkern-5mu}{}{\bar{P}}_{\ve;0,T})\bigr]\Bigr) = \bar
    P_{\ntree;0,T}.
  \end{equation}
  Furthermore we have as a consequence of the definition of $\mfU^T$
  that:
  \begin{equation}\label{e6769}
    \prescript{\delta \mkern-5mu}{}{\bar{P}}_{\ntree;0,T}
    (\cdot \cap \{ \bar \mfU_T > 0\})
    = \prescript{\delta \mkern-5mu}{}\mcL
    \bigl[(\bar \mfU^{T,\rm entr}_t)_{t \in [0,T]}\bigr].
  \end{equation}

  Denote by $\mfU^{\rm exc}$ the realization of $P_{\ntree,0;T}$. As a
  consequence of \eqref{e6769} and the form of the martingale problems
  for $\mfU^T$ and $\mfU^{\rm exc}$ evaluated on functions of
  $\wh \mfU$, $\wh \mfU^{\rm exc}$ it follows that the conditional
  martingale problem for $\wh \mfU^T$, $\wh \mfU^{\rm exc}$ (given
  $\bar \mfU^T$ resp.\ $\bar \mfU^{\rm ecx}$) are the same, see
  \eqref{e3181}-\eqref{e1081}. Thus, we have also that the processes
  $\prescript{\delta \mkern-5mu}{}\mcL[(\wh \mfU^T_t)_{t \in [0,T]}]$
  and
  $\prescript{\delta \mkern-5mu}{}\mcL[(\wh \mfU^{\rm exc})_{t \in
    [0,T]}]$ agree with
  $\prescript{\delta \mkern-5mu}{}{\wh P}_{\ntree;0,T}(\bar \mfu)$,
  $\bar \mfu$ - a.s.\ on $\{\bar \mfU^T > 0 \}$. This means $\mfU^T$
  and $\mfU^{\rm exc}$ are equal in
  $\prescript{\delta \mkern-5mu}{}\mcL$ for every $\delta >0$. Since
  the entrance law
  $\prescript{\delta \mkern-5mu}{}\mcL[(\mfU^{T,\rm entr}_t)_{t \in
    [0,T]}]$ converges to $\mcL [(\mfU^T_t)_{t \in [0,T]}]$ as
  $\delta \downarrow 0$ we are done.
\end{proof}b

\begin{proof}[Proof of Theorem~\ref{TH.MARTU} (d): Relation to
  Fleming-Viot] From the generator \eqref{e748} and the conditional
  duality in Section~\ref{ss.condualy} the claim follows adapting the
  duality relation for the $\U_1$-valued Fleming-Viot diffusion; see
  \cite{GPWmp13}.
\end{proof}

\section[Proofs of Theorems~\ref{T:KOLMOGOROVLIMIT},~\ref{T.1205},~\ref{TH.1061},~\ref{TH.IDENTKALL},~\ref{T:BACKBONE},~\ref{TH.KOLMO}]{Proofs of Theorems~\ref{T:KOLMOGOROVLIMIT},~\ref{T.1205},~\ref{TH.1061},~\ref{TH.IDENTKALL},~\ref{T:BACKBONE},~\ref{TH.KOLMO}: \\
  Conditioned processes and Kolmogorov-Yaglom limits}
\label{sec:Kolmogorovlimit}

Here we collect the proofs of statements concerning the $\U$-valued
processes which are conditioned to \emph{survive for long time} namely
to \emph{survive forever}, the $h$-transformed version, or the Evans
process in various representations. Furthermore we consider their
descriptions via $\U^V$- and $\U$-valued martingale problems which are
of different flavors than the Feller diffusion $\mfU$ itself.


\subsection{Proof of Theorem~\ref{T:KOLMOGOROVLIMIT}}
\label{ss.prtkol}

\begin{proof}[Proof of Theorem~\ref{T:KOLMOGOROVLIMIT}~(a)]
  It is well known that the $\R$-valued Feller diffusion conditioned
  to survive till time $T$ converges as $T \to \infty$ to the
  \emph{Feller diffusion} with \emph{immigration} at rate $b$ in
  $C_b([0,\infty),\R)$; see \cite{Lamb07}. Thus, the same holds for
  our total mass process $\bar \mfU^T$. Based on this we can show that
  the solution to the \emph{conditional} martingale problem converges
  to the one we obtained in Lemma~\ref{c.1310}.

  Below we first show \emph{tightness} of the laws of the process
  $\mfU^T$ on $[0,t]$ for $T \to \infty$ and then the
  \emph{convergence}. We use here the standard tightness criteria in
  $\mathcal C([0,t],\U)$. To this end, we separate masses and
  genealogies and show convergence of the branching rates on every
  time interval $[0,t]$.

  For the tightness of the laws of the process we first prove the
  \emph{compact containment property}. Consider a fixed time $t<T$
  with diverging time horizon $T$. By well known results on the
  $\R_+$-valued Feller diffusion on bounded time intervals the
  \emph{total mass} is stochastically bounded and hence compactly
  contained. Note that \emph{distances} on $[0,t]$ are bounded
  independently of $T$ by $2t+r_0(\cdot,\cdot)$. For further
  information on the \emph{genealogy}, namely that it remains
  dust-free for fixed time $t$ as we take the limit $T\to \infty$ we
  use the \emph{conditional duality} in the case of $\mfU^T$ see
  Section~\ref{sss.dualu}.

  To see that the number of ancestors time $\varepsilon$ back is
  bounded stochastically for all $T$ we observe the following facts.
  The \emph{coalescence rate} of the conditioned dual is bounded in
  $T$ for a \emph{fixed} path of total mass, because as
  $T \to \infty$, $b/\bar \mfu_s$ are bounded in $T$ for fixed total
  mass path both from above and below as long as $s \in [\delta,t]$
  for some $0<\delta < t < \infty$. Hence we have the \emph{compact
    containment} for fixed time $t$ for such times $s$. As we let
  $\delta \to 0$ we have divergent coalescence rates and the state of
  the genealogy converges to the unit tree $\mfe$. Hence we have
  compact containment for fixed $s$ on the time interval $[0,t]$ for
  every $t < \infty$.

  Next we have to give the compact containment property for the
  complete $\U_1$-valued \emph{path} $(\wh \mfU_t^T)_{t \in [0,s]}$ in
  $[0,s]$ for all $T > 0$ and some arbitrary $s > 0$. For this purpose
  we use the path properties of the $\U$-valued Feller diffusion which
  allows to bound the number of ancestors at some depth $\ve > 0$ over
  a bounded time interval where the total mass path is bounded from
  above and hence the resampling rate driving the process $\wh \mfU^T$
  conditioned on $\bar \mfU^T$ is bounded from below. Hence we need to
  see that in a $\U_1$-valued Fleming-Viot process the number of
  $\ve$-ancestors is stochastically bounded from above independently
  of the resampling rate $d \geq d^\ast$ with $d^\ast>0$, i.e.\ the
  states are in $\U_{\mathrm{comp}}$. This is well-known; see
  \cite{GPWmp13}.

  In order to show convergence we first note that it is well known
  that as $T \to \infty$ the total mass process converges to the
  Feller diffusion with immigration at rate $b$; see Section~4 in
  \cite{Lamb07}.

  To show the convergence of the genealogy $\wh \mfU^T$ in path space
  note that the coefficients in the \emph{operator} depend on $T$ via
  the total mass process only. More precisely the resampling rates are
  given by $b(\bar\mfu(t))^{-1}$ at time $t$. These varying rates can
  be viewed as a time change. This time change should converge in
  $\norm{\cdot}_\infty$-norm as a function of time. Therefore we can
  work with the convergence of the total mass process in \emph{path
    space} and the convergence of the solution to the claimed
  conditioned martingale problem for a fixed path; recall
  \eqref{e3425} and \eqref{e4330}. In order to combine both facts we
  need \emph{uniform continuity} of the conditioned law in the set of
  all total mass paths.

  The continuity of the conditional process for a given total mass
  path is evident. In order to obtain continuity \emph{uniformly} in
  the total mass paths we note that the operator of the martingale
  problem has a coefficient continuous in $\bar \mfu$, if the path is
  above some $\varepsilon > 0$. Hence, we have to take care of small
  values of the total mass process. We note that the process
  $\bar \mfU$ does not hit $0$ and is therefore bounded away from zero
  in any \emph{bounded} time interval in $(0,\infty)$. Since on the
  other hand the $\U_1$-part of the state converges to $\mfe$ as
  $t \downarrow 0$, we have the uniform continuity.
\end{proof}

We continue by first proving (c) and then (b).

\begin{proof}[Proof of Theorem~\ref{T:KOLMOGOROVLIMIT}~(c)]
  We prove first \emph{tightness} of the l.h.s.\ of \eqref{tv17} and
  then show \emph{convergence} of the ``\emph{moments}'' to finally
  conclude convergence in law.

  \medskip
  \noindent
  \emph{Tightness:} We need to check the following three points
  (i)-(iii) according to the standard tightness criteria on $\U_1$
  resp.\ $\U$ (from \cite{GPW09,Gl12}), see Section~B.1 in
  \cite{infdiv} for details.

  \medskip
  \noindent
  \textbf{(i)} Tightness of \emph{masses} follows by KY-limit
  theorem for the total mass process, which is a Feller diffusion and
  states convergence of the law of the scaled mass to an exponential
  distribution.

  \medskip
  \noindent
  \textbf{(ii)} Tightness of \emph{distances} follows from the
  construction since (after the scaling) distances are bounded by
  $1+t^{-1} r_0$.

  \medskip
  \noindent
  \textbf{(iii)} Tightness of \emph{modulus of mass distribution}
  requires more work. Here we have to control the \emph{number of
    ancestors} which contribute to at least \emph{fraction
    $1- \varepsilon$} of the total population size.

  To this end, we can make use of the stochastic representation of the
  state at any fixed time $s$ via the concatenation of independent
  sub-families from the \Levy{}-Khintchine representation of
  \eqref{e1402} and \eqref{e1403} in combination with the KY-limit law
  for the respective conditioned total mass process. More precisely,
  we take the depth $th$ for any fixed $0<h<1$ and consider the family
  decomposition in $2ht$-balls and the corresponding masses. We need
  to show that the fraction $(1-\ve)$ of the total mass is contained
  in a finite number of the largest balls uniformly as $t \to \infty$.
  To see this we argue as follows. According to the \Levy{}-Khintchine
  representation we have a Poisson distributed number of such
  $2ht$-balls and by \eqref{e1402} the parameter of the distribution
  is given as
  \begin{align}
    \label{e3504}
   2(bt(1-h))^{-1} Y_{t-ht}.
  \end{align}
  As $t \to \infty$ this converges according to the KY-limit law. The
  limiting distribution is an exponential distribution with parameter
  $1$. This completes the proof of (iii).

  \medskip
  \noindent
  \emph{Convergence:} The first step to conclude the argument is to
  identify the ``moments'', i.e.\ functionals of sampled finite
  subtrees, of the limit in a tangible way. The strategy is to obtain
  for all $h>0$ the information on the family decomposition at depth
  $h$ after the rescaling. The corresponding scaled masses can be
  identified as well as their distances.

  We note that from \eqref{e3504} we can identify the Cox measure in
  the CPP-representation in the limit as $\Exp(1)$. Next we have to
  consider the corresponding laws on scaled random genealogies in
  $\U$, more precisely $\breve{\mfU}_t$ on $\U(h)^\sqcup$. Recall the
  notation $\Phi_h^{m,\varphi}$ and $\varphi_h$ from
  \eqref{eq:pol:trunc}. Note that, as $t\to\infty$ (and denoting by
  $\sim$ asymptotic equivalence for $t\to\infty$) we have using the
  asymptotics of the extinction probability beyond time $t$
  \begin{align}
    \label{e1904}
    \E\bigl[\Phi_h^{m,\varphi}(\breve{\mfU}_{t}) | \bar{\mfU}_t >0 \bigr] =
    \frac{1}{\P(\bar{\mfU}_t>0)} \E \bigl[ \Phi_h^{m,\varphi}(\breve{\mfU}_t)
    \ind{\bar\mfU_t>0} \bigr] \sim \frac{bt}{2} \,
    \E \bigl[\Phi_h^{m,\varphi}(\breve{\mfU}_t) \bigr].
  \end{align}

  We next want to use duality \eqref{tv12} to rewrite the r.h.s.\ of
  \eqref{e1904}. For the version of $\varphi$ acting on distances
  rescaled by the factor $t^{-1}$ we write $\varphi^{(t)}$, i.e.\ we
  set $\varphi^{(t)}(\dr)=\varphi\bigl(t^{-1} \,\dr\,\bigr)$.
  Furthermore we use notation $\varphi_h$ from \eqref{eq:pol:trunc}.
  Note that when $\varphi$ and therefore $\varphi_h^{(t)}$ depends on
  $m$ coordinates it is enough to consider the dual coalescent
  starting with $m$ partition elements enriched with a metric on $m$
  points. We write $[m]$ to denote the partition of $\{1,\dots,m\}$
  into singletons. Recall in \eqref{tv11} and \eqref{tv12} the duality
  function $H$ evaluated in $([m],\dr',\varphi)$ for a function
  $\varphi$. With the notation from there we can write the r.h.s.\ of
  \eqref{e1904} using the Feynman-Kac duality as
  \begin{align}
    \label{e1904c}
    \begin{split}
      \frac{bt}{2} \, & \E_{([m],\dr')} \biggl[ \int t^{-1}
      \mu_0^{\otimes |p_t|} (\dx \underline{u}_{p_t})\,
      \varphi_h^{(t)} \left(\dr^{p_t}(\underline{u}_{p_t}) + \dr'
      \right) \exp \biggl(b \int_0^t \binom{\abs{p_s}}{2} \, \dx s
      \biggr)
      \biggr]\\
      & \eqqcolon bt \, V_m(t,\mfu_0, \varphi_h).
    \end{split}
  \end{align}
  Now we have on the r.h.s.\ the enriched coalescent evaluated with a
  scaled $\varphi$, but a \emph{reweighting} of the path by the
  exponential functional takes place here.

  The r.h.s.\ can now be calculated since in the first factor we have
  the coalescent (effectively scaled in its distance matrix via the
  evaluation) and as to the second term note that
  $\binom{\abs{p_s}}{2}$ is the rate of the exponential waiting time
  in backwards time, namely at time $s$ for the coalescent for a jump
  downwards by $1$. We can therefore write down the density for the
  successive waiting times for the coalescence events and the
  corresponding contribution of the exponential term at times before
  this jump and since the last jump namely $e^{b\binom{\abs{p_s}}2}$.
  The contribution is then asymptotically for $t \to \infty$ given by
  the $(n-1)$ fold integral from $s_{n-1}$ to $t$,$s_{n-2}$ to
  $s_{n-1},\dots, s_2$ to $s_1$ if there are $(n-1)$ jumps, which is
  the leading term. We observe that the distances are twice the
  coalescence time for two individuals in the coalescents cut at $2t$
  and then rescaled by $t^{-1}$. These explicit expression converges
  as $t \to \infty$ namely to the coalescence time in the $\varphi$
  and the time integral to the joint occupation time of all pairs.
  However we scale $\varphi$ by rescaling distances by $t^{-1}$. Hence
  asymptotic contributions arise on the event where we have the
  coalescences at some time of order $t$. Therefore (recall on the
  event of extinction the r.h.s.\ is $0$), and defining $C_m$ by
  choosing $L(\underline{1})=1$:
  \begin{align}
    \label{e1949}
    V_m(t,\mfu_0,\varphi) \sim C_m (bt)^{-1} L(\varphi) \text{ as } t
    \to \infty.
  \end{align}

  In order to be able to obtain $L(\varphi)$ we have to consider
  $\varphi$ of a specific form, in fact we may choose $\varphi^{(t)}$
  such that
  $\varphi^{(t)}(\dr)=\varphi\bigl(\,\underline{\underline{(r/t)}}\,\bigr)$
  and we may use $\varphi \in C_b([0,1]^n)$ here. The contributions to
  $C_m$ arise, as we saw on the event where all pairs do only coalesce
  at times $a_i \cdot t$ for some $a_i>0$ for $i=1,\ldots,\binom m 2$
  and here the $a_i$ have to be chosen such that they arise from
  successive coalescences. Hence we have to integrate over all
  possibilities, more precisely over the possibilities for $a_i$,
  inserting the probability density for these events which are based
  on i.i.d. exponential clocks. However a clock ringing makes other
  clocks redundant, namely coalescing with another one turns the own
  clocks into inactive. We get therefore in \eqref{e1949} for
  $\varphi$ invariant under permutation (note however we can w.l.o.g.
  assume this) of the sampled individuals
  \begin{align}
    \label{e3527}
    C_m = \left(\frac{1}{2}\right)^{m-1} b^{m-1} \; m!
  \end{align}
  and (integration is w.r.t.\ Lebesgue measure)
  \begin{align}
    \label{e5481}
    \begin{split}
      L(\varphi)= \int_{[0,1]} \cdots \int_{[0,1]}
      \varphi(\underline{\underline{a}}) \, \dx a_1 \ldots \dx a_m, \;
      & \text{ with } \underline{\underline{a}} = (a_{i,j})_{1 \leq i< j \leq m},  \\
      & \text{ and} \quad a_{i,j}=a_{j-i} \quad \text{for given} \quad
      a_1,a_2,\dots,a_{m-1}.
    \end{split}
  \end{align}

  The final point is to show that \emph{size-biased law} of the limit
  of the scaled $\U$-valued Feller diffusion conditioned to survive at
  time $t$ and observed at time $t$, equals the
  \emph{quasi-equilibrium} of $\breve\mfU^\dagger$ (see \eqref{e977}),
  which we know equals the one of $\breve\mfU^{\mathrm{Palm}}$; see
  Corollary~\ref{c.1609} for this fact. This means that we have to
  identify the limit as the claimed object, by showing the proper
  relation of the moments of the two objects, the limit of the scaled
  and conditioned to survive at time $t$ original process size-biased
  and the one conditioned to survive forever then taken in its long
  time limit $\U^\dagger$. Then in particular all finite subspaces
  generated by a sample of points from the population have
  \emph{different} laws the first has to be \emph{size-biased} to be
  equal to the other. This relation between $\U^\dagger$ and
  $\mfU^{\mathrm{Palm}}$ we have established in Corollary~\ref{c.1609}
  and we explain at the end of the proof of part (b) below how to
  obtain the claim.
\end{proof}

\begin{proof}[Proof of Theorem~\ref{T:KOLMOGOROVLIMIT}~(b)]
  Note that $\mfU^\dagger$ appears as solution of the martingale
  problem in Lemma~\ref{c.1310} which implies the compact containment
  condition on $[0,S]$ for every $S$. Together with the
  \emph{convergence of the coefficients} of the operator to the ones
  of the claimed operator, from which we have to conclude that the
  weak limit points are \emph{solutions} of the martingale problem.
  This follows since the compensators of the martingale problem
  converge, see (a), from the general theory; see e.g.\
  Lemma~5.1 in Chapter~4 in \cite{EK86}. Here we have to observe that
  the super-criticality enters only in the evolution of the total mass
  term in the generator so that the term $\bar \mfu^{-1}$ in the
  generator is compensated by the total mass terms and the only point
  here is that the generator maps polynomials not in bounded functions
  as required in the lemma. However we can use the extended form of
  the generator and consider the operator on $\mcD_1$; see
  Remark~\ref{r.1207} and Proposition~\ref{prop.1183}.

  Namely on the set $\mcD_1$ in \eqref{e1212} the operator maps into
  bounded functions. Then we see that the converging coefficients let
  the operators acting on $\mcD_1$ converge to the limit operator
  acting on $\mcD_1$. Now Lemma~5.1 in Chapter~4 in \cite{EK86} is
  applicable since these functions are still separating and hence we
  get the \emph{weak convergence} of the laws to a solution of the
  martingale problem on $\mcD_1$. Next observe that it solves also the
  $\Pi$-martingale problem as we see by approximation of
  $\Phi \in \Pi$ by elements of $\mcD_1$; see
  Section~\ref{sec:appr-solut-mart}. Therefore the limit is our
  process $\mfU^\dagger$ is identical to the one on $\mcD_1\cup \Pi$
  and hence is our process $\mfU^\dagger$.

  It remains to show the claim on the long time behavior of the limit
  dynamic. We have to show first the \emph{tightness} of its
  $t$-marginals as $t \to \infty$ and then the \emph{convergence}. We
  shall see now how to relate these two parts such that we can make
  use of the calculations already done.

  For the study of the behavior of polynomials we rewrite the
  expectation by absorbing the size-bias term into the polynomial by
  extending the $\varphi$ constant to a function of $(n+1)$-variables.
  Then the converging argument works just the final expression changes
  as claimed. This implies the convergence of all polynomials to a
  limit which is the size-biased law of the limit random variable we
  derived in \eqref{e1949}-\eqref{e5481} above. Therefore we have
  convergence to the claimed limit and we have the claimed relation
  between the two different limits.
\end{proof}

\begin{proof}[Proof of Lemma~\ref{c.1310}]\label{pr.lemma.c1310}
  Since existence was obtained above it remains to show uniqueness.
  Again, we can work with the conditioned martingale problem to get
  uniqueness from the uniqueness of the Fleming-Viot and the
  $\R_+$-valued diffusion. The details are similar to those from
  Section~\ref{ss.pr.thmartu}.
\end{proof}

\subsection{Proofs of Propositions~\ref{pr.palm1},~\ref{pr.palm2}}
\label{ss.prp1p2}

\begin{proof}[Proof of Proposition~\ref{pr.palm1}]
  \label{pr.4188}
  Let $ \mcS_t^h $ be the semigroup of the $h$-transform (with $h$
  satisfying certain conditions) which is given by
  \begin{align}
    \label{h-transf}
    \mcS_t^h (\Phi^{n,\varphi}(\mfu))
    = \frac{1}{h(\mfu)} \mcS_t\bigl(\Phi^{n,\varphi} h\bigr) (\mfu).
  \end{align}
  Then to get the generator we need to compute
  \begin{align}
    \label{h-transf-a}
      \frac{\dx}{\dx t} \mcS_t^h \Phi^{n,\varphi}(\mfu) =
      \frac{1}{h(\mfu)} \Omega^\uparrow \mcS_t
      \bigl(\Phi^{n,\varphi}h\bigr)(\mfv) \vert_{\mfv=\mfu}
       = \frac{1}{h(\mfu)} \mcS_t \Omega^{\uparrow}
      \bigl(\Phi^{n,\varphi}h\bigr)(\mfv) \vert_{\mfv=\mfu}
  \end{align}
  at $t=0$.

  Apply this now to $h: \U\setminus \{\ntree\} \to (0,\infty)$,
  $h(\mfu) = \bar\mfu$. The computations are analogous to those in
  \eqref{eq:r.742.1}--\eqref{eq:12c} but note that here we have to
  work with the object $\Phi^{n,\varphi} (\mfv) h(\mfv)$ which we
  write as $\bar\Phi(\bar\mfv) \wh\Phi^{n,\varphi}(\hat\mfv)$ with
  $\bar \Phi (\bar\mfv) = \bar\mfv^{n+1}$. In particular we have here
  $\Phi^{n,\varphi} (\mfv) = \bar\mfv^n \wh\Phi^{n,\varphi}
  (\hat\mfv)$. We obtain
  \begin{align}
    \label{h-transf-b}
    \begin{split}
      \Omega^{\uparrow} \bigl(\Phi^{n,\varphi}h\bigr)(\mfv)
      & = \wh \Phi ^{n,\varphi}(\hat\mfv) \Omega^{\mathrm{mass}} \bar \Phi
      (\bar\mfv) + \bar \Phi (\bar\mfv) \bigl(\frac{b}{\bar\mfv}
      \Omega^{\uparrow, \mathrm{res}} \wh \Phi^{n,\varphi}(\wh\mfv) +
      \Omega^{\uparrow, \mathrm{grow}} \wh \Phi^{n,\varphi}(\wh\mfv) \bigr) \\
      & = \wh \Phi ^{n,\varphi}(\hat\mfv) \frac{b\mfv}2 (n+1)n
      \bar\mfv^{n-1} + \bar \Phi (\bar\mfv) \bigl(\frac{b}{\bar\mfv}
      \Omega^{\uparrow, \mathrm{res}} \wh \Phi^{n,\varphi}(\wh\mfv) +
      \Omega^{\uparrow, \mathrm{grow}} \wh \Phi^{n,\varphi}(\wh\mfv) \bigr)\\
      & = \frac{(n+1)n}2 b \Phi ^{n,\varphi}(\mfv) + b \bar\mfv^{n}
      \Omega^{\uparrow, \mathrm{res}} \wh \Phi^{n,\varphi}(\hat\mfv) +
      \bar\mfv \Omega^{\uparrow, \mathrm{grow}}
      \Phi^{n,\varphi}(\mfv).
    \end{split}
  \end{align}
  Now, by \eqref{mr3aexta} we have
  \begin{align}
    \label{h-transf-c}
    b \bar\mfv^{n} \Omega^{\uparrow, \mathrm{res}} \wh
    \Phi^{n,\varphi}(\hat\mfv) = \bar\mfv \Omega^{\uparrow,
    \mathrm{bran}} \Phi^{n,\varphi}(\mfv) - b \frac{n(n-1)}2
    \Phi^{n,\varphi}(\mfv).
  \end{align}
  Plugging this in the last line of \eqref{h-transf-b} and simplifying,
  we obtain
  \begin{align}
    \label{h-transf-ba}
    \Omega^{\uparrow} \bigl(\Phi^{n,\varphi}h\bigr)(\mfv)
    = n b \Phi^{n,\varphi}(\mfv) +
    \bar\mfv \Omega^{\uparrow,
    \mathrm{bran}} \Phi^{n,\varphi}(\mfv)
    + \bar\mfv \Omega^{\uparrow, \mathrm{grow}}
    \Phi^{n,\varphi}(\mfv).
  \end{align}
  Finally, plugging this in \eqref{h-transf-a}, evaluating it with
  $\mfv=\mfu$ and $t=0$ we arrive at \eqref{e1372}.
\end{proof}

\begin{proof}[Proof of Proposition~\ref{pr.palm2}]
  This is standard since $C([0,\infty), \U)$ is a Polish space, since
  $\U$ is a Polish space, \cite{EK86}.
\end{proof}

\subsection{Proof of Theorem~\ref{T.1205}}
\label{ss.proth67}
\begin{proof}[Proof of Theorem~\ref{T.1205}]
  Here we have to generalize the classical Kallenberg decomposition of
  the Palm law of the $\R_+$-valued Feller diffusion to the
  $\U$-valued case. The key is the \Levy{}-Khintchine formula again,
  but the $\U$-valued one. For detail we refer to \cite{infdiv} and to
  Section~\ref{branchmarkcox} here, where the formula is recalled. The
  first observation is that the \emph{Cox measure} is identical for
  both cases due to the result on the total mass process, therefore we
  have to focus on showing that the law $\varrho_h^t$ on
  $\U(h)^\sqcup$ fits. We show that we get the concatenation of the
  terms of the Feller diffusion, i.e.\ a Poisson number of entrance
  laws from $\ntree$ surviving up to the current time, and of the term
  given by the entrance law from state $0$ of the entrance law of the
  Palm process for the $\U$-valued Feller diffusion.

  The \Levy{}-Khintchine representation gives the $h$-top of the state
  of the $\U$-valued Feller diffusion as concatenation of i.i.d.\
  trees where the number of summands is Cox-distributed with
  Cox-measure $\mcL [\bar \mfU_{t-h}]$. Then it is a general fact, see
  \cite{AGK19}, that the size-biased distribution has the size-biased
  Cox measure and size-biased Poisson numbers $(=1+ \Pois(\lambda))$
  of elements in the concatenation and the additional \emph{special
    summand} has the size-biased distribution. Therefore we need the
  $h$-transformed $\U$-valued Feller process started at the zero-tree
  as the independent additional part as claimed.
\end{proof}

\subsection{Proofs of Theorems~\ref{TH.1061},~\ref{TH.IDENTKALL}:
  \texorpdfstring{$\U^V$}{UV} and \texorpdfstring{$\U$}{U}-valued
  Feller with immigration}
\label{ss.pr.th1061}

To start with we have to derive the \emph{generator} from the
description of the dynamic of the $\U$-valued version of Evans' tree
described in Section~\ref{sss.1616}, a result we had stated there in
Corollary~\ref{l.1913} and for which Lemma~\ref{l.0850} was the basis.

\begin{proof}[Proof of Lemma~\ref{l.0850}]\label{pr.5607}
  We will calculate based on our description of the mechanism for
  every process following our description (below we shall show the
  existence of such a process) the second term in \eqref{e1068}, by
  considering the \emph{effect on the polynomial}, given an excursion
  of the total mass from the immortal line starts at some time point
  $s$ in $[t,t+\Delta]$ that is in $\bar\mfU^{\ast,+}$ a new color $s$
  starts evolving as a color-$s$ diffusion, which exists thanks to
  results by Evans (Theorems~2.7-2.9 in \cite{Evans93}), and survives
  until time $t$.

  We focus on the part $\wh \Phi^{n,\varphi,g}$ of the polynomial,
  which is the new part here, and we determine the \emph{intensity} in
  time at which this excursion occurs. Note the intensities and laws
  of such excursions are measures on $C ([0,\infty), \U^V)$, i.e.\ on
  path space.

  Namely we want to argue that we can calculate here again as for the
  Feller diffusion in \eqref{eq:r.742.1} as follows:
  \begin{align}
    \label{e2774}
    \Omega^\uparrow(\bar \Phi \wh \Phi)=\left(\Omega^{\mathrm{mass}} \bar
    \Phi \right) \wh \Phi +\bar \Phi \wh \Omega^{\mathrm{gen}} \wh \Phi,
  \end{align}
  where $\Omega^{\mathrm{mass}}$ is the operator of the mass process
  which is the diffusion from \eqref{e1116} and
  $\wh \Omega^{\mathrm{gen}}$ is the operator acting on the functions
  describing $\wh \mfU^{\ast, +}$. In other words we have again the
  \emph{product rule}. This we explained earlier below \eqref{e1212}
  for the Feller diffusion, and which works here completely analog for
  the evolution of the colors $s \leq t$ and we have to handle the
  new incoming ones. In other words we have next to calculate the
  immigration operator using the construction via the sliding
  concatenation of processes in \eqref{e1963}.

  What we need is that the creation of mass of color $s$ acts on the
  product of $\bar \Phi \; \wh \Phi$ according to the product rule. We
  note here furthermore that the immigration increases the \emph{mass
    of a color $s$} at time $s$, that is its sole effect with two
  components changing the total mass and the relative weights of
  colors. By an approximation with Galton-Watson processes (see
  Remark~\ref{r.697}, where this is described and put there $c=b$) a
  generator calculation shows this property, cf.\ \cite{Gl12}.

  The next fact needed is that the total mass process changes
  \emph{autonomously} as \emph{Markov process}, namely as a diffusion
  process given by the solution of the SDE \eqref{e1116} and similarly
  the projection on the mark space as autonomously evolving
  measure-valued Markov process. Therefore we can calculate the action
  of the generator on functions $\Phi^{n,\varphi,g}(\mfu)$ as in
  \eqref{1076}. We obtain from the first order term in \eqref{e1116}
  of the mass process a term (this can be seen using a particle
  approximation similarly to \eqref{eq:r.742.1})
  \begin{align}
    \label{eq1220a}
    b n \bar\mfu^{n-1} \wh\Phi(\hat\mfu) = n \frac{b}{\bar\mfu} \Phi(\mfu)
  \end{align}
  and from the second order term
  \begin{align}
    \label{eq1220b}
    \frac12 b n(n-1) \bar\mfu^{n-1} \wh\Phi(\hat\mfu) = b
    \frac{n(n-1)}{2\bar\mfu} \Phi(\mfu).
  \end{align}

  It remains to obtain the action of the generator on the
  $\wh \mfU$-part, i.e.\ genealogy, giving the term
  $\bar \Phi \wh\Omega^{\uparrow, \ast} \wh \Phi$ here we need in
  particular the part of
  $\wh \Omega^{\uparrow,\ast}_{V,\mathrm{imm},t}$, which gives an
  influx of the color $s$ and therefore changes the relative frequency
  of the colors, which means that it only acts via an action on $g$ in
  $\wh \Phi$. A somewhat lengthy calculation allows to explicitly
  calculate the generator (compare \eqref{e2073} and \eqref{e2091r},
  \eqref{e2079} and \eqref{e2343} in the proof section). We arrive
  with $g \in C^1_b([0,\infty))$ at the formula \eqref{e1085a} below
  for the generator action.

  We will now develop a representation of the increment arising from
  the \emph{immigration} term, which then allows us to calculate the
  generator. We obtain contributions on a small time interval if an
  immigrant starts a population surviving for some time. We need the
  \emph{intensity} and the \emph{effect} of these increments arising.

  We begin calculating the \emph{intensity of successful immigration}.
  If we think of the immigration in the interval $[0,t]$ we can put
  mass $\ve$ in the beginning of an interval of length $\ve$ into the
  system and observe at time $t$. Now let $\ve \downarrow 0$ to get
  our process. Then we see that we obtain for the masses the excursion
  measure of Feller diffusion. Hence we have to consider here
  excursions from the zero mass which start between times $t$ and
  $t + \Delta$ and which survive until time $t + \Delta$. This means
  for $s \in [t,t+\Delta]$ there are \emph{excursions} starting from
  $0$ which last beyond time $t + \Delta$. We need an \emph{intensity
    measure} on $[t,t + \Delta]$ for these $s$-excursions (and later
  the \emph{effect} of the \emph{added concatenated element}).

  Start by the \emph{intensity} in the $\R_+$-valued object and denote
  by (this is the $\Lambda_s^b$ in \cite{PY82})
  \begin{align}
    \label{e2005}
    \begin{split}
    \bar P_{0;s}   & \text{ the \emph{excursion law} (from $0$) of
        the Feller diffusion on } \R_+ \text{ with parameter } b, \\
      & \text{ the excursion starting at time } s \text{ from } 0.
    \end{split}
  \end{align}
  Furthermore we define $\bar P_{0;s,t}$ as the probability measure on
  paths in $C(([s,t],[0,\infty))$ given via $\bar P_{0;s}$ by the
  restriction of the latter to paths visible in the interval $[s,t]$,
  More precisely we set
  \begin{align}
    \label{e1751}
    \bar P_{0;s,t}(\cdot)=\bar P_{0;s}(\cdot \cap \{\bar \mfu_t>0 \})
    /\bar P_{0;s}(\{\bar \mfu_t>0\}).
  \end{align}
  Next we consider the $\U_1$-valued part for which we need a
  generalization of the law introduced in \eqref{e1435}. We denote by
  $\wh P^{\bar \mfu}_{\mfe;s,t}(\,\cdot\,)$ the kernel on
  $C([s,t],[0,\infty)) \times C([s,t],\U_1)$ describing the law of the
  pure genealogy part in the interval $[s,t]$ conditioned on total
  mass path $\bar\mfu$. We observe that using the \emph{conditional
    duality} for this excursion specified in \eqref{e892}, for a given
  path of the total mass we have the conditional duality which
  determines uniquely a law on $\U^V_1$ and depends measurably on
  $\bar \mfu$. Therefore, the \emph{conditional duality} gives us the
  transition kernel generating the law on paths from $s$ to $t$ that
  we are looking for.

  Now we calculate the corresponding $\U$-valued object which arises
  from combination of the parts described above. Recall that the
  projection of $P_{\ntree;s;t}$ on $\R$ equals $\bar P_{0;s,t}$. It
  follows that the \emph{excursion law} on paths with values in
  $\R_+ \times \U_1$ running from $s$ to $t$ is of the form:
  \begin{align}
    \label{excQst}
    P_{\ntree;s,t} (\dx\bar\mfu,\dx\hat \mfu) = \bar P_{\ntree;s,t}
    (\dx \bar\mfu)
    \otimes \wh P^{\bar \mfu}_{\mfe;s,t} (\dx\hat \mfu).
  \end{align}

  Finally we need the \emph{intensity measure}
  $Q_{s,t + \Delta}(\cdot)$ of such an $\U$-excursion for
  $s \in [t,t + \Delta]$ contained in the general object, which we
  have specified in \eqref{e1377}. To this end, we consider the
  interval $[t,t+\Delta]$ and the colors in that interval. Then we
  have a random subset $I^t_\Delta \subseteq [t,t+\Delta]$ of points
  $s$ in which an excursion of the colors $s$ starts and reaches time
  $t+\Delta$. It is convenient to scale the sets to the interval
  $[0,1]$, that is to consider $\wt I_\Delta^t \subseteq [0,1]$ the
  set of points $s \in [0,1]$ such that $t + s \Delta \in I^t_\Delta$.
  For each $t$ we obtain a point process $\wt I_\Delta^t$ on $[0,1]$
  whose law is independent of $t$. We denote the generic point process
  with this law by $I_\Delta$. It is well-known that $I_\Delta$ is an
  inhomogeneous PPP on $[0,1]$ with \emph{intensity measure} (for the
  calculation see for example Section~\ref{sec:branching}, Proof of
  Theorem~\ref{T:BRANCHING} part (b) which gives this):
  \begin{align}
    \label{e2079}
    \frac{2}{(1-s) \Delta}.
  \end{align}

  Next we come to the \emph{effect} of the excursion on the
  polynomials. The state $\mfU^{\ast,+}_{t + \Delta}$ at \emph{time
    $t+\Delta$} has a \emph{$\Delta$-top} which can be written in the
  form
  \begin{align}
    \label{e2073}
    \lfloor \mfU_{t+\Delta}^{\ast,+}\rfloor (\Delta)=
    \lfloor \prescript{\leqslant t}{}\mfU'_{t+\Delta} \rfloor (\Delta)
    \sqcup^\Delta \bigl(\prescript{>t}{}\mfV_\Delta^{\ast,+}\bigr).
  \end{align}
  Here we denote by
  $\lfloor \prescript{\leqslant t}{}\mfU'_{t+\Delta}\rfloor (\Delta)$
  the population of colors $\leq t$ evolved further with the
  \emph{Feller dynamic} from their initial time up to time $t+\Delta$
  and by $\prescript{> t}{}\mfV_\Delta^{\ast,+}$ the population of
  colors $>t$ evolved up to time $t+\Delta$ further with the
  \emph{$(\ast,+)$-dynamic} from the time $t$ state. The part
  $^{> t}\mfV_\Delta^{\ast,+}$ in \eqref{e2073} arises in distribution
  as sliding concatenation of independent processes of populations
  with one color
  \begin{align}
    \label{e2083}
    \bigl\{\prescript{s}{}\mfV_{\Delta,s} : s \in I_\Delta \bigr\}.
  \end{align}

  Then the elements of the family in \eqref{e2083} are independent
  processes, their mark is $s \Delta$ and the genealogy part is a
  version of, forgetting the mark, the process in \eqref{e2087a}:
  \begin{align}
    \label{e2087}
    \prescript{s \Delta}{}\mfU_{\Delta,s\Delta}^{\Delta(1-s)}
  \end{align}
  marked for all times with one mark namely $s\Delta$. The sliding
  concatenation of those elements gives us $^{> 0}\mfV_\Delta$:
  \begin{align}
    \label{e2091}
    \prescript{> 0}{} \mfV_\Delta \stackrel{d}{=}
    \mathop{{\bigsqcup}^{\mathrm{sli}}}_{s \in I_\Delta}
    \enspace \prescript{s \Delta}{}\mfU_{\Delta,s\Delta}^{\Delta(1-s)}.
  \end{align}
  Now it is suitable to rewrite the equation \eqref{e2091} above as
  \begin{align}
    \label{e2091r}
    \prescript{> 0}{}\mfV_\Delta = \prescript{> 0\mkern-5mu}{}{\Bigl(\bar
    \mfV_\Delta,\wh\mfV_\Delta \Bigr)} \stackrel{d}{=}
    {\mathop{{\wt\bigsqcup}^{\mathrm{sli}}}_{s \in I_\Delta}} \;
    \prescript{s \Delta\mkern-5mu}{}{\Bigl(\bar\mfU_{\Delta,s\Delta}^{\Delta(1-s)},
    \wh\mfU_{\Delta,s\Delta}^{\Delta(1-s)}\Bigr)},
  \end{align}
  where the action of the ${\wt\bigsqcup}^{\mathrm{sli}}$ operator on
  the first component is addition of mass and the action on the second
  is ${\bigsqcup}^{\mathrm{sli}}$ as defined in \eqref{e1963}.

  In the spirit of the notation in Theorem~\ref{TH.MARTU} we denote by
  \begin{align}
    \label{e2087a}
    (\mfU_{t,t_0}^{T})_{t\in [t_0,T]}
  \end{align}
  the $\U$-valued Feller diffusion with starting time $t_0$
  conditioned to survive up to time $T$. In this notation the process
  in Theorem~\ref{TH.MARTU} would be written as
  $(\mfU_{t,0}^{T})_{t\in [0,T]}$.
  We also need the process where \emph{no new colors} appear after
  time $t$, call this $^{\leq t}\mfU^{\ast,+}=(^{\leq
    t}\mfU^{\ast,+}_s)_{s \ge t}$ which coincides with
  $\mfU^{\ast,+}_s)_{s \ge t}$ projected on the population with marks
  in $[0,t]$.

  Now we use this to calculate the effect on a polynomial of running
  time by $\Delta$ and adding new immigrants further using the above
  representation, which is denoted and given by, using independent
  copies in the concatenation below, where $\tau_t$ shift colors by
  $t$:
  \begin{align}
    \label{e-wtE}
    E (t,t+\Delta; \Phi^{n,\varphi,g}) = \E\bigl[\Phi^{n,\varphi,g}
    (^{\leq t}\mfU^{\ast,+}_{t+\Delta} \sqcup^{t,\Delta} (\tau_t \circ ({}^{>
    0}\mfV_\Delta))) - \Phi^{n,\varphi,g} (\mfU^{\ast,+}_t)\bigr].
  \end{align}

  Here, the $\sqcup^{t,\Delta}$-concatenation is a modification of the
  concatenation from \eqref{e.tr47} which now takes colors into
  account. More precisely, the modification concerns the distances of
  elements of $^{\leq t}\mfU^{\ast,+}_{t+\Delta}$ and
  $(\tau_t \circ ({}^{> 0}\mfV_\Delta))$ between each other. Let
  $(u_1,s)$ for $s \le t$ be an element of a representative of
  $^{\leq t}\mfU^{\ast,+}_{t+\Delta}$ and let $(u_2,s')$ for
  $s' \in [t,t+\Delta]$ be an element of a representative of
  $(\tau_t \circ ({}^{> 0}\mfV_\Delta))$, then their distance in
  the $\sqcup^{t,\Delta}$-concatenation at time $t+\Delta$ is given by
  \begin{align}
    \label{grx53dt}
    2 (t+\Delta -s).
  \end{align}

  Next we express the \emph{difference quotient} for the expectation
  of the polynomial by the r.h.s.\ of \eqref{e-wtE} in terms of the
  mass and genealogical quantities on the r.h.s.\ of \eqref{e2091r}.

  Return to \eqref{e2091r} and analyze the r.h.s. One ingredient we
  need is $\{\mathcal L (M_{t,s}) : t\ge s\}$ the entrance law of the
  Feller diffusion from state $0$ at time $s$ observed at time $t$. We
  want to condition this to survive up to time $T$. Then the
  corresponding conditioned entrance law is denoted by
  $\{\mathcal L (M_{t,s}^{T}): t\ge s\}$ and for $T=t$ we have
  \begin{align}
    \label{e2343}
    \mathcal L (M_{t,s}^t) = \Exp\bigl((t-s)\bigr).
  \end{align}

  This can be obtained by adapting (3.3) in \cite{LN68}. (Note that
  the formula there contains a typo: in the case $x=0$ the factor
  $t^2$ should rather be $t^{-2}$.) To construct the process we use
  that we have the colored Feller diffusion without immigration by
  simply giving each individual one and the same mark and we have to
  add now the effect of immigration.

  We know that excursions starting at time $s$ and surviving up to
  time $t$ have the intensity $2(t-s)^{-1}$; see \eqref{e2079}.
  Therefore we can now calculate the expected effects using
  \eqref{e-wtE} and \eqref{e2091r}. For the calculation it is more
  convenient, to use the time-homogeneous formulation. Namely we
  consider \eqref{e2532}.

  Observe that in $\prescript{\leq t}{}{\mfU}^{\ast,+}_{t + \Delta}$ the colors
  are just inherited otherwise no change occurs while in
  $\prescript{> 0}{}{V}_\Delta$ new colors immigrate and no old colors are
  there, hence the $\leq t$ population appears only in the first part
  of the concatenation. Therefore the difference between $\leq t$
  populations and $>t$ populations are sitting in the different parts
  of the concatenation.

  In \eqref{e-wtE} the difference arises from both terms of the
  concatenation and we get the $\frac{\partial}{\partial t}$ term as
  well. We get the following three terms. Let $\Omega^\uparrow_V$
  denote the $\U^V$-valued Feller diffusion with inheritable marks in
  $V$. Let furthermore the immigration operator be written formally as
  in \eqref{e1085a}, then we get using first \eqref{e2532}, then
  inserting \eqref{e1068}, and replacing with \eqref{e2070}:
  \begin{align}
    \label{e5841}
    \Psi' (t) \Phi^{n,\varphi,g}(\mfu) + \Psi (t) \Omega_V^{\uparrow,+}
    \Phi^{n,\varphi,g}(\mfu) + \frac{bn}{\bar\mfu}
    \Psi (t) \left[\Phi^{n,\varphi,\wt g}(\mfu)  + \Psi (t)
    \Phi^{n,\varphi,\wt g}(\mfu)\right],
  \end{align}
  where $\wt g=\sum_{i=1}^n \; g_i$ recall \eqref{e1085a}.

  In order to make sense out of the formal expression concerning the
  immigration operator we recall Remark~\ref{r.2530}, where we saw we
  should treat the explicit time and marks in $V$ together without
  product form and choose $g(t,v)$ as in the statements of the lemma.
\end{proof}

\begin{proof}[Proof of Theorem~\ref{TH.1061}]\label{pro.5348}
  We now have to prove the \emph{well-posedness} of the martingale
  problem, first the existence and then the uniqueness.

  We observe for \emph{existence} that the original description of the
  dynamics allows to construct the finite dimensional distributions of
  the stochastic process based on a Poisson point process and
  independent copies of conditioned on survival marked
  $\U^{[0,\infty)}$-valued Feller diffusions. This gives the state at
  time $t$ by the sliding concatenations (see \eqref{e1963}) and
  determines the potential transition kernels. We showed above that
  the resulting object would have to solve the martingale problem with
  the operator we derived in the proof of Lemma~\ref{pr.5607} above
  where we are starting in the state
  $[\{1\} \times \{0\}, \underline{\underline{0}},\delta_{(1,0)}]$.

  We have to show that the constructed state indeed defines a
  transition kernel, i.e.\ satisfies the Chapman-Kolmogorov equations.
  Here, the first ingredient is that the measure-valued, i.e.\
  $\mathcal{M}_{\mathrm{fin}}([0,\infty))$-valued Evans process exists
  indeed as a process with a.s.\ continuous path.

  This can be derived from Evans theorem, Theorems~2.7.-2.9. in
  \cite{Evans93}. Here to apply the results one needs to set up a
  motion which allows to distinguish the populations entering at time
  $s$ from the immortal line. One way is: we let the mark move to at a
  speed depending on $s$, which is strictly decreasing but remains
  positive. This can be recoded by a one-to-one mapping to match our
  process.

  The second ingredient is that conditioned on that measure-valued
  process the $\U_1^{[0,\infty)}$-valued process
  $(\wh\mfU_t^T)_{t\in [0,T]}$ exists as time-inhomogeneous
  Fleming-Viot process as we have shown before. This defines the
  transition kernel in the zero element. Then we argue that we can
  ``glue two pieces together'' to get the kernel with the general
  starting point and hence the general finite dimensional
  distribution.

  The above claim follows when we show that we can concatenate the
  special evolution with an initial state in $\U$ of the form we allow
  here. This means given the evolution starting in $\mfe$ and an
  element $\mfu' \in \U_{\mathrm{imm}}$ which are enriched by the
  color $s \leq 0$ a state we call $\mfu$ we want to obtain the state
  of the process at time $t$ starting in $\mfu$. This means we have to
  glue together at time $t$ the process we constructed above together
  with a $\U$-valued Feller diffusion starting in $\mfu'$ where colors
  are just inherited, and the process with colors $s \leq 0$, denoted
  $(\wt \mfU_t^{\ast,+})_{t \geq 0}$. The latter is easily constructed
  using the branching property, since every color is just attached to
  all descendants in the $\U$-valued Feller diffusion starting in the
  mass from that color. Then, as a candidate for the solution we
  consider
  \begin{align}
    \label{e6014}
    \wt \mfU_t^{\ast,+} \vee \mfU^{\ast,+}_t.
  \end{align}
  Here, the operation $\vee$ means the basic set is the disjoint union
  of $\wt U$ and $U$, the metric $\wt r \vee r$ is an extension to the
  joint union with the property that it coincides with $\wt r$ on
  $\wt U \times \wt U$, with $r$ on $U \times U$, and on
  $\wt U \times U$ it is twice the color difference. Note, that this
  is a certain extension of $\sqcup^t$ to a \emph{marked} genealogy.

  We have to check the \emph{Chapman-Kolmogorov equations}. The
  process $\mfU^{*,+}$ which we obtain by conditioning on the Markov
  measure-valued process $\bar\mfU^{*,+}$ is itself a Markovian time
  inhomogeneous $\U_1^{[0,\infty)}$-valued Fleming-Viot process. We
  have to construct this process construct for a given measure-valued
  path to conclude the existence proof. To this end, we have to
  construct a collection of independent time-inhomogeneous
  Fleming-Viot process each of which is $\U^{\{s\}}_1$-valued for some
  $s \in [0,\infty)$ and has as resampling rate
  $\nu_t(\{U_t \times \{s\})$ where $\nu_t$ comes from a realization
  of a path of $\bar \mfU^{\ast,+}$. The resulting process is then a
  $\U_1^{\{s\}}$-valued strong Markov process starting in the element
  $\mfe$ of $\U_1$. This process is well-defined for each
  $s \in [0,\infty)$. We define the process at a time $t$ as sliding
  concatenation over $s \in \mcI_t$ for the given path
  $(\mcI_t)_{t \geq 0}$, yielding altogether a
  $\U_1^{[0,\infty)}$-valued process, which is by the independence of
  the components a Markov process with continuous paths. The subtlety
  here is that for $\varepsilon >0$,
  $\mcI_t \setminus [t-\varepsilon,t]$ is a finite set, but $t$ is a
  real limit point and $|\mcI_t|=\infty$. Fix a specific time $T$
  first. In order to use the infinite concatenation as definition at
  time $T$ we need that the path of the Fleming-Viot process of a
  fixed color is continuous at $T$ and the masses and diameters of
  colors close to $T$ converge to zero. The total mass at time $T$ is
  finite. Therefore we obtain a limit of the infinite concatenation.

  Next we have to construct the whole $\U_1^{[0,\infty)}$-valued
  \emph{path} for all times at once. We use the fact that the
  measure-valued paths are continuous to argue that they are
  equi-continuous on the time points $t \in \mathbbm{Q} \cap [0,T]$
  for every $T$ and the masses are uniformly concentrated on finitely
  many colors. Then we can conclude that the $\U^{[0,\infty)}$-valued
  paths have this equi-continuity property. This follows from the fact
  that if we consider the process arising by sliding concatenation
  over $\mcI_{(t-\ve)^+}$ instead of $\mcI_t$ we obtain a uniform
  approximation on $\ve > 0$. Then by standard arguments we can define
  the path for all times as an a.s.\ continuous one.

  Now we have a conditioned process and we have seen already that we
  can use \cite{Evans93} for getting the measure-valued case.

  It remains to show \emph{uniqueness}. So far we have used duality at
  this point directly or via a conditional martingale problem where we
  need the dual for a time-inhomogeneous Fleming-Viot process. The new
  element now is that the process is one of \emph{marked} genealogies
  and has in its generator an additional \emph{immigration term} from
  a time-inhomogeneous source. We try to construct such a conditional
  duality nevertheless by using a richer process in which our process
  can be embedded and a dual can be constructed to then obtain
  uniqueness automatically.

  We can augment every solution of our process by special sites
  $\{\ast,\dagger\}$, where $\ast$ carries constant in time the
  element $\mfe = [\{1\},\underline{\underline{0}},\delta_1]$ marked
  with color $t$ at time $t$, and at time $\delta$ the element $\mfe$
  is the ancestor of every other color $s$ with $s$ larger than
  $\delta$. On $\dagger$ we construct the process that we later want
  to read off as $\mfU^{\ast,+}$. Here an observation is that we might
  view this as a spatial system with critical branching in one
  component of space. In the other special site with emigration to the
  first component and with super-critical branching evolution which we
  describe more precisely below.

  We consider a $\U^V$-valued process where
  \begin{align}
    \label{e3725}
    V= \{\dagger,\ast\}\times [0,\infty).
  \end{align}
  On $\ast$ we start in $\mfe$ and we run a super-critical $\U$-valued
  dynamic with super-criticality rate $b$ but the growth operator of
  distances is turned off for two individuals at $\ast$ (otherwise we
  are not dust-free). There the color changes deterministically, i.e.\
  grows with speed $1$. Furthermore, starting from time $0$ mass
  migrates at rate $b$ at time $t$ to \emph{site $(\dagger,t)$} and
  from this site it follows the Feller dynamic on $\dagger$. Hence on
  $\dagger$ we have a critical rate $b$, $\U$-valued Feller dynamics
  where the marks in $[0,\infty)$ are \emph{inherited} such that the
  mass on $\dagger$ with mark in $[0,\infty)$ increases in mean at
  constant rate $b$ with time. There are two points to prove.

  \medskip\noindent
  (1) For every color $t$ this process on $\dagger$ is well-posed as
  $\U^{\{t\}}$-valued Feller diffusion in its $\wh \mfU^{\{t\}}$ part,
  which also connect with each other once considered together as we
  shall see.

  \medskip\noindent
  (2) The process of the colors $\mcI_t$ on $[0,\infty)$ which have
  positive mass at time $t$ is uniquely determined in law by the
  generator.

  \medskip

  The projection of the above process on the marks
  $\{\dagger\} \times [0,\infty)$ can be mapped on $[0,\infty)$
  without loss of information and we obtain a process which is by
  inspection a version of $\mfU^{\ast,+}$ solving the martingale
  problem. We have to characterize this process via its martingale
  problem and prove its \emph{uniqueness} via \emph{duality}. More
  precisely, the process on $\ast$ is an autonomous $\U$-valued
  process, which is equal to $\mfe$ marked with $t$ at time $t$ whose
  uniqueness we must establish and for the uniqueness of the
  $\dagger$-component we have to use duality arguments. A subtlety
  here is that only \emph{countably many colors} are present at time
  $t$, which have non zero weight in the time $t$-population, but
  these colors are \emph{random}. Most suitable is therefore a
  conditional duality where we condition on the measure valued process
  on $\{\dagger\} \times [0,\infty)$ so that this set of colors
  becomes deterministic.

  For this we need the uniqueness of the measure valued process that
  is in particular of the $\mcI=(\mcI_t)_{t\ge 0}$. We use Evan's
  uniqueness result. This process is a special case of processes
  constructed by Evans \cite{Evans93} as super process on general
  geographic space. We would have here a super process on $(\R_+)^2$.
  We then need that solutions to our martingale problem must be such
  super processes to conclude uniqueness.

  We have to see that \cite{Evans93} is applicable. We consider
  $E=[0,\infty)^2$ and the motion process which is deterministic
  $(a,b) \to (a,b+t)$. Put $a=0$ and $b=0$ and start the processes.
  The immortal throws of type $(s,s)$ at time $s$ at rate $b$. Project
  the measure at time $t$ the component
  $\{\dagger\} \times [0,\infty)$ on $[0,\infty)$ to obtain
  $\bar \mfU^{\ast,+}(\cdot)$, a measure on $[0,\infty)$. This process
  is characterized uniquely by its $\log$-Laplace equation as is proved
  in \cite{Evans93}. We know that it solves our martingale problem,
  see \cite{D93} in Section 6.1, where it is proved in Theorem 6.13
  that a solution to the $\log$-Laplace equation solves a martingale
  problem and vie versus, which allow to conclude with Ito-calculus
  that it solves our martingale problem. In other words
  $(\bar \mfU_t^{\ast,+})_{t \geq 0}$ solves an autonomous martingale
  problem, but was given originally by the $\log$-Laplace equation.

  Since we want to prove uniqueness of the martingale problem via
  duality this raises first the question, what is the form of the
  \textit{duality function}. Here we use:
  \begin{align}
  \label{e6193}
    H(\mfU,\mfC)=\int_{(U \times V)^n} \; \varphi
    \left(r(u_i,u_j)\right)_{1 \leq i<j\leq n} g
    \left(t,(v_i)_{i=1,\cdots, |\mfC|}\right)
    \nu^{\otimes n}\bigl(\dx\underline{(u,v)}\bigr),
  \end{align}
  where $n=|\mfC|$ and the function $g$ satisfy the conditions posed
  in \eqref{e2636}.

  Therefore the process $\mfU^{\ast,+}$ has as conditioned dual a
  ``spatial'' coalescent starting on site $\dagger$ in color
  $s \in \mcI_t$, where $\mcI_t$ is the set of colors with
  $\bar \mfU^{\ast,+}_t(\{s\}) >0$, where particles with color $s$
  jump from site $(\dagger,s)$ to $(\ast,s)$ at time $s$, coalesce at
  every color as usual, i.e.\ only the same color can coalesce on the
  ``sites'' $(\dagger,s), s \in [0,\infty)$. This process will in fact
  \emph{coalesce by time $s$}. Namely the rate is
  $b/\bar \mfu_r(\{s\})$ at the backward time $r=t-s$. Recall the
  non-integrability of the rates in $s$ at $t$, which means we
  coalesce before reaching $t$. Once particles have reached $\ast$
  they instantaneously coalesce with other colors.

  This now proves uniqueness, if we can show that the duality is
  implied by the \emph{generator criterion}. What is new here
  (compared to the proof in Section~\ref{sec:duality}) is the color
  structure, the spatial structure $\{\ast,\dagger\}$ and the
  immigration operator. The duality for the spatial model is given in
  Section~\ref{s.proofext}, the color structure is as colors are
  inherited immediate, remains as issue the starting $0$ mass.

  Here we use that the generator criterion implies the uniqueness of
  the process starting in positive mass and then reading of from the
  duality relation that this converges to a \emph{limiting duality
    relation} letting the mass going to zero and this is giving the
  uniqueness.
\end{proof}
\begin{proof}[Proof of Proposition~\ref{prop.2034}]\label{pr.6355}
  Here we observe that we deal with the same martingale problem as for
  $\mfU^\dagger$, hence the same argument carries over here for
  uniqueness.
\end{proof}
\begin{proof}[Proof of Theorem~\ref{TH.IDENTKALL}]
  \label{pr.5380}
  Here we use the above theorem, that $\mfU^{\ast,+}$ is uniquely
  determined by the martingale problem. Then the claim follows from
  the fact that the operators have the \emph{same action} on functions
  \emph{not} depending on the colors so that $\mfU^\ast$ solves the
  same martingale problem as $\mfU^\dagger, \mfU^{\mathrm{Palm}}$,
  which by well-posedness will agree.
\end{proof}

\subsection{Proofs of Proposition~\ref{l.branch} and
  Theorem~\ref{T:BACKBONE}}
\label{ss.profbran}

We construct here first the ingredients needed for the backbone
construction before we come to the actual proof of
Theorem~\ref{T:BACKBONE}.


\begin{proof}[Proof of Proposition~\ref{l.branch}]
  \label{pr.l.branch}
  (a) Recall the construction of $\mfU^{\ast,+}_t$ by sliding
  concatenation in Remark~\ref{rem:eva-sl-con}. The construction of
  the process $(\mfV^{t,+}_r)_{r \leq s}$ for $s < t$ is simpler since
  we concatenate here for $r<t$ always a finite random number of
  copies of a Feller diffusion conditioned to survive until time $t$.
  Recall the process $\mfU^T$, which we use here for $T=t$ and which
  we have constructed and characterized by a martingale problem. It is
  then easy to explicitly construct the process given a path of the
  measure-valued process $(\bar \mfV_r^{t,+})_{r \leq s}$ of the
  colored masses.

  Hence the existence of a solution follows again by the construction
  we gave via the IPP and the sliding concatenation of independent
  pieces of the processes $(\mfU^t_r)_{r \in [s,t]}$ needed, we skip
  the standard details here.

  A bit more subtle is the \emph{uniqueness}, which we must base on
  \emph{conditional duality}, conditioning on
  $(\bar \mfV_r^{t,+})_{r \leq s}$, i.e.\ on the whole collection of the
  mass paths of the various immigrating masses marked by the points of
  the IPP. As dual we take the time-inhomogeneous coalescent,
  coalescing only on mark $s$ at rate $b$ times the inverse color $s$
  mass. This runs until time $s$ and then the color changes to $0$.
  Partition elements with color $0$ coalesce instantaneously. Because
  of the non-integrability of the coalescence rates at $s$ this means
  that color $s$ partition elements coalesce into one element before
  time $s$.

  This gives us the uniqueness of the genealogical (i.e.\ the
  $\wh{\cdot}$\,) part of the process conditioned on the collection of
  masses.

  In order to close the argument we have to show first that the
  measure valued process must be \emph{atomic}. Note next that then
  the number of colors at time $r<t$ is finite due to the bounded
  immigration rate up to time $t$ and show that the evolution of the
  collection of the atomic measure valued process is uniquely
  determined by the martingale problem. For that property we need that
  the atoms evolve independently, since the operator is the sum over
  the color $s$ operators. Then we need next the uniqueness of the
  single color of the collection of mass processes on time intervals
  $(0,r]$, $r<t$, but now conditioned on survival till time $t$, which
  we know from classical SDE results for the process $\bar \mfU^T$.
  Therefore we need the \emph{atomicity} and the \emph{independence of
    atom evolutions}.

  For the total mass process we see from the martingale problem that
  the points where the part with bounded variation of a color $s$
  starts increasing form a Poisson point process with rate
  $2/(b(t-r))$.

  To get the independence we consider test functions of the form
  $\exp(-\Phi)$ for some positive polynomials and as test functions on
  the colors namely linear combinations of indicators to conclude that
  the expectation factorizes into the contribution of the different
  colors, if we condition on $I_t$. Here we use Section~1.4 in
  \cite{infdiv} together with a fact on martingale problems first
  devised by Kurtz and extended to $\U$-valued processes in
  Theorem~2.8 in \cite{ggr_GeneralBranching}. This allows also to
  argue that the populations of different colors evolve independently.
  Furthermore we know already from Theorem~\ref{TH.MARTU} that the
  single color evolution is uniquely determined by the martingale
  problem for $\mfU^T$. This concludes the argument.

  \medskip
  \noindent
  (b) Here we have to show that the sequences as $t \uparrow h$ of
  solutions are \emph{tight} and \emph{converge} to a limit as
  $t \uparrow h$. For the tightness and convergence we have to deal
  with incoming immigrant populations arriving at times $s$ close to
  $h$ as there are before time $s$ only finitely many, each behaving
  as a process with continuous path while between $s$ and $h$ we have
  countably many so that we have to control their total mass as
  $s \uparrow h$ to show their contribution can be made arbitrarily
  small. The descendant population of a time $s$ immigrant evolves
  autonomously according to the $\U^V$-valued diffusion where
  immigration is turned off after time $h-s$ and converges to a limit
  state as we approach $t$ by the continuity of path. Therefore to
  close the argument we use a coupling argument, where we control the
  sub-populations of those immigrating after time $s$ with simpler
  ones and to study the effect of additional immigrants at the
  \emph{diverging} rate.

  We have to show that the random population with marks in $[s,h)$ is
  \emph{tight} in $s$ and \emph{converges to the zero-tree} as
  $s \uparrow h$. Furthermore we have to show that for every fixed
  $s<h$ the population at time $t$ with marks less than $s$ has a
  limit as $t \uparrow h$. Both facts together give the claim.

  The first point, the tightness, will follow from the construction as
  concatenation of Feller entrance laws conditioned to survive beyond
  time $t$ starting at $s$ and is related to the second, the
  convergence requires to bound the mass of a sum of Feller diffusions
  starting at some time between $s$ and $h$ and surviving till time
  $h$. The number of individuals in $[s,h-\delta]$ surviving till time
  $t$ grows to infinity as $\delta \uparrow 0$, but the contribution
  of mass is bounded by
  $\mathit{const} \cdot\int_0^\delta \ve \; \log |\ve|d \ve$; see also
  \eqref{e1711}. By choosing $\delta$ suitably we can then get that
  the contribution in mass goes to zero as $s \uparrow h$. This means
  we have convergence to a state in $\U$ as $t \uparrow h$; see
  \eqref{e1711}.
\end{proof}

\begin{proof}[Proof of Theorem~\ref{T:BACKBONE}]
  (a) First we have to identify $\mcL[\mfU_t^{\rm Palm}]$ defined as
  size-biased law and then characterized by a martingale problem as
  the one arising here from the construction of $\mfV$ as
  $\mcL [\mfV_t^t]$. We know this for the total mass process see
  \cite{Lamb07} and also \cite{Lamb02} in other words
  $\bar \mfU^\ast_t =\bar \mfV^t_t$ in law. From the fact that the
  Laplace transform agree we can in fact read of that also
  $\mfV^{t,+}_t=\mfU_t^{t,+}$, since the former can be decomposed in
  the independent masses corresponding to the jump sizes of the IPP
  process with intensity $2(t-s)^{-1} ds$ and can hence be written in
  a specific form and on the other hand the process $\bar \mfU^\ast$
  can be written as arising from a Poisson point process with
  intensity $b \cdot ds$ of Feller diffusions containing those as
  component surviving till time $t$ a procedure, which is
  \emph{thinning} a PPP in an inhomogeneous way. This is the limit at
  starting on $\ve \N_0$ Feller processes at rate $b$ taking those
  surviving at time $t$ and starting at mass $\ve$ with
  $\ve \downarrow 0$. Recall~\eqref{e1264} for the asymptotic formula
  for survival. These two representations agree as has been shown.
  Namely they agree with the one induced by the color decomposition of
  the latter after removing the colors.

  We need here however more than
  $\bar \mfU_t^{t,+}= \bar \mfV_t^{t,+}$ in order to have sufficient
  information on genealogies, namely that the path
  $\{(\mfU_r^{\ast,+}(s))_{r \in [s,t]}, s \in \mcI_t\}$ equal in law
  $\{(\mfV_r^{t,+}(s))_{r \in [s,t]}, s \in \mcJ_t\}$, where $\mcI_t$
  and $\mcJ_t$ are the sets in mark space which carry an atom at time
  $t$. For that we need that pruning in $\mfU^{\ast,+}$ all colors
  that do not reach the time horizon $t$ is exactly resulting in the
  IPP with the intensity $(t-s)^{-1}$. This means that we are back to
  a property of $(\bar \mfU_t^\ast)_{t \geq 0}$, which follows from
  the explicit construction of the Evans process with rate $b$
  immigrations from an immortal line. This is known, as we saw above.

  Next we have to \emph{verify equality for the genealogy part}, i.e.\
  we have to show that the conditional laws of
  $\pi_{\mcI_t}(\wh \mfU^{\ast,+}_t)$ and $\wh \mfV^{t,+}_t$, both
  conditioned on $(\bar \mfU^{\ast,+}_r)_{r \in [0,t]}$, where
  $\pi_{\mcI_t}$ is the projection on the population with colors
  $\mcI_t$, respectively $(\bar \mfV_r^{t,+})_{r \in [0,t]}$ agree if
  we use the same path for $\bar \mfU$ and $\bar \mfV$. We therefore
  couple the total mass path and show that both descriptions first
  $\U$-valued Feller diffusions split off at rate $b $ and second
  copies of $\mfU$ at rate $2(t-s)^{-1}$ split off result in the same
  law at time $T$.

  We consider first the genealogies of the population of one color. We
  have seen in the proof of Theorems~\ref{TH.1061},~\ref{TH.IDENTKALL}
  that the genealogy for given total measure path can be given in
  terms of a certain $\U_1$-valued coalescent processes whose
  parameters are uniquely determined by the total mass process.
  Therefore we have to show now that given that the
  $(\bar\mfU^{*,+}(s))_{r \in [s,t]}$ and $(\bar \mfV_r^{t,+}(s))$
  agree, so that we can use a coupling of the pair in
  $[0,\infty) \times \U_1$ by actually choosing them equal. Then the
  processes $\wh \mfU^{\ast,+}(s)$ and $\wh \mfV^{t,+}(s)$ have the
  same law for all $s \in \mcI_t=\mcJ_t$. Namely the duality is a
  consequence of the calculation we did earlier below \eqref{e2005}
  calculating the generator of $\mfU^{\ast,+}$, which showed that
  given the path of one $\bar \mfU^{\ast,+}(s)$ (which is a marked
  version of $(\bar \mfU^{t-s}_r)_{0 \leq r \leq s}$) the conditional
  law of $\U_1$-valued part for the population of this color is a
  time-inhomogeneous Fleming-Viot process the resampling parameters
  depending on the total mass path of the various colors. This is
  however the same for the $\mfV^{T,+}$ process. Now we have to
  concatenate (sliding concatenation) the different color populations,
  according to the same rule. Hence we have the same time-$t$ marginal
  distributions and hence this holds in particular for the projections
  on $\U$. Then we use that the solutions of the two martingale
  problems (which are well-posed) are actually equal, according to a
  result of Ethier and Kurtz in Theorem~4.2 (a) in Chapter 4 of
  \cite{EK86}.

  \medskip
  \noindent
  (b) As for instance in the proof of
  Theorem~\ref{T:KOLMOGOROVLIMIT}(c) we proceed by showing first
  \emph{tightness} and second \emph{convergence} via \emph{convergence
    of moments}.

  The \emph{tightness} is again verified by checking the three
  conditions which guarantee tightness in the weak topology w.r.t.\ to
  the Gromov weak topology.

  \medskip
  \noindent
  \textbf{(i)} The tightness of \emph{total masses} follows by using
  the SDE for the total mass, which gives immediately the tightness,
  since the rescaled mass is bounded in expectation.

  \medskip
  \noindent
  \textbf{(ii)} The tightness of \emph{distances} by the
  representation via the concatenation of the surviving Feller
  diffusions coming of at rate $b$ with survival probability
  $2(b(t-s))^{-1}$ at time $s$ back and the ones surviving ones
  forming an inhomogeneous Poisson point process with intensity
  $2(t-s)^{-1}ds$ with surviving mass of size of order $t-s$. The
  distances are scaled by $t$. This means they are in macroscopic
  scale bounded by $1+t^{-1} r_0(\cdot,\cdot)$.

  \medskip
  \noindent
  \textbf{(iii)} Finally we have to bound the \emph{modulus of mass
    distribution}. This means we have to bound the number of ancestors
  which account for $(1-\varepsilon)$ of the total mass. Here we
  recall that the masses of the surviving family at time $s$ back from
  $t$ can be controlled by Kolmogorov's limit law. The ancestors are
  going off from the spine with rate $2(t-s)^{-1}$ for
  $s\in (0,t)$. Hence:

  We need that this gives the size-biased limit from before, which is
  now obvious. As $\delta \to 0$ we have
  \begin{align}
    \label{e1711}
    \E[\# \{\text{ind.~with descendants at $t$, born in } [s,s+\delta)]
    = \int_s^{s+\delta} \frac{2}{t-u} \, \dx u \sim
    \frac{2\delta}{t-s-\delta}.
  \end{align}
  Then a Borel-Cantelli argument gives that there are only finitely
  many ancestors in $[0,t-\varepsilon)$ for any finite
  $\varepsilon>0$, where of course as $\varepsilon \to 0$ this number
  diverges. Taking now into account the masses of the time-$t$
  population going back to ancestors immigrated in $(t-\varepsilon,t)$
  which has expectation $t-\varepsilon$ gives the needed property.

  \medskip

  Next we prove the \emph{convergence} by showing that the expectations
  of polynomials of the rescaled process converge.

  \medskip
  \noindent
  \emph{Observations:} We can calculate the Laplace transform as
  product of two Laplace transforms if we consider the concatenation
  of the two sub-families and use the additivity of the
  \emph{truncated} polynomials. We have already treated the part
  corresponding to the copy of the original process, but now there is
  no conditioning and this part does not contribute in the scale. The
  other part is the one that corresponds to the entrance law which we
  treat with the observation we make next.

  We can calculate the moments of the original $\U$-valued Feller
  diffusion via the FK-duality from which we obtain those for the size
  biased law. Namely the size-biased $n$-th moment correspond to a
  coalescent with $(n+1)$-individuals of which one is \emph{not}
  considered in the distance matrix but in the FK-functional.

  \medskip
  \noindent
  (c) The equation \eqref{e1098} is immediate from part (a). Relation
  \eqref{e1404} follows from the previous and part (b).
\end{proof}

\subsection{Proof of Theorem~\ref{TH.KOLMO}}\label{ss.prthkolmo}

It follows from the identity of $\mfU^\dagger$, the Palm process
$\mfU^{\mathrm{Palm}}$, the Evans process $\mfU^\ast$ and in
connection with Theorem~\ref{T:KOLMOGOROVLIMIT}, that we have to treat
at most \emph{two} cases and we know already from
Theorem~\ref{T:KOLMOGOROVLIMIT} that in fact we do have \emph{two}
cases. We have to show the convergence in the case of $\mfU^\dagger$,
$\mfU^{\mathrm{Palm}},\mfU^\ast$ and in the scaled $\mfU^T$,
furthermore we have to identify the limit in both cases different from
the situation in Theorem~\ref{T:KOLMOGOROVLIMIT}. This is since we now
have to identify the limit \emph{processes} as certain specific
$\U$-valued diffusions. Of course we do that by showing convergence of
the scaled processes and identify the limit processes and then get the
claim as Corollaries. We have already the result for the
one-dimensional marginal distributions. Therefore we need now the
\emph{f.d.d.-convergence} and the \emph{tightness in path space}.
There are two possible strategies to proceed.

In both cases we may work with the \emph{time-space} Feynman-Kac
\emph{duality} to show in the $\mfU^{\mathrm{Palm}}$ case the
\emph{convergence}, the \emph{tightness} of the f.d.d.'s we have
essentially done in the proof of Theorem~\ref{T:KOLMOGOROVLIMIT} for
the path space convergence we need however the \emph{compact
  containment} in \emph{path space}, finally we have to compare the
dual expressions with the ones we obtain for the claimed limit.

Alternatively for convergence (and that is what we follow up on) we
might work directly with the \emph{generators}, better operators of
the martingale problem, of the rescaled process and show their
convergence. For the latter we have to deal with the fact that the
resampling operator involves the term $\bar \mfU^{-1}$, which is
unbounded and in fact diverges as we approach the initial point. This
raises technical questions. Otherwise it is easy to see that the
generators \emph{converge pointwise}. We have therefore to consider
functions as in the case of the Feller-diffusions functions, which are
zero at the zero-tree and at $\infty$-mass and use the extension of
the operators as in Remark~\ref{r.1207}. Then we can use again the
pointwise convergence of the coefficients in the operator, to obtain
the claim. This is clear in the case of $\mfU^{\mathrm{Palm}}$ and it
remains to look at $\mfU^T$, recall \eqref{e984} to see that this
expression scales. This gives f.d.d.\ convergence of the scaled
processes and hence in particular our claim, which is about its
marginal law.

To obtain the stronger path convergence we need in addition to the
generator convergence to establish compact containment which has as
nontrivial point the others are handled by inspection, the
\emph{uniform in time dustfree condition}. Here we use that the total
mass path is tight, a classical result and then we can conclude using
that the genealogy part is time-inhomogeneous Fleming-Viot with bounds
from below at the resampling rate.

\section[Proofs of
Theorems~\ref{T.SRWALK},~\ref{T.CLUMP},~\ref{T.AGING},~\ref{T.CRT}]{Proofs
  of
  Theorems~\ref{T.SRWALK},~\ref{T.CLUMP},~\ref{T.AGING},~\ref{T.CRT}:
  the extensions}
\label{s.proofext}

The two extensions are treated separately, since they require very
different frameworks.

\subsection{Proofs of Theorems~\ref{T.SRWALK},~\ref{T.CLUMP}:
  \texorpdfstring{$\U^G$}{UG}-valued super random walk}
\label{ss.treeval}

First, we have to show \emph{existence and uniqueness} for the
solution of the martingale problem including its properties we have
claimed in Theorem~\ref{T.SRWALK}. Second, we prove the
\emph{application} to the long time behavior in Theorem~\ref{T.CLUMP}.

\subsubsection{Martingale problem and proof of Theorem~\ref{T.SRWALK}}

\paragraph{\emph{Existence}}
The \emph{existence} of solutions of the martingale problem for a
spatial model on $\U^G$ is shown first in \cite{GSW} for Fleming-Viot
models. We give here the basic steps for the branching case.

\paragraph{(1)}
The first step is to work on \emph{finite} geographic spaces
$G_n \uparrow G$ with finite $G_n$ which are abelian groups embedded
in $G$ such that the random walks on the finite spaces converge to the
one on the infinite space. This is achieved by using on $G_n$ a random
walk, which, if $G_n$ is a subgroup of $G$, suppresses all jumps
leading out of $G_n$. If $G=\Z^d$ we can use $G_n =\{-n,\dots,n\}^d$
and addition modulo $2n$ for the addition on $G_n$. Next we show using
the duality that the solutions for the geographic spaces $G_n$
converge to a solution of the martingale problem for $G$. Since the
dual is a Markov pure jump process this is a standard procedure. The
details on the approximation of the model on infinite geographic space
with finite geographic spaces is explained in \cite{GSW} and therefore
we are short here.

Now the argument continues with the \emph{approximation by individual}
based models on \emph{finite} $G_n$. The \emph{independent branching
  processes} on each site converge to i.i.d.\ $\U^{G_n}$ Feller
diffusions as we know. The limit of individual based models on finite
$G_n$ uses the result for the non-spatial case and applies it to each
component to get an i.i.d.\ system of evolving components
corresponding to what we call a $\U^{G_n}$-valued Feller diffusion. On
the other hand it is well known that the configuration
$(\bar\mfu_\xi)_{\xi \in G_n}$ under the \emph{pure migration} process
converges to the deterministic mean flow given by the SDE specified by
the drift term. What about the genealogy? Since the distances between
two individuals at positions $i$ and $j$ at two fixed sites are
changing only due to the flow since we evaluate for the time $t$ state
two individuals located at positions $i'$ and $j'$ with the respective
individual sampling measures at time $0$ at positions $i'$ and $j'$
given by the reweighted original measures namely
$\nu(\cdot,i')\otimes \nu(\cdot,j')a_t(i,i')a_t(j,j')$. The migration
dynamics acting on $\U$ above is therefore converging to the mass flow
in geographic space moving the time-$0$ genealogical state via its
measure $\nu$ only and hence we have convergence.

Next we impose on these i.i.d.\ evolutions of components the
\emph{interactions}, i.e.\ the \emph{migration} of individuals. The
\emph{combination of the two mechanisms} has to be shown to converge
to the $\U^{G_n}$-valued solution of our martingale problem. Here we
use the \emph{Trotter formula} as follows.

Consider time intervals of length $h/2$ for some $h>0$. Alternating we
apply during these time intervals one of the two dynamics. We note
that for each $d$ these $h$-indexed dynamics we have a duality to the
corresponding dual process applying alternatively migration resp.\
coalescence. One can see due to the simple nature of this dual that
the Trotter formula holds, that is, the $h$-approximations converge to
the process, which implies then that this holds for the original
process.

This gives the existence of a solution of the martingale problem for
finite $G_n$.

\paragraph{(2)}
The second step is (cf.\ \cite{GSW}) an \emph{approximation} of the
dynamic on infinite geographic spaces by a suitable dynamic on finite
subsets $G_n \uparrow G$. This follows immediately from the
convergence of the dual process as $n \to \infty$, due to the
convergence of each of the finitely many random walks.

\begin{remark}
  Alternatively \label{r.6757} we can use the existence of the measure
  valued super random walk. First construct the process given the
  population size process which is classical, as $\U_1^G$-valued
  time-inhomogeneous Fleming-Viot process, which is no problem. Since
  we have a duality we can construct and prove on $G_n$ existence of
  time-inhomogeneous process via piecewise constant approximations,
  only in time and space the rate is now varying. Then via the
  conditional martingale problem to combine the population size
  process with conditional $\U_1^G$-valued processes to obtain the
  pair $(\bar\mfU_t,\wh\mfU_t)_{t\ge 0}$ from which the desired
  process $(\mfU_t)_{t\ge 0}$ is now a functional, recall here our
  detailed explanation of the skew martingale problem in a paragraph
  below \eqref{eq:coneq}. Here we can use arguments similar to those
  used in \cite{DG03} and \cite{Gl12}.
\end{remark}

\paragraph{\emph{Uniqueness}}
We focus therefore here on the \emph{uniqueness} which is based on the
Feynman-Kac duality. This follows from a general statement, which is
given in \cite{EK86}. This requires to verify the duality only based
on the \emph{operator relation} for duality, relating the operator
from the forward martingale problem respectively the one solved by the
dual process on state spaces, denoted by $E$ and $\wt E$ and defined
in \eqref{e.939} resp.\ \eqref{e.939t}, namely
\begin{align}
  \label{eq:34}
  (G_X H(\cdot,y)(x) = (G_Y H(x,\cdot))(y) + V(y) H(x,y), \; \; x \in
  E, \, y \in \wt E,
\end{align}
with $G_X$ and $G_Y$ being the generators of the $X$ resp.\ $Y$
processes and $V$ the potential on $\wt E$. The forward operator we
have calculated in \eqref{eq:3}-\eqref{e945}.

Consider next the dual process and its operator. The dual process is a
pure jump process with deterministic motion (the growth of the
distances in the distance matrix) which can be read off from the rules
of the dynamic right away. Recall here \eqref{tv32}-\eqref{tv33}
formulas in the non-spatial case which have just to be lifted from
$\Phi^{n,\varphi}$ to $\Phi^{n,\varphi,g}$ by acting with the
coalescence operator on $\varphi$ as before but \emph{also} now on $g$
by identifying two variables which correspond to the merging partition
element, recall \eqref{e2581}-\eqref{e2589}.

Therefore the \emph{dual operator} is given by the following operator
acting on a bounded continuous function F of the state which is twice
continuously differentiable as a function of the distance matrix. The
dual operator consists of the non-spatial operators, coalescence and
distance growth, lifted to the spatial case, i.e.\ acting on
partitions at the same site in the polynomial as before and leave $g$
untouched.

The operator has as \emph{new} terms the ones from the
\emph{migration} jumps of the locations of the partitions. The latter
is acting as pure jump generator with rate $a(i,j)$ independently for
a jump from $i$ to $j$ for the $k$-th mark for each of the marks of
the current partition elements.

The calculations for the generator relation we have already given for
the branching part, since it is local we did for the $\U$-valued
Feller diffusion and our explicit lifting of this operator to the
spatial case before Theorem~\ref{T.SRWALK}. We only have to still deal
with the new, the \emph{migration} operator
$\wt \Omega^{\uparrow,\mathrm{mig}}$ to the one of the distance matrix
enriched spatial coalescent. This operator is dual to the mass flow
forward operator; see \cite{GSW}. We know the FK-duality for the
branching operator already and hence we have checked the generator
criterion and have a Feynman-Kac duality and hence also
\emph{uniqueness}. This concludes the proof of Theorem~\ref{T.SRWALK}.

\paragraph{\emph{Skew representation}}

To complete the proof of Theorem~\ref{T.SRWALK}(a) we need to argue
why $\bar\mfU$, $\wh\mfU$ have the claimed characterization as super
random walk on $G$ respectively $\wh\mfU$ conditioned on a realization
of $\bar\mfU$ as a $\U_1^G$-valued time inhomogeneous Fleming-Viot
process. We note that both for $\bar\mfU$ and $\wh\mfU$ the branching
part of the operators lifted to $\U^G$ as explained in front of
Theorem~\ref{T.SRWALK} allow to conclude from our results in
\eqref{eq:r.742.1}--\eqref{e746} that the second order part of the
operator of the operators of $\bar\mfU$ and $\wh\mfU$ are as claimed.
Here we can use of course simply the generator calculations for the
branching mechanism if we lift them from $\U$ to $\U^G$ to again get
the defined diffusion term. However, we have to complement this also
with the calculation for the migration operators. Note that the
calculations for the branching and migration parts are separate
matters. The needed calculations we did in
\eqref{eq:28}--\eqref{e945}.

Once we have shown that the operators of $\bar\mfU$ are the ones of
the super random walk and the ones of $\wh\mfU$ the ones of the
time-inhomogeneous Fleming-Viot process with the rates as specified in
the theorem, the uniqueness property of the super random walk and the
time inhomogeneous Fleming-Viot martingale problems gives then the
claim.

\paragraph{\emph{Duality}} In order to prove Theorem~\ref{T.SRWALK}(b)
we use again the criterion in \eqref{eq:34}, where we have to check
the operator relations both for $\bar\mfU$, $\wh\mfU$ conditioned on
$\bar \mfU$ and the two respective dual processes, the spatial
$a$-coalescent respective the time-inhomogeneous spatial coalescent
with the rates as specified in the theorem. We note that the
calculation for the two parts of the operator, branching and migration
versus coalescence and migration in the dual are separate matters.
Hence we have to check here only the migration part, showing the
\emph{duality} of the \emph{mass flow} of particles moving according
to $\bar a(\cdot,\cdot)$ to the \emph{independent $a(\cdot,\cdot)$
  random walks} of the dual individuals.

\paragraph{\emph{Feller property, strong Markov property}}

Here we use that it suffices to show that
$\nu \mapsto \E_\nu[H(\cdot,\cdot)]$ is continuous in the weak
topology for measures in $\mcM_1(E)$. Since the polynomials are
separating on $\wt{\mathcal M}$ and the coalescent is non-increasing
in size and the exponential in the Feynman-Kac term is bounded in time
$t \in [0,T]$ for every $T<\infty$, the dual expectations converge if
the initial measures converge and they as well as their limits are
supported on $\wt{\mathcal{M}}$. If our initial state is in $\wt E$ we
can approximate it by truncation with states in $\wt{\mathcal{M}}$ and
argue as before on $\U$. This proves the generalized Feller property.

Furthermore, since the polynomials $\{H(\cdot,y) : y \in \wt E\}$ are
measure determining we see that again as a consequence of the
generalized Feller property we have the \emph{strong} Markov property
as observed in the non-spatial case.

\subsubsection{Application and proof of Theorem~\ref{T.CLUMP}}

We have defined the spatial Evans process on the space $\R$ as a
functional of a collection of $\U^\R$-valued processes which we
obtained as functionals of the historical Dawson-Watanabe process,
which we call historical process in the sequel. Such a construction is
not possible for the super random walk and we have to work with
martingale problems. Therefore we first need to show that the
$\U^\R$-valued Dawson-Watanabe process \emph{exists} as the
\emph{unique} solution of a martingale problem so that we can conclude
the \emph{Markov property} of the functional of the historical process
we used as definition of the limit process. Second we have to prove
the convergence result.

\medskip
\noindent
\textbf{(1)} The \emph{historical process} is rigorously defined via a
$\log$-Laplace equation (\cite{DP91}). However this is equivalent to a
martingale problem for polynomials of degree $1$ and specifying the
increasing process (see Section~12.3.3 in \cite{D93}) which we want to
use. Therefore we have to get from this characterization an equivalent
martingale problem for the polynomials.

Here the polynomial test functions are based on test functions on
path, we call $g$, which is evaluating the path at $m$ time points
which looks as follows. The function $g$ is now a function on path.
Let $v \in D [((-\infty,\infty),\R)$ and let
\begin{equation}\label{e7801}
  g(v_1,\dots,v_n)=\prod^m_{i=1} g_i(v_i), \text{  with  } g_i(v)=
  g_i((v(t^i_1),\dots,v(t^i_m))),\; m \in \N.
\end{equation}

More precisely for the \emph{time-homogeneous} set up of the path
process (i.e.\ the time-space process) we add the \emph{explicit time}
coordinate. We consider for some $n \in \N $ and
$0 \leq t_1^{(i)}<t_2^{(i)}< \dots < t^{(i)}_{m(i)}< \infty$,
$i=1,\dots,n$
\begin{equation}
  \label{e1322}
  \wh g^{\uuxi}(t,v) = \prod^n_{i=1} \; \prod^{m(i)}_{k=1} \wh
  g^{\uuxi}_{i,k} \Bigl(t,v(t \wedge t_k^{(i)})\Bigr),
\end{equation}
with
$\wh g^{\uuxi}_{i,k}(t,v) = \Psi(t) \; \Psi^{\uuxi}_{i,k}(t)
g^{\uuxi}_{i,k}(v)$ and $\Psi(t)$ and
$\Psi_{i,k}^{\uuxi} \in C^1_b(\R, \R)$ as the functions to generate
polynomials.

We begin by writing down the \emph{operator} of the \emph{historical
  process} acting on the spatial monomials, there is the \emph{passing
  of explicit time coordinate} in the path, the \emph{branching} part
and the \emph{migration} part including the passing of explicit path
time (which will induce the growth operator!), which we denote by
$\Omega^{\ast, \mathrm{time}}, \Omega^{\ast,\mathrm{bran}}$ and
$\Omega^{\ast,\mathrm{mig}}$.

Recall the generator of the motion process of a single individual
$(Y(t))_{t \geq 0}$ was called $A$. For the process $Y$ the
time-space process $(t,Y(t))_{t \geq 0}$ then has generator
$\wt A = \frac{\partial}{\partial t}+A$. The corresponding
\emph{path process generator} $\wh A$ acts (see Section 12.2.2 in
\cite{D93}) on $ g $ of the form \eqref{e1322} for
$t_k \leq s < t_{k+1}$:
\begin{equation}
  \label{e1326}
  \wh A \wh g(s,v)= \prod_{\ell=1}^k \wh g_\ell \left(s,v(s \wedge
    t_\ell)\right) \wt A \Bigl(\prod_{\ell=k+1}^m \wh g_\ell
    \big(s,v(s)\big)\Bigr) \; \text{ and gives $0$ for $ s>t_m$.}
\end{equation}
This operator specifies a \emph{well-posed martingale problem} on the
spaces $ D([0,\infty),\R \times D(\R,E)) $ (Section 12.2.2 in
\cite{D93}).

We obtain for the operator of the martingale problem the formulas:
\begin{align}
  \label{e7667}
  \Omega^{\ast,\mathrm{time}} \Psi \Phi^{n,g} & =\Psi' \Phi^{n,g},\\[+1.5ex]
  \label{e7670}
  \Omega^{\ast,\mathrm{bran}} \Psi \Phi^{n,g} & =2 \sum_{1 \leq k<\ell\leq n}
  \Psi \Phi^{n,\wt \theta_{k,\ell} \circ g},\\
  \label{e7673}
  \Omega^{\ast,\mathrm{mig}} \Psi \Phi^{n,g} & =\Psi \sum_{k=1}^n \Phi^{n,g_k},
\end{align}
where now $g_k=A^\ast_k g$, $k=1,\dots,n$ and the operator $A^\ast_k$
is acting on the $k$-th variable of $g$. This operator $A^\ast_i$ is
defined as follows.

We have for each sampled marked individual the action of the path
process generator $\wh A$ but now acting on the corresponding factor
$\wh g^k$:
\begin{equation}
  \label{e.1157}
  \Omega^{\uparrow, \mathrm{anc}} \Phi^{\varphi, \wh g} = \sum_{k=1}^n \;
  \Phi^{\varphi, A^\ast_k \wh g} \; , \text{  where  } A^\ast_k \wh g
  \eqqcolon \Big(\prod^n_{\substack{\ell=1 \\ \ell \neq k}} \; \wh g^\ell
  \Big) \wh A  \wh g^k.
\end{equation}

To continue the argument we have to argue how the expressions above
follow from the version of the martingale problem in Theorem~12.3.3.1
in \cite{D93}, which gives the operator on \emph{(degree
  $1$)-monomials} to describe the drift and specifies the
\emph{increasing process} to describe the martingale part.

This step to replace the specification of the increasing processes of
the martingales by compensator terms of nonlinear functions is needed
often and uses the continuous martingale representation theorem and
Ito's formula.

Alternatively we can use a version of Theorem~6.1.3 in \cite{D93},
which deduces the martingale problem in the formulation we use here
from the $\log$-Laplace equation directly.

We have to prove that our functional \emph{solves} the $\U^\R$-valued
\emph{martingale problem for polynomials}, which we spelled out above
in \eqref{e943}--\eqref{e946}. To see this, two points are crucial.

The \emph{first point} is that the states of the historical process on
the geographic space $\R$ are concentrated on paths so that for any
two sampled paths there is a $T \ge 0$ so that they agree for
$s \le T$ or they are not identical in any (positive length) interval
contained in $(0,t)$. The \emph{second point} is that the part of the
paths before time $t$ do not change anymore from time $t$ on and only
their mass can change by branching. Why do these two properties hold?

The second point is immediate from the generator of the path process
generator which acts only on the functions of the path value we
observe at or after time $t$. For the first point we have to use the
fact that the functional giving $T$ grow with time at speed $1$.

We next have to let the operator of the historical process act on test
functions which depend only on the time-$t$ location and on the
functional giving $T$ which grows at rate $1$ with the time and for
$T=t$ by the branching which duplicates a path in two independent
copies equal before and at time $t$ since the path evolves only at the
tip. This gives the generator terms quoted above.

One would think that the \emph{uniqueness} follows from the duality
with $\U^V$-valued \emph{delayed coalescing Brownian motions}, which
follow from the fact that the process is the continuum limit of
rescaled super random walks so that our limit below will be the unique
solution of the \emph{$\U^\R$-Dawson-Watanabe martingale problem}
which is in duality to \emph{delayed coalescing Brownian motions}.
However, for the uniqueness of the martingale problem we would need
here that the duality follows from the martingale problem more
specific from the \emph{duality criterion for the forward and backward
  operator}.

This is an \emph{open} problem which, is also not resolved for the
$\U^\R$-valued Fleming-Viot process; see \cite{GSW}. The reason is
there are some conceptual and some heavy technical problems, which are
in the focus of upcoming work \cite{GSWfoss}. As a consequence we need
to obtain the Markov property differently.

We observe that the historical process is the unique solution of the
$\log$-Laplace equation and is Markov. The future evolution of our
functional depends only on the present, since the evolution of the
historical process is uniquely determined from the data we have in the
\emph{$\U^\R$-valued process at time $t$}, namely the current location
and the functional $T$. Hence we have nevertheless the Markov
property.

\medskip
\noindent
\textbf{(2)} Next comes the proof of our \emph{convergence statement}
which itself consists of three steps. In the \emph{first} step we
focus on the scaling of the genealogy, then in the \emph{second} step
we take care of the behavior of the scaled spatial marks separately.
Finally in the \emph{third} step we bring the first pieces of
information together to prove joint convergence. Recall first the
notation given in \eqref{e2712}-\eqref{e2736} which is needed now.

\paragraph{Step 1}
The scaling behavior of the $\U$-valued process, that we obtain by
projection on the genealogy is known from our analysis of the
non-spatial case in Section~\ref{ss.longuval}. On the other hand, the
projection on the marks results in a \emph{measure-valued process},
i.e.\ in a $\mcM(\Z)$-valued process, a super random walk, which has
been studied in \cite{DF88}.

In particular we know that the single ancestor subfamily conditioned
to survive forever, evaluated at time $T$ is a random variable with
values in $\U(T)$ should be \emph{scaled by $T^{-1}$} in the
\emph{distances} and in the \emph{total mass}, to converge to a
limiting object in $\U$. Recall that we have identified in this object
in Theorem~\ref{TH.KOLMO} as $\mfU^\dagger_1$ which equals
$\mfU^\ast_1$ in law. Indeed if we project the claimed limit
$\mfU^{\dagger, \sqcup}_1$ onto the genealogy, i.e.\ from $\U^{\R}$ to
$\U$ we obtain $\mfU^\dagger_1$. This means if we condition on having
a surviving subfamily starting at site $0$, we obtain a limiting
genealogy in $\U$.

We also know that the set of those sites, where with single ancestor
subfamilies which survive till time $T$ are located and we have
started initially in a translation invariant law, will converge in law
to a Poisson point process on $\R$ if we \emph{scale space by $T$}.
Hence these different clumps sit in distance of order $t$ while the
population descending from the founders up to time $t$ sits
essentially on a smaller spatial scale and the subfamilies separate as
$t \to \infty$.

\paragraph{Step 2}
\emph{Second}, we project the state of $\wt U^{z,\dagger}_t$ from
$\U^{\R}$ onto $\mcM(\R^1)$. This is the classical super random walk
which is the size-biased and scaled and is given again by the Evans
process, recall that the result of Evans works for superprocesses in
general geographic spaces. Hence, in the limit $t\to\infty$, we want
to obtain an immortal particle following a Brownian motion in $\R$ and
which is marked with a \emph{color} and the explicit time coordinate
and throws off superprocesses on $\R$, marked with the current color
(which is inherited) and explicit time and then project this from
$\mcM(\R \times [0,\infty))$ on $\mcM (\R)$.

We first have to prove therefore that the space-mass scaled surviving
forever $\mcM(\R)$-valued super random walk converges to a limit and
to identify this limit as the Evans process for the Dawson-Watanabe
process. Here we consider the \emph{moment measures} and show that the
space-mass rescaled moment measures converge to ones of the claimed
limit.

In order to calculate the moment measures of spatial averages which
are mass rescaled we need in particular the spatial mixed moments of
the super random walk under this scaling. The moment measures of the
super random walk can be calculated via the \emph{Feynman-Kac duality}
and this generalizes to the $\U^\Z$-valued process.

The behavior of the $\U^\Z$-valued dual process, the spatial
coalescent enriched with distance-matrices has been studied
asymptotically in \cite{GSW} and been shown to converge to
distance-matrix enriched instantaneously \emph{coalescing Brownian
  motions}. We have to show here in contrast that the dual expectation
once we \emph{include} the \emph{exponential functional converges} to
the corresponding expression for delayed coalescing Brownian motions
where, because of the exponential term with the Feynman-Kac
functional, delayed coalescence is based on the \emph{joint collision
  measure} of the two Brownian paths. In distribution this collision
measure equals the local time of Brownian motion in $0$.

In order to get this we have to show the path converge to Brownian
motion, the exponential terms to the one of the Brownian collision
measure and coalescence to the delayed coalescence based on the
collision measure. The points to discuss are the last two. For that
purpose we take into account the reweighting by the Feynman-Kac
potential which puts weight on the paths which do not coalesce by time
$t$ of order $\exp(b\mfl^{(t)})$, where $\mfl^{(t)}$ is the joint
occupation time of pairs of paths up to time $t$. Next we need that
the joint collision measure converges in law to the local time of
Brownian motion. This can be found for example in \cite{Bor87}.

For the convergence of the whole dual expectation we have to make sure
that all exponential integrals are finite. This however is clear since
the potential is bounded in $t \leq T$, by $\binom{n}{k} bT$, so the
scaling produces finite values. Furthermore we need that the
asymptotic for $T \to \infty$ is in the spatial case similar to the
calculation we did proving the scaling result in the KY-limit
in the non-spatial model. This gives us then immediately the
convergence of the pure genealogy part $\wh\mfU$ by using a test
function $\varphi g$ and then put $g$ constant. For $\varphi$ constant
and $g$ varying we get the convergence of the measure-valued object
$\bar\mfU$ directly from the convergence of random walks to Brownian
motion and the convergence of the Feynman-Kac term to the local time.

\paragraph{Step 3}
In the \emph{last} step of the convergence proof we have to show that
the \emph{joint distribution} of measure-valued component $\bar \mfU$
and the $\wh \mfU$-component converges. We know that since $\bar \mfU$
and $\wh \mfU$ each in law in the scaling limit converge that indeed
the scaled joint distribution is \emph{tight}.

This means that we have convergence along suitable subsequences to an
$\U^\R$-valued process, where the laws of projection on $\U_1$
respectively $\mcM(\R)$ are identified.

Since we have the tightness of the projections of the process with
values on $\U^V$ to $\U$ respectively $\mcM(V)$ already, we have
tightness of the joint law and the remaining point is to identify the
limit points of the \emph{joint law} and show their uniqueness. We
study the joint law using as tool the \emph{marked trunk}; see
\eqref{def:trunk}. We know that the laws are equal for the trunk
already from the results on the $\U$-valued case and we have to lift
this to the \emph{marked} trunk. In order to obtain the marked
$h$-trunk we consider the marks at time $t$ in the disjunct open
$2h$-balls of the state of the process
$\wt \mfU_t^{z,\dagger}, \mfU_1^{\ast, DW(\R)}$ at time $t$ and note
that all these path of one of these $2h$-balls have a common piece of
path in times $[0,t-h]$, which will be the mark after being extended
constant before time $0$ and after time $t-h$.

Since the limiting object can be approximated by its marked $h$-trunks
as $h \downarrow 0$, it suffices to show that all \emph{marked}
$h$-trunks converge in law to the \emph{marked} $h$-trunk of the
claimed limit objects.

The marked $h$-trunks are in our cases however finite marked
ultrametric measure spaces, where the marks are the "truncated" path
as explained above. They are obtained from the present time $t$ state.
In the limit object the distances in the trunk are exactly given by
the time where the two path agree all the way back to time $0$, this
implies the joint convergence as soon as a single path of migration
converges by the law of large numbers giving the convergence of the
historical process. These paths however are fixed in law over any
finite time interval $[0,s]$ as soon as $t \ge s$.

The steps 1-3 prove the convergence claim. This concludes the proof of
Theorem~\ref{T.CLUMP}.

\subsection{Proofs of Theorems~\ref{T.AGING},~\ref{T.CRT}:
  \texorpdfstring{$\M$}{M}-valued processes with fossils and the CRT}
\label{ss.treevalfoss}

Again we have to establish existence and uniqueness of the martingale
problem and to then show the existence of the large time limit of the
solution which then has to be identified as the CRT, in the sense that
$[CRT]$ is viewed as an element of $\M^{\rho,+}$.

\begin{proof}[Proof of Theorem~\ref{T.AGING} (existence and uniqueness)]
  \emph{Existence} will follow via approximation with an individual
  based model with diverging populations size as before, no new ideas
  are needed, details are suppressed.

  The \emph{uniqueness} is based on a Feynman-Kac duality with a
  \emph{time-space coalescent} with respect to the function
  $H(\mathfrak{m},\mathfrak{c})=\Phi^{\mathfrak{c}}(\mathfrak{m})$,
  where $\mathfrak{c}$ is the state of the dual process, which is the
  following object.

  The time-space coalescent is a coalescent which has two states for a
  partition element \emph{active} and \emph{frozen}. One fixes time
  points $0 \le t_0 < t_1< \dots < t_n < T$ with $T$ the present time.
  Then the system starts with active partition elements, say $n_0$
  many with the usual dynamic from time $t$ on backwards, but at times
  $T-t_1,T-t_2,\dots$ we have associated further partition elements
  say $n_1,n_2$ etc., where $n_i$-many which become active at times
  $t-t_i$ respectively and participate from then on in the usual
  dynamic with time running backward. This will be the dual particle
  system.

  With this system we obtain the \emph{space-time duality relation}
  which is in form equal to the statement which relates the finite
  dimensional distributions of the $\U$-valued Feller diffusion to the
  augmented time-space coalescent, except that in the forward
  expression we have the law of the fossil process at time $T$ (i.e.
  its time $T$ marginal law) and the state is in the time-variable
  evaluated at the times $0 \leq t_0< t_1< \dots t_n<T$ . As before if
  this duality relation follows from the generator criterion for
  duality, we have uniqueness and the proof is complete. The
  calculation that the criterion is satisfied is essentially the same
  calculation as before and not repeated here.
 \end{proof}

\begin{proof}[Proof of Theorem~\ref{T.CRT}]
  \label{pr.T.CRT}
  We show first that a limit for $t \rightarrow \infty$ exists in law,
  by showing \emph{tightness} of the laws and then convergence by
  showing the expected values of polynomials \emph{converge}. Then
  second we have to identify the limit as the CRT.

  For the \emph{tightness} we use the standard tightness criterion and
  observe that all distances are bounded by $2t$ and the total mass is
  stochastically bounded. To get the last point of the tightness
  criterion we use the FK-dual which guarantees that we have a finite
  number of ancestors back at times $t-\ve$, where an upper bound is
  given by the entrance law of the time-space Kingman coalescent for a
  fixed time horizon.

  For the \emph{convergence} we consider now the
  dual expressions for the expectations of polynomials as the time
  horizon $t \to \infty$, where the dual expression is the
  asymptotically given by the time-space coalescents at time
  $t_1<t_2<\cdots < t_n<t$ as $t \to \infty$ and using the fact that
  \begin{equation}
    \label{e7724}
    \int_0^t \bar \mfU_s \dx s \to \int_0^\infty \bar \mfU_s \dx s.
  \end{equation}
\end{proof}

Finally we have to show the limit is given by $[CRT]$. We have to deal
with the convergence of the total mass process and the $\U_1$-valued
part. For definition of the CRT from a Brownian excursion on $[0,1]$
we refer to \cite{LeGall93}, see also \cite{PWak13}.

In order to obtain that the distance matrices of samples from the CRT
agree with the $t \to \infty$ object of our $\M$-valued fossil Feller
diffusion we work with the polynomials and the fact that $\bar\mfU$
becomes extinct, so that after a random time $\bar T_{\mathrm{ext}}$
the fossil process is constant, namely equal to the $\ntree$-element.
Therefor polynomials converge as $t\to\infty$ to a limit. This limit
has to be identified as CRT.

At this point we use the results on the convergence of the individual
based models. For the CRT this goes back to Aldous \cite{Ald1991a} and
the result for our dynamic this is a simple extension of what was done
in \cite{GPW09,Gl12}. For the individual based model the two objects
are identical by inspection. Therefore the two limit objects are
identical. We omit the standard details for this extension.

\appendix

\section{Computation of diffusion coefficients}
\label{sec:comp-diff-coeff}

We know from \cite{LN68} that the Feller diffusion conditioned to
survive until some fixed time $T>0$ is again a Markov process which is
time-inhomogeneous. In fact it is a diffusion process. Hence the
process has local characteristics which we want to calculate for
general coefficient $b$ and for arbitrary time horizon $T$. In
\cite{LN68} the case $b=1$ and $T=1$ was considered with an
\emph{error} in the calculation of the volatility which we correct
here.

We carry out the calculations using the Laplace transform of the
conditioned process. Throughout this section we denote by
$(P_{s,t}(x,\cdot))_{0 \le s \le t}$ the family of transition kernels
of the (unconditioned) $\R_+$-valued Feller diffusion. We denote by
$(P_{s,t}^T (x,\cdot))_{0 \le s < t \le T}$ the family of transition
kernels of the $\R_+$-valued Feller diffusion conditioned to survive
up to time $T$. For fixed $0\le s < t \le T$ we need to calculate the
local characteristics of $P_{s,t}^T (x, \cdot)_{0 \le s < t \le T}$
for $x>0$. We denote the corresponding Laplace transform by
\begin{align}
  \label{eq:lTxst}
  L^T(x,s,t;\lambda) \coloneqq \int e^{-\lambda y} P_{s,t}^T (x,\dx
  y), \quad \lambda \ge 0.
\end{align}

First note, that we have the following elementary identities
\begin{align}
  \label{e1276}
  \int P^T_{s,t} (x,\dx y)(y-x)= - \frac{\partial}{\partial \lambda}
  \left(e^{\lambda x} L^T(x,s,t;\lambda)\right) \mid_{\lambda=0},
\end{align}
\begin{align}
  \label{e1278}
  \int P_{s,t}^T (x,\dx y)(y-x)^2=
  \frac{\partial^2}{\partial \lambda^2} \left(e^{\lambda x}
  L^T(x,s,t,\lambda)\right) \mid_{\lambda=0}.
\end{align}
Dividing by $(t-s)$ and taking $t \downarrow s$ we will get the
infinitesimal drift and variance of the conditioned process at time
$s$ is in state $x$. The rest of this section is devoted to the
computations.

\medskip
The Laplace transform of the $\R_+$-valued Feller diffusion at time
$t$ starting in $x$ is given by
\begin{align}
  \label{e1260}
  L(x,t;\lambda) = \exp \left(- \frac{2x\lambda}{2+bt\lambda} \right),
  \quad \lambda \ge 0.
\end{align}

From that we obtain the probability that the Feller diffusion survives
until time $t$ as
\begin{align}
  \label{e1264}
  1- \lim_{\lambda \to \infty} L(x,t;\lambda) = 1 - \exp \left(-
  \frac{2x}{bt} \right).
\end{align}
If we denote by $(Z_t)_{t \ge 0}$ the $\R_+$-valued Feller diffusion
starting with positive initial conditions then for $x>0$ the Laplace
transform $L^T(x,s,t;\lambda)$ satisfies
\begin{align}
  \label{e1268}
  \begin{split}
    L^T(s,x,t; \lambda)
    & = \E \left[\exp (-\lambda Z_t) |Z_s=x,Z_T >0 \right] \\
    & = \Bigl(1- \exp \Bigl( - \frac{2x}{b(T-s)} \Bigr) \Bigr)^{-1} \\
    & \qquad \cdot \biggl(
    \exp\Bigl(-\frac{2x\lambda}{2+(t-s)b\lambda}\Bigr) -
    \exp\Bigl(-\frac{2x(T-t)\lambda b+4}{(T-t)(t-s)\lambda b^2 +
      2(T-s) b}\Bigr) \biggr).
  \end{split}
\end{align}
This follows by the next calculation
\begin{align}
  \label{e1274}
  \begin{split}
    \int P_{s,t} (x,\dx y)e^{-\lambda y}
    & \cdot \left(1-P_{t,T} (y,0)\right)\\
    & = \int P_{s,t} (x,\dx y)e^{-\lambda y} \;
    \Bigl(1-\exp\Bigl(- \frac{2y}{b(T-t)}\Bigr)\Bigr)\\
    & = \int P_{s,t} (x,\dx y)e^{-\lambda y} - \int P_{s,t} (x,\dx y)
    \exp\Bigl(-y(\lambda + \frac{2}{b(T-t)}\Bigr)\Bigr)\\
    & = L(x,t-s;\lambda) - L\Bigl(x,t-s;\lambda +
    \frac{2}{b(T-t)}\Bigr).
  \end{split}
\end{align}
Using \eqref{e1260} and simplifying the obtained expression one easily
arrives at \eqref{e1268}.

Now we want to compute the diffusion coefficients using the Laplace
transform. Denoting by $P^T_{s,s+h}(x,\dx y)$ the transition density
corresponding to the Laplace transform $L^T(s,x,s+h; \cdot)$ the
infinitesimal drift is given by
\begin{align}
  \label{eq:diffq-a}
  \wt a_T(s,x)
  = \lim_{h\to 0} \frac{1}{h} \int (y-x) P^T_{s,s+h}(x,\dx y)
  = - \lim_{h \to 0} \frac{1}{h} \frac{\partial}{\partial\lambda}
  \bigl(e^{\lambda x}
  L^T(s,x,s+h; \lambda)\bigr) \Big\vert_{\lambda=0}.
\end{align}
and the infinitesimal variance by
\begin{align}
  \label{eq:diffq-b}
  \wt b_T(s,x)
  = \lim_{h\to 0} \frac{1}{h} \int (y-x)^2 P^T_{s,s+h}(x,\dx y)
  = \lim_{h \to 0} \frac{1}{h} \frac{\partial^2}{\partial\lambda^2}
  \bigl(e^{\lambda x} L^T(s,x,s+h; \lambda)\bigr) \Big\vert_{\lambda=0}.
\end{align}

For fixed $T,s,h$ with $0 \le s \le s+h \le T$ we define functions
$f_1$ and $f_2$ by
\begin{align}
  \label{eq:f-function1}
  f_1(\lambda)
  & = \frac{2\lambda}{2+hb\lambda} \\
  \intertext{and}
  \label{eq:f-function2}
  f_2(\lambda)
  & = \frac{2(T-s-h)\lambda b+4}{(T-s-h)h\lambda
    b^2 + 2(T-s)b}.
\end{align}
Then, we have
\begin{align}
  \label{eq:LT-viaf1f2}
  e^{\lambda x} L^T(s,x,t;\lambda) =
  \frac{1}{1-\exp(-\tfrac{2x}{b(T-s)})}e^{\lambda
  x}(\exp(-xf_1(\lambda)) - \exp(-xf_2(\lambda))),
\end{align}
where the first factor on the right hand side does not depend on
$\lambda$. We need to compute the first and second derivatives of
functions of the form
\begin{align}
  \label{eq:8}
  \lambda \mapsto e^{x\lambda} e^{-xf(\lambda)}
\end{align}
at $\lambda=0$. The first is given by
\begin{align}
  \label{eq:8d1}
  \frac{\partial}{\partial\lambda} (e^{x\lambda}
  e^{-xf(\lambda)})\Big\vert_{\lambda=0}  = e^{x\lambda}
  e^{-xf(\lambda)} (x-xf'(\lambda))\Big\vert_{\lambda=0} =
  e^{-xf(0)} (x-xf'(0)),
\end{align}
and the second by
\begin{align}
  \label{eq:8d2}
  \begin{split}
    \frac{\partial^2}{\partial\lambda^2} (e^{x\lambda}
    e^{-xf(\lambda)})\Big\vert_{\lambda=0} & = e^{x\lambda}
    e^{-xf(\lambda)} ((x-xf'(\lambda))^2
    -xf''(\lambda))\Big\vert_{\lambda=0} \\
    & = e^{-xf(0)} ((x-xf'(0))^2-xf''(0)).
  \end{split}
\end{align}

For $f_1$ from \eqref{eq:f-function1} we have $f_1(0) = 0$ and
\begin{align}
  \label{eq:f-function1-d1}
  f'_1(\lambda) & = \frac{4}{(2+h b\lambda)^2}, \quad
                  f'_1(0) =1\\
  \label{eq:f-function1-d2}
  f''_1(\lambda) & = -\frac{8hb}{(2+hb\lambda)^3}, \quad
                   f''_1(0)=-hb.
\end{align}
For $f_2$ from \eqref{eq:f-function2} we have
$f_2(0) = \tfrac{2}{b(T-s)}$ and
\begin{align}
  \label{eq:f-function2-d1}
  f'_2(\lambda)
  & = \frac{4(T-s-h)^2}{((T-s-h)h\lambda
    b + 2(T-s))^2},
    \quad f'_2(0) = \frac{(T-s-h)^2}{(T-s)^2} = 1-
    \frac{2h}{T-s} + \frac{h^2}{(T-s)^2}\\
  \label{eq:f-function2-d2}
  f''_2(\lambda)
  & = - \frac{8(T-s-h)^3h b}{((T-s-h)h\lambda
    b + 2(T-s))^3} , \quad
    f''_2(0) = - \frac{(T-s-h)^3h b}{(T-s)^3}.
\end{align}
It follows
\begin{align}
  \label{eq:9}
  \frac{\partial}{\partial\lambda} \Bigl(e^{\lambda x}
  \exp\Bigl(-x f_1(\lambda)\Bigr) \Big\rvert_{\lambda=0} = 0
\end{align}
and
\begin{align}
  \label{eq:911}
  \frac{\partial}{\partial\lambda} \Bigl(e^{\lambda x}
  \exp\Bigl(-x f_2(\lambda)\Bigr) \Big\rvert_{\lambda=0}
  = \exp\Bigl(-\frac{2x}{(T-s)b}\Bigr)
  \Bigl(\frac{2hx}{T-s} -\frac{xh^2}{(T-s)^2}\Bigr).
\end{align}
Using next \eqref{eq:diffq-a} we obtain from the above
\begin{align}
  \label{eq:diff-a-res}
  \begin{split}
    \wt a_T(s,x)
    & = \frac{1}{ 1- \exp \left( - \frac{2x}{(T-s)b}
      \right)} \lim_{h \to 0} \frac{1}{h} \exp\Bigl(-\frac{2x}{(T-s)b}\Bigr)
    \Bigl(\frac{2hx}{T-s} -\frac{xh^2}{(T-s)^2}\Bigr) \\
    & = \frac{2x}{(T-s)}\frac{1}{\exp\Bigl(\frac{2x}{(T-s) b}\Bigr)-1}.
  \end{split}
\end{align}

It remains to compute $b_T(s,x)$. With the above preparations we have
\begin{align}
  \label{eq:10}
  \frac{\partial^2}{\partial\lambda^2} \Bigl(e^{\lambda x}
  \exp\Bigl(-x f_1(\lambda)\Bigr)\Bigr)
  \Big\rvert_{\lambda=0} = xhb
\end{align}
and
\begin{multline}
  \label{eq:5}
  \frac{\partial^2}{\partial\lambda^2} \Bigl(e^{\lambda x}
  \exp\Bigl(-x f_2(\lambda)\Bigr)\Bigr)
  \Big\rvert_{\lambda=0} \\ = \exp\Bigl(-\frac{2x}{(T-s)b}\Bigr)
  \Bigl(\Bigl(\frac{2hx}{T-s} -\frac{xh^2}{(T-s)^2}\Bigr)^2
  + x \frac{(T-s-h)^3 h b}{(T-s)^3} \Bigr).
\end{multline}
Hence:
\begin{align}
  \label{eq:13}
  \begin{split}
    \wt b_T(s,x)
    & = \lim_{h\to 0} \frac{1}{h}   \frac{\partial^2}{\partial\lambda^2}
    \bigl(e^{\lambda x} L^T(s,x,s+h; \lambda)\bigr) \Big\vert_{\lambda=0}\\
    & = \frac{1}{1-\exp \left( - \frac{2x}{(T-s)b} \right)} \\
    & \quad \cdot \lim_{h\to 0} \frac{1}{h} \Bigl(xhb -
    \exp\Bigl(-\frac{2x}{b(T-s)}\Bigr) \Bigl(\Bigl(\frac{2hx}{T-s} -
    \frac{h^2x}{(T-s)^2} \Bigr)^2 + x \frac{(T-s-h)^3 h b}{(T-s)^3}
    \Bigr)\Bigr) \\
    & = \frac{1}{1-\exp \left( - \frac{2x}{(T-s)b} \right)} \Bigl(xb -
    xb \exp\Bigl(-\frac{2x}{(T-s)b}\Bigr) \Bigr) \\
    & = xb.
  \end{split}
\end{align}

\section{Facts for Markov branching trees}
\label{app.facts}

In this section we collect some facts about $\U$-valued $t$-Markov
branching trees ($t$-MBT). These are $\U$-valued random variables
$\mfU$ whose $t$-tops $\lfloor \mfU \rfloor (t)$ have infinitely
divisible laws with \Levy{}-Khintchine representation \eqref{ag2inf}
and \Levy{} measure of the form \eqref{e.tr12}.

We denote by $D(I,E)$ the set of \cadlag{} paths on an interval
$I\subset \R$ with values in a measurable space $E$. The set $D(I,E)$
is equipped with the classical Skorohod topology; see Chapter~3 in
\cite{EK86} for details.

We begin by studying the number of open balls of certain diameter
$t-s$ in a $t$-MBT. If we do this for fixed $t$ and vary $s$ we obtain
a \emph{ball counting process} with values in $\N_0$. For
$\mfu = [U,r,\mu] \in \U$ and $h >0$ we denote by $\anz_h(\mfu)$ the
number of open $2h$-balls in the metric space $(U,r)$.

The following result states that the process of the number of balls
indexed by the decreasing radius is a \emph{Markov branching process
  on $\N_0$}.

\begin{proposition}
  Consider \label{p.MBT.balls.new} the critical $\U$-valued rate $b$
  Feller diffusion. Then for $t >0$ and $s \in [0,t)$ the number
  $(M_{s,t})_{s \in [0,t)}$ of $2(t-s)$-balls is a continuous time
    branching process uniquely determined by the marginal laws at time
    $s \in [0,t)$, which conditionally on $\bar\mfu_{t-s}$ are
    \begin{align}
      \label{eq:44}
      \Pois\Bigl(\frac{2 \bar\mfu_{t-s}}{b(t-s)}\Bigr)
    \end{align}
    distributed.
\end{proposition}
\begin{proof}
  The assertion follows using the property of Galton-Watson processes
  which we formulated as Lemma~\ref{FellDiffDecomp} and the connection
  of such processes with Feller diffusion. 
\end{proof}

The general case of Proposition~\ref{p.MBT.balls.new} formulated below
as a conjecture will be proved in \cite{ggr_MBT}.

\begin{conjecture}[Number of covering balls of varying radius]
  \label{p.MBT.balls}
  Assume that $\mfU$ is a $t$-Markov branching tree with almost surely
  finite measure. Let
  \begin{align}
    \label{ball1}
    M_s\coloneqq M_s(\mfU) \coloneqq  \anz_{t-s}(\mfU),\;  s \in [0,t)
  \end{align}
  be the (random) number of open $2(t-s)$-balls in the metric space
  from the metric measure space $\mfU$. Then
  $(M_s(\mfU))_{s\in [0,t)}$ is a non decreasing Markov branching
  process with values in $D([0,t],\N_0)$. The state $0$ is an
  absorbing state of this process and for $k,\ell \in \N_0$ and
  $0\leq s < s' < t$ the transition probabilities are given by
  \begin{multline}
    \label{grx59}
    \P( M_{s'} = k+\ell | M_s = k) = \\
    \int_{(\U(t-s))^k} (\varrho^t_{t-s})^{\otimes
      k}(\dx\mfu_1,\dots,\dx\mfu_k) \, \1\{ \anz_{t-s'}(\mfu_1) + \dots +
    \anz_{t-s'}(\mfu_k) = k+\ell\}.
  \end{multline}
  Here, $\varrho_{t-s}^t$ is as in Definition~\ref{d.mbt} and
  Remark~\ref{r.1394}. The initial distribution $\P(M_0 \in \cdot)$ is
  a mixed Poisson distribution with mixing measure
  $\int m_0(\dx \mfv) \ind{\bar{\mfv} \in \cdot}$.
\end{conjecture}

Next we reformulate and refine some general results from Section~4 in
\cite{infdiv} which have a special form in the branching context.
These results are concerned with some properties of limit points of
the \emph{approximate excursion law}
\begin{align}
  \label{e.tr39}
  \hat\varrho_h^{h,(n)}(\cdot) \coloneqq  \1\{\cdot \neq \ntree\}\,
  nQ_h(\frac{1}{n}\mfe, \cdot).
\end{align}

\begin{proposition}[Excursion law and $\hat\varrho_h^t$]
  \label{thm:bransemig-brantree:1}
  Let $(Q_t)_{t\geq 0}$ be a Feller semigroup that has the branching
  property. Then for all $0 < h \le t$ there exists
  $\hat\varrho_h^t \in \mcM^\#(\U(h)^\sqcup \setminus \{\ntree\})$
  s.t.\ for all $\mfu \in \U$ and $\Phi \in \Pi_+$ we have
  \begin{align}
    \label{e1325}
    Q_t(\mfu, e^{-\Phi_h(\cdot)}) = \int Q_{t-h}(\mfu,\dx \mfv) \,
    \exp\left( - \bar{\mfv} \int \hat\varrho_h^t(\dx \mfw)
    (1-e^{-\Phi_h(\mfw)}) \right).
  \end{align}
  Furthermore, the measure $\hat\varrho_h^t$ does not depend on $t$ an
  is the boundedly weak limit of the measures $\hat\varrho_h^{h,(n)}$
  defined in \eqref{e.tr39}.
\end{proposition}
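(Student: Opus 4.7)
The plan is to peel off the dynamics on $[0,t-h]$ using the Markov property, then exploit the generalized branching property to extract a Laplace functional that is exponential in the total mass, and finally identify the Levy measure as the boundedly weak limit of the approximate excursion laws $\hat\varrho_h^{h,(n)}$. By Chapman-Kolmogorov,
\begin{align}
Q_t(\mfu, e^{-\Phi_h(\cdot)}) = \int Q_{t-h}(\mfu, \dx \mfv)\, F_h(\mfv), \qquad F_h(\mfv) \coloneqq Q_h(\mfv, e^{-\Phi_h(\cdot)}),
\end{align}
so the whole task reduces to proving that $F_h(\mfv)$ depends on $\mfv$ only through $\bar{\mfv}$, with the specific exponential form
$F_h(\mfv) = \exp(-\bar{\mfv} \int (1-e^{-\Phi_h(\mfw)}) \hat\varrho_h^t(\dx\mfw))$.

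First, since $\Phi \in \Pi_+$, the truncated polynomial $\Phi_h$ is $\sqcup^h$-additive (equation \eqref{e755}), hence $e^{-\Phi_h}$ is $\sqcup^h$-multiplicative. Combined with the convolution form of the generalized branching property (Remark~\ref{r.962}), this gives
\begin{align}
F_h(\mfv_1 \sqcup^h \mfv_2) = F_h(\mfv_1) F_h(\mfv_2)
\end{align}
for $\mfv_1,\mfv_2 \in \U(h)^\sqcup$. Iterating this on $n$-fold $\sqcup^h$-concatenations of $\frac{1}{n}\mfe$ and taking $n\to\infty$ (using the Feller property of $Q_h$) produces, for every $\mfv$ with $\bar{\mfv}=x$, an identity of the form $F_h(\mfv) = \varphi_h(\Phi)^x$ with $\varphi_h(\Phi)\in[0,1]$, i.e.\ $F_h(\mfv)$ depends on $\mfv$ only through $\bar{\mfv}$ and is infinitely divisible in $\bar{\mfv}$. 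The $\U$-valued Levy-Khintchine representation on $(\U(h)^\sqcup, \sqcup^h)$ recalled in \eqref{ag2inf} then yields the existence of a unique $\hat\varrho_h^t \in \mcM^\#(\U(h)^\sqcup \setminus \{\ntree\})$ with $-\log \varphi_h(\Phi) = \int (1-e^{-\Phi_h(\mfw)})\hat\varrho_h^t(\dx\mfw)$, proving \eqref{e1325}. Note that $\hat\varrho_h^t$ is constructed purely from $F_h$, hence depends only on $h$, not on $t$, and the superscript is retained only for consistency with Theorem~\ref{T:BRANCHING}.

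For the identification as a boundedly weak limit, setting $\mfv = \frac{1}{n}\mfe$ in the exponential formula and expanding gives
\begin{align}
1 - F_h(\tfrac{1}{n}\mfe) = \tfrac{1}{n}\int (1-e^{-\Phi_h(\mfw)})\hat\varrho_h^t(\dx\mfw) + O(1/n^2),
\end{align}
while by definition $n(1 - F_h(\tfrac{1}{n}\mfe)) = \int (1-e^{-\Phi_h(\mfw)}) \hat\varrho_h^{h,(n)}(\dx\mfw)$, so matching the two gives convergence of the integrals against the separating family $\{1-e^{-\Phi_h} : \Phi \in \Pi_+\}$ and hence boundedly weak convergence. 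The main obstacle will be making rigorous the step ``$F_h(\mfv)$ depends on $\mfv$ only through $\bar{\mfv}$''; this requires approximating arbitrary $\mfv \in \U$ by sequences of $\sqcup^h$-concatenations of rescaled copies of $\mfe$ in a way compatible with the Feller continuity of $Q_h$, and handling the fact that the branching identity in Remark~\ref{r.962} is stated for initial states in $\U(h)^\sqcup$ and must be extended to general $\mfv \in \U$ via the $h$-truncation mechanism and bounded moment estimates of the total mass.
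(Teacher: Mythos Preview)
Your proposal is correct and follows essentially the same route as the paper: Chapman--Kolmogorov to peel off $[0,t-h]$, then the branching property plus Feller continuity to reduce $F_h(\mfv)$ to an exponential in $\bar\mfv$, and finally the identification of the L\'evy measure as the limit of $\hat\varrho_h^{h,(n)}$. The paper isolates the step you flag as the main obstacle---that $F_h(\mfv)$ depends only on $\bar\mfv$---into a separate Lemma~\ref{l.Lapsemigroup}, proved via the Cauchy functional equation on finite um-spaces followed by Gromov-weak approximation, and then takes the $n\to\infty$ limit directly (citing Lemma~4.3 of \cite{infdiv}) rather than first invoking the abstract L\'evy--Khintchine formula and afterwards matching; but this is only a difference in packaging, not in substance.
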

\begin{proof}
  \label{p.branstree}
  Let $\mfU_t$ be a realization of a random variable with law
  $Q_t(\mfu,\cdot)$. We calculate the Laplace transform of $\mfU_t$
  for $t \geq 0$ at depth $h \in (0,t]$. We have
  \begin{align}
    \label{e1324}
    \E_\mfu [\exp(- \Phi_h(\mfU_t))]
    &  = \int Q_t (\mfu, \dx \mfv) \, \exp(-\Phi_h(\mfv)) \\
    \label{e1324b}
    & = \int Q_{t-h}(\mfu, \dx \mfw) \int Q_h(\mfw,\dx \mfv)
      \, \exp(-\Phi_h(\mfv)) \\
     \label{e1324c}
    & =  \int Q_{t-h}(\mfu,\dx \mfw) \left( \int
      Q_h(\frac{1}{n} \mfe, \dx \mfv) \, \exp
      (-\Phi_h(\mfv)) \right)^{n\bar{\mfw}} \qquad
      (\text{Lemma~\ref{l.Lapsemigroup}})\\
    \label{e1324d}
    & =  \int Q_{t-h}(\mfu,\dx \mfw) \left( 1 -\frac{1}{n}
      \int nQ_h(\frac{1}{n} \mfe, \dx \mfv) \,\left( 1- \exp
      (-\Phi_h(\mfv))\right) \right)^{n\bar{\mfw}}.
  \end{align}
  Again by Lemma~\ref{l.Lapsemigroup} we have
  $Q_h(\mfe,\cdot) = Q_h(\frac{1}{n}\mfe,\cdot)^{*n}$, so that the
  property (4.7) in \cite{infdiv} holds and we can use results on
  excursion laws of Section~4 from \cite{infdiv}. In particular, by
  Lemma~4.3 in \cite{infdiv} and $n \to \infty$ in the last line of
  the above display we deduce that
  \begin{align}
    \label{e.tr104}
    \E_\mfu [\exp(- \Phi_h(\mfU_t))] = \int Q_{t-h}(\mfu,\dx \mfw)
    \exp\left(- \bar{\mfw} \int \hat\varrho_h^t(\dx \mfv) \,
    (1-e^{-\Phi_h(\mfv)}) \right),
  \end{align}
  for a certain measure
  $\hat\varrho_h^t \in \mcM^\#(\U(h)^\sqcup\setminus\{\ntree\})$ with
  $\int \hat\varrho_h^t(\dx \mfu) \, (1\wedge \bar{\mfu}) < \infty$.
  We have seen that in the case where the branching rate $b$ is
  constant in time we have $\hat\varrho^t_h=\hat\varrho^{t'}_h$ for
  all $t' \geq t \geq h>0$. Thus, we have
  $\hat\varrho_h^t=\hat\varrho^h_h$ for $t \geq h >0$. Finally, again
  by Lemma~4.3 in \cite{infdiv}, this measure $\hat\varrho_h^h$ is the
  weak limit of $nQ_h(\frac{1}{n}\mfe,\cdot)$.
\end{proof}

\begin{lemma}[\cite{Grey:74}]
  \label{l.Grey}
  If $(\bar{Q}_t)_{t\geq 0}$ is a branching semigroup on $[0,\infty)$
  with branching mechanism $\Psi$ and satisfying the Feller property,
  then for all $t \geq 0$ the following assertions hold:
  \begin{enumerate}[(a)]
  \item \label{i:excursion}
    \begin{align}
      \label{e.D.e(t)} e(t) \coloneqq
      \lim_{n\to \infty} [n\bar{Q}_t(n^{-1}, \{\cdot \neq 0\})] =
      \psi_t(\infty) \in (0, \infty] \, ,
    \end{align}
    where the value $\infty$ is not attained if $\Psi(\theta) > 0$ and
    $\int_\theta^\infty \dx \xi / \Psi(\xi) < \infty$ for large enough
    $\theta$.
  \item \label{i:exp:totmass} Furthermore:
    \begin{align}
      \label{ae1322}
      \lim_{n\to \infty} n \int \bar{Q}_t(n^{-1}, \dx z) z =  \int
      \bar{Q}_t(1, \dx z) \, z = \exp( -t \Psi'(0)),
    \end{align}
    where
    $\Psi'(0) = - \alpha -\int_0^\infty x(1-e^{-x})\, \Pi (\dx x) \in
    [-\infty, \infty)$.
  \end{enumerate}
\end{lemma}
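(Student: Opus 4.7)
The plan is to reduce everything to the Laplace exponent of the continuous state branching semigroup $(\bar Q_t)_{t\ge 0}$. By the Feller branching property (and Silverstein's theorem) there is a function $\psi_t: [0,\infty]\to [0,\infty]$ such that for all $x\ge 0$ and $\lambda\ge 0$
\begin{align}
  \int \bar Q_t(x,\dx y)\, e^{-\lambda y} = e^{-x\psi_t(\lambda)},
\end{align}
with $\psi_t$ satisfying the branching ODE $\partial_t \psi_t(\lambda)=-\Psi(\psi_t(\lambda))$, $\psi_0(\lambda)=\lambda$, and being nondecreasing and concave in $\lambda$ with $\psi_t(0)=0$. Once this is in hand both (a) and (b) are short calculations.

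For part (a): sending $\lambda\to\infty$ in the Laplace identity and using monotone convergence gives $\bar Q_t(x,\{0\}) = e^{-x\psi_t(\infty)}$, hence $\bar Q_t(x,\{\cdot\neq 0\}) = 1 - e^{-x\psi_t(\infty)}$. Plugging in $x=n^{-1}$ and using $n(1-e^{-c/n})\to c$ as $n\to\infty$ (for $c\in (0,\infty]$, with the convention $n(1-e^{-\infty})=n$) yields
\begin{align}
  e(t) = \lim_{n\to\infty} n\bar Q_t(n^{-1},\{\cdot\neq 0\}) = \psi_t(\infty) \in (0,\infty].
\end{align}
The strict positivity follows from $\psi_t > 0$ on $(0,\infty)$ (since $\Psi(0)=0$ and the ODE is autonomous). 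The dichotomy for finiteness is precisely Grey's integral criterion \cite{Grey:74}: separating variables in the ODE gives $\int_{\psi_t(\lambda)}^{\lambda} \dx\xi/\Psi(\xi) = t$ whenever $\Psi>0$ on the relevant interval, so letting $\lambda\to\infty$ the left side converges iff $\int^\infty \dx\xi/\Psi(\xi)<\infty$, and in that case $\psi_t(\infty)$ is the unique root of $\int_{\psi_t(\infty)}^\infty \dx\xi/\Psi(\xi)=t$.

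For part (b): differentiate the Laplace identity at $\lambda=0$ to get $\int \bar Q_t(x,\dx y)\, y = x\,\psi_t'(0)$. Differentiating the ODE $\partial_t\psi_t(\lambda) = -\Psi(\psi_t(\lambda))$ in $\lambda$ yields $\partial_t \psi_t'(\lambda) = -\Psi'(\psi_t(\lambda))\psi_t'(\lambda)$; evaluating at $\lambda=0$ and using $\psi_t(0)=0$ and $\psi_0'(0)=1$ gives the linear ODE $\partial_t\psi_t'(0) = -\Psi'(0)\psi_t'(0)$, so $\psi_t'(0) = e^{-t\Psi'(0)}$. Consequently $\int \bar Q_t(1,\dx z)\,z = e^{-t\Psi'(0)}$, and with $x=n^{-1}$ we get $n\int \bar Q_t(n^{-1},\dx z)\,z = e^{-t\Psi'(0)}$ for every $n$, so the limit is trivially this value.

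The only delicate point is justifying the interchange of the limit and the derivative at $\lambda=0$ when $\Psi'(0) = -\infty$; here one argues by monotone convergence applied to the nonnegative measure $\bar Q_t(x,\dx y)$ and uses that $y\mapsto y$ is the monotone limit of $\lambda^{-1}(1-e^{-\lambda y})$ as $\lambda\downarrow 0$, combined with the explicit computation $\lambda^{-1}(1-e^{-x\psi_t(\lambda)})\to x\psi_t'(0)$ under the ODE. This is the main (mild) obstacle; everything else is a direct consequence of the branching Laplace formalism and Grey's classical analysis.
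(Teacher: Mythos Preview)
Your proof is correct and follows essentially the same route as the paper's (which largely defers to \cite{Grey:74} and Lemma~4.4 in \cite{infdiv}): both use the Laplace exponent identity $\int \bar Q_t(x,\dx y)e^{-\lambda y}=e^{-x\psi_t(\lambda)}$, derive part~(a) from $\bar Q_t(x,\{0\})=e^{-x\psi_t(\infty)}$ together with Grey's integral criterion, and get part~(b) from the linearity $\int \bar Q_t(x,\dx z)z = x\psi_t'(0)$ plus the ODE for $\psi_t'(0)$. Your write-up is in fact more self-contained than the paper's, which simply cites the relevant passages in Grey.
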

\begin{proof}
  \label{pr.1962}
  The first equality of \eqref{i:exp:totmass} is given by the
  branching property and the second equality is shown below the proof
  of Theorem~1 in \cite{Grey:74}. To show \eqref{i:excursion}, a proof
  similar to Lemma~4.4 in \cite{infdiv} shows the existence of the
  limit and that $e(t) = - \log \bar{Q}_t(\mfe,\mfu = 0)$. But the
  latter is the extinction probability after time $t$ and Theorem~1 in
  \cite{Grey:74} and its preceding lines show the rest of the
  statement.
\end{proof}

\begin{lemma}[Laplace transform and branching property]
  \label{l.Lapsemigroup}
  Suppose $(Q_t)_{t\geq 0}$ is a Feller semigroup with the branching
  property. Then
  \begin{enumerate}
  \item for any $\Phi \in \Pi_+, t\geq 0$ and $\mfu \in \U$
    \begin{align}
      \label{e2108131928}
      \int Q_t(\mfu,\dx \mfv) e^{-\Phi_{t}(\mfv)}
      = \Bigl(\int Q_t(\mfe,\dx \mfv) e^{-\Phi_{t}(\mfv)}\Bigr)^{\bar{\mfu}}.
    \end{align}
  \item for any $\Phi \in \Pi_+$, $t\geq 0$ and $\mfu \in \U$
    \begin{align}
      \label{e.Lapsemigroup}
      \int Q_t(\mfu,\dx \mfv) \Phi_t(\mfv) = \bar{\mfu}
      \int Q_t(\mfe,\dx \mfv) \Phi_t(\mfv),
    \end{align}
    as long as the expressions involved are finite.
  \end{enumerate}
\end{lemma}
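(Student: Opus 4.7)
The plan is to derive both identities from the generalized branching property \eqref{e1129} combined with the $\sqcup^t$-additivity of $t$-truncated polynomials \eqref{e755}, namely $\Phi_t(\mfv_1 \sqcup^t \mfv_2) = \Phi_t(\mfv_1) + \Phi_t(\mfv_2)$. I would set
$$F(\mfu) := \int Q_t(\mfu, \dx\mfv)\, e^{-\Phi_t(\mfv)}, \qquad G(\mfu) := \int Q_t(\mfu, \dx\mfv)\, \Phi_t(\mfv).$$
Applying \eqref{e1129} with $h=t$ expresses $Q_t(\mfu_1\sqcup^t\mfu_2,\cdot)$ as the $\sqcup^t$-convolution of $Q_t(\mfu_1,\cdot)$ and $Q_t(\mfu_2,\cdot)$ on $\mcB(\U(2t))$. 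Factoring the exponential of $\Phi_t$ via \eqref{e755} yields the multiplicative identity $F(\mfu_1 \sqcup^t \mfu_2) = F(\mfu_1) F(\mfu_2)$, and the same manipulation with the linear integrand $\Phi_t$ yields the additive identity $G(\mfu_1 \sqcup^t \mfu_2) = G(\mfu_1) + G(\mfu_2)$.

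From multiplicativity I would iterate to obtain $F(\mfe^{\sqcup^t n}) = F(\mfe)^n$ for every $n \in \N$. Writing $\mfe = (\tfrac{1}{n}\mfe)^{\sqcup^t n}$ then gives $F(\tfrac{1}{n}\mfe) = F(\mfe)^{1/n}$, and a further iteration produces $F(\tfrac{k}{n}\mfe) = F(\mfe)^{k/n}$ for all positive rationals. The Feller property of $Q_t$ ensures continuity of $\lambda \mapsto F(\lambda\mfe)$ on $[0,\infty)$, so we can extend to $F(\lambda\mfe) = F(\mfe)^\lambda$ for all $\lambda \geq 0$. The parallel argument for $G$ yields $G(\lambda\mfe) = \lambda\, G(\mfe)$.

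The remaining and hardest step is to show that $F(\mfu)$ and $G(\mfu)$ depend on $\mfu$ only through its total mass $\bar{\mfu}$, so that $F(\mfu) = F(\bar{\mfu}\mfe) = F(\mfe)^{\bar{\mfu}}$ and $G(\mfu) = \bar{\mfu}\, G(\mfe)$. The intuition is that $\Phi_t$ is supported on pairs at distance strictly less than $2t$, and any initial pairwise distance $2a > 0$ in $\mfu$ propagates to distance $\geq 2a + 2t > 2t$ in the time-$t$ state, rendering the initial geometry invisible to $\Phi_t$. To make this precise I would decompose $\lfloor\mfu\rfloor(t)$ along its $\sqcup^t$-constituents, use the product decomposition of $F$ over these pieces given by the branching property, and then invoke Feller continuity together with the already established scaling identity $F(\lambda\mfe) = F(\mfe)^\lambda$ to recombine into $F(\mfe)^{\bar{\mfu}}$. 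This last step is where the technicalities concentrate, since one must control the approximation of a general $\mfu$ by concatenations of atomic pieces while keeping the $\sqcup^t$-factorization compatible with $\Phi_t$. Once (1) is established, statement (2) can either be derived by the parallel additive argument or recovered from (1) by replacing $\Phi$ with $\lambda\Phi\in\Pi_+$, differentiating at $\lambda=0$, and invoking dominated convergence under the integrability condition stated in the lemma.
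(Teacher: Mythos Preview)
Your overall strategy---multiplicativity of $F$ from the branching property, Cauchy's functional equation, and Feller continuity---is exactly the paper's, and your derivation of part (2) by differentiating $\lambda\Phi$ at $\lambda=0$ is literally the paper's argument. However, there is a genuine gap in your step 2.

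The identity ``$\mfe = (\tfrac{1}{n}\mfe)^{\sqcup^t n}$'' is false in $\U$: the right-hand side is an $n$-point space with all mutual distances equal to $2t$, not a one-point space of mass $1$. So you cannot read off $F(\tfrac{1}{n}\mfe)=F(\mfe)^{1/n}$ from multiplicativity under $\sqcup^t$ alone. You would need your step 3 (that $F$ depends only on $\bar\mfu$) \emph{before} step 2, but your execution of step 3 in turn relies on knowing $F(\lambda\mfe)=F(\mfe)^\lambda$ for the constituent pieces---circular as written.

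The paper breaks this circularity by using $\sqcup^h$ with \emph{varying} $h$, not just $h=t$. Your own intuition (``initial distance $2a$ propagates to $\geq 2a+2t>2t$'') shows that $\Phi_t$ is additive under $\sqcup^h$ for every $h>0$, so $F(\mfu_1\sqcup^h\mfu_2)=F(\mfu_1)F(\mfu_2)$ for all $h>0$. With this in hand the paper sets $\mfu_\alpha=\mfp^{(p_1)}\sqcup^{\alpha/2}\mfp^{(p_2)}$ (two atoms at distance $\alpha$), obtains $F(\mfu_\alpha)=F(\mfp^{(p_1)})F(\mfp^{(p_2)})$, and then lets $\alpha\downarrow 0$ so that $\mfu_\alpha\to\mfp^{(p_1+p_2)}$ in the Gromov-weak topology; the Feller property yields the Cauchy equation $f_t(p_1+p_2)=f_t(p_1)f_t(p_2)$ for $f_t(p):=F(\mfp^{(p)})$ directly on one-point spaces. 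A general finite ultrametric $\mfu$ is then decomposed hierarchically via nested $\sqcup^h$'s into its atoms, giving $F(\mfu)=\prod_i f_t(p_i)=F(\mfe)^{\bar\mfu}$, and arbitrary $\mfu\in\U$ is handled by approximation with finite spaces. Once you allow $h$ to vary, your outline becomes the paper's proof.
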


\begin{proof}\label{pr1851}
  The second claim follows from the first by differentiation, i.e.\
  consider $\lambda \Phi_t$ for $\lambda \ge 0$ instead of $\Phi_t$ in
  \eqref{e2108131928}, differentiate both sides with respect to
  $\lambda$ and evaluate the resulting equality at $\lambda = 0$.

  For the first claim let $\mfu \in \bbU_f$ be an ultrametric measure
  space with finitely many points, that is we have
  $\mfu=[\{1,\dotsc,n\},r,\sum_{i=1}^np_i\delta_i]$ for some metric
  $r$ and weights $p_i$, $i=1,\dots,n$. We assume that $n\geq2$ and
  define
  $\alpha\coloneqq \min\{r(x,y):x,y\in \{1,\dotsc,n\},x\neq y\}>0$.
  Furthermore, for $i=1,\dots,n$ we define
  \begin{align}
    \label{rg91}
    \mfp_i\coloneqq \mfp^{(p_i)}\coloneqq [\{i\}, 0, p_i \delta_{i}].
  \end{align}
  Then, for any $t\in[0,\alpha]$, by the branching property we have
  \begin{align}
    \label{e.Lap:branch}
    \int Q_t(\mfu,\dx \mfv) e^{-\Phi_t(\mfv)}
      & = \prod_{i=1}^n \int  Q_t(\mfp_i,\dx \mfv_i)
        e^{-\Phi_{t}( \mfv_i) }\,.
  \end{align}
  Now, assume that $n=2$ and write $\mfu_\alpha$ to indicate the
  dependence on $\alpha$. Then, \eqref{e.Lap:branch} becomes
  \begin{align}
    \label{e.Lap:branch:2}
    \int Q_t(\mfu_\alpha,\dx \mfv) e^{-\Phi_t(\mfv)} =
    \int Q_t(\mfp_1,\dx \mfv) e^{-\Phi_t(\mfv)}
    \int Q_t(\mfp_2,\dx \mfv) e^{-\Phi_t(\mfv)}.
  \end{align}
  On the other hand we know that
  \begin{align}
    \label{rg92}
    \lim_{\alpha \to 0} \mfu_\alpha = \mfp^{(p_1+p_2)}=\mfp^{\bar{\mfu}},
  \end{align}
  in the Gromov-Prohorov topology. Since $(Q_t)_{t\geq 0}$ is a
  Feller semigroup we obtain
  \begin{align}
    \label{rg93}
    \int Q_t(\mfp^{(p_1+p_2)},\dx \mfv) e^{-\Phi_{t}(\mfv)}
    & = \lim_{\alpha\to 0} \int Q_t(\mfu_\alpha,\dx \mfv)
      e^{-\Phi_{t}(\mfv)}  \\
    & = \int Q_t(\mfp_1,\dx \mfv) e^{-\Phi_t(\mfv)}
    \int Q_t(\mfp_2,\dx \mfv) e^{-\Phi_t(\mfv)}.
      \label{rg93b}
  \end{align}
  For fixed $t>0$, this is a functional equation in the parameter $p$:
  $p \mapsto f_t(p) = Q_t(\mfp^{(p)},e^{-\Phi_{t}})$, i.e.\ we have
  $f_t(p_1+p_2) = f_t(p_1)f_t(p_2)$, $p_1,p_2\in(0,\infty)$. By the
  Feller property we also know that $p\mapsto f_t(p)$ is continuous
  and so we obtain the well-known solution
  \begin{align}
    \label{e2108131929}
    \int Q_t(\mfp^{(p)},\dx \mfv) e^{-\Phi_{t}(\mfv)}
    = f_t(p) = (f_t(1))^p = \Bigl(\int Q_t(\mfp^{(1)},\dx \mfv)
    e^{-\Phi_{t}(\mfv)} \Bigr)^p.
  \end{align}

  Using \eqref{e.Lap:branch:2}, we can extend this to the case $n>2$
  and obtain
  \begin{align}
    \label{e.Lap:branch:3}
    \int Q_t(\mfu,\dx \mfv) e^{-\Phi_{t}(\mfv)} =
    \Bigl(\int Q_t(\mfe,\dx \mfv) e^{-\Phi_{t}(\mfv)}\Bigr)^{(p_1 + \cdots + p_n)} =
    \Bigl(\int Q_t(\mfe,\dx \mfv) e^{-\Phi_{t}(\mfv)}\Bigr)^{\bar\mfu}.
  \end{align}
  where we write $\mfe=\mfp^{(1)}$. Taking into account
  \eqref{e2108131929} to cover the case $n=1$, we have now proved
  \eqref{e2108131928} for finite um-space.

  For the extension to general um-spaces, note that any um-space can
  be approximated in the Gromov-weak topology by a sequence of finite
  um-spaces (see \cite{Gl12}, Proposition 2.3.13, or \cite{GPWmp13},
  Proposition 5.6 for the normalized case; the extension to general
  mm-spaces is immediate). Let $\mfu\in\bbU$ and assume that
  $\mfu_n\in\bbU_f$, $n\in\bbN$, are finite um-spaces such that
  $\mfu_n\to\mfu$. Note that this implies
  $\bar{\mfu}_n^2=\langle1,\nu^{2,\mfu_n}\rangle\to\langle1,\nu^{2,\mfu}\rangle
  =\bar{\mfu}^2$ and thus $\bar{\mfu}_n\to\bar{\mfu}$. Therefore we
  obtain
  \begin{align}
    \label{rg94}
    \begin{split}
      \int Q_t(\mfu,\dx \mfv) e^{-\Phi_t(\mfv)}
      & =\int Q_t(\lim_{n\to\infty}\mfu_n, \dx \mfv) e^{-\Phi_t(\mfv)} \\
      & =\lim_{n\to\infty} \int Q_t(\mfu_n,\dx \mfv) e^{-\Phi_t(\mfv)} \\
      & =\lim_{n\to\infty} \Bigl(\int Q_t(\mfe,\dx \mfv)
      e^{-\Phi_t(\mfv)}\Bigr)^{\bar{\mfu}_n}
      = \Bigl(\int Q_t(\mfe,\dx \mfv) e^{-\Phi_t(\mfv)}\Bigr)^{\bar{\mfu}}.
    \end{split}
  \end{align}
  In the second equality we use the Feller property of $Q_t$, in the
  third equality we use that we have already proved the result for
  finite um-spaces.
\end{proof}

\begin{proposition}
  \label{l.totmass:semigroup}
  Suppose $(Q_t)_{t\geq 0}$ is a Feller semigroup and has the
  branching property. Then there exists a Feller semigroup
  $(\bar{Q}_t)_{t\geq0}$ on $C_b([0,\infty))$ with the branching
  property such that:
  \begin{align}
    \label{e.totalmass.semigroup}
    \int Q_t(\mfu, \dx\mfv) \, \1\{\bar{\mfv} \in \dx y\}
    = \bar{Q}_t(x, \dx y)
  \end{align}
  as measures on $[0,\infty)$ for all $\mfu \in \U$ with
  $\bar{\mfu} = x \in [0,\infty)$. Moreover, $\bar{Q}_t(x,\cdot)$ is
  an infinitely divisible distribution on $[0,\infty)$ for any $t>0$
  and $x >0$.
\end{proposition}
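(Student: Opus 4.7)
The plan is to extract everything from Lemma~\ref{l.Lapsemigroup} applied to the degree-one polynomial $\Phi = \Phi^{1,\lambda}$ given by $\Phi(\mfv) = \lambda \bar{\mfv}$ for $\lambda \ge 0$. Since for $m=1$ the product in \eqref{eq:pol:trunc} is empty, the truncation $\Phi_t$ equals $\Phi$, so Lemma~\ref{l.Lapsemigroup}(1) yields
\begin{align}
\label{eq:plan-lap}
\int Q_t(\mfu, \dx\mfv)\, e^{-\lambda \bar{\mfv}} = f_t(\lambda)^{\bar{\mfu}},
\qquad f_t(\lambda) \coloneqq \int Q_t(\mfe, \dx\mfv)\, e^{-\lambda \bar{\mfv}},
\end{align}
for every $\mfu \in \U$ and $\lambda \ge 0$. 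The right-hand side depends on $\mfu$ only through $x = \bar{\mfu}$, so by uniqueness of Laplace transforms on $[0,\infty)$ the image measure $\int Q_t(\mfu,\dx\mfv)\,\1\{\bar{\mfv} \in \cdot\}$ depends on $\mfu$ only through $x$. This well-defines $\bar{Q}_t(x,\cdot)$ as a probability measure on $[0,\infty)$ (note $f_t(0) = 1$).

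Next I would verify the semigroup property directly from Chapman--Kolmogorov for $(Q_t)$: for any $\mfu$ with $\bar{\mfu}=x$,
\begin{align*}
\bar{Q}_{s+t}(x, A)
= \int Q_s(\mfu, \dx\mfw) \int Q_t(\mfw, \dx\mfv)\,\1\{\bar{\mfv} \in A\}
= \int \bar{Q}_s(x, \dx y)\, \bar{Q}_t(y, A),
\end{align*}
using that the inner integral depends on $\mfw$ only through $\bar{\mfw}$. For the Feller property, the total-mass map $\pi \colon \U \to [0,\infty)$, $\mfv \mapsto \bar{\mfv}$, is continuous in the Gromov weak topology (it coincides with the polynomial $\Phi^{1,1}$), so $f \circ \pi \in C_b(\U)$ whenever $f \in C_b([0,\infty))$. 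Choosing any sequence $\mfu_n \to \mfu$ in $\U$ with $\bar{\mfu}_n \to x$ (e.g.\ $\mfu_n = x_n \cdot \mfe$ with $x_n \to x$), the Feller property of $(Q_t)$ then transfers to $(\bar{Q}_t)$.

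The branching property of $(\bar{Q}_t)$, i.e.\ $\bar{Q}_t(x_1+x_2,\cdot) = \bar{Q}_t(x_1,\cdot) * \bar{Q}_t(x_2,\cdot)$, is immediate from \eqref{eq:plan-lap} since $f_t(\lambda)^{x_1+x_2} = f_t(\lambda)^{x_1}\cdot f_t(\lambda)^{x_2}$. Likewise, infinite divisibility of $\bar{Q}_t(x,\cdot)$ for $t>0$ and $x>0$ follows because $f_t(\lambda)^x = \bigl(f_t(\lambda)^{x/n}\bigr)^n$ exhibits $\bar{Q}_t(x,\cdot)$ as the $n$-fold convolution of $\bar{Q}_t(x/n,\cdot)$ for every $n \in \N$.

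There is no real obstacle here: the whole proposition is a clean consequence of the Laplace identity \eqref{eq:plan-lap} once one observes that it specialises to the total mass through the degree-one polynomial. The only point requiring a line of justification is the continuity of $\mfv \mapsto \bar{\mfv}$ needed for the Feller property, which is part of the definition of the Gromov weak topology on $\U$.
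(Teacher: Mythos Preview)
Your proposal is correct and follows essentially the same route as the paper: both define $\bar{Q}_t$ via the total-mass pushforward, invoke Lemma~\ref{l.Lapsemigroup} with the degree-one polynomial to show the Laplace transform depends only on $\bar{\mfu}$, and then check the semigroup, Feller and branching properties. The only cosmetic difference is that the paper verifies the branching property by pulling back to $\mfu_1 \sqcup \mfu_2$ and the $\U$-valued branching property, whereas you read it (and infinite divisibility) directly off the exponent identity $f_t(\lambda)^{x_1+x_2}=f_t(\lambda)^{x_1}f_t(\lambda)^{x_2}$; your variant is marginally slicker but substantively the same.
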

\begin{proof}
  \label{p.totsem}
  Let $x \in [0,\infty)$. Define
  \begin{align}
    \label{e865}
    \bar{Q}_t (x,\dx y ) = \int Q_t(x \cdot \mfe, \dx\mfv)
    \1\{\bar{\mfv} \in \dx y\}.
  \end{align}
  By Lemma~\eqref{l.Lapsemigroup} for $\lambda >0$:
  \begin{align}
    \label{e.tr7}
    \begin{split}
      \int \bar{Q}_t (x,\dx y) e^{-\lambda y} = \int Q_t(x \cdot \mfe,
      \dx\mfv) \1\{\bar{\mfv} \in \dx y\} e^{-\lambda y} & = \left( \int
        Q_t(\mfe, \dx\mfv) \1\{\bar{\mfv} \in \dx y\}
        e^{-\lambda y} \right)^x \\
      & = \int Q_t(\mfu, \dx\mfv) \1\{\bar{\mfv} \in \dx y\} e^{-\lambda y},
    \end{split}
  \end{align}
  for any $\mfu \in \U$ with $\bar{\mfu} = x$. That shows
  \eqref{e.totalmass.semigroup}. Next $(\bar{Q}_t)_{t\geq0}$ is a
  Markov semigroup since,
  \begin{align}
    \label{e1921}
    \int \bar{Q}_t(x,\dx y) \int \bar{Q}_s(y,\dx z)
    & = \int Q_t(x\cdot \mfe, \dx\mfv)\, \1(\bar{\mfv} \in \dx y )
      \int Q_s(y\cdot \mfe, \dx\mfw) \, \1\{\bar{\mfw} \in \dx z\} \\
    \label{e1921b}
    & \stackrel{\eqref{e.tr7}}{=} \int Q_t( x \cdot \mfe, \dx\mfv)
      \int Q_s(\nu, \dx\mfw) \, \1\{\bar{\mfw} \in \dx z\}\\
    \label{e1921c}
    & = \int Q_{t+s}(x \cdot \mfe, \dx\mfw) \, \1(\bar{\mfw} \in
      \dx z) = \int \bar{Q}_{t+s}(x,\dx z).
  \end{align}
  The Feller property follows from the corresponding property of
  $(Q_t)_{t\geq 0}$. It remains to verify the branching property;
  therefore let $\mfu_1,\mfu_2 \in \U$ with $\bar{\mfu}_1 = x_1$,
  $\bar{\mfu}_2 = x_2$. Then
  \begin{align}
    \label{e1320}
    \int & \bar{Q}_t(x_1, \dx y_1)  \int \bar{Q}_t(x_2,\dx y_2) \,
           \1\{y_1 + y_2 \in \dx z\} \\
    \label{e1320b}
         & = \int Q_t(\mfu_1, \dx\mfv_1) \int Q_t(\mfu_2, \dx\mfv_2)
           \, \1\{\bar{\mfv}_1 + \bar{\mfv}_2 \in \dx z\} \\
    \label{e1320c}
         & = \int Q_t(\mfu_1, \dx\mfv_1) \int Q_t(\mfu_2, \dx\mfv_2)
           \, \1(\overline{\mfv_1 \sqcup \mfv_2} \in \dx z)\\
    \label{e1320d}
         & \stackrel{\text{bran.~prop}}{=} \int Q_t(\mfu_1 \sqcup
           \mfu_2, \dx\mfv) \, \1(\bar{\mfv} \in \dx z ) = \int
           \bar{Q}_t(x_1+x_2,\dx y) \1(y \in \dx z) \\
    \label{e1320e}
         & = \bar{Q}_t(x_1+x_2,\dx z).
  \end{align}
  The last claim is clear, since marginal distributions of branching
  processes are infinitely divisible.
\end{proof}

\section{Infinite divisibility and Markov branching trees}
\label{sec:infin-divis-xxx}

In this section we show that the distributions of (generalized)
$t$-branching trees are infinitely divisible and identify the
corresponding \Levy{} measures. Here, for $t\in (0,\infty)$ extending
Definition~\ref{d.mbt}, we say that an $\U(t)^{\sqcup}$-valued random
variable $\mfU$ is a \emph{(generalized) $t$-branching tree} if for
every $h \in (0,t]$ the $h$-top $\lfloor \mfU\rfloor (h)$ can be
written in the form
\begin{align}
  \label{n.rgx7}
  \lfloor \mfU\rfloor (h) \coloneqq  \bigsqcup_{\mfu \in N} \mfu
  \in \U(h)^\sqcup.
\end{align}
Here, for $h \in (0,t]$, $N$ is PPP on $\U$ arising as follows:
\begin{itemize}
\item $m_h^t$ is an infinitely divisible law on $\U(t-h)^\sqcup$ with
  the \Levy{} measure $\lambda^{m_h^t}$,
\item $\varrho_h^t$ is a kernel on $\U(t-h)^\sqcup \times \mathcal
  M^\# (\U(h)\setminus \{\ntree\})$, $\mfv \mapsto \varrho_h^t
  (\mfv,\cdot)$,
\item first $\mfv$ is drawn according to $m_h^t$, then
  $N=N(\varrho_h^t(\mfv,\cdot))$ is PPP on $\U(h)^\sqcup$ with
  intensity measure $\varrho_h^t(\mfv,\cdot)$.
\end{itemize}

We start by proving infinite divisibility of general $t$-branching
trees.

\begin{proposition}
  \label{l.ac7289}
  Any $t$-branching tree is $t$-infinitely divisible.
\end{proposition}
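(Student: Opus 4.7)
The plan is to construct the \Levy{} measure $\Lambda_t \in \mcM^\#(\U(t)^\sqcup \setminus \{\ntree\})$ directly from the defining Cox representation at level $h=t$, verify the required integrability, and then check that its $h$-truncations yield the correct \Levy{}-Khintchine expressions \eqref{ag2inf} and \eqref{e.ar1inf} for every $h\in(0,t]$. First I would exploit the defining structure \eqref{n.rgx7} specialized to $h=t$: conditionally on $\mfv\sim m_t^t$ on $\U(0)^\sqcup$, the tree $\mfU$ is the concatenation of a PPP $N(\varrho_t^t(\mfv,\cdot))$ on $\U(t)\setminus\{\ntree\}$. The classical PPP Laplace formula applied \emph{given} $\mfv$ yields
\begin{align*}
  \E\bigl[e^{-\Phi_t(\mfU)}\mid \mfv\bigr]
  = \exp\Bigl(-\int_{\U(t)\setminus\{\ntree\}} (1-e^{-\Phi_t(\mfw)})\,
  \varrho_t^t(\mfv,\dx\mfw)\Bigr) \eqqcolon e^{-\Psi(\mfv)}.
\end{align*}

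Next I would use that $m_t^t$ is infinitely divisible on $\U(0)^\sqcup$ and hence admits the \Levy{}-Khintchine representation with \Levy{} measure $\lambda^{m_t^t}$ (Theorem~2.37 of~\cite{infdiv}) so that $-\log\int m_t^t(\dx\mfv)\, e^{-\Psi(\mfv)}=\int \lambda^{m_t^t}(\dx\mfv')(1-e^{-\Psi(\mfv')})$, provided $\Psi$ is of the admissible class. Rewriting $1-e^{-\Psi(\mfv')}$ as the expectation $\E[1-e^{-\Phi_t(\mfW(\mfv'))}]$, where $\mfW(\mfv')\coloneqq\bigsqcup^t_{\mfw\in N(\varrho_t^t(\mfv',\cdot))}\mfw$, I would then define
\begin{align*}
  \Lambda_t(\dx\mfu) \coloneqq \int_{\U(0)^\sqcup\setminus\{\ntree\}}
  \lambda^{m_t^t}(\dx\mfv')\,\P\bigl(\mfW(\mfv')\in \dx\mfu\bigr),
\end{align*}
obtaining $-\log\E[e^{-\Phi_t(\mfU)}]=\int\Lambda_t(\dx\mfu)(1-e^{-\Phi_t(\mfu)})$ by Fubini. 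This is the required \Levy{}-Khintchine identity at level $h=t$. Integrability of $\bar{\mfu}\wedge 1$ against $\Lambda_t$ follows from the integrability conditions already satisfied by $\lambda^{m_t^t}$ and by the kernel $\varrho_t^t(\mfv,\cdot)$ (which must be finite on $\{\bar{\mfw}\geq\eps\}$ for every $\eps>0$ since $\mfU$ is a.s.\ locally finite), combined with $\bar{\mfW}(\mfv')=\sum_{\mfw\in N}\bar\mfw$.

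For $h\in(0,t)$ I would then verify that the pushforward $\Lambda_h$ defined through \eqref{e.ar1inf}, i.e.\ the law of $\lfloor\mfW(\mfv')\rfloor(h)$ weighted by $\lambda^{m_t^t}$, agrees with the \Levy{} measure obtained by running the same argument at level $h$ using the ingredients $(m_h^t,\varrho_h^t)$. This identification uses the definition of $t$-branching tree, which asserts precisely the consistency of the decompositions at varying depths, together with the interplay of $h$-truncation and concatenation from Section~\ref{sss.concat} (concatenation commutes with truncation, and a truncated PPP is again a PPP with truncated intensity). Uniqueness of $\Lambda_t$ is a standard consequence of the injectivity of the Laplace functional on $\wt\mcM$ (Lemma~\ref{lem:Pi:separating}) together with Theorem~2.37 in~\cite{infdiv}.

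The main obstacle I expect is this final consistency check: without an explicit ``linearity'' assumption on the kernel $\mfv\mapsto\varrho_h^t(\mfv,\cdot)$ the two levels produce \Levy{} measures that must coincide only because the $t$-branching tree structure is postulated simultaneously for all $h$. Unpacking this requires composing the \Levy{}-Khintchine formulas at the two scales and matching the resulting double integral over $\lambda^{m_t^t}$ with a single integral over $\lambda^{m_h^t}$; the mechanism behind this is the same cascade of PPP-on-PPP representations used in the proof of Theorem~\ref{T:BRANCHING}(b), and I would reduce to that case by truncating first and applying the already-established $h$-level identity.
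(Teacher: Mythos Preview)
Your route is valid in spirit but considerably more indirect than the paper's, and it effectively merges this proposition with the next one (Proposition~\ref{pr.infdivMBT}, which identifies the \Levy{} measure). The paper never constructs $\Lambda_t$ here; it argues purely at the level of $n$-divisibility. For each fixed $h\in(0,t]$ and $n\in\N$, it takes an $n$-th convolution root $m_h^{t,(n)}$ of the infinitely divisible mixing law $m_h^t$, writes $\mfV\eqd\mfV^{1,n}\sqcup^{(t-h)}\cdots\sqcup^{(t-h)}\mfV^{n,n}$ with i.i.d.\ summands, and then uses the additivity of the kernel,
\[
\varrho_h^t(\mfv_1\sqcup\cdots\sqcup\mfv_n,\cdot)=\varrho_h^t(\mfv_1,\cdot)+\cdots+\varrho_h^t(\mfv_n,\cdot),
\]
to split the conditional PPP-Laplace functional into a product, giving $\E[e^{-\Phi_h(\mfU)}]=(\cdots)^n$ directly. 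No \Levy{}-Khintchine formula is invoked and no cross-level comparison is needed.

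The step in your plan where you apply the \Levy{}-Khintchine representation of $m_t^t$ to the exponent $\Psi(\mfv)=\int(1-e^{-\Phi_t(\mfw)})\varrho_t^t(\mfv,\dx\mfw)$ hinges on exactly the same ingredient: $\Psi$ is \emph{not} a truncated polynomial, so \eqref{ag2inf} does not apply as stated; it goes through only because kernel additivity makes $\Psi$ $\sqcup$-additive (hence $e^{-\Psi}$ multiplicative), and you would still need to justify the extension of the formula beyond $\Pi_+$. Your ``main obstacle'' is therefore not the consistency between different depths $h$---that part is automatic, since $T_h$ is a semigroup homomorphism and carries infinite divisibility at level $t$ down to every $h\le t$---but precisely this additivity of $\varrho_h^t$, which the paper also uses but without comment. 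Once you grant it, the paper's $n$-th-power factorization is shorter and sidesteps the integrability and approximation issues your construction of $\Lambda_t$ would require.
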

\begin{proof}
  \label{pr.7343}
  Let $\mfU$ be a $t$-branching tree, $n\in \N$ and $h \in (0,t]$.
  Since $m_h^t$ is infinitely divisible there is a law
  $m_h^{t,(n)} \in \mcM_1(\U(t-h)^{\sqcup})$ so that for
  $\U(t-h)^{\sqcup}$-valued random variables $\mfV$ and i.i.d.\
  $\mfV^{i,n}$, $i=1,\dots,n$ with $\mcL (\mfV) = m_h^t$ respectively
  $\mcL(\mfV^{1,n}) = m_h^{t,(n)}$ we have
  \begin{align}
    \label{n.rgx5}
    \mfV \eqd \mfV^{1,n} \sqcup^{(t-h)} \dots \sqcup^{(t-h)}
    \mfV^{n,n}.
  \end{align}
  Using this representation and the description after \eqref{n.rgx7}
  for all non-negative $\Phi \in \Pi$ we obtain
  \begin{align}
    \label{n.rgx6}
    \begin{split}
      \E[\exp (-\Phi_{\hu}(\mfU) ) ]
      & = \int_{\U(t-h)^{\sqcup}} m_h^t (\dx \mfv) \, \exp
      \Bigl(- \int_{\U(h)} \Bigl(1-e^{-\Phi_{\hu}(\mfu)}\Bigr) \,
      \varrho_{h}^{t}(\mfv,\dx \mfu) \Bigr) \\
      & = \int_{\U(t-h)^{\sqcup}} m_h^{t,(n)} (\dx \mfv^{1,n}) \cdots
      \int_{\U(t-h)^{\sqcup}} m_h^{t,(n)} (\dx \mfv^{n,n}) \\
      & \qquad
      \exp \Bigl(-\int_{\U(h)} \Bigl(1-e^{-\Phi_{\hu}(\mfu)}\Bigr) \,
      \varrho_{h}^{t}(\mfv^{1,n} \sqcup \dots \sqcup \mfv^{n,n} 
      ,\dx \mfu) \Bigr) \\
      & = \int_{\U(t-h)^{\sqcup}} m_h^{t,(n)} (\dx \mfv^{1,n})
      \cdots \int_{\U(t-h)^{\sqcup}} m_h^{t,(n)} (\dx \mfv^{n,n}) \\
      & \qquad \exp \Bigl(-\int_{\U(h)} \Bigl(1-e^{-\Phi_{\hu}(\mfu)}\Bigr) \,
      (\varrho_{h}^{t}(\mfv^{1,n},\dx \mfu)+\dots +
      \varrho_{h}^{t}(\mfv^{n,n},\dx \mfu)) \Bigr)\\
      & = \Bigl( \int_{\U(t-h)^{\sqcup}} m_h^{t,(n)} (\dx \mfv) \,
      \exp\Bigl(- \int_{\U(h)} \Bigl(1-e^{-\Phi_{\hu}(\mfu)}\Bigr) \,
      \varrho_{h}^{t}(\mfv,\dx \mfu) \Bigr) \Bigr)^n.
    \end{split}
  \end{align}
  Comparing the first and the last line of the above display we see
  that in the last line we have $n$-th power of the Laplace transform
  of a $\U(h)^\sqcup$-valued random variable which itself fits the
  description after \eqref{n.rgx7} with $m_h^{t}$ replaced by
  $m_h^{t,(n)}$. This completes the proof.
\end{proof}

In the next result we identify the \Levy{} measure of a (generalized)
$t$-branching tree. The result is a generalized version of the formula
for the \Levy{} measure that we have claimed in Remark~\ref{r.1394}
for specific $\U$-valued random variables which is an MBT.

\begin{proposition}
  \label{pr.infdivMBT}
  The \Levy{} measure $\Lambda_h^\mfU$ on of a (generalized)
  $t$-branching tree $\mfU$ on $\U(t)^\sqcup$ is of the form
  \begin{align}
    \label{n.e.tilde.lambda}
   \Lambda_h^\mfU(\dx \mfu)
    = \int_{\U(t-h)^\sqcup\setminus \{\ntree\}} \lambda^{m_h^t}(\dx
    \mfv) \, \E_N\Bigl[\1\bigl\{\bigsqcup_{\mfw \in N(\varrho_h^t(\mfv,\cdot))}
    \mfw \in \dx \mfu \bigr\}\Bigr],
  \end{align}
  where we use the notation of the description after \eqref{n.rgx7}
  and expectation is w.r.t.\ PPP $N$.
\end{proposition}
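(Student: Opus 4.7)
The plan is to read off the Lévy measure directly from the Laplace functional of $\mfU$, by composing two Lévy--Khintchine representations: one for the $t$-branching tree $\mfU$ itself (which is available because, by Proposition~\ref{l.ac7289}, $\mfU$ is $t$-infinitely divisible) and one for the mixing law $m_h^t$ on the concatenation semigroup $(\U(t-h)^{\sqcup},\sqcup^{t-h})$ (which is infinitely divisible by assumption).

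First, I would evaluate $\E[\exp(-\Phi_h(\mfU))]$ for an arbitrary non-negative $\Phi \in \Pi$ using the very definition of a $t$-branching tree, giving, as in the first line of \eqref{n.rgx6},
\begin{align*}
  \E[\exp(-\Phi_h(\mfU))]
  = \int_{\U(t-h)^\sqcup} m_h^t(\dx\mfv)\,\exp\!\Bigl(
    -\!\int_{\U(h)} \bigl(1-e^{-\Phi_h(\mfu)}\bigr)\,\varrho_h^t(\mfv,\dx\mfu)\Bigr).
\end{align*}
The inner exponential is exactly the Laplace functional of the concatenation $\bigsqcup_{\mfw \in N(\varrho_h^t(\mfv,\cdot))}\mfw$, because $\Phi_h$ is additive under $\sqcup^h$ on $\U(h)^\sqcup$ (see \eqref{e755}) and $N(\varrho_h^t(\mfv,\cdot))$ is a PPP with intensity $\varrho_h^t(\mfv,\cdot)$, so the standard Campbell formula yields
\begin{align*}
  \E_N\Bigl[\exp\bigl(-\Phi_h\bigl({\textstyle\bigsqcup_{\mfw\in N}}\mfw\bigr)\bigr)\Bigr]
  = \exp\!\Bigl(-\!\int_{\U(h)}\bigl(1-e^{-\Phi_h(\mfu)}\bigr)\,\varrho_h^t(\mfv,\dx\mfu)\Bigr).
\end{align*}

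Second, I would apply the Lévy--Khintchine representation of $m_h^t$ on $(\U(t-h)^\sqcup,\sqcup^{t-h})$ with Lévy measure $\lambda^{m_h^t}$. The key verification here is that the function $g_\Phi\colon\mfv \mapsto \int(1-e^{-\Phi_h(\mfu)})\varrho_h^t(\mfv,\dx\mfu)$ is \emph{additive} under $\sqcup^{t-h}$, i.e.\ $g_\Phi(\mfv_1 \sqcup^{t-h} \mfv_2)=g_\Phi(\mfv_1)+g_\Phi(\mfv_2)$; this is the same additivity of $\varrho_h^t(\cdot,\cdot)$ in its first coordinate that is implicitly used in the splitting step of \eqref{n.rgx6} in the proof of Proposition~\ref{l.ac7289} and which is built into the definition of a generalized $t$-branching tree (it is forced by taking $n\to\infty$ in the argument there). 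Given this additivity, $\mfv \mapsto e^{-g_\Phi(\mfv)}$ is multiplicative under $\sqcup^{t-h}$, and the Lévy--Khintchine formula for $m_h^t$ yields
\begin{align*}
  -\log\E[\exp(-\Phi_h(\mfU))]
  = \int_{\U(t-h)^\sqcup\setminus\{\ntree\}}\lambda^{m_h^t}(\dx\mfv)\,
     \Bigl(1-\E_N\bigl[e^{-\Phi_h(\bigsqcup_{\mfw\in N(\varrho_h^t(\mfv,\cdot))}\mfw)}\bigr]\Bigr).
\end{align*}
Now I would write $1-\E_N[e^{-\Phi_h(\cdot)}] = \int(1-e^{-\Phi_h(\mfu)})\,\P(\bigsqcup_\mfw\in\dx\mfu)$ and exchange the order of integration, yielding
\begin{align*}
  -\log\E[\exp(-\Phi_h(\mfU))]
  = \int_{\U(h)^\sqcup\setminus\{\ntree\}}\bigl(1-e^{-\Phi_h(\mfu)}\bigr)\,\tilde\Lambda_h(\dx\mfu),
\end{align*}
where $\tilde\Lambda_h$ is precisely the right-hand side of \eqref{n.e.tilde.lambda}.

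Finally, I would conclude by uniqueness of the Lévy measure in the Lévy--Khintchine representation \eqref{ag2inf}: since truncated polynomials $\Phi_h$ with $\Phi\in\Pi_+$ are rich enough to separate $\sigma$-finite measures on $\U(h)^\sqcup\setminus\{\ntree\}$ satisfying $\int(\bar{\mfu}\wedge 1)\,d\Lambda<\infty$ (as used in the statement of \eqref{ag2inf} and Theorem~2.37 of \cite{infdiv}), the two representations must agree and $\Lambda_h^\mfU=\tilde\Lambda_h$. The main obstacle I foresee is not the computation itself but the careful bookkeeping around the additivity of the kernel $\varrho_h^t$ in its first argument and the verification that $g_\Phi$ belongs to the class of functions for which the Lévy--Khintchine representation of $m_h^t$ on the concatenation semigroup is applicable; once that is in place, the rest is a straightforward Campbell/Fubini computation.
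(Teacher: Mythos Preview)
Your proposal is correct and is essentially the paper's own proof run in reverse: the paper starts from the candidate L\'evy measure $\tilde\Lambda_h$, computes the Laplace functional of the associated infinitely divisible variable, and shows it equals the defining Laplace functional of $\mfU$ via the chain of equalities \eqref{n.rgx8} (steps (i)--(vi)), whereas you start from the latter and manipulate toward the L\'evy--Khintchine form. The additivity of $g_\Phi$ that you single out as the main obstacle is precisely what the paper invokes in step~(v) of \eqref{n.rgx8} without further comment, so your identification of this point is apt but does not indicate any divergence in method.
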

\begin{proof}
  \label{pr.ac7329}
  Let $\mfU$ be a $t$-branching tree and let $\mfV$ be a
  $t$-infinitely divisible $\U(t)^\sqcup$-valued random variable whose
  \Levy{} measure $\Lambda_h^\mfV(\dx \mfu)$ is given by the r.h.s.\
  of \eqref{n.e.tilde.lambda}.

  We have to verify that
  $\E[\exp(-\Phi_h(\mfV))] = \E[\exp(-\Phi_h(\mfU))]$ for all
  non-negative $\Phi \in \Pi$. Then by Theorem~1.30 in \cite{infdiv}
  it would follow that $\mfU \eqd \mfV$ and that in particular the
  \Levy{} measures agree. We have
  \begin{align}
    \label{n.rgx8}
    \begin{split}
      -\log \E[\exp(-\Phi_h(\mfV))] & \stackrel{\textnormal{(i)}}{=}
      \int_{\U(h)^\sqcup \setminus \{\ntree\}}
      \Lambda_h^\mfV(\dx \mfu) (1-e^{-\Phi_h(\mfu)}) \\
      & \stackrel{\textnormal{(ii)}}{=} \int_{\U(t-h)^\sqcup\setminus
        \{\ntree\}} \lambda^{m_h^t}(\dx \mfv) \, \E_N\biggl[ 1-
      \exp\Bigl( - \Phi_h \Bigl(\bigsqcup_{\mfw \in
        N(\varrho_h^t(\mfv,\cdot))} \mfw \Bigr)\Bigr) \biggr] \\
      & \stackrel{\textnormal{(iii)}}{=} \int_{\U(t-h)^\sqcup\setminus
        \{\ntree\}} \lambda^{m_h^t}(\dx \mfv) \, \biggl(1- \E_N\biggl[
      \exp \Bigl( - \sum_{\mfw \in N(\varrho_h^t(\mfv,\cdot))}
      \Phi_h(\mfw) \Bigr) \biggr]
      \biggr) \\
      & \stackrel{\textnormal{(iv)}}{=} \int_{\U(t-h)^\sqcup\setminus
        \{\ntree\}} \lambda^{m_h^t}(\dx \mfv) \, \Bigl(1- \exp\Bigl( -
      \int \varrho_h^t(\mfv,\dx \mfw) (1- e^{-\Phi_h(\mfw)}) \Bigr) \Bigr) \\
      & \stackrel{\textnormal{(v)}}{=} \int_{\U(t-h)^\sqcup\setminus
        \{\ntree\}} m_h^t(\dx \mfv) \, \exp \Bigl( - \int
      \varrho_h^t(\mfv,\dx \mfw)
      (1-e^{-\Phi_h(\mfw)}) \Bigr) \\
      & \stackrel{\textnormal{(vi)}}{=} - \log
      \E[\exp(-\Phi_h(\mfU))].
    \end{split}
  \end{align}
  Here, (i) follows by equation \eqref{ag2inf}; (ii) follows by
  \eqref{n.e.tilde.lambda}, Fubini and integration over $\dx \mfu$;
  (iii) by \eqref{e755}; (iv) by the usual Laplace transform formula
  for integrals over Poisson measures; see e.g.\ Lemma~12.2 in
  \cite{Kall02}; (v) this is the property that $\Lambda^{m_h^t}$ is
  the \Levy{} measure of $m^t_h$;
  (vi) is the first line of \eqref{n.rgx6}.
\end{proof}

\section{Approximation of solutions of
  \texorpdfstring{$\Omega^\uparrow$}{Omega-uparrow}-martingale
  problems}
\label{sec:appr-solut-mart}

We observe next that we can take every function on $\U$ of the form
$\mfu = (\bar\mfu,\hat\mfu) \mapsto
\bar\Phi(\bar\mfu)\wh\Phi(\hat\mfu)$, where $\bar\Phi$ is in $C^2$ and
$\wh\Phi$ is a polynomial induced by
$\varphi\in C^1_b([0,\infty)^{\binom{n}2})$.

We consider the set $\Pi^*$ consisting of functions (of the above
form) satisfying the following conditions: There are
$M_1,M_2,M_3,M_4 \in (0,\infty)$ and $n \in \N$ so that
\begin{enumerate}[(i)]
\item $\abs{\bar\Phi^{(i)}} \le M_1 \bar \mfu^n + M_2$ for
  $i=0,1,2$;
\item $\abs{\bar\Phi(\bar\mfu)/\bar\mfu} \le M_3$ for $\bar\mfu
  \le M_4$.
\end{enumerate}
Note that $\Pi^*$ contains the sets $\Pi(\mathcal C_b^1)$ and $\mathcal D_2$.

Now, for every $\Phi \in \Pi^*$ we can find functions
$(\Psi_k)_{k \in \N}$ with $\Psi_k \in \mathcal D_1$ so that
$\Psi_k \to \Phi$, $\Omega^\uparrow \Psi_k \to \Omega^\uparrow \Phi$
as $k \to \infty$ and
$\abs{\Omega^\uparrow \Phi - \Omega^\uparrow \Psi_k} \le \tilde M_1
\bar\mfu^n + \tilde M_2$ for suitable $\tilde M_1$ and $\tilde\M_2$
and all $k\in \N$.

By dominated convergence theorem it follows that a solution of the
$(\Omega^\uparrow,\mcD_1)$-martingale problem solves also the
$(\Omega^\uparrow,\Pi^*)$-martingale problem and in particular also
the $(\Omega^\uparrow,\mcD_2)$ and
$(\Omega^\uparrow,C_b^1)$-martingale problems.

\section{Yamada-Watanabe criterion}
\label{sec:ikeda-watan-argum}

Recall the process given in \eqref{e1011}-\eqref{e1336} and
\eqref{eq:FelBDiff-ab} for the total mass process. Let
\begin{align}
  \label{eq:40}
  \gamma(t,x) \alpha(t,x) = a(t,x) = \frac{2x(T-t)}{\exp(2x/(b(T-t)))
  -1}
\end{align}
with $a(t,0)\equiv b$, $a(T,x)\equiv b$, $\alpha(t,x) = \sqrt{bx}$,
$t\ge 0$, $x\ge 0$.

The function is continuous and bounded on $[0,T]\times [0,\infty)$.
Then
\begin{align}
  \label{eq:41}
  \gamma(t,x) = \sqrt{b}x^{-1/2}.
\end{align}
For each $\varepsilon >0$ the function $\gamma(t,\cdot)$ is bounded on
$[\varepsilon,\infty)$ uniformly in $t \in [0,T]$.

For a Feller diffusion $(X_t)_{t \in [0,T]}$ starting in
$\varepsilon>0$ we define the stopping time
\begin{align}
  \label{eq:42}
  T_\delta^\varepsilon = \inf\{t \in [0,T]: \gamma(t,X_t) \le
  \delta\}.
\end{align}
Note that we have $T_\delta^\varepsilon <\infty$ if  $0<\delta \le
\varepsilon$. Furthermore,
\begin{align}
  \label{eq:43}
  \int_0^t \gamma(s,X_s)\, \dx s <\infty \quad \text{for all $\varepsilon>0$}
\end{align}
and $T_\delta^\varepsilon \uparrow T_0^\varepsilon = T$, since by
Lemma~1.1 in \cite{DG03} the path cannot hit $0$. Then using the
Yamada-Watanabe criterion, see for instance pages 178-179 in
\cite{IkedaWatanabe1989}, the diffusion is uniquely determined as the
solution of the corresponding stochastic differential equation. To
obtain the solution starting from $0$ observe that the solution of
(4.7) in \cite{IkedaWatanabe1989} converges to a limiting SDE which
starts in $0$. Thus, the entrance law of the above diffusion from $0$
is uniquely determined.


\begin{thebibliography}{BGK{\etalchar{+}}21}

\bibitem[AGK19]{AGK19}
Richard Arratia, Larry Goldstein, and Fred Kochman.
\newblock {Size bias for one and all}.
\newblock {\em Probability Surveys}, 16(none):1 -- 61, 2019.

\bibitem[Ald90]{Aldous90}
David Aldous.
\newblock The random walk construction of uniform spanning trees and uniform
  labelled trees.
\newblock {\em SIAM J. Discrete Math.}, 3(4):450--465, 1990.

\bibitem[Ald91a]{Ald1991a}
David Aldous.
\newblock The continuum random tree. {I}.
\newblock {\em Ann. Probab.}, 19(1):1--28, 1991.

\bibitem[Ald91b]{Ald1991}
David Aldous.
\newblock The continuum random tree. {II}. {A}n overview.
\newblock In {\em Stochastic analysis ({D}urham, 1990)}, volume 167 of {\em
  London Math. Soc. Lecture Note Ser.}, pages 23--70. Cambridge Univ. Press,
  Cambridge, 1991.

\bibitem[Ald93]{Ald1993}
David Aldous.
\newblock The continuum random tree. {III}.
\newblock {\em Ann. Probab.}, 21(1):248--289, 1993.

\bibitem[BGK{\etalchar{+}}21]{BGKTW2021}
Airam Blancas, Stephan Gufler, Sandra Kliem, Viet~Chi Tran, and Anton
  Wakolbinger.
\newblock Evolving genealogies for branching populations under selection and
  competition, 2021.

\bibitem[Bor87]{Bor87}
Andrei~N. Borodin.
\newblock On the distribution of random walk local time.
\newblock {\em Ann. Inst. H. Poincar\'{e} Probab. Statist.}, 23(1):63--89,
  1987.

\bibitem[CRW91]{CRW91}
Brigitte Chauvin, Alain Rouault, and Anton Wakolbinger.
\newblock Growing conditioned trees.
\newblock {\em Stochastic Process. Appl.}, 39(1):117--130, 1991.

\bibitem[Daw93]{D93}
Donald~A. Dawson.
\newblock Measure-valued {M}arkov processes.
\newblock In {\em \'{E}cole d'\'{E}t\'e de {P}robabilit\'es de {S}aint-{F}lour
  {XXI}---1991}, volume 1541 of {\em Lecture Notes in Math.}, pages 1--260.
  Springer, Berlin, 1993.

\bibitem[DF88]{DF88}
Donald~A. Dawson and Klaus Fleischmann.
\newblock Strong clumping of critical space-time branching models in
  subcritical dimensions.
\newblock {\em Stochastic Process. Appl.}, 30(2):193--208, 1988.

\bibitem[DG96]{DG96}
Donald~A. Dawson and Andreas Greven.
\newblock Multiple space-time scale analysis for interacting branching models.
\newblock {\em Electron. J. Probab.}, 1:no.\ 14, approx.\ 84 pp.\, 1996.

\bibitem[DG03]{DG03}
Donald~A. Dawson and Andreas Greven.
\newblock State dependent multitype spatial branching processes and their
  longtime behavior.
\newblock {\em Electron. J. Probab.}, 8:no. 4, 93, 2003.

\bibitem[DG19]{DG18evolution}
Andrej Depperschmidt and Andreas Greven.
\newblock Stochastic evolution of genealogies of spatial populations: state
  description, characterization of dynamics and properties.
\newblock {\em Genealogies of Interacting Particle Systems (M. Birkner, R. Sun
  and J. Swart, Eds.), Lect. Notes Ser. Inst. Math. Sci. Natl. Univ. Singap.,
  World Sci. Publ., Hackensack, NJ.}, 2019.
\newblock arXiv version submitted Juli 2018 available at
  \url{http://arxiv.org/abs/1807.03637}.

\bibitem[DGG24]{ggr_MBT}
Andrej Depperschmidt, Andreas Greven, and Max Grieshammer.
\newblock Branching trees {II}: Dynamics and family decomposition.
\newblock {I}n preparation, 2024.

\bibitem[DGP11]{DGP11}
Andrej Depperschmidt, Andreas Greven, and Peter Pfaffelhuber.
\newblock Marked metric measure spaces.
\newblock {\em Electron. Commun. Probab.}, 16:174--188, 2011.

\bibitem[DGP12]{DGP12}
Andrej Depperschmidt, Andreas Greven, and Peter Pfaffelhuber.
\newblock Tree-valued {F}leming-{V}iot dynamics with mutation and selection.
\newblock {\em Ann. Appl. Probab.}, 22(6):2560--2615, 2012.

\bibitem[DK99a]{DonnellyKurtz1999b}
Peter Donnelly and Thomas~G. Kurtz.
\newblock Genealogical processes for {F}leming-{V}iot models with selection and
  recombination.
\newblock {\em Ann. Appl. Probab.}, 9(4):1091--1148, 1999.

\bibitem[DK99b]{DonnellyKurtz1999a}
Peter Donnelly and Thomas~G. Kurtz.
\newblock Particle representations for measure-valued population models.
\newblock {\em Ann. Probab.}, 27(1):166--205, 1999.

\bibitem[DLG02]{DuquesneLeGall2002}
Thomas Duquesne and Jean-Fran\c{c}ois Le~Gall.
\newblock Random trees, {L}\'evy processes and spatial branching processes.
\newblock {\em Ast\'erisque}, (281):vi+147, 2002.

\bibitem[DLG05]{DLG05}
Thomas Duquesne and Jean-Fran\c{c}ois Le~Gall.
\newblock Probabilistic and fractal aspects of {L}\'evy trees.
\newblock {\em Probab. Theory Related Fields}, 131(4):553--603, 2005.

\bibitem[DP91]{DP91}
Donald~A. Dawson and Edwin~A. Perkins.
\newblock Historical processes.
\newblock {\em Mem. Amer. Math. Soc.}, 93(454):iv+179, 1991.

\bibitem[DVJ08]{DVJ08}
D.~J. Daley and D.~Vere-Jones.
\newblock {\em An introduction to the theory of point processes. {V}ol. {II}}.
\newblock Probability and its Applications (New York). Springer, New York,
  second edition, 2008.
\newblock General theory and structure.

\bibitem[EK86]{EK86}
Stewart~N. Ethier and Thomas~G. Kurtz.
\newblock {\em Markov processes: Characterization and convergence}.
\newblock Wiley Series in Probability and Mathematical Statistics: Probability
  and Mathematical Statistics. John Wiley \& Sons Inc., New York, 1986.

\bibitem[Eth00]{Eth00}
Alison~M. Etheridge.
\newblock {\em An introduction to superprocesses}, volume~20 of {\em University
  Lecture Series}.
\newblock American Mathematical Society, Providence, RI, 2000.

\bibitem[Eva93]{Evans93}
Steven~N. Evans.
\newblock Two representations of a conditioned superprocess.
\newblock {\em Proc. Roy. Soc. Edinburgh Sect. A}, 123(5):959--971, 1993.

\bibitem[GGR19]{infdiv}
Patric Gl\"{o}de, Andreas Greven, and Thomas Rippl.
\newblock Branching trees {I}: {C}oncatenation and infinite divisibility.
\newblock {\em Electron. J. Probab.}, 24:Paper No. 52, 55, 2019.

\bibitem[GKW22]{GKW}
Andreas Greven, Anton Klimovsky, and Anita Winter.
\newblock Evolving genealogies of spatial {$\Lambda$}-cannings processes with
  mutation.
\newblock in preparation 2022, 2022.

\bibitem[Gl{\"o}12]{Gl12}
Patric Gl{\"o}de.
\newblock {\em Dynamics of genealogical trees for autocatalytic branching
  processes}.
\newblock PhD thesis, Department Mathematik, Erlangen, Germany, 2012.
\newblock http://www.opus.ub.uni-erlangen.de/opus/volltexte/2013/4545/.

\bibitem[GLW05]{GLW05}
Andreas Greven, Vlada Limic, and Anita Winter.
\newblock Representation theorems for interacting {M}oran models, interacting
  {F}isher-{W}right diffusions and applications.
\newblock {\em Electron. J. Probab.}, 10:no. 39, 1286--1356, 2005.

\bibitem[GPW09]{GPW09}
Andreas Greven, Peter Pfaffelhuber, and Anita Winter.
\newblock Convergence in distribution of random metric measure spaces
  ({$\Lambda$}-coalescent measure trees).
\newblock {\em Probab. Theory Related Fields}, 145(1-2):285--322, 2009.

\bibitem[GPW13]{GPWmp13}
Andreas Greven, Peter Pfaffelhuber, and Anita Winter.
\newblock Tree-valued resampling dynamics {M}artingale problems and
  applications.
\newblock {\em Probab. Theory Related Fields}, 155(3-4):789--838, 2013.

\bibitem[Gre74]{Grey:74}
David~R. Grey.
\newblock Asymptotic behaviour of continuous time, continuous state-space
  branching processes.
\newblock {\em J. Appl. Probability}, 11:669--677, 1974.

\bibitem[GRG21]{ggr_GeneralBranching}
Andreas Greven, Thomas Rippl, and Patrick Gl\"{o}de.
\newblock Branching processes---a general concept.
\newblock {\em ALEA Lat. Am. J. Probab. Math. Stat.}, 18(1):635--706, 2021.

\bibitem[GSW16]{GSW}
Andreas Greven, Rongfeng Sun, and Anita Winter.
\newblock Continuum space limit of the genealogies of interacting
  {F}leming-{V}iot processes on {$\mathbb{Z}$}.
\newblock {\em Electron. J. Probab.}, 21:Paper No. 58, 64, 2016.

\bibitem[GSW22]{GSWfoss}
Andreas Greven, Rongfeng Sun, and Anita Winter.
\newblock The evolving genealogy of fossils: Unique characterization by
  martingale problems and applications.
\newblock {I}n preparation, 2022.

\bibitem[Guf18]{Gufler2018a}
Stephan Gufler.
\newblock Pathwise construction of tree-valued {F}leming-{V}iot processes.
\newblock {\em Electron. J. Probab.}, 23:Paper No. 42, 58, 2018.

\bibitem[Hut09]{hutzenthaler2009}
Martin Hutzenthaler.
\newblock The virgin island model.
\newblock {\em Electron. J. Probab.}, 14:no. 39, 1117--1161, 2009.

\bibitem[IW89]{IkedaWatanabe1989}
Nobuyuki Ikeda and Shinzo Watanabe.
\newblock {\em Stochastic differential equations and diffusion processes},
  volume~24 of {\em North-Holland Mathematical Library}.
\newblock North-Holland Publishing Co., Amsterdam; Kodansha, Ltd., Tokyo,
  second edition, 1989.

\bibitem[Kal77]{Kall77}
Olav Kallenberg.
\newblock Stability of critical cluster fields.
\newblock {\em Math. Nachr.}, 77:7--43, 1977.

\bibitem[Kal02]{Kall02}
Olav Kallenberg.
\newblock {\em Foundations of modern probability}.
\newblock Probability and its Applications. Springer-Verlag, New York, second
  edition, 2002.

\bibitem[KL15]{KliemLoehr2015}
Sandra Kliem and Wolfgang L\"ohr.
\newblock Existence of mark functions in marked metric measure spaces.
\newblock {\em Electron. J. Probab.}, 20:no. 73, 24, 2015.

\bibitem[KW71]{KW1971}
Kiyoshi Kawazu and Shinzo Watanabe.
\newblock Branching processes with immigration and related limit theorems.
\newblock {\em Teor. Verojatnost. i Primenen.}, 16:34--51, 1971.

\bibitem[Lam02]{Lamb02}
Amaury Lambert.
\newblock The genealogy of continuous-state branching processes with
  immigration.
\newblock {\em Probab. Theory Related Fields}, 122(1):42--70, 2002.

\bibitem[Lam07]{Lamb07}
Amaury Lambert.
\newblock Quasi-stationary distributions and the continuous-state branching
  process conditioned to be never extinct.
\newblock {\em Electron. J. Probab.}, 12:no. 14, 420--446, 2007.

\bibitem[LG93]{LeGall93}
Jean-Fran\c{c}ois Le~Gall.
\newblock The uniform random tree in a {B}rownian excursion.
\newblock {\em Probab. Theory Related Fields}, 96(3):369--383, 1993.

\bibitem[LG99]{LG99}
Jean-Fran{\c{c}}ois Le~Gall.
\newblock {\em Spatial branching processes, random snakes and partial
  differential equations}.
\newblock Lectures in Mathematics ETH Z\"urich. Birkh\"auser Verlag, Basel,
  1999.

\bibitem[Li11]{Li2011}
Zenghu Li.
\newblock {\em Measure-valued branching {M}arkov processes}.
\newblock Probability and its Applications (New York). Springer, Heidelberg,
  2011.

\bibitem[LJ91]{LJ91}
Yves Le~Jan.
\newblock Superprocesses and projective limits of branching {M}arkov process.
\newblock {\em Ann. Inst. H. Poincar\'e Probab. Statist.}, 27(1):91--106, 1991.

\bibitem[LN68]{LN68}
John Lamperti and Peter Ney.
\newblock Conditioned branching processes and their limiting diffusions.
\newblock {\em Teor. Verojatnost. i Primenen.}, 13:126--137, 1968.

\bibitem[L{\"o}h13]{Loehr2013}
Wolfgang L{\"o}hr.
\newblock Equivalence of {G}romov-{P}rohorov- and {G}romov's
  {$\underline\square_\lambda$}-metric on the space of metric measure spaces.
\newblock {\em Electron. Commun. Probab.}, 18:no. 17, 10, 2013.

\bibitem[LPW13]{PWak13}
Vi~Le, Etienne Pardoux, and Anton Wakolbinger.
\newblock ``{T}rees under attack'': a {R}ay-{K}night representation of
  {F}eller's branching diffusion with logistic growth.
\newblock {\em Probab. Theory Related Fields}, 155(3-4):583--619, 2013.

\bibitem[LR16]{LR16}
Wolfgang L\"ohr and Thomas Rippl.
\newblock Boundedly finite measures: separation and convergence by an algebra
  of functions.
\newblock {\em Electron. Commun. Probab.}, 21:Paper No. 60, 16, 2016.

\bibitem[LS81]{LS81}
Thomas~M. Liggett and Frank Spitzer.
\newblock Ergodic theorems for coupled random walks and other systems with
  locally interacting components.
\newblock {\em Z. Wahrsch. Verw. Gebiete}, 56(4):443--468, 1981.

\bibitem[{Mei}20]{Mei18}
Roland {Meizis}.
\newblock {Convergence of metric two-level measure spaces}.
\newblock {\em {Stochastic Processes Appl.}}, 130(6):3499--3539, 2020.

\bibitem[MV12]{MV12}
Sylvie M\'el\'eard and Denis Villemonais.
\newblock Quasi-stationary distributions and population processes.
\newblock {\em Probab. Surv.}, 9:340--410, 2012.

\bibitem[Nev86]{Neveu86}
Jacques Neveu.
\newblock Erasing a branching tree.
\newblock {\em Adv. in Appl. Probab.}, (suppl.):101--108, 1986.

\bibitem[Ove93]{Overbeck93}
Ludger Overbeck.
\newblock Conditioned super-{B}rownian motion.
\newblock {\em Probab. Theory Related Fields}, 96(4):545--570, 1993.

\bibitem[Per92]{Perkins1992}
Edwin~A. Perkins.
\newblock Conditional {D}awson-{W}atanabe processes and {F}leming-{V}iot
  processes.
\newblock In {\em Seminar on {S}tochastic {P}rocesses, 1991 ({L}os {A}ngeles,
  {CA}, 1991)}, volume~29 of {\em Progr. Probab.}, pages 143--156.
  Birkh\"{a}user Boston, Boston, MA, 1992.

\bibitem[Per02]{zbMATH01827859}
Edwin Perkins.
\newblock Dawson-{Watanabe} superprocesses and measure-valued diffusions.
\newblock In {\em Lectures on probability theory and statistics. Ecole d'et\'e
  de probabilit\'es de Saint-Flour XXIX - 1999, Saint-Flour, France, July
  8--24, 1999}, pages 125--329. Berlin: Springer, 2002.

\bibitem[PY82]{PY82}
Jim Pitman and Marc Yor.
\newblock A decomposition of {B}essel bridges.
\newblock {\em Z. Wahrsch. Verw. Gebiete}, 59(4):425--457, 1982.

\bibitem[RW00]{RoWi00}
L.~C.~G. Rogers and D.~Williams.
\newblock {\em Diffusions, {M}arkov {P}rocesses and {M}artingales: Volume 2,
  Itô Calculus}.
\newblock Cambridge University Press, 2000.
\newblock 494 pages.

\bibitem[SS80]{SS80}
Tokuzo Shiga and Akinobu Shimizu.
\newblock Infinite-dimensional stochastic differential equations and their
  applications.
\newblock {\em J. Math. Kyoto Univ.}, 20(3):395--416, 1980.

\end{thebibliography}

\newcommand{\etalchar}[1]{$^{#1}$}

\end{document}